\tikzset{>=latex} 
\tikzset{font=\small}
\tikzset{mark size=1.5pt, mark options=thin}
\tikzset{pin distance=4pt,
  every pin edge/.style={<-, thin, shorten <= -2pt}}
\definecolor{myblue}{rgb}{0,0,0.6}
\newcommand{\e}{\epsilon}
\newcommand{\Gsc}{{\Gamma_{D}}}
\newcommand{\N}[1]{\left\|#1\right\|}
\newcommand{\dist}{\operatorname{dist}}
\newcommand{\muin}{\mu^{\rm{in}}}
\newcommand{\muout}{\mu^{\rm{out}}}
\newcommand{\Diff}{\operatorname{Diff}}
\newcommand{\supp}{\operatorname{supp}}
\newcommand{\Ell}{\operatorname{Ell}}
\newcommand{\WF}{\operatorname{\WFh }}
\newcommand{\Id}{\operatorname{Id}}
\newcommand{\comp}{\operatorname{comp}}
\newtheorem{theorem}{Theorem}[section]
\newtheorem{cor}[theorem]{Corollary}
\newtheorem{definition}[theorem]{Definition}
\newtheorem{lem}[theorem]{Lemma}
\newtheorem{prop}[theorem]{Proposition}
\newtheorem{condition}[theorem]{Condition}
\newtheorem{assumption}[theorem]{Assumption}
\newtheorem*{claim*}{Claim}
\numberwithin{equation}{section}
\numberwithin{table}{section}
\numberwithin{figure}{section}
\newtheorem{rem}[theorem]{Remark}
\newtheorem*{rem*}{Remark}
\newtheorem{experiment}[theorem]{Experiment}
\newcommand{\tfa}{\text{ for all }}
\newcommand{\tfor}{\text{ for }}
\newcommand{\tif}{\text{ if }}
\newcommand{\ton}{\text{ on }}
\newcommand{\tas}{\text{ as }}
\newcommand{\tand}{\text{ and }}
\newcommand{\tst}{\text{ such that }}
\newcommand{\bre}{\begin{rem}}
\newcommand{\ere}{\end{rem}}
\newcommand{\bit}{\begin{itemize}}
\newcommand{\eit}{\end{itemize}}
\newcommand{\ben}{\begin{enumerate}}
\newcommand{\een}{\end{enumerate}}
\newcommand{\beq}{\begin{equation}}
\newcommand{\eeq}{\end{equation}}
\newcommand{\beqs}{\begin{equation*}}
\newcommand{\eeqs}{\end{equation*}}
\newcommand{\bpf}{\begin{proof}}
\newcommand{\epf}{\end{proof}}
\newcommand{\ble}{\begin{lem}}
\newcommand{\ele}{\end{lem}}
\definecolor{dalcol}{rgb}{0,0.3,0}
\newcommand{\dal}[1]{{\color{dalcol}{#1}}}
\definecolor{jecol}{rgb}{0.4,0,0}
\definecolor{escol}{rgb}{0,0,0.8}
\definecolor{estcol}{rgb}{0,0.5,0}
\definecolor{esnewcol}{rgb}{0,0.5,0}
\newcommand{\tendi}{\rightarrow \infty}
\newcommand{\tendo}{\rightarrow 0}
\newcommand{\PML}{{\rm pml}}
\newcommand{\Rimp}{R}
\newcommand{\hFEM}{{h_{\rm FEM}}}
\newcommand{\eps}{\varepsilon}
\newcommand{\half}{\frac{1}{2}}
\newcommand{\bcN}{\mathcal{N}}
\newcommand{\bcD}{\mathcal{D}}
\newcommand{\PadeP}{\mathcal{P}_{\MPade,\NPade}}
\newcommand{\PadeQ}{\mathcal{Q}_{\MPade,\NPade}}
\newcommand{\GammaI}{{\GammaIR}}
\newcommand{\GammaIinf}{\Gamma_{\rm{tr}}^\infty}
\newcommand{\GammaIR}{\Gamma_{{\rm tr}, R}}
\newcommand{\nudtr}{\nu_d^{\rm tr}}
\newcommand{\nujtr}{\nu_j^{\rm tr}}
\newcommand{\nuntr}{\nu_n^{\rm tr}}
\newcommand{\tr}{{\rm tr}}
\newcommand{\Rea}{\mathbb{R}}
\newcommand{\noi}{\noindent}
\newcommand{\cI}{\mathcal{I}}
\newcommand{\cH}{\mathcal{H}}
\newcommand{\cV}{\mathcal{V}}
\newcommand{\xiin}{\xi_1^{\rm in}}
\newcommand{\xiout}{\xi_1^{\rm out}}
\newcommand{\pin}{p^{\rm in}}
\newcommand{\pout}{p^{\rm out}}
\newcommand{\Bout}{\mathcal B^{\rm out}}
\newcommand{\Bin}{\mathcal B^{\rm in}}
\newcommand{\tin}{t^{\rm in}}
\newcommand{\tout}{t^{\rm out}}
\newcommand{\projx}{\pi_{{\mathbb{R}^d}}}
\newcommand{\projxM}{\pi_{M}}
\newcommand{\WFh}{\operatorname{WF}_{h}}
\newcommand{\vol}{\operatorname{vol}}
\newcommand{\dvol}{d\!\vol}
\newcommand{\boundary}{{\partial M}}
\newcommand{\mvanish}{{m_{\rm vanish}}}
\newcommand{\mmult}{{m_{\rm mult}}}
\newcommand{\morderzero}{{m_{\rm ord}}}
\newcommand{\alpharef}{\alpha^{\rm ref}}
\newcommand{\crayone}{{c_{\rm ray,1}}}
\newcommand{\craytwo}{{c_{\rm ray,2}}}
\newcommand{\craythree}{{c_{\rm ray,3}}}
\newcommand{\crayfour}{{c_{\rm ray,4}}}
\newcommand{\crayfive}{{c_{\rm ray,5}}}
\newcommand{\Cfrak}{C_{\mathfrak R}}
\newcommand{\Crefone}{C_{\rm ref}}
\newcommand{\domain}{\Omega_R}
\newcommand{\tildedomain}{{\widetilde{\Omega}_R}}
\newcommand{\domaintwo}{\Omega}
\newcommand{\Omegaminus}{\Omega_-}
\newcommand{\Omegaplus}{\Omega_+}
\newcommand{\MPade}{\mathsf{M}}%
\newcommand{\NPade}{\mathsf{N}}
\newcommand\reallywidehat[1]{%
\savestack{\tmpbox}{\stretchto{%
  \scaleto{%
    \scalerel*[\widthof{\ensuremath{#1}}]{\kern-.6pt\bigwedge\kern-.6pt}%
    {\rule[-\textheight/2]{1ex}{\textheight}}
  }{\textheight}%
}{0.5ex}}%
\stackon[1pt]{#1}{\tmpbox}%
}
\newcommand{\mythmname}[1]{\emph{(#1.)}}
\begin{document}
\title[Local absorbing boundary conditions for high-frequency waves]{Local absorbing boundary conditions on fixed domains give order-one errors for high-frequency
waves}
\author{Jeffrey Galkowski} 
\address{Department of Mathematics, University College London, 25 Gordon Street, London, WC1H 0AY, UK}
\email{J.Galkowski@ucl.ac.uk}

\author{David Lafontaine} 
\address{Department of Mathematical Sciences, University of Bath, Bath, BA2 7AY, UK}
\email{D.Lafontaine@bath.ac.uk}

\author{Euan A.~Spence}
\address{Department of Mathematical Sciences, University of Bath, Bath, BA2 7AY, UK}
\email{E.A.Spence@bath.ac.uk}

\date{\today}
\maketitle

\begin{abstract}
We consider approximating the solution of the Helmholtz exterior Dirichlet problem for a nontrapping obstacle, with boundary data coming from plane-wave incidence, by the solution of the corresponding boundary value problem where the exterior domain is truncated and a local absorbing boundary condition coming from a Pad\'e approximation (of arbitrary order) of the Dirichlet-to-Neumann map is imposed on the artificial boundary (recall that the simplest such boundary condition is the impedance boundary condition).  
We prove upper- and lower-bounds on the relative error incurred by this approximation, both in the whole domain and in a fixed neighbourhood of the obstacle (i.e.~away from the artificial boundary).
Our bounds are valid for arbitrarily-high frequency, with the artificial boundary fixed, and show that the relative error is bounded away from zero, independent of the frequency, and regardless of the geometry of the artificial boundary.
\end{abstract}


\tableofcontents

\vspace{-1cm}

\section{Introduction and statement of the main results}\label{sec:intro}

\subsection{Informal discussion of the main results, their context, and their novelty}

\subsubsection*{Background on absorbing boundary conditions.}

Wave-scattering problems are usually posed in unbounded domains. However, when computing approximations to the solutions of such problems via discretisation methods in the domain, such as finite-element methods (as opposed to discretisation methods on the boundary such as boundary-element methods), an artificial boundary is introduced so that the computational domain is finite. The question then arises of what boundary condition to impose on this artificial boundary. 
If the exact Dirichlet-to-Neumann map for the domain exterior to the artificial boundary is used as the boundary condition, then the solution of the truncated problem is exactly the restriction to the truncated domain of the solution of the scattering problem. However, the Dirichlet-to-Neumann map is a nonlocal operator and is expensive to compute. 

Since the late 1970s, starting with the papers  \cite{Li:75,EnMa:77,EnMa:77a,EnMa:79,BaTu:80,BaGuTu:82}, 
there has been much research on designing \emph{local} boundary conditions to impose on the artificial boundary,
with these boundary conditions approximating the (nonlocal) Dirichlet-to-Neumann map.
Since the goal is for these boundary conditions to ``absorb'' waves hitting this boundary, and not reflect them back into the computational domain, they are often called ``absorbing'' or ``non-reflecting'' boundary conditions.
These boundary conditions are now standard tools in the numerical simulation of waves propagating in unbounded domains; see, e.g., the reviews \cite{Gi:91,Ha:97,Ts:98,Ha:99,Gi:04}, \cite[\S3.3]{Ih:98}.

\subsubsection*{The error incurred by absorbing boundary conditions}

The following natural and important question then arises:~what is the error between the solution of the truncated problem and the solution of the true scattering problem, and how does this error depend on the following factors?

\vspace{-1ex}

\bit
\item[(i)] The shape of the artificial boundary.
\item[(ii)] The distance of the artificial boundary from the scatterer.
\item[(iii)] The position in the computational domain where the error is measured (e.g., is the error smaller away from the artificial boundary than near it?).
\item[(iv)] \emph{Either} the time (for problems posed in the time domain) \emph{or} the frequency of the waves (for problems posed in the frequency domain).
\item[(v)] The order of the artificial boundary condition.
\eit

\vspace{-1ex}

Perhaps surprisingly, despite the decades-long interest in absorbing boundary conditions, there do not yet exist rigorous answers to many of these questions.

A summary of the existing answers to these questions is as follows: In the time domain, there exist error estimates describing how the error depends on the distance of the artificial boundary from the scatterer 
 \cite[Theorem 3.2]{BaTu:80}, \cite[Theorem 2.4]{DiJo:05}, on the order of the boundary conditions \cite[\S2.3]{Ha:97} (for fixed boundary), and on the average frequency present in the solution \cite[\S5]{HaRa:87}.
In the frequency domain for fixed frequency,
there exist error estimates describing how the error 
depends on the distance of the artificial boundary from the scatterer \cite[Theorems 4.1 and 4.2]{BaGuTu:82}, \cite[Theorem 3.1]{Go:82}.

\subsubsection*{The Helmholtz problem most studied by the numerical-analysis community: artificial boundary fixed and frequency arbitrarily high.}

One situation where, to our knowledge, there do not yet exist any estimates
on the error incurred by absorbing boundary conditions is in the frequency domain when the artificial boundary is fixed and the frequency is arbitrarily high. This situation is a ubiquitous model problem for numerical methods applied to the Helmholtz equation.

Indeed, the following is a non-exhaustive list of papers analysing numerical methods applied to this set up, with the analyses valid in the high-frequency limit with the domain fixed. We highlight that this list includes some of the most influential work in the numerical analysis of the Helmholtz equation from the last $\sim$15 years.\footnote{More specifically, all of the following papers consider \emph{either} the Helmholtz boundary-value problem \eqref{eq:BVPimp_new} below with the impedance boundary condition \eqref{eq:BVPimpbc_new} on the truncation boundary, \emph{or} the analogous boundary-value problem with variable coefficients in the PDE. 
}
\bit
\item 
Conforming FEMs (including continuous interior-penalty methods) 
\cite{ShWa:05, HaHu:08,MeSa:11,EsMe:12,ZhWu:13,Wu:13,EsMe:14,DuWu:15,ZhDu:15,DuZh:16,ChNi:18,BuNeOk:18,DiMoSp:19,Ch:19,GrSa:18,MeSaTo:20,ChNi:20,DuWuZh:20}.
\item Least-squares methods \cite{DeGoMuZi:12,ChQi:17,BeMe:18,HuSo:20,SoLe:20}.
\item DG methods based on piece-wise polynomials 
\cite{FeWu:09,FeWu:11,DeGoMuZi:12,FeXi:13,HoSh:13,MePaSa:13,CuZh:13,ChLuXu:13,MuWaYe:14,ZhDu:15,SaZe:15,WaWaZhZh:18,CaWu:20,ZhPaCh:20, ZhWu:21}.
\item Plane-wave/Trefftz-DG methods 
\cite{AmDjFa:09,HiMoPe:11,HiMoPe:14,AmChDiDjFi:14,HiMoPe:16,HuYu:18,MaPePi:19,YuHu:20,HuWa:20}.
\item Multiscale finite-element methods 
\cite{GaPe:15,BrGaPe:15,Pe:17,BaChGo:17,OhVe:18,ChVa:20,PeVe:20, MaAlSc:21, ChHoWa:21, HaPe:21, FrHaPe:21}.
\item Preconditioning methods \cite{GaGrSp:15,GrSpVa:17,GrSpZo:20,GoGrSp:20,LiXiSade:20,RaNa:20, GoGaGrLaSp:21, GoGrSp:21}.
\item Uncertainty-quantification methods \cite{FeLiLo:15,LiWaZh:18,GaKuSl:20}.
\eit

\subsubsection*{Informal summary of the results of this paper.}

The present paper proves error bounds on the accuracy of absorbing boundary conditions for 
the ubiquitous model problem discussed above. These bounds show how the error in this set up depends on each of the factors (i)-(v) described above, and all but one of our bounds are provably sharp.

More specifically, we consider the Helmholtz exterior Dirichlet problem 
with boundary data coming from plane-wave incidence when the artificial boundary is fixed and the frequency is arbitrarily high. 
We consider absorbing boundary conditions coming from a Pad\'e approximation (of arbitrary order) of the Dirichlet-to-Neumann map; recall that this popular class of boundary conditions was introduced in \cite{EnMa:77,EnMa:77a,EnMa:79} in the time-dependent setting.

These results are presented in \S\ref{sec:impedance} in the simplest-possible case of an impedance boundary condition, with these results illustrated in numerical experiments in \S\ref{sec:num}.
The results for the general Pad\'e case are presented  in \S\ref{subsec:error1} and \S\ref{subsec:error2}.
Our results about well-posedness of the truncated problem in \S\ref{sec:wellposed} are also new and of independent interest.
Of the results present in the existing literature, the results in this paper are closed to those of \cite{HaRa:87}, and we compare and contrast these two sets of results in \S\ref{sec:HaRa}.

\subsubsection*{How the results are obtained, and their novelty from the point of view of analysis.}

The main results are obtained using techniques from semiclassical analysis; i.e., rigorous analysis of PDEs with a large/small parameter, with the analysis explicit in that parameter. In this case the parameter is the large frequency of the Helmholtz equation.

More specifically we use \emph{semiclassical defect measures} \cite[Chapter 5]{Zworski_semi}, \cite[\S E.3]{DyZw:19}. These measures describe where the mass of Helmholtz solutions in phase space (i.e. the set of positions $x$ and momenta $\xi$) is concentrated in the high-frequency limit; for an informal discussion of Helmholtz defect measures, see \cite[\S9.1]{LaSpWu:19a}.  

The main novelty of this paper is in applying these semiclassical techniques to this long-standing numerical-analysis question of the accuracy of absorbing boundary conditions.
A large part of the analysis are delicate arguments  (in \S\ref{sec:mainproofs}) involving 
constructing geometric-optic rays and controlling their properties with respect to the distance of the artificial boundary from the scatterer, and the geometry of both the artificial boundary and the scatterer. Indeed, controlling the properties of these rays is what allows us to determine how the error depends on the factors (i)-(iii) above.
We highlight that the ideas behind the ray constructions are outlined in \S\ref{sec:idea_rays}, and their use in the defect-measure arguments is described informally in \S\ref{sec:outlinerays}.

In addition, the following two aspects of our paper are of independent interest in (non-numerical) analysis.
\bit
\item The arguments in \S\ref{sec:resolventestimates} that use defect measures to prove bounds on the solution operator 
over \emph{families} of domains (as opposed to a single one), with the bounds explicit in both frequency and the characteristic length scale of the domains.
\item The extension in \S\ref{sec:Miller} of the results in~\cite{Miller} about defect measures on the boundary to the case when the right-hand side of the Helmholtz equation is non-zero.
\eit

\subsubsection*{The wider context of absorbing boundary conditions in the numerical analysis of the Helmholtz equation}

Another important use of local absorbing boundary conditions in the numerical analysis of the Helmholtz equation is in domain-decomposition (DD) methods. 
This large interest began with the use of impedance boundary conditions for non-overlapping DD methods in \cite{De:91,BeDe:97} and the connection between absorbing boundary conditions and the optimal subdomain boundary conditions (involving appropriate Dirchlet-to-Neumann maps) 
was highlighted in 
\cite{NaRoSt:94,EnZh:98}.
Despite the large current interest in Helmholtz DD methods (see, e.g., the reviews in  \cite{GaZh:19}, \cite{GrSpZo:20}), there are no rigorous frequency-explicit convergence proofs for any practical DD method for the high-frequency Helmholtz equation, partly due to a lack of frequency-explicit bounds on the error when absorbing boundary conditions are used to approximate the appropriate Dirichlet-to-Neumann maps. 
We therefore expect the results and techniques in the present paper to be relevant for the frequency-explicit analysis of DD methods for the Helmholtz equation; preliminary results on this are given in \cite{LS1}.

\subsection{Overview of the main results in the simplest-possible setting}\label{sec:impedance}

In this section, we present a selection of our bounds on the error in their simplest-possible setting when an impedance boundary condition is imposed on the truncation boundary.
Our upper and lower bounds on the error when the absorbing boundary condition comes  from a general 
Pad\'e approximation of the Dirichlet-to-Neumann map are given in \S\ref{subsec:error1} and \ref{subsec:error2}, with results on the wellposedness of this problem in \S\ref{sec:wellposed}.

Let $\Omegaminus\subset\mathbb{R}^{d}$, $d\geq 2$, be a bounded open set such
that the open complement $\Omegaplus:=\mathbb{R}^{d} \setminus \overline{\Omegaminus}$
is connected, and let $\Gsc:=\partial\Omegaminus$ be $C^\infty$.
Given $k>0$ and $a\in\mathbb R ^d$ with $|a|=1$, let $u\in H_{\text{loc}}^{1}(\Omegaplus)$ be the solution to the
Helmholtz equation 
in $\Omegaplus$
\begin{subequations}\label{eq:BVP}
\begin{equation}
(\Delta +k^2) u=0\hspace{1em}\text{in }\Omegaplus,
\end{equation}
 with the Dirichlet boundary
condition 
\begin{equation}
u=\exp(ikx\cdot a ) \quad\text{ on } \Gsc
 \end{equation}
and satisfying the Sommerfeld radiation condition
\begin{equation}\label{eq:src}
\frac{\partial u}{\partial r} - i k u  = o \left( \frac{1}{r^{(d-1)/2}}\right)
\end{equation}
\end{subequations}
as $r:= |x|\rightarrow \infty$, uniformly in $\widehat{x}:= x/r$. (The technical reason we only consider Dirichlet boundary conditions on $\Gamma_D$, and not also Neumann boundary conditions, is discussed in Remark \ref{rem:Neumann} below.)

The physical interpretation of \eqref{eq:BVP} is that $u$ is minus the scattered wave for the plane-wave scattering problem with sound-soft boundary conditions; i.e., $\exp(i k x\cdot a) - u$ is the total field for the sound-soft scattering problem.

We assume throughout that the obstacle $\Omegaminus$ is \emph{nontrapping}, i.e.~all billiard trajectories 
(in the sense of \cite[\S24.3]{Ho:85}) starting in a neighbourhood of the convex hull of $\Omegaminus$ escape that neighbourhood after some uniform time. Without loss of generality, we assume that $\Omegaminus$ has characteristic length scale one (results explicit in the size of $\Omegaminus$ can then be obtained by a scaling argument). 
In principle, our arguments could also cover the case when the Helmholtz equation \eqref{eq:BVP} has variable coefficients, but the ray arguments would be more complicated, since the rays are no longer straight lines (at least in a neighbourhood of the scatterer).

Let $v$ be the solution of the analogous exterior Dirichlet problem, but with the exterior domain $\Omegaplus$ truncated, and an impedance boundary condition prescribed on the truncation boundary. 
More precisely, let $\tildedomain$ be such that $\tildedomain\subset B(0,MR)$ for some $M>0$, $\GammaIR:=\partial \tildedomain$
is $C^\infty$ and $\Omegaminus\Subset \tildedomain$, where $\Subset$ denotes compact containment. The subscripts $R$ 
on $\tildedomain$ and $\GammaIR$
emphasise that both have characteristic length scale $R$, and the subscript $\tr$ on $\GammaIR$ emphasises that this is the truncation boundary. We assume that the family $\{\GammaIR\}_{R\in[1,\infty)}$ is continuous in $R$ and is such that the limit $\GammaIinf:=\lim_{R\tendi}(\GammaIR/R)$ exists.
Let $\domain:= \tildedomain\setminus \overline{\Omegaminus}$, and let $v\in H^{1}(\domain)$ be the solution of
\begin{subequations}\label{eq:BVPimp_new}
\begin{align}
(\Delta+k^2)v=0\quad&\text{in }\domain,\label{eq:BVPimppde_new}\\
v=\exp(ikx\cdot a )
\quad&\text{on }\Gsc, \quad\text{ and}\\
\partial_n v- ik v =0
\quad&\text{on }\GammaIR.\label{eq:BVPimpbc_new}
\end{align}
\end{subequations}

\begin{theorem}[Lower and upper bounds when $\GammaI = \partial B (0,R)$]
\label{th:quant_new}
Suppose that  $\Omegaminus$ is nontrapping, $\Omegaminus \subset B(0,1)$, and $\GammaIR = \partial B (0,R)$ with $R \geq 1$. Then there exists $C_j=C_j(\Omegaminus)>0, j=1,2,$ such that for any $R\geq1 $, there exists $k_0(R, \Omegaminus)>0$ such that, for any direction $a$, the solutions to~\eqref{eq:BVP} and~\eqref{eq:BVPimp_new}, $u$ and $v$ respectively, satisfy
\beq\label{eq:mainbound_new}
\frac{C_1}{R^{2}}\leq 
\frac{\Vert u-v\Vert_{L^{2}(\domain)}}{\Vert u\Vert_{L^{2}(\domain)}}\leq \frac{C_2}{R^{2}},\qquad\tfa k \geq k_0.
\eeq
Furthermore, there exists $C_3=C_3(\Omegaminus)>0$ 
such that for any $R\geq 2$, there exists $k_1=k_1(R, \Omegaminus)>0$ such that, for any direction $a$, 
\beq\label{eq:subset_new}
\frac{\Vert u-v\Vert_{L^{2}(B(0,2)\setminus \Omega_-)}}{\Vert u\Vert_{L^{2}(B(0,2 )\setminus\Omega_-)}}\geq \frac{C_3}{R^{2}}\qquad\tfa k \geq k_1.
\eeq
\end{theorem}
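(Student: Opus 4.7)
The plan is to set $w := u - v$, which by subtraction satisfies the Helmholtz equation on $\Omega_R$ with zero Dirichlet data on $\Gamma_D$ and the inhomogeneous impedance condition $(\partial_n - ik)w = g$ on $\Gamma_{{\rm tr},R}$, where
\[
g := (\partial_n - ik) u\big|_{\Gamma_{{\rm tr},R}}
\]
is the defect of the impedance condition applied to the exact outgoing solution $u$. The theorem then reduces to (i) identifying the structure of $g$, (ii) estimating the impedance-to-solution map $g \mapsto w$, and (iii) computing the normalizations $\|u\|_{L^2(\Omega_R)}$ and $\|u\|_{L^2(B(0,2)\setminus \Omega_-)}$ that convert absolute bounds into the relative errors asked for in the theorem.

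The first concrete step is to use the Wilcox asymptotic expansion of the outgoing solution $u$,
\[
u(r\hat{x}) = \frac{e^{ikr}}{r^{(d-1)/2}} \sum_{j\ge 0} \frac{f_j(\hat{x})}{(kr)^j} \qquad \text{as } r\to\infty,
\]
with $f_0$ the far-field pattern of $u$. Differentiating in $r$ and using that the leading $r$-derivative of $r^{-(d-1)/2}$ produces the factor $-(d-1)/(2r)$, one obtains the explicit factorization
\[
g = -\frac{d-1}{2R}\, u\big|_{\Gamma_{{\rm tr},R}} + \tilde g_R, \qquad \|\tilde g_R\|_{L^2(\Gamma_{{\rm tr},R})} = o(R^{-1}),
\]
so that, to leading order in $R$, $g$ is a small multiple of the trace of $u$ itself. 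The same asymptotic expansion gives $\|u\|_{L^2(\Omega_R)}^2 \sim R \,\|f_0\|_{L^2(S^{d-1})}^2$ (the integrand $|u|^2 r^{d-1}$ is asymptotically constant in $r$) and $\|u\|_{L^2(B(0,2)\setminus \Omega_-)} \sim 1$. Non-triviality of $f_0$ for plane-wave scattering by a nontrapping obstacle is a standard input and provides the non-degeneracy needed for the lower bounds. These observations together reduce the target rate $R^{-2}$ in the theorem to the absolute estimates $\|w\|_{L^2(\Omega_R)} \sim R^{-3/2}$ and $\|w\|_{L^2(B(0,2)\setminus \Omega_-)} \sim R^{-2}$.

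The upper bound in \eqref{eq:mainbound_new} would then follow by feeding the explicit form of $g$ into a resolvent-type bound for the truncated impedance problem, which is available because $\Omega_R$ is a nontrapping geometry (obstacle nontrapping by hypothesis, outer sphere convex). For the matching global lower bound, I would build an approximate error $w_{\rm app}$ directly from the leading-order expression for $g$ and show that the remainder is of strictly smaller order, with the non-degeneracy of $f_0$ preventing accidental cancellation. The local lower bound \eqref{eq:subset_new} is the most delicate: one must prove that a source of $L^2$-size $R^{-1}$ living on the far sphere $\Gamma_{{\rm tr},R}$ actually produces a wave of size $R^{-2}$ on the fixed set $B(0,2)\setminus \Omega_-$, rather than dissipating en route. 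This is where I expect the main technical obstacle, and I would attack it via a quantitative propagation-of-singularities argument that tracks the incoming spherical wave launched by $g$ through the nontrapping interior, controls its focusing into the near field, and accounts for its subsequent interaction with $\Omega_-$, all uniformly in $k$.
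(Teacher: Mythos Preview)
Your plan has a fundamental gap at the very first step: the Wilcox expansion gives the asymptotics of $(\partial_n-ik)u$ in the limit $R\to\infty$ with $k$ fixed, whereas the theorem concerns the limit $k\to\infty$ with $R$ fixed. These two limits behave genuinely differently here, and the paper discusses exactly this point immediately after Theorem~\ref{th:quant} (see \eqref{eq:twobcs}--\eqref{eq:error2}). Concretely, your claim that $g=-\tfrac{d-1}{2R}u|_{\Gamma_{\mathrm{tr},R}}+\tilde g_R$ with $\|\tilde g_R\|=o(R^{-1})$ is \emph{not} uniform in $k$: the remainder $\tilde g_R$ contains a contribution of size $\sim k R^{-2}$ coming from the fact that the phase of $u$ on $\partial B(0,R)$ is not exactly $kr$ but rather $k|x-y|$ for sources $y\in B(0,1)$, and $\hat x\cdot\widehat{x-y}-1\sim R^{-2}$. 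For fixed $R$ and $k\to\infty$ this term dominates the Wilcox leading term $-(d-1)/(2R)u$. If your decomposition were $k$-uniform, feeding it into the resolvent estimate would give a relative error $\lesssim (kR)^{-1}\to 0$ as $k\to\infty$, directly contradicting the $k$-independent lower bound $C_1/R^2$ you are trying to prove.

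The paper replaces the Wilcox expansion by a semiclassical wavefront-set statement (Lemma~\ref{lem:WFu}): outgoing $u$ satisfies $\WFh(u)\cap\{|x|>C\}\subset\{|\xi\cdot x/|x|-1|\le C|x|^{-2}\}$, which is precisely the $k$-uniform version of ``rays from a source in $B(0,1)$ hit $\partial B(0,R)$ at angle $O(R^{-1})$ to the normal''. This yields $\|(hD_n-1)u\|_{L^2(\Gamma_{\mathrm{tr},R})}\lesssim R^{-2}R^{-1/2}\|u\|_{L^2(\Omega_R)}$ (Lemmas~\ref{lem:impedancetrace}--\ref{lem:impedancetrace2}), and combined with the resolvent bound of Theorem~\ref{th:uniformestimates} gives the upper bound. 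The lower bounds are obtained by an entirely different mechanism: one passes to defect measures of $v$, uses that the plane-wave Dirichlet data creates outgoing mass on $\Gamma_D$, propagates it to $\Gamma_{\mathrm{tr},R}$ where it hits at angle $\theta\sim R^{-1}$, and then computes the reflection coefficient $\alpharef\sim\theta^4\sim R^{-4}$, producing incoming mass $\mu(\mathcal I)\gtrsim R^{-3}$ in $\Omega_R$ (Lemmas~\ref{lem:ray2}, \ref{lem:reflectionbound1}, \ref{lem:quant_mes}). No ``approximate solution $w_{\mathrm{app}}$'' is constructed.
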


\noindent Theorem~\ref{th:quant_new} shows that, for sufficiently high frequency, the error is proportional to $R^{-2}$ in \emph{both} the whole domain $\domain$ \eqref{eq:mainbound_new} \emph{and} a neighbourhood of the obstacle \eqref{eq:subset_new}.
 
We make two comments:
(i) The reason that $k_0$ and $k_1$ depends on $R$ is discussed below Theorem \ref{th:quant} (the more-general version of Theorem \ref{th:quant_new}).
(ii) When the impedance boundary condition is replaced by the more-general boundary condition corresponding to 
Pad\'e approximation, the only changes in \eqref{eq:mainbound_new} and \eqref{eq:subset_new} are in the powers of $R$ (see 
\eqref{eq:mainbound} and \eqref{eq:subset} below).

The following theorem shows that when $\GammaIinf$ 
is not a sphere centred at the origin, the relative error between $u$ and $v$ does not decrease with $R$. 

\begin{theorem}[Lower bound for generic $\GammaIR$]
\label{th:quant2_new}
Suppose that  $\Omegaminus$ is nontrapping, $\Omegaminus \subset B(0,1)$, and there exists $M>1$ such that 
\beqs
B(0,M^{-1}R) \subset \tildedomain \subset B(0,MR).
\eeqs
Assume that $\GammaIR$ is smooth and 
strictly convex and 
(i) $\GammaIinf$ is \emph{not} a sphere centred at the origin, and (ii) the convergence
$\GammaIR/R\rightarrow \GammaIinf$ is in $C^{0,1}$ globally and in $C^{1,\varepsilon}$ (for some $\varepsilon>0$) away from any corners of $\GammaIinf$.

Then there exists $C=C(\Omegaminus, \{\GammaIR\}_{R\in[1,\infty)})>0$ such that for all $R\geq 1$, there exists $k_0=k_0(R, \Omegaminus, \{\GammaIR\}_{R\in[1,\infty)})>0$ such that, for any direction $a$, the solutions
 to~\eqref{eq:BVP} and~\eqref{eq:BVPimp_new}, $u$ and $v$ respectively, satisfy
\beq\label{eq:general_lower_new}
\frac{\Vert u-v\Vert_{L^{2}(\domain)}}{\Vert u\Vert_{L^{2}(\domain)}} \geq C
\qquad\tfa k \geq k_0.
\eeq
\end{theorem}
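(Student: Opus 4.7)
The plan is to prove \eqref{eq:general_lower_new} by contradiction via a semiclassical defect measure argument. Assume the conclusion fails; then there is a sequence $k_n\to\infty$ along which the relative error $\|u_n - v_n\|_{L^2(\domain)}/\|u_n\|_{L^2(\domain)}$ tends to zero. I would first verify that $\|u_n\|_{L^2(\domain)}$ is bounded above and below uniformly in $n$: the upper bound follows from the nontrapping resolvent estimate for the exterior Dirichlet problem, while the lower bound follows from the fact that in the geometric shadow of $\Omegaminus$ with respect to $a$, $u_n$ is asymptotically close to the plane wave $\exp(i k_n x\cdot a)$, whose $L^2$-norm on a fixed region is bounded below. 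Combined with the contradiction hypothesis, this gives $\|u_n - v_n\|_{L^2(\domain)}\to 0$; passing to a subsequence, the semiclassical defect measures $\mu_u$, $\mu_v$ of $\{u_n\}$, $\{v_n\}$ on $T^*\overline{\domain}$ must then coincide.

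The second step is to compare $\mu_u$ and $\mu_v$. By propagation of singularities for the exterior nontrapping Dirichlet problem together with the Sommerfeld radiation condition, $\mu_u$ is supported on purely outgoing broken bicharacteristics launched by the plane-wave Dirichlet data on $\Gsc$; in particular, at each point of $\GammaIR$ it is concentrated on covectors $\xi$ with $\xi\cdot n>0$, and carries no inward mass. By contrast, the impedance boundary condition $\partial_n v - ikv = 0$ is perfectly absorbing only in the normal direction $\xi=n$: for a non-normal outgoing direction with $\mu := \xi\cdot n\in (0,1)$, reflection of a plane wave at an impedance half-space produces an inward wave with coefficient $(\mu-1)/(\mu+1)\neq 0$. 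Consequently, wherever $\mu_v$ carries outgoing mass at a non-normal angle, it must also carry inward mass of comparable size, which propagates into the interior of $\domain$.

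To reach a contradiction, I use the geometric hypothesis. If $\nu(y)\parallel y$ at every smooth point of $\GammaIinf$, then $\GammaIinf$ would be a level set of $|y|$ and hence a sphere centred at the origin, contradicting the assumption; thus there is a nonempty open subset of $\GammaIinf$ on which $\nu\not\parallel y$. By the $C^{1,\varepsilon}$ convergence of $\GammaIR/R$ to $\GammaIinf$ away from corners, for $R$ large there is a corresponding open set $\Sigma\subset\GammaIR$ where $n(x)$ makes a definite nonzero angle with $\hat x := x/|x|$. Since outgoing rays emerging from the bounded obstacle $\Omegaminus$ have direction $\approx \hat x$ at distance of order $R$, and since $\mu_u$ has outgoing mass reaching every point of $\Sigma$ (via the shadow region of the plane-wave scattering together with reflected rays, using nontrapping), $\mu_u$ has non-normal outgoing mass on $\Sigma$. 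By the previous step, $\mu_v$ must then carry inward mass in a neighbourhood of $\Sigma$ inside $\domain$, while $\mu_u$ carries none. This contradicts $\mu_u=\mu_v$ and proves \eqref{eq:general_lower_new}.

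The main obstacle I anticipate is the rigorous microlocal setup of $\mu_u$ and $\mu_v$ near both boundaries, and in particular establishing at the impedance boundary the precise size of the inward component of $\mu_v$ in terms of the reflection coefficient $(\mu-1)/(\mu+1)$, while handling any glancing rays. Secondary issues are: localising away from corners of $\GammaIinf$ (handled by a cut-off, since $\Sigma$ can be chosen in the smooth part of $\GammaIinf$); verifying that the support of $\mu_u$ meets $\Sigma$ uniformly in the incident direction $a\in S^{d-1}$, which follows by combining the shadow contribution with the reflected contributions from $\Gsc$; and producing a constant $C$ depending only on $\Omegaminus$ and the family $\{\GammaIR\}_R$ as in the statement, which requires the microlocal reflection estimates on $\GammaIR$ to be uniform for $R$ large.
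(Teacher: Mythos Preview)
Your approach matches the paper's: both argue by contradiction via defect measures, use that the outgoing solution $u$ carries no mass on the directly-incoming set $\mathcal{I}$, and derive a contradiction by showing that the impedance solution $v$ must carry incoming mass produced by reflection at a non-normally-hit portion of $\GammaIR$.

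The step you label ``secondary'' is in fact the main technical point, and your justification for it has a gap. You need that for \emph{every} incident direction $a$ the measure $\mu_u$ (equivalently $\mu_v$) carries positive outgoing mass onto the set $\Sigma\subset\GammaIR$ where $n(x)\not\parallel x/|x|$. The shadow contribution only produces rays in the single direction $a$, which need not point toward $\Sigma$; and ``reflected contributions from $\Gsc$'' is not enough without an argument that these cover the required direction. The paper resolves this via a dedicated geometric lemma (Lemma~\ref{lem:all_directions}): the reflection map $\mathfrak{R}_a:\Gsc\to \mathcal{S}^{d-1}$ is onto the whole sphere for every $a$, so there always exists $x'_0\in\Gsc$ whose emanating ray is direct and points toward the preselected non-spherical point $x_0^\infty\in\GammaIinf$. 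This is combined with the explicit Dirichlet-trace measure $\nu_d$ (Lemma~\ref{lem:nuD}) to produce a neighbourhood $V_D\subset\Gsc$ of volume bounded below, independent of $a$, whose rays all hit $\GammaIR$ near $\Sigma$ at a uniformly non-normal angle (Lemma~\ref{lem:ray3}). A smaller point: your argument addresses only $R$ large; the paper covers all $R\ge1$ by first proving a qualitative bound $C(R)>0$ depending continuously on $R$ (Theorem~\ref{th:lower}) and then taking $C=\min\big(\inf_{1\le R\le R_0}C(R),\,\widetilde C\big)$, where $\widetilde C$ is the constant from the asymptotic argument for $R\ge R_0$.
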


\bre
We highlight that the constant $C$ in Theorem \ref{th:quant2_new} depends on the family $\{\GammaIR\}_{R\in[1,\infty)}$ (indexed by $R$), but is independent of the variable $R$ itself. This also applies in Theorems \ref{th:uniformestimates}, \ref{th:quant2}, and \ref{th:quant3} below.
\ere

We make four comments:
(i) Even under the more-general boundary condition corresponding to Pad\'e approximation, the lower bound analogous to \eqref{eq:general_lower_new} is still independent of $R$; see Theorem \ref{th:quant2} below.
(ii) The numerical experiments in \S\ref{sec:num} indicate that $k_0$ in Theorem \ref{th:quant2_new} is independent of $R$, and in fact a lower bound holds uniformly in $k$ and $R$; see Tables \ref{tab:square1} and \ref{tab:square2}.
(iii) Under further smoothness assumption on $\GammaIinf$, Theorem \ref{th:quant3} proves an upper bound on the relative error. 
(iv) The reason why the error decreases with $R$ when $\GammaIR= \partial B(0,R)$, but is independent of $R$ for generic $\GammaIR$ is explained 
in the text immediately after the statement of Theorem \ref{th:quant3}.

\subsection{Definitions of the boundary conditions corresponding to Pad\'e approximation of the Dirichlet-to-Neumann map}
\label{sec:setup}

We now consider a more-general truncated problem than \eqref{eq:BVPimp_new}. With $\Omegaminus, \tildedomain,$ and $\domain$ as in \S\ref{sec:impedance}, let $v\in H^{1}(\domain)$ be the solution of
\begin{subequations}\label{eq:BVPimp}
\begin{align}
(\Delta+k^2)v=0\quad&\text{in }\domain,\label{eq:BVPimppde}\\
v=\exp(ikx\cdot a )
\quad&\text{on }\Gsc, \quad\text{ and}\\
\bcN(k^{-1}\partial_n v)- i \bcD(v) =0
\quad&\text{on }\GammaIR.\label{eq:BVPimpbc}
\end{align}
\end{subequations}
where 
$\bcN\in \Psi^{2\NPade}(\GammaI)$, $\bcD\in \Psi^{2\MPade}(\GammaI)$ (i.e.~$\bcN$ and $\bcD$ are semiclassical pseudodifferential operators on $\GammaI$ of order $2\NPade $ and $2\MPade $ respectively) and both have real-valued principal symbols (see \S\ref{sec:appendix} for background material on semiclassical pseudodifferential operators).

While most of our analysis applies to much more general choices of $\bcN$ and $\bcD$, we focus on $\bcN$ and $\bcD$ corresponding to a Pad\'e approximation (up to terms that are lower order both in $k^{-1}$ and differentiation order) of the principal symbol of the Dirichlet-to-Neumann map; this class of $\bcN$ and $\bcD$ 
was introduced in \cite{EnMa:77,EnMa:77a,EnMa:79} in the time-dependent setting.
In the following assumption, $\Diff^{m}$ denotes the set of operators of the form
\beqs
A(x,k^{-1}D)=\sum_{j=0}^{m}a_j(x)\big(k^{-1}D\big)^j,
\eeqs
with $a_j \in C^\infty$, $D=-i\partial$,
Furthermore, we use Fermi normal coordinates $x=(x_1,x'), \xi=(\xi_1,\xi')$, with $\GammaI = \{x_1=0\}$, $x_1$ the signed distance to $\GammaIR$, $\partial_{x'}$, and $\partial_{x_1}$ orthogonal. We also let $r(x',\xi')$ denote the symbol of one plus the tangential Laplacian on $\GammaI$, i.e.
\beq\label{eq:r}
r(x',\xi'):=1 - |\xi'|_g^2
\eeq
where $|\cdot|_g$ is the norm induced on the co-tangent space (i.e.~the space of the Fourier variables $\xi'$ corresponding to the tangential variables $x'$) of $\GammaI$ from $\mathbb{R}^d$; see \S\ref{subsec:geo} for more details.

Let the coefficients  $(p_{\MPade,\NPade}^j)_{j=0}^{\MPade}$ and
$(q_{\MPade,\NPade}^j)_{j=1}^{\NPade}$ be defined so that
 $p(t)/q(t)$ is the Pad\'e approximant of of type $[\MPade,\NPade]$ at $t=0$ to $\sqrt{1-t}$, where
\beq\label{eq:pq}
p(t)=\sum_{j=0}^{\MPade} p_{\MPade,\NPade}^j t^j\quad \tand \quad q(t)= \sum_{j=0}^{\NPade} q_{\MPade,\NPade}^j t^j
\eeq
with $q_{\MPade,\NPade}^0=1$ and $p_{\MPade,\NPade}^{\MPade}, q_{\MPade,\NPade}^{\NPade}\neq 0$. This definition implies that
\beq\label{eq:Padeorder}
\sqrt{1-t} - \left(\sum_{j=0}^{\MPade} p_{\MPade,\NPade}^j t^j\right)\left(1+ \sum_{j=1}^{\NPade} q_{\MPade,\NPade}^j t^j\right)^{-1}= O\big( t^{\morderzero}\big)\quad\tas t\tendo
\eeq
where 
\beqs
\morderzero \geq \MPade+\NPade+1.
\eeqs

\begin{assumption}[Boundary condition corresponding to Pad\'e approximation]\label{ass:Pade}
We assume that
\beqs
\bcD
- \PadeP\big(x',k^{-1}D_{x'}\big) \in k^{-1}\Diff^{2\MPade -1},\qquad
\bcN
 - \PadeQ\big(x',k^{-1}D_{x'}\big)  \in k^{-1}\Diff^{2\NPade -1},
\eeqs
where 
\beqs
\PadeP(x',\xi') := \sum_{j=0}^{\MPade} p_{\MPade,\NPade}^j\big(1-r(x',\xi')\big)^j
\,\,\tand\,\,
 \PadeQ(x',\xi') := 1+\sum_{j=1}^{\NPade} q_{\MPade,\NPade}^j \big(1-r(x',\xi')\big)^j.
\eeqs
\end{assumption}

By \eqref{eq:r}, $\PadeP$ and $\PadeQ$ involve powers of $|\xi'|^2_g$. Since $|\xi'|_g^2$ is a quadratic form in the variables $\xi'$, the boundary condition \eqref{eq:BVPimpbc} involves differential operators, and is thus local.

Recall that the rationale behind these particular $\bcD$ and $\bcN$ consists of the following three points.

(i) The ideal condition to impose on $\GammaI$ is that the Neumann trace, $\partial_n v$, equals the Dirichlet-to-Neumann map for the exterior of $\tildedomain$ under the Sommerfeld radiation condition~\eqref{eq:src} applied to the Dirichlet trace, $v$ (see \S\ref{sec:applied2} and the references therein).

(ii) When $\tildedomain$ is strictly convex,
 the principal symbol of this Dirichlet-to-Neumann map (as a semiclassical pseudodifferential operator), away from glancing rays, i.e.~rays that are tangent to the boundary, equals $\sqrt{r(x',\xi')}=\sqrt{1 - |\xi'|_g^2}$; see Remark \ref{rem:applied1} for more details.

(iii) The definitions of $p(t)$ and $q(t)$ \eqref{eq:pq} imply that if $\bcD$ and $\bcN$ satisfy Assumption \ref{ass:Pade}, then the boundary condition \eqref{eq:BVPimpbc} corresponds to approximating 
$\sqrt{1 - |\xi'|_g^2}$ 
by the Pad\'e approximant of type $[\MPade,\NPade]$ at $|\xi'|^2_g=0$, i.e. at rays that are normal to the boundary.

The polynomials $p(t)$ and $q(t)$ are constructed based on their properties at $t=0$. However, the quantity $q(t) \sqrt{1-t}- p(t)$ can have other zeros in $t\in (0,1]$, which corresponds to the boundary condition \eqref{eq:BVPimpbc} not reflecting certain non-normal rays. 
We record here notation used later in the paper for these other zeros. Given $\MPade,\NPade$, let $\{t_j\}_{j=1}^{\mvanish}$ be the zeros of $q(t) \sqrt{1-t}- p(t)$ in $t\in (0,1]$ where $p(t)$ and $q(t)$ are defined by \eqref{eq:pq}. 
Then  $\mvanish<\infty$ since $q(t) \sqrt{1-t}- p(t)$ is analytic on $(-1,1)$, continuous at $1$, and $p(1)\neq 0$ (see Lemma \ref{lem:TH} below). Let $\mmult$ be the highest multiplicity of the zeros $\{t_j\}_{j=1}^{\mvanish}$. 

When $\bcN=\bcD=I$, \eqref{eq:BVPimpbc} is the impedance boundary condition 
\beq\label{eq:impedance1}
\partial_n v - i k v=0,
\eeq
and is covered by Assumption \ref{ass:Pade} with $\MPade=\NPade=0$, i.e. $p(t)=q(t)=1$. In this case, $\mvanish=0$, since $\sqrt{1-t}-1$ has no zeros for $t\in (0,1]$.

\subsection{Wellposedness of the truncated problem and $k$-explicit bound on its solution}\label{sec:wellposed}

\begin{theorem}\label{th:uniformestimates}
Let $\Omegaminus\Subset B(0,1)$ be a non-trapping obstacle, $M>0$, $\tildedomain\subset B(0,MR)$ be convex 
with smooth boundary $\GammaIR$ that is nowhere flat to infinite order 
and such that 
$\GammaIR/R\to \GammaIinf$ in $C^\infty$. 
Let $\bcN$ and $\bcD$ satisfy Assumption \ref{ass:Pade} with \emph{either} $\MPade=\NPade$ \emph{or} $\MPade=\NPade+1$.

There exists $C>0$ such that given $R\geq 1$, there exists $k_0=k_0(R)>0$ such that, 
given $f \in L^2(\domain)$, $g_D \in H^1(\Gamma_D)$, and $g_I\in L^2(\GammaI)$, 
if $k\geq k_0$, then 
the solution $v\in H^1(\domain)$ of
\begin{subequations}\label{eq:BVPimpdata}
\begin{align}
(\Delta+k^2)v=f\quad&\text{in }\domain,\label{eq:BVPimpdata2}\\
v=g_D\quad&\text{on }\Gsc, \quad\text{ and}\\
\bcN(k^{-1}\partial_n v) -i \bcD
(v) =g_I
\quad&\text{on }\GammaI
\end{align}
\end{subequations}
 exists, is unique, and satisfies 
\begin{align}\nonumber
&\N{\nabla v}_{L^2(\domain)} + k \N{v}_{L^2(\domain)} \\
&\qquad\leq C \Big(
R\N{f}_{L^2(\domain)} + R^{1/2}\big( \N{\nabla_{\Gamma_D}g_D}_{L^2(\Gamma_D)} + k \N{g_D}_{L^2(\Gamma_D)}\big) + R^{1/2} k\N{g_I}_{L^2(\GammaIR)}
\Big).
\label{eq:GenImpResolvent}
\end{align}
\end{theorem}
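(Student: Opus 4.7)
The plan is to reduce the theorem to establishing the a priori estimate \eqref{eq:GenImpResolvent}, after which existence follows by standard Fredholm theory: the boundary operator $\bcN(k^{-1}\partial_n)-i\bcD$ is elliptic in the Lopatinski\u{\i} sense away from the glancing set, so the truncated BVP fits into the classical elliptic framework once uniqueness is in hand. To prove the a priori bound I would argue by contradiction using a semiclassical defect measure: suppose the estimate fails at some fixed $R$, extract a sequence $k_n\to\infty$ together with $L^2$-normalised solutions $v_n$ of the homogeneous problem whose data tends to zero, and show that the associated defect measure $\mu$ on the compressed cotangent bundle of $\overline{\domain}$ must vanish identically, contradicting the normalisation.

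The core microlocal input is the analysis of the Pad\'e boundary condition at $\GammaIR$. On the hyperbolic region $\{r>0\}\subset T^{*}\GammaIR$ the operator $\bcN$ is microlocally elliptic wherever $\PadeQ\neq 0$, so one can invert it and rewrite the boundary condition in the effective impedance form $k^{-1}\partial_n v = i\mathcal{A}\,v+\text{l.o.t.}$, with principal symbol of $\mathcal{A}$ equal to $\PadeP/\PadeQ$---a rational approximant of the Dirichlet-to-Neumann symbol $\sqrt{r(x',\xi')}$. The parity hypothesis $\MPade=\NPade$ or $\MPade=\NPade+1$ is precisely what forces the imaginary part of this effective impedance to have the correct sign, so that mass is strictly lost at every transversal reflection except possibly at the exceptional tangential momenta $\{t_j\}_{j=1}^{\mvanish}$ where $q(t)\sqrt{1-t}-p(t)$ vanishes. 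Since these exceptional values are discrete in $r$, they cut out a codimension-one subset of $T^{*}\GammaIR$, and hence a $\mu$-null set.

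Granted this absorption property, the argument is a standard Melrose--Sj\"ostrand propagation story: $\mu$ is supported on the characteristic set, invariant under generalised bicharacteristic flow in $\domain$, reflects at $\Gsc$ (Dirichlet), and loses mass at each transversal hit of $\GammaIR$ outside the exceptional set. The convexity of $\tildedomain$ rules out rays re-entering from $\GammaIR$, while the assumption that $\GammaIR$ is nowhere flat to infinite order ensures that the glancing set of $\GammaIR$ carries no mass (diffraction rather than clinging). Combined with non-trapping of $\Omegaminus$, every bicharacteristic in $\supp\mu$ strikes $\GammaIR$ transversally at a non-exceptional angle in uniformly bounded time; iterating forces $\mu\equiv 0$, giving the required contradiction.

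For the explicit $R$-dependence in \eqref{eq:GenImpResolvent}, I would then rescale $y=x/R$ to transform the problem onto a unit-scale domain with effective wavenumber $\tilde{k}=kR$. The Pad\'e boundary operators preserve their principal form under this dilation (modulo lower-order corrections in $(kR)^{-1}$), so the rescaled problem inherits a bound of shape $\|\nabla v\|+\tilde{k}\|v\|\lesssim \|f\|+\|\nabla_{\Gsc}g_D\|+\tilde{k}\|g_D\|+\tilde{k}\|g_I\|$, and unwinding the dilation produces the stated powers $R$ and $R^{1/2}$ from the respective volume and surface Jacobians. The main obstacle is the boundary analysis at $\GammaIR$: verifying that the interaction between the geometric hypotheses (convexity, nowhere infinitely flat, non-trapping of $\Omegaminus$) and the algebraic parity condition $\MPade\in\{\NPade,\NPade+1\}$ simultaneously excludes both glancing accumulation on $\GammaIR$ and trapped orbits that reflect only at the exceptional angles $\{t_j\}$ shared between $\GammaIR$ and $\Gsc$---it is this combined geometric/algebraic compatibility, rather than either ingredient alone, that makes the defect measure vanish.
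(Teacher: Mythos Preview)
Your broad architecture---Fredholm reduction to an a priori bound, contradiction via defect measures, propagation plus absorption at $\GammaIR$, and rescaling for the $R$-dependence---matches the paper's proof (Theorem \ref{th:uniformestimates2} and Lemma \ref{lem:contradictMe}). But two points are off, and one of them is a genuine gap.

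First, a confusion that is harmless but indicates a misreading: the exceptional values $\{t_j\}$ where $q(t)\sqrt{1-t}-p(t)=0$ are \emph{not} a threat to absorption. At those angles the reflection coefficient $\alpharef$ in \eqref{eq:reflectioncoefficient} is \emph{zero}, so all incident mass is absorbed. The $t_j$ matter for the \emph{lower} bounds in Theorems \ref{th:lower}--\ref{th:local_square}, where one needs rays that are \emph{partially} reflected; they play no role in wellposedness. What the parity hypothesis $\MPade\in\{\NPade,\NPade+1\}$ actually delivers, via Trefethen--Halpern (Lemma \ref{lem:TH} and Corollary \ref{cor:TH}), is that $p(t)q(t)>0$ on $[0,1]$, i.e.\ $\sigma(\bcN)\sigma(\bcD)>0$ on $\overline{B^*\GammaIR}$. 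This makes the denominator $\sqrt{r}\,\sigma(\bcN)+\sigma(\bcD)$ nonvanishing and forces $\alpharef<1$ strictly on the open hyperbolic set.

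The real gap is your treatment of glancing. You assert that ``nowhere flat to infinite order ensures that the glancing set of $\GammaIR$ carries no mass''; this is false. Convexity gives $H_p^2 x_1\le 0$ on $\mathcal{G}$, which is exactly where $\mu^{\partial}$ can be supported (Lemma \ref{l:glancing}). Worse, near glancing $\alpharef\to 1$ as $r\to 0^+$, so individual reflections lose arbitrarily little mass, and your claim that ``every bicharacteristic strikes $\GammaIR$ transversally at a non-exceptional angle in uniformly bounded time'' is not enough. The paper's Lemma \ref{lem:contradictMe} handles this in two parts. At exact glancing, Theorem \ref{t:propagate} gives a damping term $\tfrac12(\Re\dot{n}^j)H_p^2x_1$ with $\Re\dot{n}^j=-\sigma(\bcN)/\sigma(\bcD)<0$, and one uses that the time-integral of $-H_p^2x_1$ along a glancing orbit is bounded below (by convexity and $\tildedomain\subset B(0,MR)$) to get exponential gain backward in time. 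Away from glancing, the argument is a quantitative accumulation estimate: writing $\log\alpharef(\rho)=-4(\sigma(\bcN)/\sigma(\bcD))\sqrt{r(\rho)}+O(r)$, one shows that if the cumulative loss $\sum_j\log\alpharef(\beta^{-j}\rho)$ over successive billiard reflections stayed above $-c_0$, then the total angular turning $\sum_j\Delta(\beta^{-j}\rho)$ would be small, forcing the trajectory to be nearly straight and hence to exit $B(0,MR)$---a contradiction. This is the missing idea in your sketch: you need a mechanism that converts many small near-glancing losses into a definite total loss over time $\sim R$, and the ``nowhere flat'' hypothesis is used only to make Theorem \ref{t:propagate} (propagation through higher-order tangency) available, not to rule out glancing mass.
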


\noindent Note that results analogous to the wellposedness statement in Theorem~\ref{th:uniformestimates}  
in the time domain are given in \cite[Theorem 4]{TrHa:86}, \cite[Theorem 1]{EnMa:79} for problems where the spatial domain is a half-plane.

Because of the importance of the truncated problem in numerical analysis, proving bounds analogous to \eqref{eq:GenImpResolvent} when $v$ satisfies the impedance boundary condition 
\beq\label{eq:impedance}
\partial_n v- i kv=g_I \quad\ton \GammaIR
\eeq
has been the subject of many investigations in the literature, including 
\cite[\S8.1]{Me:95},  \cite{CuFe:06,He:07,BaYuZh:12,LiMaSu:13}, \cite[Remark 4.7]{MoSp:14}, \cite[\S2.1]{Ch:15},  \cite{BaYu:16,BaSpWu:16}, \cite[Appendix B]{ChNi:18}, \cite{SaTo:18}, \cite[Appendix A]{GrPeSp:18}, \cite{GrSa:18}. Indeed,
the bound \eqref{eq:GenImpResolvent} under the boundary condition \eqref{eq:impedance} and various assumptions on $\Omegaminus$ and $\tildedomain$
(often for star-shaped $\Omegaminus$ and $\tildedomain$ and sometimes with $\Omegaminus=\emptyset$) 
in \cite[Proposition 8.1.4]{Me:95},  \cite[Theorem 1]{CuFe:06}, \cite[Proposition 3.3]{He:07}, 
\cite[Theorem 1.8]{BaSpWu:16}, \cite[\S2.1.5]{Ch:15}, \cite[Theorem 22]{SaTo:18}, \cite[\S A.2]{GrPeSp:18}, \cite[Theorems 3.2 and 5.10]{GrSa:18} (with the last four references considering the variable-coefficient Helmholtz equation). 

To our knowledge, the  bound \eqref{eq:GenImpResolvent}, however, is the first $k$-explicit bound for a truncated Helmholtz problem where a local absorbing boundary condition is posed other than the impedance boundary condition \eqref{eq:impedance}.

\subsection{Bounds on the relative error in $\domain$}\label{subsec:error1}

All the results in this section proved under the assumption that $\bcN$ and $\bcD$ satisfy Assumption \ref{ass:1} with \emph{either} $\MPade=\NPade$ \emph{or} $\MPade=\NPade+1$, so the the truncated problem is wellposed by Theorem \ref{th:uniformestimates}.

\begin{theorem}[Lower bound for general strictly-convex $\GammaI$]
\label{th:lower}
If  $\Omegaminus$ is nontrapping and $\GammaI $ is strictly convex, then there exists $C=C(\domain, \MPade,\NPade)>0$ that depends continuously on $R$ and $k_0 = k_0(R, \domain, \MPade,\NPade)>0$, such that, for any direction $a$,
\beqs
\frac{\Vert u-v\Vert_{L^{2}(\domain)}}{\Vert u\Vert_{L^{2}(\domain)}}\geq C\qquad\tfa k\geq k_0.
\eeqs
\end{theorem}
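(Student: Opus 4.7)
The natural approach is a contradiction argument using semiclassical defect measures, exploiting the fact that $u$ is purely outgoing at $\GammaIR$ while the Pad\'e boundary condition imposed on $v$ reflects a nontrivial amount of outgoing energy back into $\domain$. Suppose the conclusion fails: there exist $R\ge 1$ and sequences $k_n\to\infty$, $a_n\to a_\star\in S^{d-1}$ such that the corresponding solutions $u_n, v_n$ satisfy $\N{u_n-v_n}_{L^2(\domain)}/\N{u_n}_{L^2(\domain)}\to 0$. Nontrapping bounds on the exterior scattering problem, combined with the identity $u=\exp(ikx\cdot a)-u_{\mathrm{tot}}$ and the fact that $\N{u_{\mathrm{tot}}}_{L^2(\domain)}$ is bounded uniformly in $k$, yield an $R$-dependent lower bound $c(R)>0$ on $\N{u_n}_{L^2(\domain)}$ (for small $R$ one normalises by $\N{u_n}$ instead). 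Theorem~\ref{th:uniformestimates} bounds $\N{v_n}_{L^2(\domain)}$ uniformly in $n$. Hence $w_n:=u_n-v_n\to 0$ strongly in $L^2(\domain)$.

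Passing to subsequences, extract semiclassical defect measures $\mu_u,\mu_v$ of $(u_n),(v_n)$ on $T^*\domain$. Strong convergence $w_n\to 0$ forces $\mu_u=\mu_v=:\mu$. The measure $\mu$ is supported on the characteristic set $\{|\xi|_g=1\}$, is invariant under the Dirichlet billiard flow in the interior of $\domain$, and has a common boundary trace at $\Gsc$ coming from the shared Dirichlet data. At $\GammaIR$, however, the two sequences impose incompatible conditions on $\mu$. From $u_n$: its extension to $\mathbb{R}^d\setminus\overline{\Omegaminus}$ satisfies~\eqref{eq:src}; nontrapping and propagation of singularities out to infinity then imply that $\mu$ carries no incoming mass across any hyperbolic point of $T^*\GammaIR$. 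From $v_n$: microlocal analysis of the Pad\'e boundary condition~\eqref{eq:BVPimpbc} under Assumption~\ref{ass:Pade} yields, at each hyperbolic point $(x',\xi')\in T^*\GammaIR$, the reflection law $\mu_{\mathrm{in}}(x',\xi')=|R_P(x',\xi')|^2\mu_{\mathrm{out}}(x',\xi')$ with
\beqs
R_P(x',\xi'):=\frac{\PadeQ(x',\xi')\sqrt{r(x',\xi')}-\PadeP(x',\xi')}{\PadeQ(x',\xi')\sqrt{r(x',\xi')}+\PadeP(x',\xi')}.
\eeqs
Combining these two conditions forces $\mu_{\mathrm{out}}\equiv 0$ on the open set $\{R_P\neq 0\}\subset T^*\GammaIR$.

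To reach a contradiction we exhibit positive outgoing mass on $\{R_P\neq 0\}$. By~\eqref{eq:Padeorder} and the definition of $\{t_j\}_{j=1}^{\mvanish}$, the principal symbol $\PadeQ\sqrt{r}-\PadeP$ vanishes only on the codimension-one subset $\{r\in\{1\}\cup\{1-t_j\}_{j=1}^{\mvanish}\}$, so $R_P$ is nonzero on an open dense subset of the hyperbolic region. On the other hand, the semiclassical defect measure of the incident plane wave $\exp(ik_n x\cdot a_n)$ is $\delta_{\xi=a_\star}\otimes dx$; convexity of $\GammaIR$ implies that its outgoing trace on $T^*\GammaIR$ is supported on an open subset of positive measure (half the boundary, modulo the glancing set), and the nontrapping bound on $u_{\mathrm{tot},n}$ ensures that this outgoing mass persists in $\mu$. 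Since the support of this mass cannot lie in the codimension-one zero set of $R_P$, we obtain the required contradiction. Continuous dependence of the resulting constant $C$ on $R$ follows from continuity of all geometric data (the domain, the principal symbols $\PadeP,\PadeQ$, and the nontrapping resolvent bounds) in $R$; the $R$-dependence of $k_0$ reflects the need to take $k$ large compared with the geometry of $\GammaIR$ before the semiclassical extraction of $\mu$ becomes meaningful.

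The main obstacle is the derivation of the reflection law for $\mu_v$ at $\GammaIR$ and the matching outgoing/no-return condition for $\mu_u$: this requires a careful microlocal analysis of the pseudodifferential boundary condition~\eqref{eq:BVPimpbc} at hyperbolic points, correct handling of the lower-order symbolic remainders allowed by Assumption~\ref{ass:Pade}, and an argument ruling out the concentration of $\mu$ in the glancing set $\{r=0\}$, which is the only region where the reflection coefficient formula above degenerates.
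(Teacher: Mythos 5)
Your overall strategy is the paper's: extract defect measures, use the outgoing property of $u$ to kill the mass returning from $\GammaIR$, use the reflection law $\muout=\alpharef\muin$ on the hyperbolic set of $\GammaIR$ (your $R_P$ is exactly $\sqrt{\alpharef}$), and feed the argument with the mass created by the plane-wave Dirichlet data on $\Gsc$ (Lemma \ref{lem:nuD} together with Lemma \ref{lem:key_Miller}). Working with $\mu_u=\mu_v$ rather than reducing to a lower bound on $\mu_v(\mathcal I)$ as in Corollary \ref{cor:reduction} is a cosmetic difference.

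However, there is a genuine gap at the decisive step. You claim that the outgoing trace of $\mu$ on $T^*\GammaIR$ is ``supported on an open subset of positive measure'' and that ``the support of this mass cannot lie in the codimension-one zero set of $R_P$.'' Neither assertion is justified, and the second is not a valid argument: the measures involved are typically singular. The boundary trace measure produced by the plane wave on $\Gsc$ is $\dvol_{\Gamma_D}\otimes\delta_{\xi'=(a_T)^\flat}$, and its push-forward along the billiard flow to $T^*\GammaIR$ is supported on a $(d-1)$-dimensional subset of the $2(d-1)$-dimensional space $T^*\GammaIR$; a priori this subset could be entirely contained in the set of angles where $R_P$ vanishes (the normal direction together with the finitely many angles $\psi_j=\sin^{-1}\sqrt{t_j}$ coming from the zeros of $q(t)\sqrt{1-t}-p(t)$), and dimension counting cannot exclude this. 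What is needed --- and what the paper supplies in Lemma \ref{lem:ray1}, which is the real content of the proof --- is an explicit construction of a subset $V_D\subset\Gamma_D$ of positive surface measure such that the rays emanating from $V_D$ (i) hit $\GammaIR$ \emph{directly}, i.e.\ before returning to $\Gamma_D$, and (ii) do so at angles bounded away from $0$ and from each $\psi_j$; this uses the convexity of $\GammaIR$ and a quantitative exclusion of neighbourhoods of the bad angles while keeping $\vol(V_D)$ bounded below. Without this, the contradiction does not close. (Your closing remark about ruling out concentration at glancing identifies only part of the problem; concentration at the nonzero angles $\psi_j$ must be ruled out as well, and one must also verify that the rays reach $\GammaIR$ without first re-reflecting off $\Gamma_D$.) A secondary inaccuracy: for the step $u_n-v_n\to 0$ in $L^2$ you need an \emph{upper} bound on $\|u_n\|_{L^2(\domain)}$ (Lemma \ref{lem:mass_u}), not the lower bound you invoke.
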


The following three results prove bounds on the relative error that are explicit in $R$. 
Theorem \ref{th:quant} considers the case $\GammaIR = \partial B (0,R)$, and Theorems \ref{th:quant2} and \ref{th:quant3} consider the case when $\GammaIR/R$ tends to a limiting object that is not a sphere.

\begin{theorem}[Quantitative lower and upper bounds when $\GammaI = \partial B (0,R)$]
\label{th:quant}
Suppose that  $\Omegaminus$ is nontrapping, $\Omegaminus \subset B(0,1)$, and $\GammaIR = \partial B (0,R)$ with $R \geq 1$. Then, there exists $C_j=C_j(\Omegaminus, \MPade,\NPade )>0, j=1,2,$ such that for any $R\geq1 $, there exists $k_0(R, \Omegaminus, \MPade,\NPade)>0$ such that, for any direction $a$, 
\beq\label{eq:mainbound}
\frac{C_1}{R^{2\morderzero}}\leq 
\frac{\Vert u-v\Vert_{L^{2}(\domain)}}{\Vert u\Vert_{L^{2}(\domain)}}\leq \frac{C_2}{R^{2\morderzero}},\qquad\tfa k \geq k_0.
\eeq
\end{theorem}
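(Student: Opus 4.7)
Set $w := u - v$. Since $u$ and $v$ share Dirichlet data on $\Gsc$ and both satisfy the Helmholtz equation, $w$ solves~\eqref{eq:BVPimpdata} with $f = 0$, $g_D = 0$, and boundary data
\[
g_I := -\bcN(k^{-1}\partial_n u) + i\,\bcD(u)\quad\ton\GammaIR,
\]
i.e.\ $g_I$ is the defect of the exact solution $u$ in the Padé boundary condition. Theorem~\ref{th:uniformestimates} applied to $w$ gives $\N{w}_{L^2(\domain)} \leq C R^{1/2}\N{g_I}_{L^2(\GammaIR)}$, while a Wilcox-type asymptotic expansion of the outgoing scattered field (using that $\Omegaminus$ is nontrapping) yields $\N{u}_{L^2(\domain)} \asymp R^{1/2}\Vert u_\infty\Vert_{L^2(S^{d-1})}$ and $\N{u}_{L^2(\partial B(0,R))} \asymp \Vert u_\infty\Vert_{L^2(S^{d-1})}$ for $k$ large, where $u_\infty$ is the far-field pattern. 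The $u_\infty$-factors will cancel in the final ratio, so both halves of~\eqref{eq:mainbound} reduce to showing the operator-norm estimate $\N{g_I}_{L^2(\GammaIR)}/\Vert u_\infty\Vert \asymp R^{-2\morderzero}$.

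\textbf{Microlocal analysis and upper bound.} Since $\Omegaminus \subset B(0,1)$ is nontrapping and $u$ is outgoing, rays emerging from the obstacle hit $\partial B(0,R)$ at angles at most $\sim 1/R$ from the normal. Propagation of singularities (or the explicit free outgoing resolvent kernel) therefore gives
\[
\WFh\!\bigl(u|_{\partial B(0,R)}\bigr) \subset \bigl\{(x',\xi') : 1-r(x',\xi') = |\xi'|_g^2 \leq C R^{-2}\bigr\}.
\]
On the convex sphere $\partial B(0,R)$ the exterior outgoing Dirichlet-to-Neumann map is a semiclassical pseudodifferential operator of principal symbol $i\sqrt{r(x',\xi')}$, up to a subprincipal curvature correction of order $R^{-1}$ (see Remark~\ref{rem:applied1}). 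Combined with Assumption~\ref{ass:Pade} and the Padé identity~\eqref{eq:Padeorder} we obtain
\[
g_I = i\,\mathrm{Op}_k\!\bigl(\PadeQ(1-r)\sqrt{r} - \PadeP(1-r)\bigr)\,u + \text{l.o.t.\ in $k^{-1}$ and $R^{-1}$},
\]
whose symbol vanishes to order $\morderzero$ in $(1-r)$ at $r=1$. Together with the microlocal restriction above and $\N{u}_{L^2(\partial B(0,R))} = O(\Vert u_\infty\Vert)$, this yields $\N{g_I}_{L^2(\GammaIR)} \leq C R^{-2\morderzero}\Vert u_\infty\Vert$, and the upper half of~\eqref{eq:mainbound} follows from Theorem~\ref{th:uniformestimates} together with the normalisation in Step~1.

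\textbf{Lower bound.} Expand $u$ on $\partial B(0,R)$ in spherical harmonics $\{Y_\ell\}$: the modes with $\ell \sim k$ correspond exactly to $|\xi'|_g \sim R^{-1}$, the scale at which $|\PadeQ(1-r)\sqrt{r} - \PadeP(1-r)|$ is bounded below by $c R^{-2\morderzero}$ (using $p(1) \neq 0$ from Lemma~\ref{lem:TH} to rule out over-vanishing). Separation of variables with outgoing Hankel functions gives the exact mode-by-mode DtN eigenvalue and confirms that the leading $(1-r)^{\morderzero}$-coefficient of the Padé defect is nonzero on these modes. For plane-wave incidence on a nontrapping obstacle, the high-frequency far-field pattern has a forward-scattering lobe of angular width $\sim 1/k$ and amplitude $\sim k^{(d-1)/2}$, so its spherical-harmonic coefficients carry $L^2$-mass bounded below on the window $\ell \sim k$. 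This gives $\N{g_I}_{L^2(\GammaIR)} \geq c R^{-2\morderzero}\Vert u_\infty\Vert$; a duality argument---pairing $w$ against a microlocally-tailored solution of the adjoint truncated problem (constructed using Theorem~\ref{th:uniformestimates})---then transfers this to the matching lower bound on $\N{w}_{L^2(\domain)}$.

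\textbf{Main obstacle.} The delicate point is the lower bound on $\N{g_I}$: ruling out accidental cancellations between the modes of $u|_{\partial B(0,R)}$ at the critical angular scale $|\xi'|_g \sim R^{-1}$. This requires a quantitative non-degeneracy of the plane-wave far-field pattern in the mode window $\ell \sim k$, combined with explicit Hankel-function computation of the Padé defect on a single mode. The upper bound, by contrast, follows routinely from standard semiclassical pseudodifferential calculus, propagation of singularities, and Theorem~\ref{th:uniformestimates}.
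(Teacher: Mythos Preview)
Your upper bound is essentially the paper's argument: apply Theorem~\ref{th:uniformestimates} to $w=u-v$, then bound $\|g_I\|_{L^2(\GammaIR)}$ microlocally using that rays from $\Omegaminus$ meet $\partial B(0,R)$ at angles $O(R^{-1})$ from normal, so the Pad\'e defect symbol is $O(R^{-2\morderzero})$. The paper packages this as Lemma~\ref{lem:impedancetrace} (giving $\|g_I\|\le C\Upsilon(R)R^{-1/2}\|u\|_{L^2(\domain)}$) together with Lemma~\ref{lem:impedancetrace2}(i) (giving $\Upsilon(R)\le CR^{-2\morderzero}$), which yields the upper half of~\eqref{eq:mainbound} directly without passing through $\|u_\infty\|$. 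Your detour via the far-field pattern is unnecessary and would require extra care to make the two-sided estimates $\|u\|_{L^2(\domain)}\asymp R^{1/2}\|u_\infty\|$ uniform in $k$.

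Your lower bound has a genuine gap. A lower bound on $\|g_I\|_{L^2(\GammaIR)}$ does \emph{not} by itself give a lower bound on $\|w\|_{L^2(\domain)}$: Theorem~\ref{th:uniformestimates} bounds the solution operator $g_I\mapsto w$ from above, not from below, and the ``duality argument'' you invoke is precisely the missing step. To make it work you would need to construct an adjoint solution $\phi$ whose trace picks out the part of $g_I$ at scale $|\xi'|_g\sim R^{-1}$, while controlling $\|\phi\|_{L^2(\domain)}$ with the correct $R$-dependence; this is not routine and you have not indicated how to do it. The claimed lower bound on the spherical-harmonic mass of $u|_{\partial B(0,R)}$ in the window $\ell\sim k$, uniformly over all incident directions $a$, is also not justified (plausibility via the shadow lobe is not a proof).

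The paper's lower bound proceeds along an entirely different route, via semiclassical defect measures. The key observation is that $u$, being outgoing, has no defect-measure mass in the directly-incoming set $\mathcal{I}$ (Lemma~\ref{lem:null_incom}), whereas $v$ \emph{does}: the Pad\'e boundary condition reflects outgoing rays back into $\domain$ with reflection coefficient $\alpharef$ (Corollary~\ref{cor:reflection}), and for rays hitting $\partial B(0,R)$ at angle $\sim R^{-1}$ this coefficient is $\sim R^{-4\morderzero}$ (Lemma~\ref{lem:reflectionbound1}). One then constructs a tube of rays emanating from $\Gamma_D$ (Lemma~\ref{lem:ray2}), propagates their mass to $\GammaIR$ and back using Corollary~\ref{cor:explain}, and obtains $\mu(\mathcal{I})\gtrsim R^{1-4\morderzero}$ (Lemma~\ref{lem:quant_mes}). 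Combined with $\|u\|_{L^2(\domain)}\lesssim R^{1/2}$ from Lemma~\ref{lem:mass_u}, this gives the lower half of~\eqref{eq:mainbound} via~\eqref{eq:reduction_to_lower}. The defect-measure machinery replaces your unproven duality step with a direct comparison of the phase-space supports of $u$ and $v$.
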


The reason that $k_0$ 
in Theorem \ref{th:quant} depends on $R$ is because of the difference between the limits $k\tendi$ with $R$ fixed and $R\tendi$ with $k$ fixed. To illustrate this difference, consider the boundary conditions
\beq\label{eq:twobcs}
(\partial_n - i k) v= 0 \quad\tand \quad \left(\partial_n - i k+ \frac{d-1}{2r}\right) v= 0. 
\eeq
Both satisfy Assumption \ref{ass:Pade} with $\MPade=\NPade=0$, with, respectively $\bcN=1$, $\bcD=1$ and $\bcN=1, \bcD=1-k^{-1}i(d-1)(2r)^{-1}$.
Therefore, in both cases the error $\|u-v\|_{L^2(\domain)}/\|u\|_{L^2(\domain)} \sim R^{-2}$ for fixed $R$ as $k\tendi$ by Theorem \ref{th:quant}. However, for fixed $k$ as $r:=|x|\tendi$,
\beq\label{eq:error1}
(\partial_n - i k)(u-v)(x)= (\partial_n - i k)u(x) = O(r^{-(d+1)/2})_{L^\infty}=O(r^{-1})_{L^2(\partial B(0,r))}
\eeq
and
\begin{align}
\left(\partial_n - i k+ \frac{d-1}{2r}\right)(u-v)(x)= \left(\partial_n - i k+ \frac{d-1}{2r}\right)u(x) &= O(r^{-(d+3)/2})_{L^\infty}
=O(r^{-2})_{L^2(\partial B(0,r))}.
\label{eq:error2}
\end{align}
The fact that the right-hand sides of \eqref{eq:error1} and \eqref{eq:error2} are different shows that, while
the behaviour of $u-v$ for the two boundary conditions in \eqref{eq:twobcs} is the same as $k\tendi$ with $R$ fixed by Theorem \ref{th:quant}, the behaviour as $R\tendi$ with $k$ fixed is different. 
We expect that the bounds in this paper -- for the limit $k\tendi$ with $R$ fixed -- in fact hold uniformly when $R\ll k^\gamma$ for some $\gamma<1$ (i.e., when the large parameter $R$ is smaller than the large parameter $k$).

When the limiting object  $\GammaIinf$ is not a sphere, 
the lower and upper bounds are given separately in Theorems \ref{th:quant2} and \ref{th:quant3}, respectively. This is because 
the lower bound allows the limiting object to, e.g., have corners, whereas the upper bound requires the limiting object to be smooth.

\begin{theorem}[Quantitative lower bound for generic $\GammaIR$]
\label{th:quant2}
Suppose that  $\Omegaminus$ is nontrapping, $\Omegaminus \subset B(0,1)$, and there exists $M>0$ such that 
\beq\label{eq:OmegaR}
B(0,M^{-1}R) \subset \tildedomain \subset B(0,MR).
\eeq
Assume that $\GammaIR$ is smooth and strictly convex and such that (i) $\GammaIinf$ is \emph{not} a sphere centred at the origin, and (ii) the convergence
$\GammaIR/R\rightarrow \GammaIinf$ is in $C^{0,1}$ globally and in $C^{1,\varepsilon}$ (for some $\varepsilon>0$) away from any corners of $\GammaIinf$.

Then there exists $C=C(\Omegaminus, \MPade,\NPade ,\{\GammaIR\}_{R\in[1,\infty)})>0$ 
such that for all $R\geq 1$, there exists $k_0=k_0(R, \Omegaminus, \MPade,\NPade ,\{\GammaIR\}_{R\in[1,\infty)})>0$ such that, for any direction $a$,
\beqs
\frac{\Vert u-v\Vert_{L^{2}(\domain)}}{\Vert u\Vert_{L^{2}(\domain)}} \geq C
\qquad\tfa k \geq k_0.
\eeqs
\end{theorem}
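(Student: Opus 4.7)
Set $w := u - v$. Then $w$ solves the Helmholtz equation in $\domain$, vanishes on $\Gsc$, and carries the boundary error
\beqs
\bcN(k^{-1}\partial_n w) - i\bcD w = g_I \ton \GammaIR, \qquad g_I := \big(\bcN(k^{-1}\partial_n) - i\bcD\big) u\big|_{\GammaIR}.
\eeqs
By homogeneity in $u$ it suffices to show $\|w\|_{L^2(\domain)}\gtrsim \|u\|_{L^2(\domain)}$ with constant uniform in $R$ and for all $k$ large. I will (i) extract a leading-order lower bound on $g_I$ from the high-frequency asymptotics of $u$ near $\GammaIR$, exploiting the non-sphere assumption on $\GammaIinf$, and (ii) transfer this bound to the bulk via a semiclassical defect-measure contradiction argument.

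\textbf{Microlocal lower bound on $g_I$.} By the nontrapping hypothesis and propagation of singularities together with the Sommerfeld radiation condition, the semiclassical wavefront set of $u$ on $\GammaIR$ is concentrated on outgoing ray directions emanating from $\Omega_-$. Since $\Omega_- \subset B(0,1)$ and $\GammaIR$ is at distance $\sim R$ from the origin, these directions are essentially the radial directions $\hat x$ up to corrections of order $1/R$. Because $\GammaIinf$ is not a sphere centred at the origin, there exist an open $\mathcal O \subset \GammaIinf$ and $\theta_0 > 0$ on which the outer normal makes angle at least $\theta_0$ with the radial direction; the assumed $C^{0,1}$, and (away from corners) $C^{1,\varepsilon}$, convergence $\GammaIR/R \to \GammaIinf$ transfers this to an open $\mathcal U_R \subset \GammaIR$ on which, in Fermi normal coordinates, $|\xi'|_g \geq \sin\theta_0 > 0$ for the principal wavefront of $u$, uniformly in $R$ and $k$. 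Shrinking $\mathcal U_R$ slightly, we may also keep $|\xi'|_g^2$ bounded away from the finite set $\{t_j\}_{j=1}^{\mvanish}$ of spurious zeros. For an outgoing wave ($\xi_n = \sqrt{r}$, $r=1-|\xi'|_g^2$), Assumption \ref{ass:Pade} gives the principal symbol of $\bcN(k^{-1}\partial_n)-i\bcD$ as
\beqs
i\big(q(|\xi'|_g^2)\sqrt{1-|\xi'|_g^2}-p(|\xi'|_g^2)\big)+O(k^{-1}),
\eeqs
which on $\mathcal U_R$ is bounded below by some $c_0>0$ thanks to the choice of $\theta_0$ and the avoidance of the $t_j$. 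Hence
\beqs
\|g_I\|_{L^2(\mathcal U_R)} \geq c_0\,\|u\|_{L^2(\mathcal U_R)}\big(1+o(1)\big) \quad \tas k\tendi.
\eeqs

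\textbf{Transfer to a volume bound, and main obstacle.} I then argue by contradiction: assume $\|w_{k_n}\|_{L^2(\domain)}/\|u_{k_n}\|_{L^2(\domain)} \to 0$ along some $k_n\tendi$, and pass to semiclassical defect measures $\mu$ and $\nu$ on $T^*\overline{\domain}$ for the normalized sequences $u_{k_n}/\|u_{k_n}\|_{L^2(\domain)}$ and $w_{k_n}/\|u_{k_n}\|_{L^2(\domain)}$ respectively. By assumption $\nu\equiv 0$, while standard nontrapping scattering theory gives $\mu$ nontrivial mass on the outgoing-ray flow-out through $\mathcal U_R$. The boundary condition on $\GammaIR$, combined with the invertibility of the boundary principal symbol on $\mathcal U_R$ established in the previous step, would force the mass of $\mu$ on $\mathcal U_R$ in non-normal directions to vanish, a contradiction. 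The hard part is executing this defect-measure argument with constants uniform in $R$: the limited regularity of $\GammaIinf$ (corners allowed in $C^{0,1}$) forces the microlocal analysis to be localized away from corners and combined with elliptic estimates near them, and the transfer from the boundary data $g_I$ into the bulk must be controlled using the nontrapping resolvent bound of Theorem~\ref{th:uniformestimates}. A possible alternative---likely cleaner but still relying on the non-cancellation from the previous step---is to build a microlocal parametrix for $w$ near $\GammaIR$ directly from $g_I$ and propagate it into $\domain$ to obtain an explicit inverse estimate of the shape $\|w\|_{L^2(\domain)}\gtrsim \|g_I\|_{L^2(\mathcal U_R)}$ up to lower-order corrections, thereby sidestepping the need for defect measures.
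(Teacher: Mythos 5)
Your route---lower-bounding the boundary defect $g_I=(\bcN(k^{-1}\partial_n)-i\bcD)u|_{\GammaIR}$ of $w=u-v$ and then transferring it into the bulk---is structurally different from the paper, which instead lower-bounds the defect measure of $v$ on the \emph{directly-incoming set} $\mathcal I$ (where the outgoing solution $u$ carries no mass, by Lemma \ref{lem:null_incom}), so that $\|u-v\|_{L^2(\domain)}^2\gtrsim \mu(\mathcal I)$. Your symbol computation on the outgoing branch is correct, and the observation that non-sphericity of $\GammaIinf$ forces non-normal incidence on an open set is exactly the paper's starting point. But there is a genuine gap at the heart of your argument: the assertion that ``standard nontrapping scattering theory gives $\mu$ nontrivial mass on the outgoing-ray flow-out through $\mathcal U_R$'' is precisely what must be proved, and it is where most of the paper's work lies. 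For a fixed incident direction $a$, the mass of $u$ on $\GammaIR$ lives on the rays launched from $\Gamma_D$ by the plane wave (Lemma \ref{lem:nuD}), and one must exhibit, \emph{for every} $a$, a set of such rays of measure bounded below uniformly in $R$ and $a$ that leave $\Gamma_D$ directly, land on the part of $\GammaIR$ where the normal is non-radial, and do so at angles bounded away from both $0$ and the spurious angles $\psi_j$. This is the content of Lemmas \ref{lem:all_directions}, \ref{lem:nuD} and \ref{lem:ray3} together with Corollary \ref{cor:explain}; it is a nontrivial geometric construction (find $x_0^\infty\in\GammaIinf$ with non-radial normal, find $x_0'\in\Gamma_D$ whose reflected ray points in the direction $x_0^\infty/|x_0^\infty|$ and escapes $\Gamma_D$, control the limit $\GammaIR/R\to\GammaIinf$), not a consequence of nontrapping alone. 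Without it, $\|u\|_{L^2(\mathcal U_R)}$---and hence your lower bound on $g_I$---could a priori degenerate.

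The second unclosed step is the transfer. Theorem \ref{th:uniformestimates} controls $\|w\|$ from \emph{above} by $\|g_I\|$, and the trace estimates only give $\|g_I\|\lesssim \|w\|_{L^2(\domain)}+\|g_I\|$, which is circular; neither yields $\|w\|_{L^2(\domain)}\gtrsim\|g_I\|_{L^2(\GammaIR)}$. The defect-measure version you sketch can be made to work, but it amounts to reconstructing the paper's mechanism: the reflection relation $\muout=\alpharef\muin$ for $v$ on $\GammaIR$ (Corollary \ref{cor:reflection}), the lower bound $\alpharef\geq c>0$ at the constructed angles (Lemma \ref{lem:reflectionbound1}), and Corollary \ref{cor:explain}(ii), which converts reflected boundary mass into interior mass of size $\gtrsim R\,\muout$ because the reflected rays travel a distance $\sim R$ before re-hitting a boundary---the factor of $R$ being exactly what cancels $\|u\|_{L^2(\domain)}^2\lesssim R$ (Lemma \ref{lem:mass_u}) to yield an $R$-independent constant. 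As written, your proposal flags but does not close either step.
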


\begin{theorem}[Quantitative upper bound for generic $\GammaIR$]
\label{th:quant3}
Suppose that  $\Omegaminus$ is nontrapping with $\Omegaminus \subset B(0,1)$. Suppose that, for every $R\geq 1$,  $ \tildedomain \subset B(0,MR)$,
$\GammaIR$ is smooth, convex, and nowhere flat to infinite order, and $(\GammaIR/R)\to \GammaIinf$  in $C^\infty$ as $R\tendi$. 
Then there exists $C=C(\Omegaminus, 
\MPade,\NPade , \{\GammaIR\}_{R\in[1,\infty)})>0$ such that for any $R\geq1 $, there exists $k_0 =
k_0(R, \Omegaminus, $ $\MPade, \NPade,  \{\GammaIR\}_{R\in[1,\infty)})>0$ such that for any $a\in\mathbb R^d$,
\beqs
\frac{\Vert u-v\Vert_{L^{2}(\domain)}}{\Vert u\Vert_{L^{2}(\domain)}}\leq C
\qquad\tfa k \geq k_0.
\eeqs
\end{theorem}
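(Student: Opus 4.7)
The plan is to write the error $w := u - v$ as the solution of a boundary value problem driven by the mismatch of the Pad\'e operator $\Bout$ applied to the true outgoing field $u$, and then to combine the wellposedness bound of Theorem~\ref{th:uniformestimates} with a lower bound on $\N{u}_{L^2(\domain)}$ that grows with $R$ at the same rate as the resulting upper bound on $\N{u-v}_{L^2(\domain)}$.

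First, I would observe that $w$ satisfies the homogeneous Helmholtz equation in $\domain$, vanishes on $\Gsc$, and on $\GammaIR$ satisfies
\[
\bcN(k^{-1}\partial_n w) - i\bcD(w) = -\Bout u, \qquad \Bout u := \bcN(k^{-1}\partial_n u) - i\bcD(u).
\]
Applying Theorem~\ref{th:uniformestimates} with $f = g_D = 0$ and $g_I = -\Bout u$ then gives
\[
\N{u - v}_{L^2(\domain)} \leq C R^{1/2}\, \N{\Bout u}_{L^2(\GammaIR)}.
\]

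Second, I would prove the uniform bound $\N{\Bout u}_{L^2(\GammaIR)} \leq C$ with $C$ independent of $k$ and $R$. Since $\tildedomain$ is convex and $u$ is outgoing in its exterior, microlocally away from the glancing set $\{r=0\}$ the Neumann trace $k^{-1}\partial_n u|_{\GammaIR}$ equals a semiclassical pseudodifferential operator of order zero applied to $u|_{\GammaIR}$, with principal symbol $i\sqrt{r(x',\xi')}$; the hypothesis $\GammaIR/R \to \GammaIinf$ in $C^\infty$, together with the nowhere-flat-to-infinite-order condition, makes this parametrix construction uniform in $R$. The non-trapping hypothesis forces the semiclassical wavefront set of $u|_{\GammaIR}$ to lie in the compact characteristic set $\{r\in[0,1]\}$. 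Since $\PadeP$ and $\PadeQ$ are polynomials in $1-r$, hence bounded on $\{r\in[0,1]\}$, semiclassical $L^2 \to L^2$ boundedness of the resulting composition then yields $\N{\Bout u}_{L^2(\GammaIR)} \leq C\,\N{u}_{L^2(\GammaIR)}$. The outgoing radiation asymptotics $|u(x)| \lesssim |x|^{-(d-1)/2}$ together with $|\GammaIR| \sim R^{d-1}$ finally give $\N{u}_{L^2(\GammaIR)} \leq C$ uniformly in $R$.

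Third, I would establish the matching lower bound $\N{u}_{L^2(\domain)} \geq c R^{1/2}$. For plane-wave incidence on a non-trapping obstacle, the high-frequency asymptotics of the scattering amplitude give $\N{u_\infty(\cdot,k)}_{L^2(S^{d-1})} \geq c > 0$ uniformly for $k$ large, where $u_\infty$ is the far-field pattern of $u$. Using the expansion $u(x) = r^{-(d-1)/2} e^{ikr} u_\infty(\hat x,k) + O(r^{-(d+1)/2})$ and integrating over an annular region $\{c_1 \leq |x| \leq c_2 R\} \subset \domain$ yields $\N{u}^2_{L^2(\domain)} \geq c R$. Combining this with the previous two steps gives $\N{u-v}_{L^2(\domain)}/\N{u}_{L^2(\domain)} \leq CR^{1/2}/(cR^{1/2})$, bounded uniformly as required.

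The main obstacle is the second step: obtaining uniformity in $R$ for the semiclassical analysis of $\Bout u$. This requires constructing the pseudodifferential representation of the outgoing Neumann trace with bounds depending only on the $C^\infty$ norms of $\GammaIR/R$, and carefully handling the contribution from near the glancing set where the standard pseudodifferential symbol calculus breaks down. The convexity of $\GammaIR$, together with the nowhere-flat-to-infinite-order hypothesis, is what makes this analysis tractable, paralleling the techniques used in the earlier theorems of the paper.
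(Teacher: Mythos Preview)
Your first step---writing $w=u-v$ as the solution of the truncated problem with boundary data $g_I=-(\bcN k^{-1}\partial_n-i\bcD)u$ on $\GammaIR$ and invoking Theorem~\ref{th:uniformestimates}---is exactly how the paper begins. The divergence is in how you close the argument.

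The paper does \emph{not} try to bound $\|g_I\|_{L^2(\GammaIR)}$ by an absolute constant and then separately bound $\|u\|_{L^2(\domain)}$ from below. Instead, Lemma~\ref{lem:impedancetrace} proves the direct relation
\[
\big\|(\bcN hD_n-\bcD)u\big\|_{L^2(\GammaIR)}\le C\,\Upsilon(R)\,R^{-1/2}\,\|u\|_{L^2(\domain)},
\]
with $\Upsilon(R)\le C$ uniformly for generic $\GammaIR$ by Lemma~\ref{lem:impedancetrace2}(ii). The factor $R^{-1/2}$ comes from an annulus estimate (Lemma~\ref{lem:crown}), proved via defect-measure invariance, which controls $\|u\|$ on a unit-width shell around $\GammaIR$ by $R^{-1/2}\|u\|_{L^2(\domain)}$. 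This $R^{-1/2}$ cancels exactly against the $R^{1/2}$ from Theorem~\ref{th:uniformestimates}, giving the relative-error bound with no lower bound on $\|u\|_{L^2(\domain)}$ ever needed.

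Your route has a genuine error in Step~2. The pointwise bound $|u(x)|\lesssim|x|^{-(d-1)/2}$ \emph{uniformly in $k$} is false: the forward-scattering peak of the far-field pattern satisfies $|u_\infty(a)|\sim k^{(d-1)/2}$ (by the optical theorem, or by a Fresnel-zone argument for the shadow beam), so $\|u_\infty\|_{L^\infty}$ is unbounded in $k$ even though $\|u_\infty\|_{L^2(S^{d-1})}$ stays bounded. Hence you cannot deduce $\|u\|_{L^2(\GammaIR)}\le C$ from pointwise decay. Your Step~3 lower bound $\|u\|_{L^2(\domain)}\ge cR^{1/2}$, uniform in $k$ and in the incidence direction $a$, is plausible but is not proved in the paper; establishing it would itself require a defect-measure argument of the same type as Lemma~\ref{lem:crown}, at which point you are essentially back to the paper's method.

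One further remark: your worry about glancing in Step~2 is misplaced. Because $u$ is outgoing and $\GammaIR$ is convex with $\Omega_-\Subset\tildedomain$, Lemma~\ref{lem:WFu} shows that $\WFh(u)$ near $\GammaIR$ lies in $\{|\xi\cdot x/|x|-1|\le C/|x|^2\}$, which sits strictly inside the hyperbolic region of $T^*\GammaIR$; no parametrix near glancing is needed for $u$. The nowhere-flat-to-infinite-order hypothesis is used only for the wellposedness of the truncated problem (Theorem~\ref{th:uniformestimates}), not for the trace estimate on $u$.
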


We now explain why the constants in the upper and lower bounds in Theorems \ref{th:lower}-\ref{th:quant3} decrease with $R$ when $\GammaIR=\partial B(0,R)$, but are independent of $R$ for generic $\GammaIR$.
Recall from \S\ref{sec:setup} that the boundary condition \eqref{eq:BVPimpbc} corresponds to approximating $\sqrt{r(x',\xi')}$ by a
Pad\'e approximant in $|\xi'|^2_g$,  with this approximation valid to order $\morderzero$ in $|\xi'|^2_g$ at $\xi'=0$ (i.e., rays hitting $\GammaI$ in the normal direction) by \eqref{eq:Padeorder};
recall also that there exists finitely-many other values of $|\xi'|^2_g$ such that $\PadeQ(x',\xi')\sqrt{r(x',\xi')} - \PadeP(x',\xi') =0$, which corresponds to there being finitely-many non-normal angles such that rays hitting $\GammaIR$ at these angles are not reflected by $\GammaIR$.
When $\GammaIR=\partial B(0,R)$ and $R$ is large, the rays originating from $\Omegaminus$ hit $\GammaIR$ in a direction whose angle with the normal decreases  with increasing $R$ (in fact the angle $< R^{-1}$; see Lemma \ref{lem:ballangle} below). Thus, if $R$ is sufficiently large, the finitely-many special non-normal angles are avoided, and the error for large $k$ decreases with $R$, with the accuracy depending on $\morderzero$; see Theorem \ref{th:quant}.
When $\GammaIinf$ is not a sphere centred at the origin, 
for every incident direction there exist
rays hitting $\GammaIinf$ at a fixed, non-normal angle that is also not one of the finitely-many special non-normal angles 
(see Lemma \ref{lem:ray3} below).
Since the Dirichlet-to-Neumann map is not approximated by the boundary condition \eqref{eq:BVPimpbc} at such an angle, the error is therefore independent of $R$ and $\morderzero$; see Theorems \ref{th:quant2} and \ref{th:quant3}.

\subsection{Bounds on the relative error in subsets of $\domain$}\label{subsec:error2}

Given the upper and lower bounds on the error in Theorems \ref{th:lower}-\ref{th:quant3}, a natural question is: is the error any smaller in a neighbourhood of the obstacle (i.e.~away from the artificial boundary)?

We focus on the case when \emph{either} $\GammaIR =\partial B(0,R)$ \emph{or} $\GammaIR$ is the boundary of a hypercube with smoothed corners.
We do this because the artificial boundaries most commonly used in applications are $\GammaIR =\partial B(0,R)$ and $\GammaIR$ is a hypercube, but in the latter case we need to smooth the corners for technical reasons.

\begin{theorem}[Quantitative lower bound on subset of $\domain$ when $\GammaIR= \partial B(0,R)$]
\label{th:local_ball}
Suppose that  $\Omegaminus$ is nontrapping, $\Omegaminus \subset B(0,1)$, and $\GammaIR = \partial B (0,R)$ with $R \geq 1$. Then, there exists $C=C(\Omegaminus, \MPade,\NPade )>0$ 
and $R_0= R_0(\MPade,\NPade )\geq 2$ 
such that for any $R\geq R_0$, there exists $k_0=k_0(R, \Omegaminus, \MPade,\NPade)>0$ such that, for any direction $a$, 
\beq\label{eq:subset}
\frac{\Vert u-v\Vert_{L^{2}(B(0,2)\setminus\Omegaminus)}}{\Vert u\Vert_{L^{2}(B(0,2 )\setminus\Omegaminus)}}\geq \frac{C}{R^{2\morderzero}}\qquad\tfa k \geq k_0.
\eeq
Furthermore, if $\MPade=\NPade=0$, then $R_0=2$.
\end{theorem}

That is, when $\GammaIR= \partial B(0,R)$, the error in $B(0,2)$ is bounded below, independently of $k$, and the lower bound has the same dependence on $R$ as for the error in $\domain$ (see Theorem \ref{th:quant}). The fact that we have explicit information about $R_0$ when $\MPade=\NPade=0$ is because in this case $\mvanish=0$, i.e.~there are no non-normal angles for which the reflection coefficient vanishes, and the proof is simpler.

\begin{theorem}\mythmname{Quantitative lower bound on subset of $\domain$ when $\GammaIR$ is the boundary of a smoothed hypercube}
\label{th:local_square} 
Suppose that  $\Omegaminus$ is nontrapping and $\Omegaminus \subset B(0,1)$. 
Let $\mathfrak C$ be the set of corners of $ [- R/2, R/2]^d$ and, given $\epsilon>0$, let 
\beqs
\mathfrak C_\epsilon := \bigcup_{x \in \mathfrak C} B(x, \epsilon);
\eeqs
i.e.~$\mathfrak C_\epsilon$ is a neighbourhood of the corners.
Then, there exists $C=C(\Omegaminus, \MPade,\NPade )>0$, and $\epsilon_0 = \epsilon_0(\Omegaminus)$ such that, for any $R\geq 4$, if $\GammaIR$ is smooth and
\beqs
\GammaIR \setminus \mathfrak C_\epsilon = \left[- \frac R2, \frac R2\right]^d \setminus \mathfrak C_\epsilon \quad\tfor 0<\epsilon\leq \epsilon_0,
\eeqs
then there exists $k_0=k_0(\GammaIR,\Omegaminus, \MPade,\NPade)>0$ such that, for any direction $a$, 
\beqs
\frac{\Vert u-v\Vert_{L^{2}(B(0,2)\setminus\Omegaminus)}}{\Vert u\Vert_{L^{2}(B(0,2)\setminus\Omegaminus)}}\geq \frac{C}{R^{(d-1)/2}},\hspace{1em}\quad\tfa k \geq k_0.
\eeqs
\end{theorem}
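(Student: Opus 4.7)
The plan is to represent $w:=u-v$ as the solution of an inhomogeneous Helmholtz BVP whose only datum is the boundary residual $g_I:=(\bcN k^{-1}\partial_n-i\bcD)u|_{\GammaIR}$, to show that on at least one flat face of $\GammaIR$ this residual has amplitude bounded away from zero uniformly in $k$ and $a$, and then to convert this into a pointwise lower bound on $B(0,2)$ using an adjoint-testing argument based on the resolvent bound of Theorem~\ref{th:uniformestimates}.

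\textbf{Step 1 (BVP for the error).} Subtracting the two problems, $w$ satisfies $(\Delta+k^2)w=0$ in $\domain$, $w=0$ on $\Gamma_D$, and $\bcN k^{-1}\partial_n w-i\bcD w=g_I$ on $\GammaIR$. Since $\Omegaminus$ is nontrapping and $\GammaIR$ lies at distance $\gtrsim R$ from $\Omegaminus$, standard scattered-wave decay gives $u=e^{ikx\cdot a}+O(R^{-(d-1)/2})$ on $\GammaIR$. Consequently, away from the smoothed corners $\mathfrak C_\epsilon$, on each flat face $F$ with outward unit normal $n_F$, Assumption~\ref{ass:Pade} implies that to leading order $g_I|_F$ equals the plane wave $e^{ikx\cdot a}$ multiplied by the symbol residual $\PadeQ(a_T)(n_F\cdot a)-i\PadeP(a_T)$, where $a_T$ is the tangential projection of $a$ to $F$.

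\textbf{Step 2 (productive face).} For every direction $a$ there exists a face $F^*=F^*(a)$ on which this multiplier is bounded below by a constant $c=c(\MPade,\NPade,d)>0$: the set of incidence cosines $\{n_F\cdot a\}_{F}$ for the $2d$ faces cannot simultaneously coincide with the normal value and with the finite set of special angles $\{t_j\}_{j=1}^{\mvanish}$ where $\PadeQ\sqrt{r}-\PadeP$ vanishes. Combined with the lower-order terms in Step~1 (absorbed by choosing $R$ sufficiently large), one obtains $\N{g_I}_{L^{2}(F^*\setminus \mathfrak C_\epsilon)}\geq c R^{(d-1)/2}$.

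\textbf{Step 3 (duality and conclusion).} Given any $\phi\in L^2(B(0,2))$, let $\psi\in H^1(\domain)$ solve the adjoint truncated BVP with source $\overline\phi$, which is well-posed by Theorem~\ref{th:uniformestimates} applied to the adjoint problem. Green's identity combined with the Pad\'e BC for $w$ yields an identity of the form
\[
\int_{B(0,2)} w\, \phi\, dx \;=\; \int_{\GammaIR} g_I\cdot\overline{H\psi}\, dS,
\]
for an explicit boundary operator $H$ built from $\bcN$ and $\bcD$. Now take $\phi(x)=\chi(x)e^{-ikx\cdot a}$ with $\chi\in C_c^\infty(B(0,2))$ of unit $L^2$-norm. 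By non-trapping propagation the adjoint solution $\psi$, on $F^*$, is a coherent wave whose phase is conjugate to that of $g_I$; a stationary-phase analysis on $F^*$, together with the Sommerfeld-type decay of the truncated adjoint Green's function (which supplies the $R^{-(d-1)/2}$ amplitude at distance $R$), gives $|\int_{\GammaIR}g_I\,\overline{H\psi}|\geq c R^{-(d-1)/2}$. Since $\N{u}_{L^{2}(B(0,2))}=O(1)$ by the non-trapping local energy estimate, the claim follows by duality.

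\textbf{Main obstacle.} The hard part is Step~3: one has to establish that $\psi$ on $F^*$ behaves like an outgoing wave of amplitude $R^{-(d-1)/2}$ and with phase coherent with $e^{ikx\cdot a}$. This requires a semiclassical / geometric-optics parametrix construction that follows the bicharacteristic flow of the truncated Pad\'e operator from $\supp\chi\subset B(0,2)$ out to $F^*$, controls the reflections off $\Gamma_D$ via the non-trapping hypothesis, and verifies that the rays producing the stationary-phase contribution on $F^*$ do not land in the finite set of "bad" Pad\'e angles. In effect, one needs a quantitative propagation-of-singularities statement for the truncated adjoint operator that is uniform in $R$ and in $a$, and which interacts cleanly with the smoothed-corner regions $\mathfrak C_\epsilon$.
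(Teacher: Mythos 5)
There is a genuine gap, and in fact two. First, Steps 1--2 rest on a misidentification of $u$: in the paper $u$ is \emph{minus the scattered wave} (it satisfies the Sommerfeld radiation condition with Dirichlet data $e^{ikx\cdot a}$ on $\Gamma_D$), not the total field, so on $\GammaIR$ it has amplitude $O(R^{-(d-1)/2})$ rather than $e^{ikx\cdot a}+O(R^{-(d-1)/2})$. Consequently the boundary residual satisfies $\N{g_I}_{L^2(\GammaIR)}=O(1)$ (cf.\ Lemmas \ref{lem:impedancetrace} and \ref{lem:impedancetrace2}(ii)), not $\N{g_I}_{L^2(F^*)}\gtrsim R^{(d-1)/2}$; the bookkeeping in Step 3 therefore does not produce the claimed $R^{-(d-1)/2}$, and a naive repair (unit-amplitude $g_I$ on a face of area $R^{d-1}$ paired against an adjoint wave of amplitude $R^{-(d-1)/2}$) gives an answer of the wrong order. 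The correct $R^{-(d-1)/2}$ in the theorem arises from a different mechanism entirely: the volume $\sim R^{-(d-1)}$ of the set $V_D\subset\Gamma_D$ of launch points whose rays survive $N(R)\leq M$ reflections off the cube, avoid the smoothed corners, avoid the finitely many Pad\'e angles, and return to $B(0,3/2)$ (Lemma \ref{lem:ray4}), fed through the defect-measure propagation and reflection identities (Corollaries \ref{cor:explain} and \ref{cor:reflection}, Lemma \ref{lem:quant_mes}(ii)) and a final square root from Corollary \ref{cor:reduction}(v).

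Second, even granting the amplitudes, your Step 3 is not an argument but a list of what would have to be proved: a geometric-optics parametrix for the adjoint truncated operator, uniform in $R$ and $a$, with phase coherence on $F^*$ after an unbounded-in-principle number of reflections off $\Gamma_D$ and the cube faces, and with control near glancing and near $\mathfrak C_\epsilon$. That missing content subsumes all of the billiard analysis the paper actually carries out (the periodic-ray selection, the corner-avoidance iteration, and the bounded-reflection-count estimate in the proof of Lemma \ref{lem:ray4}) and additionally requires a parametrix that the paper deliberately avoids by working with weak limits (defect measures), for which only the reflection law $\muout=\alpharef\muin$ and the transport identities are needed. As written, the proposal does not close.
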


That is, when $\GammaIR$ is a smoothed hypercube, the error in $B(0,2)$ is bounded below independently of $k$, in a similar way to the error in $\domain$ (see Theorem \ref{th:quant2}). However, whereas the lower bound in Theorem \ref{th:quant2} is independent of $R$, Theorem \ref{th:local_square} allows for the possibility that the large-$k$-limit of the error in $B(0,2)\setminus\Omegaminus$ decreases with $R$.

\bre[Smoothness of boundaries]
Theorems \ref{th:lower}, \ref{th:quant}, \ref{th:quant2}, \ref{th:quant3}, and \ref{th:uniformestimates} are proved under the assumptions that $\Gamma_D$ and $\GammaIR$ are $C^\infty$, with Theorem \ref{th:uniformestimates} also assuming that $\GammaIinf$ is $C^\infty$.
In all these proofs one actually requires that these boundaries are $C^m$ for some unspecified $m$. One could in principle go through the arguments in the present paper, and those in \cite{Miller} about defect measures on the boundary (which we adapt in \S\ref{sec:defect}), to determine the smallest $m$ such that the results hold, but we have not done this. 
\ere

\subsection{Numerical experiments in 2-d illustrating some of the main results}\label{sec:num}

These numerical experiments all consider the simplest boundary condition satisfying Assumption \ref{ass:Pade}, i.e.~the impedance boundary condition $\partial_n v -i k v=0$, which is covered by Assumption \ref{ass:Pade} with $\bcN=\bcD=1$.

We first describe the set up common to Experiments \ref{exp:ball}, \ref{exp:butterfly}, and \ref{exp:trapping}. The set up for Experiments \ref{exp:square1} and \ref{exp:square2} is slightly different, and is described at the beginning of Experiment \ref{exp:square1}.

\noi \emph{The absorbing boundary condition.}
We let $\GammaI = \partial B(0,  \Rimp )$, for some specified $ \Rimp >0$, $d=2$, $\bcN=\bcD=1$ in \eqref{eq:BVPimpbc}; therefore $\MPade=\NPade=0$, $\morderzero=1$, and $\mvanish=0$.

\noi \emph{The PML solution used as a proxy for the exact solution.}
As a proxy for the solution $u$ to \eqref{eq:BVP}, we use $u_{\PML}$ defined to be the solution of the boundary value problem analogous to \eqref{eq:BVP} but truncated with a radial PML in an annular region $B(0, R_{\PML})\setminus B(0, \Rimp )$, with $R_{\PML}> \Rimp$, using the particular PML described in  \cite[\S3]{CoMo:98a}.
The one change from \cite[\S3]{CoMo:98a} is that we take the scaled variable to be independent of $k$, i.e.,
\beqs
\widetilde{\rho} = \rho + i\int_{\Rimp}^\rho  \sigma(s) \,d s, \quad\rho > \Rimp,
\eeqs
(compare to \cite[Second displayed equation on Page 2067]{CoMo:98a} where $\Rimp$ in our notation is $a$ in their notation). We choose 
\beq\label{eq:sigma}
\sigma(s) = (r- \Rimp)^2 /(R_{\PML}- \Rimp)^2.
\eeq
With this set up, the error between the PML solution and the solution to \eqref{eq:BVP} decreases exponentially with $k$ by \cite[Theorem 1.2]{GLS2}. (Note, in particular, that $\sigma = f_\theta'$ in the notation of \cite{GLS2} and thus the choice of $\sigma$ \eqref{eq:sigma} satisfies the regularity assumptions in \cite{GLS2} -- indeed, this particular $\sigma$ is given as an example in \cite[Equation 1.7]{GLS2}.)
The width of the PML, $R_{\PML}-\Rimp$ is chosen as a constant independent of $k$ (specified in each experiment) which is always larger than the largest wavelength considered.

\noi \emph{The FEM approximation space.}
The boundary value problems for $u_{\PML}$ and $v$ are discretised using the finite element method with P4 elements (i.e.~conforming piecewise polynomials of degree $4$) and implemented in FreeFEM++ \cite{He:12}. 
The finite-element approximations to $u_\PML$ and $v$ are denoted by $u_{\PML,\hFEM}$ and $v_{\hFEM}$ respectively, and the same mesh is used inside $\Omega_R$ when computing both.
We then compute the \emph{relative error}
\beq\label{eq:rel_error}
\frac{
\N{u_{\PML,\hFEM}- v_{\hFEM}}_{L^2(\domain)}
}{
\N{u_{\PML,\hFEM}}_{L^2(\domain)}
}.
\eeq
using an element-wise quadrature rule. In the figures we plot the total fields corresponding to $u_{\PML,\hFEM}$ and $v_{\hFEM}$, i.e.~$\exp(i k x\cdot a) - u_{\PML,\hFEM}$ and $\exp(i k x\cdot a) - v_{\hFEM}$ respectively; this is because the total field is easier to interpret than the scattered fields.

\noi \emph{Ensuring accuracy of the FEM solutions.
The relative $H^1$ errors in the FEM approximations of $u_{\PML}$ and $v$ are both controllably small, uniformly in $k$, if 
\beq\label{eq:meshcondition}
k\Rimp (\hFEM k)^{2p}= C
\eeq
for some $C>0$, independent of all parameters, where $p$ is the polynomial degree and $\hFEM$ is the meshwidth. This is proved in \cite[Theorems 4.9 and 5.3]{GS3}, following earlier results in  \cite[Theorem 5.1 and Corollary 5.2]{DuWu:15} for the impedance problem with no scatterer and  \cite[Theorem 4.4]{LiWu:19} for the PML problem with no scatterer and $p=1$ (see also \cite{ChGaNiTo:22} for related results).
Although $p=4$, we choose $\hFEM$ to satisfy \eqref{eq:meshcondition} with $p=3$. This choice ensures that the FEM error \emph{decreases} as $k\to \infty$, and thus 
the difference between $u_{\PML,\hFEM}-v_{\hFEM}$ and $u_{\PML}-v$ decreases as $k\to \infty$.
We choose $C>0$ (depending on $R$ and $p$) such that when $k=20$, $hk=1$ (i.e., there are $2\pi$ points per wavelength at $k=20$).
We use triangular elements, and thus there is a variational crime caused by approximating the curved boundaries $\GammaI$ and $\partial \Omega_-$; empirically this error is controlled if $hk$ is sufficiently small, and thus decreases as $k\to\infty$ under the meshthreshold \eqref{eq:meshcondition}.
The linear systems are solved using preconditioned GMRES, using the package ``ffddm'' with tolerance $10^{-6}$ and the preconditioner ORAS (Optimized Restricted Additive Schwarz), as described in \cite{ffddm:20}. 
}

\begin{experiment}[Scattering by ball, verifying Theorems \ref{th:quant_new}/\ref{th:quant}]\label{exp:ball}
We choose $\Gamma_D= \partial B(0,1)$, $ \Rimp =2$, $R_\PML = 2+0.5$, and $a= (1,0)$ (i.e.~the plane wave is incident from the left).
Figure \ref{fig:ball} shows the real parts of the total fields
\beq\label{eq:total_fields}
\Re\big(\exp(i k x\cdot a) - u_{\PML,\hFEM}),
\quad
\Re\big(\exp(i k x\cdot a) - v_{\PML,\hFEM}),
\quad\tand\quad
\Re\big(u_{\PML,\hFEM} - v_{\PML,\hFEM}).
\eeq
at $k=40$. 
We see the error is largest in the shadow of the scatterer near $\Gamma_D$.

Table \ref{tab:ball} then shows the relative error (defined by \eqref{eq:rel_error}) for increasing $k$ for $ \Rimp =2, 4, 8$. 
The errors in Table \ref{tab:ball} are constant for $ \Rimp $ fixed as $k$ increases, in agreement with Theorems \ref{th:quant_new}/\ref{th:quant}. 
The errors for $\Rimp=4$ are roughly 4 to 4.5 times smaller than the errors for $\Rimp=2$, and the errors for $\Rimp=8$ are roughly 4 times smaller than the errors for $\Rimp=4$.
Since $\morderzero=1$, the factor $R^{-2\morderzero}= R^{-2}$ in the  bound \eqref{eq:mainbound} means that we expect 
the error for $R=4$ to be 4 times smaller than that for $R=2$, and the error for $R=8$ to be 4 times smaller than that for $R=4$, 
at least when $k\geq k_0(R)$ (with $k_0(R)$ the unspecified constant in Theorems \ref{th:quant_new}/\ref{th:quant}).
\end{experiment}

\begin{figure}
\subfigure[Real part of total field corresponding to $u_{\PML,\hFEM}$]
  {
    \includegraphics[width=0.47\textwidth]{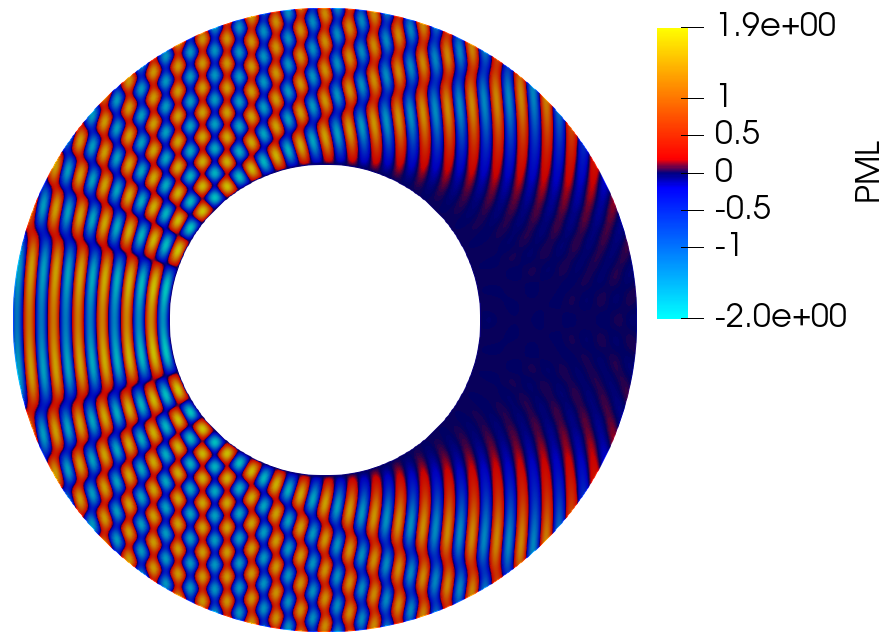}
  }
\subfigure[Real part of total field corresponding to $v_{\hFEM}$]
  {
    \includegraphics[width=0.47\textwidth]{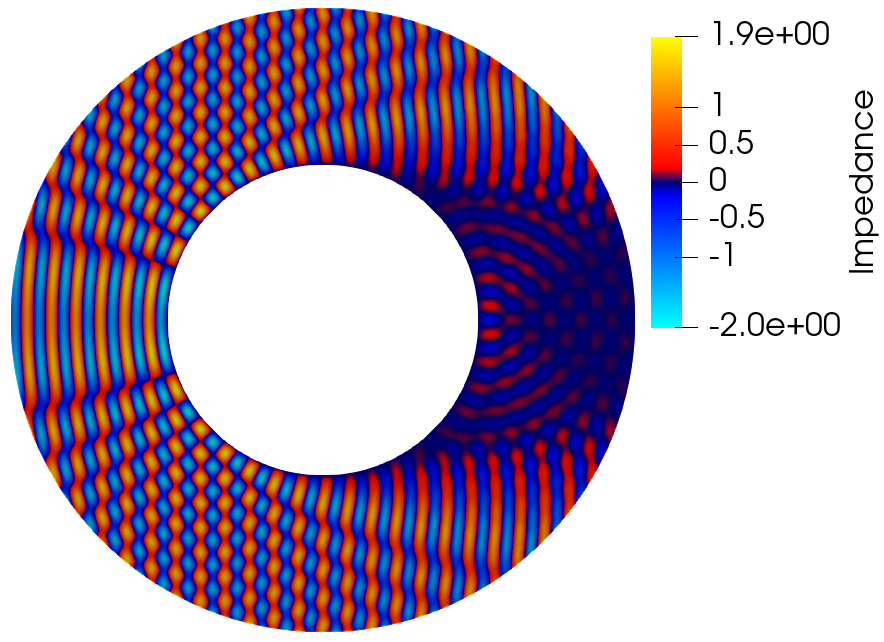}
  }
\subfigure[Real part of error $u_{\PML,\hFEM}- v_{\hFEM}$]
  {
    \includegraphics[width=0.47\textwidth]{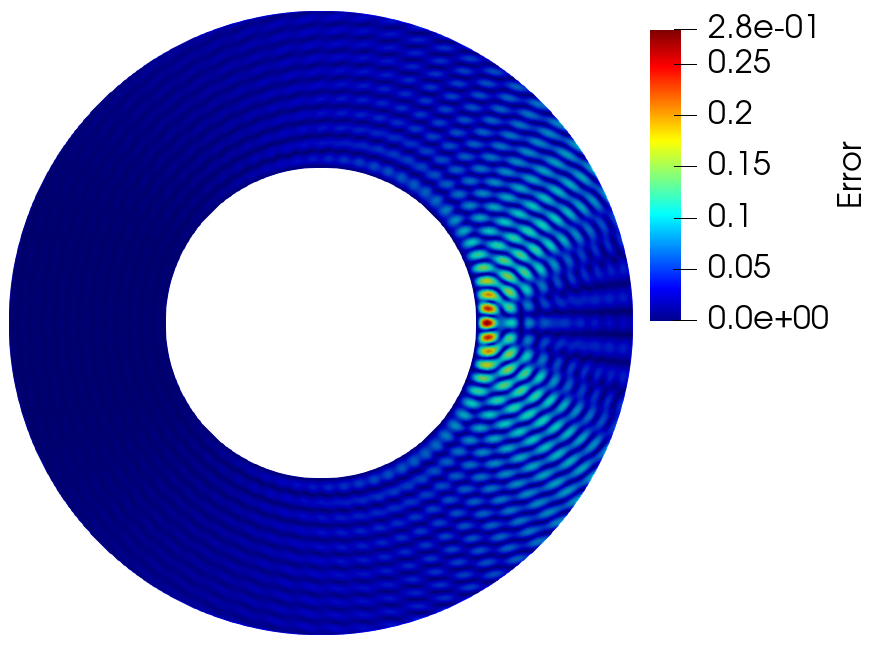}
  }
\caption{Scattering by a unit ball for $k=40$ (as described in Experiment \ref{exp:ball})}\label{fig:ball}
\end{figure}


\begin{table}[h!]
\begin{tabular}{c|c|c|c}
$k$ & Relative error for ball $\Rimp =2$ & Relative error for ball $\Rimp =4$ & Relative error for ball $\Rimp =8$ \\
\hline
20 &0.052557755 &0.012321440&0.0035458354\\
40 &0.050360302& 0.011438903&0.0029200006\\
80 &0.050034175&0.011050890&\\
160 &0.049001901  &
\end{tabular}
\caption{The relative error \eqref{eq:rel_error} against $k$ for scattering by a ball (described in Experiment \ref{exp:ball}) for two different values of $\Rimp$.}\label{tab:ball}
\end{table}

\begin{figure}
\subfigure[Real part of total field corresponding to $u_{\PML,\hFEM}$]
  {
    \includegraphics[width=0.47\textwidth]{
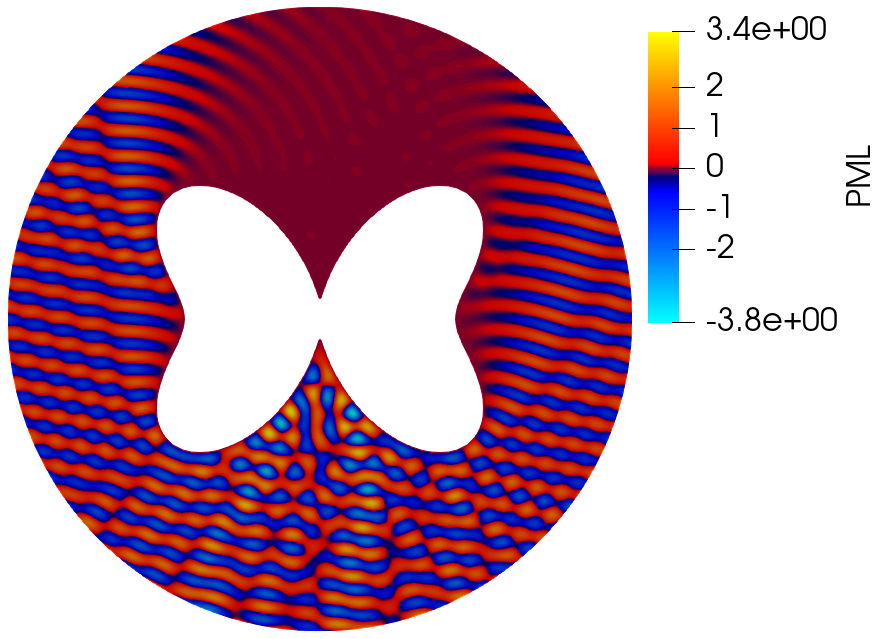}
  }
\subfigure[Real part of total field corresponding to $v_{\hFEM}$]
  {
    \includegraphics[width=0.47\textwidth]{
    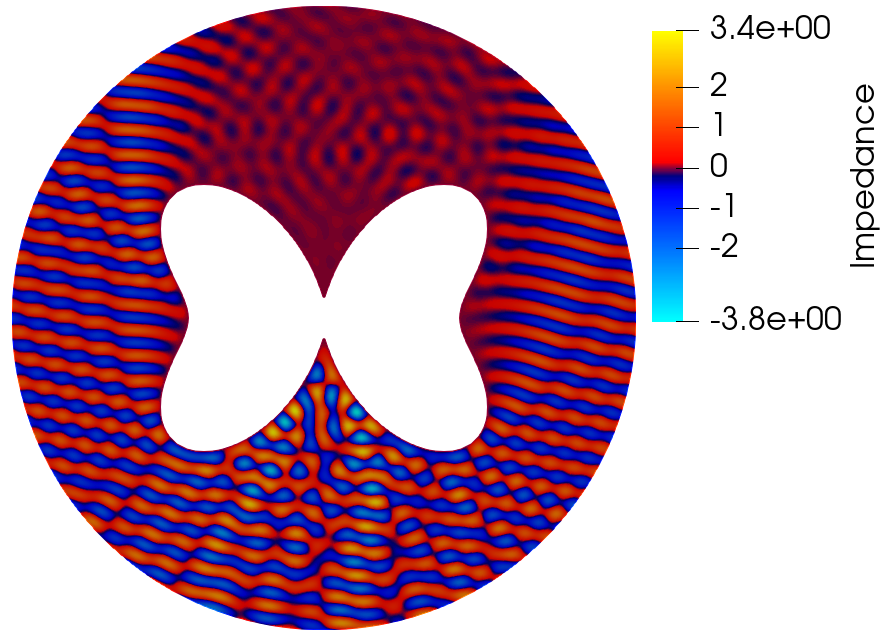}
  }
\subfigure[Real part of error $u_{\PML,\hFEM}- v_{\hFEM}$]
  {
    \includegraphics[width=0.47\textwidth]{
    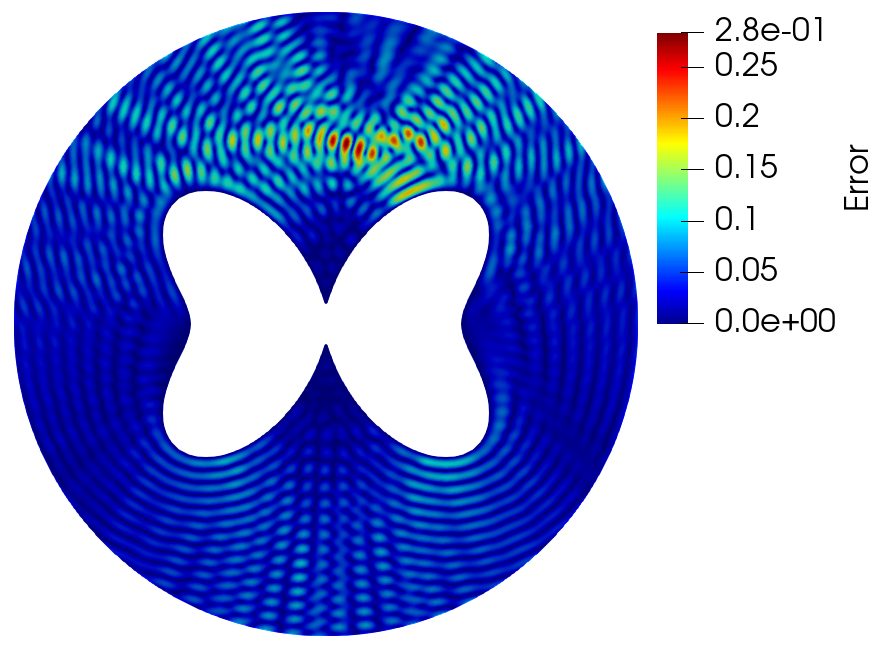}
  }
\caption{Scattering by a butterfly-shaped obstacle for $k=40$ (as described in Experiment \ref{exp:butterfly})}\label{fig:bean}
\end{figure}

\begin{experiment}[Scattering by a butterfly-shaped obstacle, verifying Theorems \ref{th:quant_new}/\ref{th:quant}]\label{exp:butterfly}
We choose
$\Gamma_D$ to be the curve defined in polar coordinates by 
\beqs
\Gamma_D :=\Big\{ (r,\theta)\, :\, r=(0.3+\sin^2(\theta))(1.4 \cos(2\theta)+1.5), \theta\in [0,2\pi)\, \Big\}
\eeqs
$ \Rimp =2$, and $R_\PML = 2+0.5$. We consider the two different incident plane waves corresponding to 
$a = (\cos(7\pi/16), \sin(7\pi/16))$ and $a = (\cos(\pi/16), \sin(\pi/16))$.

Figure \ref{fig:bean} shows the real parts of the total fields \eqref{eq:total_fields} at $k=40$ with $a = (\cos(7\pi/16), \sin(7\pi/16))$, computed with $p=2$ and $\hFEM= (2\pi/5) k^{-1-1/4}$.
In this case, the error is large in the shadow of the scatterer not only near $\Gamma_D$, but also away from the obstacle. The choice 
$a = (\cos(\pi/16), \sin(\pi/16))$ gives a qualitatively similar picture.

Table \ref{tab:butterfly} shows the relative error (defined by \eqref{eq:rel_error}) for this set up for increasing $k$ and the two different incident plane waves. 
For each $a$, the error is constant as $k$ increases, again in agreement with Theorems \ref{th:quant_new}/\ref{th:quant}.
While the errors depend on $a$, the results are consistent with the statement in Theorems \ref{th:quant_new}/\ref{th:quant} that the error can be bounded, from above and below, uniformly in $a$.
\end{experiment}

\begin{table}[h]
\begin{tabular}{c|c|c}
$k$ &  Relative error, incident angle $7\pi/16$ & Relative error, incident angle $\pi/16$ \\
\hline
20 &  0.066501411 & 0.060746128\\
40 & 0.063926342 &  0.061104428\\
80 & 0.063212656 & 0.058719452
\end{tabular}
\caption{The relative error \eqref{eq:rel_error} against $k$ for scattering by a butterfly-shaped obstacle (described in Experiment \ref{exp:butterfly}) and two different incident plane waves.}\label{tab:butterfly}
\end{table}

\begin{experiment}[Trapping created by the impedance boundary]\label{exp:trapping}
We choose $ \Rimp =2$, $R_\PML = 2+0.5$, $k=50$, $a = (10/\sqrt{104},2/\sqrt{104})$, and $\Omegaminus$ the polygon connecting the points
$(0.5,0.125)$, $(0.5,0.5)$, $(-0.5, 0.5)$, $(-0.5,-0.5)$, $(0.8,-0.5)$, $(0.8, -0.125)$, $(0.55, -0.125)$, $(0.55, -0.375)$,
$(-0.375, -0.375)$, $(-0.375, 0.375)$, $(0.25, 0.375)$, $(0.25, 0.125).$
The total fields are plotted in Figure \ref{fig:trapping}.

This set up is not included in Theorems \ref{th:quant_new}/\ref{th:quant}, since $\Omegaminus$ is trapping. However we include this experiment to show that artificial reflections from the impedance boundary $\Gamma_I$ can excite trapped waves not present in the PML solution
 (as long as the incident angle is chosen in a careful way depending on $\Omegaminus$, $k$, and the position of $\Gamma_I$).
\end{experiment}

\begin{figure}
\subfigure[Real part of total field corresponding to $u_{\PML,\hFEM}$]
  {
    \includegraphics[width=0.48\textwidth]{
    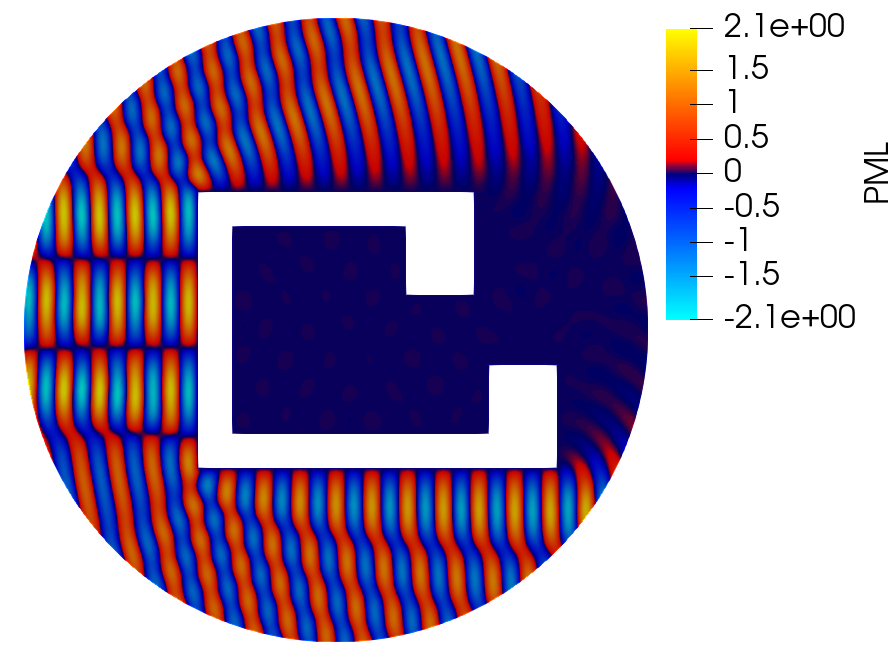}
  }
\subfigure[Real part of total field corresponding to $v_{\hFEM}$]
  {
    \includegraphics[width=0.48\textwidth]{
    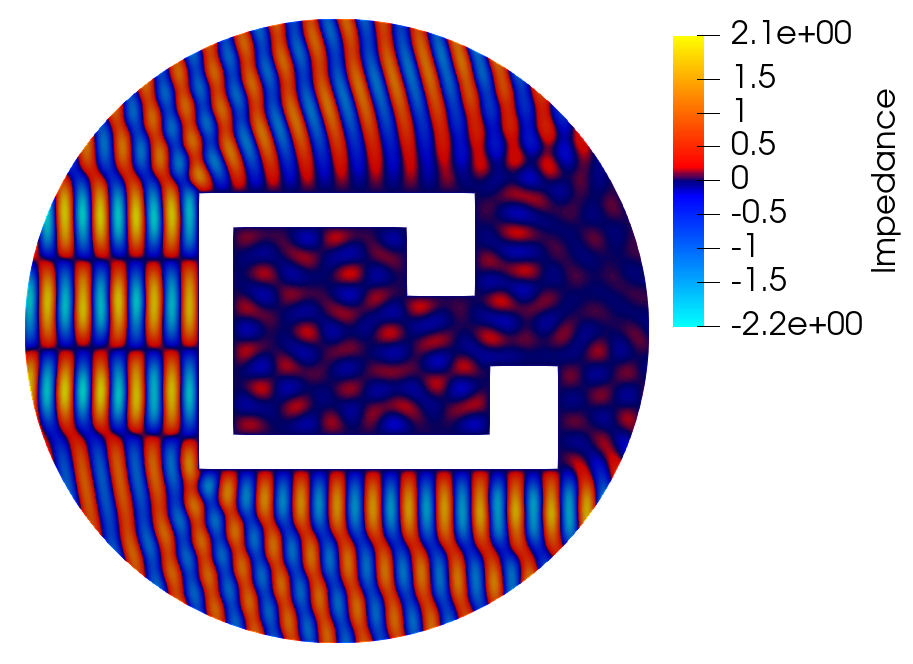}
  }
\subfigure[Real part of error $u_{\PML,\hFEM}- v_{\hFEM}$]
  {
    \includegraphics[width=0.48\textwidth]{
    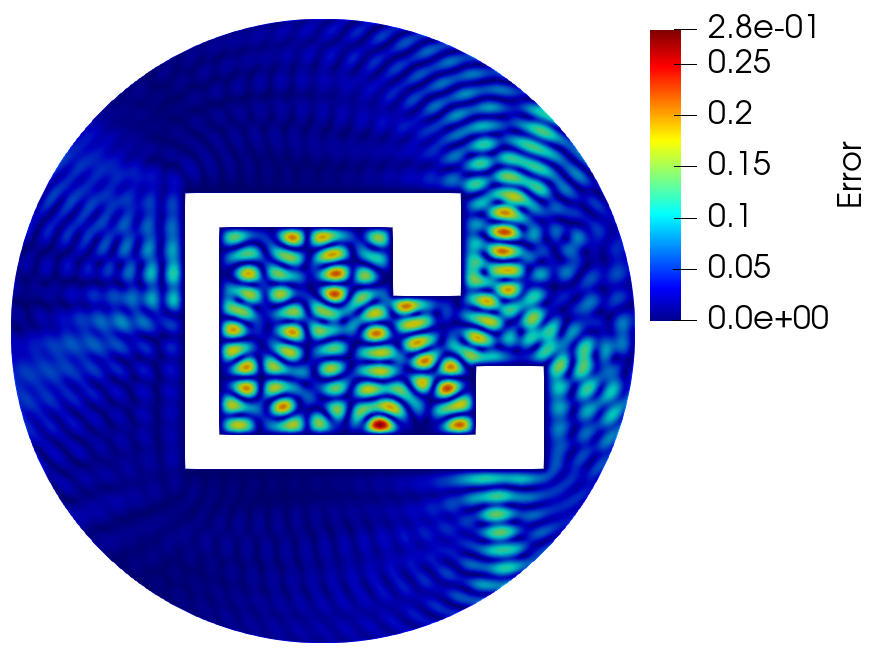}
  }
\caption{Scattering by a trapping obstacle for $k=50$ (as described in Experiment \ref{exp:trapping})}\label{fig:trapping}
\end{figure}

\begin{experiment}[Square $\Gamma_I$, investigating accuracy for increasing $k$ with $\Gamma_I$ fixed]\label{exp:square1}
Both this experiment and Experiment \ref{exp:square2} investigate the effect of a non-circular impedance boundary. $\Gamma_I$ is the square of side length $2R_{\rm square}$ centred at the origin. We still compute our proxy for $u$ using a radial PML, posing the boundary-value problem for $u_{\PML}$ on $B(0,3R_{\rm square}/2)$, with the PML region being $B(0,3R_{\rm square}/2+ 1/2)\setminus B(0,3R_{\rm square}/2)$. Observe that $\Gamma_I \subset B(0,3R_{\rm square}/2)$, and so $\Gamma_I$ is a fixed distance away from the PML region.
We choose $\Omegaminus= B(0,1)$, 
$R_{\rm square}=2, 4, 8 $ (observe that $\Gamma_D$ is then inside $\Gamma_I$ -- as required), and incident direction $a= (\cos \pi/8, \sin \pi/8)$. 
Table \ref{tab:square1} then shows the relative error for increasing $k$.

When $\GammaI= \partial B(0,R)$, Table \ref{tab:ball} showed the error decreasing by roughly a factor of $4$ as $R$ doubled. In Table \ref{tab:square1} we see very different behaviour: going from $R_{\rm square}=2$ to $R_{\rm square} =4$ the error decreases by less than a factor of $2$, and going from $R_{\rm square}=4$ to $R_{\rm square} =8$ the error does not decrease. Although this experiment is not covered by Theorems \ref{th:quant2_new}/\ref{th:quant2}, since the theorem requires $\GammaI$ to be smooth, the behaviour of the error is consistent with the main result of Theorems \ref{th:quant2_new}/\ref{th:quant2}, namely that when $\GammaIinf:= \lim_{R\tendi}(\GammaI /R)$ is not a ball centred at the origin, the relative error is bounded above and below, \emph{independent of $R$}, as $k$ increases.
\end{experiment}

\begin{table}[h]
\begin{tabular}{c|c|c|c}
$k$ &  Relative error for $R_{\rm square}=2$ & Relative error for $R_{\rm square}=4$ & Relative error for $R_{\rm square}=8$ \\
\hline
20 & 0.0832432
 & 
0.0582767
 & 0.0529081\\ 
40 & 0.0802578
&  0.0578435
&  0.0528049\\ 
80 & 0.0772090
& &
\end{tabular}
\caption{The relative error \eqref{eq:rel_error} against $k$ for scattering by the ball of radius $1$ with $\Gamma_I$ a square of side length $2R_{\rm square}$ centred at the origin and incident angle $\pi/8$ (described in Experiment \ref{exp:square1}).}\label{tab:square1}
\end{table}

\begin{experiment}[Square $\GammaI$,  investigating accuracy for increasing ${\rm dist}(\Gamma_I,0)$ with $k$ fixed]\label{exp:square2}
We now investigate the error when $\GammaI$ is a square as $R_{\rm square}$ increases with $k$ fixed. This situation is not covered by any of Theorems \ref{th:lower}-\ref{th:quant3}. However, we include this experiment since its results, along with those in Experiment \ref{exp:square1}, indicate that the lower bound in Theorems \ref{th:quant2_new}/\ref{th:quant2}
holds uniformly in $R$ and $k$.


To investigate the case when $R_{\rm square}$ increases with $k$ fixed, we consider an equivalent problem when $R_{\rm square}$ is fixed, $k$ increases, and the obstacle diameter decreases like $1/k$. 
The set up is as in Experiment \ref{exp:square1} with $R_{\rm square}=2$ (so the PML region is $B(0,3.5)\setminus B(0,3)$), 
$\Omegaminus= B(0, 10/k)$ (so that we need $k>5$ for $\Gamma_D$ to be inside $\Gamma_I$), and the incident direction $a=\pi/8$. 
Figure \ref{fig:square2} plots the relative error for this set up with $k=30$, and
Table \ref{tab:square2} displays the relative error \eqref{eq:rel_error} for $k=
20,40,80$. 
This set up is equivalent to $\Omegaminus= B(0,1)$, $k=10$, and $R_{\rm square}=
4,8,16$, and Table \ref{tab:square2} is labelled with these parameters. 

The fact that the last three entries of Table \ref{tab:square2} and the last entries in the second and third columns of Table \ref{tab:square1} are all around $0.05$ suggests that some value near $0.05$ is a lower bound on the relative error in \emph{both} the limit $k\tendi$ with $R_{\rm square}$ fixed \emph{and}  the limit $R_{\rm square}\tendi$ with $k$ fixed.
\end{experiment}

\begin{table}[h]
\begin{tabular}{c|c}
$R_{\rm square}$ &  Relative error \\
\hline
4 & 0.0593483\\
8 & 0.0532721\\
16 & 0.0515247\\
\end{tabular}
\caption{The relative error \eqref{eq:rel_error} against $R_{\rm square}$ for scattering by the ball of radius $1$ with $\Gamma_I$ a square of side length $2R_{\rm square}$ centred at the origin, $k=10$, and incident 
direction $a=(\cos(\pi/8), \sin(\pi/8))$ (described in Experiment \ref{exp:square2}).}
\label{tab:square2}
\end{table}

\begin{figure}
  {
    \includegraphics[width=0.48\textwidth]{
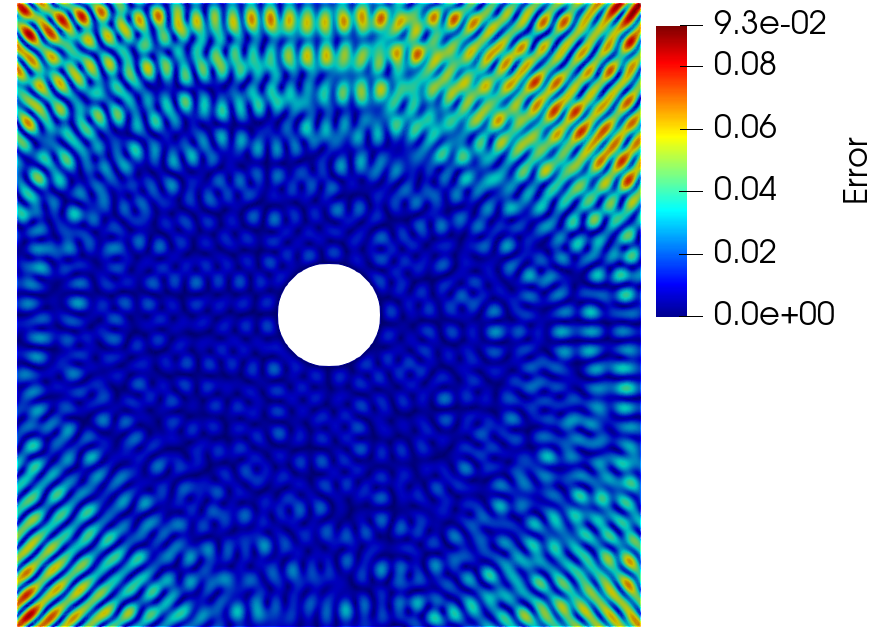}
  }

\caption{Real part of error $u_{\PML,\hFEM}- v_{\hFEM}$ for scattering by a circle radius one with $k=10$, 
$\Gamma_I$ the square of side length $12$ centred at the origin, and incident direction $a=(\cos(\pi/8), \sin(\pi/8))$ (as described in Experiment \ref{exp:square2})}\label{fig:square2}
\end{figure}

\subsection{Comparison to the results of \cite{HaRa:87}}\label{sec:HaRa}
Out of the existing results on absorbing boundary conditions in the literature, the closest to those in the present paper are in \cite{HaRa:87}. 
Indeed, \cite{HaRa:87} used microlocal methods to study the time-domain analogue of the problems \eqref{eq:BVP}/\eqref{eq:BVPimp} when $\Omega_-=\emptyset$ (i.e.,~no obstacle), and proved a bound on the error between the solutions of the analogues of \eqref{eq:BVP}/\eqref{eq:BVPimp} at an arbitrary time.

While the results of the present paper also use microlocal methods (using defect measures instead of propagation of singularities used in \cite{HaRa:87}), differences between the results of the present paper and the results of \cite{HaRa:87} are the following.
\bit
\item The constants in the main error bound in  \cite{HaRa:87} (\cite[Equation 5.1]{HaRa:87}) depend in an unspecified way on time. The results of the present paper hold uniformly for high-frequency in the frequency domain, which is analogous to proving results for arbitrarily-long times in the time domain.
\item The constants in the main error bound in \cite{HaRa:87} are not explicit in the distance of the artificial boundary from the origin. In contrast, the error bounds in Theorems \ref{th:quant2}-\ref{th:local_square} are explicit in $R$.
\item \cite{HaRa:87} does not have to deal with glancings rays because it assumes that (i) $\Omega_-=\emptyset$ and (ii) the data is supported away from the artificial boundary. In contrast, (i) we allow the obstacle $\Omega_-$ to be non-empty and have tangent points, and so have to deal with glancing here, and (ii) we allow $f$ in \eqref{eq:BVPimpdata2} to have support up to the boundary $\GammaIR$ (as is needed to use the bound \eqref{eq:GenImpResolvent} in, e.g., the analysis of finite-element methods); therefore a large part of the analysis in \S\ref{sec:resolventestimates} takes place at glancing.
\eit

\subsection{Outline of paper}

\S\ref{sec:defect} contains results about semiclassical defect measures of Helmholtz solutions, with these results used in proofs of both the upper and lower bounds in Theorems \ref{th:lower}-\ref{th:local_square}.

\S\ref{sec:outgoing} proves three results about outgoing solutions of the Helmholtz equation (i.e., solutions satisfying the Sommerfeld radiation condition \eqref{eq:src}), Lemmas \ref{lem:mass_u}, \ref{lem:impedancetrace}, and \ref{lem:impedancetrace2}, with the first used in the proof of the lower bounds, and the last two used in the proof of the upper bounds.

\S\ref{sec:resolventestimates} proves Theorem \ref{th:uniformestimates} (the wellposedness result). Important ingredients for this proof are the trace bounds of Theorem \ref{th:higherOrderBound}; since the proofs of these are long and technical, they are postponed to \S\ref{sec:traceHO}.

\S\ref{sec:mainproofs} proves Theorems \ref{th:lower}-\ref{th:local_square}. The upper bounds follow immediately from Theorem \ref{th:uniformestimates} and Lemma \ref{lem:impedancetrace}. However, the lower bounds require showing that there exist rays, created by the incident plane wave, that reflect off $\Gamma_D$ and hit $\GammaIR$ at an angle for which the reflection coefficient is not zero. Furthermore, to prove the qualitative bounds Theorems \ref{th:quant}-\ref{th:local_square} we need to control various properties of these rays explicitly in $R$. \S\ref{sec:outlinerays} outlines the ideas used to construct these rays. 

\noi\textbf{Acknowledgements.} The authors thank Martin Averseng (University of Bath), Shihua Gong (University of Bath), and Pierre-Henri Tournier (Sorbonne Universit\'e, CNRS) for their help in performing the numerical experiments in \S\ref{sec:num}. We thank Th\'eophile Chaumont-Frelet (INRIA, Nice) and Ivan Graham (University of Bath) for commenting on an early draft of the introduction; in particular we thank TCF for asking us about the error in subsets of $\domain$, prompting us to prove the results in \S\ref{subsec:error2}.
We also thank the anonymous referee for their constructive comments.  EAS and DL were supported by EPSRC grant EP/R005591/1.
This research made use of the Balena and Nimpus High Performance Computing (HPC) Services at the University of Bath.


\section{Results about defect measures of solutions of the Helmholtz equation}\label{sec:defect}

\subsection{Restatement of the boundary-value problems in semiclassical notation}\label{sec:conventions}

While we anticipate the vast majority of ``end users'' of Theorems \ref{th:lower}, \ref{th:quant}, \ref{th:quant2}, and \ref{th:quant3} will use the Helmholtz equation in the form \eqref{eq:BVP} with frequency $k$ (and be interested in the limit $k\tendi$), the tools and existing results from semiclassical-analysis that we use to prove these results are more convenient to write using the semiclassical parameter $h=k^{-1}$ (and the corresponding limit $h\tendo$).

The boundary-value problem \eqref{eq:BVP} therefore becomes, 
\begin{subequations}\label{eq:BVP2}
\begin{align}
(-h^{2}\Delta-1)u=0\qquad&\text{in }\Omegaplus,\label{eq:PDE2}\\
u=\exp(ix\cdot a /h)
\qquad&\text{on }\Gsc, \quad\text{ and}\label{eq:BC2}\\
h\frac{\partial u}{\partial r} - i  u  = o \left( \frac{1}{r^{(d-1)/2}}\right)\qquad&\text{as } r\tendi,
\end{align}
\end{subequations}
and the boundary-value problem \eqref{eq:BVPimp} becomes, 
\begin{subequations}\label{eq:BVPimp2}
\begin{align}
(-h^{2}\Delta-1)v=0\qquad&\text{in }\domain,\\
v=\exp(ix\cdot a /h)
\qquad&\text{on } \Gsc,\label{eq:BCimp2}\\
\bcN h\partial_n v - i\bcD v =0
\quad&\text{on }\GammaI.
\label{eq:BVPimp2imp}
\end{align}
\end{subequations}
In the rest of the paper, we use the ``$h$-notation'' instead of the ``$k$-notation''.

Appendix \ref{sec:appendix} recaps semiclassical pseudodifferential operators and associated notation.

\subsection{The Helmholtz equation posed a Riemannian manifold $M$}

While the main results of this paper concern the Helmholtz equation posed in $\domain \subset \Rea^d$, in the rest of this section 
(\S\ref{sec:defect}), in \S\ref{sec:resolventestimates}, and in \S\ref{sec:traceHO}, 
unless specifically indicated otherwise, we consider the Helmholtz equation posed on a Riemannian manifold $M$ with smooth boundary $\boundary$ and such that there exists a smooth extension $\widetilde{M}$ of $M$.
The reason we do this is that 
we expect the intermediary results of Theorems \ref{t:propagate} and  \ref{th:higherOrderBound} to be of interest in this manifold setting, independent of their application in proving the main results (Theorems \ref{th:lower}-\ref{th:local_square}).
This manifold setting involves the operator $P:= -h^2 \Delta_g-1$, where $\Delta_g$ is the metric Laplacian.
Nevertheless, for the reader unfamiliar with this set up, we highlight that $M$ can be replaced by $\domain$, $\widetilde{M}$ replaced by $\Rea^d$, and $\Delta$ replaced by $\Delta_g$, and all the statements and proofs remain unchanged.

\subsection{The local geometry and the flow} \label{subsec:geo}

Near the boundary $\boundary$,
we use Riemannian/Fermi normal coordinates $(x_{1},x')$, in which $\Gamma$
is given by $\{x_{1}=0\}$ and $\domain$ is $\{x_{1}>0\}$.  
The conormal and cotangent variables are given by $(\xi_{1},\xi')$. In these coordinates,
\beq\label{eq:P}
P:=-h^2\Delta_g -1= (hD_{x_1})^2 - R(x_{1},x',hD_{x'}) +h\big(a_1(x)hD_{x_1}+a_0(x,hD_{x'})\big).
\eeq
where $a_1\in C^\infty$, $a_0$ and $R$ are tangential pseudodifferential operators (in sense of \S\ref{sec:tangential}), with $a_0$ of order $1$, and $R$ of order $2$ with $h$-symbol $r(x_{1},x',\xi')$, with $r(0,x',\xi')=1-|\xi'|_{g_{\Gamma}}^2$ (where the metric $g_\Gamma$ in the norm is that induced by the boundary). That is, $r(0,x',\xi')$ is the symbol of one plus the tangential Laplacian; 
in what follows, we often abbreviate $r(0,x',\xi')$ to $r(x',\xi')$.

The fact that $P$ is self adjoint implies that $R$ is self adjoint, $a_1=\overline{a_1}$, and $[hD_{x_1},a_1]=a_0 -(a_0)^*$ (with the latter two conditions obtained by integration by parts in the $x_1$ variable near $\Gamma$).
Let $p$ denote the semiclassical principal symbol of $P:= -h^2 \Delta_g - 1$, i.e.~$p= |\xi|^2_g-1$. In a classical way (see, e.g., \cite[\S24.2 Page 423]{Ho:85}), the cotangent bundle to the boundary $T^{*}\boundary $
is divided in three regions, corresponding to the number of solutions
of the second order polynomial equation $p(\xi_{1})=0$:
\begin{itemize}
\item the \emph{elliptic region} $\mathcal{E}:=\big\{(x',\xi')\in T^{*}\boundary ,\ r(x',\xi')<0\big\}$,
where this equation has no solution, 
\item the \emph{hyperbolic region} $\mathcal{H}:=\big\{(x',\xi')\in T^{*}\boundary ,\ r(x',\xi')>0\big\}$,
where it has two distinct solutions 
\beq\label{eq:xiinout}
\xi_{1}^{\text{in}}=-\sqrt{r(x',\xi')}
\quad\tand\quad
\xi_{1}^{\text{out}}=\sqrt{r(x',\xi')},
\eeq
\item the \emph{glancing region} $\mathcal{G}:=\big\{(x',\xi')\in T^{*}\boundary ,\ r(x',\xi')=0\big\}$,
where it has exactly one solution, $\xi_{1}=0$.
\end{itemize}
The hyperbolic region plays a crucial role in obtaining the lower bounds in the main results, while we perform analysis near glancing to obtain the upper bounds.

With $p= |\xi|^2_g-1$ (i.e., the semiclassical principal symbol of $P:= -h^2 \Delta_g - 1$), the Hamiltonian vector field of $p$ is defined for compactly supported $a$ by 
\[
H_{p}a:=\left\{ p,a\right\},
\]
where $\{\cdot,\cdot\}$ denotes the Poisson bracket. 
Let $H_{p}^{*}$ denote the formal adjoint of $H_p a$, and let $\varphi_t(\rho)$ denote the \emph{generalised bicharacteristic flow} in $M$ (see \cite[\S24.3]{Ho:85}), defined such that
\beq\label{eq:flow}
(t,\rho)\in\mathbb{R}\times S^{*}_{\overline{M}}\widetilde{M}
\rightarrow\varphi_{t}(\rho)\in S^{*}_{\overline{M}}\widetilde{M}.
\eeq
In particular, when $M=\domain$ and $\widetilde{M}=\Rea^d$, 
$\varphi_{t}(\rho)\in S^*_{\overline{\domain}}\mathbb{R}^d:= \{(x,\xi) \in S^*\mathbb{R}^{d}, x\in \overline{\domain}\}=
 \{ x\in \overline{\domain}, \xi \in \Rea^d \text{ with } |\xi|=1\}$. By Hamilton's equations, away from the boundary of $M$, this flow satisfies $\dot{x}_i= 2 \xi_i$ and $\dot{\xi}_i=0$, so that it has speed $2$ (since $|\xi|=1$). Recall that the projection of the flow in the spatial variables are the \emph{rays}.

We now defined some projection maps.
Let $\projxM: T^*\widetilde{M} \to \widetilde{M}
$ be defined by $\projx(x,\xi)=x$.
Let $\pi_{\boundary}:  T_\boundary^{*}\widetilde{M}\cap \{p=0\} \rightarrow T^*\boundary$ be defined by 
\beq\label{eq:pigamma}
\pi_{\boundary}\big(0,x',\xi_{1},\xi'\big)=(x',\xi').
\eeq
Let $\pi_{\boundary,\rm in}:= \pi_{\boundary} |_{\xi_1<0}$ 
and let $\pi_{\boundary, {\rm out}}:= \pi_{\boundary} |_{\xi_1>0}$.

\bre[The Dirichlet-to-Neumann map away from glancing in local coordinates]\label{rem:applied1}
In the notation above, locally on $\GammaI$, the map $u \mapsto h D_{x_1}u = -h \partial_n u/i$ has semiclassical principal symbol $-\sqrt{r(x',\xi')}$. The minus sign in front of the square root is chosen since, when $\xi'=0$ (i.e.~$u$ corresponds to a normally-incident wave), the outgoing condition is that $h D_{x_1}u= -u$ (i.e.~$\partial_n u= i k u$), as opposed to $hD_{x_1}u= u$ (i.e.~$\partial_n u = -i k u$).
\ere

\subsection{Existence and basic properties of defect measures}

We first assume that $u\in L_{\operatorname{loc}}^2(\mathbb{R}^d)$ is a solution to 
\begin{equation}
\label{eq:helmholtzNoBoundary}
Pu:= (-h^2\Delta_g-1)u=hf \quad\text{ on } U, 
\qquad u|_{\mathbb{R}^d\setminus \overline{U}}=0,
\end{equation}
where $U\subset \mathbb{R}^d$ is open with smooth boundary $\Gamma$ and $f\in L^2_{\rm comp}(\Rea^d)$. When taking traces of $u$, we always do so from $U$ rather than from $\mathbb{R}^d\setminus \overline{U}$.
To define the defect measures associated with $u$ 
we need the following boundedness assumption.

\begin{assumption}\label{ass:1}
Given $\chi\in C_{c}^{\infty}(\mathbb{R}^{d})$, there exists $C>0$,
and $h_{0}>0$ such that for any $0<h\leq h_{0}$ 
\beqs
\Vert\chi u\Vert_{L^{2}(U)}+\Vert u\Vert_{L^2(\Gamma)}+\Vert h\partial_{n}u\Vert_{L^{2}(\Gamma)}\leq C.
\eeqs
\end{assumption}

We highlight that Assumption \ref{ass:1} is satisfied when the problem is nontrapping; see Lemma \ref{l:defectExist} below.

\begin{theorem}[Existence of defect measures] \label{th:ex_defect}
Suppose that $u_{h_k}$ solves~\eqref{eq:helmholtzNoBoundary} and satisfies Assumption~\ref{ass:1}. Then there exists a subsequence $h_{k_\ell}\to 0$ and non-negative Radon measures $\mu$ and $\mu^j$on
$T^*\widetilde{M}$, $\nu_{d}$, $\nu_{n}$, $\nu_{j}$ on $T^{*}\boundary $ such that for any symbol $b\in C_c^\infty(T^*\widetilde{M})$ and tangential symbol $a\in C_c^\infty(T^{*}\boundary )$,
as $\ell\tendi$
\begin{align}\label{eq:jointmeasure}
&\big\langle b(x,h_{k_\ell}D_{x})u,u\big\rangle\rightarrow\int b(x,\xi)\ d\mu, \qquad\qquad
\big\langle b(x,h_{k_\ell}D_{x})u,f\big\rangle\rightarrow\int b(x,\xi)\ d\mu^j, \\ \nonumber
&\big\langle a(x',h_{k_\ell}D_{x'})u,u\big\rangle_{\Gamma}\rightarrow\int a(x',\xi')\ d\nu_{d},\qquad
\big\langle a(x',h_{k_\ell}D_{x'})h_{k_\ell}D_{x_1}u,u\big\rangle_{\Gamma}\rightarrow\int a(x',\xi')\ d\nu_{j},\\ \nonumber
&\big\langle a(x',h_{k_\ell}D_{x'})h_{k_\ell}D_{x_1}u,h_{k_\ell}D_{x_1}u\big\rangle_{\Gamma}\rightarrow\int a(x',\xi')\ d\nu_{n}.
\end{align}
\end{theorem}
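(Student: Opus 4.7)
The plan is to adapt the standard construction of semiclassical defect measures (as in \cite{Zworski} for the interior case, together with the boundary-measure machinery of \cite{Miller}) to our setting, treating the interior, boundary, and joint pieces in parallel. For each $b\in C_c^\infty(T^*\widetilde M)$ and tangential $a\in C_c^\infty(T^*\boundary)$, I would introduce the linear functionals
\[
L_h(b):=\big\langle b(x,hD)u,u\big\rangle,\quad L_h^j(b):=\big\langle b(x,hD)u,f\big\rangle,
\]
and their boundary analogues $L_h^d(a),L_h^n(a),L_h^j(a)$ defined from the three pairings in \eqref{eq:jointmeasure}. The Calderón--Vaillancourt theorem gives $\|b(x,hD)\|_{L^2\to L^2}\leq C\|b\|_{C^N}$ for some $N=N(d)$; combined with Assumption~\ref{ass:1} and a cutoff $\chi\equiv 1$ on $\projx(\supp b)$, this yields $|L_h(b)|\leq C\|b\|_{C^N}$ uniformly in $h$, and similarly for the boundary functionals via the uniform control of $\|u\|_{L^2(\Gamma)}$ and $\|h\partial_n u\|_{L^2(\Gamma)}$. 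The joint functional $L_h^j(b)$ is controlled by Cauchy--Schwarz, using that $f\in L^2_{\mathrm{comp}}$.

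Next, a Cantor diagonal argument extracts a single subsequence $h_{k_\ell}\to0$ along which $L_{h_{k_\ell}}(b_m)$, and each boundary/joint analogue, converges for every element $b_m$ (resp.\ $a_m$) of a fixed countable dense subset of the relevant $C_c^\infty$ space (stratified by compact support and Fréchet seminorms). The uniform $C^N$-bounds from the first step then promote this to convergence for every $b\in C_c^\infty(T^*\widetilde M)$ and every $a\in C_c^\infty(T^*\boundary)$, producing continuous linear functionals $L,L^j,L^d,L^n,L^j$ on these spaces.

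To obtain the \emph{non-negative} Radon measures $\mu,\nu_d,\nu_n$, I would invoke the sharp Gårding inequality: for $b\geq 0$, $\langle b(x,hD)u,u\rangle\geq -Ch\|\chi u\|_{L^2}^2$, so $L(b)\geq 0$; the Riesz representation theorem then gives $\mu$. The tangential sharp Gårding inequality does the same for $\nu_d$ and $\nu_n$ on $T^*\boundary$. For the complex/signed measures $\mu^j$ and $\nu_j$ (which record pairings that are not \emph{a priori} real) I would apply a (signed) Riesz theorem to $L^j$ and to the appropriately symmetrised functional $a\mapsto \tfrac12(L_h^j(a)+\overline{L_h^j(a^*)})$ and its imaginary counterpart; continuity of the tangential calculus, together with $[h D_{x_1},a(x',hD_{x'})]=O(h)$ on $\Gamma$, shows that the imaginary parts are controlled by $h\|u\|_{L^2(\Gamma)}\|hD_{x_1}u\|_{L^2(\Gamma)}$ and hence vanish in the limit, so $\nu_j$ is a real signed Radon measure.

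The main technical obstacle is the boundary part: unlike the interior case, there is no global smooth extension of $hD_{x_1}u$ off $\Gamma$, so one must work purely with tangential operators and the traces supplied by Assumption~\ref{ass:1}, and then check that the various bilinear pairings have limits independent of how the test operators are quantised (right/left/Weyl). This is exactly the content of the boundary calculations carried out in~\cite{Miller}; the argument here is the same modulo keeping track of the fact that $Pu=hf$ (rather than $Pu=0$), which enters only through the off-diagonal functional $L_h^j$ and is handled by the Cauchy--Schwarz bound noted above.
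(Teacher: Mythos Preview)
The paper does not actually prove this theorem: its entire ``proof'' is the single line \emph{See \cite[Theorem 5.2]{Zworski\_semi}}. Your sketch is essentially the standard argument that appears in Zworski (Calder\'on--Vaillancourt for uniform bounds, diagonal extraction, sharp G\aa rding for positivity, Riesz representation), so at the level of approach there is nothing to compare.

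One small correction: your last paragraph asserts that the imaginary part of the joint boundary functional vanishes in the limit and hence $\nu_j$ is a \emph{real} signed measure. This is not right. The pairing $\langle a(x',hD_{x'})hD_{x_1}u,\,u\rangle_\Gamma$ is not self-adjoint even when $a$ is real, and the limit need not be real; in the paper $\nu_j$ is a complex Radon measure (note that Lemma~\ref{lem:key_Miller} and Theorem~\ref{t:propagate} systematically refer to $\Re\nu_j$). Your commutator argument does not force reality: what the remark after the theorem actually shows is the symmetry $\int a\,d\nu_j=\overline{\int \bar a\,d\nu_j}$, i.e.\ $\nu_j$ is conjugation-symmetric as a distribution, not that it is real. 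Likewise, despite the wording of the statement, $\mu^j$ is a complex measure, not a non-negative one. None of this affects the extraction argument---you simply apply Riesz to the real and imaginary parts separately---but you should drop the claim that $\nu_j$ is real.
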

\begin{proof}[Reference for the proof]
See \cite[Theorem 5.2]{Zworski_semi}.
\end{proof}

\bre[The measure $\nu_j$]
The joint measure $\nu_j$ also describes pairings with the Neumann and Dirichlet traces swapped, 
since, by \eqref{eq:symbol}, 
\beqs
\big\langle a(x',h_{k_\ell}D_{x'})u,h_{k_\ell}D_{x_1}u\big\rangle_{\Gamma}
=\overline{\big\langle a(x',h_{k_\ell}D_{x'})^*h_{k_\ell}D_{x_1}u,u\big\rangle_{\Gamma}}
\rightarrow\overline{\int \overline{a}\ d\nu_{j}} =\int a\ d\nu_{j}.
\eeqs
\ere

We use the notation that $\mu(a) := \int a\, d \mu$ for the pairing of a function and a measure. 
We also use the notation that $b\mu (f) := \int f\, b\, d\mu$, where $b \in L^\infty(d \mu)$ and $f\in L^1(d\mu)$.

We now recall the following two fundamental results.
\begin{lem}[Invariance and support of defect measures]\label{lem:inv}
Let $u$ satisfy \eqref{eq:helmholtzNoBoundary} and let $\mu$ be a defect measure of $u$.

(i) In the interior of $U$, 
\begin{equation} \label{eq:inv_int}
\mu(H_pa)=-2\Im\mu^{j}(a)
\end{equation}
for all $a\in C_c^\infty(T^*U)$;
in particular, if $f=o(1)$ as $h\tendo$, then $\mu$ is invariant under the flow.

(ii) $\mu$ is supported in the characteristic set:
\begin{equation}
\supp \mu\cap T^*U\subset\Sigma_{p}:=\{p=0\}.\label{eq:carac}
\end{equation}
\end{lem}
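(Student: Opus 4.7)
My plan is to derive both statements from standard semiclassical commutator identities, exploiting the fact that a compactly supported symbol in $T^*U$ produces no boundary contributions when integrated by parts against $P$.

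For part (i), I would fix real-valued $a\in C_c^\infty(T^*U)$ and let $A$ be a self-adjoint quantization of $a$. Since $\operatorname{supp} a\Subset T^*U$, every function $Au$ is supported in the interior of $U$, so pairings involving $P=-h^2\Delta_g-1$ produce no boundary terms. Using $Pu=hf$ and the self-adjointness of both $P$ and $A$,
\[
\langle [P,A]u,u\rangle=\langle Au,Pu\rangle-\langle APu,u\rangle=h\langle Au,f\rangle-h\langle Af,u\rangle=2ih\,\Im\langle Au,f\rangle.
\]
The semiclassical calculus provides $[P,A]=\tfrac{h}{i}\operatorname{Op}(H_p a)+h^2 E$ with $\|E\|_{L^2\to L^2}=O(1)$. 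Dividing by $h$, using the $L^2$ control of $u$ on $\operatorname{supp} a$ from Assumption~\ref{ass:1}, and passing to the limit along the subsequence supplied by Theorem~\ref{th:ex_defect} gives
\[
-i\,\mu(H_p a)=2i\,\Im\mu^j(a),
\]
which rearranges to $\mu(H_p a)=-2\Im\mu^j(a)$. When $f=o(1)$ in $L^2$, the joint measure $\mu^j$ vanishes and hence $\mu(H_p a)=0$ for every real test symbol, which is precisely the invariance of $\mu$ under the Hamiltonian flow of $p$ in the interior of $U$.

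For part (ii), I would use an elliptic parametrix. Fix $\chi\in C_c^\infty(T^*U\setminus \Sigma_p)$; since $p$ is bounded away from zero on $\operatorname{supp}\chi$, the symbol $q:=\chi/p$ belongs to $C_c^\infty(T^*U)$. Setting $Q:=\operatorname{Op}(q)$, the composition formula gives $\operatorname{Op}(\chi)=QP+hF$ with $F$ uniformly $L^2$-bounded. Testing against $u$ and using $Pu=hf$,
\[
\langle \operatorname{Op}(\chi)u,u\rangle=h\langle Qf,u\rangle+h\langle Fu,u\rangle=O(h),
\]
so letting $h\to 0$ along the subsequence yields $\mu(\chi)=0$. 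Since $\chi$ ranged over arbitrary test symbols supported off $\Sigma_p$, we conclude $\operatorname{supp}\mu\cap T^*U\subset\Sigma_p$.

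Neither step presents a serious obstacle; the only point requiring care is keeping the test symbols compactly supported inside the open set $T^*U$, so that every manipulation stays away from $\Gamma$ and the standard boundaryless semiclassical calculus can be applied verbatim.
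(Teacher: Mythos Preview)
Your proposal is correct and follows exactly the standard commutator/elliptic-parametrix argument that underlies the references the paper cites (G\'erard, \cite[Theorem~5.4]{Zworski_semi}, \cite[Theorem~E.44]{DyZw:19} for part~(i); \cite[Lemma~1.3]{Miller}, \cite[Lemma~4.2]{GaSpWu:20} for part~(ii)); the paper itself does not spell out a proof but simply points to these sources.
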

\begin{proof}[References for the proof]
\eqref{eq:inv_int} was originally proved in \cite{Gerard}; see also \cite[Theorem 5.4]{Zworski_semi}, \cite[Theorem E.44]{DyZw:19}. \eqref{eq:carac} was proved in the framework with boundary by \cite[Lemma 1.3]{Miller}; see also \cite[Lemma 4.2]{GaSpWu:20}.
\end{proof}

Part (ii) of Lemma \ref{lem:inv} implies that $\mu$ is only supported on $|\xi|=1$; this is the reason why we only consider the flow \eqref{eq:flow} defined on $S^{*}_{\overline{M}}\widetilde{M}$.

\subsection{Evolution of defect measures under the flow}

\begin{lem}[Integration by parts]
\label{lem:intByParts}
Let $B_i\in C_c^\infty((-2\delta,2\delta)_{x_1};\Psi^{\ell_i}(\Rea^{d-1}))$,
 $i=1,2$, and let 
$B=B_0+B_1hD_{x_1}$. 
If
\begin{equation} \label{eq:B0B1}
B_1^*=B_1,\qquad B_0^*+[hD_{x_1},B_1]=B_0,
\end{equation}
then, for all $u \in C^\infty(\overline{M})$,
\begin{align}\nonumber
&\frac{i}{h}\big\langle  [P,B]u,u\big\rangle _{L^2(M)}=-\frac{2}{h}\Im\big\langle  Bu,Pu\big\rangle _{L^2(M)}  \\ \nonumber
&\hspace{0.5cm}
-\big\langle  B_1 hD_{x_1}u,hD_{x_1}u\big\rangle _{L^2(\boundary)} 
-\big\langle  \big(B_0+h(D_{x_1}B_1)-h(B_1 a_1 -\overline{a_1}B_1)\big)hD_{x_1}u,u\big\rangle _{L^2(\boundary)}\\
&\hspace{0.5cm} 
-\big\langle  B_0u,hD_{x_1}u\big\rangle _{L^2(\boundary)}
-\big\langle \big( h(D_{x_1}B_0)+B_1(R-ha_0)+h\overline{a_1}B_0\big)u,u\big\rangle _{L^2(\boundary)},
\label{eq:ibps}
\end{align}
\end{lem}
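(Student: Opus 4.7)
The plan is to derive \eqref{eq:ibps} by combining two integration-by-parts identities: Green's identity for the self-adjoint operator $P$, and a companion identity expressing that $B$, while formally self-adjoint, picks up a boundary correction on $L^2(M)$ from the $hD_{x_1}$ inside $B_1 hD_{x_1}$. Specifically, integrating by parts in $x_1$ (twice for $(hD_{x_1})^2$, once for $h a_1 hD_{x_1}$, and using $a_0-a_0^*=[hD_{x_1},a_1]$ to cancel the interior remainder coming from $h a_0$) will yield
\begin{equation*}
\mathcal{B}_P(v,w):=\langle Pv,w\rangle-\langle v,Pw\rangle = ih\langle hD_{x_1}v,w\rangle_{\Gamma}+ih\langle v,hD_{x_1}w\rangle_{\Gamma}+ih^2\langle a_1 v,w\rangle_{\Gamma},
\end{equation*}
while the assumptions $B_1^*=B_1$ and $B_0^*+[hD_{x_1},B_1]=B_0$, together with one integration by parts inside $\langle B_1 hD_{x_1}\phi,\psi\rangle$, give the companion identity $\langle B\phi,\psi\rangle - \langle\phi,B\psi\rangle = ih\langle\phi,B_1\psi\rangle_{\Gamma}$.

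Applying the first identity with $v=Bu,\,w=u$ to $\langle PBu,u\rangle$, and the second with $\phi=Pu,\,\psi=u$ to $\langle BPu,u\rangle$, and then using $B_1^*=B_1$ to rewrite $\overline{\langle B_1 u,Pu\rangle_{\Gamma}}=\langle B_1 Pu,u\rangle_{\Gamma}$, one obtains
\begin{equation*}
\tfrac{i}{h}\langle[P,B]u,u\rangle = -\tfrac{2}{h}\Im\langle Bu,Pu\rangle + \tfrac{i}{h}\mathcal{B}_P(Bu,u) + \langle B_1 Pu,u\rangle_{\Gamma}.
\end{equation*}
The final step is to expand $\tfrac{i}{h}\mathcal{B}_P(Bu,u)$, substituting $Bu = B_0 u + B_1 hD_{x_1}u$ and using the key rewrite
\begin{equation*}
hD_{x_1}B = B_1 P + \big[B_0+h(D_{x_1}B_1)-hB_1 a_1\big]hD_{x_1} + h(D_{x_1}B_0)+B_1(R-ha_0),
\end{equation*}
obtained by commuting $hD_{x_1}$ past $B_0,B_1$ and substituting $(hD_{x_1})^2 = P + R - h(a_1 hD_{x_1}+a_0)$. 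Tracing on $\Gamma$ and pairing with $u$, grouping the resulting terms by whether they pair with $u$ or with $hD_{x_1}u$, and using $\overline{a_1}=a_1$ to rewrite $-hB_1 a_1+ha_1 B_1 = -h(B_1 a_1-\overline{a_1}B_1)$ and $h a_1 B_0 = h\overline{a_1}B_0$, will exactly reproduce the four boundary terms on the right-hand side of \eqref{eq:ibps}; the $-\langle B_1 Pu,u\rangle_{\Gamma}$ contribution coming from the $B_1 P$ piece of $hD_{x_1}B$ will cancel the $+\langle B_1 Pu,u\rangle_{\Gamma}$ from the display above.

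The main bookkeeping obstacle is the trace $(hD_{x_1})^2 u|_{\Gamma}$ that appears when $hD_{x_1}$ is applied to $B_1 hD_{x_1}u$ and then restricted to $\Gamma$; it must be rewritten via the equation for $P$, which produces the apparently spurious boundary pairing $\langle B_1 Pu,u\rangle_{\Gamma}$. Verifying that this pairing cancels exactly against the non-self-adjointness defect of $B$ on $L^2(M)$ is the nontrivial consistency check behind the clean final form of \eqref{eq:ibps}; the rest of the computation is routine operator manipulation.
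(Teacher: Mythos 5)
Your proposal is correct and follows essentially the same route as the paper's proof: the same Green's identity for $P$ (with the $[hD_{x_1},a_1]=a_0-a_0^*$ cancellation), the same rewrite of $hD_{x_1}B$ via $(hD_{x_1})^2=P+R-h(a_1hD_{x_1}+a_0)$, and the same companion identity $\langle Bv,u\rangle-\langle v,Bu\rangle=ih\langle v,B_1u\rangle_{\Gamma}$ derived from \eqref{eq:B0B1}, with the resulting cancellation of the $\langle B_1Pu,u\rangle_{\Gamma}$ terms. The only difference is cosmetic ordering: you apply the companion identity to $\langle BPu,u\rangle$ at the outset, whereas the paper applies it at the end.
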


\begin{cor}\label{cor:ibps}
Let $u$ satisfy Assumption \ref{ass:1} and thus have defect measures as in Theorem~\ref{th:ex_defect}. 
Given $a\in C_c^\infty (T^*\widetilde{M})$, 
let 
\[
a_{\rm even}(x,\xi_{1},\xi'):=\frac{a(x,\xi_{1},\xi')+a(x,-\xi_{1},\xi')}{2},\quad a_{\rm odd}(x,\xi_{1},\xi'):=\frac{a(x,\xi_{1},\xi')-a(x,-\xi_{1},\xi')}{2\xi_{1}},
\]
so that $a(x,\xi_{1},\xi')= a_{\rm even}(x,\xi_{1},\xi') + \xi_1 a_{\rm odd}(x,\xi_{1},\xi')$.
Then
\begin{equation}
\mu(H_{p}a)=-2\Im \mu^j(a) -\nu_{n}(a_{\rm odd})-2\Re \nu_j(a_{\rm even})-\nu_{d}({r(x',\xi')}a_{\rm odd}).\label{eq:H_p_star}
\end{equation}
\end{cor}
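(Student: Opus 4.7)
The plan is to apply Lemma~\ref{lem:intByParts} to a carefully chosen self-adjoint operator $B$ whose principal symbol matches $a$ on the characteristic set, and then pass to the semiclassical limit $h\to 0$ using the trace and interior convergences guaranteed by Theorem~\ref{th:ex_defect}. Note that the identity \eqref{eq:H_p_star} is $\mathbb{R}$-linear in $a$ (the terms $\mu(H_p a)$, $\nu_n(a_{\rm odd})$, $\nu_d(ra_{\rm odd})$ are $\mathbb{C}$-linear, while $\Im\mu^j(a)$ and $\Re\nu_j(a_{\rm even})$ are $\mathbb{R}$-linear), so by decomposing $a=\Re a + i\Im a$ it suffices to prove the identity for real-valued $a$.

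For real $a$, both $a_{\rm even}$ and $a_{\rm odd}$ are real. I would first construct tangential pseudodifferential operators $B_0,B_1$ supported in a small collar neighbourhood of $\boundary$ with principal symbols $a_{\rm even}$ and $a_{\rm odd}$ respectively; these are even functions of $\xi_1$ and, on the characteristic set $\{p=0\}$ where $\xi_1^2=r(x,\xi')$, may be expressed as symbols in $(x,\xi')$ alone. Next, symmetrise so that $B_1^*=B_1$, and correct $B_0$ by a lower-order term so that $B_0=B_0^*+[hD_{x_1},B_1]$, i.e., so that $B:=B_0+B_1hD_{x_1}$ is self-adjoint and the hypotheses \eqref{eq:B0B1} of Lemma~\ref{lem:intByParts} are met. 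With this choice, $\sigma(B)=a_{\rm even}+\xi_1 a_{\rm odd}=a$.

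Applying Lemma~\ref{lem:intByParts} and using $Pu=hf$ from \eqref{eq:helmholtzNoBoundary}, the left-hand side reads
\begin{equation*}
\tfrac{i}{h}\langle [P,B]u,u\rangle_{L^2(M)}-2\Im\langle Bu,f\rangle_{L^2(M)}.
\end{equation*}
Since $\frac{i}{h}[P,B]$ has principal symbol $\{p,a\}=H_p a$, the first term converges to $\mu(H_p a)$ and the second to $-2\Im\mu^j(a)$, using the convergences in \eqref{eq:jointmeasure}. For the four boundary terms, the $O(h)$ contributions (those involving $hD_{x_1}B_j$, $ha_1$, $ha_0$) vanish in the limit thanks to the trace bounds in Assumption~\ref{ass:1}. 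The leading-order pieces converge, by Theorem~\ref{th:ex_defect} and the identification $\sigma(B_j)|_{\Gamma}$, to $-\nu_n(a_{\rm odd})$ from $-\langle B_1 hD_{x_1}u,hD_{x_1}u\rangle_\Gamma$; to $-\nu_j(a_{\rm even})$ from $-\langle B_0 hD_{x_1}u,u\rangle_\Gamma$; to $-\overline{\nu_j(a_{\rm even})}=-\nu_j(a_{\rm even})$ from $-\langle B_0 u,hD_{x_1}u\rangle_\Gamma$ (using the remark following Theorem~\ref{th:ex_defect} together with $a_{\rm even}$ real); and to $-\nu_d(r\,a_{\rm odd})$ from $-\langle B_1 R u,u\rangle_\Gamma$. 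Combining the two middle contributions gives $-2\nu_j(a_{\rm even})=-2\Re\nu_j(a_{\rm even})$, establishing \eqref{eq:H_p_star} for real $a$, and then for all $a$ by $\mathbb{R}$-linearity.

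The main obstacle is the construction in the second paragraph: we must find tangential $B_0,B_1$ with the prescribed principal symbols and satisfying the \emph{exact} algebraic self-adjointness constraint $B_0=B_0^*+[hD_{x_1},B_1]$ demanded by \eqref{eq:B0B1}, while simultaneously verifying that the inevitable subprincipal corrections do not contaminate the identification of the boundary limits in Step~3. Once this is arranged, the remaining arguments are a direct application of symbol calculus and the compactness-of-measures machinery already in place.
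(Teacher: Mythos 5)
Your proposal is correct and follows essentially the same route as the paper: reduce to real $a$ by linearity, replace $a_{\rm even}$ and $a_{\rm odd}$ (even in $\xi_1$) by tangential symbols via the substitution $\xi_1^2\mapsto r(x_1,x',\xi')$ so that they agree with the originals on $\{p=0\}$, symmetrise to get $B_1^*=B_1$ and set $B_0=\tfrac12(\widetilde{a}_{\rm even}+\widetilde{a}_{\rm even}^*)+\tfrac12[hD_{x_1},B_1]$ so that \eqref{eq:B0B1} holds exactly, then pass to the limit in Lemma~\ref{lem:intByParts}. The "main obstacle" you flag is resolved in the paper by precisely this explicit symmetrisation, together with the observation that $H_p(\xi_1^2-r)=0$ guarantees $\mu(H_p\widetilde{a})=\mu(H_pa)$ on the support of $\mu$.
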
 

\begin{proof}[Proof of Lemma \ref{lem:intByParts}]
First recall that $R$ is self adjoint, $a_1=\overline{a_1}$, and $[hD_{x_1},a_1]=a_0 -(a_0)^*$; see \S\ref{subsec:geo}.
By integration by parts,
\beqs
\big\langle  (hD_{x_1})^2Bu,u\big\rangle _{L^2(M)}=\big\langle  Bu,(hD_{x_1})^2u\big\rangle _{L^2(M)}-\frac{h}{i}\Big[\big\langle  hD_{x_1}Bu,u\big\rangle _{L^2(\boundary)}+\big\langle  B u, hD_{x_1}u\big\rangle _{L^2(\boundary)}\Big],
\eeqs
and
\beqs
\big\langle  a_1 hD_{x_1}Bu,u\big\rangle _{L^2(M)}=\big\langle Bu, \big(a_1hD_{x_1}+[hD_{x_1},a_1]\big)u\big\rangle _{L^2(M)}-\frac{h}{i}\big\langle  Bu,a_1u\big\rangle _{L^2(\boundary)}
\eeqs
Using theses two identities, the expression for $P$ \eqref{eq:P}, the self-adjointness of $R$, 
and the fact that $[hD_{x_1}, a_1] = a_0-(a_0)^*$,
we obtain that
\begin{align}
\big\langle  PBu,u\big\rangle _{L^2(M)} =& \big\langle  Bu, Pu\big\rangle _{L^2(M)}- \frac{h}{i}\Big[\big\langle  hD_{x_1}Bu,u\big\rangle _{L^2(\boundary)}+\big\langle  B u, hD_{x_1}u\big\rangle _{L^2(\boundary)} + h \big\langle  Bu,a_1u\big\rangle _{L^2(\boundary)}\Big].
\label{eq:Euan1}
\end{align}
The definition of $B$ and the form of $P$ in \eqref{eq:P} imply that 
\begin{align}\nonumber
h D_{x_1} Bu &= B_1 (hD_{x_1})^2 u + \big( hD_{x_1}B_1+ B_0\big)(hD_{x_1}u) + (hD_{x_1}B_0)u,\\
&= B_1 \big( R-ha_0 -ha_1 hD_{x_1}\big) u + B_1 P u + \big( hD_{x_1}B_1 + B_0\big) (hD_{x_1}u) + (hD_{x_1}B_0)u.
\label{eq:Euan2}
\end{align}
Therefore, using \eqref{eq:Euan1} and \eqref{eq:Euan2}, we have 
\begin{align}\nonumber
&\frac{i}{h}\big\langle  [P,B]u,u\big\rangle _{L^2(M)}=\frac{i}{h}\big\langle  PBu,u\big\rangle _{L^2(M)} - \frac{i}{h}\big\langle  B(Pu),u\big\rangle _{L^2(M)} 
 \\ \nonumber
&=\frac{i}{h}\big\langle  Bu,Pu\big\rangle _{L^2(M)} - \frac{i}{h}\big\langle  B(Pu),u\big\rangle _{L^2(M)} 
\\ \nonumber
&\hspace{0.5cm}
-\big\langle  B_1 hD_{x_1}u,hD_{x_1}u\big\rangle _{L^2(\boundary)} 
-\big\langle  \big(B_0+h(D_{x_1}B_1)-h(B_1 a_1 -\overline{a_1}B_1)\big)hD_{x_1}u,u\big\rangle _{L^2(\boundary)}\\
&\hspace{0.5cm} 
-\big\langle  B_0u,hD_{x_1}u\big\rangle _{L^2(\boundary)}
-\big\langle [ h(D_{x_1}B_0)+B_1(R-ha_0)+h\overline{a_1}B_0]u,u\big\rangle _{L^2(\boundary)}
-\big\langle  B_1(Pu),u\big\rangle _{L^2(\boundary)}\label{eq:Euan3}
\end{align}
Next, using the definition of $B$, integration by parts, and \eqref{eq:B0B1}, we find that, for any $v,u$,
\begin{align}\nonumber
\big\langle  Bv,u\big\rangle _{L^2(M)} &=-\frac{h}{i}\big\langle  v,B_1^*u\big\rangle _{L^2(\boundary)}+ \big\langle v, B_0^* u + hD_{x_1}(B_1^* u)\big\rangle_{L^2(M)}\\
&=-\frac{h}{i}\big\langle  v,B_1u\big\rangle _{L^2(\boundary)}+ \big\langle  v,Bu\big\rangle _{L^2(M)}\label{eq:Euan4}
\end{align}
Letting $v=Pu$, combining \eqref{eq:Euan3} and \eqref{eq:Euan4}, and using the fact that $B_1=B_1^*$, we obtain
\begin{align*}\nonumber
&\frac{i}{h}\big\langle  [P,B]u,u\big\rangle _{L^2(M)}=\frac{i}{h}\big\langle  Bu,Pu\big\rangle _{L^2(M)} - \frac{i}{h}\big\langle  Pu,Bu\big\rangle _{L^2(M)} 
\\ \nonumber
&\hspace{0.5cm}
-\big\langle  B_1 hD_{x_1}u,hD_{x_1}u\big\rangle _{L^2(\boundary)} 
-\big\langle  \big(B_0+h(D_{x_1}B_1)-h(B_1 a_1 -\overline{a_1}B_1)\big)hD_{x_1}u,u\big\rangle _{L^2(\boundary)}\\
&\hspace{0.5cm} 
-\big\langle  B_0u,hD_{x_1}u\big\rangle _{L^2(\boundary)}
-\big\langle [ h(D_{x_1}B_0)+B_1(R-ha_0)+h\overline{a_1}B_0]u,u\big\rangle _{L^2(\boundary)},
\end{align*}
which is \eqref{eq:ibps}.
\end{proof}

\bpf[Proof of Corollary \ref{cor:ibps}]
Letting $h\tendo$ in \eqref{eq:ibps}, using the third equation in \eqref{eq:symbol} and the definitions of the measures in Theorem \ref{th:ex_defect}, we have 
\begin{equation}\label{eq:H_p_star2}
\mu(H_{p}b)=-2\Im \mu^j(b) -\nu_{n}(b_1)-2\Re \nu_j(b_0)-\nu_{d}(r \,b_1),
\end{equation}
where $b=\sigma(B)$, $b_i=\sigma(B_i)$.
The idea of the proof is to construct a $B$ satisfying the assumptions of Lemma \ref{lem:intByParts} with $\sigma(B_0)= a_{\rm odd}$ and $\sigma(B_1)= a_{\rm even}$ (and thus $\sigma(B)= a$).
Since \eqref{eq:H_p_star} is linear in $a$, without loss of generality, we assume that $a$ is real.
Since $a_{\rm even}$ and $a_{\rm odd}$ are both smooth, even functions of $\xi_1$, abusing notation slightly, we can write 
\beq\label{eq:the_end1}
a_{\rm even/odd}(x,\xi_1,\xi') = 
a_{\rm even/odd}(x, \xi_1^2,\xi').
\eeq
Let 
\beq\label{eq:the_end2}
\widetilde{a}_{\rm even}(x,\xi')= a_{\rm even}\big(x,r(x_1,x',\xi'),\xi'\big), \qquad 
\widetilde{a}_{\rm odd}(x,\xi')= a_{\rm odd}\big(x,r(x_1,x',\xi'),\xi'\big),
\eeq
and 
\beqs
\widetilde{a}(x,\xi')= 
\widetilde{a}_{\rm even}(x,\xi')+ \xi_1 \widetilde{a}_{\rm odd}(x,\xi').
\eeqs
Since $S^*\widetilde{M}= \{\xi_1^2 - r(x_1,x',\xi')=0\}$ and 
$H_p\big(\xi_1^2 - r(x_1,x',\xi')\big)=0$ (by \eqref{eq:P}),
\beqs
\widetilde{a}|_{S^*\widetilde{M}} = a|_{S^*\widetilde{M} }
\quad\tand
\quad
H_p a|_{S^*\widetilde{M}} = H_p\big(a|_{S^*\widetilde{M}}\big);
\eeqs
therefore 
\beqs
H_p a|_{S^*\widetilde{M}} = H_p\big(\widetilde{a}|_{S^*\widetilde{M}}\big).
\eeqs
Since $\mu$ is supported on $\{p=0\}$ by \eqref{eq:carac}, 
\beq\label{eq:the_end3}
\mu (H_p a)= \mu(H_p \widetilde{a}).
\eeq

Let 
\beqs
B_0(x,hD_{x'}):= \frac{\widetilde{a}_{\rm even}(x,hD_{x'})+ (\widetilde{a}_{\rm even}(x,hD_{x'}))^*}{2} + \frac{1}{2} \left[ hD_{x_1} ,  \frac{\widetilde{a}_{\rm odd}(x,hD_{x'})+ (\widetilde{a}_{\rm odd}(x,hD_{x'}))^*}{2}\right]
\eeqs
and 
\beqs
B_1(x,hD_{x'}):= \frac{\widetilde{a}_{\rm odd}(x,hD_{x'})+ (\widetilde{a}_{\rm odd}(x,hD_{x'}))^*}{2}. 
\eeqs
Then \eqref{eq:B0B1} is satisfied and, by \eqref{eq:symbol}, \eqref{eq:the_end2}, and \eqref{eq:the_end1},
\begin{align*}
\sigma(B_0)(x,\xi')&= \widetilde{a}_{\rm even}(x,\xi')=a_{\rm even}(x,\xi_1^2, \xi') \quad\ton S^*\widetilde{M}.
\end{align*}
Similarly, $\sigma(B_0)(x,\xi')= a_{\rm odd}(x,\xi_1^2, \xi')$, and thus $\sigma(B)= a(x,\xi_1,\xi')$ on $S^*\widetilde{M}$.
The result \eqref{eq:H_p_star} then follows from \eqref{eq:H_p_star2} and \eqref{eq:the_end3}.
\epf

\subsection{Properties of defect measures on the boundary}\label{sec:Miller}

In this subsection we review the calculations from~\cite{Miller}, adapting them to the case when the right-hand side of the PDE is non-zero.

\bre[Notation in \cite{Miller}]\label{rem:not_Miller}
Since our results rely heavily on the results of \cite{Miller}, we record here the correspondence between the notation in \cite{Miller} (on the left) and our notation (on the right):
\begin{equation*}
\Delta_{p}=4r,\quad k^{{\rm in/out}}=\xi_{1}^{{\rm in/out}},\quad  \sigma=\xi_{1},\quad s=x_{1},
\quad\dot \nu^N = 4 \nu_n, \quad \nu^{jN} = 2\nu_j.
\end{equation*}
\ere

Recall that $u$ has defect measure $\mu$, trace measures $\nu_d$, $\nu_n$, and $\nu_j$, and $f$ and $u$ have joint defect measure $\mu^j$. By \cite[Lemma 3.3]{GaSpWu:20}, $\mu^j(a)$ is absolutely continuous with respect to $\mu$, and $\mu^j=\beta d\mu$ for some $\beta\in L^1(d\mu)$; hence \eqref{eq:H_p_star} becomes
\begin{equation}
\label{e:flowMe}
\mu(H_pa+2\Im \beta a)=-\nu_n(a_{\rm odd})-2\Re \nu_j(a_{\rm even})-\nu_d(ra_{\rm odd}).
\end{equation}
For convenience, we define the differential operator 
$$
\mathcal{L}:=H_p+2\Im \beta.
$$

\begin{lem}\label{lem:charge}
There is a distribution $\mu^0$ on $T^*_{\boundary }\widetilde{M}$ supported in $\overline{B^*\boundary }$ such that 
\beq\label{eq:charge1}
\mathcal{L}^*(\mu 1_{x_1>0})=\delta(x_1)\otimes \mu^0,
\eeq
where $\otimes$ denotes tensor product of distributions.
Furthermore, on $\pi_{\boundary}^{-1}(\mathcal{H})$, 
\beq\label{eq:mu0delta}
\mu^0:=\delta\big(\xi_1- \xiin
\big)\otimes \muin- \delta\big(\xi_1-
\xiout
\big)\otimes \muout
\eeq
where $\mu^{\rm in/out}$ are positive measures on $T^*\boundary $ supported in $\mathcal{H}$, and $\xi^{\rm in/out}$ are defined by \eqref{eq:xiinout}.
\end{lem}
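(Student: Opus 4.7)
The plan is to adapt the argument of \cite[Proposition 1.8]{Miller} to the inhomogeneous setting, where equation \eqref{e:flowMe}, rather than pure flow-invariance, drives the analysis. First, I would show that $\mathcal{L}^*(\mu 1_{x_1 > 0})$ is supported in $\{x_1 = 0\}$: for $a\in C_c^\infty(T^*\widetilde{M})$ with $\supp a \subset \{x_1 > \delta\}$, using $\mu^j = \beta\, d\mu$ in \eqref{eq:inv_int} gives $\mu(H_p a) = -2\Im\mu^j(a)=-\mu(2\Im\beta\, a)$, so $\mu(\mathcal{L}a)=0$; and for $\supp a \subset \{x_1<0\}$ the pairing with $\mu 1_{x_1 > 0}$ vanishes trivially.

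Next, I would compute $\mu^0$ via a cutoff limit. Take $\chi\in C^\infty(\mathbb{R})$ with $\chi(t)=0$ for $t\leq 0$, $\chi(t)=1$ for $t\geq 1$, $\chi'\geq 0$, and set $\chi_\epsilon(x_1):=\chi(x_1/\epsilon)$. Since $\chi_\epsilon a$ vanishes on $\boundary$, applying \eqref{e:flowMe} to $\chi_\epsilon a$ annihilates the three boundary terms on the right, and $H_p \chi_\epsilon = 2\xi_1\, \chi'(x_1/\epsilon)/\epsilon$ (which uses $p=\xi_1^2 - r$) yields
\begin{equation*}
\int \chi_\epsilon\, \mathcal{L}a\, d\mu \;=\; -\int 2\xi_1\, a\, \tfrac{\chi'(x_1/\epsilon)}{\epsilon}\, d\mu.
\end{equation*}
Dominated convergence as $\epsilon\tendo$ sends the left side to $\langle \mathcal{L}^*(\mu 1_{x_1 > 0}), a\rangle$, so the right-hand limit exists; since $x_1\chi'(x_1/\epsilon)/\epsilon\to 0$ uniformly, this limit depends only on $a|_{x_1 = 0}$. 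Defining $\langle \mu^0, a|_{x_1 = 0}\rangle := -\lim \int 2\xi_1\, a\, \chi'(x_1/\epsilon)/\epsilon\, d\mu$ gives the required distribution on $T^*_\boundary\widetilde{M}$; the inclusion $\supp\mu^0 \subset \overline{B^*\boundary}$ is forced by \eqref{eq:carac}, since $\{p=0\}\cap \{x_1=0\}$ requires $1-|\xi'|_g^2\geq 0$.

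For the explicit form on $\pi_\boundary^{-1}(\mathcal{H})$, I would exploit that over the hyperbolic region $\supp \mu$ near the boundary decomposes into two disjoint smooth sheets, $\xi_1 = \xiin$ and $\xi_1 = \xiout$. Using a microlocal partition $\psi_- + \psi_+ = 1$ on $\supp\mu$ near the boundary, localized to each sheet, set
\begin{equation*}
\muin(b) := \lim_{\epsilon\to 0}\int (-2\xi_1)\, b\, \psi_-\, \tfrac{\chi'(x_1/\epsilon)}{\epsilon}\, d\mu, \quad \muout(b) := \lim_{\epsilon\to 0}\int 2\xi_1\, b\, \psi_+\, \tfrac{\chi'(x_1/\epsilon)}{\epsilon}\, d\mu.
\end{equation*}
Each integrand pairs the positive measure $\mu$ with non-negative factors ($\chi'\geq 0$, $-\xi_1>0$ on $\supp\psi_-$, $\xi_1>0$ on $\supp\psi_+$, $b\geq 0$), so positivity is preserved in the limit; both are non-negative measures on $T^*\boundary$ supported in $\mathcal{H}$. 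Substituting into the definition of $\mu^0$, with the opposite signs arising from $-2\xi_1$ on the two sheets, yields \eqref{eq:mu0delta}.

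The principal technical obstacle is establishing that the limits defining $\muin$ and $\muout$ exist as Radon measures rather than merely as distributions. This relies on the transversality of the Hamilton flow to $\{x_1=0\}$ on each hyperbolic sheet (where $\dot{x}_1 = 2\xi_1\neq 0$) combined with the near-invariance \eqref{e:flowMe}, which locally parametrizes $\supp \mu$ by a product $T^*\boundary \times (0,\epsilon_0)$ on which $\chi'(x_1/\epsilon)/\epsilon\, d\mu$ converges weakly to a boundary trace; this is precisely the mechanism used in \cite[\S1]{Miller}, which I would adapt here with the correction coming from the $\mu^j$ term.
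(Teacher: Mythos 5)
Your proposal is correct, and the second half takes a genuinely different (and slightly cleaner) route than the paper. The paper follows Miller's Proposition 1.7 more literally: it first writes $\mathcal{L}^*(\mu 1_{x_1>0})=\sum_k\delta^{(k)}(x_1)\otimes\mu_k$ and kills the $k\geq 1$ terms by testing against $\e^\ell\chi(\e^{-1}x_1)b$, and then, for the hyperbolic structure, passes to geodesic coordinates $(t,\rho)=\varphi_t(\rho)$ near a hyperbolic point, obtains $(\partial_t+2\Im\beta)\widetilde{\mu}=\delta(t)\otimes\mu^0$, and reads off the signs from the fact that the pulled-back measure vanishes on one side of $t=0$ for each of the incoming/outgoing directions. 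You instead work entirely with the spatial cutoff $\chi(x_1/\e)$ and a microlocal partition $\psi_-+\psi_+$ separating the two sheets $\xi_1=\xi_1^{\rm in/out}$ over $\mathcal{H}$, reading the signs directly from $\pm 2\xi_1>0$; this avoids the flow-box coordinates altogether. Your first step (the limit $-\lim_\e\int 2\xi_1 a\,\chi'(x_1/\e)\e^{-1}d\mu$ depends only on $a|_{x_1=0}$) is equivalent to the paper's scaling argument, though note that $(x_1/\e)\chi'(x_1/\e)$ does not tend to zero uniformly --- it is merely bounded and supported in $\{0<x_1\leq\e\}$, which is what dominated convergence actually uses. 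Finally, the ``principal technical obstacle'' you flag at the end is already resolved by your own construction: since $a\psi_\pm$ is itself an admissible test function (the sheets are separated by $2\sqrt{r}>0$ locally over $\mathcal{H}$), the limits defining $\muin$ and $\muout$ exist because they equal $\mp\mu^0((a\psi_\pm)|_{x_1=0})$, and a nonnegative distribution is automatically a Radon measure; no further appeal to Miller's parametrization is needed.
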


\begin{proof}
The proof follows~\cite[Proposition 1.7]{Miller}, replacing $H_p$ at every step by $\mathcal{L}$. In particular, by \eqref{e:flowMe}, $\mathcal{L}^*(\mu 1_{x_1>0}) $ is supported in $\{x_1=0\}$ and hence is of the form $\sum_{k=0}^{\ell}\delta^{(k)}(x_1)\otimes \mu_k$ where each $\mu_k$ is a distribution on $T^*_{\boundary }\widetilde{M}$. But, letting $\chi\in C_c^\infty(\mathbb{R})$ with $\chi^{(k)}(0)=1$, for $k\leq \ell$ and applying~\eqref{e:flowMe} to $a_\e=\e^\ell \chi(\e^{-1}x_1)b(x',\xi)$, we have for $\ell\geq 1$,
$$
\sum_{k=0}^\ell \e^{\ell-k}\mu_k(b)= \mu(1_{x_1>0}\mathcal{L}a_\e)= \mu\Big(1_{x_1>0}(\e^{\ell-1}H_p\chi +\e^\ell\chi  H_pb-2\Im \beta a)\Big)
\rightarrow 0 \tas \e\to 0.
$$
In particular, $\mu_k=0$ for $k\geq 1$, and \eqref{eq:charge1} follows.

The result \eqref{eq:mu0delta} about the structure of $\mu^0$ in the hyperbolic set follows by considering a small neighbourhood $\mathcal{V}$ in $T^*\boundary $ of a point $\rho\in \mathcal{H}$ and $\delta>0$ such that each geodesic trajectory of length $2\delta$ centered in $\pi_{\boundary}^{-1}(\mathcal{V})$ intersects the boundary exactly once. We may then use 
$$
(-\delta,\delta)\times \pi_{\boundary}^{-1}(\mathcal{V})\ni (t,\rho)\to \varphi_t(\rho)\in \mathcal{V}_\delta\subset T^*\widetilde{M}
$$
as coordinates on an open neighbourhood, $\mathcal{V}_\delta$ of $\pi_{\boundary}^{-1}(\mathcal{V})$. In these coordinates, writing $\widetilde{\mu}$ for the pull-back of $1_{x_1>0}\mu$ under $\varphi_t$, we obtain
$$
(\partial_t+2\Im \beta)\widetilde{\mu}=\delta(t)\otimes \mu^0.
$$

In particular, $\widetilde{\mu}$ is null $\mathcal{V}_{t_0}$ for any $t_0\in (-\delta,\delta)$, and testing by $\e\chi(t\e^{-1}) b$ with $0\leq b\in C_c^\infty(\pi_{\boundary}^{-1}(\mathcal{V}))$, and $\chi\in C_c^\infty(-\delta,\delta)$ with $t\chi'(t)<0$ on $|t|>0$, $\chi(0)=1$, we have  
$$
\widetilde{\mu}(\chi'(\e^{-1}t)b-2\e\Im \beta \chi(\e^{-1}t) b)=\mu^0(b).
$$
Now $\widetilde{\mu}$ is identically zero on $\pi_{\rm in}^{-1}(\mathcal{V})\times [0,\infty)$ and on $\pi_{\rm out}^{-1}(\mathcal{V})\times (-\infty,0])$. Therefore, for $b$ supported in $\pi_{\boundary  {\rm out}}^{-1}(\mathcal{V})$ 
$$
\mu^0(b)\leq \liminf_{\e \to 0}\widetilde{\mu}\Big(\big[\chi'(\e^{-1}t)b-2\e\Im \beta \chi(\e^{-1}t) b\big]1_{t>0}\Big)\leq 0.
$$
Similarly, for $b$ supported in $\pi_{\rm in}^{-1}(\mathcal{V})$, $\mu^0(b)\geq 0$. In particular, $\mu^0$ is a positive distribution on $\pi_{\rm in}^{-1}(\mathcal{H})$ and a negative distribution of $\pi_{\rm out}^{-1}(\mathcal{H})$, and the result follows. 
\end{proof}

Next, we decompose $\mu$ into its interior and boundary components, with the following lemma the analogue of \cite[Proposition 1.8]{Miller}.
\begin{lem}\label{lem:interior_boundary}
There is a positive measure $\mu^{\partial}$ on $\mathcal{G}\subset T^*_{\boundary }\widetilde{M}$ such that 
$$
\mu= 1_{x_1>0}\mu +\delta(x_1)\otimes \delta (H_px_1)\otimes \mu^{\partial}.
$$
\end{lem}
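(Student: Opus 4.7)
The argument closely follows Miller's proof of \cite[Proposition 1.8]{Miller}, with $H_p$ replaced throughout by $\mathcal{L} = H_p + 2\Im \beta$; since the extra term $2\Im \beta$ is a multiplication operator of order zero, it does not affect the geometric content of the argument.

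First I would decompose $\mu = 1_{x_1>0}\mu + \mu_b$, where $\mu_b := \mu - 1_{x_1>0}\mu$ is a non-negative Radon measure supported on the smooth hypersurface $\{x_1 = 0\} \subset T^*\widetilde{M}$. Disintegration along this hypersurface gives $\mu_b = \delta(x_1) \otimes \widetilde{\mu}$ for a non-negative Radon measure $\widetilde{\mu}$ on $T^*_{\boundary}\widetilde{M}$.

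Next I would constrain the support of $\widetilde{\mu}$. By~\eqref{eq:carac}, $\mu$ is supported on $\Sigma_p$, so $\supp \widetilde{\mu} \subset \{\xi_1^2 = r(0,x',\xi')\}$; in particular, $\widetilde{\mu}$ vanishes over the elliptic region $\mathcal{E}$, since the constraint has no real solution there. The crucial and most delicate step is to show that $\widetilde{\mu}$ also carries no mass over the hyperbolic region $\mathcal{H}$. Combining~\eqref{e:flowMe} applied to $\mu$ with Lemma~\ref{lem:charge} applied to $1_{x_1>0}\mu$ yields, for every test function $a$,
\[
\widetilde{\mu}\bigl((\mathcal{L}a)|_{x_1=0}\bigr) = -\nu_n(a_{\rm odd}) - 2\Re \nu_j(a_{\rm even}) - \nu_d(r\, a_{\rm odd}) - \mu^0\bigl(a|_{x_1=0}\bigr).
\]
Localizing to a point of $\pi_{\boundary}^{-1}(\mathcal{H})$ away from $\mathcal{G}$ and testing against symbols adapted to the two sheets $\xi_1 = \xi_1^{\rm in/out}$, the explicit decomposition of $\mu^0$ in~\eqref{eq:mu0delta} into $\muin$ and $\muout$ matches the two-branch structure, forcing the hyperbolic part of $\widetilde{\mu}$ to be absorbed by $\muin$ and $\muout$ and hence to vanish.

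Combining these restrictions with the characteristic-set constraint yields $\supp \widetilde{\mu} \subset \mathcal{G} \cap \{\xi_1 = 0\}$. Since $H_p x_1 = 2\xi_1$ and therefore $\delta(H_p x_1) = \tfrac{1}{2}\delta(\xi_1)$, we may factor $\widetilde{\mu} = \delta(H_p x_1) \otimes \mu^\partial$, where $\mu^\partial$ is (twice) the pushforward of $\widetilde{\mu}$ along $\pi_{\boundary}$ onto $T^*\boundary$. The measure $\mu^\partial$ is non-negative by construction and supported in $\mathcal{G}$, which completes the decomposition. The main obstacle is the hyperbolic vanishing step, which requires a careful choice of test functions matched to the two branches of $\Sigma_p$ at the boundary.
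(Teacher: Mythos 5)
Your skeleton --- split off $1_{x_1=0}\mu=\delta(x_1)\otimes \widetilde{\mu}$, show that $\widetilde{\mu}$ charges only $\{\xi_1=0\}$, then factor out $\delta(H_px_1)=\tfrac12\delta(\xi_1)$ --- is the same as the paper's, and the identity you derive,
\[
\widetilde{\mu}\bigl((\mathcal{L}a)|_{x_1=0}\bigr)=-\nu_n(a_{\rm odd})-2\Re \nu_j(a_{\rm even})-\nu_d(r\,a_{\rm odd})-\mu^0\bigl(a|_{x_1=0}\bigr),
\]
is correct. But the step you yourself flag as the crux --- the vanishing of $\widetilde{\mu}$ over the hyperbolic region --- is not carried out, and the mechanism you sketch for it is not the one that closes the argument. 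Nothing is ``absorbed by $\muin$ and $\muout$'': those measures never enter. The point is to choose $a$ vanishing to first order on $\{x_1=0\}$, so that \emph{every} term on the right-hand side of your identity dies (the trace terms because $a_{\rm odd}|_{x_1=0}=a_{\rm even}|_{x_1=0}=0$, the $\mu^0$ term because $a|_{x_1=0}=0$), while the left-hand side survives and isolates $\widetilde{\mu}$ against $H_px_1$ times an arbitrary amplitude.

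Concretely, the paper takes $a_\e=\e\chi(x_1\e^{-1})b$ with $\chi(0)=\chi'(0)=1$ and plugs it directly into \eqref{e:flowMe} --- no appeal to Lemma \ref{lem:charge} or to $\mu^0$ is needed, since the interior contribution is handled by dominated convergence. The right-hand side is $O(\e)$ because the trace measures are finite, while $\mathcal{L}a_\e=\chi'(x_1\e^{-1})(H_px_1)\,b+O(\e)$, so letting $\e\to 0$ gives $\mu\bigl(1_{x_1=0}(H_px_1)b\bigr)=0$. To upgrade this to the support statement one must allow $b$ to depend on $\xi_1$ (or, equivalently, also test with $b\,\xi_1$, which is what the proof of Lemma \ref{lem:key_Miller} does to get \eqref{eq:system2}): with $b=\xi_1 b_0$, $b_0\ge 0$, one obtains $\mu(1_{x_1=0}\,2\xi_1^2 b_0)=0$, and positivity of $\mu$ then forces $1_{x_1=0}\mu$ to put no mass where $\xi_1\neq 0$; a test function even in $\xi_1$ alone only shows that the two hyperbolic sheets carry equal mass. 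Once $\supp\widetilde{\mu}\subset\{\xi_1=0\}$ is established, intersecting with the characteristic set places $\mu^{\partial}$ on $\mathcal{G}$ and the factorisation follows as you describe. So the proposal identifies the right identity but leaves the decisive test-function computation --- and the correct reason it works --- unsupplied.
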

\begin{proof}
Let $\chi \in C_c^\infty(\mathbb{R})$ with $\chi(0)=\chi'(0)=1$  and $b\in C_c^\infty(\mathbb{R}\times T^*\mathbb{R}^{n-1})$. Then, with $a_\e=\e \chi(x_1\e^{-1})b(x,\xi')$, \eqref{e:flowMe} implies that
$$
\mu(\mathcal{L}a_\e)= -2\e\Re \nu_j(b)
$$
Now, 
$$
\mathcal{L}a_\e= 2\chi'(x_1\e^{-1})H_px_1b+O(\e).
$$
Therefore, by the dominated convergence theorem, 
$$
\mu(\mathcal{L}a_\e)\to \mu(1_{x_1=0}bH_px_1)
$$
and, since $|\nu_j(b)|<\infty$, 
$$
\mu(1_{x_1=0}bH_px_1)=0.
$$
Since $b$ was arbitrary, $\mu$ decomposes as claimed.
\end{proof}

The following lemma is the analogue of~\cite[Lemma 1.9]{Miller}.

\begin{lem}
\label{l:elliptic}
On $\mathcal{E}$ (i.e.~$r <0$), $\Re \nu_j=0$ and $\nu_n=-r\nu_d$. 
\end{lem}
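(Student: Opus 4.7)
The plan is to apply identity \eqref{e:flowMe} with carefully chosen test symbols whose supports sit above $\mathcal{E}$ but are disjoint from the characteristic set $\Sigma_p=\{p=0\}$. On $\mathcal{E}$ the equation $\xi_1^2=r$ has no real solution because $r<0$, so such supports miss $\supp\mu$ by \eqref{eq:carac}; consequently both $\mu(H_pa)$ and $\mu^j(a)=\mu(\beta a)$ vanish, and \eqref{e:flowMe} collapses to a pure boundary identity. Choosing symbols that are separately even or odd in $\xi_1$ will then isolate $\Re\nu_j$ from $\nu_n$ and $\nu_d$.

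Concretely, fix $(x'_0,\xi'_0)\in\mathcal{E}$ and use continuity of $r$ to pick a neighbourhood $V\subset T^*\boundary$ of $(x'_0,\xi'_0)$ together with $\delta>0$ such that $r(x_1,x',\xi')\leq -c<0$ on $[0,\delta)\times V$. Given $\phi\in C_c^\infty(V)$, I would take cutoffs $\psi\in C_c^\infty([0,\delta))$ with $\psi(0)=1$ and $\tilde\rho\in C_c^\infty(\mathbb{R})$ with $\tilde\rho\equiv 1$ on a neighbourhood of $\{0\}\cup\{r(0,x',\xi'):(x',\xi')\in\supp\phi\}$, and consider the two symbols
\beqs
a_+(x,\xi_1,\xi'):=\psi(x_1)\tilde\rho(\xi_1^2)\phi(x',\xi'),\qquad a_-(x,\xi_1,\xi'):=\xi_1\,\psi(x_1)\tilde\rho(\xi_1^2)\phi(x',\xi').
\eeqs
Both lie in $C_c^\infty(T^*\widetilde M)$, and on their supports $\xi_1^2\geq 0$ while $r<0$, so $p=\xi_1^2-r\geq c>0$; hence each support is disjoint from $\Sigma_p$ and the interior terms in \eqref{e:flowMe} vanish as above.

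The symbol $a_+$ is even in $\xi_1$, so $(a_+)_{\rm odd}=0$ and $(a_+)_{\rm even}=a_+$. Following the convention in the proof of Corollary~\ref{cor:ibps}, the boundary pairing treats $(a_+)_{\rm even}$ as a function of $(x,\xi_1^2,\xi')$ and substitutes $\xi_1^2=r(0,x',\xi')$; by the choice of $\tilde\rho$ this produces exactly $\phi(x',\xi')$ on $\supp\phi$. Identity \eqref{e:flowMe} applied to $a_+$ therefore reads $0=-2\Re\nu_j(\phi)$, so $\Re\nu_j(\phi)=0$. The same argument applied to $a_-$, which is odd in $\xi_1$ with $(a_-)_{\rm even}=0$ and $(a_-)_{\rm odd}=\psi(x_1)\tilde\rho(\xi_1^2)\phi(x',\xi')$ (also restricting to $\phi$ on the boundary), gives $0=-\nu_n(\phi)-\nu_d(r\phi)$. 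Since $V\subset\mathcal{E}$ and $\phi\in C_c^\infty(V)$ were arbitrary, the two identities $\Re\nu_j=0$ and $\nu_n=-r\nu_d$ hold on $\mathcal{E}$.

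There is no substantive obstacle: the construction is essentially forced once one observes that $\mu$ cannot charge any point above $\mathcal{E}$. The only point to verify is that the explicit symbols $a_\pm$ produce the intended boundary traces, and this is immediate since they are written in terms of $\xi_1^2$ through $\tilde\rho$, so no nontrivial smooth extension from $[0,\infty)$ to $\mathbb R$ needs to be invoked.
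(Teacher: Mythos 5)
Your proof is correct and follows essentially the same route as the paper: both test the identity \eqref{e:flowMe} against symbols microlocalized over $\mathcal{E}$, where $p=\xi_1^2-r>0$ forces the interior terms to vanish, and use an even-in-$\xi_1$ symbol to isolate $\Re\nu_j$ and a $\xi_1$-times-even symbol to get $\nu_n+r\nu_d=0$. The only (immaterial) difference is the localization mechanism — the paper cuts off with $\chi(\e^{-1}r)$ and sends $\e\to 0^+$ to reach the indicator $1_{r<0}$ directly, while you localize near points of $\mathcal{E}$ with explicit cutoffs in $x_1$ and $\xi_1$ (note only that $\psi$ should be taken in $C_c^\infty((-\delta,\delta))$ so the symbol lives on $T^*\widetilde M$).
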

\begin{proof}
Let $\chi \in C^\infty(\mathbb{R})$ with $\chi \equiv 1$ on $(-\infty, -1]$ and $\supp \chi \subset (-\infty,0)$. Let $b=b(x,\xi')\in C_c^\infty$ and define $b_\e=\chi(\e^{-1}r)b$. Then, by~\eqref{e:flowMe} together with the fact that $\supp \mu\subset S^*M$, 
$$
0=\mu(H_pb_\e+2\Im \beta b_\e)=-2\Re \nu_j(b_\e)
$$
Sending $\e\to 0^+$, we obtain 
$$
0=2\Re \nu_j(b1_{r<0}). 
$$
Since $b$ was arbitrary, $\nu_j1_{r<0}=0$. Replacing $b$ by $b(x,\xi')\xi_1$ and applying the same argument, we obtain 
$$
\nu_n1_{r<0}=-r\nu_d1_{r<0}.
$$
\end{proof}

Next, we prove the analogue of~\cite[Proposition 1.10]{Miller}
\begin{lem}
\label{lem:key_Miller}On the hyperbolic set $\mathcal{H}$,

\noindent (i)
\beq\label{eq:key_Miller0}
2\muout=  \sqrt{r(x',\xi')}\nu_{d} + 2\Re \nu_{j}+\frac{1}{\sqrt{r(x',\xi')}}\nu_{n},\qquad
2\muin=\sqrt{r(x',\xi')}\nu_{d}-2\Re \nu_{j}+\frac{1}{\sqrt{r(x',\xi')}}\nu_{n}.
\eeq
(ii) If $\muin=0$ on some Borel set $\mathcal{B}\subset \mathcal{H}$, then 
\beq\label{eq:muout}
\muout=2\Re \nu_{j}=2\sqrt{r(x',\xi')}\nu_{d}=\frac{2 }{\sqrt{r(x',\xi')}}\nu_{n}.
\eeq
(iii) If 
\beq\label{eq:alpha_key}
-2 \Re  \nu_j=(\Re \alpha)\nu_{d}=4(\Re \alpha)|\alpha|^{-2}\nu_{n}
\eeq
on some Borel set $\mathcal{B}\subset \mathcal{H}$ for $\alpha$ a complex valued function
such that $\alpha+2\sqrt{r(x',\xi')}$ is never zero
on $\mathcal{B}$ then
\beq\label{eq:key_Miller1}
\muout=\alpharef \muin,
\eeq
where
\beq\label{eq:key_Miller2}
\alpharef :=\left|\frac{2\sqrt{r(x',\xi')}-\alpha}{2\sqrt{r(x',\xi')}+\alpha}\right|^{2}\text{ on }\mathcal{B},
\eeq
where the superscript ``{\rm ref}'' stands for ``reflected''. If instead, $\alpha-2\sqrt{r}$ is never zero, then 
$$
(\alpharef )^{-1}\muout=\muin.
$$
\end{lem}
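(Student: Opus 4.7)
The plan is to adapt the matching argument in \cite[Proposition 1.10]{Miller} to the inhomogeneous setting provided by \eqref{e:flowMe}. The key identity will come from computing $\mu(\mathcal{L}a)$ in two ways for test functions $a\in C_c^\infty(T^*\widetilde{M})$ supported in a neighbourhood of a point of $\pi_{\boundary}^{-1}(\mathcal{H})$ and of $\{x_1=0\}$. On the one hand, Lemma~\ref{lem:interior_boundary} shows that the boundary part of $\mu$ is supported on $\mathcal{G}$ and so contributes nothing near $\mathcal{H}$, whence $\mu(\mathcal{L}a)=(1_{x_1>0}\mu)(\mathcal{L}a)$; applying Lemma~\ref{lem:charge} together with the explicit form \eqref{eq:mu0delta} of $\mu^0$ gives
\[
\mu(\mathcal{L}a)=\muin\bigl(a(0,x',-\sqrt{r},\xi')\bigr)-\muout\bigl(a(0,x',\sqrt{r},\xi')\bigr).
\]
On the other hand, \eqref{e:flowMe} equates this with $-\nu_n(a_{\rm odd})-2\Re\nu_j(a_{\rm even})-\nu_d(r\,a_{\rm odd})$.

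For part (i), I use the fact that $a_{\rm even}$ and $a_{\rm odd}$ are even in $\xi_{1}$, so on the characteristic variety $\xi_1=\pm\sqrt{r}$ one has $a(\cdot,\pm\sqrt{r},\cdot)=a_{\rm even}\pm\sqrt{r}\,a_{\rm odd}$. Independently setting $a_{\rm odd}\equiv 0$ and then $a_{\rm even}\equiv 0$ in the matched identity yields
\[
\muout-\muin=2\Re\nu_j, \qquad \muout+\muin=\sqrt{r}\,\nu_d+\nu_n/\sqrt{r}
\]
on $\mathcal{H}$; adding and subtracting these gives the two formulas in \eqref{eq:key_Miller0}.

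For part (ii), with $\muin=0$ the two relations collapse to $\muout=2\Re\nu_j=\sqrt{r}\,\nu_d+\nu_n/\sqrt{r}$. Combining the Cauchy--Schwarz bound $(\Re\nu_j)^2\leq\nu_n\nu_d$ (valid pointwise for the Radon--Nikodym densities with respect to any common dominating measure) with the AM--GM inequality $\sqrt{r}\,\nu_d+\nu_n/\sqrt{r}\geq 2\sqrt{\nu_n\nu_d}$ squeezes everything to equality, which forces $\sqrt{r}\,\nu_d=\nu_n/\sqrt{r}$ and $\Re\nu_j=\sqrt{r}\,\nu_d$, yielding \eqref{eq:muout}. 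For part (iii), I split $\mathcal{B}$ at the set $\{\Re\alpha=0\}$. Where $\Re\alpha\neq 0$, the second equality in \eqref{eq:alpha_key} forces $|\alpha|^2\nu_d=4\nu_n$; substituting this together with $2\Re\nu_j=-(\Re\alpha)\nu_d$ into \eqref{eq:key_Miller0} and using $|2\sqrt{r}\pm\alpha|^2=4r\pm4\sqrt{r}\,\Re\alpha+|\alpha|^2$ gives
\[
2\muin=\frac{|2\sqrt{r}+\alpha|^2}{4\sqrt{r}}\,\nu_d, \qquad 2\muout=\frac{|2\sqrt{r}-\alpha|^2}{4\sqrt{r}}\,\nu_d,
\]
and dividing (legitimate because $\alpha+2\sqrt{r}\neq 0$) produces \eqref{eq:key_Miller1} with $\alpharef$ as in \eqref{eq:key_Miller2}. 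Where $\Re\alpha=0$ the hypothesis reduces to $\Re\nu_j=0$, so part (i) already gives $\muout=\muin$, while $\alpharef\equiv 1$ there since $\alpha$ is purely imaginary, so \eqref{eq:key_Miller1} persists. The final statement under the alternative hypothesis $\alpha-2\sqrt{r}\neq 0$ is obtained by dividing the formulas in the opposite direction.

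The only subtle input is the pointwise Cauchy--Schwarz bound $(\Re\nu_j)^2\leq \nu_n\nu_d$ used in part (ii); this is the main obstacle. I would prove it as in \cite[Lemma 1.9]{Miller} by applying the ordinary Cauchy--Schwarz to $|\langle a(x',hD_{x'})u,hD_{x_1}u\rangle_\Gamma|^2\leq \langle au,u\rangle_\Gamma\,\langle a\,hD_{x_1}u,hD_{x_1}u\rangle_\Gamma$ for $a\geq 0$, passing to the defect-measure limit, and approximating indicator functions to obtain the claim at the level of densities.
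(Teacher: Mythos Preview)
Your proof is correct and follows essentially the same route as the paper. For (i) the paper uses an explicit cutoff $a_\e=\chi(\e^{-1}|H_px_1|)a$ and passes to the limit $\e\to 0$ rather than directly choosing $a$ supported near $\pi_{\boundary}^{-1}(\mathcal{H})$, but this is a cosmetic difference and leads to the same two linear relations \eqref{eq:system1}--\eqref{eq:system2}; for (ii) both arguments are the Cauchy--Schwarz/AM--GM squeeze you describe; for (iii) the paper is terser (simply ``use \eqref{eq:alpha_key} in \eqref{eq:key_Miller0}''), and your case split on $\Re\alpha=0$ is a helpful clarification rather than a different idea.
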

\begin{proof}
(i) By combining Lemmas \ref{lem:charge} and \ref{lem:interior_boundary},
\beq\label{eq:mutemp1}
\mathcal{L}^*\mu=\delta(x_1)\otimes \mu^0+\mathcal{L}^*(\delta(x_1)\otimes \delta(H_px_1)\otimes \mu^{\partial}).
\eeq
Let $\chi \in C^\infty(\mathbb{R})$ with $\chi \equiv 0$ on $(-\infty,1]$ and $\chi \equiv 1$ on $[2,\infty)$. For $a\in C_c^\infty(\mathbb{R}\times T^*\boundary )$ (so $a=a(x_1,x',\xi')$), let $a_\e=\chi(\e^{-1}|H_px_1|)a$. Since $H_p x_1= 2\xi_1$, $a_\e= a$ for $|\xi_1|\geq \e$ and $a_\e=0$ for $|\xi_1|\leq \e/2$.
Combining \eqref{eq:mutemp1} and \eqref{e:flowMe}, and
using the facts that $a_\e$ is even in $\xi_1$ and $a_\eps=0$ for $|H_p x_1|\leq \eps/2$, we find that
$$
\mu^0(a_\e|_{x_1=0})=\mu(\mathcal{L}a_\e)=-2\Re \nu_j(a_\e|_{x_1=0}).
$$
By \eqref{eq:mu0delta},
\beqs
\chi\big(2|\xiin|/\e\big)\muin\big(a|_{x_1=0}\big)- \chi\big(2|\xiout|/\e\big)\muout\big(a|_{x_1=0}\big)=2\Re \nu_j\big(a_\e|_{x_1=0}\big).
\eeqs
Therefore, by the dominated convergence theorem,
\beq\label{eq:system1}
\muin-\muout=-2\Re \nu_j \quad\ton \mathcal{H}.
\eeq
Similarly, since $a_\e \xi_1$ is an odd function of $\xi_1$,  \eqref{eq:mutemp1} and \eqref{e:flowMe} imply that
$$
\mu^0(a_\e\xi_1|_{x_1=0})=\mu(\mathcal{L}a_\e\xi_1)= -\nu_d(ra_\e|_{x_1=0})-\nu_n(a_\e|_{x_1=0}).
$$
By \eqref{eq:mu0delta},
\beqs
\xiin\chi\big(2|\xiin|/\e\big)\muin\big(a|_{x_1=0}\big)-\xiout \chi\big(2|\xiout|/\e\big)\muout\big(a|_{x_1=0}\big)=-\nu_d(ra_\e|_{x_1=0})-\nu_n(a_\e|_{x_1=0}).
\eeqs
Therefore, by the dominated convergence theorem,
\beq\label{eq:system2}
-\sqrt{r}(\muin+\muout)=-r\nu_d-\nu_n \quad\ton \mathcal{H}.
\eeq
The  result \eqref{eq:key_Miller0} now follows from solving \eqref{eq:system1} and \eqref{eq:system2} for $\muin$ and $\muout$. 

(ii) By the Cauchy--Schwarz inequality and similar reasoning used in the proof of \cite[Lemma 3.3]{GaSpWu:20},
\beq\label{eq:CS}
|\nu_j|\leq \sqrt{\sqrt{r} \nu_d} \sqrt{\nu_n/\sqrt{r}}. 
\eeq
By \eqref{eq:key_Miller0}, when $\muin=0$, 
\beq\label{eq:consist1}
2\Re \nu_j = \sqrt{r} \nu_d + \nu_n/\sqrt{r},
\eeq
However, for both \eqref{eq:CS} and \eqref{eq:consist1} to hold, we must have $\sqrt{r}\nu_d = \nu_n/\sqrt{r}$, and \eqref{eq:key_Miller1} follows.

(iii) The equation \eqref{eq:key_Miller1} follows from using \eqref{eq:alpha_key} in \eqref{eq:key_Miller0}.
\end{proof}

\begin{lem}
\label{l:glancing}
$$
-H_p^2x_1 \mu^{\partial}=4\nu_n1_{\mathcal{G}}.
$$
In particular, $\mu^{\partial}$ is supported in $H_p^2x_1\leq 0$ and $\nu_n1_{\mathcal{G}}$ does not charge $H_p^2x_1\geq 0$.
\end{lem}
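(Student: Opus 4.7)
The plan is to apply the flow identity~\eqref{e:flowMe} to a test function of the form $a(x,\xi_1,\xi') = \xi_1 b(x_1,x',\xi')$ with $b\in C_c^\infty$, and to unpack $\mu(\mathcal{L} a)$ using the decomposition of $\mu$ from Lemma~\ref{lem:interior_boundary} together with Lemma~\ref{lem:charge}. For such $a$ we have $a_{\rm odd}=b$ and $a_{\rm even}=0$, so the right-hand side of~\eqref{e:flowMe} reduces to $-\nu_n(b|_{x_1=0})-\nu_d(r\,b|_{x_1=0})$, while $H_p\xi_1 = \partial_{x_1} r$ together with the Leibniz rule gives
\begin{equation*}
\mathcal{L}(\xi_1 b) = \partial_{x_1} r\cdot b + \xi_1\,\mathcal{L} b.
\end{equation*}

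Next I would split $\mu(\mathcal{L}(\xi_1 b))$ using $\mu = 1_{x_1>0}\mu + \delta(x_1)\otimes\delta(H_p x_1)\otimes\mu^\partial$. The boundary summand, supported at $\{x_1=0,\,\xi_1=0\}$, annihilates the $\xi_1\mathcal{L} b$ piece and contributes $\tfrac{1}{2}\int \partial_{x_1} r\cdot b|_{x_1=0,\xi_1=0}\,d\mu^\partial$ from the $\partial_{x_1} r\cdot b$ piece (the factor $1/2$ coming from $\delta(H_p x_1)=\delta(2\xi_1)=\tfrac{1}{2}\delta(\xi_1)$), while the interior summand contributes $\mu^0(\xi_1 b|_{x_1=0})$ by Lemma~\ref{lem:charge}. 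The resulting identity, valid for every $b\in C_c^\infty$, reads
\begin{equation*}
\mu^0(\xi_1 b|_{x_1=0}) + \tfrac{1}{2}\int \partial_{x_1} r\cdot b\,d\mu^\partial \;=\; -\nu_n(b|_{x_1=0}) - \nu_d(r\,b|_{x_1=0}).
\end{equation*}

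The last step localises to the glancing set: replace $b$ by $b\cdot\chi(r(0,x',\xi')/\eps)$ with $\chi\in C_c^\infty(\mathbb{R})$, $\chi(0)=1$, and send $\eps\to 0$. Dominated convergence gives $\nu_n(b\chi(r/\eps)|_{x_1=0})\to(\nu_n 1_{\mathcal{G}})(b|_{x_1=0})$; the bound $|r\chi(r/\eps)|\leq\eps$ forces the $\nu_d$-term to zero; and since $\mu^\partial$ is supported on $\mathcal{G}=\{r=0\}$ with $\chi(0)=1$, the $\mu^\partial$-term equals $\tfrac{1}{2}\int \partial_{x_1} r\cdot b\,d\mu^\partial$ in the limit. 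Granted that the $\mu^0$ contribution vanishes as $\eps\to 0$, one gets $\nu_n 1_{\mathcal{G}} = -\tfrac{1}{2}(\partial_{x_1} r)\mu^\partial = -\tfrac{1}{4}(H_p^2 x_1)\mu^\partial$, using $H_p^2 x_1 = H_p(2\xi_1) = 2\partial_{x_1} r$; rearranging yields the claim $-H_p^2 x_1\,\mu^\partial = 4\nu_n 1_{\mathcal{G}}$. The two support statements then follow from positivity alone: for any Borel $A\subset\{H_p^2 x_1>0\}$, the identity combined with $\mu^\partial,\nu_n\geq 0$ forces $\mu^\partial(A)=0$, and similarly for $A\subset\{H_p^2 x_1\geq 0\}$ it forces $\nu_n 1_{\mathcal{G}}(A)=0$.

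The main obstacle is the vanishing of $\mu^0(\xi_1 b\chi(r/\eps)|_{x_1=0})$ in the limit. On $\pi_{\boundary}^{-1}(\mathcal{H})$ the explicit form~\eqref{eq:mu0delta} forces $|\xi_1|=\sqrt{r}\leq\sqrt{\eps}$ on the support, and since $\mu^{\rm in}$ and $\mu^{\rm out}$ are Radon this contribution is $O(\sqrt{\eps})\to 0$. The delicate part is the closure to glancing, where $\mu^0$ is not directly pinned down by Lemma~\ref{lem:charge}; as in~\cite[Proposition~1.11]{Miller}, this requires a refinement showing that $\mu^0$ is carried by the characteristic set $\{\xi_1^2=r\}$ up to the glancing closure (so that the uniform bound $|\xi_1\chi(r/\eps)|=O(\sqrt{\eps})$ persists there as well), which can be obtained by a further application of~\eqref{e:flowMe} against symbols odd in $\xi_1$ and supported in a shrinking conic neighbourhood of $\mathcal{G}$.
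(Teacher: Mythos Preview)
Your argument is structurally sound up to the point you yourself flag: the vanishing of $\mu^0(\xi_1\,b\,\chi(r/\eps)|_{x_1=0})$ as $\eps\to0$. This is not a minor technicality---it is equivalent to the lemma itself. Indeed, your identity
\[
\mu^0(\xi_1 b|_{x_1=0})+\tfrac12\mu^\partial(\partial_{x_1}r\cdot b)=-\nu_n(b|_{x_1=0})-\nu_d(r\,b|_{x_1=0}),
\]
applied to $b\chi(r/\eps)$ and sent to the limit, gives
\[
\lim_{\eps\to0}\mu^0(\xi_1 b\chi(r/\eps))=-(\nu_n 1_{\mathcal G})(b)-\tfrac12\mu^\partial(\partial_{x_1}r\cdot b),
\]
so asserting this limit vanishes is exactly the statement you want. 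Your proposed fix (showing $\mu^0$ is carried by $\{\xi_1^2=r\}$ and using $|\xi_1\chi(r/\eps)|=O(\sqrt\eps)$) is also insufficient: near glancing $\mu^0$ is only known to be a distribution, not a measure, so a pointwise bound on the test function does not bound the pairing. Even on $\mathcal H$, where $\mu^0$ has the explicit form~\eqref{eq:mu0delta}, finiteness of $\mu^{\rm in/out}$ near $\mathcal G$ is not immediate (it follows a posteriori from Lemma~\ref{lem:key_Miller}, but you cannot invoke that yet).

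The paper sidesteps this entirely by also localizing in $x_1$: it tests~\eqref{e:flowMe} against $a_\eps=\chi(\eps^{-1}x_1)\chi(\eps^{-1}r)\,2a\xi_1$. The $\chi(\eps^{-1}x_1)$ cutoff confines the support to $\{|x_1|\lesssim\eps\}$, so $\mu(H_p a_\eps)$ is computed directly from the decomposition in Lemma~\ref{lem:interior_boundary} without ever unpacking $\mu^0$. The key calculation is that $H_p a_\eps=\chi(\eps^{-1}x_1)\chi(\eps^{-1}r)\,a\,H_p^2x_1+O(1)(|\chi'(\eps^{-1}x_1)|+|\chi'(\eps^{-1}r)|+\eps^{1/2})$, where the dangerous $\eps^{-1}$ factors from differentiating the cutoffs are cancelled by the extra $\xi_1$ (since $|\xi_1|=O(\sqrt r)=O(\sqrt\eps)$ on $S^*M\cap\supp a_\eps$). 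Dominated convergence then gives $\mu(H_p a_\eps)\to\tfrac12\mu^\partial(H_p^2x_1\cdot a)$ and $\mu(2\Im\beta\,a_\eps)\to0$, while the right-hand side of~\eqref{e:flowMe} converges to $-2(\nu_n 1_{\mathcal G})(a)$. The missing ingredient in your approach is precisely this $x_1$-localization.
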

\begin{proof}
We follow~\cite[Lemma 4.7]{GaSpWu:20}. Since $H_px_1=2\xi_1$, 
$$
H_p(2a(x,\xi)\xi_1)=aH_p^2x_1+2\xi_1H_pa.
$$
Now, put $a_\e=\chi(\e^{-1}x_1)\chi(\e^{-1}r(x,\xi'))2 a\xi_1$ where $\chi \in C_c^\infty(\mathbb{R})$ has $\chi \equiv 1$ near $0$. 
Then,
$$
H_pa_\e=a\chi(\e^{-1}x_1)\chi(\e^{-1}r)H_p^2x_1+O(1)\Big(|\chi'(\e^{-1}x_1)|+|\chi'(\e^{-1}r)|+\e^{1/2}\Big),
$$
where we have used that on $S^*M$, $H_pr=-H_p\xi_1^2=O(\xi_1)$. Then, by the dominated convergence theorem,
$$
\mu(H_pa_\e)\to \frac{1}{2}\mu^{\partial}\big([H_p^2x_1] a\big).
$$
Using~\eqref{e:flowMe}, we have
$$
\mu(H_pa_\e)=-2\mu(\Im \beta a_\e)-\nu_d(2\chi(\e^{-1}r)ra)-\nu_n(2\chi(\e^{-1}r)a).
$$
Using the dominated convergence theorem again, using that $\xi_1=O(\sqrt{r})$ on $S^*M$, we have
$$
\mu(2\Im \beta a_\e)\to 0,
$$
and hence
$$
\tfrac{1}{2}\mu^{\partial}\big([H_p^2x_1]a\big)=-\nu_n(2a1_{\mathcal{G}}),
$$
as claimed.
\end{proof}

\begin{lem}
\label{l:finalEq}
Let $q=q(x_1,x_1\xi_1,x',\xi)\in C_c^\infty(T^*\widetilde{M})$. Then, 
$$
\mu(H_pq)=-2\Im \beta \mu(q)+(\muin-\muout)(q|_{x_1=0})+\tfrac{1}{2} \mu^{\partial}(\Re(\dot{n}^j) H_p^2x_1q|_{x_1=0}).
$$
where $\dot{n}^j \nu_n=\nu_j$.
\end{lem}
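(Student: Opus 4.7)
The plan is to apply the general identity~\eqref{e:flowMe} to $q$ and then exploit the special form of $q$ at the boundary together with the earlier structural lemmas. The main simplification comes from the fact that, because the $\xi_1$-dependence of $q$ comes only through the product $x_1\xi_1$, both of the ``odd/even'' boundary pieces collapse: writing out the definitions one has
\begin{equation*}
q_{\rm even}(x,\xi_1,\xi')=\frac{q(x_1,x_1\xi_1,x',\xi)+q(x_1,-x_1\xi_1,x',\xi)}{2}, \quad
q_{\rm odd}(x,\xi_1,\xi')=\frac{q(x_1,x_1\xi_1,x',\xi)-q(x_1,-x_1\xi_1,x',\xi)}{2\xi_1},
\end{equation*}
and at $x_1=0$ the argument $x_1\xi_1$ vanishes, giving $q_{\rm even}|_{x_1=0}=q|_{x_1=0}$ (independent of $\xi_1$) and $q_{\rm odd}|_{x_1=0}=0$.

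Substituting $a=q$ into~\eqref{e:flowMe} and using this observation to kill the $\nu_n$- and $\nu_d$-terms, one obtains
\begin{equation*}
\mu(H_pq)=-2\Im\beta\,\mu(q)-2\Re\nu_j(q|_{x_1=0}).
\end{equation*}
It then remains to rewrite $-2\Re\nu_j(q|_{x_1=0})$ as the two stated boundary contributions, and this is done by splitting $\nu_j=\nu_j 1_{\mathcal{E}}+\nu_j 1_{\mathcal{H}}+\nu_j 1_{\mathcal{G}}$ and invoking the relevant earlier lemmas on each piece. On $\mathcal{E}$, Lemma~\ref{l:elliptic} gives $\Re\nu_j 1_{\mathcal{E}}=0$, contributing nothing. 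On $\mathcal{H}$, subtracting the two identities in Lemma~\ref{lem:key_Miller}(i) yields $2\Re\nu_j 1_{\mathcal{H}}=\muout-\muin$, and since $q|_{x_1=0}$ is a function of $(x',\xi')$ the pairing $(\muin-\muout)(q|_{x_1=0})$ is well-defined and produces exactly the second term on the right-hand side of the claim.

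On the glancing set $\mathcal{G}$, the hypothesis $\nu_j=\dot n^j\nu_n$ combined with reality of $q|_{x_1=0}$ gives
\begin{equation*}
2\Re\nu_j 1_{\mathcal{G}}(q|_{x_1=0})=\nu_n 1_{\mathcal{G}}\bigl(2\Re(\dot n^j)\,q|_{x_1=0}\bigr),
\end{equation*}
and Lemma~\ref{l:glancing} (which identifies $\nu_n 1_{\mathcal{G}}=-\tfrac14 H_p^2 x_1\,\mu^{\partial}$) then converts this into $-\tfrac12\mu^{\partial}\bigl(\Re(\dot n^j)H_p^2x_1\,q|_{x_1=0}\bigr)$, so that the minus sign in front produces the third term. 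Combining the three contributions gives the stated identity. By linearity it suffices to treat real $q$; the complex case follows by splitting into real and imaginary parts.

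The only real point requiring care is Step~1, namely the recognition that the prescribed functional form $q(x_1,x_1\xi_1,x',\xi)$ forces $q_{\rm odd}|_{x_1=0}=0$; once this is in place the remainder is just bookkeeping using Lemmas~\ref{l:elliptic}, \ref{lem:key_Miller} and~\ref{l:glancing}. A small subtlety to verify is that the Radon--Nikodym derivative $\dot n^j$ is well-defined on $\mathcal{G}$ in the sense required (so that the pairing in the glancing term makes sense), but this follows from the Cauchy--Schwarz-type control of $\nu_j$ by $\nu_n$ and $\nu_d$ that underlies the arguments of Section~\ref{sec:Miller}.
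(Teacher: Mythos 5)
Your proof is correct and follows essentially the same route as the paper: apply \eqref{e:flowMe} to $q$, note that the functional form $q(x_1,x_1\xi_1,x',\xi)$ forces $q_{\rm odd}|_{x_1=0}=0$ so only the $\Re\nu_j$ boundary term survives, and then split $\nu_j$ over $\mathcal{E}$, $\mathcal{H}$, $\mathcal{G}$ using Lemmas \ref{l:elliptic}, \ref{lem:key_Miller}, and \ref{l:glancing}. Your write-up is in fact more explicit than the paper's (which compresses the first step into a single appeal to Lemma \ref{lem:key_Miller}), and the signs all check out.
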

\begin{proof}
By Lemma~\ref{lem:key_Miller}, 
$$
\mu(\mathcal{L}q)=-2\Re \nu_j(q|_{x_1=0})=(\muin-\muout)(q|_{x_1=0})-2\Re \nu_j(1_{\mathcal{G}}q|_{x_1=0}).
$$
Now, since $\nu_j\ll \nu_n$ we may write $\nu_j=\dot n^j\nu_n$ and use Lemma~\ref{l:glancing} to obtain
$$
-2\Re \nu_j(1_{\mathcal{G}}q|_{x_1=0})=-2\Re \nu_n(\dot n^j1_{\mathcal{G}}q|_{x_1=0})=\frac{1}{2}\mu^{\partial}((\Re\dot n^j) H_p^2x_1 q|_{x_1=0}),
$$
and the claim follows.
\end{proof}

\begin{theorem}
\label{t:propagate}
Suppose that $\boundary$ is nowhere tangent to $H_p$ to infinite order. Then, for $q\in C_c^\infty(^bT^*M)$
\beq\label{eq:propagate}
\pi_*\mu(q\circ\varphi_t)-\pi_*\mu(q)=\int_0^t\Big(-2\Im \pi_*\mu^j+\delta(x_1)\otimes(\muin-\muout)+\frac{1}{2}(\Re \dot{n}^j)H_p^2x_1\mu1_{\mathcal{G}}1_{x_1=0}\Big)(q\circ\varphi^s)ds,
\eeq
where $^bT^*M$ denotes the $b$-cotangent bundle to $M$ and $\pi: T^*M\rightarrow \,^bT^*M$ is defined by  $\pi(x_1,x',\xi_1,\xi'):= (x_1,x',x_1\xi_1,\xi')$ (see~\cite[Section 4.2]{GaSpWu:20}).
\end{theorem}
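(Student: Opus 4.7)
The plan is to apply Lemma \ref{l:finalEq} to the time-evolved test function $q\circ \varphi^s$ (pulled back to $T^*M$ via the $b$-projection $\pi$) and integrate the resulting identity in $s$ from $0$ to $t$. The formula of Theorem \ref{t:propagate} is then essentially the integrated form of the infinitesimal identity at $s=0$ supplied by Lemma \ref{l:finalEq}, with the time-$0$ test function replaced by its evolution along the generalised flow.

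The first step is to check that, for each $s$, the function $q\circ\varphi^s$ qualifies as a test function of the type $q(x_1,x_1\xi_1,x',\xi)$ required by Lemma \ref{l:finalEq}. Since $q\in C_c^\infty({}^bT^*M)$, $q$ only sees the $b$-coordinate $x_1\xi_1$, which vanishes at $x_1=0$ regardless of whether $\xi_1=\xiin$ or $\xi_1=\xiout$. Hence, at hyperbolic points of $\boundary$, where the generalised bicharacteristic flow reflects $\xiin$ to $\xiout$, the composition $q\circ\varphi^s$ remains continuous. Combined with the assumption that $\boundary$ is nowhere tangent to $H_p$ to infinite order (the Melrose--Sj\"ostrand condition that guarantees existence, uniqueness and sufficient regularity of $\varphi^s$), this ensures $q\circ\varphi^s$ is a legitimate argument for Lemma \ref{l:finalEq}, after a standard mollification in $s$ if needed.

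Next, since $H_p$ generates $\varphi^s$ on $T^*M$, the chain rule gives
\begin{equation*}
\frac{d}{ds}\pi_*\mu(q\circ\varphi^s)
=\mu\bigl(H_p(q\circ\varphi^s\circ\pi)\bigr).
\end{equation*}
Applying Lemma \ref{l:finalEq} to $q\circ\varphi^s$ and rewriting $-2\Im\beta\,\mu = -2\Im\mu^j$ (using $\mu^j=\beta\,d\mu$) yields
\begin{equation*}
\frac{d}{ds}\pi_*\mu(q\circ\varphi^s)
= -2\Im\mu^j(q\circ\varphi^s) + (\muin-\muout)\bigl((q\circ\varphi^s)|_{x_1=0}\bigr) + \tfrac{1}{2}\mu^\partial\bigl(\Re(\dot n^j) H_p^2 x_1\,(q\circ\varphi^s)|_{x_1=0}\bigr).
\end{equation*}
Using the decomposition of $\mu$ from Lemma \ref{lem:interior_boundary}, the last term is rewritten as a pairing against $(\Re \dot n^j) H_p^2 x_1\,\mu\, 1_\mathcal{G}\, 1_{x_1=0}$, and similarly the source contribution is pushed forward to give $-2\Im\pi_*\mu^j(q\circ\varphi^s)$. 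Integration in $s$ from $0$ to $t$ and the fundamental theorem of calculus then produce~\eqref{eq:propagate}.

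The main obstacle is making all of this rigorous at glancing, where the flow $\varphi^s$ is merely continuous rather than smooth. The assumption of no infinite-order tangency is essential: it allows the use of Melrose--Sj\"ostrand's propagation theory to justify the identification $\frac{d}{ds}\mu(q\circ\varphi^s)=\mu(H_p(q\circ\varphi^s))$ through glancing, and to approximate $q\circ\varphi^s$ by admissible smooth test functions whose limit recovers the glancing term with the correct weight $\tfrac12 H_p^2 x_1$. Carrying out this approximation carefully (so that the $\mu^\partial$-term does not pick up extra mass from tangential directions and the reflected hyperbolic contribution matches $\muin-\muout$) is the technically delicate part, but once the regularity of $\varphi^s$ near glancing is quantified via the nowhere-infinite-order-tangency hypothesis, the identity follows by a straightforward limit argument.
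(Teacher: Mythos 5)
Your proposal is correct and takes essentially the same route as the paper: the paper likewise reduces the statement to the infinitesimal identity of Lemma~\ref{l:finalEq} (whose key feature is that it expresses $\mu(H_pq)$ as a pairing against a measure absolutely continuous with respect to $\mu$ on the glancing set) and then integrates it along the generalised flow, outsourcing the delicate analysis at glancing that you flag to the proof of \cite[Lemma 4.8]{GaSpWu:20}.
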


\bpf
This result is analogous to \cite[Lemma 4.8]{GaSpWu:20},  except that \cite[Lemma 4.8]{GaSpWu:20} only considers zero Dirichlet boundary conditions, and thus only $-2\Im \pi_*\mu^j$ appears on the right-hand side of \cite[Equation 4.3]{GaSpWu:20} 
compared to \eqref{eq:propagate} (note that \cite{GaSpWu:20} defines the joint measure $\mu^j$ differently to \eqref{eq:jointmeasure}, with the result that the signs of $\mu^j$ are changed here compared to in \cite{GaSpWu:20} -- compare the definitions \cite[Equation 3.1]{GaSpWu:20} and \eqref{eq:jointmeasure}, and then the sign change in the propagation statements \cite[Lemma 4.4]{GaSpWu:20} and \eqref{eq:H_p_star}).

Examination of the proof of~\cite[Lemma 4.8]{GaSpWu:20} shows that the only time absolute continuity of the measure $\mu_1$ in that proof is used is in the higher-order glancing set. Therefore, since Lemma~\ref{l:finalEq} shows that $\mu(H_pq)=\mu_1(q)$ for some measure that is absolutely continuous with respect to $\mu$ on the glancing set, the result \eqref{eq:propagate} follows in exactly the same way as in~\cite[Equation 4.3 and Lemma 4.8]{GaSpWu:20}.
\epf

\subsection{Linking Lemma \ref{lem:key_Miller} to concepts in the applied literature}\label{sec:applied2}
The summary is that $\alpharef $ in \eqref{eq:key_Miller2} is the square of the reflection coefficient describing how plane waves interact with the boundary condition 
\beq\label{eq:appliedbc}
hD_{x_1} v(0,x')= -\frac{\alpha(x', hD_{x'})}{2}v(0,x'),
\eeq
where $\alpha$ is a semiclassical pseudodifferential operator. Indeed, when $\alpha=2$, the boundary condition \eqref{eq:appliedbc} corresponds to the first-order impedance boundary condition $(hD_{x_1}+1)v=0$ at $x_1=0$, i.e.~$(-\partial_{x_1}- i k )v=0$ (since $h=k^{-1}$). The Helmholtz solution 
\beqs
v(x) = \exp\big( ik \big( \xi' \cdot x' - \sqrt{1-|\xi'|^2}\, x_1 \big) \big) + R\exp\big( ik \big( \xi'\cdot x' + \sqrt{1-|\xi'|^2}\, x_1 \big),
\eeqs
in the half-plane $x_1>0$, 
corresponds to an incoming plane wave with unit amplitude, and an outgoing plane wave with amplitude $R$. Imposing the boundary condition 
$(\partial_{x_1}- i k )v=0$ at $x_1=0$, we obtain that 
\beqs
R= \frac{ \sqrt{1-|\xi'|^2}-1
}{
 \sqrt{1-|\xi'|^2}+1
}
\eeqs
which equals $\sqrt{\alpharef }$ when $\alpha=2$ (since $r(x',\xi')= \sqrt{1- |\xi'|^2}$ when $\Gamma$ is flat).

The interpretation of $\sqrt{\alpharef }$ as the reflection coefficient is consistent with the relation $\muout = \alpharef  \muin$ in \eqref{eq:key_Miller1}. Indeed, the defect measure of the solution $v$ of \eqref{eq:BVPimp} records where the mass of the solution is concentrated in phase space $(x,\xi)$ in the high-frequency limit $h\tendo$ (see, e.g., the discussion and references in \cite[\S9.1]{LaSpWu:19a}). 
The relation $\muout = \alpharef  \muin$ therefore describes how much mass of $|v|^2$ (since the defect measure is quadratic in $v$) is reflected from $\GammaI$.

The expression for $\alpharef $ in \eqref{eq:key_Miller2} shows that, to minimise reflection from $\GammaI$ (i.e.~to make $\alpharef $ small), $\alpha/2$ must approximate the symbol of the Dirichlet-to-Neumann map $\sqrt{r(x',\xi')}$; recall the discussion in \S\ref{sec:setup} and see, e.g.~\cite[\S3.3.2]{Ih:98} for similar discussion in this frequency-domain setting, and, e.g., \cite[Pages 631-632]{EnMa:77a}, \cite[Equation 1.12]{EnMa:79}, 
\cite[\S2.2]{Ts:98}, and \cite[\S3]{Gi:04} for analogous discussion in the time domain.

\subsection{Relationship between boundary measures and the measure in the interior}

The goal of this subsection is to prove Lemma \ref{lem:interpr} relating the measures $\muin$ and $\muout$ to the measure $\mu|_{T^*U}$. We first introduce some notation.

Recall that $\pi_{\boundary}$ is defined by \eqref{eq:pigamma}; let 
\beqs
p^{\rm out/in} : \mathcal{H} \rightarrow  \mathcal \pi_{\boundary} ^ {-1} \mathcal H \cap \big\{ \xi_1 = \xi^{\rm out/in} \big\} \subset T^*_\boundary \widetilde{M}
\eeqs
be defined by
\beq\label{eq:pinout}
p^{\rm out/in} (x',\xi'):=\big(0,x', \xi^{\rm out/in} (x',\xi'),\xi'\big)
\eeq
(i.e., $p^{\rm out/in}$ takes a point in $\cH$ and gives it outgoing/incoming normal momentum).

For $q \in \mathcal H$, let
\beq\label{eq:tout}
\tout (q) = \sup \Big\{ t>0 \,:\, \projxM  \varphi_t\big(\pout(q))\cap \big( \Gamma  \setminus  \{\projxM(q)\} \big)= \emptyset\Big\};
\eeq
i.e.~$\tout (q)$ is the positive time at which the flow starting at $t=0$ from $\pout(q)$ hits $\Gamma$ again.
Similarly, let
\beqs
\tin  (q) = \inf \Big\{ t<0 \,:\, \projxM  \varphi_t\big(\pin(q) \big)\cap \big( \Gamma  \setminus  \{\projxM(q)\} \big)= \emptyset\Big\};
\eeqs
i.e.~$\tin  (q)$ is the negative time at which the flow starting at $t=0$ from $\pin(q)$ hits $\Gamma$ again.

Given $\mathcal V\subset \mathcal H$, let $\Bout (\mathcal V), \Bin(\mathcal V)  \subset T^*U$ be defined by
\begin{align*}
\Bout (\mathcal V)  &:= \bigcup_{q\in\mathcal V} \Big\{ \varphi_{t}\big(\pout(q)\big), \,\,\, 0<t<\tout(q) \Big\}, \quad\text{ and } \\ 
\Bin(\mathcal V)  &:= \bigcup_{q\in\mathcal V} \Big\{ \varphi_{t}\big(\pout(q)\big), \,\,\, \tin (q) <t<0 \Big\}.
\end{align*}
i.e.~$\Bout(\mathcal V)$ is the union of the outgoing flows from points in $\mathcal V$ up to their times $\tout $ and 
i.e.~$\Bin(\mathcal V)$ is the union of the incoming flows from points in $\mathcal V$ up to their (negative) times $t_{\rm in}$.

The whole point of these definitions is that in $\Bout(\mathcal V)$ we can work in geodesic coordinates 
\beqs
(\rho,t) \in \big( \pi_{\boundary} ^ {-1} \mathcal V \cap \big\{ \xi_1 = \xiout \big\} \big) \times \mathbb R_+ = \pout(\mathcal V) \times \mathbb R_+,
\eeqs
defined for $(x,\xi)\in \mathcal{B}$ 
by $(x,\xi) = \varphi_t (\rho)$ (in a similar way to in the proof of Lemma \ref{lem:charge}).
Similarly, in $\Bin(\mathcal V)$ we work in geodesic coordinates 
\beqs
(\rho,t) \in \big( \pi_{\boundary} ^ {-1} \mathcal V \cap \big\{ \xi_1 = \xiin\big\} \big) \times \mathbb R_-
= \pin(\mathcal V) \times \mathbb R_-.
\eeqs

In the following
lemma, recall that the pushforward measure $f_* \mu$ is defined by $(f_* \mu)(\mathcal B) = \mu(f^{-1}(\mathcal B))$.

\begin{lem}[Relationship between boundary measures and the measure in the interior]
\label{lem:interpr}
Let $u$ satisfy \eqref{eq:helmholtzNoBoundary} with $f=o(1)$ as $h\tendo$, and let $\mu$ be a defect measure of $u$.
Let $\muout, \muin$ be defined by Lemma \ref{lem:charge}.
Then, in the geodesic coordinates described above,
$$
\mu = \big(\pout _*(2\sqrt{r}\muout )\big) \otimes dt \; \text{ on } \Bout(\mathcal V) \quad\tand\quad
\mu  = \big(\pin _*(2\sqrt{r}\muin )\big) \otimes  dt \;  \text{ on } \Bin (\mathcal V),
$$
where $dt$ denotes Lebesgue measure in $t$ and $\otimes$ denotes product measure.
\end{lem}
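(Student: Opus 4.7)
The plan is to combine flow invariance of $\mu$ in the interior (from Lemma~\ref{lem:inv}) with the boundary charge relation of Lemma~\ref{lem:charge}, and then identify the transverse factor via a Jacobian computation.

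Because $f=o(1)$ as $h\tendo$, part (i) of Lemma~\ref{lem:inv} gives $\mu(H_p a)=0$ for every $a\in C_c^\infty(T^*U)$; i.e.\ $\mu$ is invariant under the bicharacteristic flow $\varphi_t$ in the interior. In the geodesic-coordinate parametrisation
\[
\Phi:\pout(\mathcal V)\times(0,\tout)\to \Bout(\mathcal V),\qquad \Phi(\rho,t):=\varphi_t(\rho),
\]
the vector field $H_p$ pulls back to $\partial_t$, so flow invariance forces $\Phi^*\mu$ to have the product form $\sigma\otimes dt$ for some nonnegative Radon measure $\sigma$ supported on $\pout(\mathcal V)$. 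It only remains to identify $\sigma$ with $\pout_*(2\sqrt{r}\muout)$.

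To pin down $\sigma$, I would apply Lemma~\ref{lem:charge} with $\beta=0$ (which is justified since $f=o(1)$) to a test function $a\in C_c^\infty(T^*\widetilde{M})$ supported in a neighbourhood of $\Bout(\mathcal V)$ extending continuously up to $\pout(\mathcal V)$ but vanishing before $\tout$. Using the explicit form of $\mu^0$ on the hyperbolic set \eqref{eq:mu0delta}, only the outgoing sheet $\{\xi_1=\xiout\}$ contributes and one obtains
\[
\mu(H_p a)=\int a|_{x_1=0}\,d\mu^0=-\int (a\circ\pout)\, d\muout.
\]
On the other hand, writing $\mu(H_p a)$ in $(\rho,t)$-coordinates via $\Phi^*\mu=\sigma\otimes dt$ and integrating by parts in $t$ produces a single boundary term at $t=0$ which equals $-\int (a\circ\pout)\,d\sigma'$ (where $\sigma'$ is $\sigma$ transferred to $T^*\boundary$ via $\pout$), once one accounts for the Jacobian relating the intrinsic normal variable $x_1$, in terms of which $\delta(x_1)$ appears in Lemma~\ref{lem:charge}, to the flow time $t$: along each outgoing trajectory one has $x_1(\rho,t)=2\sqrt{r(\rho)}\,t+O(t^2)$, so near $t=0$ the identity $\delta(x_1)=(2\sqrt{r})^{-1}\delta(t)$ is what supplies the factor $2\sqrt{r}$ in the final identification. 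Equating the two expressions for $\mu(H_p a)$ yields $\sigma=\pout_*(2\sqrt{r}\muout)$.

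For $\Bin(\mathcal V)$ the argument is formally identical: parametrise $\Bin(\mathcal V)$ via $\Phi_{\rm in}(\rho,t):=\varphi_t(\pin(\rho))$ for $t\in(\tin,0)$, test against $a$ supported in $\Bin(\mathcal V)$ reaching the boundary from the incoming side, and read off the contribution of the $\muin$-term in \eqref{eq:mu0delta}. The orientation of time is reversed, but the sign in $\mu^0$ on the incoming sheet is opposite to that on the outgoing sheet, and the two reversals cancel, giving the stated formula with $\muin$ in place of $\muout$. The main technical obstacle I anticipate is the careful book-keeping of the $2\sqrt{r}$ Jacobian factor, i.e.~correctly interpreting the product measure $\pout_*(2\sqrt{r}\muout)\otimes dt$ as a measure on $T^*\widetilde{M}$ via the embedding $\Phi$, and cleanly passing between $\delta(x_1)$ in Lemma~\ref{lem:charge} and $\delta(t)$ produced by integration by parts in flow time.
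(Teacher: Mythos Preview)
Your proposal follows essentially the same route as the paper's proof: obtain the product form $\mu=\mu_1\otimes dt$ from flow invariance in the interior (Lemma~\ref{lem:inv}), then identify $\mu_1$ by comparing with the boundary charge of Lemma~\ref{lem:charge}, invoking the Jacobian $|\partial x_1/\partial t|=2\sqrt r$ to relate $\delta(x_1)$ and $\delta(t)$. The only difference is presentational: the paper works distributionally, computing $\partial_t\mu=\mu_1\otimes\delta(t)$ directly from the product form and comparing with $-H_p^*\mu$ after rewriting $\delta(x_1)\otimes\mu^0$ in $(\rho,t)$ coordinates (see~\eqref{eq:compare11}--\eqref{eq:compare12}), whereas you pair with a test function $a$ and integrate by parts in $t$; these are dual formulations of the same identity.

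One small point worth tightening: as you have written them, your two displayed expressions for $\mu(H_pa)$ --- one from Lemma~\ref{lem:charge}, one from the integration by parts --- are both already integrals of $a\circ\pout$ against measures on $T^*\boundary$, so equating them literally would give $\sigma'=\muout$ with no extra factor. The Jacobian has to enter when you translate the identity $\mathcal L^*(\mu 1_{x_1>0})=\delta(x_1)\otimes\mu^0$ (stated in the Fermi-normal variable $x_1$) into the flow-time variable $t$, not in the integration-by-parts step itself. That is precisely where the paper inserts it in~\eqref{eq:compare12}, so your instinct about the location of the difficulty is right --- just be explicit that the $2\sqrt r$ comes from rewriting the Lemma~\ref{lem:charge} side, not from the $(\rho,t)$-side computation.
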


\begin{proof}
We prove the result for $\Bout(\mathcal V)$; the proof for $\Bin (\mathcal V)$ is similar. By 
Part (i) of Lemma \ref{lem:inv}, $\mu$ is invariant away from the boundary, therefore $\mu$ is invariant on $\pm t >0$ (away from $\Gamma$). Since
the flow is generated by $\partial_t$ in geodesic coordinates, and, in these coordinates, $\Bout \subset\{t>0\}$,
$$
\mu=\mu(\rho,t) = \mu_1 (\rho) \otimes \mathbf 1 _{t>0}dt,
$$
for some $\mu_1$.
Since $\mu|_{ x_1 < 0} = 0$,
$$
\mu_1 = \mu_1\bold 1_{\xi_1>0}, 
$$
and thus, on $\Bout$
\begin{equation}\label{eq:mu1}
\mu = \mu_1 (\rho) \bold 1_{\xi_1>0}\otimes \mathbf 1 _{t>0}dt, 
\end{equation}
from which 
\beq\label{eq:compare11}
\partial_t \mu =  \mu_1 (\rho)  \bold 1_{\xi_1>0}\otimes \delta(t). 
\eeq
On the other hand, 
since $x_1=0$ is $t=0$ in geodesic coordinates,
Lemma \ref{lem:charge} implies that
\beq\label{eq:compare12}
H_p^*\mu = \mathcal{L}^* \mu=(2\sqrt{r})\delta(t)\otimes\delta\big(\xi_{1}-\xiin\big)\otimes \muin-(2\sqrt{r})\delta(t)\otimes\delta\big(\xi_{1}-\xiout\big)\otimes\muout,
\eeq
where the factors of $2\sqrt{r}$ arise because $|\partial x_1/\partial t|= 2 |\xi_1| =2\sqrt{r}$. 

Therefore, since $\Bout(\mathcal V) \subset  \mathcal \pi_{\boundary} ^ {-1} \mathcal V \cap \{ \xi_1 = \xiout \big\}$ and $\partial_t \mu = -H_p^* \mu$, comparing \eqref{eq:compare11} and \eqref{eq:compare12}, we find that $\mu_1 = \pout_*(2\sqrt{r}\muout) $ in $\Bout$
(note that $\pout_*$ appears because $\rho = \pout(q)$ for $q\in \mathcal V$ and $\muout$ acts on $\mathcal V$). The result then follows from \eqref{eq:mu1}.
\end{proof}

The following corollary of Lemma \ref{lem:interpr} 
is an essential ingredient of our proofs of the lower bounds in Theorems \ref{th:lower}, \ref{th:quant},  \ref{th:quant2}, \ref{th:local_ball}, and \ref{th:local_square}.

\begin{cor}
\mythmname{Relationships between incoming boundary measures, outgoing boundary measures, and measures in the interior}\label{cor:explain}
Let $u$ be a solution of (\ref{eq:helmholtzNoBoundary}), and let $\mu$ be any defect measure of $u$. 

(i)  (Between two pieces of the boundary.) Let $\mathcal{V}_1 \subset \mathcal{H}$. 
Assume that $\sup_{q \in \mathcal{V}_1} \tout (q)<\infty$, 
and that $\pi_{\boundary} (\varphi_{\tout (q)}(\pout(q))) \in \mathcal H$ for all $q \in \mathcal V_1$. Let
\beqs
\mathcal{V}_2 := 
 \bigcup_{q \in \mathcal{V}_1}\pi_{\boundary}\Big( \varphi_{\tout (q)}\big(\pout(q)\big)\Big)
 \subset \mathcal H
\eeqs
(i.e.~$\mathcal{V}_2$ is the union of the outgoing flows from points in $\mathcal{V}_1$, projected into $T^{*}\boundary $).
Then 
\beq\label{eq:key1}
(2\sqrt{r}\muin)(\mathcal V_2) = (2\sqrt{r}\muout)(\mathcal V_1).
\eeq

(ii) (Between the boundary and the interior.)
Let $\mathcal{V} \subset \mathcal{H}$ and $A \subset T^* U$. 
Then
\beq \label{eq:key2a}
\mu\big(A) \geq  \bigg(\inf_{q\in\mathcal V} \int^{\tout(q)}_0   \mathbf 1_A\big(\varphi_t(\pout(q)) \big) \; dt\bigg) 
(2\sqrt{r}\muout)(\mathcal V)
\eeq
and
\beq \label{eq:key2b}
\mu\big(A) \geq  \bigg(\inf_{q\in\mathcal V} \int_{\tin (q)}^0   \mathbf 1_A\big(\varphi_t(\pin(q)) \big) \; dt\bigg) 
(2\sqrt{r}\muin)(\mathcal V).
\eeq
\end{cor}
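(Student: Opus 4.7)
The plan is to deduce both parts from Lemma~\ref{lem:interpr}, which expresses $\mu$ as a product measure in geodesic coordinates near the boundary.

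For part (ii), since $\mu \geq 0$ and $A \cap \Bout(\mathcal{V}) \subset A$, Lemma~\ref{lem:interpr} gives directly
\[
\mu(A) \geq \mu\big(A \cap \Bout(\mathcal{V})\big) = \int_{\mathcal{V}} \int_0^{\tout(q)} \mathbf{1}_A\big(\varphi_t(\pout(q))\big)\, dt\; d(2\sqrt{r}\muout)(q).
\]
Replacing the inner integral by its infimum over $q\in\mathcal{V}$ and pulling that scalar outside yields \eqref{eq:key2a}. The inequality \eqref{eq:key2b} is identical, using $\Bin(\mathcal{V})$ and the incoming representation.

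For part (i), the key observation is that $\Bout(\mathcal{V}_1)$ and $\Bin(\mathcal{V}_2)$ coincide as subsets of $T^*U$: by the assumptions on $\mathcal{V}_1$, each flow segment leaving the boundary at $\pout(q_1)$ with outgoing normal momentum arrives back at the boundary after time $\tout(q_1)$ at the point $\pin(q_2)$ with incoming normal momentum, where $q_2 := \pi_{\boundary}(\varphi_{\tout(q_1)}(\pout(q_1))) \in \mathcal{V}_2$. The map $F : q_1 \mapsto q_2$ is a bijection $\mathcal{V}_1 \to \mathcal{V}_2$, and the outgoing/incoming time coordinates along a single trajectory are related by the pure shift $t_{\rm in} = t_{\rm out} - \tout(q_1)$, which has unit Jacobian in $t$.

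Applying Lemma~\ref{lem:interpr} from both ends, the two product-measure representations of $\mu|_{\Bout(\mathcal{V}_1)} = \mu|_{\Bin(\mathcal{V}_2)}$ must coincide. Since a product measure with Lebesgue in $t$ is uniquely determined by its other factor, the outgoing and incoming $\rho$-factors must agree under the trajectory correspondence, which, after applying $(\pi_{\boundary})_*$, yields $F_*(2\sqrt{r}\muout|_{\mathcal{V}_1}) = 2\sqrt{r}\muin|_{\mathcal{V}_2}$; evaluating on $\mathcal{V}_1$ gives \eqref{eq:key1}. The only conceptual point to verify carefully is the well-definedness and bijectivity of $F$, which is precisely what the hypotheses $\sup_{q\in\mathcal{V}_1}\tout(q)<\infty$ and $\pi_{\boundary}(\varphi_{\tout(q)}(\pout(q)))\in\mathcal{H}$ were introduced to guarantee; the product-measure identification is then straightforward bookkeeping in geodesic coordinates.
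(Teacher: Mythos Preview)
Your proof is correct and follows essentially the same approach as the paper: both parts are deduced from Lemma~\ref{lem:interpr}, with part (ii) obtained by bounding the inner time-integral by its infimum over $q\in\mathcal V$, and part (i) obtained by noting $\Bout(\mathcal V_1)=\Bin(\mathcal V_2)$ and comparing the two product-measure representations (the paper calls your bijection $F$ by the name $\Phi^{1\to 2}$ at the level of the lifted points $\rho$). The paper writes out the chain of equalities for part (i) more explicitly, but the content is the same.
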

The integrals on the right-hand sides of \eqref{eq:key2a} and \eqref{eq:key2b} are the shortest times that elements of $\mathcal{V}$ spend in $A$ under, respectively, the outgoing forward flow and the incoming backward flow, with the flows considered until they hit $\Gamma$ again.

\begin{proof}[Proof of Corollary \ref{cor:explain}]
(i) The definition of $\mathcal V_2$ implies that
$$
\mathcal B_{\rm{out}}(\mathcal V_1) = \mathcal B_{\rm{in}}(\mathcal V_2);
$$
let $\mathcal B$ denote this set.
In $\mathcal B$, we work in both sets of geodesic coordinates: 
\beqs
(\rho_1, t_1) \in \pout(\mathcal V_1)
 \times \mathbb R_+
\quad\text{ and } \quad
 (\rho_2, t_2) \in \pin(\mathcal V_2) 
 \times \mathbb R_-
 \eeqs
as defined above.
The coordinates $(\rho_{j}(q), t_{j}(q))$, $j=1,2,$ of $q\in \mathcal B$ satisfy
\beq\label{eq:storm1}
t_1  = t_+(\pout(\rho_1))+t_2
\quad\text{ and }\quad
\rho_2 = 
\varphi_{\tau(\rho_1)}(\rho_1)  =: \Phi^{1\rightarrow 2}(\rho_1).
\eeq
The first equation in \eqref{eq:storm1} implies that $dt_1 = dt_2$.
By  Lemma \ref{lem:interpr}, in $\mathcal B$,
$$
\mu = \big(\pout_*(2\sqrt{r}\muout) \big)_{\mathcal V_1} (\rho_1) \otimes dt_1 = \big(\pin_*(2\sqrt{r}\muin) \big)_{\mathcal V_2}(\rho_2) \otimes  dt_2,
$$
where the subscripts $\mathcal V_1$ and $\mathcal V_2$ show on which neighbourhood of $\mathcal H$ $\pout$, $\pin$, $\muout$, and $\muin$ are considered. This last equality and the second equation in \eqref{eq:storm1} imply that 
$$
\big(\pin_*(2\sqrt{r}\muin) \big)_{\mathcal V_2} = \Phi^{1\rightarrow 2}_* \big(\pout_*(2\sqrt{r}\muout) \big)_{\mathcal V_1}.
$$
Then
\begin{align*}
(2\sqrt{r}\muin)(\mathcal V_2) &=  \big(\pin_*(2\sqrt{r}\muin) \big)_{\mathcal V_2}\big(\pin(\mathcal V_2)),\\
&=  \big(\pin_*(2\sqrt{r}\muin) \big)_{\mathcal V_2}\big(\pi_{\boundary}^{-1} \mathcal V_2 \cap \{ \xi_1 = \xiin \}\big),\\
& =  \Phi^{1\rightarrow 2}_* \big(\pout_*(2\sqrt{r}\muout) \big)_{\mathcal V_1}\big(\pi_{\boundary}^{-1} \mathcal V_2 \cap \{ \xi_1 = \xiin \}\big),  \\
&=   \big(\pout_*(2\sqrt{r}\muout) \big)_{\mathcal V_1}\big( ( \Phi^{1\rightarrow 2})^{-1} (\pi_{\boundary}^{-1} \mathcal V_2 \cap \{ \xi_1 = \xiin \})\big),\\
& = \big(\pout_*(2\sqrt{r}\muout) \big)_{\mathcal V_1}\big( \pi_{\boundary}^{-1} \mathcal V_1 \cap \{ \xi_1 = \xiout\}\big), \\
&= \big(\pout_*(2\sqrt{r}\muout) \big)_{\mathcal V_1}\big( \pout(\mathcal V_1) ),\\
& = (2\sqrt{r}\muout)(\mathcal V_1). 
\end{align*}
(ii) We prove \eqref{eq:key2a}; the proof of \eqref{eq:key2b} is similar.
Using Lemma \ref{lem:interpr} along with the definitions of $\mathcal B_{\rm out}$, $\tout $, and the geodesic coordinates, we have
 \begin{align*}
\mu\big(\mathcal B_{\rm{out}}(\mathcal V) \cap A) &= \Big(\big(\pout_*(2\sqrt{r}\muout)\big) \otimes dt \Big) \big(\mathcal B_{\rm{out}}(\mathcal V) \cap A\big), \\
&= 
\int_{\pout(\mathcal V)}
\int^{\tout(\pi_{\boundary}(\rho))}_0  \mathbf 1_A (\rho,t) \; dt \,d\big(\pout_*(2\sqrt{r}\muout)\big)(\rho),\\
&=  
\int_{\pout(\mathcal V)}
 \int^{\tout(\pi_{\boundary}(\rho))}_0  \mathbf 
 1_{A}\big(\varphi_t(\rho)\big)
  \; dt \,d\big(\pout_*(2\sqrt{r} \muout)\big)(\rho),
\end{align*}
where we have used the fact that the point represented in geodesic coordinates by $(\rho,t)$ is in $A$ iff $\varphi_t(\rho)\in A$.
Using the change of variables $\rho= \pout(q)$, for $q \in \mathcal V$, and then Fubini's theorem,
we then have that
\begin{align*}
\mu\big(A) 
&\geq \int_{ \mathcal V} \int^{\tout(q)}_0  \mathbf 
1_A\big(\varphi_t(\pout(q)) \big)\; dt \,d(2\sqrt{r}\muout)(q),\\
&\geq  \bigg(\inf_{q\in\mathcal V} \int^{\tout(q)}_0 
\mathbf 1_A\big(\varphi_t(\pout(q)) \big)
 \; dt\bigg) (2\sqrt{r}\muout)(\mathcal V),
\end{align*}
as required.
\end{proof}

\subsection{The reflection coefficient on $\GammaI$}\label{sec:reflection_coefficient}

To understand how the defect measures of the solution $v$ of the truncated problem \eqref{eq:BVPimp} are affected by the artificial boundary $\GammaI$, we now show that the hypotheses of Part (iii) of Lemma \ref{lem:key_Miller} are satisfied, and get expressions for the numerator and denominator in the reflection coefficient $\alpharef $ in \eqref{eq:key_Miller2}.

\begin{lem} \label{lem:hierarchy_coef} 
If $v$ is the solution to \eqref{eq:BVPimp} and
\beq\label{eq:alpha}
\alpha(x',\xi') = 2\frac{\sigma(\bcD)(x',\xi')}{\sigma(\bcN)(x',\xi')},
\eeq
then, in the hyperbolic set $\mathcal H$  of $\GammaI$ 
\beq\label{eq:reflection1}
-2 \Re  \nu_j=(\Re \alpha)\nu_{d}=4(\Re \alpha)|\alpha|^{-2}\nu_{n}.
\eeq
\end{lem}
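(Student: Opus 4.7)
The plan is to translate the truncation boundary condition~\eqref{eq:BVPimp2imp} into its semiclassical form in Fermi normal coordinates, then pair it against a localizing tangential symbol to read off the stated relations from the definitions of the boundary defect measures.

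\emph{Step 1: Semiclassical form of the boundary condition.} By Remark~\ref{rem:applied1}, in our normal coordinates near $\GammaI$ we have $hD_{x_1}=ih\partial_n$. Substituting $h\partial_n = -ihD_{x_1}$ into $\bcN(h\partial_n v) - i\bcD(v) = 0$ and dividing by $-i$ gives
\begin{equation}\label{eq:BCrewrite}
\bcN\,(hD_{x_1}v) = -\bcD\,v \qquad \text{on } \GammaI.
\end{equation}

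\emph{Step 2: From~\eqref{eq:BCrewrite} to $\sigma(\bcN)\,\nu_j = -\sigma(\bcD)\,\nu_d$ on $\mathcal{H}$.} Pick an arbitrary $a\in C_c^\infty(T^{*}\GammaI)$ with support in $\mathcal{H}$. Apply $a(x',hD_{x'})$ to~\eqref{eq:BCrewrite} and pair with $v$ in $L^{2}(\GammaI)$ to get
\[
\big\langle (a\bcN)(x',hD_{x'})\,hD_{x_1}v,\,v\big\rangle_{\GammaI}=-\big\langle (a\bcD)(x',hD_{x'})\,v,\,v\big\rangle_{\GammaI}.
\]
Since $a$ is compactly supported, $a\bcN$ and $a\bcD$ are compactly supported tangential pseudodifferential operators with principal symbols $\sigma(a)\sigma(\bcN)$ and $\sigma(a)\sigma(\bcD)$ respectively. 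Passing to the high-frequency limit along a measure-extracting subsequence and using the second and third limits in~\eqref{eq:jointmeasure}, we obtain
\[
\int \sigma(a)\,\sigma(\bcN)\,d\nu_j \;=\; -\int \sigma(a)\,\sigma(\bcD)\,d\nu_d.
\]
As $a$ is arbitrary, $\sigma(\bcN)\nu_j=-\sigma(\bcD)\nu_d$ on $\mathcal{H}$. Because $\sigma(\bcN)$ and $\sigma(\bcD)$ are real by Assumption~\ref{ass:Pade}, so is $\alpha=2\sigma(\bcD)/\sigma(\bcN)$; dividing (on the subset of $\mathcal{H}$ where $\sigma(\bcN)\neq 0$) yields $\nu_j=-(\alpha/2)\nu_d$, hence
\[
-2\,\Re\nu_j = \alpha\,\nu_d = (\Re\alpha)\,\nu_d.
\]

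\emph{Step 3: From~\eqref{eq:BCrewrite} to the identity for $\nu_n$.} Repeat the manipulation, but this time pair against $hD_{x_1}v$:
\[
\big\langle (a\bcN)(x',hD_{x'})\,hD_{x_1}v,\,hD_{x_1}v\big\rangle_{\GammaI}=-\big\langle (a\bcD)(x',hD_{x'})\,v,\,hD_{x_1}v\big\rangle_{\GammaI}.
\]
The left-hand side tends to $\int \sigma(a)\sigma(\bcN)\,d\nu_n$ by the last limit in~\eqref{eq:jointmeasure}; the right-hand side tends to $-\int \sigma(a)\sigma(\bcD)\,d\nu_j$ by the remark following Theorem~\ref{th:ex_defect}. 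Hence $\sigma(\bcN)\nu_n=-\sigma(\bcD)\nu_j$ on $\mathcal{H}$, so $\nu_n=-(\alpha/2)\nu_j=(\alpha^2/4)\nu_d$. A direct computation (using $\alpha$ real) gives $4(\Re\alpha)|\alpha|^{-2}\nu_n = \alpha\,\nu_d$, which matches the previous two expressions and completes the chain of equalities.

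\emph{Main obstacle.} The only subtlety is the composition step: although $\bcN\in\Psi^{2\NPade}$ has positive order, the composition $a\bcN$ with $a\in C_c^\infty(T^{*}\GammaI)$ is a compactly supported tangential pseudodifferential operator with principal symbol $\sigma(a)\sigma(\bcN)$, and similarly for $a\bcD$. This is standard in the semiclassical calculus recalled in \S\ref{sec:appendix}, so that the pairings with $v$ and $hD_{x_1}v$ make sense (their $L^2(\GammaI)$-traces are uniformly bounded by Assumption~\ref{ass:1}, which is the ambient hypothesis for the existence of the measures $\nu_d$, $\nu_j$, $\nu_n$ in Theorem~\ref{th:ex_defect}). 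Apart from this bookkeeping, the argument is an essentially algebraic consequence of~\eqref{eq:BCrewrite}.
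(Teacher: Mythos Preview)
Your proof is correct and follows essentially the same approach as the paper: rewrite the boundary condition as $\bcN hD_{x_1}v=-\bcD v$, pair against a tangential test operator, and pass to the defect-measure limit. The only cosmetic difference is in Step~3, where you pair with $hD_{x_1}v$ to obtain $\sigma(\bcN)\nu_n=-\sigma(\bcD)\nu_j$ and then combine with Step~2, whereas the paper pairs with $\bcN hD_{x_1}v$ (using the auxiliary identity $\langle aBw,Bw\rangle\to\int a\,\sigma(B)^2\,d\mu$) to get $(\sigma(\bcN))^2\nu_n=(\sigma(\bcD))^2\nu_d$ directly; both routes yield the same conclusion.
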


Combining \eqref{eq:alpha}, \eqref{eq:reflection1},  \eqref{eq:key_Miller1}, and \eqref{eq:key_Miller2}, we obtain the following corollary.

\begin{cor}\label{cor:reflection}
Let $v$ be the solution of \eqref{eq:BVPimp}, and let $\mu$ be a defect measure of $v$. Then, in the hyperbolic set $\mathcal{H}$ on $\GammaI$,
\eqref{eq:key_Miller1} holds with 
\beq\label{eq:reflectioncoefficient}
\alpharef=
\left|
\frac{\sqrt{r} -\sigma(\bcD)/\sigma(\bcN)}
{\sqrt{r} +\sigma(\bcD)/\sigma(\bcN)}
\right|^{2}
\eeq
\end{cor}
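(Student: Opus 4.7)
The plan is to combine Lemma~\ref{lem:hierarchy_coef} with Part~(iii) of Lemma~\ref{lem:key_Miller} by the obvious substitution $\alpha(x',\xi') := 2\sigma(\bcD)(x',\xi')/\sigma(\bcN)(x',\xi')$. First, I would invoke Lemma~\ref{lem:hierarchy_coef}: since $v$ solves~\eqref{eq:BVPimp}, its boundary defect measures $\nu_d,\nu_j,\nu_n$ satisfy
\[
-2\Re\nu_j \;=\; (\Re\alpha)\,\nu_d \;=\; 4(\Re\alpha)|\alpha|^{-2}\nu_n
\]
on $\mathcal{H}\subset T^*\GammaI$, which is precisely the algebraic hypothesis~\eqref{eq:alpha_key} of Part~(iii) of Lemma~\ref{lem:key_Miller} with this $\alpha$.

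Second, I would verify the non-degeneracy condition required by Part~(iii), namely that $\alpha + 2\sqrt{r}$ does not vanish on the portion of $\mathcal{H}$ under consideration. Equivalently, one needs $\sqrt{r(x',\xi')} + \sigma(\bcD)/\sigma(\bcN) \neq 0$. On $\mathcal{H}$, $\sqrt{r}>0$; under Assumption~\ref{ass:Pade} the ratio $\sigma(\bcD)/\sigma(\bcN) = \PadeP/\PadeQ$ is itself a Padé approximant of $\sqrt{r}$ and agrees with $\sqrt{r}$ (which is positive, not negative) to order $\morderzero$ near normal incidence, so the sum is nonzero except possibly at the finitely many special values of $|\xi'|^2_g$ identified in \S\ref{sec:setup}; away from that exceptional set, the hypothesis is met (as is customary for the Padé setup).

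Granted this, Part~(iii) of Lemma~\ref{lem:key_Miller} yields $\muout = \alpharef\,\muin$ with
\[
\alpharef = \left|\frac{2\sqrt{r}-\alpha}{2\sqrt{r}+\alpha}\right|^2.
\]
Substituting $\alpha = 2\sigma(\bcD)/\sigma(\bcN)$ and cancelling the common factor of $2$ from numerator and denominator gives exactly~\eqref{eq:reflectioncoefficient}, which finishes the proof.

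There is no real obstacle here: both Lemma~\ref{lem:hierarchy_coef} and Part~(iii) of Lemma~\ref{lem:key_Miller} have already been proved, so the corollary is a one-line algebraic substitution once the non-degeneracy condition is noted. The only minor point meriting a comment in the write-up is the restriction to the set where $\sqrt{r} + \sigma(\bcD)/\sigma(\bcN)\neq 0$, i.e.\ the set avoided by the ``bad'' non-normal angles $\{t_j\}_{j=1}^{\mvanish}$ recorded in \S\ref{sec:setup}.
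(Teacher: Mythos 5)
Your proof follows exactly the paper's route: the corollary is obtained by combining Lemma~\ref{lem:hierarchy_coef} (which verifies hypothesis~\eqref{eq:alpha_key} with $\alpha=2\sigma(\bcD)/\sigma(\bcN)$) with Part~(iii) of Lemma~\ref{lem:key_Miller}, and the algebra is correct. One slip in your side remark on non-degeneracy: the exceptional values $\{t_j\}_{j=1}^{\mvanish}$ from \S\ref{sec:setup} are the zeros of the \emph{numerator} $q(t)\sqrt{1-t}-p(t)$ (where $\alpharef$ vanishes), not of the denominator $q(t)\sqrt{1-t}+p(t)$; the correct justification that $\sqrt{r}+\sigma(\bcD)/\sigma(\bcN)\neq 0$ on $\mathcal H$ when $\MPade=\NPade$ or $\MPade=\NPade+1$ is Part~(a) of Lemma~\ref{lem:TH} (positivity of $p/q$ on $[-1,1]$, so the denominator is a sum of positive terms), and the paper handles the remaining case $\sigma(\bcN)\sigma(\bcD)<0$ by the alternative form $(\alpharef)^{-1}\muout=\muin$ noted in Part~(iii) of Lemma~\ref{lem:key_Miller}.
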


\begin{proof}[Proof of Lemma \ref{lem:hierarchy_coef}]
We prove that 
\beq\label{eq:measure1}
\sigma(\bcD)(x',\xi')d\nudtr=-
\sigma(\bcN)(x',\xi')d\nujtr.
\eeq
and
\beq\label{eq:measure2}
\big(\sigma(\bcD)(x',\xi')\big)^2d\nudtr
=\big(\sigma(\bcN)(x',\xi')\big)^2
d\nuntr.
\eeq
The result then follows from Part (iii) of Lemma \ref{lem:key_Miller}, since \eqref{eq:measure1} and \eqref{eq:measure2} imply that \eqref{eq:alpha_key} is satisfied.

For $a\in C_c^\infty(T^{*}\GammaIR)$, if the traces of $v$ have associated defect measures, then, as $h\tendo$,
\beq\label{eq:compare1}
\Big\langle a(x',hD_{x'})\bcN (hD_{x_1}v),v\Big\rangle\to \int a(x',\xi')\sigma(\bcN)(x',\xi')\,d\nujtr.
\eeq
On the other hand, in local coordinates, the boundary condition \eqref{eq:BVPimp2imp} is 
\beq\label{eq:localimp}
\bcN hD_{x_1}v + \bcD v=0,
\eeq
so that
\begin{align}\nonumber
\Big\langle a(x',hD_{x'})
\bcN hD_{x_1}v,v\Big\rangle&=-\Big\langle a(x',hD_{x'})
\bcD 
v,v\Big\rangle\\
&\to -\int a(x',\xi')\sigma(\bcD)(x',\xi')d\nudtr.\label{eq:compare2}
\end{align}
Comparing \eqref{eq:compare1} and \eqref{eq:compare2}, we obtain \eqref{eq:measure1}. 

We now use a similar, but slightly more involved, argument to obtain \eqref{eq:measure2}.
First observe that if $\sigma(B)$ is real and the trace of $w$ has an associated defect measure $d \mu$, then 
\begin{align}\nonumber
\big\langle  a(x', hD_{x'}) B w, B w \big\rangle &=\big\langle  B^* a(x', hD_{x'}) B w,  w \Big\rangle \\ \nonumber
&=\Big\langle a(x', hD_{x'}) B^2 + a(x', hD_{x'})(B^*-B)B + [B,a(x', hD_{x'})]B w, w\Big\rangle\\
&\to \int a(x',\xi')\big(\sigma(B)(x',\xi')\big)^2 d \mu \label{eq:abDw}
\end{align}
as $h\to 0$, since both $B^*-B$ and $[B,a(x', hD_{x'})]$ are $O(h)$(see 
\eqref{eq:symbol} and \cite[Proposition E.17]{DyZw:19}).
Therefore, \eqref{eq:abDw} with $B= \bcN$ and $w=hD_{x_1}v$ implies that
\begin{align}
\Big\langle a(x',hD_{x'})\bcN hD_{x_1}v,\bcN hD_{x_1}v\Big\rangle\to \int a(x',\xi')\big(\sigma(\bcN)(x',\xi')\big)^2 d\nuntr.\label{eq:compare3}
\end{align}
On the other hand by \eqref{eq:localimp} and  \eqref{eq:abDw} (with  $B= \bcD$ and $w=v$),
\begin{align}\nonumber
\Big\langle a(x',hD_{x'})\bcN hD_{x_1}v,\bcN hD_{x_1}v\Big\rangle&=\Big\langle a(x',hD_{x'})\bcD v,
\bcD v\Big\rangle\\
&\rightarrow \int a(x',\xi')\big(\sigma(\bcD)(x',\xi')\big)^2 d\nudtr.\label{eq:compare4}
\end{align}
Comparing \eqref{eq:compare3} and \eqref{eq:compare4}, we find \eqref{eq:measure2}.
\end{proof}

\subsection{The mass produced by the Dirichlet boundary data on $\Gamma_D$}

\begin{lem} \label{lem:nuD}
Suppose that $h_\ell\to 0$ and $a_\ell\to a$, then the defect measure of 
$$
e^{i x\cdot a_\ell/h_{\ell}}|_{\Gamma_D}
$$
is given by
$$
\dvol_{\Gamma_D} \otimes \delta_{\xi' = (a_{T(x')})^{\flat}},
$$
where $\dvol_{\Gamma_D}$ denotes Lebesgue measure on $\Gamma_D$, $a_{T(x')} := a - (a\cdot n (x')) n(x')$ is the tangential component of the direction $a$ at the point $x'$, $(\cdot)^{\flat}$ denotes the lowering map $T\Gamma_D\to T^{*}\Gamma_D$ given by the metric,
and $\delta$ denotes Dirac measure.
\end{lem}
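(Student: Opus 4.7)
My plan is to compute the pairing $\langle b(x', h_\ell D_{x'}) f_\ell, f_\ell\rangle_{\Gamma_D}$ for a test symbol $b\in C_c^\infty(T^*\Gamma_D)$ in the limit $\ell\to\infty$ by using the standard WKB asymptotic of semiclassical pseudodifferential operators applied to oscillatory states, then patch local computations via a partition of unity on $\Gamma_D$. First, localise to a chart $(U,\phi)$ on $\Gamma_D$, where $\phi:U\subset\mathbb{R}^{d-1}\to\Gamma_D$ is a smooth parameterisation, and set $\widetilde f_\ell(x'):= f_\ell(\phi(x'))=e^{i\psi_\ell(x')/h_\ell}$ with $\psi_\ell(x'):=\phi(x')\cdot a_\ell$. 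For each coordinate tangent vector $\partial_{x'_j}\phi$, which is tangent to $\Gamma_D$ and hence orthogonal to the normal component of $a_\ell$, we have
\beqs
\partial_{x'_j}\psi_\ell(x')=\partial_{x'_j}\phi(x')\cdot a_\ell = \partial_{x'_j}\phi(x')\cdot a_{\ell,T(\phi(x'))}.
\eeqs
In terms of the induced metric $g_{ij}:=\partial_{x'_i}\phi\cdot\partial_{x'_j}\phi$, this is precisely the coordinate expression of the flat of $a_{\ell,T(\phi(x'))}$, so $d\psi_\ell(x')=(a_{\ell,T(\phi(x'))})^{\flat}$ as a covector on $\Gamma_D$.

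Next, apply the standard asymptotic for pseudodifferential operators applied to a real WKB state: for $b\in C_c^\infty(T^*\Gamma_D)$ supported in the chart,
\beqs
b(x',h_\ell D_{x'})\bigl[e^{i\psi_\ell/h_\ell}\bigr]=b\bigl(x',d\psi_\ell(x')\bigr)e^{i\psi_\ell/h_\ell}+O_{L^2}(h_\ell),
\eeqs
with the remainder uniform in $\ell$ because $\{\psi_\ell\}$ is uniformly bounded in $C^\infty_{\rm loc}(U)$ (since $|a_\ell|$ stays bounded). Taking the $L^2(\Gamma_D)$ pairing against $\widetilde f_\ell$, which in local coordinates is integration against $\sqrt{\det g}\,dx'$, and using $|\widetilde f_\ell|\equiv 1$, we obtain
\beqs
\bigl\langle b(x',h_\ell D_{x'})\widetilde f_\ell,\widetilde f_\ell\bigr\rangle_{\Gamma_D}=\int b\bigl(x',d\psi_\ell(x')\bigr)\sqrt{\det g}\,dx'+O(h_\ell).
\eeqs

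Finally, since $a_\ell\to a$, the covectors $(a_{\ell,T(\phi(\cdot))})^{\flat}$ converge uniformly on $\mathrm{supp}(\phi^*b)$ to $(a_{T(\phi(\cdot))})^{\flat}$, and dominated convergence yields
\beqs
\lim_{\ell\to\infty}\int b\bigl(x',d\psi_\ell(x')\bigr)\sqrt{\det g}\,dx' = \int_U b\bigl(x',(a_{T(\phi(x'))})^{\flat}\bigr)\sqrt{\det g}\,dx' = \int_{\Gamma_D} b\bigl(y,(a_{T(y)})^{\flat}\bigr)\,\dvol_{\Gamma_D}(y),
\eeqs
which is exactly the pairing of $b$ against $\dvol_{\Gamma_D}\otimes\delta_{\xi'=(a_{T(x')})^{\flat}}$. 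Summing over a partition of unity subordinate to a finite atlas on $\Gamma_D$ yields the global identification of the defect measure.

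The main obstacle, which is only mild, is uniformity in $\ell$ of the WKB remainder: the phase $\psi_\ell$ depends on $\ell$ through $a_\ell$, so one must verify that the $O(h_\ell)$ term in the symbolic expansion is uniform. This follows from the usual symbolic-calculus bounds because the relevant seminorms of $\psi_\ell$ are controlled by $|a_\ell|$ (and finitely many derivatives of $\phi$), both of which stay bounded. A related minor point is that different choices of quantisation on $\Gamma_D$ (local coordinate versus intrinsic) differ by $O(h)$, so they produce the same limit measure.
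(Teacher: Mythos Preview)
Your proof is correct and follows essentially the same approach as the paper: localise via a partition of unity, write the restriction as a WKB state $e^{i\psi_\ell/h_\ell}$ in a chart, and identify $d\psi_\ell$ with $(a_{\ell,T})^\flat$. The only cosmetic difference is that the paper works in graph coordinates $\Gamma_D=\{(\gamma({\tt x}'),{\tt x}')\}$ and performs the stationary-phase computation explicitly in the double integral defining $\langle b(x',hD_{x'})u_\ell,u_\ell\rangle$, whereas you invoke the packaged WKB asymptotic $b(x',hD_{x'})e^{i\psi/h}=b(x',d\psi)e^{i\psi/h}+O(h)$ and then pair; these are the same computation.
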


\begin{proof} 
By using a partition of unity argument, it is sufficient to work locally in a neighbourhood of a point $x_0\in \Gamma_D$.
We work in Euclidean coordinates $\tt{x}$ such that in a neighbourhood of $x_0$, 
$$
\Gamma_{D}=\{(\gamma(\tt{x}'),\tt{x'})\}.
$$
If $a_\ell=(\tt{a_1,a'})$,  then, since $n(\tt{x'})= (1, -\nabla \gamma(\tt{x'}))/\sqrt{1+ |\nabla \gamma(\tt{x'})|^2}$, 
$$
a-(a \cdot n({\tt{x}'}))n({\tt{x}'})=\Big(\frac{{\tt{a_1}}|\nabla \gamma({\tt{x}}')|^2+\langle{\tt{ a}}', \nabla \gamma ({\tt{x}}')\rangle}{1+|\nabla \gamma({\tt{x}}')|^2},{\tt{a'}}-\frac{\langle {\tt{a'}},\nabla \gamma({\tt{x}}')\rangle -{\tt{a_1}}}{1+|\nabla \gamma({\tt{x}}')|^2}\nabla \gamma({\tt{x}}')\Big),
$$
and the metric on $\Gamma_D$ in the ${\tt{x}'}$ coordinates is 
$$
g_{ij}({\tt{x}}')=\delta_{ij}+\partial_{x_i}\gamma({\tt {x}}')\partial_{x_j}\gamma({\tt {x}}'), \qquad i,j=2,\dots,n.
$$
Therefore, since we identify the tangent space of $\Gamma_D$ with $\partial_{\tt{x}_i}$ $i=2,\dots n$
\begin{align*}
(a^T)^\flat&
={\tt{a}}'-\frac{\langle {\tt{a'}}, \nabla \gamma({\tt{x}}')\rangle-{\tt{a_1}}}{1+|\nabla \gamma({\tt{x}'})|^2}\nabla \gamma({\tt{x}'})+\Big(
\frac{{\tt{a_1}}|\nabla \gamma({\tt{x}}')|^2+\langle{\tt{ a}}', \nabla \gamma ({\tt{x}}')\rangle}{1+|\nabla \gamma({\tt{x}}')|^2}
\Big)\nabla \gamma({\tt{x'}})
\\
&={\tt{a}}'+\frac{{\tt{a_1}}-\langle {\tt{a'}}, \nabla \gamma({\tt{x}}')\rangle}{1+|\nabla \gamma({\tt{x}'})|^2}\nabla \gamma({\tt{x}'})+\Big(\langle \tt{a}',\nabla \gamma({\tt{x}'})\rangle +\frac{|\nabla \gamma({\tt{x}'})|^2}{1+|\nabla\gamma({\tt{x}'})|^2}({\tt{a_1}}-\langle{\tt{a'}},\nabla \gamma({\tt{x'}})\rangle)\Big)\nabla \gamma({\tt{x'}})\\
&={\tt{a'}}+{\tt{a_1}}\nabla \gamma({\tt{x}'}).
\end{align*}
Let $u_\ell =e^{i x\cdot a_\ell/h_{\ell}}|_{\Gamma_D}$; the previous calculation implies that $u_\ell({\tt x'}) = \exp((i/h)( {\tt a}_\ell' \cdot{\tt x}'+ {\tt a}_{\ell,1}\gamma({\tt x}'))$. 
By change of variable for the semiclassical quantisation (see, e.g., \cite[Theorem 9.3, p. 203]{Zworski_semi}, 
\begin{align*}
\big\langle b({\tt{x}'}, h_\ell D_{{\tt{x}}'}) u_\ell, u_\ell \big\rangle_{\Gsc}&= \int_{\Gsc} \big(b({\tt{x}}', h_\ell D_{x'})u_\ell \big) ({\tt{x}'}) \, \overline{ u_\ell ({\tt{x}'})} \; d{\tt{x}}'\\  &\hspace{-2.5cm}=  \int_{\Gsc} \big( b({\tt{x}}',  h_\ell D_{{\tt{x}'}}) u_\ell \big) ({\tt x}') \, \overline{ u_\ell ({\tt x}')}  \sqrt{1+|\nabla \gamma({\tt x}')|^2} \; d{\tt x}' + O(h_\ell) \\
  &\hspace{-2.5cm}= (2\pi h_\ell)^{-n+1}\int_{\Gsc} \int_{\Gsc} \int _{\mathbb R ^{n-1}} e^{\frac ih ({\tt x}'-{\tt y}')\cdot \xi'} b({\tt x}',\xi') \\ 
  &\hspace{0.75cm} \times e^{\frac ih ({\tt a}_\ell'\cdot {\tt y}' + {\tt a}_{\ell,1}\gamma({\tt y}'))}e^{-\frac ih ({\tt a}'_{\ell}\cdot {\tt x}' + {\tt a}_{\ell,1} \gamma({\tt x}'))}\sqrt{1+|\nabla \gamma({\tt x}')|^2}\;d{\xi}'d{\tt y}'d{\tt x}' + O(h_\ell).
\end{align*}
Observe that for ${\tt x}'$ fixed, the phase 
\begin{align*}
\Phi({\tt y}',\xi') 
&=  ({\tt x}'-{\tt y}')\cdot \xi' + {\tt a}_\ell'\cdot {\tt y}' + {\tt a}_{\ell,1} \gamma({\tt y}') - {\tt a}_\ell'\cdot {\tt x}' - {\tt a}_{\ell,1} \gamma({\tt x}'),\\
&=  ({\tt x}'-{\tt y}')\cdot (\xi'-  {\tt a}_\ell') + {\tt a}_{\ell,1}( \gamma({\tt y}') - \gamma({\tt x}'))
\end{align*}
is stationary 
(i.e.~$\partial_{{\tt y}'} \Phi = \partial_{{\tt \xi}'}\Phi=0$)
if and only if
$$
({\tt y}',\xi') = ({\tt x}', {\tt a}_\ell' +  \nabla \gamma({\tt x}') {\tt a}_{\ell,1}),
$$
where it is additionally non-degenerate. Consequently, by stationary phase (see, e.g., \cite[\S3.5]{Zworski_semi})
\begin{align*}
\big\langle b({\tt{x}}', h_\ell D_{{\tt{x}'}}) u_\ell, u_\ell \big\rangle_{\Gsc}&= \int_{\Gsc} b\bigg({\tt x}',{\tt a}_\ell' +  \nabla \gamma({\tt x}') {\tt a}_{\ell,1}\bigg)\sqrt{1+|\nabla \gamma({\tt x}')|^2} d{\tt x}' + O(h_\ell)\\
&=\int_{\Gsc} b\bigg({\tt x}',(a^T)^{\flat}({\tt{x}'})\bigg)\sqrt{1+|\nabla \gamma({\tt x}')|^2} d{\tt x}' + O(h_\ell).
\end{align*}
The result follows by letting $\ell\to \infty$.
\end{proof}


\section{Properties of outgoing solutions of the Helmholtz equation}\label{sec:outgoing}

The goal of this section is to prove three lemmas (Lemmas \ref{lem:mass_u}, \ref{lem:impedancetrace}, and \ref{lem:impedancetrace2}), the first two of which concern the solution to the exterior Dirichlet problem:
\begin{equation}
\label{eq:BVP3}
\begin{cases}
(-h^2\Delta-1)u=0\quad&\text{ in }\Omegaplus,\\
u=g\quad&\text{ on }\Gamma_D,\\
h\partial_r u-iu=o(r^{(1-d)/2})&\text{ as } r\tendi;
\end{cases}
\end{equation}
observe that the problem~\eqref{eq:BVP2} is a special case of~\eqref{eq:BVP3} with $g=e^{ia\cdot x/h}$.

\begin{lem}
\label{lem:mass_u}  
Suppose that $\Omegaminus\Subset B(0,1)$ is non-trapping. Then there is $C_0>0$ such that for all $R\geq 1$ there is $h_0>0$ such that for $u_h$ solving~\eqref{eq:BVP3}
$$
\Vert u_h\Vert_{H_h^1(B(0,R)\setminus\overline{\Omegaminus})}\leq C_0R^{1/2}\|g\|_{H_h^{1}(\Gamma_D)},\qquad 0<h<h_0.
$$
\end{lem}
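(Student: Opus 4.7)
The plan is to combine a standard nontrapping cutoff-resolvent estimate on a fixed neighbourhood of the obstacle with the Sommerfeld decay of $u_h$ at infinity, then integrate over the annulus $B(0,R)\setminus B(0,2)$ to pick up the $R^{1/2}$ factor.

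\emph{Step 1 (local bound on a fixed neighbourhood).} I would first establish a $C_0'$ depending only on $\Omegaminus$ such that
\[
\|u_h\|_{H_h^1(B(0,2)\setminus\overline{\Omegaminus})}\leq C_0'\|g\|_{H_h^1(\Gamma_D)},\qquad 0<h<h_0',
\]
which is the standard nontrapping estimate for the exterior Dirichlet problem with inhomogeneous boundary data. The clean way to obtain this is by contradiction using semiclassical defect measures: if no such $C_0'$ existed, take a sequence $(h_\ell,u_\ell,g_\ell)$ with $\|u_\ell\|_{H_h^1(B(0,2)\setminus\overline{\Omegaminus})}=1$ and $\|g_\ell\|_{H_h^1(\Gamma_D)}\to 0$, extract the defect measure $\mu$ of $u_\ell$ on $T^*(B(0,3)\setminus\overline{\Omegaminus})$ via Theorem \ref{th:ex_defect}, and apply the propagation result Theorem \ref{t:propagate} together with the Sommerfeld condition (which, for outgoing solutions, kills $\muin$ at infinity); nontrapping of $\Omegaminus$ then forces $\mu \equiv 0$, contradicting $\|u_\ell\|=1$. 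The vanishing of the Dirichlet data also makes the boundary contribution $\nu_d$ on $\Gamma_D$ trivial via Lemma \ref{lem:nuD}, so the interior mass of $\mu$ must come only from reflections on $\Gamma_D$, and there is no source for it.

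\emph{Step 2 (radiation-type bound on the far annulus).} Outside $B(0,2)$, $u_h$ solves the free semiclassical Helmholtz equation and is outgoing. I would use a multiplier/Rellich-Morawetz identity, testing $(-h^2\Delta-1)u_h=0$ against a radial multiplier of the form $\chi(r)\overline{u_h}$ or $\chi(r)\bigl(h\partial_r - i\bigr)\overline{u_h}$, to derive the uniform-in-$r$ trace bound
\[
\|u_h\|_{L^2(\partial B(0,r))}^2+\|h\nabla u_h\|_{L^2(\partial B(0,r))}^2\leq C_1\|u_h\|_{H_h^1(B(0,2)\setminus\overline{\Omegaminus})}^2,\qquad r\geq 2,
\]
valid uniformly for $0<h<h_0$. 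The essential input is that the Sommerfeld radiation condition makes the boundary terms at infinity non-negative (after multiplying by the appropriate Morawetz weight), so the inequality propagates out from $r=2$ with no growth in $r$.

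\emph{Step 3 (integration).} Integrating the trace bound of Step 2 gives
\[
\|u_h\|_{H_h^1(B(0,R)\setminus B(0,2))}^2=\int_2^R\bigl(\|u_h\|_{L^2(\partial B(0,r))}^2+\|h\nabla u_h\|_{L^2(\partial B(0,r))}^2\bigr)dr\leq C_1(R-2)\|u_h\|_{H_h^1(B(0,2)\setminus\overline{\Omegaminus})}^2,
\]
and combining with Step 1 gives $\|u_h\|_{H_h^1(B(0,R)\setminus\overline{\Omegaminus})}^2\leq CR\|g\|_{H_h^1(\Gamma_D)}^2$, yielding the claim with $h_0$ chosen as the smaller of the two thresholds (which may depend on $R$ through the multiplier constant).

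The main obstacle I expect is Step 2, specifically making the trace bound uniform in $r$ and in $h$ simultaneously: the Sommerfeld condition is only asymptotic as $r\to\infty$, so one must close the multiplier identity over the annulus $B(0,r)\setminus B(0,2)$ without losing the sign of the boundary contribution at $r$. The natural workaround is to pick the Morawetz multiplier whose radial derivative is non-negative, so that the resulting bulk integral is non-negative and the identity yields a one-sided estimate controlling the outer trace by the inner trace without any $r$-growth.
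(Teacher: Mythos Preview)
Your outline is sound and would work, but it takes a genuinely different route from the paper.

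For Step 1 you sketch a self-contained defect-measure contradiction. The paper instead strips off the boundary data by subtracting $\chi w_\ell$, where $w_\ell$ solves an auxiliary truncated problem on $B(0,1)\setminus\overline{\Omegaminus}$ bounded via Theorem \ref{th:uniformestimates}, and then appeals directly to the nontrapping cutoff-resolvent estimate of \cite{GaSpWu:20} for the resulting zero-Dirichlet problem. (A small correction to your sketch: the Sommerfeld input you need is Lemma \ref{lem:null_incom}, i.e.\ $\mu(\mathcal{I})=0$ for outgoing solutions; Theorem \ref{t:propagate} only describes propagation through the boundary and says nothing about infinity.)

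For the far-field annulus the paper stays entirely within defect measures and obtains the $R^{1/2}$ as a time-of-flight, not via integrating a trace bound. Inside a contradiction argument at the given $R$, it extracts a defect measure $\mu$ for $v_\ell=u_\ell-\chi w_\ell$; the outgoing property forces $\supp\mu\cap\{|x|>2\}$ to lie on forward trajectories issuing from $\{|x|=2\}$, and since $\mu$ is flow-invariant outside $B(0,1)$ and each such trajectory spends time at most $\sqrt{R^2-4}$ in $B(0,R)$, one has
\[
\mu\big(T^*B(0,R)\setminus B(0,2)\big)\;\leq\;\sqrt{R^2-4}\;\mu\Big(\bigcup_{-1\leq t\leq 0}\varphi_t\big(\{|x|=2,\ \text{coming from }B(0,1)\}\big)\Big),
\]
with the right-hand side controlled by the local estimate. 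This replaces your Steps 2--3. Your Morawetz route is a legitimate alternative and would give a direct (non-contradiction) proof, but the trace bound in Step 2 is exactly where the simultaneous $r$- and $h$-uniformity is delicate: the radiation condition is only asymptotic in $r$ with no a-priori uniformity in $h$, and the natural monotone quantity for the standard multiplier is $\|h\partial_r u-iu\|_{L^2(\partial B(0,r))}^2$ rather than the full trace you wrote. The paper's measure-theoretic argument sidesteps this by passing to the $h\to 0$ limit first, where outgoingness becomes the purely geometric statement that $\mu$ is carried by trajectories emanating from the obstacle.
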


\begin{lem}\label{lem:impedancetrace}
Let $\bcN, \bcD$ be as in \S\ref{sec:setup} (i.e., $\bcN\in \Psi^{2\NPade}(\GammaI)$, $\bcD\in \Psi^{2\MPade}(\GammaI)$ and both have real-valued principal symbols). There exists $C>0$  such that for any $R>1$ 
there exists $h_0(R)>0$ such that for $0<h\leq h_0(R)$ the solution $u$ of \eqref{eq:BVP3} satisfies
\beqs
\N{(\bcN hD_{n}-\bcD) u}_{L^2(\GammaIR)}
\leq  C \frac{\Upsilon (R) }{R^{1/2}}\N{u}_{L^2(\domain)},
\eeqs
where $n(x)$ is the normal vector field to $\GammaIR$, and
\begin{align}\nonumber
\Upsilon (R) := &\sup \bigg\{ \big|\sigma(\bcN)(x',\xi')n(x)\cdot\xi-\sigma(\bcD)(x',\xi')\big| 
+ \big|H_p\big(\sigma(\bcN)(x',\xi')n(x)\cdot\xi-\sigma(\bcD)(x',\xi')\big)\big|\,\\
&\qquad\qquad : x\in \GammaIR,\,\,\bigg|\xi \cdot \frac {x}{|x|} - 1\bigg| \leq \frac{C}{|x|^2},\,\, |\xi|=1\bigg\}.\label{eq:Upsilon}
\end{align}
\end{lem}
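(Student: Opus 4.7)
The plan is a semiclassical defect-measure argument: identify $\lim_{h\tendo}\N{(\bcN hD_n - \bcD)u}_{L^{2}(\GammaIR)}^{2}$ as an integral of the squared symbol against a Dirichlet trace measure, confine the support of this measure to the nearly-radial set where $\Upsilon(R)$ controls the integrand, and extract the factor $R^{-1/2}$ from the length bicharacteristics spend inside $\domain$. Supposing by contradiction that the estimate fails along $h_{\ell}\tendo$, normalise $\N{u_{h_{\ell}}}_{L^{2}(\domain)}=1$ and pass to a subsequence along which Theorem~\ref{th:ex_defect} supplies a defect measure $\mu$ on $T^{*}\domain$ and trace measures $\nu_{d},\nu_{n},\nu_{j}$ on $T^{*}\GammaIR$ (Assumption~\ref{ass:1} is easily checked for outgoing Helmholtz solutions). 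Expanding the square and converting each sesquilinear pairing into a defect-measure integral via~\eqref{eq:jointmeasure},
\begin{equation*}
\lim_{\ell\tendi}\N{(\bcN hD_n-\bcD)u_{h_{\ell}}}_{L^{2}(\GammaIR)}^{2}=\int\sigma(\bcN)^{2}\,d\nu_{n}+2\int\sigma(\bcN)\sigma(\bcD)\,d\Re\nu_{j}+\int\sigma(\bcD)^{2}\,d\nu_{d},
\end{equation*}
the sign of the cross term reflecting $hD_n=-hD_{x_{1}}$ in Fermi coordinates with $\domain=\{x_{1}>0\}$.

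Because $u$ is outgoing and $\Omegaminus$ is non-trapping, every bicharacteristic in $\supp\mu$ eventually leaves every compact set forward in time; viewed at $\GammaIR$ as the outer boundary of $\domain$, this forces $\muout=0$. The characteristic variety $\{|\xi|=1\}$ is disjoint from the elliptic region, and the glancing contribution is absorbed via Lemma~\ref{l:glancing}; combining $\muout=0$ with the system behind the proof of Lemma~\ref{lem:key_Miller}(ii) and Cauchy--Schwarz saturation yields $\nu_{n}=r\nu_{d}$ and $\Re\nu_{j}=-\sqrt{r}\,\nu_{d}$ on the hyperbolic set. The integral therefore collapses to
\begin{equation*}
\int\bigl(\sigma(\bcN)(x',\xi')\sqrt{r(x',\xi')}-\sigma(\bcD)(x',\xi')\bigr)^{2}d\nu_{d}=\int\bigl(\sigma(\bcN)\,n(x)\cdot\xi-\sigma(\bcD)\bigr)^{2}d\nu_{d},
\end{equation*}
using $n(x)\cdot\xi=\sqrt{r}$ on the outgoing branch. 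A straight-line bicharacteristic in $\supp\mu$ must have exited $\Omegaminus\subset B(0,1)$, so if it reaches $x\in\GammaIR$ with $|x|\gtrsim R$, its direction lies within angle $O(1/R)$ of $\hat x$, giving $|\xi\cdot\hat x-1|\leq C/|x|^{2}$. Hence $\supp\nu_{d}$ is contained in the set defining $\Upsilon(R)$, and the integral is bounded by $\Upsilon(R)^{2}\nu_{d}(T^{*}\GammaIR)$.

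The factor $R^{-1/2}$ comes from Corollary~\ref{cor:explain}(ii) applied with $A=T^{*}\domain$: the backward flow from each $\pin(q)\in\pi_{\boundary,\rm in}^{-1}(\supp\muin)$ remains in $\domain$ for a time $\geq cR$ before hitting $\Gsc$ (since the ray came from $\Omegaminus\subset B(0,1)$ at distance $\gtrsim R$), so $1=\mu(T^{*}\domain)\geq cR\cdot(2\sqrt{r}\,\muin)(T^{*}\GammaIR)\geq cR\,\nu_{d}(T^{*}\GammaIR)$, using $\muin=2\sqrt{r}\,\nu_{d}$ and $r\geq 1/2$ on the outgoing support. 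Combining the two displays yields $\lim\N{(\bcN hD_n-\bcD)u_{h_{\ell}}}_{L^{2}(\GammaIR)}^{2}\leq C\,\Upsilon(R)^{2}/R$, contradicting the hypothesised failure. The main obstacle is making the concentration argument rigorous: justifying $\muout=0$ on $\GammaIR$ (which is not a genuine boundary for $u$) and the $O(1/R^{2})$ bound on $\supp\nu_{d}$ requires careful bicharacteristic analysis combining non-trapping with the radiation condition. The $H_p$ term in $\Upsilon(R)$ is anticipated to enter precisely when converting this approximate pointwise symbol bound into an $L^{2}$ operator-norm bound: controlling the symbol's fluctuation within an $O(1/R)$-neighbourhood of the exact outgoing set, transported along bicharacteristics, naturally costs one factor of $|H_{p}(\cdot)|$.
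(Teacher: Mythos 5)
Your route is genuinely different from the paper's: the paper proves this directly, by extending $\bcN hD_n-\bcD$ off $\GammaIR$, using Lemma~\ref{lem:WFu} to microlocalise $u$ near $\GammaIR$ onto the nearly-radial set, invoking the trace estimate of Lemma~\ref{l:traceEst} (which needs the convexity of $\tildedomain$ so that $S^*\GammaIR\subset\mathcal{I}$), and then bounding the resulting operators in $L^2$ by the sup of their symbols; the factor $R^{-1/2}$ comes from the annulus estimate of Lemma~\ref{lem:crown}, whose proof is the flow-time argument you sketch in your last step. Your defect-measure reformulation is attractive, but as written it has a genuine gap precisely at its crux. The identities $\muout=0$, $\nu_n=r\nu_d$, $\Re\nu_j=-\sqrt{r}\,\nu_d$ you need at $\GammaIR$ come from the boundary-measure machinery of Lemmas~\ref{lem:charge}--\ref{lem:key_Miller}, which the paper sets up only for a surface across which the solution is cut off (i.e.\ for $1_{U}u$ with $u$ solving the PDE in $U$); $\GammaIR$ is an interior hypersurface for $u$, so you must first apply the whole framework to $1_{\domain}u$, verify Assumption~\ref{ass:1} for the traces of $u$ on $\GammaIR$, and then deduce $\muout=0$ from Lemma~\ref{lem:null_incom} via Lemma~\ref{lem:interpr}. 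You flag this yourself as "the main obstacle" and do not resolve it. Two further steps are glossed over: (i) pairing the order-$2\NPade$ and order-$2\MPade$ symbols $\sigma(\bcN)^2$, $\sigma(\bcD)^2$ against the trace measures requires a tightness/compact-microlocalisation argument (cf.\ Lemma~\ref{lem:tight}); (ii) the quadratic form collapses to $\int(\sqrt{r}\sigma(\bcN)-\sigma(\bcD))^2\,d\nu_d$ only on the hyperbolic set, so you must show $\nu_d$ charges neither the elliptic nor the glancing region, which again needs the wavefront-set localisation transported to the traces.

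The clearest sign the argument has not been carried through is your final remark about the $H_p$ term in $\Upsilon(R)$. In your scheme there is no "operator-norm conversion" step where it could enter: the limit is exactly an integral of $(\sigma(\bcN)\sqrt{r}-\sigma(\bcD))^2$ against $\nu_d$, bounded by the sup of the zeroth-order part of $\Upsilon(R)$ alone, so the $H_p$ term is simply unused (harmless, since it only enlarges $\Upsilon$). In the paper it is forced: Lemma~\ref{l:traceEst} controls the trace by $\|Au\|_{L^2}+h^{-1}\|PAu\|_{L^2}$, and the term $h^{-1}[-h^2\Delta-1,\widetilde{\bcN}hD_{\widetilde n}-\widetilde{\bcD}]$ has principal symbol $\tfrac1i H_p(\sigma(\bcN)\,n(x)\cdot\xi-\sigma(\bcD))$. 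Anticipating the term via a mechanism absent from your own proof indicates you were pattern-matching the statement rather than deriving it; if you complete the defect-measure route, state explicitly that you obtain the (formally stronger) bound with only the first summand of $\Upsilon(R)$.
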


The quantity $\Upsilon(R)$ controls, on all rays that are approximately radial, the reflection coefficient as well as the change of the reflection coefficient under the Hamiltonian flow.

\begin{lem}[Bounds on $\Upsilon(R)$]
\label{lem:impedancetrace2} 
If $\bcN$ and $\bcD$ satisfy Assumption \ref{ass:Pade}, then the following hold.

\noi (i) There exists $C_1>0$, independent of $R$, such that if $\GammaIR = \partial B(0,R)$, then $\Upsilon(R) \leq C_1 R^{-2\morderzero}$.

\noi (ii) There exists $C_2>0$, independent of $R$, such that 
if $\GammaIR$ is $C^2$ uniformly in $R$, then $\Upsilon(R) \leq C_2$.
\end{lem}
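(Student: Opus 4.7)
The plan is to work directly with principal symbols. By Assumption~\ref{ass:Pade}, $\bcN - \PadeQ \in k^{-1}\Diff^{2\NPade-1}$ and $\bcD - \PadeP \in k^{-1}\Diff^{2\MPade-1}$ have vanishing semiclassical principal symbol, so $\sigma(\bcN) = \PadeQ$ and $\sigma(\bcD) = \PadeP$. Setting
$$
F(x,\xi) := \PadeQ(x',\xi')\, n(x)\cdot\xi - \PadeP(x',\xi'),
$$
the quantity $\Upsilon(R)$ is the supremum of $|F| + |H_p F|$ over the constraint set $\mathcal{S}_R$ in \eqref{eq:Upsilon}. The key structural observation driving both parts is that on $\{|\xi|=1\}\cap\{n\cdot\xi>0\}$ one has $n\cdot\xi = \sqrt{1-|\xi'|^2_g} = \sqrt{r}$, so $F$ reduces there to $G(|\xi'|^2_g)$, where $G(t):=q(t)\sqrt{1-t}-p(t)$. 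By \eqref{eq:Padeorder}, $G(t)=O(t^{\morderzero})$, and since $G$ is smooth at $0$, also $G'(t)=O(t^{\morderzero-1})$.

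For part (i), with $\GammaIR = \partial B(0,R)$ and $n(x)=x/R$ on $\GammaIR$, the constraint $|\xi\cdot x/|x| - 1|\leq C/R^2$ directly forces $|\xi'|_g^2 = 1-(n\cdot\xi)^2 = O(R^{-2})$, so $|F| = O(R^{-2\morderzero})$ is immediate from the vanishing order of $G$. For the Hamiltonian derivative, the plan is to exploit that $H_p$ is tangent to $\{|\xi|=1\}$ (since $H_p|\xi|^2=0$), which allows me to replace $F$ on the unit cosphere bundle by the ambient extension $\tilde F(x,\xi) := G(|\xi|^2 - (n(x)\cdot\xi)^2)$ defined using $n(x)=x/|x|$, and then compute $H_p \tilde F$ with Euclidean $H_p = 2\xi\cdot\partial_x$. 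A short calculation with $n=x/|x|$ gives
$$
\xi\cdot\partial_x(n\cdot\xi) = \frac{1-(n\cdot\xi)^2}{|x|} = \frac{|\xi'|_g^2}{R}
$$
on $\mathcal{S}_R$, so $|H_p(n\cdot\xi)| = O(R^{-3})$, and the chain rule yields $|H_pF| = |G'(|\xi'|_g^2)|\cdot O(R^{-3}) = O(R^{-2\morderzero-1})$, which is better than needed.

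For part (ii), the bound $|F|\leq C$ on $\{|\xi|=1\}$ is immediate since $\PadeP, \PadeQ$ are polynomials in $|\xi'|_g^2 \leq 1$ with constant numerical coefficients and $|n\cdot\xi|\leq 1$. For $|H_pF|$, the approach is to extend $F$ smoothly off $\GammaIR$ using a Fermi-coordinate extension of the unit normal and of the tangential metric entering $|\xi'|^2_g$, and then expand $H_pF = 2\xi\cdot\partial_x F$ via the product and chain rules; all resulting terms are controlled by $\|\partial n\|_{L^\infty}$ (the Weingarten map, determined by the second fundamental form of $\GammaIR$) together with the first derivatives of the induced tangential metric in ambient coordinates. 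The $C^2$-uniformity hypothesis on $\GammaIR$ guarantees both are bounded uniformly in $R$, so $|H_pF|\leq C$ uniformly. The main obstacle is purely bookkeeping: verifying that the various extensions off $\GammaIR$ of $n$, of $\PadeQ$, and of $\PadeP$ produce the same $H_p F$ on $\{|\xi|=1\}$ (ensured by the tangency of $H_p$) and have uniform $R$-control of their first derivatives (which is exactly the content of $C^2$-uniformity).
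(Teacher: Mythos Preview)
Your proposal is correct and follows essentially the same approach as the paper. For part~(i), both you and the paper identify $|\xi'|_g^2 = 1-(n\cdot\xi)^2 = O(R^{-2})$ on the constraint set and use the vanishing order of $G(t)=q(t)\sqrt{1-t}-p(t)$ at $t=0$; for the $H_p$ term, the paper decomposes $H_p = 2(n\cdot\xi)\partial_n + 2\langle\xi',\partial_{x'}\rangle$ in Fermi-type coordinates and observes that the normal part annihilates $n\cdot\xi$ for the sphere, whereas you work in ambient Cartesian coordinates with the extension $\tilde F = G(|\xi|^2-(n\cdot\xi)^2)$ and compute $\xi\cdot\partial_x(n\cdot\xi)=(1-(n\cdot\xi)^2)/|x|$ directly --- both routes yield the same $O(R^{-2\morderzero-1})$ bound and are just different bookkeepings of the same computation. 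Part~(ii) is identical in spirit to the paper's (terser) argument that $\sigma(\bcN),\sigma(\bcD)$ have uniformly bounded $C^1$ norms.
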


Regarding Lemma \ref{lem:mass_u}: this result gives us a lower bound on $ 1/\|u\|_{L^2(\domain)}$, and we use this in proving the $R$-explicit lower bounds on the relative error in Theorems \ref{th:quant}, \ref{th:quant2}, \ref{th:quant3}. The analogue of this result without the explicit dependence of the constant on $R$ was proved in \cite[Theorem 3.5]{BaSpWu:16}. 

Regarding Lemmas \ref{lem:impedancetrace} and \ref{lem:impedancetrace2}: the upper bounds in Theorem \ref{th:quant} and in Theorem \ref{th:quant3} follow from 
applying Theorem \ref{th:uniformestimates} to $u-v$ and then using these two lemmas. 

\subsection{Proof of Lemma \ref{lem:mass_u}}

We define the directly-incoming set $\mathcal{I}$ by
\begin{equation}
\label{eq:incoming}
\mathcal{I}:=\bigg\{\rho\in S^{*}\domain,\text{ s.t. }\projx \bigg(\bigcup_{t\geq0}\varphi_{-t}(\rho)\bigg)\cap\Omegaminus=\emptyset\bigg\},
\end{equation}
where we recall that $\projx $ denotes projection in the $x$ variable.
The following lemma reflects the fact that
$u$ is an outgoing solution.

\begin{lem} \label{lem:WFu}
If $u$ solves~\eqref{eq:BVP3} with $\|g\|_{H_h^1}\leq C$, then
$$
\WFh (u)\cap \mathcal{I}=\emptyset.
$$
In particular, there exists $C>0$, sufficiently large, such that
$$
\WFh (u)\cap \{|x|>C\} \subset \left\{ \ \left| \xi - \frac{x}{|x|} \right| < \frac{C}{|x|},\quad \left|\xi\cdot \frac{x}{|x|}-1\right|\leq \frac{C}{|x|^2} \ \right\}.
$$

\end{lem}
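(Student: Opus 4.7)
The plan is to first establish the main wavefront-set statement $\WFh(u)\cap\mathcal I = \emptyset$ by propagating the Sommerfeld radiation condition backward along bicharacteristics, and then deduce the ``in particular'' consequence from it together with an elementary geometric observation about the complement of $\mathcal I$.

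For the main statement, I argue by contradiction: suppose $\rho_0 = (x_0,\xi_0) \in \WFh(u)\cap \mathcal I$. Since $\rho_0 \in \mathcal I$, the backward bicharacteristic $\varphi_{-t}(\rho_0) = (x_0 - 2t\xi_0, \xi_0)$, $t\geq 0$, is a straight line that never meets $\Gamma_D$. Hence propagation of $h$-singularities for the free Helmholtz operator $-h^2\Delta - 1$ (equivalently, the invariance of $\WFh(u)$ under the free flow in the interior, as in Lemma~\ref{lem:inv}(i), see also \cite[Theorem~E.47]{DyZw:19}) gives $\varphi_{-t}(\rho_0) \in \WFh(u)$ for all $t\geq 0$. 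As $t\to\infty$, the spatial position $x_0 - 2t\xi_0$ leaves every compact set with $(x_0 - 2t\xi_0)/|x_0 - 2t\xi_0|\to -\xi_0$, so $\WFh(u)$ eventually contains points $(x_t,\xi_0)$ with momentum asymptotic to $-x_t/|x_t|$, i.e.\ pointing in the \emph{incoming} radial direction.

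The principal obstacle is contradicting this using the Sommerfeld radiation condition~\eqref{eq:src}. My preferred route is to represent $u$ outside a large ball via Green's identity as $u = R_{+}(h) F$, where $F$ is a compactly supported distribution built from $g$ and $h\partial_n u|_{\Gamma_D}$ (the latter controlled by standard regularity for~\eqref{eq:BVP3}) and $R_{+}(h) = (-h^2\Delta - 1 - i0)^{-1}$ is the outgoing free resolvent; the stationary-phase expansion of the free outgoing Green's function (see e.g.\ \cite[\S4.6]{DyZw:19}) then confines $\WFh(u)\cap\{|x|>C_0\}$ to an $O(|x|^{-1})$ momentum-neighbourhood of the outgoing Lagrangian $\Lambda_+ := \{(x,x/|x|):x\neq 0\}$. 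A closely related alternative is a semiclassical defect-measure argument: the Sommerfeld condition on spheres of large radius forces any defect measure of $u$ restricted to $\{|x|>C_0\}$ to be supported on $\Lambda_+$ (using Lemma~\ref{lem:inv}(i)). Either way the incoming-direction trajectory $(x_t,\xi_0)$ is excluded from $\WFh(u)$ for $t$ sufficiently large, delivering the required contradiction.

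For the ``in particular'' consequence, it now suffices to show that $\rho = (x,\xi) \in S^{*}\domain \setminus \mathcal I$ with $|x|>C$ already forces the stated geometric bounds, since then $\WFh(u) \cap \{|x|>C\}$ lies in the claimed set. Indeed, $\rho \notin \mathcal I$ means there exists $t\geq 0$ with $x - 2t\xi \in \Omegaminus \subset B(0,1)$; writing $\widehat x := x/|x|$ and minimising $s\mapsto |x - s\xi|^2$ (with $|\xi|=1$) gives $s_{*} = x\cdot\xi$ and $|x-s_{*}\xi|^2 = |x|^2 - (x\cdot\xi)^2$, and the constraints $s_{*}\geq 0$ and $|x - s_{*}\xi| < 1$ force $x\cdot\xi \geq \sqrt{|x|^2 - 1}$. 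For $|x|$ large this rearranges to $0\leq 1 - \xi\cdot\widehat x \leq C/|x|^2$, whence $|\xi - \widehat x|^2 = 2(1 - \xi\cdot\widehat x) \leq 2C/|x|^2$. Combining this geometric fact with the main statement concludes the proof.
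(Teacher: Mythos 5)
Your proof is correct and follows essentially the same route as the paper's: the outgoing structure of $u$ at infinity (which the paper obtains by writing $u=\chi_0\widetilde g+R_D([-h^2\Delta,\chi_0]\widetilde g)$ and factoring the Dirichlet resolvent through the free outgoing resolvent on the exterior of a large ball, and which you obtain equivalently via the Green's representation, at the mild extra cost of needing temperedness of $h\partial_n u|_{\Gamma_D}$) confines $\WFh(u)$ far away to the forward flowout of a compact set, hence to an $O(|x|^{-1})$ momentum-neighbourhood of the outgoing Lagrangian, and propagation of singularities along backward rays avoiding $\Gamma_D$ then excludes $\mathcal I$, with your geometric derivation of the ``in particular'' part being the same computation as the paper's, if anything slightly cleaner. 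One caution: the parenthetical defect-measure ``alternative'' you mention would only control $\supp\mu$ rather than $\WFh(u)$ and so could not by itself prove the wavefront-set statement, but since your primary argument is the Green's-identity/outgoing-resolvent one, the proof stands.
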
 
\begin{proof}
Let $R_D$ be the outgoing resolvent for 
$$
(-h^2\Delta-1)w=f,\qquad w|_{\Gsc}=0,
$$ 
i.e., $w = R_D f$.
Fix $0<R_1<R_2$ such that $\Omegaminus\subset B(0,R_1)$, and let $\chi_i\in C_c^\infty(B(0,R_{2}))$, $i=0,1,2$, with $\chi_i\equiv 1$ on $B(0,R_1)$, $\supp \chi_i\subset \{\chi_{i+1}\equiv 1\}$.
We now extend the Dirichlet boundary data off $\Gamma_D$ by letting $\widetilde{g}$ be the solution of 
\begin{align*}
(-h^2\Delta -1)\widetilde{g}=0 &\quad\text{ in } \Omegaplus\cap B(0,R_1),\\
\widetilde{g} = g &\quad\ton \Gamma_D,\\
(hD_n -1)\widetilde{g} =0 &\quad\ton \partial B(0,R_1).
\end{align*}
We now show that $u$ can be expressed as an outgoing resolvent plus a function with compact support. To this end, let
$$
v:=u-\chi_0 \widetilde{g}-R_D\big([-h^2\Delta,\chi_0]\widetilde{g})\big),
$$
and observe that $(-h^2 \Delta -1)v=0$.
Since the Dirichlet Laplacian is a black box Hamiltonian in the sense of~\cite[Chapter 4]{DyZw:19}, by~\cite[Theorem 4.17]{DyZw:19}, the radiation condition for $u$ implies that $v=0$ and hence $u=\chi_0\widetilde{g}+R_D([-h^2\Delta,\chi_0]\widetilde{g}).$ Now, by, e.g.,~\cite[Theorem 4.4]{DyZw:19}, the range of $(1-\chi_2)R_D$ lies in the range of $R_0\chi_1 $ where $R_0$ denotes the free resolvent. In particular, by the outgoing property of $R_0$ (see e.g.~\cite[Theorem 3.37]{DyZw:19}) 
\begin{equation}
\label{eq:tempOutgoing}
\WFh (u)\cap \{|x|>R_2+1\}\subset \bigcup_{t\geq 0}\varphi_t
( S_{B(0,R_2)}^*\mathbb{R}^d).
\end{equation}

Now, suppose that $A\subset \mathcal{I}$, where $\mathcal{I}$ is as in~(\ref{eq:incoming}). Then, for $t_0\geq 0$ large enough, 
$$\varphi_{-t_0}(A) \subset \{|x|>R_2+1\}$$
and, moreover, 
$$
\bigcup_{t\leq -t_0}\varphi_{t}(A)\cap S_{B(0,R_1)}^*\mathbb{R}^d=\emptyset.
$$
Therefore, by~(\ref{eq:tempOutgoing}), $\varphi_{-t_0}(A)\cap \WFh (u)=\emptyset.$ Now, since $(h^2\Delta+1)u=0$, and 
$$
\bigcup_{-t_0\leq t\leq 0} \varphi_t(A)\cap S^*_{\Gsc}\mathbb{R}^d=\emptyset,
$$
by propagation of singularities (see e.g.~\cite[Appendix E.4]{DyZw:19}), $A\cap \WFh (u)=\emptyset$.

Now, suppose $(x,\xi)\in \WFh (u)\cap \{|x|\geq R\}$. Then, $(x,\xi)\notin \mathcal{I}$ and, in particular, there is $t\geq 0$ such that $\varphi_{-t}(x,\xi)\in S^*_{\Omegaminus}\mathbb{R}^d.$ 
Let
$$
t_0=\inf\{ t\geq 0\,:\, \varphi_{-t}(x,\xi)\in S^*_{\Omegaminus}\mathbb{R}^d\}
$$
and $(x_0,\xi_0)=\varphi_{-t_0}(x,\xi)$. Then, $|x_0|\leq R_1$, $t_0\geq \frac{R-R_0}{2}$, $\xi=\xi_0$, and 
$$
x=x_0+2t_0\xi_0.
$$
Observe that 
$$
|x_0+2t\xi_0|=\sqrt{|x_0|^2+4t\langle x_0,\xi_0\rangle +4t^2}=2t\sqrt{1+|x_0|^2t^{-2}+2t^{-1}\langle x_0,\xi_0\rangle}=2t+O(t^{-1}|x_0|^2).
$$
Then consider 
$$
\left|\frac{x}{|x|}-\xi\right|=\left|\frac{x_0+2t\xi_0}{|x_0+2t\xi|_0}-\xi_0\right|=\left|\frac{x_0+\xi_0 O(t^{-1}|x_0|^2)}{|x_0+2t\xi_0|}\right|=O(t^{-1}|x_0|)=O\left(\frac{R_1}{|x|-R_1}\right).
$$
In particular, if $R\geq 2R_1$, $|x|-R_1\geq \frac{1}{2}|x|$.

Next, observe that
$$
\xi\cdot \frac{x}{|x|}=\frac{x_0\cdot \xi_0+2t}{|x_0+2t\xi_0|},\qquad 
|x_0+2t\xi_0|^2=|x_0|^2+4t^2+4tx_0\cdot \xi_0
$$
so that 
$$
\frac{1}{|x_0+2t\xi_0|}=\frac{1}{2t}\Big(1-\frac{x_0\cdot \xi_0}{2t}+O(R_1^2t^{-2})\Big).
$$
In particular, 
$$
\frac{x_0\cdot \xi_0+2t}{|x_0+2t\xi_0|}=1+\frac{x_0\cdot \xi_0}{2t}-\frac{x_0\cdot \xi_0}{2t}+O(R_1^2t^{-2})=1+O(R_1^2t^{-2})=1+O\left(\frac{R_1^2}{(|x|-R_1)^2}\right).
$$
Taking $|x|\geq 2R_1$ completes the proof.
\end{proof}

\begin{cor} \label{cor:outgo}
There exists $t_0>0, r_0>0$ such that, if $u$ solves~\eqref{eq:BVP3} and has defect measure $\mu$, then for any $r\geq r_0$, if $(x,\xi) \in \supp \mu$ with $|x| = r$, then,  for $0\leq t \leq r-t_0$,
$$ |x(\varphi_{-t}(x,\xi))|^2 = |x -2 t\xi|^2 = (r-2t)^2+O(tr^{-1}).$$
\end{cor}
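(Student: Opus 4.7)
The plan is to combine two inputs: first, that $\supp\mu\subset \WFh(u)$ (standard for defect measures) together with $\supp\mu\subset \Sigma_p=\{|\xi|=1\}$ from Lemma \ref{lem:inv}(ii); second, the outgoing wavefront estimate of Lemma \ref{lem:WFu}. Choosing $r_0$ at least as large as the constant $C$ in Lemma \ref{lem:WFu}, any $(x,\xi)\in\supp\mu$ with $|x|=r\geq r_0$ satisfies $|\xi|=1$ and
\beqs
0\leq r-x\cdot\xi\leq \frac{C}{r},
\eeqs
where nonnegativity is Cauchy--Schwarz and the upper bound is a rewriting of $|\xi\cdot x/|x|-1|\leq C/|x|^2$.

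The second equality of the claim is then a direct computation: setting $\delta:=r-x\cdot\xi\in[0,C/r]$,
\beqs
|x-2t\xi|^2 = r^2-4t(x\cdot\xi)+4t^2|\xi|^2 = (r-2t)^2+4t\delta = (r-2t)^2+O(tr^{-1}),
\eeqs
using only the pointwise bounds above and nothing about the flow.

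For the first equality $|x(\varphi_{-t}(x,\xi))|^2=|x-2t\xi|^2$, I would exploit that the generalized bicharacteristic flow coincides with the free straight-line flow $(x,\xi)\mapsto(x-2t\xi,\xi)$ whenever the trajectory stays in the interior of $\Omegaplus$. Using the outgoing momentum condition $\xi\cdot x/|x|=1-O(r^{-2})$, I would show that the backward straight-line trajectory $\{x-2s\xi:0\leq s\leq t\}$ remains outside a fixed neighbourhood of $\overline{\Omegaminus}$ throughout $0\leq t\leq r-t_0$ once $t_0$ is chosen sufficiently large. A key ingredient, already extracted inside the proof of Lemma \ref{lem:WFu}, is that for $(x,\xi)\in\WFh(u)$ with $|x|=r$ large, the backward trajectory only enters $\overline{B(0,R_1)}$ (for $R_1$ slightly larger than $1$) at a time comparable to $r$; one then selects $t_0$ to dominate the resulting geometric constants.

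The main obstacle is geometric control of the trajectory near the origin: the perpendicular distance from the backward ray to the origin is only $\sqrt{2r\delta-\delta^2}\leq \sqrt{2C}=O(1)$, so the straight-line ray passes close to $\overline{\Omegaminus}$ and one cannot merely use $|x-2t\xi|\gg 1$. The resolution rests on the outgoing character of $\supp\mu$ inherited from Lemma \ref{lem:WFu} (which rules out directly-incoming rays) together with the nontrapping hypothesis, and the proper choice of $t_0$ depends jointly on the constant $C$ from Lemma \ref{lem:WFu} and on the geometry of $\Omegaminus$.
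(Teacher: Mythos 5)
Your derivation of the displayed asymptotics is exactly the paper's argument: the paper's entire proof consists of noting $\supp\mu\subset\WFh(u)$, invoking Lemma \ref{lem:WFu} to get $x\cdot\xi\geq r-C/r$ (equivalently $\delta:=r-x\cdot\xi\in[0,C/r]$), and expanding $|x-2t\xi|^2=(r-2t)^2+4t\delta$. So on the part the paper actually proves, you match it.

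The gap is in your treatment of the first equality $|x(\varphi_{-t}(x,\xi))|=|x-2t\xi|$. Your plan is to show that the backward straight ray $\{x-2s\xi\}$ stays outside a fixed neighbourhood of $\overline{\Omegaminus}$ for all $0\leq s\leq r-t_0$, with $t_0$ a large constant. That step would fail: Lemma \ref{lem:WFu} says points of $\WFh(u)$ are \emph{not} directly incoming, which means precisely that the backward trajectory \emph{does} reach $\overline{\Omegaminus}\subset B(0,1)$; by your own formula $|x-2s\xi|^2=(r-2s)^2+4s\delta$, the first entry time into $B(0,1)$ is $s_*\approx r/2$, which lies well inside $[0,r-t_0]$ for large $r$ no matter how $t_0$ is chosen. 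Beyond $s_*$ the generalised flow reflects and is no longer the free flow, so the identification cannot be pushed to $t=r-t_0$ by any choice of constants; your ``main obstacle'' paragraph correctly senses the difficulty, but the proposed resolution points the wrong way --- the outgoing character of $\supp\mu$ is exactly what forces the ray \emph{into} the obstacle rather than keeping it away. For what it is worth, the paper's one-line proof silently ignores this point as well, and the corollary is only ever invoked (in Lemma \ref{lem:crown}) with $t=O(1)$, where $|x-2t\xi|\geq r-2t$ is large, the trajectory is manifestly far from $\Omegaminus$, and the free-flow identity is immediate; read with $2t\leq r-t_0$ (or with $t$ bounded), both your computation and the paper's are correct.
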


\begin{proof}
This follows from Lemma \ref{lem:WFu} by observing that, by the definition of defect measures, $\supp  \mu \subset \WFh  (u)$; then, if $|x| = r$ and $ |\xi| = 1$ with $| \xi\cdot \tfrac{x}{|x|} -1 | < \frac{C}{r^2}$, then $x\cdot\xi \geq r-\frac{1}{r}$.
\end{proof}

By the definitions of $\WFh (u)$ and $\mathcal{I}$, another corollary of Lemma \ref{lem:WFu} is the following lemma, originally proved in \cite[Proposition 3.5]{Bu:02} (see also \cite[Lemma 3.4]{GaSpWu:20}).

\begin{lem}
\label{lem:null_incom}Suppose that $u$ solves~\eqref{eq:BVP3} and has defect measure $\mu$. Then $\mu(\mathcal{I})=0$.
\end{lem}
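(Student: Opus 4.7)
The plan is to derive this as a direct corollary of Lemma \ref{lem:WFu}, using the standard fact that defect measures are supported in the semiclassical wavefront set. Specifically, I would first recall that for any bounded family $u_h$ with defect measure $\mu$, one has $\supp \mu \subset \WFh(u)$ (see, e.g., \cite[Theorem 5.4]{Zworski_semi} or the analogous statement in \cite[Appendix E]{DyZw:19}). Lemma \ref{lem:WFu} already gives $\WFh(u) \cap \mathcal{I} = \emptyset$, so once the support statement is in place the conclusion is immediate.

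The one small point to address is that $\mathcal{I}$ as defined in \eqref{eq:incoming} is not obviously open; however, $\WFh(u)$ is closed, so its complement is open, and this is all that is needed. Concretely, let $\rho \in \mathcal{I}$. By Lemma \ref{lem:WFu}, $\rho \notin \WFh(u)$, so there is an open neighbourhood $U_\rho$ of $\rho$ in $S^*\domain$ with $U_\rho \cap \WFh(u) = \emptyset$. Since $\supp \mu \subset \WFh(u)$, we have $\mu(U_\rho)=0$. Covering $\mathcal{I}$ by countably many such neighbourhoods (using $\sigma$-compactness of $S^*\domain$) yields $\mu(\mathcal{I})=0$.

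I do not anticipate any real obstacle here: the entire content has been packaged in Lemma \ref{lem:WFu}, and Lemma \ref{lem:null_incom} is simply the translation of a wavefront statement into a statement about defect measure mass. The only reason to state it separately is that subsequent arguments (for instance in the proof of the lower bounds in \S\ref{sec:mainproofs}) use the measure-theoretic formulation rather than the $\WFh$ formulation.
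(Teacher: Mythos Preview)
Your proposal is correct and matches the paper's own treatment: the paper presents Lemma \ref{lem:null_incom} as an immediate corollary of Lemma \ref{lem:WFu} via the inclusion $\supp\mu\subset\WFh(u)$ (the same fact the paper invokes explicitly in the proof of Corollary \ref{cor:outgo}). Your covering argument is slightly more elaborate than needed---once $\mathcal{I}\cap\supp\mu=\emptyset$, the inclusion $\mathcal{I}\subset(\supp\mu)^c$ and $\mu((\supp\mu)^c)=0$ finish it directly---but this is cosmetic.
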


We now prove Lemma \ref{lem:mass_u}.

\begin{proof}[Proof of Lemma \ref{lem:mass_u}]
Suppose that the lemma fails. Then there exist  $R\geq 1$, $\e>0$, $(h_{\ell}, g_\ell)$ such that $h_{\ell}\rightarrow 0$ as $\ell \to \infty$ and such that 
\begin{equation}
\label{e:contra1}
\|u_{h_{\ell}}\|_{H_{h_\ell}^1(B(0,R)\setminus\overline{\Omegaminus})}=  1\quad\tand\quad \|g_{\ell}\|_{H_{h_{\ell}}^1(\Gamma_D)}\leq \frac{1}{R^{1/2}(C_0+\e)}.
\end{equation}
Let $w_{\ell}$ solve
$$
(-h^2_{\ell}\Delta -1)w_{\ell}=0,\qquad w_{\ell}|_{\Gamma_D}=g_{\ell},\qquad (hD_n-1)w_{\ell}|_{\partial B(0,1)}=0.
$$
Since Lemma \ref{lem:mass_u} is not used in the proof of Theorem~\ref{th:uniformestimates}, the upper bound in this latter result implies that 
there exists a $C_1>0$ such that
$$
\|w_{\ell}\|_{H_{h_{\ell}}^1(B(0,1)\setminus \overline{\Omegaminus})}\leq C_1\|g_{\ell}\|_{H_{h_\ell}^1(\Gamma_D)}.
$$
Let $\chi \in C_c^\infty(B(0,1))$ with $\chi \equiv 1$ near $\Gamma_D$ and put $v_{\ell}=u_{\ell}- \chi w_{\ell}$ so that 
$$
\begin{cases}
(-h_{\ell}^2\Delta-1)v_{\ell}=-(-h_\ell^2\Delta_{\ell}-1)\chi w_{\ell}=: h_\ell f_{\ell}\\
v_{\ell}|_{\Gamma_D}=0\\
(h_{\ell}D_n-1)v_{\ell}=o(r^{(1-d)/2}),
\end{cases}
$$
and $\|f_{\ell}\|_{L^2}\leq C_2\|w_\ell\|_{H_h^1}\leq C_2C_1\|g_{\ell}\|_{H_h^1(\Gamma_D)}$, $\supp f_{\ell}\subset B(0,1)$.  In particular, by e.g.~\cite[Theorem 1]{GaSpWu:20} there is $C_3>0$ such that for any $\psi\in C_c^\infty$ with $\psi\equiv 1$ on $B(0,1)$ and $\supp \psi \subset B(0,R_0)$, and any $h_\ell$ small enough,
\begin{equation}
\label{e:spider}
\|\psi v_{\ell}\|_{H^1_{h_\ell}}\leq  C_3 R_0\|f_{\ell}\|_{L^2}\leq R_0 C_1C_2C_3\|g_{\ell}\|_{H_h^1(\Gamma_D)}.
\end{equation}
Now, taking $C_0\geq C_1(3C_2C_3+1)$ the proof is complete for $1\leq R\leq 2$. To see this, observe that using~\eqref{e:spider} with $R_0=3$ and $\psi \equiv 1$ on $B(0,2)$
$$
\| u_{h_\ell}\|_{H_{h_\ell}^1 (B(0,2)\setminus\overline{\Omegaminus})}\leq \|\psi(v_\ell +\chi w_\ell)\|_{H_{h_\ell}^1}\leq \|\psi v_\ell\|_{H_{h_\ell}^1} +\|\chi w_\ell\|_{H_{h_\ell}^1}\leq C_1(3C_2C_3+1)R^{1/2}\|g\|_{H_h^1}< 1
$$
which contradicts~\eqref{e:contra1}.

Now, for $R\geq 2$, we can pass to a subsequence in $\ell$, and assume that $v_{{\ell}}$ has defect measure $\mu$. 
By Lemma~\ref{lem:null_incom}, $\mu(\mathcal{I}\cap T^*\widetilde{M}\setminus \supp f)=0$ and 
$$
\mu(H_pa)=0,\qquad a\in C_c^\infty(T^*\widetilde{M}\setminus \supp f).
$$ 

Therefore, since $\supp f\subset B(0,1)$
$$
\supp \mu \cap T^*\widetilde{M}\setminus B(0,2)\subset \bigcup_{t\geq 0}\varphi_t\Big(\big\{ (x,\xi)\,:\, |x|=2, \,\exists s>0\text{ s.t. } \varphi_{-s}(x,\xi)\in T^*B(0,1)\big\}\Big).
$$
In particular, since $\mu$ is invariant under $\varphi_t$ on $T^*(\mathbb{R}^d\setminus B(0,1))$,
\begin{align*}
\mu (T^*B(0,R)\setminus B(0,2))&\leq \mu\Big(\bigcup_{0\leq t\leq \sqrt{R^2-4}}\varphi_t\Big(\big\{ (x,\xi)\,:\, |x|=2,\,\,\exists s>0\text{ s.t. } \varphi_{-s}(x,\xi)\in T^*B(0,1)\big\}\Big)\Big)\\
&=\sqrt{R^2-4}\,\mu\Big(\bigcup_{-1\leq t\leq 0}\varphi_t\Big(\big\{ (x,\xi)\,:\, |x|=2,\,\exists s>0\text{ s.t. } \varphi_{-s}(x,\xi)\in T^*B(0,1)\big\}\Big)\Big)\\
&\leq \sqrt{R^2-4}\,\lim_{\ell \to \infty}\|v_\ell\|_{L^2(B(0,2)-B(0,1))}^2\\
&\leq 9C_1^2C_2^2C_3^2\sqrt{R^2-4}\lim_{\ell \to \infty}\|g_{\ell}\|^2_{H_h^1(\Gamma_D)}\\
&\leq \frac{3C_1C_2C_3\sqrt{R^2-4}}{R(C_0+\e)^2}.
\end{align*}
By~\cite[Lemma 4.2]{GaSpWu:20}
$$
\mu\Big(|\xi|^2{1}_{T^*B(0,R)\setminus B(0,2)}\Big)\geq \limsup_{\ell \to \infty} \|v_\ell\|_{H_{h_\ell}^1(B(0,R)\setminus B(0,5/2))}^2.
$$
Therefore, using~\eqref{e:spider} with $R_0=3$, $\psi \equiv 1$ on $B(0,5/2)$,
$$
\limsup_{\ell \to \infty}\|v_{\ell}\|^2_{H_{h_\ell}^1(B(0,R))}\leq \frac{9C^2_3C^2_2C^2_1(1+\sqrt{R^2-4})}{R(C_0+\e)^2}.
$$
Hence, letting 
$$
C_0=C_1\max\left(3C_2C_3+1,\sup_{R\geq 2}\frac{3C_3C_2\sqrt{1+\sqrt{R^2-4}}+1}{R^{1/2}}\right),
$$
we have
$$
\limsup_{\ell \to \infty}\|u_{h_\ell}\|_{H_{h_\ell}^1(B(0,R))}\leq  \frac{3C_3C_2C_1\sqrt{1+\sqrt{R^2-4}}+C_1}{R^{1/2}(C_0+\e)}<1,
$$
which contradicts~\eqref{e:contra1}.
\end{proof}

\subsection{Proof of Lemmas \ref{lem:impedancetrace} and \ref{lem:impedancetrace2}}

In the next lemma, we identify $S^*\GammaI$ with a subset of $S^*\mathbb{R}^d$.
\begin{lem}
\label{l:traceEst}
Suppose that $A\in \Psi^m(\mathbb{R}^d)$ and $\WFh '(A)\cap S^*\GammaI=\emptyset.$ Then there is $C>0$ such that
$$
\|Au\|_{L^2(\GammaI)}\leq C\|Au\|_{L^2} +Ch^{-1}\|PAu\|_{L^2}+O(h^\infty)\|u\|_{L^2}.
$$
\end{lem}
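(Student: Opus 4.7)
The plan is to split $A$ microlocally into an elliptic piece and a transverse piece and handle each by a different mechanism. Since $S^*\GammaI$ is precisely the glancing set $\{\xi_1=0,\ r(x',\xi')=0\}\subset T^*_{\GammaI}\widetilde{M}$, the hypothesis $\WFh'(A)\cap S^*\GammaI=\emptyset$ means that at every point of $\WFh'(A)\cap T^*_{\GammaI}\widetilde{M}$ either $p\neq 0$ (semiclassically elliptic for $P$) or $\xi_1\neq 0$ (non-characteristic for $hD_{x_1}$). By a compactness argument and a pseudodifferential partition of unity I therefore write $A=A_e+A_t+R$ with $R$ smoothing, $\WFh'(A_e)\subset\{|p|\geq c\}$, and $\WFh'(A_t)\cap T^*_{\GammaI}\widetilde{M}\subset\{|\xi_1|\geq c\}$ for some $c>0$, and bound each piece separately.

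For the elliptic piece, an elliptic parametrix $E\in\Psi^{-2}_h$ of $P$ microlocal on $\WFh'(A_e)$ gives $A_eu = E(PA_eu)+(\text{smoothing})u$, hence $\|A_eu\|_{H^2_h(\mathrm{loc})}\leq C\|PA_eu\|_{L^2}+O(h^\infty)\|u\|_{L^2}$. Combining the standard semiclassical trace inequality $\|w\|^2_{L^2(\GammaI)}\leq Ch^{-1}\|w\|_{L^2}\|hDw\|_{L^2}+C\|w\|^2_{L^2}$ (applied to $w=E(PA_eu)$) with the decomposition $PA_eu=PAu-PA_tu+O(h^\infty)\|u\|$ (noting that $\|PA_tu\|_{L^2}\leq Ch\|Au\|_{H^1_h(\mathrm{loc})}$ because $\sigma(PA_t)=p\,\sigma(A_t)\equiv 0$ on $\WFh'(A_t)$), and then absorbing $\|Au\|_{H^1_h(\mathrm{loc})}$ via the Caccioppoli-type bound $\|Au\|_{H^1_h(\mathrm{loc})}\leq C(\|Au\|_{L^2}+\|PAu\|_{L^2})+O(h^\infty)\|u\|_{L^2}$ (obtained by pairing $\overline{\chi Au}$ with $P(\chi Au)$ and using $\|hD(\chi Au)\|^2=\langle P(\chi Au),\chi Au\rangle+\|\chi Au\|^2$), yields the elliptic-piece contribution in the claimed form.

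For the transverse piece I apply Lemma~\ref{lem:intByParts} with a symmetric $B_1\in\Psi^0_h$ having principal symbol $\chi(x_1)\,\sigma(A_t^*A_t)/r(x',\xi')$, and $B_0$ chosen to satisfy (\ref{eq:B0B1}); here $\chi\in C_c^\infty(\mathbb{R})$ satisfies $\chi(0)=1$ and is supported near the origin. The division by $r$ is legitimate since on $\WFh'(A_t)\cap T^*_{\GammaI}\widetilde{M}$ one has $\xi_1^2=r\geq c^2>0$ on the characteristic set. The last of the four boundary contributions then has principal symbol $\chi(0)\sigma(A_t^*A_t)$ at $x_1=0$, producing $\|A_tu\|^2_{L^2(\GammaI)}$ at leading order, while the Neumann-trace term $-\langle B_1hD_{x_1}u,hD_{x_1}u\rangle_{\GammaI}$ and the two cross terms are controlled microlocally in terms of $\|A_tu\|^2_{L^2(\GammaI)}$ using $hD_{x_1}A_t=\mathrm{Op}(\xi_1)A_t+O(h^\infty)\|u\|$ on $\WFh'(A_t)\cap T^*_{\GammaI}\widetilde{M}$. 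The interior term $\tfrac{i}{h}\langle[P,B]u,u\rangle$ contributes $\leq C\|Au\|^2_{L^2}+O(h^\infty)\|u\|^2_{L^2}$ (since $\{p,\sigma(B)\}$ is bounded by transversality), and the driving term $\tfrac{2}{h}|\langle Bu,Pu\rangle|\leq Ch^{-1}\|Au\|_{L^2}\|PAu\|_{L^2}$ is absorbed via Young's inequality into $\tfrac12\|A_tu\|^2_{L^2(\GammaI)}+Ch^{-2}\|PAu\|^2_{L^2}$; rearranging gives the transverse-piece contribution. The main obstacle I expect is the careful symbolic bookkeeping in this Rellich identity: tracking signs and the $O(h)$ corrections (from the Moyal-product error in constructing the self-adjoint $B_1$, from $[B_1,R]$, and from the cross and Neumann-trace terms) carefully enough to show that the four boundary contributions combine at leading order into a positive multiple of $\|A_tu\|^2_{L^2(\GammaI)}$, with subleading terms absorbable into the acceptable $\|Au\|^2_{L^2}$ and $h^{-2}\|PAu\|^2_{L^2}$ right-hand sides.
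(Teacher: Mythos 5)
Your overall architecture is the same as the paper's: since excluding the glancing set means that every point of $\WFh'(A)$ over $\GammaI$ has either $|\,|\xi|^2-1|\geq c$ or $|\xi_1|\geq c$, one splits into an elliptic region for $P$ (handled by the parametrix of Lemma~\ref{l:ellip} plus the $h^{-1/2}$ trace/Sobolev inequality, exactly as in the paper) and a transverse region. For the transverse region the paper proceeds differently from you: on the microsupport one has $|\xi_1|>0$, so $P$ factors microlocally as an elliptic factor times $hD_{x_1}-b(x,hD_{x'})$, and the trace bound is then the standard first-order energy estimate \cite[Lemma 7.11]{Zworski_semi}, which produces the $h^{-1}\|PAu\|_{L^2}$ term directly. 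Your Rellich-identity route via Lemma~\ref{lem:intByParts} is essentially a hand-rolled proof of that energy estimate; it is viable and has the merit of staying inside the paper's own toolbox, at the cost of the extra Caccioppoli step (which the factorization approach does not need) and of the boundary-quadratic-form bookkeeping you acknowledge.

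Two of your concrete steps do not survive scrutiny as written. First, the assertion that $\|PA_tu\|_{L^2}\leq Ch\|Au\|_{H_h^1}$ because ``$\sigma(PA_t)=p\,\sigma(A_t)\equiv 0$ on $\WFh'(A_t)$'' is false: $\WFh'(A_t)$ is a \emph{neighbourhood} of a piece of the characteristic set, not a subset of $\{p=0\}$, so $p\,\sigma(A_t)$ does not vanish there and $PA_t\notin h\Psi^{m+1}$. The repair is to take your partition composed on the left, $A_e=B_eA$ and $A_t=B_tA$ with $B_\bullet\in\Psi^0$; then $PA_tu=B_t(PAu)+[P,B_t]Au$ with $[P,B_t]\in h\Psi^{1}$, which gives the bound you want after your Caccioppoli step. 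Second, Lemma~\ref{lem:intByParts} requires $B_1$ to be a \emph{tangential} operator, so you may not set $\sigma(B_1)=\chi(x_1)\sigma(A_t^*A_t)/r$ when $\sigma(A_t)$ depends on $\xi_1$: you must first reduce $A_t$ modulo $P$ by polynomial division of its symbol by $\xi_1^2-r$, writing $A_t=A_{t,0}(x,hD_{x'})+A_{t,1}(x,hD_{x'})hD_{x_1}+EP+O(h^\infty)$. This reduction is also what is actually needed to express $A_tu|_{\GammaI}$ in terms of the tangential traces $u|_{\GammaI}$ and $hD_{x_1}u|_{\GammaI}$ that the Rellich identity controls; the identity ``$hD_{x_1}A_t=\mathrm{Op}(\xi_1)A_t$'' does not accomplish this. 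These are precisely the points the paper's factorization argument sidesteps, so you should either carry them out or switch to the factorization.
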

\begin{proof}
First, note that for $B\in \Psi^0$ with $\WFh '(B)$ supported away from $S^*\mathbb{R}^d$, we can write using the elliptic parametrix construction (Lemma~\ref{l:ellip}) that there is $E\in \Psi^{-2}$ such that 
$$
BAu=EPAu+O(h^\infty)_{\Psi^{-\infty}}.
$$
In particular, by the Sobolev embedding as in~\cite[Lemma 5.1]{Ga:19} see also~\cite[Lemma 7.10]{Zworski_semi}, 
\begin{align*}
\|BAu\|_{L^2(\GammaI)}\leq Ch^{-1/2}\|BAu\|_{H^1_{h}}&\leq Ch^{-1/2}\|EPAu\|_{H_h^1}+O(h^\infty)\|u\|_{L^2}
\\&\leq Ch^{-1/2}\|PAu\|_{L^2}+O(h^\infty)\|u\|_{L^2}.
\end{align*}
Therefore, we can assume that 
\begin{equation*}
\WFh '(A)\subset \big\{1-\delta \leq |\xi|^2\leq 1+\delta\big\}
\end{equation*}
for any $\delta>0$. Next, if $\WFh '(A)\cap S^*_{\GammaI}\mathbb{R}^d=\emptyset$, then there is $\chi\in C_c^\infty(\mathbb{R}^d)$ with $\chi\equiv 1$ in a neighbourhood of $\GammaI$ such that 
$$
\chi A=O(h^\infty)_{\Psi^{-\infty}}.
$$
In particular, 
$$
\|\chi Au|_{\GammaI}\|_{L^2(\GammaI)}=O(h^\infty)\|u\|_{L^2}.
$$

By using a partition of unity, we can work locally, assuming that $\GammaI=\{x_1=0\}$ as in \S\ref{subsec:geo}. 
We can then assume that $\WFh '(A)\subset \{|x_1|<\delta\}$. Write $A=a(x,hD)$ where $d(\supp a, \{r(x,\xi)=0\})>\epsilon>0$ and $\supp a\subset \{|x_1|<\delta\}$ for some $\epsilon >0$. Then, choosing $\delta>0$ small enough, we have $|\xi_1|>0$ on $\supp a$ and hence there is $e\in C_c^\infty(T^*\mathbb{R}^d)$ with $|e|>0$ on $\supp a$ and such that 
$$
e(x,\xi)(\xi_1-b(x,\xi'))a(x,\xi')=(-\xi_1^2+r(x,\xi'))a(x,\xi).
$$
Therefore,
$$
\|(hD_{x_1}-b(x,hD_{x'}))Au\|_{L^2}\leq C\|PAu\|_{L^2}+O(h)\|Au\|_{L^2};
$$
the result then follows by applying~\cite[Lemma 7.11]{Zworski_semi}.
\end{proof}

\begin{lem} \label{lem:crown}
Let $u$ be the solution to \eqref{eq:BVP3}.
For any $\eta>0$, there exists $R_0>0$ such that, for $R\geq R_0$ and $h$ small enough (depending on $R$)
\beq\label{eq:crown}
\Vert u \Vert_{L^2(B(0,R+1)  \setminus B(0,R-1))} \leq (\sqrt{2}+\eta) {R^{-\frac{1}{2}}} \Vert u \Vert _ {L^2(B(0,R))}.
\eeq
\end{lem}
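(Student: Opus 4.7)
My plan is to argue by contradiction, combining a rescaling by $R$ with semiclassical defect-measure techniques. Suppose the lemma fails: there exist $\eta>0$ and sequences $R_n\to\infty$, $h_n\to 0$ (with $h_n$ as small as desired given $R_n$), together with corresponding solutions $u_n$ of~\eqref{eq:BVP3} such that $\|u_n\|_{L^2(A_{R_n})}>(\sqrt 2+\eta)R_n^{-1/2}\|u_n\|_{L^2(B(0,R_n))}$, where $A_R:=B(0,R+1)\setminus B(0,R-1)$. First I would rescale by setting $\hat u_n(y):=R_n^{d/2}u_n(R_n y)/\|u_n\|_{L^2(B(0,R_n))}$ and $\hat h_n:=h_n/R_n$, so that $\hat h_n\to 0$ and $\hat u_n$ satisfies an outgoing Helmholtz problem $(-\hat h_n^2\hat\Delta-1)\hat u_n=0$ on $\Omegaplus/R_n$, with $\|\hat u_n\|_{L^2(B(0,1))}=1$ and
\[
R_n\|\hat u_n\|_{L^2(\hat A_n)}^2>(\sqrt 2+\eta)^2>2,\qquad \hat A_n:=B(0,1+R_n^{-1})\setminus B(0,1-R_n^{-1}).
\]
The goal is to show that the defect-measure machinery forces $R_n\|\hat u_n\|_{L^2(\hat A_n)}^2\to 2\int_{S^{d-1}}f\,d\hat\omega\leq 2$ for a limiting angular density $f$, contradicting the above.

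To extract $f$, pass to a subsequence admitting a semiclassical defect measure $\hat\mu$ of $\hat u_n$ on $T^*(\mathbb R^d\setminus\{0\})$; the required local $L^2$-boundedness comes from Lemma~\ref{lem:mass_u} applied to $u_n$ combined with the rescaling. Lemma~\ref{lem:WFu}, transferred to $\hat u_n$ (noting that under the rescaling a fixed $\hat y$ with $|\hat y|>0$ corresponds to $|x|=R_n|\hat y|\to\infty$), forces $\supp\hat\mu\subset\{(\hat y,\hat\xi)\,:\,\hat\xi=\hat y/|\hat y|,\,|\hat\xi|=1\}$ away from the origin. Part~(i) of Lemma~\ref{lem:inv} (with right-hand side zero) gives invariance of $\hat\mu$ under the geodesic flow, which on its support is the radial translation $\hat r\mapsto\hat r+2t$. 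Together these force the polar form $\hat\mu=f(\hat\omega)\,d\hat r\,d\hat\omega$ for some nonnegative Radon measure $f$ on $S^{d-1}$; testing against $B(0,1)\setminus B(0,\epsilon)$ using $\|\hat u_n\|_{L^2(B(0,1))}=1$ and letting $\epsilon\to 0$ yields $\int_{S^{d-1}}f\,d\hat\omega\leq 1$, while $\hat\mu(\hat A_n)=2R_n^{-1}\int f\,d\hat\omega\leq 2R_n^{-1}$ for $n$ large.

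The principal obstacle is the final step: because $\hat A_n$ shrinks to the sphere $\{|y|=1\}$ at the same rate $R_n^{-1}$ as the semiclassical wavelength $\hat h_n$, the standard weak convergence $\|\hat u_n\|_{L^2(K)}^2\to\hat\mu(K)$ for \emph{fixed} compact $K$ is not sharp enough to control the shrinking annulus. To bridge this I would work directly with the angular $L^2$-density $\phi_n(r):=\int_{S^{d-1}}|\hat u_n(r\omega)|^2r^{d-1}\,d\omega$. From the Sommerfeld condition $\hat h_n\partial_r\hat u_n-i\hat u_n=o(r^{-(d-1)/2})$ together with the flux-conservation identity $\hat h_n\Im\int_{|y|=r}\partial_r\hat u_n\,\overline{\hat u_n}\,dS=\mathrm{const}$ (obtained by taking the imaginary part of the weak formulation on a spherical shell), one concludes that $\phi_n$ is asymptotically independent of $r$ in the free region, with limiting value $c_n=\int_{S^{d-1}}f\,d\hat\omega+o(1)$. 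The constraint $\int_\epsilon^1\phi_n(r)\,dr\leq\|\hat u_n\|_{L^2(B(0,1))}^2=1$ then gives $c_n\leq 1/(1-\epsilon)$, so after sending $\epsilon\to 0$ one obtains $\limsup_n R_n\|\hat u_n\|_{L^2(\hat A_n)}^2=2\limsup_n c_n\leq 2$, contradicting $R_n\|\hat u_n\|_{L^2(\hat A_n)}^2>(\sqrt 2+\eta)^2>2$.
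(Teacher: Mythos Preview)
Your overall strategy of contradiction plus defect measures is right, but the rescaling by $R_n$ manufactures a difficulty that the paper's proof avoids, and your proposed resolution of that difficulty has a genuine gap.

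The paper does \emph{not} rescale. Instead it fixes $R$ large and reduces (via a short contradiction argument) to the defect-measure statement
\[
\mu\big(T^*A_{R+1,R-1}\big)<\frac{(\sqrt 2+\eta)^2}{R}\,\mu\big(T^*B(0,R)\big),
\]
where $\mu$ is any defect measure of $u_h$ as $h\to 0$ at that fixed $R$. The annulus $A_{R+1,R-1}$ is then a \emph{fixed} set. Using Lemma~\ref{lem:WFu} and Corollary~\ref{cor:outgo} one checks that the backward flow of $T^*A_{R+1,R-1}\cap\supp\mu$ leaves the annulus after time $\sim 1$ and does not return before time $\sim R/2$. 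A time-averaging argument (integrating $1_{A_{R+1,R-1}}\circ\varphi_t$ over $t\in[T_{0,R},T_{1,R}]$ with $T_{0,R}\sim 1$, $T_{1,R}\sim R/2$, and using flow-invariance of $\mu$) then gives
\[
\mu(A_{R+1,R-1})\leq\frac{T_{0,R}}{T_{1,R}-T_{0,R}}\,\mu(B(0,R))=\frac{2}{R}\big(1+O(R^{-1})\big)\mu(B(0,R)),
\]
which yields the claim for $R$ large.

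In your argument, after rescaling, the annulus $\hat A_n$ has width $2/R_n\to 0$. Defect-measure convergence only controls $\|\hat u_n\|_{L^2(K)}^2$ for fixed $K$, so you cannot pass directly to $\hat\mu$ on $\hat A_n$; you correctly flag this. But your proposed remedy via the angular density $\phi_n(r)$ does not close the gap. The flux identity gives $\hat h_n\Im\int_{|y|=r}\partial_r\hat u_n\,\overline{\hat u_n}\,dS=\mathrm{const}$, and Sommerfeld says this constant is close to $\phi_n(r)$ only with an error that is \emph{not} uniform in $n$: the $o(r^{-(d-1)/2})$ in the radiation condition is for fixed semiclassical parameter as $r\to\infty$, whereas here both $\hat h_n\to 0$ and the relevant $r$ is bounded. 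In particular, the claimed identification $c_n=\int_{S^{d-1}}f\,d\hat\omega+o(1)$ conflates two different limits (the defect-measure limit $n\to\infty$ at fixed $r$, and the Sommerfeld asymptotics $r\to\infty$ at fixed $n$) without justification. Also, your remark that $\hat A_n$ shrinks ``at the same rate $R_n^{-1}$ as the semiclassical wavelength $\hat h_n$'' is not correct: in the contradiction setup you may choose $h_n$ as small as you like given $R_n$, so $\hat h_n=h_n/R_n$ can be made much smaller than $1/R_n$; this does not, however, rescue the shrinking-annulus issue.

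The cleanest fix is to drop the rescaling and follow the paper's time-averaging argument at fixed $R$.
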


\begin{proof}
We define $A_{r_0, r_1} := \overline{B(0,r_0)}  \setminus B(0,r_1)$.
First, observe that it is sufficient to prove that there exists $R_1(\eta)>0$ such that, for any $R\geq R_1$ and any $u$ solving~\eqref{eq:BVP3} having defect measure $\mu$,
\begin{equation} \label{eq:crown_defect}
\mu (T^* A_{R+1, R-1}) < \frac{(\sqrt{2}+\eta)^2}{R}  \mu (T^* B(0,R)).
\end{equation}
Indeed, if \eqref{eq:crown} fails, then there exists $\eta>0$ and $h_n \rightarrow 0$ and $g_n\in H_h^1(\Gamma_D)$ such that, for $u(h_n)$ solving~\eqref{eq:BVP3} with $g=g_n$ and some $R\geq R_1 (\eta)$,
\begin{equation}
\label{e:spider2}
\Vert u(h_n) \Vert_{L^2(A_{R+1,R-1})} >  \frac{\sqrt{2}+\eta}{R^{1/2}} \Vert u(h_n) \Vert _ {L^2(B(0,R))}.\qquad \Vert u(h_n) \Vert _ {L^2(B(0,R))}=1.
\end{equation}
Then, passing to a subsequence, we can assume that $u(h_n)$ has defect measure $\mu$. 
Let $\epsilon>0$ be arbitrary. Take $\chi^\epsilon_0$ equal to one in $A_{R+1,R-1}$ and supported in $A_{R+1+\epsilon,R-1-\epsilon}$ and $\chi^\epsilon_1$ supported in $B(0,R)$ and equal to one in $B(0,R-\epsilon)$. The estimate~\eqref{e:spider2} implies
$$
\Vert \chi_0^\epsilon u(h_n) \Vert_{L^2} >  \frac{\sqrt{2}+\eta}{R^{1/2}} \Vert \chi_1^\epsilon u(h_n) \Vert _ {L^2},
$$
passing to the limit $h_n \rightarrow 0$ and using e.g.~\cite[Lemma 4.2]{GaSpWu:20} we obtain 
\beqs
\mu((\chi_0^\epsilon)^2)\geq \frac{(\sqrt{2}+\eta)^2}{R} \mu((\chi_1^\epsilon)^2),
\eeqs
which in turn implies, by the support
properties of $\chi_{0,1}$,
$$
\mu (T^* A_{R+1+\epsilon, R-1-\epsilon}) \geq \frac{(\sqrt{2}+\eta)^2}{R}\mu (T^* B_{R-\epsilon}).
$$
In particular, sending $\e\to 0^+$, and using monotonicity of measures
$$
 \mu (T^* A_{R+1, R-1}) \geq \frac{(\sqrt{2}+\eta)^2}{R} \mu (T^* B_{R}),
$$
which contradicts~\eqref{eq:crown_defect}.

We therefore only need to prove (\ref{eq:crown_defect}). The definition of defect measures implies $\supp \  \mu \subset \text{WF}_h(u)$, thus, by Lemma \ref{lem:WFu},
$$
\supp   \mu \,\cap \{|x|>C\}\subset \left\{ \left|  \xi\cdot\frac{x}{|x|} -1 \right| < \frac{C}{|x|^2} \right\}.
$$
Now, invariance of defect measures away from the obstacle combined with the above implies that, for $r_0>C+2$, so that $\Omegaminus \subset B(0,r_0 - 2)$, and $0\leq t\leq1$,
$$
 \mu (T^* A_{r_1, r_0}) =  \mu \left( \varphi_{-t} \left(T^* A_{r_1, r_0} \cap \left\{ |\xi| = 1, \ \left|  \xi\cdot\frac{x}{|x|} - 1\right| < \frac{C}{|x|^2}\right\} \right) \right).
$$
By Corollary \ref{cor:outgo}, there exist $C_0,C_1,C_2>0$ such that 
\begin{equation*}
\begin{gathered}
\varphi_{-\frac{1}{2}-C_0R^{-2}}\big(T^* A_{R+1,R-1}\cap \supp {\mu}\big)\cap T^*\big\{|x|\geq R\big\}=\emptyset,\\
\varphi_{-1-1C_0R^{-2}}\big(T^* A_{R+1,R-1}\cap \supp {\mu}\big)\subset T^*\big\{|x|< R-1\big\}.
\end{gathered}
\end{equation*}
Fix $r_0>0$ such that $\Omegaminus\Subset B(0,r_0)$. Then, for $0\leq 2t\leq R-1-r_0$, we have $\varphi_{-t}(S^* A_{R+1,R-1})\cap B(0,r_0)=\emptyset$. Therefore, using the fact that $\langle x,\xi\rangle>0$ on $\supp {\mu}\cap T^*A_{R+1,R-1}$, we have 
\begin{equation}
\label{eq:nonSelfLoop}
\varphi_{-t}(T^* A_{R+1,R-1}\cap \supp {\mu})\cap T^* A_{R+1,R-1}\cap \supp {\mu}=\emptyset\,\,\tfor\,\, t\in \left[1+C_0R^{-2},\frac{ R-1-r_0}{2}\right].
\end{equation}
Now, let $T_{1,R}:=(R-1-r_0)/2$ and $T_{0,R}:=1+C_0R^{-2}$ and consider
$$
f_{T,R}(x,\xi):=\int_{T_{0,R}}^{T_{1,R}} 1_{T^*A_{R+1,R-1}\cap \supp {\mu}}\circ\varphi_{t}(x,\xi)dt.
$$
We claim that $0\leq f_{T,R}\leq T_{0,R}$; to see this, suppose that $\varphi_t(x,\xi)\in T^*A_{R+1,R-1}\cap \supp {\mu}$ and $\varphi_{s}(x,\xi)\in T^*A_{R+1,R-1}\cap \supp {\mu}$ with $T_{0,R}\leq s\leq t-T_{0,R}$ and $t\leq T_{1,R}$. Then,
$$
\varphi_{-(t-s)}(x,\xi)\in T^*A_{R+1,R-1}\cap \supp {\mu},\qquad (x,\xi)\in T^*A_{R+1,R-1}\cap \supp {\mu}
$$
and $T_{0,R}\leq t-s\leq T_{1,R}$, contradicting~(\ref{eq:nonSelfLoop}).

Now, since ${\mu}$ is $\varphi_t$ invariant,
$$
(T_{1,R}-T_{0,R})\,{\mu}(1_{T^*A_{R+1,R-1}})={\mu}(f_{T,R}(x,\xi))\leq T_{0,R}\,{\mu}(B(0,R)).
$$
In particular, 
\begin{align*}
{\mu}(1_{T^*A_{R+1,R-1}})\leq \frac{T_0,R}{T_{1,R}-T_{0,R}}{\mu}(B(0,R))\leq \frac{2}{R}(1+O(R^{-1})){\mu}(B(0,R)).
\end{align*}
Choosing $R>0$ large enough yields \eqref{eq:crown_defect}, and the proof is complete.
\end{proof}

We now prove Lemmas \ref{lem:impedancetrace} and \ref{lem:impedancetrace2}.

\bpf[Proof of Lemma \ref{lem:impedancetrace}]
Let $\widetilde{n}$ be a smooth extension of the normal vector field to $\GammaIR$, $n_R(x)$ and $C_0>0$ so that the conclusions of~Lemma~\ref{lem:WFu} hold, and, $\widetilde{\bcN}$, $\widetilde{\bcD}$ smooth extensions of $\bcN$ and $\bcD$. Next, fix $\e>0$ such that
$$
\sup \Big\{ \big|\widetilde{\bcN}hD_{\widetilde{n}}-\widetilde{\bcD}\big|+\big|H_p(\widetilde{\bcN}hD_{\widetilde{n}}-\widetilde{\bcD})\big|\,:\, \dist(x,\GammaIR)<\e, \, \Big|\xi\cdot \frac{x}{|x|}-1\Big|\leq \frac{C_0}{|x|^2},\, \big||\xi|-1\big|<\e\Big\}\leq 2\Upsilon(R).
$$
 and let $\chi$ be smooth, supported in 
$$
\Gamma_\e:=\big\{x\,:\, \dist(x,\GammaIR)<\e\big\},
$$
and equal to one near $\GammaIR$. By Lemma~\ref{lem:WFu}, we can find $Z \in \Psi(\mathbb{R}^d)$ with $\WFh '(Z)\cap \mathcal{I}=\emptyset$ such that 
$$
\chi u=\chi Zu+O_{C^\infty}(h^\infty\|u\|_{L^2}).
$$
Now, since $\tildedomain$ is convex, and $\Omegaminus\Subset \tildedomain$, $S^*\GammaI\subset \mathcal{I}$. In particular, by Lemma~\ref{l:traceEst},
\begin{align*} 
\Vert (\bcN hD_n-\bcD) u \Vert_{L^2(\GammaI)} &=\Vert (\bcN hD_n-\bcD) \chi Zu \Vert_{L^2(\GammaI)} +O(h^\infty)\|u\|_{L^2}\\
&\hspace{-2cm}\leq C\Vert(\widetilde{\bcN} hD_{\widetilde{n}}-\widetilde{\bcD})\chi Zu\Vert_{L^2} +Ch^{-1}\Vert (-h^2\Delta-1)(\widetilde{\bcN} hD_{\widetilde{n}}-\widetilde{\bcD})\chi Zu\Vert_{L^2}+ O(h^\infty)\|u\|_{L^2}\\
&\hspace{-2cm}=C\Vert (\widetilde{\bcN} hD_{\widetilde{n}}-\widetilde{\bcD})\chi u\Vert_{L^2} +Ch^{-1}\Vert (-h^2\Delta-1)(\widetilde{\bcN} hD_{\widetilde{n}}-\widetilde{\bcD})\chi u\Vert_{L^2}+ O(h^\infty)\Vert u\Vert_{L^2}\\
&\hspace{-2cm} \leq C\Vert (\widetilde{\bcN} hD_{\widetilde{n}}-\widetilde{\bcD}) \chi u \Vert_{L^2} +  Ch^{-1}  \Vert (\widetilde{\bcN} hD_{\widetilde{n}}-\widetilde{\bcD})(-h^2\Delta-1) \chi u \Vert_{L^2} \\ 
&  \qquad +  Ch^{-1}  \Vert [-h^2\Delta-1,\widetilde{\bcN} hD_{\widetilde{n}}-\widetilde{\bcD}] \chi u \Vert_{L^2}+O(h^\infty) \Vert u\Vert_{L^2},
\end{align*} 
and, using the fact that $(-h^2 \Delta - 1)u = 0$,
\begin{align}\nonumber
\Vert (\widetilde{\bcN} hD_{\widetilde{n}}-\widetilde{\bcD}) u \Vert_{L^2(\GammaI)} 
&\leq \Vert (\widetilde{\bcN} hD_{\widetilde{n}}-\widetilde{\bcD})\chi u \Vert_{L^2}  +  h^{-1}  \Vert (\widetilde{\bcN} hD_{\widetilde{n}}-\widetilde{\bcD}) [h^2 \Delta + 1,\chi]  u \Vert_{L^2} \\
&\qquad+  h^{-1} \Vert [-h^2\Delta-1,\widetilde{\bcN} hD_{\widetilde{n}}-\widetilde{\bcD}] \chi u \Vert_{L^2}.
 \label{eq:upper_dec}
\end{align}
Let 
$$
R_1:=\sup\Big\{R\,:\, \GammaIR\cap B(0,C_0+1)\neq \emptyset\Big\}.
$$
Then, for $1\leq R\leq R_1$, the proof is completed, since $\|Bu\|_{H_h^1}+h^{-1}\|[B,(-h^2\Delta-1)]u\|_{L^2}\leq C_B\|u\|_{L^2}$ for any $B\in \Psi^\infty$.  We now consider the case $R\geq C_0$. 

Observe that, by Lemma \ref{lem:WFu},
\begin{equation} \label{eq:up_WFstat}
\WFh \big(  \chi u \big) \subset \supp\chi \cap \WFh ( u ) \subset \left\{ \Big| \xi\cdot\frac{x}{|x|} -1 \Big| < \frac{C}{|x|^2}, \ x\in\Gamma_\epsilon ,\,|\xi|=1\right\}.
\end{equation}
Now, let $\widetilde{\chi}\in C_c^\infty(\mathbb{R}^d)$ with $\widetilde{\chi}\equiv 1$ on $\supp\chi$ with $\supp \widetilde{\chi}\subset \Gamma_\e$,  and $\psi\in C_c^\infty(T^*\mathbb{R}^d)$ with
$$
\supp\psi\subset \left\{ \left| \xi\cdot\frac{x}{|x|}  -1 \right| \leq \frac{2C}{|x|^2},\quad \big||\xi|-1\big|<\e\right\},$$
with $\psi\equiv 1$ on 
$$\left\{ \left|\xi\cdot\frac{x}{|x|}  - 1\right| < \frac{C}{|x|^2},\,|\xi|=1 \right\}.$$ and $\Psi := \text{Op}_h (\psi \widetilde \chi)$. By (\ref{eq:up_WFstat})
$$
 \Vert (\widetilde{\bcN} hD_{\widetilde{n}}-\widetilde{\bcD})\chi u \Vert_{L^2}  =   \Vert \Psi (\widetilde{\bcN} hD_{\widetilde{n}}-\widetilde{\bcD})\chi u \Vert_{L^2} + O(h^\infty) \Vert \chi u \Vert_{L^2},
 $$
where $\Psi  (\widetilde{\bcN} hD_{\widetilde{n}}-\widetilde{\bcD})$ has principal $h$-symbol 
\beq\label{eq:Lambda}
\Lambda(x,\xi) := \psi \widetilde \chi (\widetilde{\bcN}(x,\xi)\xi \cdot \widetilde{n}(x) - \widetilde{\bcD}(x,\xi)),
\eeq
and thus $\Psi  (\widetilde{\bcN}hD_{\widetilde{n}}-\widetilde{\bcD}) = \text{Op}_h(\Lambda) + O(h)_{L^2\rightarrow L^2}$, and then, by\cite[Theorem 5.1]{Zworski_semi},
$$
\Vert \Psi \chi u \Vert_{L^2} \leq \left(\sup \big|\Lambda(x,\xi)\big|+O\big(h^{1/2}\big)\right)\Vert \chi u\Vert_{L^2}.
$$
However, by the support properties of $\widetilde \chi$ and $\psi$ and the definition \eqref{eq:Lambda} of $\Lambda$,
$$
\sup \big|\Lambda(x,\xi)\big| \leq \Upsilon(R),
$$
and it follows that, given $R>0$, there exists $h_0(R)>0$ such that, for $0 < h \leq h_0$,
\begin{equation} \label{eq:upper_e1}
 \Vert  (\widetilde{\bcN}hD_{\widetilde{n}}-\widetilde{\bcD})\chi u \Vert_{L^2}  \lesssim \Upsilon(R) \Vert {\chi} u \Vert_{L^2}.
\end{equation}
 
 On the other hand, by Lemma \ref{lem:WFu}, 
 $$
 \WFh ( [-h^2 \Delta - 1,\chi]u) \subset \Big\{ \big| \xi\cdot \frac{x}{|x|} - 1\big| < \frac{C}{|x|^2}, \ x\in\Gamma_{\e},\,|\xi|=1 \Big\};
 $$
we  obtain in the same way as before, reducing $h_0$ if necessary, that for $0 < h \leq h_0$ 
 \begin{equation} \label{eq:upper_commu}
\Vert  (\widetilde{\bcN}hD_{\widetilde{n}}-\widetilde{\bcD}) [-h^2 \Delta - 1,\chi]  u  \Vert_{L^2}  \lesssim  \Upsilon(R) \Vert  [-h^2 \Delta - 1,\chi] u \Vert_{L^2} \lesssim   \Upsilon(R) h \Vert \chi_0 u \Vert_{H^1_h},
 \end{equation}
where $\chi_0$ is supported in the support of $\widetilde \chi$ and equal to one on the support of $\chi$. But, since  $(-h^2 \Delta - 1)u = 0$, $u$ has $h$-wavefront set in $\{ |\xi|^2 = 1 \}$, thus so does $\widetilde \chi u$, and it follows that, taking $\eta$ compactly supported near one
\begin{align}
\Vert  \chi_0 u \Vert_{H_h^1} &=  \Vert \text{Op}_h(\eta(|\xi|^2)) \chi_0 \widetilde{\chi} u \Vert_{H_h^1} + O(h^\infty) \Vert \widetilde{\chi} u \Vert_{L^2} \label{eq:eqnorms}\\
& =  \Vert \text{Op}_h(\eta(|\xi|^2) \xi \chi_0) \widetilde{\chi} u \Vert_{H_h^1} + O(h) \Vert \widetilde{\chi} u \Vert_{L^2} \nonumber \\
&\hspace{0.3cm} \lesssim \Vert \widetilde{\chi} u \Vert_{L^2}. \nonumber
\end{align}
Hence, by \eqref{eq:upper_commu}, for $0< h \leq h_0$,
\begin{equation} \label{eq:upper_e2}
  h^{-1} \Big\Vert  \big(\widetilde{\bcN}hD_{\widetilde{n}}-\widetilde{\bcD}\big)\big[-h^2 \Delta -1,\chi\big]  u  \Big\Vert_{L^2}    \lesssim \Upsilon(R)  \Vert \widetilde{\chi} u \Vert_{L^2}.
\end{equation}
 
 Finally, observe that $h^{-1} [-h^2 \Delta - 1, \widetilde{\bcN}hD_{\widetilde{n}}-\widetilde{\bcD}]$ has principal $h-$symbol
\begin{align*}
 \sigma\Big(h^{-1}\big[-h^2 \Delta - 1,  \big(\widetilde{\bcN}hD_{\widetilde{n}}-\widetilde{\bcD}\big)\big]\Big) 
& = \frac{1}{i} \Big\{ |\xi|^2 - 1, \widetilde{\bcN}(x,\xi)\xi \cdot \widetilde{n}(x) - \widetilde{\bcD}(x,\xi)\Big\} \\
 &= \frac{1}{i} H_p \Big( \widetilde{\bcN}(x,\xi)\xi \cdot \widetilde{n}(x) - \widetilde{\bcD}(x,\xi) \Big),
\end{align*}
 therefore, using Lemma \ref{lem:WFu} in the same way as before, we obtain
$$
 h^{-1} \Big\Vert \big[h^2 \Delta + 1, \widetilde{\bcN}hD_{\widetilde{n}}-\widetilde{\bcD}\big] \chi u \Big\Vert_{L^2}  \lesssim  \sup \Big| \widetilde \chi \psi H_p \Big( \widetilde{\bcN}(x,\xi)\xi \cdot \widetilde{n}(x) - \widetilde{\bcD}(x,\xi) \Big) \Big|  \Vert {\chi} u \Vert_{L^2} + O(h^{1/2})\Vert {\chi} u \Vert_{L^2} .
$$
By the support properties of $\psi$ and $\widetilde \chi$ 
 $$
\sup \Big| \widetilde \chi \psi H_p \Big( \widetilde{\bcN}(x,\xi)\xi \cdot \widetilde{n}(x) - \widetilde{\bcD}(x,\xi) \Big) \Big|\lesssim \Upsilon(R).
 $$
Reducing $h_0>0$ depending on $R$ if necessary,  we obtain that for $0< h \leq h_0$
 \begin{equation} \label{eq:upper_e3}
  h^{-1} \Big\| [-h^2 \Delta - 1, \widetilde{\bcN}(x,\xi)\xi \cdot \widetilde{n}(x) - \widetilde{\bcD}(x,\xi) ] \chi u \Big\|_{L^2}  \lesssim \Upsilon(R) \Vert \chi u \Vert_{L^2}.
\end{equation}
 
Combining \eqref{eq:upper_dec} with \eqref{eq:upper_e1}, \eqref{eq:upper_e2}, and \eqref{eq:upper_e3}, we have, for $0< h \leq h_0(R)$,
 $$
 \Vert (\bcN hD_{n}-\bcD) u \Vert_{L^2(\GammaI)} \lesssim \Upsilon(R)  \Vert \widetilde \chi u \Vert_{L^2},
 $$
 and then Lemma \ref{lem:crown} 
implies that 
 $$
 \N{(\bcN hD_{n}-\bcD) u}_{L^2(\GammaI)}  \leq C \frac{\Upsilon(R)}{R^{1/2}}\N{u}_{L^2(\domain)}.
 $$
 
 To obtain the bound on $Au$, we observe that, by Lemma \ref{lem:WFu}, $S^*\GammaI\subset \mathcal{I}$, and, by Lemma~\ref{l:traceEst},
 $$
 \Vert A u \Vert_{L^2(\GammaI)} \leq \Vert A \chi u \Vert_{L^2} + h^{-1} \Vert (-h^2\Delta - 1) A \chi u \Vert_{L^2} + O(h^\infty)\Vert \chi u \Vert_{L^2}.
 $$
However, in the same way as we obtained (\ref{eq:eqnorms}), the fact that $u$ has $h$-wavefront set in $\{|\xi|^2 = 1\}$ implies that
 $$
 \Vert A \chi u \Vert_{L^2} + h^{-1} \Vert (-h^2\Delta - 1) A \chi u \Vert_{L^2} \lesssim  \Vert \widetilde \chi u \Vert_{L^2},
 $$
 and the bound on $Au$ follows by reducing $h_0(R)>0$ again if necessary.
 \end{proof}

\begin{proof}[Proof of Lemma \ref{lem:impedancetrace2}]

\

\noi \emph{Proof of (i).} 
First observe that if $\GammaIR = \partial B(0,R)$, then for $x\in \GammaIR$, $n(x)=x/|x|$. Therefore, on
$$
\mathcal{O}:=\left\{(x,\xi)\,:\,  x\in \GammaIR, \,\,\bigg|\xi \cdot \frac {x}{|x|} - 1\bigg| \leq \frac{C}{R^2},\,\, |\xi|=1\right\}.
$$
since $n(x)\cdot \xi=\sqrt{1-|\xi'|_g^2}$, we have
$$
|\xi'|^2_g=1-|n(x)\cdot \xi|^2\leq\frac{C}{R^{2}}.
$$ 
We now claim that
\beq\label{eq:decomp}
\sigma(\bcN)(x',\xi')n(x)\cdot\xi-\sigma(\bcD)(x',\xi') = e(x',\xi') |\xi'|_g^{2\morderzero} \quad \ton \mathcal{O},
\eeq
where $e(x',\xi')$ is smooth on $\mathcal{O}$. 
 Indeed, the existence of $e(x',\xi')$ follows from the definition of $\morderzero$ \eqref{eq:pq} and that $n(x)\cdot \xi=\sqrt{1-|\xi'|_g^2}$ on $\mathcal{O}$. 

Therefore
\begin{align}
\sup_{\mathcal{O}}\big|\sigma(\bcN)(x',\xi')n(x)\cdot\xi-\sigma(\bcD)(x',\xi')\big|
\leq C|\xi'|_g^{2\morderzero}\leq CR^{-2\morderzero}.\label{e:vanishingRabbit}
\end{align}

Next, we bound the terms in $\Upsilon(R)$ \eqref{eq:Upsilon} involving the Hamiltonian vector field $H_p=2\langle \xi,\partial_x\rangle$.
First, using again that $\xi=(n(x)\cdot \xi) n(x)+\xi'$ (where we abuse notation slightly to identify vectors and covectors), we have $H_p=2 n(x)\cdot \xi\,\partial_n  +2\langle \xi',\partial_{x'}\rangle$. Thus, on $\mathcal{O}$,
\begin{align}\nonumber
H_p\Big(\sigma(\bcN)n(x)\cdot\xi-\sigma(\bcD)\Big)
&=\sigma(\bcN)2 \left( \frac{x}{|x|}\cdot \xi \right)\left\langle \frac{x}{|x|},\partial_x\right\rangle\left( \frac{x}{|x|}\cdot \xi \right)+ 2\big\langle\xi',\partial_{x'}\big\rangle\big(\sigma(\bcN)n(x)\cdot\xi-\sigma(\bcD)\big)\\
&=2\big\langle\xi',\partial_{x'}\big\rangle\Big(\sigma(\bcN)\sqrt{1-|\xi'|_g^2}-\sigma(\bcD)\Big)\label{eq:rabbit2}
\end{align}
where we have used that $\partial_{x'}$ is tangent to $\GammaIR\cap \{|\xi|=1\}$ to write $n(x)\cdot \xi=\sqrt{1-|\xi'|_g^2}$ in the last line. Now, 
by \eqref{eq:decomp},
$$
\partial_{x'}\left(\sigma(\bcN)\sqrt{1-|\xi'|_g^2}-\sigma(\bcD)\right)=O\big(|\xi'|_g^{2\morderzero}\big).
$$
In particular, 
\beq\label{eq:rabbit3}
2\langle\xi',\partial_{x'}\rangle\left(\sigma(\bcN)\sqrt{1-|\xi'|_g^2}-\sigma(\bcD)\right)=O\big(|\xi'|_g^{2\morderzero+1}\big)=O(R^{-2\morderzero-1}).
\eeq
The required bound on $\Upsilon(R)$ follows by combining \eqref{e:vanishingRabbit}, \eqref{eq:rabbit2}, and \eqref{eq:rabbit3}.

\noi\emph{Proof of (ii).} This follows from the fact that $\sigma(\bcN)$ and $\sigma(\bcD)$ have uniformly bounded $C^1$ norms in $R$.
\end{proof}


\section{Proof of wellposedness of the truncated problem (Theorem \ref{th:uniformestimates})}
\label{sec:resolventestimates}

\subsection{Trace bounds for higher order boundary conditions}

In this section, we consider the solution to 
\begin{equation}
\label{eq:genBC}
\begin{cases}
(-h^2\Delta_g-1)u=hf&\text{ in }M,\\
\bcN_{i}hD_n u- \bcD_{i}u =g_i&\text{ on }\Gamma_i\subset \boundary,
\end{cases}
\end{equation}
where $(M,g)$ is a Riemannian manifold with smooth boundary $\partial M=\cup_{i=1}^N\Gamma_i$
such that $\Gamma_i$ are the connected components of $\boundary $, and 
$\bcN_{i}\in \Psi^{m_{1,i}}(\Gamma_i)$, and $\bcD_i\in \Psi^{m_{0,i}}(\Gamma_i)$  have real-valued principal symbols. We further assume that for all $i=1,\dots,N$, 
\begin{equation}
\label{eq:conditions}
\begin{gathered}
|\sigma(\bcN_{i})|^2\langle \xi '\rangle^{-2m_{1,i}}+|\sigma(\bcD_{i})|^2\langle \xi '\rangle^{-2m_{0,i}}\geq c>0\quad  \ton T^{*}\Gamma_i,\\ |\sigma(\bcD_{i})|>0\quad\text{ on }S^*\Gamma_i,\\
\end{gathered}
\end{equation}
and for each $i$ one of the following holds:
\begin{align}
&m_{0,i}=m_{1,i}+1,\qquad\text{ or }\label{eq:conditions2a}\\
&|\sigma(\bcN_{i})|^2\langle \xi'\rangle^{-2m_{1,i}}\geq c>0,&& |\xi'|\geq C,&&\text{and }\qquad m_{0,i}\leq m_{1,i}+1,&\text{ or}\label{eq:conditions2c}\\
&|\sigma(\bcD_{i})|^2\langle \xi'\rangle^{-2m_{0,i}}\geq c>0,&& |\xi'|\geq C,&&\text{and }\qquad m_{1,i}+1\leq m_{0,i}.\label{eq:conditions2d}
\end{align}
The first condition in \eqref{eq:conditions} ensures non-degeneracy at infinity in $\xi$ (with \eqref{eq:conditions2a}, \eqref{eq:conditions2c} and \eqref{eq:conditions2d} the different options for which term in the boundary condition is dominant), and the second condition in \eqref{eq:conditions} ensures that the Dirichlet trace is bounded.

\begin{theorem}
\label{th:higherOrderBound}
Suppose that $u$ solves~(\ref{eq:genBC}) where $\bcN_{i}\in \Psi^{m_{1,i}}(\Gamma_i)$, $\bcD_{i}\in \Psi^{m_{0,i}}(\Gamma_i)$ have real-valued principal symbols and satisfy~\eqref{eq:conditions} and one of~\eqref{eq:conditions2a}-~\eqref{eq:conditions2d}. Then, there exist $C>0$ and $h_0>0$ such that for $0<h<h_0$, and $i$ and all $\ell_i$ satisfying
\beq\label{eq:ellineq}
-\frac{m_{0,i}+m_{1,i}}{2}\leq \ell_i\leq \frac{1}{2}-\frac{m_{0,i}+m_{1,i}}{2},
\eeq
\begin{align}\label{eq:trace1}
 \|u\|_{H_h^{\ell_i+m_{0,i}}(\Gamma_i)}+\|hD_{\nu}u\|_{H_h^{\ell_i+m_{1,i}}(\Gamma_i)}&\leq C\Big(\|u\|_{L^2(M)}+\|f\|_{H_h^{\ell_i+\frac{m_{1,i}+m_{0,i}-1}{2}}(M)} +\|g_i\|_{H_h^{\ell_i}(\Gamma_i)}\Big),
\end{align}
\beq\label{eq:trace2}
\|u\|_{H_h^1(M)}\leq C\Big(\|u\|_{L^2(M)}+h\|f\|_{L^2(M)}+\sum_i\|g_i\|_{H_h^{\ell_i}(\Gamma_i)}\Big),
\eeq
and for $s\leq 0$,
\beq\label{eq:trace3}
\|hD_{\nu}u\|_{H_h^s(\Gamma_i)}\leq C\Big(\|u\|_{H_h^{s+1}(\Gamma_i)}+\|u\|_{L^2(M)}+\|f\|_{L^2(M)}+\sum_i\|g_i\|_{H_h^{\ell_i}(\Gamma_i)}\Big) .
\eeq

\end{theorem}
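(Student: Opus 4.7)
The proof strategy is microlocal, working in Fermi normal coordinates near each boundary component $\Gamma_i$ so that locally $P = (hD_{x_1})^2 - R(x_1, x', hD_{x'}) + h E$ with $\Gamma_i = \{x_1 = 0\}$, and splitting the tangential cotangent bundle into the elliptic, hyperbolic, and glancing regions of \S\ref{subsec:geo}. The overarching idea is that the boundary condition $\bcN_i h D_n u - \bcD_i u = g_i$, combined with the interior Dirichlet-to-Neumann relation dictated by $Pu = hf$, determines $(u, h D_n u)|_{\Gamma_i}$ up to data and one order of regularity lost to $h\|f\|$. I would first establish \eqref{eq:trace1} and then deduce \eqref{eq:trace2} and \eqref{eq:trace3} from it.

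For \eqref{eq:trace1}, the plan is to partition $T^*\Gamma_i$ microlocally and build a tangential parametrix. In the elliptic region $\{r < 0\}$, the interior operator factors microlocally as $(hD_{x_1} - \Lambda_+)(hD_{x_1} - \Lambda_-) + O(h)$ with $\Lambda_{\pm}$ tangential of order one, and selecting the exponentially decaying branch yields $h D_n u \approx \Lambda u$ on $\Gamma_i$; combined with the boundary condition this gives a system with symbol proportional to $\sigma(\bcN_i)\sigma(\Lambda) - \sigma(\bcD_i)$, whose ellipticity follows from \eqref{eq:conditions} together with whichever of \eqref{eq:conditions2a}-\eqref{eq:conditions2d} applies. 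In the hyperbolic and glancing regions, in a bounded piece of $\xi'$ the first line of \eqref{eq:conditions} produces a joint lower bound on $(\sigma(\bcN_i), \sigma(\bcD_i))$ in the two components separately; the trace estimate then follows by testing against appropriate tangential pseudodifferential operators and using Lemma~\ref{l:traceEst} (and its quantitative cousins) for the normal trace. At infinity in $\xi'$, either $\bcN_i$ or $\bcD_i$ is elliptic (under \eqref{eq:conditions2c} or \eqref{eq:conditions2d} respectively), and under the balanced case \eqref{eq:conditions2a} the formal order $m_{0,i}=m_{1,i}+1$ is chosen precisely so that the boundary condition exchanges one order of normal regularity for one tangential order — which is consistent with the admissible range \eqref{eq:ellineq}.

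For \eqref{eq:trace2}, the plan is Green's identity: pairing $(-h^2 \Delta_g - 1)u = hf$ against $\bar{u}$ gives
\begin{equation*}
\|h\nabla u\|_{L^2(M)}^2 - \|u\|_{L^2(M)}^2 = h\langle f, u\rangle_{L^2(M)} + h\sum_i \langle hD_n u, u\rangle_{L^2(\Gamma_i)}.
\end{equation*}
The boundary pairing is handled by using the boundary condition to write $\langle h D_n u, u\rangle = \langle \bcN_i^{-1}(g_i + \bcD_i u), u\rangle$ on the elliptic set of $\bcN_i$ (and the dual manipulation on the elliptic set of $\bcD_i$); the trace pieces on the right are then absorbed by \eqref{eq:trace1} for a choice of $\ell_i$ in the admissible range, while the $u^2$ term on the right-hand side of \eqref{eq:trace1} is sent into $\|u\|_{L^2(M)}$. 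The estimate \eqref{eq:trace3} is even more direct: microlocally where $\bcN_i$ is elliptic, construct a parametrix $\bcN_i^{-1}$ and write $h D_n u = \bcN_i^{-1}(g_i + \bcD_i u) \bmod h^\infty$, losing exactly one tangential order relative to $u$; where $\bcN_i$ fails to be elliptic, the first condition in \eqref{eq:conditions} forces $\bcD_i$ to be elliptic (or vice versa), and the PDE combined with Lemma~\ref{l:traceEst} recovers the normal trace.

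The main obstacle is the glancing region $r(x',\xi') = 0$, where the tangential factorisation used in the elliptic region degenerates and $\Lambda_+ = \Lambda_-$. Handling this requires either a parabolic rescaling near $r=0$ or, more robustly, positive commutator estimates with carefully chosen tangential commutants that exploit the Hamiltonian flow tangency together with the non-degeneracy of the boundary operator; the Melrose-type arguments underlying Lemma~\ref{l:glancing} are well-suited to this. A secondary book-keeping difficulty is ensuring the single range \eqref{eq:ellineq} covers all three cases \eqref{eq:conditions2a}-\eqref{eq:conditions2d} uniformly, and that the constants in the parametrix construction are independent of $h$ (so semiclassical Sobolev norms scale correctly); this requires working consistently in the semiclassical pseudodifferential calculus of \S\ref{sec:appendix} with a uniform family of cutoffs adapted to the three microlocal regimes.
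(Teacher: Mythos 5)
There is a genuine gap at glancing, and it is precisely the point you flag as ``the main obstacle''. Your plan for the hyperbolic and elliptic regions of $P$ is workable (the factorisation $hD_nu\approx\Lambda u$ in $\{r<0\}$ and at $|\xi'|=\infty$, trace control via Lemma~\ref{l:traceEst} in $\{r>0\}$), but at $\{r=0\}$ you offer only candidate techniques — parabolic rescaling, or ``Melrose-type arguments underlying Lemma~\ref{l:glancing}'' — without an actual estimate, and neither is the mechanism that closes the argument. Lemma~\ref{l:glancing} is a statement about defect measures and does not yield quantitative, $h$-uniform trace bounds; a genuine second-microlocal analysis at glancing would be a substantial piece of work that your proposal does not carry out. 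The resolution in the paper is structural rather than technical: the second line of \eqref{eq:conditions} asserts that $\sigma(\bcD_i)\neq 0$ on $S^*\Gamma_i$, i.e.\ $\bcD_i$ is \emph{elliptic at glancing}, so near $\{r=0\}$ one can write $u=-\bcD^{-1}(\bcN hD_{x_1}u-g)$ microlocally and run a Rellich-type commutator identity (Lemma~\ref{lem:intByParts} with commutant $B=B_1hD_{x_1}+\tfrac12 hD_{x_1}B_1$, as in Lemma~\ref{lem:elephant2}); the resulting boundary quadratic form has principal symbol $b_1\bigl(1+|\sigma(\bcD^{-1}\bcN)|^2R\bigr)$, which is elliptic precisely because $R\approx 0$ there, and sharp G\aa rding finishes it. No glancing propagation theory is needed. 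The same identity with $\bcN$ inverted (Lemma~\ref{lem:elephant1}) handles the set where $\bcD$ degenerates, which by hypothesis is away from glancing, and conditions \eqref{eq:conditions2a}--\eqref{eq:conditions2d} decide which of these two regimes can be globalised in $\xi'$ — a point your proposal gestures at but does not pin down.

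A second, smaller gap: your route to \eqref{eq:trace3} inverts $\bcN_i$ where it is elliptic and falls back on Lemma~\ref{l:traceEst} elsewhere, but Lemma~\ref{l:traceEst} requires the wavefront set to avoid $S^*\Gamma_i$; at a glancing point where $\sigma(\bcN_i)$ also vanishes, neither branch applies. The estimate \eqref{eq:trace3} actually follows unconditionally from a Ne\v{c}as-type identity (Lemma~\ref{lem:basicbound}, the commutator with $B_1=\chi(x_1)\langle hD_{x'}\rangle^{2\ell}$), which bounds $\|hD_\nu u\|_{H_h^{s}(\Gamma_i)}$ by $\|u\|_{H_h^{s+1}(\Gamma_i)}$ plus interior norms without invoking the boundary condition or any ellipticity; combined with \eqref{eq:trace2} this gives \eqref{eq:trace3}. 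Finally, note that both your trace estimates and the paper's produce $\varepsilon\|u\|_{H_h^1(M)}$ error terms, while \eqref{eq:trace2} needs the traces; this circularity must be broken by an explicit $\varepsilon$-absorption between the two families of estimates, which your write-up should make explicit.
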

The proof of Theorem~\ref{th:higherOrderBound} is postponed until Section~\ref{sec:traceHO}. Here we proceed directly to its application.

\subsubsection{Application of  Theorem~\ref{th:higherOrderBound} with $L^2$ right hand sides}

\begin{cor}\label{cor:L2RHS}
Suppose that 
\beq\label{eq:mcond0}
m_0\geq 0, \quad m_0+m_1\geq 0, \quad m_1\leq m_0+1,
\eeq
and \emph{either}
\beq\label{eq:mcond1}
m_0\leq m_1 + \min\{1, m_0+ m_1\},
\eeq
\emph{or}
\beq\label{eq:mcond2}
m_0\geq m_1 + 1 \quad\tand\quad m_0\geq 1.
\eeq
Then there exists $C>0$ and $h_0>0$ such that, for $0<h\leq h_0$, the solution to 
$$
\begin{cases}
(-h^2\Delta-1)u=hf&\text{in }\domaintwo,\\
\big(\bcN hD_n -\bcD\big)u=g&\text{on }\Gamma,
\end{cases}
$$
with $f\in L^2(\Omega)$ and $g\in L^2(\Gamma)$ 
satisfies
\beq\label{eq:trace4}
\|u\|_{L^2(\Gamma)}+\|hD_n u\|_{L^2(\Gamma)}+\|u\|_{H_h^1(\domain)}\leq C\Big(\|u\|_{L^2(\domain)}+\|f\|_{L^2(\domain)}+\|g\|_{L^2(\Gamma)}\Big).
\eeq
\end{cor}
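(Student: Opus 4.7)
The plan is to apply Theorem~\ref{th:higherOrderBound} with a carefully chosen regularity parameter $\ell$ from the admissible range \eqref{eq:ellineq}, and then compare semiclassical Sobolev norms to convert the output into \eqref{eq:trace4}. For \eqref{eq:trace1} to yield $\|u\|_{L^2(\Gamma)}+\|hD_\nu u\|_{L^2(\Gamma)}$ on the left we need $\ell+m_0\geq 0$ and $\ell+m_1\geq 0$, while for the right-hand side to be controlled by $\|f\|_{L^2}$ and $\|g\|_{L^2}$ we need $\ell+(m_0+m_1-1)/2\leq 0$ (automatic from \eqref{eq:ellineq}) and $\ell\leq 0$. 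The two cases \eqref{eq:mcond1} and \eqref{eq:mcond2} correspond to when these requirements can and cannot all be met simultaneously.

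\emph{Case 1: \eqref{eq:mcond1} holds.} Here $m_0,m_1\geq 0$ and $m_0\leq m_1+1$; combined with $m_1\leq m_0+1$ from \eqref{eq:mcond0}, one has $|m_0-m_1|\leq 1$. I will choose
\[
\ell := \min\bigl\{0,\ \tfrac{1-m_0-m_1}{2}\bigr\},
\]
and check, splitting according to $m_0+m_1\lessgtr 1$, that $\ell$ lies in the admissible interval \eqref{eq:ellineq} and that $\ell+m_i\geq 0$ for $i=0,1$. Then \eqref{eq:trace1} gives the two $L^2$ trace bounds, and \eqref{eq:trace2} gives the interior $H_h^1$ bound, with right-hand sides controlled by $\|u\|_{L^2(\Omega)}+\|f\|_{L^2(\Omega)}+\|g\|_{L^2(\Gamma)}$.

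\emph{Case 2: \eqref{eq:mcond2} holds (and \eqref{eq:mcond1} may fail, which is the only genuinely new situation).} Here $m_0\geq 1$, $m_0\geq m_1+1$, and $m_1$ may be negative. Then $\ell+m_1\geq 0$ and $\ell\leq 0$ cannot both hold, so \eqref{eq:trace1} alone cannot produce the $L^2$ Neumann trace estimate. Instead I choose
\[
\ell := \max\bigl\{1-m_0,\ -\tfrac{m_0+m_1}{2}\bigr\},
\]
which is non-positive (since $m_0\geq 1$) and satisfies the upper bound in \eqref{eq:ellineq} thanks to $m_0\geq m_1+1$. Since $\ell+m_0\geq 1$, \eqref{eq:trace1} delivers $\|u\|_{H_h^1(\Gamma)}$ controlled by $\|u\|_{L^2}+\|f\|_{L^2}+\|g\|_{L^2}$, which in particular bounds $\|u\|_{L^2(\Gamma)}$. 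Feeding this into \eqref{eq:trace3} with $s=0$ yields the desired $\|hD_\nu u\|_{L^2(\Gamma)}$ bound, and \eqref{eq:trace2} again gives the interior $H_h^1$ estimate.

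The only real obstacle is Case 2 with $m_1<0$: a direct application of \eqref{eq:trace1} cannot simultaneously deliver an $L^2$ Neumann-trace bound and an $L^2$-data bound, so one is forced to extract $\|u\|_{H_h^1(\Gamma)}$ as an intermediate quantity and bootstrap through the auxiliary trace identity \eqref{eq:trace3}. The remaining work is bookkeeping with the inequality constraints on $\ell$ to verify that the two choices above are admissible under \eqref{eq:mcond0}--\eqref{eq:mcond2}.
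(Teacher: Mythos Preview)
Your proposal is correct and follows essentially the same approach as the paper: the paper parametrizes via $r=\ell+\tfrac{m_0+m_1}{2}$ and chooses $r=\min\{\tfrac12,\tfrac{m_0+m_1}{2}\}$ in Case~1 (which is exactly your $\ell=\min\{0,\tfrac{1-m_0-m_1}{2}\}$) and, when \eqref{eq:mcond1} fails, takes $r$ in $[\max\{0,1+\tfrac{m_1-m_0}{2}\},\min\{\tfrac12,\tfrac{m_0+m_1}{2}\}]$ so that $\ell+m_0\geq 1$, then bootstraps via \eqref{eq:trace3} with $s=0$ to recover the $L^2$ Neumann trace, just as you do. The division into cases and the role of \eqref{eq:mcond1}/\eqref{eq:mcond2} are identical.
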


\begin{proof}
Let 
$$
\ell=r-\frac{m_0+m_1}{2}.
$$
If $0\leq r\leq \frac{1}{2}$, then Theorem~\ref{th:higherOrderBound} holds and \eqref{eq:trace1} and \eqref{eq:trace2} become
\beq\label{eq:trace1a}
 \|u\|_{H_h^{r+\frac{m_{0}-m_1}{2}}(\Gamma)}+\|hD_{\nu}u\|_{H_h^{r+\frac{m_{1}-m_0}{2}}(\Gamma)}\leq C\Big(\|u\|_{L^2(\domain)}+\|f\|_{H_h^{r-\frac{1}{2}}(\domain)} +\|g\|_{H_h^{r-\frac{m_1+m_0}{2}}(\Gamma)}\Big)
 \eeq
 and
 \beq\label{eq:trace2a}
\|u\|_{H_h^1(M)}\leq C\Big(\|u\|_{L^2(M)}+h\|f\|_{L^2(M)}+\|g\|_{H_h^{r-\frac{m_1+m_0}{2}}(\Gamma)}\Big),
\eeq 
respectively. 
Focusing on \eqref{eq:trace1a}, we therefore impose the conditions that  
\beqs
 r\geq \frac{m_1-m_0}{2},\qquad 0\leq r\leq \frac{1}{2}, \qquad r\leq \frac{m_1+m_0}{2}, 
 \eeqs
 i.e.,
\beqs
  \max\left(0, \frac{m_1-m_0}{2}\right) \leq r\leq \min\left(\frac12, \frac{m_1+m_0}{2}\right)
\eeqs
 (observe that this range of $r$ is nonempty since $m_0\geq 0$, $m_1-m_0\leq 1$, and $m_1+m_0\geq 0$).
Choosing $r = \min\{1/2, (m_1+m_0)/2\}$, we have 
 \beq\label{eq:trace4a}
\N{u}_{L^2(\Gamma)}+\N{hD_{\nu}u}_{H_h^{s^*}(\Gamma)}
 \leq C\Big(\|u\|_{L^2(\domain)}+\|f\|_{L^2(\domain)} +\|g\|_{L^2(\Gamma)}\Big),
 \eeq
where 
\beqs
s^*:= \min\left(\frac12, \frac{m_1+m_0}{2}\right)+ \frac{m_1-m_0}{2}.
\eeqs 
If  $s^*\geq 0$, i.e., if  \eqref{eq:mcond1} holds, then the result \eqref{eq:trace4} follows from combining \eqref{eq:trace4a} with \eqref{eq:trace2a}.

If \eqref{eq:mcond1} doesn't hold, we seek control of $\|hD_n u\|_{L^2(\Gamma)}$ via the bound \eqref{eq:trace3} with $s=0$, i.e.
\beqs
\|hD_{\nu}u\|_{L^2(\Gamma)}\leq C\Big(\|u\|_{H_h^{1}(\Gamma)}+\|u\|_{L^2(M)}+\|f\|_{L^2(M)}+\|g_i\|_{H_h^{r-\frac{m_1+m_0}{2}}(\Gamma)}\Big).
\eeqs
To prove \eqref{eq:trace4}, therefore, we only need to bound $\|u\|_{H_h^{1}(\Gamma)}$ in terms of the right-hand side of \eqref{eq:trace4}.
This follows from \eqref{eq:trace1a} if 
\beqs
  \max\left(0, 1+\frac{m_1-m_0}{2}\right) \leq r\leq \min\left(\frac12, \frac{m_1+m_0}{2}\right),
\eeqs
which is ensured if \eqref{eq:mcond2} holds.
\end{proof}

\subsubsection{Application of Theorem \ref{th:higherOrderBound} to Dirichlet boundary conditions}
\begin{cor}
\label{cor:Dir}
There exist $C>0$ and $h_0>0$ such that if $0\leq h\leq h_0$, then the solution of 
\beqs
\begin{cases}
(-h^2\Delta-1)u=hf&\text{ in }\domaintwo\\
u=g&\text{ on } \Gamma.
\end{cases}
\eeqs
with $f\in L^2(\Omega)$ and $g\in H^1_h(\Gamma)$ satisfies 
$$
\|u\|_{H_h^1(\Gamma)}+\|hD_{\nu}u\|_{L^2(\Gamma)}+\|u\|_{H_h^1(\domain)}\leq C\Big(\|u\|_{L^2(\domain)}+\|f\|_{L^2(\domain)}+\|g\|_{H_h^1(\Gamma)}\Big)
$$
\end{cor}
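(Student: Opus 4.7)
The plan is to recognise the pure Dirichlet problem as the special case of the general framework of Theorem~\ref{th:higherOrderBound} corresponding to $\bcN \equiv 0$ and $\bcD \equiv -I$, so that the boundary condition $\bcN h D_n u - \bcD u = g$ reduces to $u = g$. Note that Corollary~\ref{cor:L2RHS} cannot be invoked here, because in this case $m_0 + m_1 < 0$, violating \eqref{eq:mcond0}; one must therefore apply Theorem~\ref{th:higherOrderBound} directly, in combination with the auxiliary trace bound \eqref{eq:trace3}.

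First I would verify the hypotheses of Theorem~\ref{th:higherOrderBound} with $m_1 = -1$ and $m_0 = 0$. Taking $\bcN \equiv 0 \in \Psi^{-1}(\Gamma)$ (the zero operator lies in every class) and $\bcD \equiv -I \in \Psi^{0}(\Gamma)$, both conditions in \eqref{eq:conditions} are immediate since $|\sigma(\bcD)| \equiv 1$. Condition \eqref{eq:conditions2d} is satisfied since $m_1 + 1 = 0 = m_0$ and $|\sigma(\bcD)|^2\langle \xi'\rangle^{-2m_0} \equiv 1$ for all $\xi'$. The admissibility range \eqref{eq:ellineq} for the auxiliary index $\ell$ collapses to $\tfrac12 \leq \ell \leq 1$.

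Next I would take $\ell = \tfrac12$, which is the unique value in the admissible range producing $\|f\|_{L^2(M)}$ (rather than a higher-order $H^s_h$-norm of $f$) on the right-hand sides of \eqref{eq:trace1}--\eqref{eq:trace2}. Bound \eqref{eq:trace2} then gives directly
\begin{equation*}
\|u\|_{H_h^1(M)} \leq C\bigl(\|u\|_{L^2(M)} + h\|f\|_{L^2(M)} + \|g\|_{H_h^{1/2}(\Gamma)}\bigr).
\end{equation*}
For the Dirichlet trace, I would simply use the identity $u|_\Gamma = g$, so that $\|u\|_{H_h^1(\Gamma)} = \|g\|_{H_h^1(\Gamma)}$. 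For the Neumann trace, I would apply \eqref{eq:trace3} at $s = 0$ to obtain
\begin{equation*}
\|hD_n u\|_{L^2(\Gamma)} \leq C\bigl(\|u\|_{H_h^1(\Gamma)} + \|u\|_{L^2(M)} + \|f\|_{L^2(M)} + \|g\|_{H_h^{1/2}(\Gamma)}\bigr),
\end{equation*}
and then substitute $\|u\|_{H_h^1(\Gamma)} = \|g\|_{H_h^1(\Gamma)}$. Adding the three estimates and using the trivial inclusion $\|g\|_{H_h^{1/2}(\Gamma)} \leq \|g\|_{H_h^1(\Gamma)}$ yields the claimed bound.

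There is no serious obstacle: all of the analytic content lies in Theorem~\ref{th:higherOrderBound}. The only point requiring care is the choice of $\ell$. A larger $\ell$ would produce stronger boundary traces directly from \eqref{eq:trace1}, and would seem to give the result in a single step; however it also forces $\|f\|_{H_h^{\ell-1/2}}$ with $\ell - 1/2 > 0$ onto the right-hand side, which is strictly stronger than what the corollary allows. Choosing $\ell = 1/2$ trades this off: the Dirichlet trace comes for free from $u|_\Gamma = g$, and the Neumann trace is recovered at no extra cost in $f$ via \eqref{eq:trace3}.
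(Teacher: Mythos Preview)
Your argument is correct and reaches the claimed bound, but the detour through $\ell=\tfrac12$ and \eqref{eq:trace3} is unnecessary because your stated reason for avoiding $\ell=1$ rests on an arithmetic slip. With $m_0=0$ and $m_1=-1$ the exponent on $f$ in \eqref{eq:trace1} is
\[
\ell+\frac{m_1+m_0-1}{2}=\ell-1,
\]
not $\ell-\tfrac12$. Hence taking $\ell=1$ (which is in the admissible range $[\tfrac12,1]$) puts exactly $\|f\|_{L^2(M)}$ on the right of \eqref{eq:trace1}, while the left-hand side becomes $\|u\|_{H_h^1(\Gamma)}+\|hD_\nu u\|_{L^2(\Gamma)}$ directly; combining with \eqref{eq:trace2} then gives the corollary in one step. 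This is precisely the paper's proof.

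Your route via $\ell=\tfrac12$, the identity $u|_\Gamma=g$, and \eqref{eq:trace3} at $s=0$ still works, so the proposal is not wrong---just longer than it needs to be, and the final paragraph misidentifies the obstruction to the simpler choice.
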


\bpf[Proof of Lemma \ref{lem:impedancetrace2}]
The Dirichlet boundary condition corresponds to  $\bcD=I, \bcN=0$, and so satisfies the assumptions of Theorem~\ref{th:higherOrderBound} with $m_0=0$ and $m_1=-1$, say. 
The result follows by choosing $\ell=1$ and combining \eqref{eq:trace1} and \eqref{eq:trace2}.
\epf

\subsection{Recap of results of \cite{TrHa:86} about Pad\'e approximants}\label{sec:TH}

We now recall results of \cite{TrHa:86} about Pad\'e approximants. These results consider a larger class of approximants than covered in our Assumption \ref{ass:Pade}; before stating these results, we explain this difference.

With $p(t)$ and $q(t)$ defined by \eqref{eq:pq}, by Assumption \ref{ass:Pade}, 
\beq\label{eq:prinsym}
\sigma(\bcD)(x',\xi')= \PadeP(x',\xi')= p(|\xi'|_g^2) \quad \tand\quad \sigma(\bcN)(x',\xi')= \PadeQ(x',\xi')= q(|\xi'|_g^2).
\eeq
As described in \S\ref{sec:setup}, this choice of $\bcD$ and $\bcN$ is based on approximating $\sqrt{1-|\xi'|^2_g}$ with a rational function in $|\xi'|_g^2$.

The boundary conditions in \cite{TrHa:86} are based on approximating $\sqrt{1-|\xi'|^2_g}$ with a rational function in $|\xi'|_g$, i.e.~\cite{TrHa:86} consider Pad\'e approximants with polynomials $\widetilde{p}(s)$ and $\widetilde{q}(s)$, where the degrees $\widetilde{p}(s)$ and $\widetilde{q}(s)$ allowed to be either even or odd. 
Our polynomials $p,q$ fit into the framework of \cite{TrHa:86} with 
\beq\label{eq:widetildep}
\widetilde{p}(s):= p(s^2) \quad\tand\quad \widetilde{q}(s):=q(s^2),
\eeq
and then $\widetilde{p}$ has degree $2\MPade $ 
and $\widetilde{q}$ has degree $2\NPade $. 
For $d-1\geq 2$ (i.e.~when the boundary dimension is $\geq 2$), polynomials with odd powers of $|\xi'|_g$ do not lead to $\bcN$ and $\bcD$ being local differential operators, 
but for $d-1=1$ (i.e.~$d=2$) they do, since in this case $\sqrt{|\xi'|^2_g}=\sqrt{g(x'})\xi'$, i.e., a polynomial in $\xi'$. Our arguments also apply to polynomials with odd powers of  $|\xi'|_g$ in $d=2$, but we do not analyze them specifically, instead leaving this to the interested reader.

To state the results of \cite{TrHa:86}, we let $\widetilde{p}(s)$ and $\widetilde{q}(s)$ be polynomials of degree $m_0$ and $m_1$ respectively; this notation is chosen so that, when we specialise the results to our case with \eqref{eq:widetildep}, these $m_0$ and $m_1$ are the same as in Theorem \ref{th:higherOrderBound}/Corollary \ref{cor:L2RHS}, i.e., $m_0=2\MPade $ and $m_1=2\NPade $.
Finally, we let 
\beqs
 \widetilde{r}(s):= \frac{\widetilde{p}(s)}{\widetilde{q}(s)}.
\eeqs

\ble\mythmname{\cite[Theorems 2 and 4]{TrHa:86}}\label{lem:TH}
If, and only if, $m_0=m_1$ or $m_0=m_1+2$, then 

(a) $\widetilde{r}(s)>0$ for $s\in [-1,1]$, and

(b) the zeros and poles of $\widetilde{r}(s)/s$ are real and simple and interlace along the real axis.
\ele

\begin{cor}\label{cor:TH}
If $m_0=m_1$ or $m_0=m_1+2$, then neither $\widetilde{p}(s)$ nor $\widetilde{q}(s)$ has any zeros in $[-1,1]$. 
\end{cor}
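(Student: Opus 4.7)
The plan is to derive Corollary \ref{cor:TH} as a direct consequence of Lemma \ref{lem:TH}. First I would exploit part (a), which says $\widetilde{r}(s)=\widetilde{p}(s)/\widetilde{q}(s)>0$ for every $s\in[-1,1]$; interpreted pointwise, this means that at each such $s$ the quantity $\widetilde{r}(s)$ is a finite positive real number. This immediately rules out $\widetilde{q}(s_0)=0$ with $\widetilde{p}(s_0)\neq 0$ (which would make $\widetilde{r}$ blow up at $s_0$) and rules out $\widetilde{p}(s_0)=0$ with $\widetilde{q}(s_0)\neq 0$ (which would give $\widetilde{r}(s_0)=0$). Hence the only remaining scenario to exclude is a simultaneous zero of $\widetilde{p}$ and $\widetilde{q}$ at some $s_0\in[-1,1]$ with matching multiplicities, so that after cancellation $\widetilde{r}$ extends continuously to a positive value at $s_0$.

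To eliminate this last case, I would invoke part (b), read, as is standard in the Pad\'e-approximation literature, as a statement about the formal zeros of the numerator $\widetilde{p}$ and denominator $s\widetilde{q}$ of $\widetilde{r}(s)/s$: these zeros are all real and simple, and interlace along the real axis. In particular the zeros of $\widetilde{p}$ are disjoint from those of $s\widetilde{q}$, so $\widetilde{p}$ and $\widetilde{q}$ are coprime (and $\widetilde{q}(0)\neq 0$, which is also clear from $q(0)=1$ in \eqref{eq:pq}). Coprimality removes the possibility of a common zero, completing the argument.

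I do not foresee any technical obstacle: the proof amounts to a two-line case distinction combining pointwise positivity from (a) with the coprimality supplied by (b). The only point that requires a moment of care is the passage from ``\,$\widetilde{r}$ has no zeros or poles in $[-1,1]$\," (which is all that (a) gives directly) to ``\,$\widetilde{p}$ and $\widetilde{q}$ themselves have no zeros in $[-1,1]$\,,'' and this passage is exactly the role played by the simplicity-and-interlacing conclusion (b) of Lemma \ref{lem:TH}.
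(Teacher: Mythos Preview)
Your proposal is correct and takes essentially the same approach as the paper: both derive the result from parts (a) and (b) of Lemma~\ref{lem:TH}, using (a) to rule out zeros/poles of $\widetilde r$ in $[-1,1]$ and (b) to supply the coprimality of $\widetilde p$ and $\widetilde q$ needed to pass from statements about $\widetilde r$ to statements about $\widetilde p$ and $\widetilde q$ individually.

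There is a minor organisational difference. The paper treats $\widetilde p$ and $\widetilde q$ asymmetrically: it says the non-vanishing of $\widetilde p$ follows ``directly from Part (a)'', and for $\widetilde q$ it argues that a (necessarily simple, by (b)) zero of $\widetilde q$ would force $\widetilde r$ to change sign on $[-1,1]$, contradicting (a). Your version is arguably cleaner in that you make explicit the one subtle point the paper glosses over for $\widetilde p$, namely the need to exclude a simultaneous zero of $\widetilde p$ and $\widetilde q$; you handle this via coprimality from the interlacing in (b), which is exactly right.
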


\bpf
For $\widetilde{p}(s)$, this property follows directly from Part (a) of Lemma \ref{lem:TH}. For $\widetilde{q}(s)$, this property follows from Parts (a) and (b) of Lemma \ref{lem:TH}; indeed, if there were a zero of $\widetilde{q}(s)$ (i.e.~a pole of $\widetilde{r}(s)$) in $[-1,1]$, since 
the zeros of $\widetilde{q}(s)$ are simple and interlace with the zeros of $\widetilde{p}(s)$ (by Part (b)), $\widetilde{r}(s)$ would change sign in $[-1,1]$, contradicting Part (a).
\epf

\subsection{Proof of Theorem \ref{th:uniformestimates}}

Throughout this section, we let $\tildedomain$ be a smooth family of domains depending on $R$ and assume that there is $M>0$ such that
\begin{equation}
\label{e:doughnut}
\begin{gathered}
 B(0,1)\subset \tildedomain\subset B(0,MR),\\
\tildedomain\text{ is convex with smooth boundary, $\GammaI$, that is nowhere flat to infinite order}
\end{gathered}
\end{equation}
Furthermore, we assume that 
\begin{equation*}
\begin{gathered}
\tildedomain/R\to \Omega_\infty
\end{gathered}
\end{equation*}
in the sense that $\partial{\tildedomain}/R\to\partial  \Omega_\infty$ in $C^\infty$. 

We prove below that Theorem~\ref{th:uniformestimates} is a consequence of the following result, combined with the results from \cite{TrHa:86} in \S\ref{sec:TH}.

\begin{theorem}\label{th:uniformestimates2}
Let $\tildedomain$ be as in~\eqref{e:doughnut} and $\Omegaminus\Subset B(0,1)$ with $\Omegaminus$ non-trapping. Let 
$\bcN\in \Psi^{m_{1}}(\GammaIR)$, $\bcD\in \Psi^{m_{0}}(\GammaIR)$ have real-valued principal symbols and satisfy~\eqref{eq:conditions} and one of~\eqref{eq:conditions2a}-\eqref{eq:conditions2d}. Let $m_0$ and $m_1$ satisfy the assumptions of Corollary \ref{cor:L2RHS}, and furthermore let $\bcN$ and $\bcD$ satisfy 
\beq\label{eq:mainevent}
\sigma(\bcN)\sigma(\bcD)>0\text{ on }\overline{B^*\GammaI}.
\eeq
Let
$$
G^R_h{: L^2(\GammaI)\oplus H_h^1(\Gamma_D)\oplus L^2(\tildedomain\setminus \Omegaminus) \rightarrow H_h^1(\tildedomain\setminus \Omegaminus)}
$$ 
satisfy
\beqs
\begin{cases}
(-h^2\Delta-1)G^R_h(g_I,g_D,f)=hf&\text{on }\tildedomain\setminus \Omegaminus\\
(\bcN hD_n -\bcD\big) G^R_h(g_I,g_D,f)=g_I&\text{on }\GammaI\\
G^R_h (g_I,g_D,f)=g_D&\text{on }\Gamma_D.
\end{cases}
\eeqs
Then there exists $C>0$ such that for $R\geq 1$, there is $h_0=h_0(R)>0$ such that for $0<h<h_0$, 
$G_h^R$ is well defined and satisfies
\beq\label{eq:trunc_estimate}
\|G^R_h(g_I,g_D,f)\|_{H_h^1(\tildedomain\setminus \Omegaminus)}\leq CR^{1/2}\Big(\|g_I\|_{L^2(\GammaI)}+\|g_D\|_{H_h^1(\Gamma_D)}\Big)+CR\|f\|_{L^2(\tildedomain\setminus \Omegaminus)}.
\eeq
\end{theorem}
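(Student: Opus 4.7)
\emph{Deduction of Theorem~\ref{th:uniformestimates} from Theorem~\ref{th:uniformestimates2}.}
The plan is to apply Theorem~\ref{th:uniformestimates2} with the Pad\'e operators of Assumption~\ref{ass:Pade}; it suffices to verify~\eqref{eq:conditions}, one of~\eqref{eq:conditions2c}/\eqref{eq:conditions2d}, the hypotheses of Corollary~\ref{cor:L2RHS}, and the positivity~\eqref{eq:mainevent}. Setting $m_0 = 2\MPade$ and $m_1 = 2\NPade$, the allowed cases $\MPade \in \{\NPade, \NPade + 1\}$ translate to $m_0 = m_1$ or $m_0 = m_1 + 2$, so Corollary~\ref{cor:TH} rules out zeros of $p$ and $q$ on $[-1, 1]$ while Lemma~\ref{lem:TH}(a) gives $p/q > 0$ on $[-1, 1]$. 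Restricting to $|\xi'|_g^2 \in [0, 1]$ yields $\sigma(\bcN)\sigma(\bcD) > 0$ on $\overline{B^*\GammaI}$, which is~\eqref{eq:mainevent}. The non-vanishing of the leading coefficients $p_{\MPade,\NPade}^{\MPade}$ and $q_{\MPade,\NPade}^{\NPade}$ then verifies~\eqref{eq:conditions} together with~\eqref{eq:conditions2c} when $\MPade = \NPade$ and~\eqref{eq:conditions2d} when $\MPade = \NPade + 1$; the conditions on $(m_0, m_1)$ in Corollary~\ref{cor:L2RHS} reduce to elementary inequalities.

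\emph{Reduction of Theorem~\ref{th:uniformestimates2} to an $R$-uniform $L^2$ bound.}
First I would lift the Dirichlet data: construct $w \in H_h^1(\tildedomain\setminus\Omegaminus)$ with $w|_{\Gamma_D} = g_D$ and $\|w\|_{H_h^1} \lesssim \|g_D\|_{H_h^1(\Gamma_D)}$ and subtract $w$ from $u$, reducing to $g_D = 0$ at the cost of controlled modifications of $f$ and $g_I$. Then Corollary~\ref{cor:L2RHS} (applicable by the previous paragraph) gives
\[
\|u\|_{H_h^1(\tildedomain\setminus\Omegaminus)} \lesssim \|u\|_{L^2(\tildedomain\setminus\Omegaminus)} + \|f\|_{L^2} + \|g_I\|_{L^2(\GammaI)} + \|g_D\|_{H_h^1(\Gamma_D)},
\]
so it remains to show
\[
\|u\|_{L^2(\tildedomain\setminus\Omegaminus)} \lesssim R\|f\|_{L^2} + R^{1/2}\big(\|g_I\|_{L^2(\GammaI)} + \|g_D\|_{H_h^1(\Gamma_D)}\big)
\]
with implied constant uniform in $R \geq 1$. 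To remove the $R$-dependence I would rescale $y := x/R$, $\hbar := h/R$, $\tilde u(y) := u(Ry)$, which converts the problem on $\tildedomain\setminus\Omegaminus \subset B(0, MR)$ into one on $\tilde\Omega^R := (\tildedomain\setminus\Omegaminus)/R \subset B(0, M)$ with semiclassical parameter $\hbar \to 0$. Because the Pad\'e symbols are polynomials in $|\xi'|_g^2$ and this quantity is invariant under the rescaling (as $\hbar D_{y'}$ corresponds to $hD_{x'}$ and the pullback metric compensates the cotangent rescaling), the boundary condition retains its form with the same symbols; tracking the norm scalings then turns the target into the $R$-uniform statement $\|\tilde u\|_{L^2(\tilde\Omega^R)} \lesssim \|\tilde f\|_{L^2} + \|\tilde g_I\|_{L^2} + \|\tilde g_D\|_{H_\hbar^1}$.

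\emph{Contradiction via defect measures.}
Suppose the rescaled bound fails. Then there exist $R_n \geq 1$, $\hbar_n \to 0$, and rescaled solutions $\tilde u_n$ with $\|\tilde u_n\|_{L^2} = 1$ whose rescaled data tend to zero. Pass to a subsequence with $R_n \to R_\infty \in [1, \infty]$, so that $\tilde\Omega^{R_n}$ converges to a limit $\tilde\Omega^\infty \subset B(0, M)$ in which the inner obstacle either stays compact (if $R_\infty < \infty$) or collapses to a point (if $R_\infty = \infty$). Using~\eqref{eq:trace4} to verify Assumption~\ref{ass:1}, I would apply Theorem~\ref{th:ex_defect} to extract a defect measure $\tilde\mu$ of $\{\tilde u_n\}$ on $\tilde\Omega^\infty$ together with trace measures on $\partial\tilde\Omega^\infty$. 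By Lemma~\ref{lem:inv}, $\tilde\mu$ is supported on $\{|\xi| = 1\}$ and invariant under the generalised bicharacteristic flow in the interior. At the inner Dirichlet boundary the vanishing of $\tilde g_{D,n}$ yields $\nu_d = 0$, so Lemma~\ref{lem:key_Miller} gives $\muin = \muout$ and Dirichlet reflection is lossless. At the outer boundary $\partial\tilde\Omega^\infty$, Corollary~\ref{cor:reflection} combined with~\eqref{eq:mainevent} gives reflection coefficient $\alpharef < 1$ strictly on compact subsets of the hyperbolic set $\mathcal H$. The non-trapping hypothesis on $\Omegaminus$ (preserved under rescaling) implies that every bicharacteristic in $\tilde\Omega^\infty$ bounces off the outer boundary in the hyperbolic regime infinitely often in forward time; iterating Theorem~\ref{t:propagate} then gives geometric decay of the mass, so $\tilde\mu \equiv 0$, contradicting $\|\tilde u_n\|_{L^2} = 1$.

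\emph{Main obstacle and wellposedness.}
The hardest part will be the analysis at the glancing set of $\partial\tilde\Omega^\infty$, where $\alpharef \to 1$ and the pointwise absorption argument degenerates; the hypothesis that $\GammaI$ is nowhere flat to infinite order, applied through Theorem~\ref{t:propagate} and the structure of $\mu^\partial$ established in Lemmas~\ref{l:glancing} and~\ref{l:finalEq}, is exactly what will prevent defect mass from concentrating on glancing bicharacteristics without ever being absorbed. A secondary subtlety is the passage to the $R_n \to \infty$ limit, in which the obstacle degenerates to a point; the limiting non-trapping condition is automatic since $\Omega_\infty$ has no interior obstacle, but the rescaling of the Pad\'e symbols must be tracked with care. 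Finally, existence and uniqueness of $v$ in Theorem~\ref{th:uniformestimates} follow from the a priori bound~\eqref{eq:GenImpResolvent} by a standard Fredholm-alternative argument, with uniqueness obtained via an energy identity using $v$ as a test function and exploiting the strict absorption at $\GammaI$ encoded in~\eqref{eq:mainevent}.
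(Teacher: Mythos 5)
Your proposal follows essentially the same route as the paper: rescale by $R$, run a defect-measure contradiction argument (with tightness/Prokhorov to handle $R\to\infty$, where the obstacle collapses to a point), kill the limiting measure using strict absorption on the hyperbolic set of $\GammaI$ together with the glancing-set machinery of Theorem~\ref{t:propagate}, and obtain wellposedness by Fredholm theory. The one step you leave optimistic --- that iterating reflections gives geometric decay of mass even though $\alpharef\to 1$ near glancing --- is exactly the crux, and the paper resolves it in Lemma~\ref{lem:contradictMe} by an angle-accumulation argument: a trajectory that loses little mass must reflect at near-glancing angles, hence travels almost straight and must exit $B(0,MR)$ within time $O(R)$.
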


\bpf[Proof of Theorem \ref{th:uniformestimates} using Theorem \ref{th:uniformestimates2}]
Theorem \ref{th:uniformestimates} will follow from Theorem \ref{th:uniformestimates2} (translating between the $h$- and $k$-notations using \S\ref{sec:conventions}) if we can show that the boundary conditions in Assumption \ref{ass:Pade} with either $\MPade=\NPade$ or $\MPade=\NPade+1$, with $\MPade, \NPade\geq 0$, satisfy

(i)  \eqref{eq:conditions}, 

(ii) one of \eqref{eq:conditions2a}-\eqref{eq:conditions2d}, 

(iii) the assumptions of Corollary \ref{cor:L2RHS}, and 

(iv) \eqref{eq:mainevent},

\noi where $m_0= 2\MPade $ and $m_1=2\NPade $.

Regarding (iii): the first two inequalities in \eqref{eq:mcond0} are satisfied since $m_0, m_1\geq 0$, and the third inequality is satisfied both when $m_0=m_1$ and when $m_0=m_1+2$. If $m_0=m_1$, then \eqref{eq:mcond1} is satisfied, and if $m_0=m_1+2$ then \eqref{eq:mcond2} is satisfied (since $m_1\geq 0$ and thus $m_0\geq 2$).

Regarding (ii): if $m_0= m_1$, then \eqref{eq:conditions2c} holds since $q_{\MPade,\NPade}^{\NPade}\neq 0$ by definition. If $m_0=m_1+2$, then \eqref{eq:conditions2d} holds since $p_{\MPade,\NPade}^{\MPade}\neq 0$ by definition.

Regarding (i) and (iv): using \eqref{eq:prinsym}, the conditions \eqref{eq:conditions} and \eqref{eq:mainevent} become (with $t=|\xi'|^2_g$)
\beq\label{eq:conditions_alt}
\big|q(t)\big|^2 t^{-2\NPade } + \big|p(t)\big|^2 t^{-2\MPade }>0 \,\,\tfa t \quad\tand\quad |p(\pm 1)|>0,
\eeq
and
\beq\label{eq:mainevent2}
|p(t)q(t)| >0 \quad\ton -1\leq t\leq 1,
\eeq
respectively 

If $\widetilde{p}(s)$ and $\widetilde{q}(s)$ are defined by \eqref{eq:widetildep}, then \eqref{eq:conditions_alt} and \eqref{eq:mainevent2} become
\beq\label{eq:conditions_alt2}
\big|\widetilde{q}(s)\big|^2 s^{-2m_1} + \big|\widetilde{p}(s)\big|^2 s^{-2m_0}>0 \,\,\tfa s \quad\tand\quad |\widetilde{p}(\pm 1)|>0,
\eeq
and
\beq\label{eq:mainevent3}
|\widetilde{p}(s)\widetilde{q}(s)| >0 \quad\ton -1\leq s\leq 1.
\eeq

The first condition in \eqref{eq:conditions_alt2} holds since, by Part (a) of Lemma \ref{lem:TH}, $\widetilde{p}(s)$ and $\widetilde{q}(s)$ have no common zeros. Both the second condition in \eqref{eq:conditions_alt2} and the condition in \eqref{eq:mainevent3} hold by Corollary \ref{cor:TH}.
\epf

We now prove Theorem \ref{th:uniformestimates2}. We first show that, for each $z \in \mathbb{C}$ and $s\geq 0$ the operator 
\begin{multline*}
\widetilde{P}(z):H^{2+s}(\tildedomain\setminus \Omegaminus)\ni u\mapsto (-h^2\Delta-z,\big(\bcN hD_n-\bcD\big)u|_{\GammaI}, u|_{\Gamma_D})\\
\in H^s(\tildedomain\setminus\Omegaminus)\oplus H^{3/2+s-m}(\GammaI)\oplus H^{3/2+s}(\Gamma_D)
\end{multline*}
is Fredholm with $m=\max(m_0,m_1+1)$; we do this by checking the conditions of~\cite[Theorem 20.1.8', Page 249]{Ho:85}. Observe that, for fixed $h>0$, as a homogeneous pseudodifferential operator, $(-h^2\Delta-z^2)$ has symbol $p(x,\xi)=|\xi|^2$. Therefore, in Fermi normal coordinates at $\GammaI$, we need to check that the map
$$
M_{x,\xi'}\ni u\to \big(b\big(x,(D_t,\xi')\big)u\big)(0)
$$
is bijective, where $M_{x,\xi'}$ denotes the solutions to $(D_t^2+|\xi'|_g^2)u(t)=0$ with $u$ is bounded on $\mathbb{R}_+$, and 
$$
b(x,\xi)=\lim_{\lambda \to \infty}\Big(-\sigma(\bcN)(x,\lambda \xi')\lambda \xi_1-\sigma(\bcD)(x,\lambda\xi')\Big)\lambda^{-m}.
$$
Since $u=Ae^{-t|\xi'|_g}$, 
\beqs
\big(b\big(x,(D_t,\xi')\big)u\big)(0)
=A\lim_{\lambda \to \infty}\Big(-\sigma(\bcN)(x,\lambda \xi')\lambda i|\xi'|-\sigma(\bcD)(x,\lambda\xi')\Big)\lambda^{-m},
\eeqs
and bijectivity follows if the limit on the right-hand side is non-zero. 
Since $\bcN$ and $\bcD$ are both real, this is ensured by~\eqref{eq:conditions} and any of~\eqref{eq:conditions2a}-\eqref{eq:conditions2d}.

Now, to see that $\widetilde{P}$ is invertible somewhere, consider $z=-1$. First, note that for $s\geq 0$ the map
$$
P_D:(H^{2+s}(\tildedomain\setminus \Omegaminus)\ni u\mapsto \to (-h^2\Delta+1)u,u|_{\GammaI},u|_{\Gamma_D})\in H^s(\tildedomain\setminus \Omegaminus)\oplus H^{s-\frac{1}{2}}(\Gamma_I)\oplus H^{s-\frac{1}{2}}(\Gamma_D)
$$
is invertible with inverse $G_D:H_h^s(\tildedomain\setminus \Omegaminus)\oplus H_h^{s-\frac{1}{2}}(\GammaI)\oplus H_h^{s-\frac{1}{2}}(\Gamma_D)\to H_h^{2+s}(\tildedomain\setminus \Omegaminus)$ (see e.g.~\cite[Chapter 6]{Eva}). In particular, the Dirichlet to Neumann map
$$
\Lambda:g_1\mapsto hD_n u|_{\Gamma_I},\quad 
\text{ where }\quad
\begin{cases}
(-h^2\Delta +1)u=0&\text{on }\tildedomain\setminus \Omegaminus,\\
u=g_1&\text{on }\GammaI,\\
u=0&\text{on }\Gamma_D,
\end{cases}
$$
is well defined. Furthermore, $\Lambda\in \Psi^1(\GammaI)$ is a semiclassical pseudodifferential operator with symbol $\sigma(\Lambda)=-i\sqrt{|\xi'|_g+1}$ (see, e.g., \cite[Proposition 4.1.1, Lemma 4.27]{Ga:19b}). In particular, by~\eqref{eq:conditions} and~\eqref{eq:conditions2a}-\eqref{eq:conditions2d}, $(-i\bcN\Lambda-\bcD)^{-1}$ exists, and hence
$$
[\widetilde{P}(-1)]^{-1}(f,g_I,g_D)=G_D(f,(-i\bcN \Lambda -\bcD)^{-1}g_I,g_D)
$$

Therefore, since for $z=-1$, the operator is invertible, by the analytic Fredholm Theorem (see e.g.~\cite[Theorem C.8]{DyZw:19}) the family $G^R_h(z)$ of operators solving
$$
\begin{cases}
(-h^2\Delta-z)G^R_h(z)(g_I,0,f)=hf&\text{on }\tildedomain\setminus \Omegaminus\\
\big(\bcN hD_{\nu}-\bcD\big) G_h^R(z)(g_I,0,f)=g_I&\text{on }\GammaI\\
G^R_h(z) (g_I,0,f)=0&\text{on }\Gamma_D
\end{cases}
$$
is a meromorphic family of operators with finite rank poles. To include the Dirichlet boundary values, we observe that by standard elliptic theory, the operator $\widetilde{G}_h(z):H_h^1(\Gamma_D)\to H^{3/2}(B(0,1)\setminus \overline{\Omegaminus})$ solving
$$
\begin{cases}
(-h^2\Delta-z)\widetilde{G}(z)g=0& \text{on}B(0,1)\setminus \overline{\Omegaminus}\\
\widetilde{G}_h(z)g=g&\text{ on }\Gamma_D\\
(hD_n -1)\widetilde{G}_h(z)g=0&\text{ on }\partial B(0,1)
\end{cases}
$$
is a meromorphic family of operators with finite rank poles. With $\chi \in C_c^\infty(B(0,1))$ with $\chi\equiv 1$ near $\Omegaminus$, 
$$
G_h^R(g_I,g_D,f)=G_h^R\big(g_I,0,f-h^{-1}[-h^2\Delta,\chi]\widetilde{G}_hg_D\big) +\chi\widetilde{G}_h g_D,
$$
and thus the operator $G^{R}_h$ is well defined.

We start by studying $G_h^R(0,g,0)$.

\begin{lem}\label{lem:4.5}
Let $R>0$ and assume that $\bcN$ and $\bcD$ satisfy the assumptions of Theorem~\ref{th:higherOrderBound}. Then there exist $C, h_0>0$ such that $u=G_h^R(0,g,0)$, the solution to
$$
\begin{cases}
(-h^2\Delta-1)u=0 & \text{ in } \domain,\\
u=g& \ton \Gamma_D, \\
\big(\bcN hD_{\nu}-\bcD\big)u=0 & \ton \GammaI,
\end{cases}
$$
satisfies
$$
\|u\|_{H_h^1(\tildedomain\setminus \Omegaminus)}\leq C\|g\|_{H_h^1(\Gamma_D)}
$$
\end{lem}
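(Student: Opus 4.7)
The plan is to argue by contradiction using the semiclassical defect measure machinery developed in \S\ref{sec:defect}. Suppose the bound fails: then there are sequences $h_n\to 0$, $g_n\in H^1_{h_n}(\Gamma_D)$ and $u_n:=G^R_{h_n}(0,g_n,0)$ with
\beqs
\|u_n\|_{H_{h_n}^1(\tildedomain\setminus\Omegaminus)}=1, \qquad \|g_n\|_{H_{h_n}^1(\Gamma_D)}\to 0.
\eeqs
By Corollary~\ref{cor:L2RHS} applied on $\GammaI$ (using assumptions~\eqref{eq:conditions},\eqref{eq:conditions2a}--\eqref{eq:conditions2d}) and Corollary~\ref{cor:Dir} applied on $\Gamma_D$, the Dirichlet and Neumann traces of $u_n$ on both components of the boundary are uniformly bounded in $L^2$. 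Hence Assumption~\ref{ass:1} holds, and after extracting a subsequence we may associate to $(u_n)$ a defect measure $\mu$ on $T^*\widetilde M$ together with trace measures $\nu_d,\nu_n,\nu_j$ on each of $\Gamma_D$ and $\GammaI$ as in Theorem~\ref{th:ex_defect}. Since $(-h^2\Delta-1)u_n=0$, Lemma~\ref{lem:inv} gives that $\mu$ is supported on $\{|\xi|_g=1\}$ and is invariant under the generalised bicharacteristic flow in the interior.

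Next I would identify the behaviour of $\mu$ at each boundary. On $\Gamma_D$, the convergence $g_n\to 0$ in $H_{h_n}^1(\Gamma_D)$ forces $\nu_d^D=0$; the Cauchy--Schwarz bound $|\nu_j|^2\le \nu_d\,\nu_n$ (as in the proof of Lemma~\ref{lem:key_Miller}(ii)) then yields $\nu_j^D=0$, so Lemma~\ref{lem:key_Miller}(i) gives $\muin=\muout$ on the hyperbolic set of $\Gamma_D$ (perfect reflection). On $\GammaI$, Corollary~\ref{cor:reflection} produces the reflection law $\muout=\alpharef\,\muin$ with
\beqs
\alpharef=\left|\frac{\sqrt r-\sigma(\bcD)/\sigma(\bcN)}{\sqrt r+\sigma(\bcD)/\sigma(\bcN)}\right|^2.
\eeqs
By hypothesis~\eqref{eq:mainevent} the ratio $\sigma(\bcD)/\sigma(\bcN)>0$, so $\alpharef<1$ strictly on the hyperbolic set $\mathcal H$ away from glancing.

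The final step is to exploit these two boundary relations and propagation to force $\mu\equiv 0$, contradicting the fact that $\mu$ must be non-trivial (the latter follows from the normalisation $\|u_n\|_{H^1_{h_n}}=1$ together with the elliptic bound $\|u_n\|_{H^1_{h_n}}\le C\|u_n\|_{L^2}+o(1)$ coming from the trace estimates with $g_n\to 0$, which gives $\mu(T^*\widetilde M)\geq c>0$). Fix $\rho\in\supp\mu\cap S^*\tildedomain$. Since $\Omegaminus$ is nontrapping, the forward bicharacteristic starting at $\rho$ (extended by the boundary flow described in Lemma~\ref{lem:charge} and used in Corollary~\ref{cor:explain}) can reflect only finitely many times off $\Gamma_D$ while remaining in a neighbourhood of $B(0,1)$; after that it must exit $B(0,1)$ and, since $\tildedomain$ is bounded and convex, subsequently reach $\GammaI$. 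The convexity together with the ``nowhere flat to infinite order'' assumption on $\GammaI$ prevents infinite-order gliding along $\GammaI$, so (as in \cite{Miller}) the ray detaches and re-enters the interior; in particular the set of rays that repeatedly glance without ever hitting $\mathcal H$ transversally is $\mu$-null. Using Corollary~\ref{cor:explain} and Theorem~\ref{t:propagate} one may therefore transport mass from the interior to $\muin$ on $\GammaI$, apply the reflection law $\muout=\alpharef\muin$ with $\alpharef<1$, and then transport $\muout$ back into the interior via the outgoing flow (picking up perfect-reflection factors at $\Gamma_D$). Iterating and summing this strict contraction over the bounded travel times (which are finite by nontrapping plus convexity) yields $\mu(S^*\tildedomain)=0$, the desired contradiction.

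\textbf{Main obstacle.} The delicate point is the treatment of the glancing set on $\GammaI$, where $\alpharef=1$ and no absorption occurs: one must justify that the flow-invariance of $\mu$, combined with Lemma~\ref{l:glancing} (which confines $\mu^\partial$ to $H_p^2 x_1\le 0$) and the nowhere-flat-to-infinite-order condition on $\GammaI$, prevents the mass from concentrating on glancing trajectories. This is where the strict convexity-type assumption is essential, and where the hardest technical work of adapting the propagation result Theorem~\ref{t:propagate} to the mixed Dirichlet/impedance setting lies.
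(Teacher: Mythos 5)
Your overall strategy coincides with the paper's: normalise, extract a defect measure $\mu$ with $\mu(T^*\mathbb{R}^d)\geq c>0$, observe that $g_n\to 0$ forces $\nu_d^D=0$ and hence lossless reflection at $\Gamma_D$, invoke the reflection law $\muout=\alpharef\muin$ on $\GammaI$, and derive a contradiction by iterating the loss of mass. But the iteration step as you describe it has a genuine gap. The factor $\alpharef$ is \emph{not} uniformly bounded away from $1$ on the hyperbolic set: $\log\alpharef=-4(\sigma(\bcN)/\sigma(\bcD))\sqrt{r}+O(r)\to 0$ as $r\to 0^+$, so a trajectory making many nearly-tangential reflections off $\GammaI$ loses an arbitrarily small fraction of its mass at each one, and ``iterating and summing this strict contraction'' does not by itself give $\mu=0$. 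The paper's key quantitative input (Lemma \ref{lem:contradictMe}) is exactly what closes this: if the cumulative loss $\sum_j\log\alpharef(\beta^{-j}\rho)$ along a backward billiard trajectory stays above $-c_0$, then $\sum_j\sqrt{r(\beta^{-j}\rho)}$ is small, hence the total turning angle $\sum_j\Delta(\beta^{-j}\rho)\approx 2\sum_j\sqrt{r}$ is small, hence the trajectory is nearly straight and travels a distance comparable to $\sum_j T_-(\beta^{-j}\rho)$, which is impossible inside $B(0,MR)$ beyond time $O(R)$. This converts ``$\alpharef<1$ pointwise'' into ``mass multiplied by at least $e^{c_0}$ under backward flow for time $T_0R$'', which is what makes the iteration $\mu(\varphi_{\mp NT_0R}(A))\geq(1+\delta_0)^N\mu(A)\to\infty$ actually run. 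Your proposal contains neither this argument nor any substitute for it.

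You have also slightly misdiagnosed the glancing set. It is not that $\alpharef=1$ there and one must rule out concentration; rather, Theorem \ref{t:propagate} supplies an absorption term $\tfrac12(\Re\dot n^j)H_p^2x_1\,\mu 1_{\mathcal G}$ at glancing, and since $\GammaI$ is convex ($H_p^2x_1\leq 0$ on $\mathcal G$, with $\int_{-T_0R}^{0}-H_p^2x_1\,ds\geq c$ uniformly because $\tildedomain\subset B(0,MR)$) and $\Re\dot n^j=-\sigma(\bcN)/\sigma(\bcD)$ has a definite sign by \eqref{eq:mainevent}, the glancing set also loses a definite fraction of mass per time $T_0R$. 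No separate ``nowhere flat to infinite order'' argument is needed at this step, and Theorem \ref{t:propagate} requires no further adaptation to the mixed boundary conditions. So the skeleton of your argument is right, but the two places where you wave your hands --- uniformity of the loss in the near-glancing regime, and the mechanism at exact glancing --- are precisely where the substance of the paper's proof lies.
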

\begin{proof}
Suppose the lemma fails. Then there exist $(h_n,g_n)$ with $h_n\to 0$ such that $u_n=G_{h_n}^R(0,g_n,0)$, 
$$
\|u_n\|_{H_{h_n}^1(\tildedomain\setminus \Omegaminus)}=1,\qquad \|g_n\|_{H_{h_n}^1(\Gamma_D)}=n^{-1}
$$
Extracting subsequences, we can assume that $u_n$ has defect measure $\mu$. Moreover, by Corollaries~\ref{cor:L2RHS} and \ref{cor:Dir}, we can assume that the trace measures $\nu_d^{D/\tr},\nu_j^{D/\tr },$ and $\nu_n^{D/\tr}$ exist. 
In particular, since $g_n\to 0$ in $H_h^1$, $\nu_d^D=0$. Let $\varphi_t$ denote the billiard flow outside $\Omegaminus$. Then by Lemma \ref{lem:key_Miller} together with~\cite[Section 4]{GaSpWu:20}, 
\begin{gather}
\label{eq:buzz}
\mu(\varphi_t(A))=\mu(A)\qquad \text{ if }\qquad \bigcup_{0\leq t\leq T}\varphi_t(A) \cap \GammaI=\emptyset.
\end{gather}
Furthermore, using again Corollaries~\ref{cor:L2RHS} and \ref{cor:Dir}, we find that
\begin{equation*}
1=\limsup_{n}\|u_n\|_{H^1_{h_{n}}}^2\geq \mu(T^*\mathbb{R}^d)\geq \liminf_{n}\|v_{n}\|_{L^2}^2  \geq c\liminf_{n}\|\widetilde{v}_{n}\|_{H_{h_{n}}^1}^2=c>0.
\end{equation*}
Note also that $\mu^{\rm in/out,\tr}$, $\nudtr$, $\nujtr$, and $\nuntr$ satisfy the relations in Lemma~\ref{lem:key_Miller}. Next, by Lemma~\ref{lem:hierarchy_coef},
\begin{equation}
\label{eq:reflect}
\mu^{\rm out,\tr}=\alpharef \mu^{\rm in,\tr} \quad\text{where}\quad
\alpharef =\Bigg|\frac{\sqrt{r}\bcN-\bcD}{\sqrt{r}\bcN+\bcD}\Bigg|^{2}\in C^\infty(\{r> 0\});
\end{equation} 
Here, we abuse notation slightly, since when $\sigma(\bcN)\sigma(\bcD)<0$, $\sqrt{r}\bcN+\bcD$ may take the value $0$. In that case, the first equation in \eqref{eq:reflect} is replaced by $(\alpharef )^{-1}\mu^{\rm out,\tr}=\mu^{\rm in,\tr}$.

Finally, these measures satisfy Theorem~\ref{t:propagate} with $\dot{n}^j=\sigma(\bcN)/\sigma(\bcD)$ which is well defined and satisfies $\pm\dot{n}^j\geq m>0$ since $\pm \sigma(\bcN)\sigma(\bcD)>0$ on $\overline{B^*\GammaI}$.

The proof of Lemma \ref{lem:4.5} is completed by the following lemma.
\begin{lem}
\label{lem:contradictMe}
Suppose that $\Omegaminus$ is non-trapping, and let $M>0$. Then there exist $T_0,\delta_0>0$ such that the following holds for all $R\geq 1$. Suppose
 $\Omegaminus\Subset B(0,1)\subset\tildedomain\subset B(0,MR)$ has smooth boundary and is convex and that $\mu$ is a finite measure supported in $S^*_{\overline{\tildedomain}\setminus \Omegaminus}\mathbb{R}^d$ 
satisfying~\eqref{eq:buzz},~\eqref{eq:reflect} and Theorem~\ref{t:propagate} with $ \Re\dot{n}^j=\frac{\sigma(\bcN)}{\sigma(\bcD)}$ 
with $0<\pm\sigma(\bcN)\sigma(\bcD)$ on $\overline{B^*\GammaI}$
 Then, for all $A\subset S^*_{\overline{\tildedomain}\setminus \Omegaminus}\mathbb{R}^d$,
 $$
\mu\big(\varphi_{\mp T_0R}(A)\big)\geq (1+\delta_0)\mu(A).
$$ 
\end{lem}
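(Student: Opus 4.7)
I focus on the case $\sigma(\bcN)\sigma(\bcD) > 0$ on $\overline{B^*\GammaI}$ (the opposite-sign case follows by time reversal), so that $\alpharef < 1$ strictly whenever $r > 0$: backward flow strictly increases mass at each hyperbolic $\GammaI$-reflection. The strategy is to simplify Theorem \ref{t:propagate} to a clean boundary contribution, express $\mu(\varphi_{-t}(A))$ as a weighted integral over backward orbits, and close via a dichotomy between transverse and grazing rays.

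\emph{Step 1 (simplifying the propagation identity).} The strict convexity of $\tildedomain$ combined with the nowhere-flat-to-infinite-order hypothesis on $\GammaIR$ forces $H_p^2 x_1 > 0$ at each glancing point of $\GammaI$, so Lemma \ref{l:glancing} gives $\mu^{\partial}|_{\GammaI} = 0$. On $\Gamma_D$, the vanishing of the Dirichlet data in the calling context of Lemma \ref{lem:4.5} gives $\nu_d = 0$, hence by Cauchy--Schwarz $\nu_j = 0$ and $\Re\dot{n}^j = 0$; the $\Gamma_D$-glancing contribution therefore also drops out. Since $\mu^j \equiv 0$ (no source), Theorem \ref{t:propagate} applied to $q \sim 1_A$ simplifies to
\begin{equation*}
\mu(\varphi_{-t}(A)) - \mu(A) \;=\; \int_0^t \!\int_{\GammaI} (1-\alpharef)(1_A\circ\varphi^s)\, d\muin\, ds \;\geq\; 0.
\end{equation*}

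\emph{Step 2 (pointwise multiplicative representation).} Iterating Lemma \ref{lem:interpr} along backward orbits---on each geodesic flow tube $\mu$ decomposes as $(2\sqrt{r}\muout) \otimes dt$---together with the reflection laws $\muout = \alpharef\muin$ at $\GammaI$ and $\muout = \muin$ at $\Gamma_D$, one obtains the representation
\begin{equation*}
\mu(\varphi_{-t}(A)) \;=\; \int_A \prod_{j\,:\,s_j \leq t} \alpharef(\rho_{s_j})^{-1}\, d\mu(\rho),
\end{equation*}
where $0 < s_1 < s_2 < \dots$ are the successive $\GammaI$-hit times of the backward trajectory of $\rho$ (reflections off $\Gamma_D$ contribute trivial factors of $1$, and non-trapping bounds the time spent near $\Gamma_D$). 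The lemma therefore reduces to the pointwise bound, for $\mu$-a.e.\ $\rho$,
\begin{equation*}
\prod_{j\,:\,s_j \leq T_0 R} \alpharef(\rho_{s_j})^{-1} \;\geq\; 1 + \delta_0.
\end{equation*}

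\emph{Step 3 (dichotomy on the grazing parameter).} Fix a threshold $c_0 > 0$. Either (a) some backward hit $s_j \leq T_0 R$ satisfies $r(\rho_{s_j}) \geq c_0$, in which case continuity of $\alpharef$ on the compact set $\{r \geq c_0\}$ gives one uniform gain $\alpharef^{-1} \geq 1 + c_1(c_0) > 1$ (the finite set of resonant angles where $\alpharef = 0$ carries no $\muout$-mass, and so is not visited by $\mu$-a.e.\ backward trajectory); or (b) every hit within time $T_0 R$ has $r < c_0$, in which case the expansion $\alpharef(r) = 1 - c\sqrt{r} + O(r)$ near glancing together with the convex-billiard chord-length estimate $s_{j+1} - s_j \sim R\sqrt{r(\rho_{s_j})}$ (uniform in $R$ because $\GammaIR/R \to \GammaIinf$ in $C^\infty$ forces principal curvatures to scale as $1/R$) yields $\gtrsim T_0/\sqrt{r}$ reflections, hence a cumulative gain $\gtrsim (1+c\sqrt{r})^{T_0/\sqrt{r}} \to e^{cT_0}$. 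Choosing $T_0$ large with $e^{cT_0} \geq 1 + c_1$ produces a uniform $\delta_0 > 0$ valid in both cases.

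\emph{Main obstacle.} The grazing sub-case of Step 3 is the delicate one: one must establish matching two-sided bounds on the near-tangent chord length $s_{j+1} - s_j$ for a convex domain whose principal curvatures scale as $1/R$, with constants uniform in $R$ and in the ray. The $C^\infty$ convergence $\GammaIR/R \to \GammaIinf$ and the ``nowhere flat to infinite order'' hypothesis together rule out degenerate orbits that accumulate at a single boundary point. Rays that bounce off $\Gamma_D$ between $\GammaI$-hits must also be tracked, but by the non-trapping of $\Omegaminus$ each such excursion consumes bounded time and produces no mass change, so it can be absorbed into the time budget $T_0 R$ without altering the multiplicative estimate.
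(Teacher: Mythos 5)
There are two genuine gaps. First, Step 1 has the sign of $H_p^2x_1$ backwards: on $\GammaI$ the relevant geometry is the boundary of a convex domain seen from the \emph{inside}, where a tangent line immediately leaves the domain, so $x_1\circ\varphi_t$ has a local maximum at a glancing point and $H_p^2x_1\leq 0$ there (the paper records precisely that convexity of $\GammaI$ gives $\mathcal G\subset\{H_p^2x_1\leq 0\}$). Lemma \ref{l:glancing} only says $\mu^{\partial}$ is supported in $\{H_p^2x_1\leq 0\}$, which is no obstruction at all on $\GammaI$, so you cannot conclude $\mu^{\partial}|_{\GammaI}=0$, and the gliding mass does not drop out of Theorem \ref{t:propagate}. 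The paper treats $A\subset\mathcal G$ by a separate argument: convexity together with $\tildedomain\subset B(0,MR)$ gives $\int_{-T_0R}^{0}-H_p^2x_1(\varphi_s(\rho))\,ds\geq c>0$ uniformly in $R$, and combined with $-\Re\dot n^j\geq m>0$ the glancing term in \eqref{eq:propagate} itself yields $\mu(\varphi_{-T_0R}(A))\geq e^{mc}\mu(A)$. Your proof as written says nothing about mass carried by gliding rays.

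Second, your grazing case (b) hinges on the two-sided chord estimate $s_{j+1}-s_j\sim R\sqrt{r}$, which you flag as the main obstacle but do not prove --- and which is in fact unavailable under the lemma's hypotheses: the lemma assumes only that $\tildedomain$ is convex with smooth boundary and $B(0,1)\subset\tildedomain\subset B(0,MR)$, with no lower curvature bound (it must apply, e.g., to smoothed hypercubes, where a near-tangent chord along a flat face has length comparable to $R$ independently of $r$). The paper closes this case by a different and more robust mechanism: the turning angle at each reflection is $\Delta(\rho)=2\sqrt{r(\rho)}+O(r(\rho)^{3/2})$, so if the cumulative loss $\sum_j\log\alpharef(\beta^{-j}\rho)$ stays above $-c_0$, then $\sum_j\sqrt{r(\beta^{-j}\rho)}$ is small (via $\log\alpharef=-4\tfrac{\sigma(\bcN)}{\sigma(\bcD)}\sqrt r+O(r)$), hence the total turning is below $\pi/4$, hence the broken trajectory stays within a factor $\sqrt2$ of a straight line and traverses a Euclidean distance at least $\tfrac1{\sqrt2}\sum_jT_-(\beta^{-j}\rho)$. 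If that time sum exceeds $T_0R$ with $T_0$ large, this contradicts $\tildedomain\subset B(0,MR)$. No chord-length or curvature estimate is needed; you would have to replace case (b) by this diameter-versus-turning argument, since the estimate you rely on is false in the stated generality.
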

To see that Lemma~\ref{lem:contradictMe} completes the proof of Lemma \ref{lem:4.5} observe that our defect measure $\mu$ has $\mu(T^*\mathbb{R}^d)\neq 0$, is finite, and is supported in $S^*_{\overline{\tildedomain}\setminus \Omegaminus}\mathbb{R}^d$. Therefore, there is $A\subset S^*_{\overline{\tildedomain}\setminus \Omegaminus}\mathbb{R}^d$ such that $\mu(A)>0$. But then
$$
\mu\big(\varphi_{\mp NRT_0}(A)\big)=(1+\delta_0)^N\mu(A)\to \infty,
$$
which is a contradiction.
 \epf

\begin{proof}[Proof of Lemma~\ref{lem:contradictMe}]

We consider only  the case where $\sigma(\bcN)\sigma(\bcD)>0$. The other case follows from an identical argument but reversing the time direction.

 By~\eqref{eq:buzz}, $\mu$ is invariant under $\varphi_t$ away from $\GammaI$. We first study the glancing set, $\mathcal{G}=T^{*}\GammaIR\cap \{r=0\}$. Note that since $\GammaI$ is convex, $\mathcal{G}\subset \{H_p^2x_1\leq 0\}$ where $x_1$ is a boundary defining function for $\GammaI$. Note that for $\rho \in\mathcal{G}$, since $\tildedomain(R)$ is convex and $\tildedomain(R)\subset B(0,MR)$, there exist $c>0$ and $T_0>0$ independent of $R$ such that
$$
\int_{-T_0R}^0 -H_p^2x_1(\varphi_s(\rho))ds \geq c>0
$$
In particular, since $\sigma(\bcN)\sigma(\bcD)>m>0$ on $S^*\GammaI$ (by \eqref{eq:mainevent}), $\Re \dot{n}^j\geq m>0$ and hence by Theorem~\ref{t:propagate}, for $A\subset \mathcal{G}$,
\begin{equation*}
\mu(\varphi_{-T_0R}(A))\geq e^{mc}\mu(A), 
\end{equation*}

Next, we study the case where $A\subset S^*_{\overline{\tildedomain}\setminus \Omegaminus}\mathbb{R}^d\setminus \mathcal{G}$. Let $\beta^{-1}:B^*\GammaI\to B^*\GammaI$ be the reversed billiard ball map induced by $\varphi_t$. That is, let $\pi:S^*_{\GammaI}\mathbb{R}^d\to B^*\GammaI$ be the natural projection map and $\pi_{\pm}^{-1}:B^*\GammaI\to S^*_{\GammaI}\mathbb{R}^d$ the inward- and outward-pointing inverse maps. Next, for $(x,\xi)\in S^*_{\GammaI}\mathbb{R}^d$ define
$$
T_-(x,\xi)=\inf\{ t>0\,:\, \varphi_{-t}(x,\xi)\in S^*_{\GammaI}\mathbb{R}^d\}.
$$
Since $\Omegaminus$ is nontrapping, there is $T_0>0$ such that for all $(x,\xi)\in S^*_{\tildedomain\setminus \Omegaminus\mathbb{R}^d \cup \pi_-^{-1}(B^*\GammaI)}$, $T_-(x,\xi)\leq T_0 R$. In particular every trajectory intersects the boundary in time $T_0R$.

The reversed billiard map is then given by 
$$
\beta^{-1}(q):\pi\big( \varphi_{-T_-(\pi_-^{-1}(q)}(\pi^{-1}_-(q))\big).
$$
Since $\GammaI$ is convex $\beta:B^* \GammaI\to B^*\GammaI $ is well defined and, since $\mu$ is invariant under $\varphi_t$, $\beta_*\mu^{\rm out,\tr}=\mu^{\rm in,\tr}.$ Then, using~\eqref{eq:reflect}, we have 
\begin{equation}
\label{eq:reflectMeasure}
\mu^{\rm out,\tr}=\alpharef \mu^{\rm in,\tr}=\alpharef \beta_*\mu^{\rm out,\tr}.
\end{equation}

Fix $0<c<1$ and for $\rho \in B^*\GammaI$, let
$$
N(\rho,c):=\inf\Big\{ N\geq 0\,:\, \sum_{j=0}^{N}\log(r(\beta^{-j}(\rho)))<-c\Big\}
$$
We claim that there exist $c_0,T_0>0$ such that for all $\rho\in B^*\GammaI$
\begin{equation}
\label{e:claimMe}
\sum_{j=0}^{N(\rho, c_0)} T_-(\beta^{-j}(\rho))< T_0R
\end{equation}
Once we prove this claim, using~\eqref{eq:reflectMeasure} together with the definition of $\mu^{\rm out,\tr}$ as the derivative along the flow of $\mu$, we see that if $A\subset S^*_{\overline{\tildedomain}\setminus \Omegaminus}\mathbb{R}^d\setminus \mathcal{G}$, then
\begin{equation*}
\mu\big(\varphi_{-T_0R}(A)\big)\geq e^{-c_0}\mu(A).
\end{equation*}
and hence the proof will be complete.

We now prove \eqref{e:claimMe}. If the claim fails then there is a sequence 
$$
(R_n,\rho_n,M_n)\in [1,\infty)\times B^*\GammaI(R_n)\times \mathbb{Z}
$$
such that 
\begin{equation}
\label{e:longTime}
\sum_{j=0}^{M_n}T_-(\beta^{-j}(\rho_n))\geq nR_n,\qquad \sum_{j=0}^{M_n}\log \alpharef (r(\beta^{-j}\rho_n))>-\frac{1}{n}.
\end{equation}
Without loss of generality, we can assume that $R_n\to R_\infty\in [1,\infty]$. 
Note that 
$$
\log \alpharef (\rho)= -\frac{4\sigma(\bcN)}{\sigma(\bcD)}\sqrt{r(\rho)}+O(r(\rho))
$$
By \eqref{eq:mainevent}, since $\frac{\sigma(\bcN)}{\sigma(\bcD)}>m>0$ on $S^*\GammaI$, 
\begin{equation}
\label{e:rIsSmall}
\sum_{j=0}^{M_n} \sqrt{(r(\beta^{-j}\rho_n))}\leq \frac{1}{4mn}.
\end{equation}
and in particular,
\begin{equation}
\label{e:supRIsSmall}
\sup_{0\leq j\leq M_n}r(\beta^{-j}(\rho_n))\leq \frac{1}{16m^2n^2}.
\end{equation}

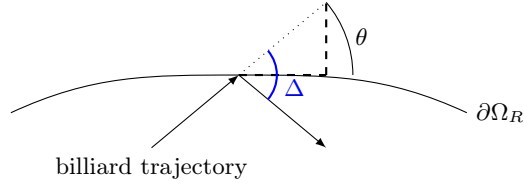
\begin{figure}
\begin{tikzpicture}
\def \a{40};
\def \r{1.5}
\def \l{.2};'
\draw (-3,-.5) to [out=25, in =180] (0,0) to [out=0, in =155](3,-.5) node[right]{$\partial\domain$};
\draw[->] (\a:-\r)node[below]{billiard trajectory}--(\a:0);
\draw [dotted](\a:0)--(\a:\r);
\draw[->](-\a:0)--(-\a:\r);
\draw[thick,dashed] (0,0)--({\r*cos (\a)},0);
\draw[thick,dashed] ({\r*cos(\a)},0)--(\a:\r);
\draw ({\r},0)arc (0:\a:{\r});
\draw node[right] at ({\a/2}:\r){$\theta$};
\draw[thick,blue] (-\a:{\r/3}) arc (-\a:\a:\r/3);
\draw node[right] at (-\a/2:\r/3){$\textcolor{blue}{\Delta}$};
\end{tikzpicture}
\caption{\label{f:ray}Ray construction showing the change, $\Delta$, in the angle of a ray when hitting the boundary at angle $\theta$. Note that $r=\sin^2\theta $.}
\end{figure}

Now, let $\pi_M:T^*M\to M$ and $\rho \in B^*\partial\tildedomain$. We consider the angle between the two vectors
$$
V_\pm(\rho):=d\pi_M(\partial_t \varphi_t (\pi_{\pm}^{-1}(\rho)))=2\xi(\pi_\pm^{-1}(\rho)).
$$
Note that $V_\pm$ are the tangent vectors to the billiard trajectory just before $(-)$ and after $(+)$ reflection. We define the angle accumulated at $\rho$, $\Delta(\rho)\in [0,\pi]$ by
$$
\langle V_{+}(\rho),V_-(\rho)\rangle =4 \cos \Delta(\rho).
$$ 
As can be seen, e.g., in Figure~\ref{f:ray}, 
$$
\sin (\Delta(\rho)/2)=\sqrt{r(\rho)},\qquad \cos(\Delta(\rho)/2)=\sqrt{1-r(\rho)}.
$$
In particular, 
$$
\sin(\Delta(\rho))=2\sqrt{r(\rho)}\sqrt{1-r(\rho)}.
$$
Therefore, 
$$
\Delta(\rho)=2\sqrt{r(\rho)}+O(r(\rho)^{3/2}).
$$
Now, note that if 
$$
\sum_{j=0}^k\Delta(\beta^{-j}(\rho))<\frac{\pi}{4},
$$
then
\begin{equation}
\label{e:travel}
|\pi_M(\rho)-\pi_M(\beta^{-k}(\rho))|\geq \frac{1}{\sqrt{2}}\sum_{j=0}^k T_-(\beta^{-j}(\rho)).
\end{equation}
By~\eqref{e:rIsSmall} and~\eqref{e:supRIsSmall}, 
$$
\sum_{j=0}^{M_n}\Delta(\beta^{-j}(\rho_n)) = \sum_{j=0}^{M_n}2\sqrt{r(\beta^{-j}(\rho_n))}+O(r(\beta^{-j}(\rho_n))^{3/2}\leq \frac{1}{2mn}+O(n^{-3})<\frac{\pi}{4}
$$
for $n$ large enough. In particular,~\eqref{e:longTime} and~\eqref{e:travel} imply that
$$
|\pi_M(\rho_n)-\pi_M(\beta^{-k}(\rho_n))|\geq \frac{1}{\sqrt{2}}\sum_{j=0}^k T_-(\beta^{-j}(\rho_n))\geq \frac{1}{\sqrt{2}}nR_n
$$
which, for $n$ large enough, is impossible since $\tildedomain\subset B(0,MR)$. 
\end{proof}

We now set up our contradiction argument to prove the 
bound \eqref{eq:trunc_estimate}.
Suppose there is no constant $C>0$ such that for all $R\geq 1$ the estimate fails. Then, there exists $\{R_\ell\}_{\ell=1}^\infty\subset [1,\infty)$, $\{h_{k,\ell}\}_{k,\ell=1}^\infty$, with $\lim_{k
\to \infty} h_{k,\ell}=0$, $u_{k,\ell}$, and $g_{k,\ell,\tr/D}$, $f_{k,\ell}$ such that  $\|u_{k,\ell}\|_{H_h^1(\widetilde{\Omega}_{R_\ell}\setminus \Omegaminus)}=1$, 
$$
\Big(\|g_{k,\ell,I}\|_{L^2(\GammaI(R_\ell))}+\|g_{k,\ell,D}\|_{H_{h_{k,\ell}}^1(\Gamma_D)}\Big)\leq R_\ell^{-1/2}\ell^{-1},\qquad \|f_{k,\ell}\|_{L^2(\widetilde{\Omega}(R_\ell)\setminus \Omegaminus)}\leq R_\ell^{-1}\ell^{-1},
$$ 
and such that
$$
\begin{cases}(-h_{k,\ell}^2\Delta-1)u_{k,\ell}=h_{k,\ell}f_{k,\ell}&\text{on }\widetilde{\Omega}_{R_\ell}\setminus \Omegaminus\\
(\bcN h_{k,\ell}D_n-\bcD)u_{k,\ell}=g_{k,\ell,I}&\text{on }\Gamma_{\tr, R_\ell}\\
u_{k,\ell}=g_{k,\ell,D}&\text{on }\Gsc.
\end{cases}
$$
Rescaling, we define
\begin{gather*}
\widetilde{u}_{k,\ell}(x)=R_\ell^{\frac{n}{2}}v_{k,\ell}(xR_\ell),\quad \widetilde{g}_{k,\ell,I}(x)=R_\ell^{\frac{n}{2}}g_{k,\ell,I}(xR_\ell),\\ \widetilde{f}_{k,\ell}(x)=R_\ell^{\frac{n+2}{2}}f_{k,\ell}(xR_\ell),\quad \widetilde{G}_{k,\ell,D}=R_{\ell}^{\frac{n}{2}}g_{l,\ell,D}(xR_\ell). 
\end{gather*}
Then,
\begin{gather*}
\|\widetilde{g}_{k,\ell,I}\|_{L^2(\Gamma_{\tr, R_\ell}/R_\ell)}+\|\widetilde{g}_{k,\ell,D}\|_{L^2(\Gsc/R_\ell)}\leq \frac{1}{\ell}, \qquad \|\widetilde{u}_{k,\ell}\|_{H_{h_{k,\ell}(\widetilde{\Omega}_{R_\ell})}^1}\geq 1-\frac{C}{R_\ell^{1/2}\ell},\qquad \|\widetilde{f}_{k,\ell}\|_{L^2}\leq \frac{1}{\ell},
\end{gather*}
and, with $U_\ell=(\widetilde{\Omega}_{R_\ell}/R_\ell)\setminus \overline{(\Omegaminus/R_\ell)}$, $\widetilde{\Gamma}_{D,\ell}=\Gamma_D/R_\ell$, $\widetilde{\Gamma}_{I,\ell}=\Gamma_{\tr, R_\ell}/R_\ell$, 
$$
\begin{cases}(-(h_{k,\ell}R_\ell^{-1})^2\Delta-1)\widetilde{u}_{k,\ell}=(h_{k,\ell}R^{-1}_{\ell})\widetilde{f}_{k,\ell}&\text{on }U_\ell\\
(\widetilde{\bcN} h_{k,\ell}R_{k,\ell}^{-1}D_n  -\widetilde{\bcD})\tilde{u}_{k,\ell}=\widetilde{g}_{k,\ell,I}&\text{on }\widetilde{\Gamma}_{I,\ell}\\
\tilde{u}_{k,\ell}|_{\tilde{\Gamma}_{D,\ell}}=\widetilde{G}_{k,\ell,D},
\end{cases}
$$
where, if a pseudodifferential operator $B$ on $\GammaI$ is given by 
$$
B={\rm Op}_h(b),\qquad b\sim \sum_j h^j b_j,
$$
then 
$$
\tilde{B}={\rm Op}_{hR^{-1}}(\tilde{b}),\qquad \tilde{b}\sim \sum_j (hR^{-1})^j R^jb_j.
$$
Putting $\widetilde{h}_{k,\ell}=h_{k,\ell}R_\ell^{-1}$, we have $\widetilde{h}_{k,\ell}\underset{k\to \infty}{\rightarrow} 0$ hence, extracting subsequences if necessary, we can assume that $u_{k,\ell}$ ($k\to \infty$) has a defect measure $\mu_\ell$  and by Corollaries~\ref{cor:L2RHS} and~\ref{cor:Dir} we can assume that the trace measures for $u_{k,\ell}$, $\nu_{d,\ell}^{I/D}$, $\nu_{n,\ell}^{I/D}$, and $\nu_{j,\ell}^{I/D}$ exist. Moreover, $\mu_{\ell}$ satisfies the relations from Proposition~\ref{lem:key_Miller} where $\mu^{\rm in/out}$. 
Finally, extracting even further subsequences, we can assume $\widetilde{g}_{k,\ell,I/D}$ have defect measures $\omega_{\ell,I/D}$, $\widetilde{f}_{k,\ell}$ has defect measure $\alpha_\ell$, 
and the joint measure of $\tilde{u}_{k,\ell}$ and $\tilde{f}_{k,\ell}$ is $\mu_\ell^j$ with 
\begin{gather*}
\omega_{\ell,I}(T^*\widetilde{\Gamma}_{I,\ell})\leq  \frac{1}{\ell^2},\qquad \omega_{\ell,D}(T^*\widetilde{\Gamma}_{D,\ell})\leq  \frac{1}{\ell^2},\qquad \alpha_\ell(T^*U_\ell)\leq \frac{1}{\ell^2},\\
| \mu^j_\ell(A)|\leq \sqrt{\mu_\ell(A)\alpha_\ell(A)}.
\end{gather*}
and $R_\ell\to R\in [1,\infty]$. Therefore, using e.g.~\cite[Lemma 4.2]{GaSpWu:20} together with Corollaries~\ref{cor:L2RHS} and~\ref{cor:Dir} to estimate the $H_{h_\ell/R_\ell}^1$ norm of $\widetilde{v}$ by its $L^2$ norm, 
$$
1=\limsup_{k}\|\widetilde{v}_{k,\ell}\|_{H_{h_{k,\ell}/R_\ell}^1}^2\geq \mu_\ell(T^*\mathbb{R}^d)\geq \liminf_{k}\|\widetilde{v}_{k,\ell}\|_{L^2}^2  \geq c\liminf_{k}\|\widetilde{v}_{k,\ell}\|_{H_{h_{k,\ell}/R_\ell}^1}^2\geq \frac{c}{2}>0.
$$

Note that each $\mu_\ell$ is a finite measure satisfying $\supp \mu_\ell\subset S^*_{B(0,M)}\mathbb{R}^d$. Therefore, the sequence $\mu_\ell$ is tight and bounded and hence by Prokhorov's theorem (see, e.g., \cite[Theorem 5.1, Page 59]{Bi:99} 
we can assume that $\mu_\ell\rightharpoonup \mu$ for some measure $\mu$. Moreover, $\supp \mu\subset S^*_{\overline{U_\infty}}\mathbb{R}^d$ and 
\begin{equation}
\label{e:ImABigMeasure}
1\geq \mu(S^*\mathbb{R}^d)>c>0.
\end{equation} 

\begin{lem}\label{lem:tight}
The sequences of boundary measures $\nu_{d,\ell}^{\tr}$, $\nu_{n,\ell}^{\tr}$, and $\nu_{j,\ell}^{\tr}$, and $\nu_{n,\ell}^D$ are tight.
\end{lem}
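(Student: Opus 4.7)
My plan is to show that, for each of the four sequences, every measure in the sequence is supported in a common compact subset of $T^*\mathbb{R}^d$, and that the total masses are uniformly bounded; since sequences of non-negative measures with a common compact support and uniform mass bound are automatically tight, this will suffice. The bases $\widetilde{\Gamma}_{I,\ell}=\Gamma_{\tr,R_\ell}/R_\ell$ and $\widetilde{\Gamma}_{D,\ell}=\Gamma_D/R_\ell$ both lie in the fixed bounded region $\overline{B(0,M)}$, with metrics converging in $C^\infty$, so tightness reduces to a uniform bound on the fiber variable $\xi'$.

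The total-mass bound comes directly from applying Corollary~\ref{cor:L2RHS} to the rescaled problem satisfied by $u_{k,\ell}$ with semiclassical parameter $\tilde{h}_{k,\ell}=h_{k,\ell}/R_\ell$. Since the rescaled data have norms $\leq 1/\ell$ and $\|u_{k,\ell}\|_{H^1_{\tilde{h}}}\leq 1+o(1)$, \eqref{eq:trace4} gives
\[
\|u_{k,\ell}\|_{L^2(\widetilde{\Gamma}_{I,\ell})}+\|\tilde{h}D_n u_{k,\ell}\|_{L^2(\widetilde{\Gamma}_{I,\ell})}\leq C
\]
uniformly in $k,\ell$, and likewise on $\widetilde{\Gamma}_{D,\ell}$. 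Hence $\nu_{d,\ell}^{\tr}$, $\nu_{n,\ell}^{\tr}$ and $\nu_{n,\ell}^D$ have uniformly bounded total mass, and $\nu_{j,\ell}^{\tr}$ too by Cauchy--Schwarz.

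The main step is to show all four measures are supported in $\{|\xi'|_{g_\ell}\leq 1\}$, which is contained in a fixed Euclidean compact set. Fix $\delta>0$ and let $\psi\in C_c^{\infty}(\mathbb{R}^{d-1})$ be supported in $\{|\xi'|_{g_\ell}>1+\delta\}$; pick $\chi_0\in C_c^{\infty}$ supported in a tubular neighborhood of $\widetilde{\Gamma}_{I,\ell}$ in which Fermi coordinates are defined, and set $A=\chi_0(x)\psi(\tilde{h}D_{x'})$. Then $\WFh'(A)\subset\{|\xi'|_{g_\ell}>1+\delta\}$, and since $|\xi|^2\geq|\xi'|_{g_\ell}^2>(1+\delta)^2>1$ there, we have $\WFh'(A)\cap S^*\mathbb{R}^d=\emptyset$. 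The elliptic parametrix step in the proof of Lemma~\ref{l:traceEst} applies and yields
\[
\|Au_{k,\ell}\|_{L^2(\widetilde{\Gamma}_{I,\ell})}\leq C\tilde{h}^{-1/2}\|\widetilde{P}_\ell A u_{k,\ell}\|_{L^2}+O(\tilde{h}^\infty)\|u_{k,\ell}\|_{L^2},
\]
where $\widetilde{P}_\ell=-\tilde{h}^2\Delta_g-1$. Since $\widetilde{P}_\ell u_{k,\ell}=\tilde{h}\tilde{f}_{k,\ell}$ and the commutator $[\widetilde{P}_\ell,A]$ is of order $O(\tilde{h})$, we get $\|\widetilde{P}_\ell A u_{k,\ell}\|_{L^2}=O(\tilde{h})$ uniformly, so $\|Au_{k,\ell}\|_{L^2(\widetilde{\Gamma}_{I,\ell})}=O(\tilde{h}^{1/2})\to 0$ as $k\to\infty$. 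Thus $\nu_{d,\ell}^{\tr}(\psi^2)=0$ for every such $\psi$, giving $\supp\nu_{d,\ell}^{\tr}\subset\{|\xi'|_{g_\ell}\leq 1\}$.

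For $\nu_{n,\ell}^{\tr}$ (and by the same token $\nu_{n,\ell}^D$), I use that $A$ is tangential so $[A,\tilde{h}D_{x_1}]=O(\tilde{h})$ in Fermi coordinates; then $A\tilde{h}D_n u_{k,\ell}=\tilde{h}D_n(Au_{k,\ell})+O(\tilde{h})u_{k,\ell}$, and the semiclassical trace estimate $\|\tilde{h}D_n(Au_{k,\ell})|_\Gamma\|_{L^2}\leq C\tilde{h}^{-1/2}\|Au_{k,\ell}\|_{H^2_{\tilde{h}}}$ combined with the elliptic regularity bound $\|Au_{k,\ell}\|_{H^2_{\tilde{h}}}=O(\tilde{h})$ gives the same vanishing. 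The joint measure $\nu_{j,\ell}^{\tr}$ is controlled by Cauchy--Schwarz from the other two. The main obstacle is uniformity in $\ell$: the constants in Lemma~\ref{l:traceEst}, in the elliptic parametrix, and in the semiclassical trace theorem all depend on the tangential geometry, but the $C^\infty$ convergence $\widetilde{\Gamma}_{I,\ell}\to\partial\Omega_\infty$ together with the uniform inclusion in $B(0,M)$ makes these constants uniform in $\ell$, completing the argument.
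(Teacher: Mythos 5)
Your reduction of tightness to a fiber bound is the right framing (the bases are uniformly compact, so only the behaviour at large $|\xi'|$ is at stake), and the uniform mass bounds via Corollary~\ref{cor:L2RHS} are fine. But the central claim of your argument --- that all four measures are supported in $\{|\xi'|_{g}\leq 1\}$ --- is false, and the tool you use to prove it does not apply here. Lemma~\ref{l:traceEst} is a statement about traces on an \emph{interior} hypersurface: in \S\ref{sec:outgoing} the surface $\GammaI$ sits inside $\Omegaplus$, where $u$ solves $(-h^2\Delta-1)u=0$ on \emph{both} sides, so that $PAu=[P,A]u+hAf$ holds with no boundary contributions and is genuinely $O(h)$ in $L^2$. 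For $u_{k,\ell}$, the surface $\widetilde{\Gamma}_{I,\ell}$ is the actual boundary of the domain; computing $\widetilde{P}_\ell A u_{k,\ell}$ there either produces distributional terms $\delta(x_1)\otimes(\cdot)$ carrying exactly the traces you are trying to estimate, or, if you work one-sided, the elliptic parametrix step breaks down. Concretely, the elliptic region is populated by evanescent waves: in the half-space model with boundary datum $g_I=\ell^{-1}e^{ix'\cdot\xi_0'/\tilde h}$, $|\xi_0'|_g>1$, the solution contains a component $c\,e^{ix'\cdot\xi_0'/\tilde h}e^{-x_1\sqrt{|\xi_0'|^2_g-1}/\tilde h}$ whose interior $L^2$ norm is $O(\tilde h^{1/2})$ (invisible to interior estimates) but whose Dirichlet trace is $O(\ell^{-1})$ in $L^2(\Gamma)$ and concentrates at the elliptic frequency $\xi_0'$. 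So $\nu_{d,\ell}^{\tr}$ does charge $\{|\xi'|_g>1\}$, with mass of order $\ell^{-2}$, and your $O(\tilde h^{1/2})$ bound for $\|Au_{k,\ell}\|_{L^2(\widetilde{\Gamma}_{I,\ell})}$ cannot hold. The same objection applies to your treatment of $\nu_{n,\ell}$ via ``elliptic regularity $\|Au\|_{H^2_{\tilde h}}=O(\tilde h)$''.

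A further warning sign is that your argument never uses the boundary condition or the nondegeneracy hypotheses \eqref{eq:conditions}--\eqref{eq:conditions2d}; without some ellipticity of $\bcN$ or $\bcD$ at fiber infinity there is nothing to prevent the data from pumping mass to arbitrarily large $|\xi'|$, and tightness would genuinely fail. The paper's proof works entirely in $\{r<0\}$ (since $\{r\geq0\}$ is already compact): Lemma~\ref{l:elliptic} gives $\Re\nu_{j,\ell}1_{r<0}=0$ and $\nu_{n,\ell}1_{r<0}=-r\nu_{d,\ell}1_{r<0}$; pairing the boundary condition with test operators shows that both $\sigma(\bcD)\nu_{d,\ell}^{\tr}1_{r<0}$ and $r\,\sigma(\bcN)\nu_{d,\ell}^{\tr}1_{r<0}$ have total variation $O(\ell^{-1})$; and the hypotheses on $(m_0,m_1)$ guarantee that $\{r|\sigma(\bcN)|\leq\e\}\cap\{|\sigma(\bcD)|\leq\e\}$ is compact, so outside a compact set one of the two weights is bounded below and the tails of $\nu_{d,\ell}^{\tr}$ are uniformly controlled. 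Tightness of $\nu_{n,\ell}^{\tr}$ then follows from $\nu_n=-r\nu_d$ on $\{r<0\}$, of $\nu_{j,\ell}^{\tr}$ from $|\nu_j|\leq\sqrt{\nu_d\nu_n}$, and the Dirichlet case from $\nu_{d,\ell}^{D}=\omega_{\ell,D}\leq\ell^{-2}$. You would need to rebuild your argument along these lines rather than via interior trace estimates.
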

\begin{proof}
Since $\{r\geq 0\}\subset T^{*}\boundary _{I,\ell}$ is a compact set, we need only consider $r<0$. By Lemma~\ref{l:elliptic}, 
\begin{equation} 
\label{e:measureElliptic}
\Re \nu_{j,\ell}^{I/D}1_{r<0}=0,\qquad \nu_{n,\ell}^{I/D}1_{r<0}=-r\nu_{d,\ell}^{I/D}1_{r<0}.
\end{equation}
On the other hand, the boundary condition on $\GammaI$ gives for $a\in C_c^\infty(\{r<0\}),$
\begin{align*}
\big\langle a(x,\tilde{h}D)\widetilde{\bcN}\tilde{h}D_n u, u\big\rangle &=\big\langle a(x,\tilde{h}D)\widetilde{\bcD} u, u\big\rangle  +O(\ell^{-1})+o(1)_{\tilde{h}\to 0}.
\end{align*}
Sending $\tilde{h}\to0$, we obtain 
$$
\nu^{\tr}_{j,\ell}(\sigma(\bcN)a)=\nu^{\tr}_{d,\ell}(\sigma(\bcD)a)+O(\ell^{-1}).
$$
In particular, 
$$
\|\nu^{\tr}_{j,\ell}(\sigma(\bcN))1_{r<0}-\nu_{d,\ell}^{\tr}(\sigma(\bcD))1_{r<0}\|=O(\ell^{-1}).
$$
Now, since $\Re \nu^{\tr}_{j,\ell}=0$ and $\nu_{d,\ell}^{\tr}$, $\sigma(\bcD)$ are real, 
$$
\|\nu_{d,\ell}^{\tr}(\sigma(\bcD))1_{r<0}\|=O(\ell^{-1}).
$$
Similarly, for $a\in C_c^\infty(\{r<0\}),$
\begin{align*}
\big\langle a(x,\tilde{h}D)\tilde{h}D_n u, \bcD u\big\rangle &=\big\langle a(x,\tilde{h}D) u, \widetilde{\bcN}h D_{\nu} u\big\rangle  +O(\ell^{-1})+o(1)_{\tilde{h}\to 0},
\end{align*}
so that, since $\sigma(\bcN)$ and $\sigma(\bcD)$ are both  real,
$$
\|\nu^{\tr}_{j,\ell}(\sigma(\bcD))1_{r<0}-\nu^{\tr}_{n,\ell}(\sigma(\bcN))1_{r<0}\|=O(\ell^{-1}).
$$
and hence
$$
\|\nu^{\tr}_{j,\ell}(\sigma(\bcD))1_{r<0}+r\nu^{\tr}_{d,\ell}(\sigma(\bcN))1_{r<0}\|=O(\ell^{-1}),
$$
which again implies
$$
\|r\nu_{d,\ell}^{\tr}(\sigma(\bcN))1_{r<0}\|=O(\ell^{-1}).
$$

We now claim that 
\beq\label{eq:compact}
\text{ there exists } \e>0 \tst \big\{r|\sigma(\bcN)|\leq\e\big\}\cap \big\{|\sigma(\bcD)|\leq\e\big\}\text{ is compact},
\eeq
which then implies that
$\nu_{d,\ell}^{\tr}$ is tight. 
We now show that \eqref{eq:compact} holds in each of the three cases: $m_0>m_1+1, m_0<m_1+1$, and $m_0=m_1+1$.
If $m_0>m_1+1$, then  $ \big\{|\sigma(\bcD)|\leq c/2\big\}$ is compact by \eqref{eq:conditions2d} since $m_0\geq 0$ by \eqref{eq:mcond0}.
If $m_0<m_1+1$ and $m_1\geq -2$ then $\big\{r|\sigma(\bcN)|\leq c/2\big\}$ is compact by \eqref{eq:conditions2c}; observe that the inequality $m_1\geq -2$ follows from $m_0<m_1+1$ since $m_0\geq 0$ by \eqref{eq:mcond0}.
We now show that if $m_0=m_1+1$ then the first inequality in \eqref{eq:conditions} implies that there exists $C>0$ such that if $|\xi'|\geq C$ then the intersection \eqref{eq:compact} with $\e=\sqrt{c/2}$ (with $c$ the constant in \eqref{eq:conditions2a} is empty (and hence compact). Indeed, since $m_0\geq 0$ and $\langle \xi\rangle\geq 1$,
\beqs
\text{ if }  |\sigma(\bcD)|^2 \leq (c/2) \quad\text{ then }\quad |\sigma(\bcD)|^2 \leq (c/2) \langle\xi\rangle^{2m_0}.
\eeqs
Now, by the first inequality in \eqref{eq:conditions}
\beqs
\text{ if } |\sigma(\bcD)|^2 \leq( c/2) \langle\xi\rangle^{2m_0} \quad\text{ then }\quad |\sigma(\bcN)|^2 \leq (c/2) \langle\xi\rangle^{2m_1}.
\eeqs
If  $|\sigma(\bcN)|^2 \leq (c/2) \langle\xi\rangle^{2m_1}$ then, since $m_1 >-2$, $r^2 |\sigma(\bcN)|^2 \geq c/2$ for sufficiently large $\xi$,
and thus \eqref{eq:compact} indeed holds with $\e=\sqrt{c/2}$.

The tightness of $\nu_{d,\ell}^{\tr}$ and \eqref{e:measureElliptic} then imply that $\nu_{n,\ell}^{\tr}$ is tight and $|\nu_{j,\ell}^{\tr}|\leq \sqrt{\nu_{d,\ell}^{\tr}\nu_{n,\ell}^{\tr}}$ implies that $\nu_{j,\ell}^{\tr}$  is tight.
Next, the boundary condition on $\Gsc$ gives that 
$$
\nu_{d,\ell}^D=\omega_{\ell,D}\leq \frac{1}{\ell^2}.
$$
Hence, $\nu_{n,\ell}^D$ and $\nu_{j,\ell}^D$ are tight as above.
\end{proof}

Since the boundary measures form tight sequences, extracting subsequences if necessary, we can assume $\nu_{d,\ell}^{I/D}\rightharpoonup \nu_d^{I/D}$, $\nu_{n,\ell}^{I/D}\rightharpoonup \nu_n^{I/D}$, and $\nu_{j,\ell}^{I/D}\rightharpoonup \nu_j^{I/D}$ for some measures $\nu_d^{I/D}$, and $\nu_n^{I/D}$, and a complex measure $\nu_j^{I/D}$. Furthermore, $\nu_{d,\ell}^D=\omega_{\ell,D}\to 0$, and hence $\nu_{j,\ell}^D\to 0$. We also have $\alpha_\ell\to 0$.

Since these measures converge as distributions and $\tilde{\Gamma}_{I,\ell}\to \GammaIinf$ in $C^\infty$, the equations from Lemma~\ref{lem:key_Miller} and Theorem~\ref{t:propagate} hold for the limiting measures on $\GammaIinf$. (Here, we think of $\tilde{\Gamma}_{I,\ell}$ as a $C^\infty$ graph over $\GammaIinf$.) In addition, since $\alpha_\ell \to 0$, 
\begin{equation*}
\mu(H_pa)=\lim_{\ell\to \infty}\mu_\ell(H_p a)=0,\qquad a\in C_c^\infty( T^* U_\infty\setminus B(0,R^{-1})).
\end{equation*}
In addition, \eqref{eq:reflect} holds by Lemma~\ref{lem:key_Miller}.

We now introduce notation for various billiard flows in the next section. First, let $\varphi^{\ell}_t$ denote the billiard flow on $\mathbb{R}^d\setminus (\Omegaminus/R_{\ell})$. Then, define 
$$
\varphi_t^{\infty}(x,\xi)=\lim_{\ell\to \infty}\varphi^{\ell}_t(x,\xi),\qquad (x,\xi)\in S^*\big(\mathbb{R}^d\setminus (\Omegaminus/R)\big).
$$
Note that, the convergence to $\varphi_t^\infty$ is uniform and, in the case $R<\infty$, $\varphi_t^{\infty}(x,\xi)$ agrees with the billiard flow on $\mathbb{R}^d\setminus (\Omegaminus/R)$ and we identify the two flows.

\begin{prop}
Suppose that $T<\infty$ and $A\subset S^*_{U_\ell}\mathbb{R}^d$with
\begin{equation*}
\bigcup_{0\leq t\leq T}\varphi^{\ell}_t(A)\cap \tilde{\Gamma}_{I,\ell}=\emptyset.
\end{equation*}
Then,
$$
\lim_{\ell\to \infty}\sup_{t\in [0,T]}\big|\mu_\ell(\varphi^{\ell}_t(A))-\mu_\ell(A)\big|=0
$$
\end{prop}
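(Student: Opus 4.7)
My plan is to apply Theorem~\ref{t:propagate} to $\mu_\ell$ on the rescaled domain $U_\ell$ (with semiclassical parameter $\tilde h = h_{k,\ell}/R_\ell$) and show that every term on the right-hand side of the propagation identity is $o_\ell(1)$, uniformly for $t\in[0,T]$. Under the hypothesis $\bigcup_{0\le t\le T}\varphi^\ell_t(A)\cap\widetilde\Gamma_{I,\ell}=\emptyset$, I can fix a smooth compactly supported cutoff $q$ which equals $1$ on a small thickening of the orbit tube of $A$ and whose support is disjoint from $\widetilde\Gamma_{I,\ell}$. Thus the only possible boundary contribution comes from the rescaled Dirichlet boundary $\Gamma_D/R_\ell$, which is already encoded into the billiard flow $\varphi^\ell_t$.

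Applying Theorem~\ref{t:propagate} to this $q$ yields
\begin{align*}
\pi_*\mu_\ell(q\circ\varphi^\ell_t)-\pi_*\mu_\ell(q)
&=\int_0^t\Bigl(-2\Im\pi_*\mu^j_\ell+\delta(x_1)\otimes(\mu^{\rm in,D}_\ell-\mu^{\rm out,D}_\ell)\\
&\qquad+\tfrac12(\Re\dot n^{j,D}_\ell)\,H_p^2 x_1\,\mu_\ell\mathbf 1_{\mathcal G_D}\mathbf 1_{x_1=0}\Bigr)(q\circ\varphi^\ell_s)\,ds,
\end{align*}
with all boundary terms supported on $\Gamma_D/R_\ell$. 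The source term is controlled by $|\mu^j_\ell(q\circ\varphi^\ell_s)|\le \sqrt{\mu_\ell(|q|^2)\alpha_\ell(1)}=O(1/\ell)$, using $\alpha_\ell(T^*U_\ell)\le 1/\ell^2$. The Dirichlet-reflection contribution equals $-2\Re\nu^D_{j,\ell}$ by~\eqref{eq:system1}; since $\nu^D_{d,\ell}=\omega_{\ell,D}\to 0$ and $\nu^D_{n,\ell}$ is tight (Lemma~\ref{lem:tight}), the Cauchy--Schwarz estimate $|\nu_j|^2\le \nu_d\nu_n$ forces $\nu^D_{j,\ell}\to 0$ in total variation. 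The glancing term is handled using Lemma~\ref{l:glancing} together with the identity $\nu^D_{j,\ell}=\dot n^{j,D}_\ell\,\nu^D_{n,\ell}$, which forces $\dot n^{j,D}_\ell\to 0$ in $\nu^D_{n,\ell}$-measure on $\mathcal G_D$. Integrating over $t\in[0,T]$ and using $T<\infty$ then gives $\sup_{t\in[0,T]}|\mu_\ell(q\circ\varphi^\ell_t)-\mu_\ell(q)|\to 0$.

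To replace $q$ by the indicator of $A$, I would sandwich $A$ between smooth cutoffs $q_-\le \mathbf 1_A\le q_+$ supported in $\e$-neighbourhoods of $\overline A$ (inside the original cutoff region). Since $\{\mu_\ell\}$ is a uniformly bounded family of measures supported in the compact set $S^*_{B(0,M)}\mathbb R^d$, the discrepancy $\mu_\ell(q_+-q_-)$ can be made arbitrarily small uniformly in $\ell$ by taking $\e$ small, up to a null-boundary condition on $\partial A$ which can be arranged by shrinking $A$ slightly. The main obstacle I anticipate is the glancing term on $\Gamma_D$: while smallness of the Dirichlet data forces $\dot n^{j,D}_\ell\to 0$ in $\nu^D_{n,\ell}$-measure, pairing this against $H_p^2 x_1\,(q\circ\varphi^\ell_s)$ and integrating in $s$ requires a uniform control of the $\nu^D_{n,\ell}$-mass on the glancing set, which should follow from Lemma~\ref{lem:tight}. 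A subsidiary technical point is that $\Gamma_D/R_\ell$ shrinks to the origin as $\ell\to\infty$ when $R_\infty=\infty$, so one must verify that the boundary contributions only pair with test functions $q$ supported away from this singular point---which is automatic for any fixed $A\subset S^*_{U_\ell}\mathbb R^d$ not touching $\Omega_-/R_\ell$ in the limit, and can be arranged in general by a further cutoff argument.
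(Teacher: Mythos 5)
Your proposal is correct and follows essentially the same route as the paper: the paper's own (two-line) proof also invokes Theorem~\ref{t:propagate} and observes that the source term is controlled by $\|\mu^j_\ell\|\leq C\sqrt{\alpha_\ell}=O(\ell^{-1})$ while the boundary contribution on $\Gamma_D/R_\ell$ is controlled by $\|\muin_{D,\ell}-\muout_{D,\ell}\|=2\|\Re\nu_{j,\ell}^D\|\leq C\sqrt{\|\omega_{D,\ell}\|}=O(\ell^{-1})$, exactly the two estimates at the heart of your argument (your explicit treatment of the glancing term is subsumed in the bound on $\Re\nu^D_{j,\ell}$, since by Lemma~\ref{l:finalEq} that term is just $-2\Re\nu_j^D\mathbf{1}_{\mathcal G}$). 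Your additional care in passing from smooth $q$ to $\mathbf 1_A$ goes beyond what the paper writes down, and is most cleanly closed by reading Theorem~\ref{t:propagate} as a total-variation bound on $(\varphi^\ell_{-t})_*\mu_\ell-\mu_\ell$, which then applies to arbitrary Borel sets without a sandwich argument.
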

\begin{proof}
This follow from Theorem~\ref{t:propagate} since 
$$\|\muin_{D,\ell}-\muout_{D,\ell}\|=2\|\Re \nu_{j,\ell}^D\|\leq C \sqrt{\|\omega_{D,\ell}\|}=O(\ell^{-1})
$$
and
$$
\|\mu^j_\ell\|\leq C\sqrt{\alpha_\ell}=O(\ell^{-1}).
$$
\end{proof}

Next, we show that $\mu_\infty$ is invariant under $\varphi_t^{\infty}$ when $R<\infty$. 
\begin{lem}
\label{lem:invarianceLimit}
Suppose that $R<\infty$ and that $A\subset S^*_{U_\infty}\mathbb{R}^d$ is closed and
$$
\bigcup_{0\leq t\leq T}\varphi_t^{\infty}(A)\cap \GammaIinf=\emptyset.
$$
Then, 
$$
\mu(\varphi_t^{\infty}(A))=\mu(A).
$$
\end{lem}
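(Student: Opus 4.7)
The plan is to transfer the asymptotic invariance of $\mu_\ell$ under $\varphi_t^\ell$ (given by the preceding proposition) to exact invariance of $\mu$ under $\varphi_t^\infty$, combining the weak convergence $\mu_\ell \rightharpoonup \mu$ with the uniform convergence $\varphi_t^\ell \to \varphi_t^\infty$ on compact subsets of $S^*(\mathbb{R}^d \setminus \overline{\Omegaminus/R})$ (available because $R<\infty$), and Radon regularity to descend from continuous test functions to indicators of closed sets.

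First I would upgrade the preceding proposition from indicators to continuous test functions: its proof via Theorem~\ref{t:propagate} applies verbatim with $q\in C_c^\infty$ in place of characteristic functions, yielding
\begin{equation*}
\sup_{t \in [0,T]} \big|\mu_\ell(\phi \circ \varphi_{-t}^\ell) - \mu_\ell(\phi)\big| \underset{\ell\to\infty}{\longrightarrow} 0
\end{equation*}
for any $\phi \in C_c(T^*U_\infty \setminus T^*_{\GammaIinf})$ whose $\varphi_t^\ell$-orbit over $[0,T]$ avoids $\widetilde{\Gamma}_{I,\ell}$; the change of variable $\mu_\ell(\varphi_t^\ell(A))=\mu_\ell(1_A\circ \varphi_{-t}^\ell)$ reconciles this with the set-wise statement. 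Since $\bigcup_{0 \leq t \leq T} \varphi_t^\infty(A) \subset S^*_{\overline{U_\infty}}\mathbb{R}^d$ is compact (using $R<\infty$) and disjoint from $T^*\GammaIinf$ by hypothesis, and since $\widetilde{\Gamma}_{I,\ell}\to \GammaIinf$ smoothly while $\varphi_t^\ell\to \varphi_t^\infty$ uniformly on compacta, there is a fixed neighbourhood $V$ of $\bigcup_{0 \leq t \leq T} \varphi_t^\infty(A)$ in $S^*U_\infty \setminus T^*_{\GammaIinf}$ such that the flow-avoidance hypothesis holds for any $\phi$ compactly supported in $V$ and all $\ell$ large.

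Next, for such $\phi$, I take $\ell\to\infty$. Weak convergence yields $\mu_\ell(\phi) \to \mu(\phi)$; uniform continuity of $\phi$ with the uniform convergence of the flows on $V$ gives $\phi \circ \varphi_{-t}^\ell \to \phi \circ \varphi_{-t}^\infty$ uniformly in $(t,x)\in[0,T]\times V$, so using the total-mass bound $\mu_\ell(S^*\mathbb{R}^d)\leq 1$,
\begin{equation*}
\big|\mu_\ell(\phi \circ \varphi_{-t}^\ell) - \mu_\ell(\phi \circ \varphi_{-t}^\infty)\big| \leq \|\phi\circ \varphi_{-t}^\ell - \phi \circ \varphi_{-t}^\infty\|_\infty \longrightarrow 0.
\end{equation*}
A further application of weak convergence gives $\mu_\ell(\phi\circ \varphi_{-t}^\infty)\to \mu(\phi\circ \varphi_{-t}^\infty)$; combining the three limits with the invariance from the first step produces $\mu(\phi \circ \varphi_{-t}^\infty)=\mu(\phi)$ for every such test function.

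Finally, I pass to the closed set $A$ by choosing, via Urysohn, a decreasing sequence $\phi_n \in C_c(V)$ with $\phi_n \searrow 1_A$ pointwise. Since $\phi_n\circ\varphi_{-t}^\infty \searrow 1_A\circ \varphi_{-t}^\infty=1_{\varphi_t^\infty(A)}$ pointwise and $\mu(\phi_1)<\infty$ (as $\phi_1$ is compactly supported and $\mu$ is Radon), monotone convergence yields $\mu(\phi_n) \to \mu(A)$ and $\mu(\phi_n\circ \varphi_{-t}^\infty)\to \mu(\varphi_t^\infty(A))$, giving $\mu(\varphi_t^\infty(A))=\mu(A)$. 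The main technical subtlety is the joint $\ell$-limit of the second step, where the test function itself depends on $\ell$; this is resolved cleanly by the uniform total-mass bound on $\mu_\ell$ combined with the uniform convergence of the flows on a fixed compact neighbourhood.
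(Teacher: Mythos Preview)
Your approach via continuous test functions and Urysohn is a legitimate alternative to the paper's more direct set-theoretic argument. The paper instead sandwiches using $\epsilon$-neighbourhoods: from the uniform convergence $\varphi_t^\ell\to\varphi_t^\infty$ it gets, for each $\epsilon>0$ and $\ell$ large, the inclusions $\varphi_t^\ell(A)\subset\{d(\cdot,\varphi_t^\infty(A))<\epsilon\}$ and $(\varphi_t^\ell)^{-1}(\varphi_t^\infty(A))\subset\{d(\cdot,A)<\epsilon\}$, applies the asymptotic $\varphi_t^\ell$-invariance of $\mu_\ell$ directly on sets, passes to the weak limit, and then sends $\epsilon\to 0$. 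Your route trades this $\epsilon$-sandwich for an approximation of $1_A$ by continuous functions, which is equally clean.

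There is, however, one concrete slip. You take $V$ to be a neighbourhood of the \emph{whole orbit} $\bigcup_{0\le t\le T}\varphi_t^\infty(A)$ and claim the flow-avoidance hypothesis holds for every $\phi$ supported in $V$. This is false: a $\phi$ supported near the endpoint $\varphi_T^\infty(A)\subset V$ has forward $\varphi_t^\ell$-orbit over $[0,T]$ close to $\bigcup_{T\le s\le 2T}\varphi_s^\infty(A)$, which the lemma's hypothesis does not control and which may well meet $\GammaIinf$. The fix is to take $V$ to be a small neighbourhood of $A$ only; then the forward orbit of $V$ over $[0,T]$ lies near $\bigcup_{0\le t\le T}\varphi_t^\infty(A)$, which is disjoint from $\GammaIinf$, and hence from $\widetilde\Gamma_{I,\ell}$ for $\ell$ large. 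Since your Urysohn step only uses $\phi_n\searrow 1_A$ supported near $A$, this is all that is needed. Correspondingly, in step~2 the sup-norm bound on $\phi\circ\varphi_{-t}^\ell-\phi\circ\varphi_{-t}^\infty$ must be taken over $\varphi_t^\ell(\operatorname{supp}\phi)\cup\varphi_t^\infty(\operatorname{supp}\phi)$ (a compact set near the orbit), not over $V$ itself; this is where the uniform convergence of the flows is actually applied.
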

\begin{proof}
First, note that since the convergence of $\varphi^{\ell}_t$ to $\varphi_t^{\infty}$ is uniform, 
$$
\lim_{\ell\to \infty}d(\varphi_t^{\infty}(A),\varphi^{\ell}_t(A))=0.
$$
Therefore, fixing $\e>0$, for $\ell$ large enough, 
$$
\varphi^{\ell}_t(A)\subset \{(x,\xi)\,:\, d(\varphi_T(A),(x,\xi))<\e\}
$$
and 
$$
\varphi_\ell^{-T}(\varphi_t^{\infty}A)\subset \{(x,\xi)\,:\, d(A,(x,\xi))<\e\}
$$
Now, for finite times $T$, $\mu_\ell$ is invariant under $\varphi^{\ell}_t$ up to $o(1)_{\ell\to\infty}$. Combining this with the fact that our assumption on $A$ implies that, for $\ell$ large enough, $\varphi^{\ell}_t$ does not intersect $\GammaI$ in $[0,T]$, we have
$$
\mu_\ell(\varphi_t^{\infty}(A))=\mu_\ell(\varphi_\ell^{-T}\varphi_t^{\infty}(A))+o(1)_{\ell\to \infty}\leq \mu_\ell(\{(x,\xi)\,:\, \dist\big((x,\xi),A\big)<\e\}+o(1)_{\ell\to \infty}
$$
and 
$$
\mu_\ell(A)=\mu_\ell(\varphi^{\ell}_t(A))+o(1)_{\ell\to\infty}\leq \mu_\ell(\{(x,\xi)\,:\, \dist\big((x,\xi),\varphi_t^{\infty}(A)\big)<\e\}+o(1)_{\ell\to \infty}.
$$
Sending $\ell\to \infty$ and then $\e\to 0$, we obtain
$$
\mu(A)=\mu(\varphi_t^{\infty}(A))
$$
as claimed.
\end{proof}

\begin{rem*} Note that when $R=\infty$, the analogue of Lemma~\ref{lem:invarianceLimit} is obvious except on the sets $\{\xi=\pm \tfrac{x}{|x|}\}$ and $\{x=0\}$ since we can test $\mu$ against $H_pa$ away from these sets.
\end{rem*}

In the case $R=\infty$, we use the following lemmas.
\begin{lem}
\label{lem:zeroIsZero}
If $R=\infty$, then $\mu(\{x=0\}=0\}$.
\end{lem}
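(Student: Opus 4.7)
The plan is to bound $\mu\big(S^*_{\overline{B(0,\epsilon)}}\mathbb{R}^d\big)$ by a quantity that tends to zero as $\epsilon\to 0^+$, modulo contributions from the ``bad sets'' $\{x=0\}$ and $\{x\neq 0,\,\xi=\pm x/|x|\}$ that will be handled separately. Since $\{x=0\}\cap S^*\mathbb{R}^d \subset S^*_{\overline{B(0,\epsilon)}}\mathbb{R}^d$ for every $\epsilon>0$, this will give $\mu(\{x=0\})=0$. The key observation is that when $R=\infty$ the obstacles $\Omegaminus/R_\ell$ collapse to the single point $\{0\}$, so the limiting flow $\varphi_t^\infty$ is the free geodesic flow everywhere in $\Omega_\infty$ away from the origin. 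By the remark following Lemma~\ref{lem:invarianceLimit}, the invariance $\mu(\varphi_t^\infty(A))=\mu(A)$ then holds on every closed set $A$ avoiding $\Gamma_\infty$ and the two bad sets.

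The main step would be a Fubini--disjointness estimate in the spirit of the proof of Lemma~\ref{lem:crown}. Since $0$ is an interior point of $\Omega_\infty$, the time $T_*:=d(0,\Gamma_\infty)/3$ is strictly positive and independent of $\epsilon$. For $\delta>0$, set
\[
A_\delta := \Big\{(x,\xi)\in S^{*}\overline{B(0,\epsilon)}\,:\,|x|\geq \delta\ \tand\ \big||x|^{-1}\langle x,\xi\rangle\big|\leq 1-\delta\Big\},
\]
which avoids both bad sets and whose flowout over $[0,T_*]$ stays inside $\Omega_\infty\setminus\Gamma_\infty$ for $\epsilon$ small. Invariance on $A_\delta$ and Fubini would then give
\[
T_*\,\mu(A_\delta)=\int_0^{T_*}\mu\big(\varphi_t^\infty(A_\delta)\big)\,dt \leq \int d\mu(\rho)\,\Big|\big\{t\in[0,T_*]:\projx(\varphi_{-t}^\infty(\rho))\in B(0,\epsilon)\big\}\Big|\leq \epsilon\,\mu(T^*\mathbb{R}^d)\leq \epsilon,
\]
since each free ray of speed $2$ spends time at most $\epsilon$ inside $B(0,\epsilon)$. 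Sending $\delta\to 0^+$ and using monotonicity would yield
\[
\mu\Big(S^*_{\overline{B(0,\epsilon)}}\mathbb{R}^d\setminus\big(\{x=0\}\cup\{\xi=\pm x/|x|\}\big)\Big)\leq C\epsilon/T_*.
\]

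The hardest remaining task is to show that the bad sets carry no $\mu$-mass that survives the limit $\epsilon\to 0$. For the incoming radial rays $\{\xi=-x/|x|\}$, I would adapt Lemma~\ref{lem:null_incom} via the decomposition $\tilde u_{k,\ell}=\tilde w_{k,\ell}+\tilde v_{k,\ell}$, where $\tilde w_{k,\ell}$ is the rescaled outgoing exterior Dirichlet solution with data of norm $\|\widetilde{G}_{D,k,\ell}\|_{H^1_{\tilde h}}\leq C\ell^{-1}$: by Lemma~\ref{lem:WFu} the $\tilde h$-wavefront set of $\tilde w_{k,\ell}$ lies in the outgoing cone so that incoming rays carry no $\tilde w$-mass, while the rescaled version of Lemma~\ref{lem:mass_u} yields $\|\tilde w_{k,\ell}\|_{L^2(B(0,\epsilon))}\leq C\epsilon^{1/2}\ell^{-1}\to 0$. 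For the fiber $\{x=0\}$ and the outgoing rays $\{\xi=+x/|x|\}$, the invariance noted in the remark after Lemma~\ref{lem:invarianceLimit} forces any mass there to propagate backward onto the incoming rays; since the incoming rays were just shown to carry no mass, the total bad-set contribution is $O(\epsilon)$ as well. Sending $\epsilon\to 0^+$ then gives $\mu(\{x=0\})=0$. The main technical challenge will be to make this flow-invariance argument rigorous at the degenerate bad sets where $\varphi_t^\infty$ is not smooth, which likely requires combining the absolute continuity statements from Theorem~\ref{t:propagate} with a careful $\delta\to 0$ limiting procedure.
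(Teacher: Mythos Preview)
Your Fubini/disjointness idea for the ``good'' part $A_\delta$ is fine and is the same mechanism the paper uses. The gap is in your treatment of the ``bad sets'' $\{x=0\}$ and $\{\xi=\pm x/|x|\}$, which is circular. For the outgoing radial rays and the fibre over the origin you invoke ``the invariance noted in the remark after Lemma~\ref{lem:invarianceLimit}''---but that remark says precisely that invariance is \emph{not} available on those sets. You cannot propagate mass backward through $x=0$ without first knowing that $\mu$ behaves well there, which is the content of the lemma you are trying to prove (and of Lemma~\ref{lem:singularReflect}, which in the paper's logic comes \emph{after} Lemma~\ref{lem:zeroIsZero}). For the incoming radial rays, your decomposition $\tilde u_{k,\ell}=\tilde w_{k,\ell}+\tilde v_{k,\ell}$ does not help: even granting that $\tilde w_{k,\ell}$ is outgoing and small, the remainder $\tilde v_{k,\ell}$ satisfies the absorbing boundary condition on $\widetilde\Gamma_{I,\ell}$, which reflects mass, so there is no reason its defect measure avoids incoming rays. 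You never control the $\tilde v$-mass on $\{\xi=-x/|x|\}$.

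The paper sidesteps all of this by working at the pre-limit level $\mu_\ell$, where the rescaled obstacle $\Omegaminus/R_\ell$ is small but nondegenerate and the flow $\varphi_t^\ell$ is perfectly well defined everywhere. Nontrapping of $\Omegaminus$ then gives, for $\ell$ large, that $\varphi^\ell_{4\e}(\{|x|\leq\e\})\subset\{2\e\leq|x|\leq 6\e,\ \langle x/|x|,\xi\rangle>c\}$ for some $c>0$; this already pushes \emph{all} of $\{|x|\leq\e\}$---including the radial directions---into a region moving outward. The forward translates $\varphi^\ell_{4\e+c^{-1}j}(\{|x|\leq\e\})$ for $0\leq j<c\e^{-1}$ are then pairwise disjoint, and since $\mu_\ell(T^*\mathbb{R}^d)\leq 1$ this yields $\mu_\ell(\{|x|\leq\e\})\leq C\e+o_{\ell\to\infty}(1)$; passing to the limit in $\ell$ and then $\e\to 0$ finishes. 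The key difference from your approach is that nontrapping at finite $\ell$ handles the bad directions automatically, with no need for any invariance of the limiting measure.
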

\begin{proof}
Fix $\e>0$.  Since $\Omegaminus$ is nontrapping and $\Gsc\Subset B(0,1)$, there is $T>0$ and $c>0$ such that 
$$
\bigcup_{\pm t\geq TR_\ell^{-1}}\varphi^{\ell}_t\big(|x|\leq 2R_{\ell}^{-1}\big)\cap \Big(\big\{|x|\leq 3R_{\ell}^{-1}\big\}\cup \big\{| \langle \tfrac{x}{|x|},\xi\rangle| \leq c\big\}\Big)=\emptyset. 
$$ 
Thus, for $\ell$ large enough 
$$
\varphi^\ell_{4\e}(|x|\leq \e)\subset \{ 2\e\leq |x|\leq 6\e, \,\xi\cdot \tfrac{x}{|x|}>c\}.
$$
In particular, there is $c>0$ such that  for $j\neq k$, $0\leq j<k<c\e^{-1}$
$$
\varphi^{\ell}_{4\e +c^{-1}k}(\{|x|\leq \e\})\cap \varphi^{\ell}_{4\e+c^{-1}j }(\{|x|\leq \e\})\Big)=\emptyset.
$$
Since $\mu_\ell(T^*\mathbb{R}^d)\leq 1$, this implies that 
$$
\mu_\ell(\{|x|\leq \e\})\leq C\e +o_{\ell\to \infty}(1)
$$
and hence, sending $\ell\to \infty$, 
$$
\mu(\{|x|\leq \e\})\leq C\e.
$$
Finally, sending $\e\to 0$ proves the claim.
\end{proof}

\begin{lem}
\label{lem:singularReflect}
If $R=\infty$ then $\mu_\infty$ is invariant under $\varphi_t^\infty$ away from $\GammaIinf$.
\end{lem}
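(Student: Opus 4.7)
The plan is to reduce to the situation handled by the remark preceding the lemma (where invariance holds by pairing $\mu$ against $H_p a$ for $a \in C_c^\infty$ supported in the complement of $\{x=0\} \cup \{\xi=\pm x/|x|\}$), and then to control the contribution from the singular sets using Lemma~\ref{lem:zeroIsZero} together with a disjointness argument in the spirit of that proof.

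First I would fix a closed $A\subset S^*_{U_\infty}\mathbb{R}^d$ with $\bigcup_{0\leq s\leq t}\varphi_s^\infty(A)\cap \GammaIinf = \emptyset$ and decompose $A = A_{\epsilon}^{\rm reg} \cup A_\epsilon^{\rm sing}$, where $A_\epsilon^{\rm reg}$ consists of those $\rho\in A$ whose forward $\varphi_s^\infty$-trajectory over $[0,t]$ stays in $\{|x|\geq \epsilon\}$ and avoids an $\epsilon$-neighbourhood of $\{\xi=\pm x/|x|\}$, and $A_\epsilon^{\rm sing}$ is the remainder. For $\rho\in A_\epsilon^{\rm reg}$ and $\ell$ large enough that $1/R_\ell<\epsilon/2$, the obstacle $\Omegaminus/R_\ell\subset B(0,1/R_\ell)$ does not meet these trajectories, so $\varphi_s^\ell(\rho)=\varphi_s^\infty(\rho)$ for all $s\in[0,t]$, and the convergence is uniform on $A_\epsilon^{\rm reg}$. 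The argument of Lemma~\ref{lem:invarianceLimit}---combined with the approximate invariance of $\mu_\ell$ under $\varphi_s^\ell$ provided by Theorem~\ref{t:propagate} (the trace terms $\mu^{\rm in/out}_{D,\ell}$ and $\mu^j_\ell$ vanish in the limit by the data estimates $\|g_{k,\ell,\bullet}\|,\|f_{k,\ell}\|=O(\ell^{-1})$, exactly as in the proof of Lemma~\ref{lem:invarianceLimit})---then yields $\mu(\varphi_t^\infty(A_\epsilon^{\rm reg}))=\mu(A_\epsilon^{\rm reg})$.

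To complete the proof I would show that $\mu(A_\epsilon^{\rm sing})\to 0$ as $\epsilon\to 0$; the analogous bound $\mu(\varphi_t^\infty(A_\epsilon^{\rm sing}))\to 0$ follows by repeating the argument for the time-reversed flow. The set $A_\epsilon^{\rm sing}$ is contained in the union of $\bigcup_{0\leq s\leq t}\varphi_{-s}^\infty(\{|x|\leq\epsilon\})$ with an $\epsilon$-tube around the rays $\{\xi=\pm x/|x|\}$. The main obstacle is bounding the $\mu$-mass of this tube: Lemma~\ref{lem:zeroIsZero} alone only yields $\mu(\{|x|\leq\epsilon\})\leq C\epsilon$, and summing naively across $\lceil t/\epsilon\rceil$ time slabs gives an ineffective $O(t)$ bound. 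To remove the $\epsilon^{-1}$ loss, I would adapt the disjointness mechanism from the proof of Lemma~\ref{lem:zeroIsZero}: after waiting an additional time $4\epsilon$, each slab is pushed into the outgoing shell $\{2\epsilon\leq|x|\leq 6\epsilon,\ \xi\cdot x/|x|>c\}$, and further flow-translates spaced by $c^{-1}$ remain pairwise disjoint up to the boundary encounter time; combining this with the uniform mass bound $\mu_\ell(T^*\mathbb{R}^d)\leq 1$ and passing to the limit $\ell\to \infty$ yields $\mu(A_\epsilon^{\rm sing})\leq C t\epsilon$, which tends to zero as $\epsilon\to 0$ and finishes the proof.
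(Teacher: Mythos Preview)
Your approach has a genuine gap. The issue is the claim that the disjointness mechanism from Lemma~\ref{lem:zeroIsZero} yields $\mu(A_\epsilon^{\rm sing})\leq Ct\epsilon$. It does not. The argument in Lemma~\ref{lem:zeroIsZero} produces $\sim\epsilon^{-1}$ pairwise disjoint forward $\varphi^\ell$-translates of $S^*B(0,\epsilon)$, which gives $\mu_\ell(S^*B(0,\epsilon))\leq C\epsilon$. But $A_\epsilon^{\rm sing}$ is (essentially) the \emph{backward} flow-out $\bigcup_{s\in[0,t]}\varphi_{-s}^\ell(S^*B(0,\epsilon))$, and your ``slabs'' $\varphi^\ell_{-j\epsilon}(S^*B(0,\epsilon))$ all collapse, after the appropriate forward waiting time, onto the \emph{same} outgoing shell $S_\epsilon$; they are not mutually disjoint in the way the forward translates of $S_\epsilon$ are. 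The only disjointness available for $A_\epsilon^{\rm sing}$ itself comes from translating the whole tube by multiples of $t$, which yields at most $O(1/t)$ disjoint copies in the bounded domain and hence only $\mu_\ell(A_\epsilon^{\rm sing})\leq Ct$, independent of $\epsilon$.

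More fundamentally, your strategy requires $\mu\big(\bigcap_\epsilon A_\epsilon^{\rm sing}\big)=0$, i.e.\ that $\mu$ does not charge the set of trajectories passing through the origin---equivalently, the radial set $\{\xi=-x/|x|\}\cap\{|x|\leq 2t\}$. Nothing established so far rules this out: a measure concentrated on a single line through the origin is free-flow invariant, has $\mu(\{|x|\leq\epsilon\})=O(\epsilon)$, yet has $\mu(A_\epsilon^{\rm sing})\to t>0$. The paper therefore does \emph{not} try to show the singular mass vanishes. Instead it fixes the spheres $A_\pm=\{\pm\xi=x/|x|,\ |x|=1/(2M)\}$ and proves directly, via the approximate $\varphi^\ell_t$-invariance of $\mu_\ell$ and the uniform convergence $\varphi^\ell_{1/M}|_{A_-}\to\varphi^\infty_{1/M}|_{A_-}$, that $\mu$-mass on a tube around $B_-\subset A_-$ equals the $\mu$-mass on the tube around $B_+=\varphi^\infty_{1/M}(B_-)\subset A_+$. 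This transports mass across the origin rather than showing there is none, and together with $\mu(H_pa)=0$ for $a$ supported in $T^*_{U_\infty\setminus\{0\}}\mathbb{R}^d$ it gives the full $\varphi_t^\infty$-invariance.
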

\begin{proof}
Let 
$$
A_{\pm}:=\big\{\pm\xi=\tfrac{x}{|x|}\big\}\cap \big\{ |x|=\tfrac{1}{2\MPade } \big\}.
$$
 Note that $\mu_\ell$ is invariant under $\varphi_t^{\ell}$ modulo $o_{\ell\to \infty}(1)$. Now, $\widetilde{\Gamma}_{D,\ell}\subset B(0,R^{-1}_\ell)$. Since $R_\ell\to \infty$, and $\Omegaminus$ is nontrapping for $(x,\xi)\in A_-$,
$$
\lim_{\ell\to \infty}\sup_{(x,\xi)\in A_-}\dist(\varphi^\ell_{1/M}(x,\xi),A_+)=0.
$$ 
Similarly,
$$
\lim_{\ell\to \infty}\sup_{(x,\xi)\in A_+}\dist(\varphi^\ell_{-1/M}(x,\xi),A_-)=0.
$$ 
Now, for $\delta>0$ small enough, $-\delta\leq t\leq \delta$ and $\dist\big((x,\xi), A_{\pm}\big)\leq \delta$, $\varphi^{\ell}_t(x,\xi)=\varphi_t^{\infty}(x,\xi)$. In particular, for $B_0\subset A_-$,
$$
\mu_\ell\Big(\bigcup_{-\delta\leq t\leq \delta}\varphi_t^{\infty}(B_-)\Big)=\mu_\ell\Big(\bigcup_{-\delta\leq t\leq \delta}\varphi_t^{\ell}(B_-)\Big)=\mu_\ell\Big(\bigcup_{\frac{1}{M}-\delta\leq t\leq \frac{1}{M}+\delta}\varphi_t^{\ell}( B_-)\Big)+o_{\ell\to \infty}(1).
$$
Fix $\e>0$. Then for $\ell$ large enough,
$$
\bigcup_{1/M-\delta\leq t\leq 1/M+\delta}\varphi_t^{\ell}( B_-)\subset \bigcup_{-\delta\leq t\leq \delta}\varphi_t^{\ell}( \{(x,\xi)\,:\, \dist\big((x,\xi),\varphi_{1/M}^\infty(B_-)\big)\leq \e\}).
$$
In particular
\begin{align*}
\mu_\ell\Big(\bigcup_{-\delta\leq t\leq \delta}\varphi_t^{\infty}(B_-)\Big)&\leq \mu_\ell\Big( \bigcup_{-\delta\leq t\leq \delta}\varphi_t^{\ell}( \{(x,\xi)\mid \dist\big((x,\xi),\varphi_{1/M}^{\infty}(B_-)\big)\leq \e\})
\Big)+o(1)_{\ell\to\infty},\\
&= \mu_\ell\Big( \bigcup_{-\delta\leq t\leq \delta}\varphi_t^{\infty}( \{(x,\xi)\,:\, \dist\big((x,\xi),\varphi_{1/M}^{\infty}(B_-)\big)\leq \e\})
\Big)+o(1)_{\ell\to\infty},
\end{align*}
where in the last line we use that $\varphi^{\ell}_t=\varphi_t^{\infty}$ on the relevant set.
Similarly,  for $\ell$ large enough  (depending on $\e$), and $B_+\subset A_+$
$$
\mu_\ell\Big(\bigcup_{-\delta\leq t\leq \delta}\varphi_t^{\infty}(B_+)\Big)\leq \mu_\ell\Big( \bigcup_{-\delta\leq t\leq \delta}\varphi_t^{\infty}( \{(x,\xi)\,:\, \dist\big((x,\xi),\varphi_{-1/M}^{\infty}(B_+)\big)\leq \e\})
\Big)+o(1)_{\ell\to\infty}.
$$
Putting $B_+=\varphi_{1/M}^\infty(B_-)$, sending $\ell \to \infty$ and then $\e \to 0$, we obtain
\begin{align*}
\mu\Big(\bigcup_{-\delta\leq t\leq \delta}\varphi_t^{\infty}(B_+)\Big)\leq \mu\Big( \bigcup_{-\delta\leq t\leq \delta}\varphi^\infty_{t-1/M}( B_+)\Big)&= \mu\Big( \bigcup_{-\delta\leq t\leq \delta}\varphi^\infty_{t}( B_-)\Big)\\&\leq \mu\Big(\bigcup_{-\delta\leq t\leq \delta}\varphi^\infty_{t+1/M}(B_-)\Big)= \mu\Big( \bigcup_{-\delta\leq t\leq \delta}\varphi^\infty_{t}( B_+)\Big),
\end{align*}
and the  claim then follows from the fact that 
$$
\mu(H_pa)=0
$$
for all $a\in C_c^\infty( T^*_{U_\infty\setminus \{0\}}\mathbb{R}^d).$
\end{proof}

We now derive our contradiction to prove the bound \eqref{eq:trunc_estimate} and thus complete the proof of Theorem~\ref{th:uniformestimates2}. By~Lemmas \ref{lem:invarianceLimit},~\ref{lem:zeroIsZero}, and~\ref{lem:singularReflect}, $\mu$ is invariant under $\varphi_t^\infty$ away from $\GammaIinf$.  In particular, Lemma~\ref{lem:contradictMe} applies and we obtain that $\mu=0$, which is a contradiction to~\eqref{e:ImABigMeasure}.


\section{Proofs of the bounds on the relative error (Theorems \ref{th:lower}-\ref{th:local_square})}\label{sec:mainproofs}

As discussed in \S\ref{sec:outgoing}, the upper bounds in Theorem \ref{th:quant} and in Theorem \ref{th:quant3} follow from 
applying Theorem \ref{th:uniformestimates} to $u-v$ and then using Lemma \ref{lem:impedancetrace}. It therefore remains to prove the 
lower bounds in Theorems \ref{th:lower}, \ref{th:quant},  \ref{th:quant2}, \ref{th:local_ball}, and \ref{th:local_square}.

\subsection{Existence of defect measures}

\begin{lem}
\label{l:defectExist}
If $\Omegaminus$ is nontrapping, then 
Assumption \ref{ass:1} holds for $u$ and $v$ the solutions of \eqref{eq:BVP2} and \eqref{eq:BVPimp2}, respectively.
\end{lem}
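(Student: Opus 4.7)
The plan is to establish the three bounds in Assumption~\ref{ass:1} for $u$ and $v$ separately, in each case reducing to results already in hand: Lemma~\ref{lem:mass_u} or Theorem~\ref{th:uniformestimates} for the interior $L^2$ bound, and Theorem~\ref{th:higherOrderBound} (via Corollaries~\ref{cor:L2RHS}, \ref{cor:Dir}) for the trace bounds. The key observation underpinning both cases is that the plane-wave Dirichlet data $g := \exp(ia\cdot x/h)$ satisfies $\Vert g\Vert_{H_h^1(\Gamma_D)} \leq C$ uniformly in $h$, since $|g| = 1$ and $h\nabla g = iag$ has unit modulus.

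For $v$ the argument will be essentially immediate. Applying Theorem~\ref{th:uniformestimates} with $f = 0$, $g_I = 0$, $g_D = g$, and with $R$ a fixed parameter, the right-hand side of \eqref{eq:GenImpResolvent} is uniformly bounded in $h = k^{-1}$, so $\Vert v\Vert_{H_h^1(\domain)} \leq C$, which yields $\Vert \chi v\Vert_{L^2} \leq C$. For the traces I would then apply Theorem~\ref{th:higherOrderBound} directly to $v$ on $\domain$, treating $\Gamma_D$ (Dirichlet, $\bcN = 0$, $\bcD = I$) and $\GammaI$ (the Pad\'e condition) as two separate boundary components — both satisfying \eqref{eq:conditions} together with one of \eqref{eq:conditions2a}--\eqref{eq:conditions2d}, as was already verified in the deduction of Theorem~\ref{th:uniformestimates} from Theorem~\ref{th:uniformestimates2}. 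Choosing $\ell_i$ within the admissible range \eqref{eq:ellineq} so that \eqref{eq:trace1} yields $L^2$ traces then gives the desired bound on $\Vert v\Vert_{L^2(\Gamma_D\cup\GammaI)} + \Vert h\partial_n v\Vert_{L^2(\Gamma_D\cup\GammaI)}$.

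For $u$ the interior $L^2$ bound $\Vert \chi u\Vert_{L^2} \leq C$ follows directly from Lemma~\ref{lem:mass_u}, applied with any $R \geq 1$ large enough that $\supp \chi \subset B(0,R)$. For the trace bounds on $\Gamma_D$, since Lemma~\ref{lem:mass_u} is an exterior-domain estimate and Corollary~\ref{cor:Dir} is stated for a bounded domain, I plan a standard cutoff step: fix $R_0$ with $\Omegaminus \Subset B(0,R_0/2)$, choose $\chi_0 \in C_c^\infty(\mathbb{R}^d)$ with $\chi_0 \equiv 1$ on $B(0,R_0/2)$ and $\supp \chi_0 \subset B(0,R_0)$, and observe that $w := \chi_0 u$ solves a Dirichlet problem on the bounded domain $B(0,R_0)\setminus\overline{\Omegaminus}$ with data $g$ on $\Gamma_D$ and $0$ on $\partial B(0,R_0)$, and source $\widetilde f := -h[\Delta,\chi_0]u$ whose $L^2$ norm is $\lesssim \Vert u\Vert_{H_h^1(B(0,R_0)\setminus\overline{\Omegaminus})}$ and hence $O(1)$ by Lemma~\ref{lem:mass_u}. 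Corollary~\ref{cor:Dir} then controls $\Vert w\Vert_{L^2(\Gamma_D)} + \Vert hD_n w\Vert_{L^2(\Gamma_D)}$, and since $\chi_0 \equiv 1$ in a neighbourhood of $\Gamma_D$ this coincides with the corresponding norm of $u$.

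No part of the argument is expected to present a serious obstacle: the proof is essentially a bookkeeping assembly of the resolvent and trace estimates of \S\ref{sec:resolventestimates}. The only minor technicality is the cutoff device converting the exterior $H_h^1$ control of Lemma~\ref{lem:mass_u} into a compactly-supported Dirichlet problem to which Corollary~\ref{cor:Dir} applies.
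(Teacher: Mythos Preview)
Your proposal is correct and follows essentially the same route as the paper: Lemma~\ref{lem:mass_u} and Theorem~\ref{th:uniformestimates} for the interior bounds, and Corollaries~\ref{cor:Dir} and~\ref{cor:L2RHS} for the traces, with the Dirichlet trace being trivially bounded since $u|_{\Gamma_D}=v|_{\Gamma_D}=\exp(ia\cdot x/h)$ has unit modulus. Your cutoff step to place the exterior solution $u$ into a bounded-domain setting before invoking Corollary~\ref{cor:Dir} is a detail the paper glosses over but implicitly requires.
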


\bpf
The bound on $\|\chi u\|_{L^2}$ follows from Lemma~\ref{lem:mass_u}; the bound on $\|hD_n u\|_{L^2(\Gsc)}$ follows from Corollary~\ref{cor:Dir} and that on $\|u\|_{L^2(\Gsc)}$ follows from the condition \eqref{eq:BC2} that $u|_{\Gamma_D}=\exp(i x\cdot a/h)$. 
The bound on $\|v\|_{L^2}$ follows from Theorem~\ref{th:uniformestimates}. The bounds on $\|v\|_{L^2(\GammaI)}$ and $\|hD_n v\|_{L^2(\GammaI)}$ follow from Corollary~\ref{cor:L2RHS}, and those for $\|hD_n u\|_{L^2(\Gsc)}$ from Corollary~\ref{cor:Dir}. The bound on $\|v\|_{L^2(\Gsc)}$ follows from the condition \eqref{eq:BCimp2} that $v|_{\Gamma_D}=\exp(i x\cdot a/h)$.
\epf

\bre[Neumann boundary conditions]\label{rem:Neumann}
We do not consider Neumann boundary conditions on $\Gamma_D$ because, as far as we know, propagation of measures for Neumann boundary conditions is not available. Indeed, the Neumann boundary condition does not satisfy the uniform Lopatinski--Shapiro condition (see, e.g., \cite[Part (ii) of Definition 20.1.1, Page 233]{Ho:85}) and, under Neumann boundary conditions, 
if $u$ is normalised so that $\|h \partial_n u\|_{L^2(\Gamma_D)}$ is bounded, then $\|u\|_{L^2(\Gamma_D)}$ is typically not uniformly bounded as $h\to 0$ (for example, when $\Gamma_D$ is the boundary of a ball; see, e.g., \cite[Equation 3.31]{Sp:14}); therefore Assumption \ref{ass:1} does not hold.
\ere

\subsection{Reduction to a lower bound on the measure of the incoming set}

\begin{lem}
There exists $C_{1}>0$ such that if $\{u_\ell\}_{\ell=1}^\infty$ and $\{v_\ell\}_{\ell=1}^\infty$ are sequences of solutions to ~\eqref{eq:BVP2} and~\eqref{eq:BVPimp2}, respectively, such that $u_\ell$ has a defect measure and $v_\ell$ has defect measure $\mu$, then
\begin{equation}\label{eq:reduction_to_lower}
\liminf_{\ell \to \infty}\frac{\Vert u_\ell- v_\ell \Vert_{L^2(\domain)}}{\Vert u_\ell  \Vert_{L^2(\domain)}} \geq C_1 \sqrt{\frac{\mu(\mathcal I)}{R}},
\end{equation}
and

\begin{equation}\label{eq:reduction_to_lower_local}
\liminf_{\ell \to \infty}\frac{\Vert u_\ell- v_\ell \Vert_{L^2(B(0,2)\setminus\Omegaminus)}}{\Vert u_\ell  \Vert_{L^2(B(0,2)\setminus\Omegaminus)}} \geq C_1 \sqrt{\mu\big(\mathcal I \cap(S_{B(0,3/2)}^*\mathbb R ^d)\big)},
\end{equation}
where $\mathcal I$ is the directly-incoming set defined by \eqref{eq:incoming}.
\end{lem}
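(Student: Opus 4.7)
The plan is to compare the measures of $u_\ell$, $v_\ell$, and their difference on the incoming set $\mathcal I$, exploiting the crucial fact from Lemma~\ref{lem:null_incom} that any defect measure of $u$ (as an outgoing solution) vanishes on $\mathcal I$, while the truncated solution $v$ has no such constraint, so any measure $v$ deposits on $\mathcal I$ must come from $u-v$.

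First I would pass to a subsequence on which $u_\ell$ has defect measure $\mu_u$, $v_\ell$ has defect measure $\mu_v=\mu$ (both guaranteed by Lemma~\ref{l:defectExist}), and moreover the joint measure $\nu_{uv}$ defined by $\langle b(x,hD)u_\ell,v_\ell\rangle\to \int b\,d\nu_{uv}$ exists. The standard Cauchy--Schwarz bound for joint defect measures gives
\[
|\nu_{uv}(a)|^2\leq \mu_u(|a|)\,\mu_v(|a|)\qquad \tfa a\in C_c^\infty(T^*\widetilde M).
\]
Since $u$ is outgoing and has $H^1_h$-bounded Dirichlet data, Lemma~\ref{lem:null_incom} gives $\mu_u(\mathcal I)=0$. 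Hence, for any $a\in C_c^\infty(\mathcal I)$ with $0\leq a\leq 1$, both $\mu_u(a)=0$ and $\nu_{uv}(a)=0$, so the defect measure $\mu_{u-v}$ of $u_\ell-v_\ell$ satisfies
\[
\mu_{u-v}(a)=\mu_u(a)+\mu_v(a)-2\Re\nu_{uv}(a)=\mu_v(a)=\mu(a).
\]
Taking $a_n\uparrow 1_{\mathcal I}$ and applying monotone convergence then yields $\mu_{u-v}(\mathcal I)\geq \mu(\mathcal I)$.

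Next, I would combine this with lower-semicontinuity of $\|u_\ell-v_\ell\|_{L^2}^2$ against indicators of open sets: choosing $\chi\in C_c^\infty(\domain)$ with $0\leq\chi\leq 1$ and $\chi\equiv 1$ on any prescribed compact subset of $\domain$, one has
\[
\|u_\ell-v_\ell\|_{L^2(\domain)}^2\geq \|\chi(u_\ell-v_\ell)\|_{L^2}^2\to \mu_{u-v}(\chi^2),
\]
and sending $\chi\uparrow 1_{\domain}$ gives $\liminf_\ell\|u_\ell-v_\ell\|_{L^2(\domain)}^2\geq \mu_{u-v}(\mathcal I)\geq\mu(\mathcal I)$. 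For the denominator, Lemma~\ref{lem:mass_u} applied to $g=\exp(ix\cdot a/h)$ (whose $H^1_h(\Gamma_D)$-norm is bounded by a constant depending only on $\Gamma_D$) produces $\|u_\ell\|_{L^2(\domain)}\leq C R^{1/2}$ for $\ell$ large. Dividing gives~\eqref{eq:reduction_to_lower}.

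For the local version~\eqref{eq:reduction_to_lower_local} the strategy is identical, except that I use a cutoff $\chi\in C_c^\infty(B(0,2))$ with $\chi\equiv 1$ on $B(0,3/2)$ for both numerator and denominator: the lower bound becomes $\mu_{u-v}(\mathcal I\cap S^*_{B(0,3/2)}\mathbb R^d)=\mu(\mathcal I\cap S^*_{B(0,3/2)}\mathbb R^d)$, while Lemma~\ref{lem:mass_u} with $R=2$ gives $\|u_\ell\|_{L^2(B(0,2))}\leq C$ uniformly. The main subtle point in the argument is the step $\mu_{u-v}(\mathcal I)=\mu(\mathcal I)$, which requires both openness of $\mathcal I$ (so that testing against $C_c^\infty(\mathcal I)$ functions and monotone convergence recovers the full measure) and the Cauchy--Schwarz control on $\nu_{uv}$; openness of $\mathcal I$ follows from nontrapping of $\Omegaminus$ together with continuity of the backward generalised bicharacteristic flow.
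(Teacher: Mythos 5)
Your argument is essentially the paper's: both rest on Lemma~\ref{lem:null_incom} to kill the contribution of $u_\ell$ on $\mathcal I$, a Cauchy--Schwarz step to kill the cross term, and Lemma~\ref{lem:mass_u} for the denominator; the paper simply tests against a single fixed $b\in C_c^\infty$ supported in $\mathcal I$ with $\int|b|^2\,d\mu\geq\mu(\mathcal I)/2$ rather than manipulating the measures of the sets themselves. Two small remarks. First, the joint measure $\nu_{uv}$ (and hence the extra subsequence extraction) is unnecessary: the cross term $\langle b(x,h_\ell D)u_\ell, b(x,h_\ell D)v_\ell\rangle$ tends to zero along the \emph{whole} sequence by Cauchy--Schwarz, since $\|b(x,h_\ell D)u_\ell\|_{L^2}\to 0$ while $\|b(x,h_\ell D)v_\ell\|_{L^2}$ is bounded. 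Second, as written your extraction of a further subsequence only bounds one subsequential limit from below, which does not directly bound the $\liminf$; to make this rigorous you should first pass to a subsequence realizing the $\liminf$ and then extract further (or drop the joint measure as above, after which all limits exist along the full sequence and the issue disappears).
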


\begin{proof}
Let $b\in C_{c}^{\infty}(S^{*}\domain)$ be supported in $\mathcal{I}$ and such that
$$
\int |b|^2 \ d\mu \geq \mu(\mathcal I)/2.
$$
If $\widetilde{\mu}$ is a defect measure of $u$, then $\widetilde{\mu}(\mathcal{I})=0$
by Lemma \ref{lem:null_incom}. By the definition of defect measures, \
$$\lim_{\ell \to \infty} \big\langle b(x,h_{\ell}D) u_{\ell}, b(x,h_{\ell}D) u_{\ell}\big\rangle=0,$$
and therefore
\begin{align*}
\mu(\mathcal I)/2 & \leq \lim_{\ell\to \infty}\big\langle b(x,h_{\ell}D)v_{\ell},b(x,h_{\ell}D)v_{\ell}\big\rangle\\
& = \lim_{\ell \to \infty}\Big(\big\langle b(x,h_{\ell}D)v_{\ell},b(x,h_{\ell}D)v_{\ell}\big\rangle
+\big\langle  b(x,h_{\ell}D)u_{\ell},b(x,h_{\ell}D)u_{\ell}\big\rangle\Big)\\
&\qquad-2\lim_{\ell \to \infty}\Re \langle b(x,h_{\ell}D)u_{\ell},b(x,h_{\ell}D)v_{\ell}\rangle
\\
& = \lim_{\ell \to \infty}\big\langle b(x,h_{\ell}D)(v_{\ell}-u_{\ell}),b(x,h_{\ell}D)(v_{\ell}-u_{\ell})\big\rangle\\
 & \lesssim\Vert u_{\ell}-v_{\ell}\Vert_{L^{2}(\domain)}^2
\end{align*}
(where the upper bound on $b(x, h_\ell D)$ is independent of $\cI$ by \cite[Theorem 5.1]{Zworski_semi}).
The bound \eqref{eq:reduction_to_lower} then follows from the upper bound on $\|u_{\ell}\|_{L^2(\domain)}$ in Lemma \ref{lem:mass_u}.
The estimate \eqref{eq:reduction_to_lower_local} is proved in the same way by taking $b$ supported in $S_{B(0,3)}^*\mathbb R ^d$ and such that 
$\int |b|^2 \ d\mu \geq \mu(\mathcal I \cap S_{B(0,3/2)}^* \mathbb{R}^d )/2$.
\end{proof}

\begin{cor}\label{cor:reduction}
Let $\{v_\ell\}_{\ell=1}^\infty$, $\{h_\ell\}_{\ell=1}^\infty$, and $\{a_\ell\}_{\ell=1}^\infty$ be sequences such that $v_\ell$ satisfies \eqref{eq:BVPimp2} with $a=a_\ell$ and $\{v_\ell\}_{\ell=1}^\infty$ has defect measure $\mu$.

(i) To prove Theorem \ref{th:lower} it is sufficient to prove that there exists $c_0>0$ 
that depends continuously on $\GammaIR$
such that
\beqs
\mu(\mathcal I)\geq c_0.
\eeqs

(ii) Having proved Theorem \ref{th:lower}, to prove the lower bound in Theorem \ref{th:quant} it is sufficient to prove that there exists $c_1>0$ (independent of $R$) and $R_0$ such that, for all $R\geq R_0$, 
\beq\label{eq:lowerboundmu}
\mu(\mathcal I)\geq
\frac{c_1}{R^{4\morderzero-1}}.
\eeq

(iii) Having proved Theorem \ref{th:lower}, to prove Theorem \ref{th:quant2} it is sufficient to prove that there exists $c_2>0$ and $R_0>0$ (independent of $R$) such that, for all $R\geq R_0$,
\beq\label{eq:lowerboundmu2}
\mu(\mathcal I)\geq 
c_2 R.
\eeq

(iv) To prove Theorem \ref{th:local_ball} it is sufficient to prove that there exists $c_3>0$ (independent of $R$) and $R_0\geq 2$ such that, for all $R\geq R_0$, 
\beq\label{eq:lowerboundmu3}
\mu\big(\mathcal I \cap(S_{B(0,3/2)}^*\mathbb R ^d)\big) \geq \frac{c_3}{R^{4\morderzero}}.
\eeq

(v) To prove Theorem \ref{th:local_square} it is sufficient to prove that there exists $c_4>0$ (independent of $R$) such that, 
for all $R\geq 2$,
\beq\label{eq:lowerboundmu4}
\mu\big(\mathcal I \cap(S_{B(0,3/2)}^*\mathbb R ^d)\big) \geq \frac{c_4}{R^{d-1}}.
\eeq
\end{cor}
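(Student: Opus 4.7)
The five parts are all consequences of combining the reduction estimates \eqref{eq:reduction_to_lower}--\eqref{eq:reduction_to_lower_local} with a standard extract-a-defect-measure contradiction argument, made available by Lemma~\ref{l:defectExist} (which ensures Assumption~\ref{ass:1} holds for $u$ and $v$, and hence that defect measures exist along subsequences by Theorem~\ref{th:ex_defect}). The scheme in each case is identical: assume the relevant theorem fails, extract a sequence $\{(h_\ell,a_\ell)\}$ with $h_\ell\to 0$ violating the claimed bound on the relative error, pass to a subsequence so that $a_\ell\to a$ and $\{v_\ell\}$ admits a defect measure $\mu$, and then apply the appropriate reduction estimate together with the hypothesized lower bound on $\mu(\mathcal{I})$ (or on $\mu(\mathcal{I}\cap S^*_{B(0,3/2)}\mathbb{R}^d)$) to derive a contradiction.

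Concretely, for Part (i), if Theorem~\ref{th:lower} fails then there exist $R\geq 1$ and sequences $h_\ell\tendo$, $a_\ell$ with $\|u_\ell-v_\ell\|_{L^2(\Omega_R)}/\|u_\ell\|_{L^2(\Omega_R)}\to 0$; passing to a defect-measure subsequence and applying \eqref{eq:reduction_to_lower} forces $\mu(\mathcal{I})=0$, which contradicts $\mu(\mathcal{I})\geq c_0(\GammaIR)>0$. For Parts (ii) and (iii) one runs the analogous argument for $R\geq R_0$, using \eqref{eq:reduction_to_lower}: the hypothesized bound $\mu(\mathcal{I})\geq c_1 R^{-(4\morderzero-1)}$ translates through $\sqrt{\mu(\mathcal{I})/R}$ into a relative-error lower bound of the form $c\,R^{-2\morderzero}$, matching Theorem~\ref{th:quant}, and $\mu(\mathcal{I})\geq c_2 R$ translates into an $R$-independent lower bound as required for Theorem~\ref{th:quant2}. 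For $1\leq R<R_0$, Theorem~\ref{th:lower} is invoked directly: its constant depends continuously on $R$ and is therefore bounded below on the compact interval $[1,R_0]$, which produces a bound of the correct form (after absorbing constants) in this range. Parts (iv) and (v) are handled in the same way, now using \eqref{eq:reduction_to_lower_local}: the hypothesized bounds $\mu(\mathcal{I}\cap S^*_{B(0,3/2)}\mathbb{R}^d)\geq c_3 R^{-4\morderzero}$ and $\geq c_4 R^{-(d-1)}$ yield, through a single square root, the rates $R^{-2\morderzero}$ and $R^{-(d-1)/2}$ required by Theorems~\ref{th:local_ball} and~\ref{th:local_square} respectively.

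Finally, the $R$-dependent threshold $k_0=k_0(R,\ldots)$ in each theorem arises automatically from the contradiction argument, since the failure of any \emph{uniform-in-$k$} lower bound (at fixed $R$) is precisely what produces the sequence $h_\ell\tendo$. There is no genuine obstacle in Corollary~\ref{cor:reduction} itself: once the reduction estimate \eqref{eq:reduction_to_lower}/\eqref{eq:reduction_to_lower_local} is in place, the argument is essentially bookkeeping with powers of $R$. The real work is the construction of rays that generate enough mass in $\mathcal{I}$ to verify the hypothesized lower bounds on $\mu(\mathcal{I})$; this is the content of the ray constructions outlined in \S\ref{sec:outlinerays} and carried out in the remainder of the section, and it is there that the distinction between $\GammaIR=\partial B(0,R)$ and generic $\GammaIR$, as well as the role of the finitely many non-normal ``bad'' angles associated with the zeros $\{t_j\}$ of $q(t)\sqrt{1-t}-p(t)$, enters the analysis.
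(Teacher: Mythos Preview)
Your proposal is correct and follows essentially the same approach as the paper: a contradiction argument that extracts a sequence $h_\ell\to 0$ violating the claimed bound, passes to a defect-measure subsequence via Lemma~\ref{l:defectExist}, and then feeds the hypothesized lower bound on $\mu(\mathcal I)$ (or its localized version) through \eqref{eq:reduction_to_lower}--\eqref{eq:reduction_to_lower_local}, handling small $R$ in Parts~(ii)--(iii) via Theorem~\ref{th:lower} and the continuity of its constant. The paper's proof spells out Part~(ii) in detail and states that the remaining parts are ``essentially identical and/or simpler,'' which matches your treatment.
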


\bpf
We prove Part (ii), i.e.~the lower bound in \eqref{eq:mainbound} in Theorem \ref{th:quant}; the proofs of the other parts are essentially identical and/or simpler.

We first show that it is sufficient to prove that there exists $\widetilde{C}_1=\widetilde{C}_1(\Omegaminus, \MPade,\NPade )$ and $R_0= R_0(\Omegaminus, \MPade,\NPade)>0$ such that for any $R\geq R_0 $, there exists $\widetilde{k}_0(R)>0$ such that, for any direction $a$, 
\beq\label{eq:mainbound_proof}
\frac{\Vert u-v\Vert_{L^{2}(\domain)}}{\Vert u\Vert_{L^{2}(\domain)}}\geq \frac{\widetilde{C}_1}{R^{2\morderzero}}\qquad\tfa k \geq k_0.
\eeq
Indeed, having proved \eqref{eq:mainbound_proof}, we let 
\beqs
C_1 := \min \left( \widetilde{C}_1, \, \min_{1\leq R\leq R_0} \frac{\|u-v\|_{L^2(\domain)}}{\|u\|_{L^2(\domain)}}\right).
\eeqs
By Theorem \ref{th:lower} and the fact that the constant $C$ in this theorem depends continuously on $R$, $C_1$ exists, is $>0$, and is independent of $k$. With this definition of $C_1$, \eqref{eq:mainbound_proof} implies that the lower bound in \eqref{eq:mainbound} holds with $k_0(R):= \widetilde{k}_0(R)$ for $R\geq R_0$, and $k_0(R)$ equal to the respective $k_0$ from Theorem \ref{th:lower} for $1\leq R\leq R_0$.

We now prove \eqref{eq:mainbound_proof}; seeking a contradiction, suppose that the converse of 
\eqref{eq:mainbound_proof} is true; that is, 
given $C_0>0$, for any $\widetilde{R}_0>0$ there exists $R\geq \widetilde{R}_0$ and sequences $\{h_\ell\}_{\ell=1}^\infty$, $\{a_\ell\}_{\ell=1}^\infty$ with $h_{\ell} \to 0$, $|a_{\ell}|=1$ such that the solutions $u_\ell$ and $v_\ell$ to \eqref{eq:BVP2} and \eqref{eq:BVPimp2} satisfy
\begin{equation}
\label{e:contradictMeAgain}
\frac{\|u_{\ell} -v_{\ell}\|_{L^2(\domain)}}{\|u_{\ell}\|_{L^2(\domain)}}\leq \frac{C_0}{R^{2\morderzero}}.
\end{equation}
By extracting subsequences, we can assume that $u_\ell$ has defect measure $\widetilde{\mu}$ and $v_\ell$ has defect measure $\mu$ by Lemma~\ref{l:defectExist}

Setting $\widetilde{R}_0:= R_0$, with $R_0$ such that \eqref{eq:lowerboundmu} holds for $R\geq R_0$, and using this lower bound on $\mu(\mathcal I)$ in 
\eqref{eq:reduction_to_lower}, we have
\beqs
\liminf_{\ell \to \infty}\frac{\Vert u_\ell- v_\ell \Vert_{L^2(\domain)}}{\Vert u_\ell  \Vert_{L^2(\domain)}} \geq \frac{ C_1 \sqrt{c_1}}{R^{2\morderzero}},
\eeqs
for all $R\geq \widetilde{R}_0$, which contradicts \eqref{e:contradictMeAgain} for $C_0< C_1 \sqrt{c_1}$, thus proving the lower bound in Theorem \ref{th:quant}.
\epf

\subsection{Outline of the ideas behind rest of the proofs, and the structure of the rest of this section}\label{sec:outlinerays}

By Corollary \ref{cor:reduction}, we need to prove lower bounds on the measure of the incoming set $\mu(\mathcal{I})$.
We argue by contradiction and assume that $\mu(\mathcal{I})$ is small. The overall plan is the following.

(i) Show that, since $\mu(\cI)$ is small, mass is created when incoming rays reflect off $\Gamma_D$ using Lemma \ref{lem:nuD} above.

(ii) Show that there exists a neighbourhood of rays starting from $\Gamma_D$ that hit $\GammaIR$ directly (i.e. without hitting $\Gamma_D$ in the meantime) and hit $\GammaIR$ at angles to the normal that are not zero, and not one of the special angles corresponding to the non-zero zeros $\{t_j\}_{j=1}^{\mvanish}$ of $q(t)\sqrt{1-t}-p(t)$ (these conditions are made more precise in Condition \ref{cond:ray} below).

(iii) Propagate the mass created in Point (i) on the rays constructed in Point (ii) using Part (i) of Corollary \ref{cor:explain} (to go from mass on $\Gamma_D$ to mass on $\GammaIR$). 

(iv) Show that mass is reflected on $\GammaIR$ using the expression for the reflection coefficient in Corollary \ref{cor:reflection} and the fact that the rays hit $\GammaIR$ away from angles where the reflection coefficient vanishes.

(v) Show that this reflected mass produces mass on $\cI$ using Part (ii) of Corollary \ref{cor:explain} (to go from mass on $\GammaIR$ to mass in $\domain$), contradicting the assumption that $\mu(\mathcal{I})$ is small.

For the quantitative (i.e.~explicit-in-$R$) bounds
the goal is to prove a lower bound on $\mu(\cI)$ that is explicit in $R$. 
Therefore, on top of the requirements on the rays in Point (ii) above, we need (a) the angles the rays hit $\GammaIR$ to have certain $R$-dependence (since this will affect the $R$-dependence of the reflection coefficient in Point (iv)), and (b) information about when the reflected rays next hit $\Gamma_D$.

For the bounds on the relative error in subsets of $\domain$ (Theorems \ref{th:local_ball} and \ref{th:local_square}), we also require information about when the rays return to a neighbourhood of $\Omegaminus$, since we need information about the defect-measure mass here (more specifically, $\mu(\cI\cap S^*_{B(0,3/2)}\Rea^d)$).

\noi\textbf{Outline of the rest of \S\ref{sec:mainproofs}.}

\S\ref{sec:prelimray} contains preliminary results required for the ray arguments.
\S\ref{sec:raystatement} states the condition the rays must satisfy (Condition \ref{cond:ray}) and 
results constructing rays satisfying this condition (Lemmas \ref{lem:ray1}-\ref{lem:ray4}).
\S\ref{sec:rayconstruction} proves Lemmas \ref{lem:ray1}-\ref{lem:ray4}.
\S\ref{sec:reflectionbound} bounds the reflection coefficient \eqref{eq:reflectioncoefficient} for rays satisfying Condition \ref{cond:ray}.
\S\ref{sec:lowerboundproof1} proves the qualitative (i.e.~not explicit in $R$) lower bound in Theorem \ref{th:lower}; the steps (i)-(v) above therefore appear in their simplest form in this proof.
\S\ref{sec:lowerboundproof2} proves the quantitative (i.e.~explicit in $R$) lower bounds in Theorem \ref{th:quant}, \ref{th:quant2}, \ref{th:local_ball}, \ref{th:local_square}.

\subsection{Preliminary results required for the ray arguments}
\label{sec:prelimray}

Recall that $\mathcal S^{d-1}$ denotes the $d$-dimensional unit sphere.
Given $a\in \mathbb{R}^d$ with $|a|=1$, let $\mathfrak R_a: \Gamma_D \rightarrow \mathcal{S}^{d-1}$ be defined by
$$
\mathfrak R_a (x')= \Big(\xi_1 = \sqrt{r(x', (a_T(x'))^{\flat})}, \xi' = (a_T(x'))^{\flat}\Big).
$$
The definition of the local coordinates in \S\ref{subsec:geo} and the fact that $\xi_1>0$ imply that
\beq\label{eq:mathfrakR}
\mathfrak R_a (x') = 
\begin{cases}
a - 2(n(x') \cdot a)n(x') & \text{if } a\cdot n(x') \leq 0, \\
a & \text{if } a\cdot n(x') \geq 0,
\end{cases}
\eeq
i.e.,~$\mathfrak R_a(x')$ is the reflection of $a$ from $\Gamma_D$ if $x'$ is in the illuminated part of $\Gamma_D$ and 
$\mathfrak R_a(x')$ is just $a$ if $x'$ is in the shadow part of $\Gamma_D$.

\begin{definition} \label{def:emanating}
Given $x'\in\Gamma_D$ and $a\in\mathbb R ^d$ with $|a|=1$,
\emph{the ray emanating from $x'$} is the ray 
starting from $\big(x = x', \xi=\mathfrak R_a(x'))$.
\end{definition}

\begin{definition} The ray emanating from $x'\in \Gsc$ is \emph{direct} if the flow along the ray, starting at $x'$, hits $\GammaI$ before hitting $\Gsc$.
\end{definition}

We now show that there are direct rays emanating from $\Gamma_D$ in every direction.

\begin{lem} \label{lem:all_directions}
Given $a\in \mathbb{R}^d$ with $|a|=1$. Let $\Gamma^{+,a}_D \subset \Gamma_D$ denote the set of points $x'$ of $\Gamma_D$ such that both $a\cdot n(x') \neq 0$ and the ray emanating from $x'$ is direct. Then, 
$$
\mathfrak R_a(\Gamma^{+,a}_D) = \mathcal S^{d-1}.
$$
\end{lem}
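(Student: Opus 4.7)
The plan is to reduce the statement to the elementary geometric fact that for any unit vector $\nu\in\mathcal S^{d-1}$, a maximizer of $x\mapsto \nu\cdot x$ on $\overline{\Omegaminus}$ lies on $\Gamma_D$ and has outward unit normal equal to $\nu$ (since $\overline{\Omegaminus}$ is compact, $\Omegaminus$ is open, and the supporting hyperplane at the maximizer forces $n(x^*)=\nu$). Given a target direction $\xi\in\mathcal S^{d-1}$, I would split into two cases, in each case choosing a $\nu$ for which the maximizer $x^*$ automatically satisfies $\mathfrak R_a(x^*)=\xi$.

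\emph{Case 1: $\xi=a$.} Take $\nu:=a$ and $x^*\in\overline{\Omegaminus}$ with $a\cdot x^*=\max_{\overline\Omegaminus}a\cdot x$. Then $n(x^*)=a$, so $a\cdot n(x^*)=1>0$, i.e.~$x^*$ is in the shadow and $\mathfrak R_a(x^*)=a=\xi$ by \eqref{eq:mathfrakR}.

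\emph{Case 2: $\xi\neq a$.} Take $\nu:=(\xi-a)/|\xi-a|$; this is the only choice for which the reflection formula $\xi=a-2(a\cdot\nu)\nu$ can hold with $a\cdot\nu<0$, as $a\cdot\nu=(a\cdot\xi-1)/|\xi-a|<0$. Choose $x^*\in\overline{\Omegaminus}$ with $\nu\cdot x^*=\max_{\overline\Omegaminus}\nu\cdot x$, so $n(x^*)=\nu$, $a\cdot n(x^*)<0$, and a one-line calculation (using $|\xi-a|^2=2(1-a\cdot\xi)$) gives $\mathfrak R_a(x^*)=a-2(a\cdot\nu)\nu=\xi$.

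It remains to verify that the ray from $x^*$ in direction $\xi$ is direct. In both cases one shows $\nu\cdot\xi>0$ (equal to $1$ in Case 1, equal to $(1-a\cdot\xi)/|\xi-a|$ in Case 2), so for all $t>0$
\[
\nu\cdot(x^*+t\xi)=\nu\cdot x^*+t\,\nu\cdot\xi>\nu\cdot x^*=\max_{\overline\Omegaminus}\nu\cdot x,
\]
which places the ray strictly on the exterior side of the supporting hyperplane and hence outside $\overline{\Omegaminus}$. In particular the ray never re-meets $\Gamma_D$, and since $\Omegaminus\Subset\tildedomain$ with $\tildedomain$ bounded, the ray must exit through $\GammaI$ at some finite time. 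Thus $x^*\in\Gamma_D^{+,a}$ with $\mathfrak R_a(x^*)=\xi$, proving surjectivity. There is no real obstacle here: the only care needed is to ensure strict inequalities $a\cdot n(x^*)\neq 0$ (and not merely $\leq 0$) in each case, which is automatic from the choice of $\nu$; the rest is the standard maximizer-equals-normal observation and the reflection identity.
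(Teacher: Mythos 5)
Your proof is correct and takes essentially the same route as the paper's: both arguments choose, for each target direction, the point of $\Gamma_D$ extremal in the direction of the required normal, use the supporting-hyperplane property to conclude that $n(x^*)$ equals that direction (hence $\mathfrak R_a(x^*)$ is the target) and that the ray is direct. The only difference is cosmetic: you identify the normal explicitly as $(\xi-a)/|\xi-a|$ and verify the reflection identity algebraically, whereas the paper finds the same normal by parametrizing reflections in the plane $\operatorname{Span}(a,\zeta)$.
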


\begin{figure} 
\includegraphics[scale=0.7]{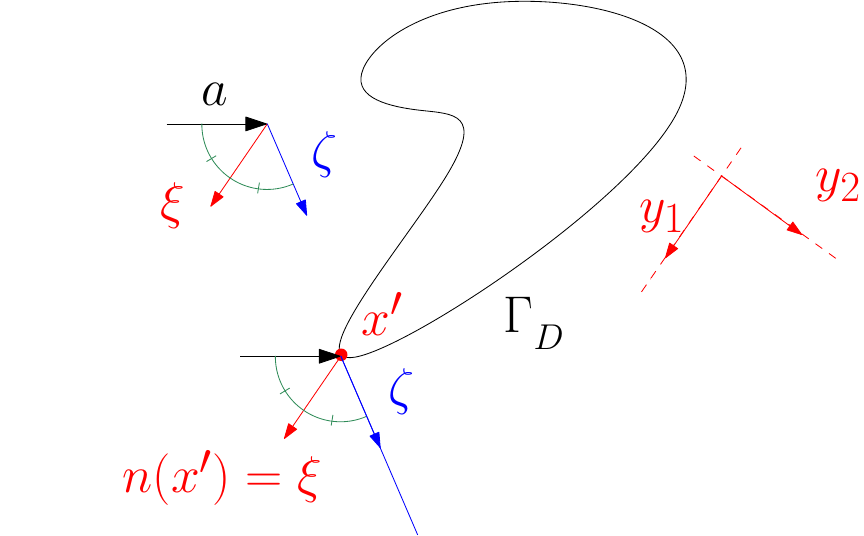}
\caption{Illustration of the proof of Lemma \ref{lem:all_directions} in the 
two-dimensional case; i.e., construction of a ray reflecting from $\Gamma_D$ in an arbitrary direction $\zeta$.
The point $x'$ has maximal $y_1$ coordinate, where the vector $\xi$ defines the $y_1$ axis, and $\xi$ is 
defined by $\zeta = a - 2(a\cdot\xi)\xi$.}   \label{fig:ray_construct}
\end{figure}

\begin{proof}
We first prove that  $a\in \mathfrak R_a(\Gamma^{+,a}_D)$. Without loss of generality $a=(1,0,\ldots,0)$. Let $x'_0 \in \Gamma_D$ be the point with maximal $x_1$ coordinate. Then $\mathfrak R_a(x_0')= a$ by \eqref{eq:mathfrakR}, $x'_0 \in \Gamma^{+,a}_D$ by the fact it has maximal $x_1$ coordinate, and so $a\in \mathfrak R_a(\Gamma^{+,a}_D)$.

We now need to show that, given $\zeta \in \mathcal S^{d -1}  \setminus  \{ a\}$, $\zeta \in \mathfrak R_a(\Gamma^{+,a}_D)$. Let $\mathcal P$ be the plane defined by $\mathcal P := \text{Span }(a,\zeta)$. Choose a cartesian
system of coordinates in which $\mathcal P = \{ x_3 = \cdots = x_n = 0 \}$, $a=(1,0,\cdots,0)$, and $(x_1, x_2)$ is right-handed oriented in $\mathcal P$. 
For $\xi \in \mathcal S^{d -1}$, let $r_a(\xi) := a  - 2(\xi\cdot a) \xi$;
i.e.~$r_a(\xi)$ is the reflection of $a$ from a boundary with normal $\xi$.
This definition implies that 
$$
r_a((\cos \omega, \sin \omega, 0, \cdots, 0)) = (\cos (2\omega-\pi), \sin (2\omega-\pi), 0, \cdots, 0),
$$
so that
$$
r_a(\mathcal D) = \big(\mathcal S^{d -1}\cap\mathcal P\big)  \setminus   \{ a\}, \; \text{where }\mathcal D := \left\{ (\cos \omega, \sin \omega, 0, \cdots, 0), \; \omega\in\left(\frac{\pi}{2}, \frac{3\pi}{2}\right)\right\}.
$$
Hence, there exists $\xi \in\mathcal D$ such that $r_a(\xi)=\zeta$. 

Finally, to show $\zeta \in \mathfrak R_a(\Gamma^{+,a}_D)$, we need to find $x'\in \Gamma_D^{+,a}$ such that $\mathfrak R_a (x') = r_a(\xi)$. 
Let $(y_1, \cdots, y_d )$ be a cartesian system of coordinates such that $\xi = (y_1 = 1, y_2 = 0, \cdots, y_d  = 0)$; see Figure \ref{fig:ray_construct}; let $x'$ be a point of $\Gamma_D$ with maximal $y_1$ coordinate. By definition, $n(x') = \xi$, and, since $\xi \in \mathcal D$,
$a\cdot n(x') < 0$. Therefore, $\mathfrak R_a (x') = r_a(n(x')) = r_a(\xi) = \zeta$. 
Since $x'$ has maximal $y_1$ coordinate in $\Gamma_D$, the ray emanating from $x'$ only intersects $\Gamma_D$ at $x'$, and thus $x' \in \Gamma^{+,a}_D$.
\end{proof}

The following dilation property $\mathfrak R_a(x')$ is need\dal{ed} for one of the proofs below (the proof of Lemma \ref{lem:ray4}).

\begin{lem} \label{lem:raydil}
Let $0<\delta<1$ and let $\mathcal C \subset \Gamma_D^{+,a}$ be uniformly convex (i.e.~the second fundamental form is positive definite)
 and such that, for any $x' \in \mathcal C$, $\delta \leq |n(x')\cdot a| \leq 1 - \delta$. Then,
there exists $\Cfrak>0$ and $\alpha_0 > 0$ such that,
for any $x' \in \mathcal C$ and any $0<\alpha\leq \alpha_0$, if $\partial B(x',\alpha) \cap \mathcal C \neq \emptyset$ and $\partial B(x',\alpha) \cap \mathcal \partial \mathcal C = \emptyset$, there exists $y' \in \partial B(x',\alpha) \cap \mathcal C $ so that
$$
| \mathfrak R_a(x') - \mathfrak R_a(y') | \geq \Cfrak |x'-y'| = \Cfrak\alpha.
$$
\end{lem}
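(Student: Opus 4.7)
The plan is to establish a uniform lower bound on the differential $D\mathfrak R_a$ over $\mathcal C$, then upgrade it to a local bi-Lipschitz estimate by Taylor expansion. Granted a uniform lower Lipschitz bound, the existence of $y'$ is immediate from the first hypothesis $\partial B(x',\alpha)\cap \mathcal C\neq\emptyset$, and the desired inequality follows for any such $y'$ provided $\alpha\leq \alpha_0$.

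As a preliminary reduction, I note that since $|n(x')\cdot a|\geq \delta>0$ on $\mathcal C$ and $\mathcal C$ is connected (being strictly convex), $n(x')\cdot a$ has constant sign on $\mathcal C$. In the shadow case $n(x')\cdot a>0$, formula~\eqref{eq:mathfrakR} gives $\mathfrak R_a\equiv a$ on $\mathcal C$, so the desired inequality fails; the lemma is therefore applied only in the illuminated case, and I will assume $n(x')\cdot a\leq -\delta$ on $\mathcal C$, so that $\mathfrak R_a(x')=a-2(n(x')\cdot a)\,n(x')$.

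The key computation is the following: for $v\in T_{x'}\Gamma_D$,
\begin{equation*}
D\mathfrak R_a(x')[v]=-2\bigl[(Dn(x')[v]\cdot a)\,n(x')+(n(x')\cdot a)\,Dn(x')[v]\bigr].
\end{equation*}
Since $Dn(x')[v]\in T_{x'}\Gamma_D$ is orthogonal to $n(x')$, the two summands on the right are orthogonal, and therefore
\begin{equation*}
\|D\mathfrak R_a(x')[v]\|^{2} = 4\bigl[(Dn(x')[v]\cdot a)^{2} + (n(x')\cdot a)^{2}\|Dn(x')[v]\|^{2}\bigr] \geq 4\delta^{2}\|Dn(x')[v]\|^{2}.
\end{equation*}
Strict convexity of $\mathcal C$ (positive definiteness of the shape operator $Dn$) together with the continuity of $Dn$ and compactness of $\overline{\mathcal C}$ in the illuminated portion of $\Gamma_D$ furnish a constant $c=c(\mathcal C)>0$ such that $\|Dn(x')[v]\|\geq c\|v\|$ uniformly for $x'\in\mathcal C$ and $v\in T_{x'}\Gamma_D$. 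Hence $\|D\mathfrak R_a(x')[v]\|\geq 2\delta c\,\|v\|$ uniformly on $\mathcal C$.

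Finally, since $\Gamma_D$ is $C^\infty$ and $\mathfrak R_a$ has uniformly bounded second derivatives on a neighbourhood of $\overline{\mathcal C}$ in $\Gamma_D$, a Taylor expansion (along the geodesic joining $x'$ to $y'$ on $\Gamma_D$, whose length is comparable to $|y'-x'|$ when the latter is small) produces $\alpha_0=\alpha_0(\mathcal C,\delta)>0$ such that, for every $x'\in\mathcal C$ and $y'\in \Gamma_D$ with $|y'-x'|\leq \alpha_0$,
\begin{equation*}
|\mathfrak R_a(y')-\mathfrak R_a(x')|\geq \delta c\,|y'-x'|.
\end{equation*}
Given any $y'\in \partial B(x',\alpha)\cap \mathcal C$ with $0<\alpha\leq \alpha_0$, this yields the claim with $\Cfrak:=\delta c$. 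The main delicacy is guaranteeing that both $c$ and $\alpha_0$ can be chosen independently of $x'\in \mathcal C$: this is where strict convexity (as opposed to merely convexity) of $\mathcal C$ and the compactness of $\overline{\mathcal C}$ inside the illuminated region are used, while the role of the second hypothesis $\partial B(x',\alpha)\cap \partial \mathcal C=\emptyset$ is to ensure that the chosen $y'$ is a genuine interior point of $\mathcal C$ (so that the Taylor argument applies without boundary effects).
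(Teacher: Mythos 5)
Your proof is correct, but it takes a genuinely different --- and in fact stronger --- route than the paper's. The paper works in graph coordinates over the hyperplane orthogonal to $a$ and tracks only the component of $H=(n\cdot a)n$ along $a$, namely $H_1=(1+|\nabla\gamma_{\mathcal C}|^2)^{-1}$; since $\nabla H_1$ is proportional to the single covector $\partial^2\gamma_{\mathcal C}\,\nabla\gamma_{\mathcal C}$, the first-order Taylor term degenerates in directions orthogonal to it, so the paper must \emph{choose} $y'$ with $x'-y'$ aligned with that covector --- which is exactly why the conclusion is existential and why the hypothesis $\partial B(x',\alpha)\cap\partial\mathcal C=\emptyset$ is invoked there (to guarantee that a point of $\mathcal C$ in the required direction exists). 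You instead compute the full vector-valued differential $D\mathfrak R_a[v]=-2\bigl[(Dn[v]\cdot a)n+(n\cdot a)Dn[v]\bigr]$ and use the orthogonality of the two summands to obtain the uniform lower bound $2\delta c\|v\|$ in \emph{every} tangent direction, combining $|n\cdot a|\geq\delta$ with the strict-convexity bound $\|Dn[v]\|\geq c\|v\|$; this gives the conclusion for \emph{any} $y'\in\partial B(x',\alpha)\cap\mathcal C$, renders the choice of $y'$ trivial, and makes the boundary hypothesis essentially superfluous. Both arguments need the same Taylor-remainder step and the same compactness of $\overline{\mathcal C}$ to make $c$ and $\alpha_0$ uniform, and both implicitly restrict to the illuminated case $n\cdot a<0$ (the paper does so silently when it writes $\mathfrak R_a(x')-\mathfrak R_a(y')=2(H(x')-H(y'))$; you flag it explicitly, and this is indeed how the lemma is used in the proof of Lemma \ref{lem:ray4}).
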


\bpf[Proof  of Lemma \ref{lem:raydil}]

Let $({\tt x}_1, \cdots, \tt{x_d}) =: ({\tt x}_1, {\tt x}')$ be an Euclidian system of coordinates in which $a = (1,0,\cdots,0)$.
Since $\mathcal C$ is included in $\{ \delta \leq |n(x')\cdot a| \leq 1 - \delta \}$, there exists ${\tt X} \subset \{ { \tt x}_1 = 0\}$ and
a smooth map
$\gamma_{\mathcal C} : {\tt X} \longrightarrow \mathbb R$ such that $\mathcal C$ is given by, in this Euclidian system of
coordinates
$$
\mathcal C = \big\{(\gamma_{\mathcal C} ({\tt x}'),{\tt x}') \,:\, {\tt x}'\in {\tt X}\big\}.
$$
First observe that, for $x'=(\gamma_D({\tt x}'), {\tt x}') \in \mathcal C$ and $y'=(\gamma_D({\tt y}'), {\tt y}') \in \mathcal C$
\beqs
|x'-y'| \leq |{\tt x}'-{\tt y}'| + |\gamma_D({\tt x}') - \gamma_D({\tt y}')| \leq \Big(1+ \sup_{\tt X} |\nabla \gamma_{\mathcal C}|\Big)  |{\tt x}'-{\tt y}'|,
\eeqs
and hence
\beq \label{eq:raydil1:eq}
C_0|x'-y'|\leq |{\tt x}'-{\tt y}'| \leq  |x'-y'|, \quad\text{ where } \quad  C_0 :=  (1+ \sup_{\tt X} |\nabla \gamma_D|)^{-1}.
\eeq
By the definition of $\mathfrak R_a$ \eqref{eq:mathfrakR},
\beq \label{eq:raydil1:dec}
\mathfrak R_a(x') - \mathfrak R_a(y') = 2\big(H({\tt x}') - H({\tt y}')\big),
\eeq
\beqs
H({\tt x}') := (n({\tt x}')\cdot a)n({\tt x}')\quad\tand\quad 
n({\tt x}'):=(1,-\nabla \gamma_D({\tt x}'))/\sqrt{1+|\nabla \gamma_D({\tt x}')|^2}.
\eeqs
i.e., $n({\tt x}')$ is the outward-pointing normal to $\Gamma_D$ at $x'=(\gamma_D({\tt x}'),{\tt x}')\in\mathcal C$. 

Given $x'$, our plan is to use Taylor's theorem on $H$ to bound $|\mathfrak R_a(x') - \mathfrak R_a(y')|$ below, and then choose $y'$ appropriately so that this lower bound is $\geq  \Cfrak |x'-y'|$.
We first record that, since $|n({\tt x}')\cdot a| \leq 1-\delta$ and $a=(1,0,\ldots,0)$,
\beq \label{eq:raydil1:beta}
|\nabla \gamma_{\mathcal{C}}({\tt x}')| \geq (1-\delta)^{-2} - 1 =: \beta > 0.
\eeq
Let $H_1$ be the component of $H$ in the ${\tt x}_1$ direction (i.e., the direction of $a$), i.e.
\beq\label{eq:H1}
H_1({\tt x}') = \frac{1}{1+|\nabla \gamma_{\mathcal{C}}({\tt x}')|^2}.
\eeq
Then, using (\ref{eq:raydil1:dec}), Taylor's theorem, \eqref{eq:H1}, (\ref{eq:raydil1:eq}), and (\ref{eq:raydil1:beta}), we obtain
\begin{align}\nonumber
&\frac 12 \big|\mathfrak R_a(x') - \mathfrak R_a(y')\big| \geq \big|H_1({\tt x}') - H_1({\tt y}')\big| \geq  \big|\nabla H_1({\tt x}') \cdot({\tt x}'-{\tt y}')\big| - \sup_{\tt X} \big|\partial^2 H_1\big|\big|{\tt x}'-{\tt y}'\big|^2 \\ \nonumber
&\qquad= \left|\left\langle \frac{2 \partial^2 \gamma_{\mathcal C} ({\tt x}') \nabla \gamma_{\mathcal C} ({\tt x}')}{(1+|\nabla \gamma_{\mathcal C} ({\tt x}')|^2)^2}, \frac{{\tt x}'-{\tt y}'}{|{\tt x}'-{\tt y}'|}  \right\rangle\right| |{\tt x}'-{\tt y}'| - \sup_{\tt X}  \big|\partial^2 H_1\big|\big|{\tt x}'-{\tt y}'\big|^2 \\ \nonumber 
&\qquad=\frac{2|\nabla \gamma_{\mathcal C} ({\tt x}')|}{(1+|\nabla \gamma_{\mathcal C} ({\tt x}')|^2)^2} 
\left|\left\langle  \partial^2 \gamma_{\mathcal C} ({\tt x}') \frac{\nabla \gamma_{\mathcal C} ({\tt x}')}{|\nabla \gamma_{\mathcal C} ({\tt x}')|}, \frac{{\tt x}'-{\tt y}'}{|{\tt x}'-{\tt y}'|}  \right\rangle\right| |{\tt x}'-{\tt y}'| - \sup_{\tt X}  \big|\partial^2 H_1\big|\big|{\tt x}'-{\tt y}'\big|^2 \\
&\qquad\geq 2C_1 \beta C_0 Q_{\mathcal C}  \left| \left\langle v, \frac{{\tt x}'-{\tt y}'}{|{\tt x}'-{\tt y}'|}  \right\rangle\right| \big|x'-y'\big| - C_2\big|x'-y'\big|^2,
 \label{eq:raydil1:dil}
\end{align}
where
$$
v:= \left(\partial^2 \gamma_{\mathcal C} ({\tt x}') \frac{\nabla \gamma_{\mathcal C} ({\tt x}')}{|\nabla \gamma_{\mathcal C} ({\tt x}')|}
\right)
 \left| \partial^2 \gamma_{\mathcal C} ({\tt x}') \frac{\nabla \gamma_{\mathcal{C}}({\tt x}')}{|\nabla \gamma_{\mathcal C} ({\tt x}')|} \right|^{-1},
$$
and
$$
C_1 := \big(1+\sup_{\tt X} |\nabla \gamma_{\mathcal C} |^2\big)^{-2} > 0, \quad C_2 := \sup_{\tt X} |\partial^2 H_1| < \infty, \quad Q_{\mathcal C} := \inf_{{\tt x'}\in{\tt X}, |e| = 1} |\partial^2 \gamma_{\mathcal C} ({\tt x'}) e| > 0,
$$
where $Q_{\mathcal C} >0$ because $\mathcal C$ is uniformly convex.

We now claim that, under the assumption that $\partial B(x',\alpha) \cap \mathcal C \neq \emptyset$ and $\partial B(x',\alpha) \cap \mathcal \partial C = \emptyset$, it is always possible to choose $y' \in \mathcal C$  so that 
\beq\label{eq:x'y'}
|x' - y'|=\alpha
\quad\tand\quad \frac{{\tt x}'-{\tt y}'}{|{\tt x}'-{\tt y}'|} = v.
\eeq
Indeed, for $d\geq 3$, the projection of $\partial B(x',\alpha) \cap \mathcal C$ on the hyperplane $\{ {\tt x_1} = 0 \}$ is a closed hypersurface of $\mathbb R^{d-1}$ (e.g., for $d=3$ it is a closed curve). Since $\tt x'$ is in the geometrical interior of this hypersurface, for any $v\in \Rea^{d-1}$, there exists $y'$ satisfying \eqref{eq:x'y'}. 
For $d=2$, the projection of $\partial B(x',\alpha) \cap \mathcal C$ on the hyperplane $\{ {\tt x_1} = 0 \}$ equals two points (one on either side of ${\tt x'}$); since $v= \pm 1$ in this case, there exists $y'$ satisfying \eqref{eq:x'y'}.

For such a $y'\in \mathcal{C}$ satisfying \eqref{eq:x'y'}, by (\ref{eq:raydil1:dil}),
$$
|\mathfrak R_a({x}')- \mathfrak R_a({ y}')| \geq \Big( 2C_1 \beta C_0 Q_{\mathcal C} - C_2 \alpha\Big) \alpha;
$$
taking $\alpha_0 := C_0C_1 \beta Q_{\mathcal C}/C_2$ gives the result with $\Cfrak:= C_1\beta C_0 Q_{\mathcal C}$.
\epf 

\subsection{Statement of the lemmas constructing the rays}
\label{sec:raystatement}

\begin{condition}\label{cond:ray}
Given $\{\psi_j\}_{j=1}^m \in (0,\pi/2]$, 
there exist $c_{{\rm ray}, j}, j=1,\ldots,5$, such that, given $a\in\mathbb R ^d$ with $|a|=1$, there exists  $V_D\subset \Gamma_D$ such that 

(i) $\vol(V_D)\geq \crayone$,

(ii) $|n(x') \cdot a|\geq \craytwo$  for all $x'\in V_D$, 

(iii) the emanating rays from $V_D$ hit $\GammaIR$ directly and, for each ray, the angle $\theta$ the ray makes with the normal  satisfies
\beq\label{eq:anglebounds}
\theta \geq \craythree \quad\tand\quad
\min_{j=1,\ldots,m} | \theta - \psi_j| \geq \crayfour,
\eeq

(iv) after hitting $\GammaIR$, the rays travel a distance $\geq \crayfive$ before hitting either $\GammaIR$ or $\Gamma_D$ again.
\end{condition}

The $\{\psi_j\}_{j=1}^m$ in Condition \ref{cond:ray} are arbitrary angles, but in the proofs below we choose them to 
be the angles at which the reflection coefficient on $\GammaIR$ (i.e. \eqref{eq:reflectioncoefficient}) vanishes, i.e., the angles 
corresponding to the 
zeros of $q(t)\sqrt{1-t}-p(t)$ in $(0,1]$. We set 
\beq\label{eq:psi_j}
\psi_j := \sin^{-1} \sqrt{t_j} \in (0,\pi/2], \qquad j=1,\ldots,\mvanish,
\eeq
where $\{t_j\}_{j=1}^{\mvanish}$ are defined at the end of \S\ref{sec:setup}.
Then, when $|\xi'|_g= \sin \psi_j$ for some $j=1,\ldots,\mvanish$,
$\sigma(\bcN)\sqrt{r}- \sigma(\bcD)= q(t_j)\sqrt{1-t_j} - p(t_j)=0.$

We now state four lemmas constructing the rays used to prove the different lower bounds on $\mu(\mathcal{I})$ required by Corollary \ref{cor:reduction}.

\begin{lem}[The rays for general strictly-convex $\GammaIR$] \label{lem:ray1}
If $\GammaIR$ is strictly convex, then Condition \ref{cond:ray} holds with 
$c_{{\rm ray}, j}= c_{{\rm ray}, j}(\Gamma_D, \GammaIR)$ for $j=1,3,4,5,$ and $\craytwo= \craytwo(\Gamma_D)$. 
Furthermore $c_{{\rm ray},j}$, $j=1,3,4,5,$ are continuous in $R$.
\end{lem}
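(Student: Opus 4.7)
The plan is to produce, for each $a\in\mathcal S^{d-1}$, a set $V_D\subset\Gamma_D^{+,a}$ satisfying (i)--(iv); uniformity in $a$ will then follow from compactness of $\mathcal S^{d-1}$ together with continuity of the construction in $a$, and continuity in $R$ will follow from the smooth dependence of $\GammaIR$ on $R$.

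Fix $a$. By Lemma \ref{lem:all_directions} the reflection map $\mathfrak R_a:\Gamma_D^{+,a}\to\mathcal S^{d-1}$ is surjective, and, inspecting the proof of that lemma, one obtains an open subset $W\subset\Gamma_D^{+,a}$ on which $|n(x')\cdot a|\geq\craytwo$ for some $\craytwo>0$ depending only on $\Gamma_D$, the emanating ray is direct, and the image $\mathfrak R_a(W)$ is open in $\mathcal S^{d-1}$. On such a $W$ define the smooth maps $y(x')\in\GammaIR$ (the first intersection of the emanating ray with $\GammaIR$) and $\theta(x')\in[0,\pi/2)$ (the angle this ray makes with the inward normal at $y(x')$).

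The key step is to find $x_0'\in W$ at which $\theta(x_0')$ avoids the finite bad set $\mathcal B:=\{0,\psi_1,\dots,\psi_m\}$. For this, one argues that $\theta$ cannot be identically equal to an element of $\mathcal B$ on any open subset of $W$. Indeed, $\theta(x')$ is determined by $\mathfrak R_a(x')$ and the outward unit normal of $\GammaIR$ at $y(x')$, and both of these vary non-trivially as $x'$ varies on $W$: $\mathfrak R_a$ has full rank on an open dense subset of $\Gamma_D^{+,a}$ (because it covers $\mathcal S^{d-1}$, by Lemma \ref{lem:all_directions}), and the Gauss map $y\mapsto n_I(y)$ on $\GammaIR$ is non-degenerate away from flat faces since $\GammaIR$ is convex and bounds a domain of diameter $\gtrsim R>0$ at positive distance from $\Gamma_D$. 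Hence $\theta$ is locally non-constant on an open dense subset of $W$, and since $\mathcal B$ is finite one can pick $x_0'$ there with $\theta(x_0')\in(0,\pi/2)\setminus\mathcal B$. We then take $V_D$ to be a small open neighbourhood of $x_0'$ in $W$, shrunk so that $|n(x')\cdot a|\geq\tfrac12|n(x_0')\cdot a|$, the emanating ray remains direct, $\theta(x')\geq\tfrac12\theta(x_0')$, and $|\theta(x')-\psi_j|\geq\tfrac12|\theta(x_0')-\psi_j|$ for $j=1,\dots,m$, throughout $V_D$; all four conditions are open and thus hold on a neighbourhood of $x_0'$, giving a positive-volume $V_D$.

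Conditions (i)--(iii) are then immediate. For (iv), the reflected ray leaves $\GammaIR$ at angle $\theta(x')$ which is bounded strictly away from $\pi/2$ and strictly positive, so its direction has a transverse component bounded below; it then either returns to $\Gamma_D$, which is at distance at least $\dist(\Gamma_D,\GammaIR)>0$ since $\Omegaminus\Subset\tildedomain$, or it next hits $\GammaIR$ after a chord-length whose lower bound depends only on the geometry of $\GammaIR$ and this transverse component. Uniformity of all constants in $a$ follows from continuity of the construction in $a$ and a finite cover of $\mathcal S^{d-1}$; continuity in $R$ is inherited from the smooth dependence of $\GammaIR$ on $R$. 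The hard part is establishing the local non-constancy of $\theta$ on $W$: rigorously one must exclude pathological configurations of $\GammaIR$ in which, for some specific $a$, the Gauss-map relation between reflection directions at $\Gamma_D$ and normal directions at $\GammaIR$ conspires to produce a constant bad incidence angle on a full-dimensional portion of $\Gamma_D^{+,a}$, and this comes down to a rank/Sard-type argument combining the surjectivity of $\mathfrak R_a$ with convexity and non-triviality of $\GammaIR$.
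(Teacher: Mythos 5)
There is a genuine gap at the crux of your argument, namely the claim that $\theta$ is locally non-constant on an open dense subset of $W$, which you justify by asserting that $\mathfrak R_a$ ``has full rank on an open dense subset of $\Gamma_D^{+,a}$ (because it covers $\mathcal S^{d-1}$)''. Surjectivity of a smooth map does not imply full rank on an open dense set, and here the claim is in fact false: by \eqref{eq:mathfrakR}, $\mathfrak R_a\equiv a$ on the entire region $\{a\cdot n(x')\geq 0\}$, so $\mathfrak R_a$ has rank zero on a nonempty open subset of $\Gamma_D^{+,a}$ (containing the extremal point of $\Gamma_D$ in the direction $a$); and on the illuminated side the derivative of $\mathfrak R_a$ is controlled by the second fundamental form of $\Gamma_D$, which may vanish on open sets since $\Omegaminus$ is only assumed nontrapping, not strictly convex. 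Even where $\mathfrak R_a$ does have full rank, non-constancy of the composite quantity $\theta$ (which also depends on the normal of $\GammaIR$ at the variable hit point) does not follow without further argument; you acknowledge this yourself by deferring ``the hard part'' to an unspecified rank/Sard-type argument. Since the existence of a single point $x_0'$ with $\theta(x_0')\notin\{0,\psi_1,\dots,\psi_m\}$ is exactly what the whole proof rests on, this is not a detail that can be waved through. A secondary issue is that your $V_D$ only has ``positive volume'' for each fixed $a$; upgrading this to a lower bound $\crayone$ uniform in $a$ requires more than the compactness remark, because your selection of $x_0'$ is non-constructive.

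The paper's proof runs in the opposite direction and thereby avoids this problem entirely: it deliberately works near the extremal point of $\Gamma_D$ in the direction $a$, where $\mathfrak R_a$ is \emph{constant} (all emanating rays are parallel to $a$ and automatically direct), and obtains the variation of the incidence angle $\theta$ not from the variation of ray directions but from the curvature of $\GammaIR$: writing $\GammaIR$ locally as a graph $\gamma_{\tr}$, one has $\tan\theta(x')=|\nabla\gamma_{\tr}(x')|$, and a quantitative Taylor/excision argument using $Q=\inf|\partial^2\gamma_{\tr}v|$ removes, for each bad angle, a set of controlled proportion of $B(0,\epsilon)$ on which $\theta$ could be close to that angle. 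This yields explicit constants $\crayone,\craythree,\crayfour$ (continuous in $R$, and whose $R$-scaling is then read off for $\GammaIR=\partial B(0,R)$ in Lemma \ref{lem:ray2}), which your qualitative selection argument could not deliver even if the non-constancy step were repaired.
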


\begin{lem}[The rays for $\GammaIR= \partial B (0,R)$] \label{lem:ray2}
If $\GammaIR= \partial B (0,R)$ then there exists $R_0>0$ such that Condition \ref{cond:ray} holds for all $R\geq R_0$ with $\crayone, \craytwo, \crayfour$ independent of $R$, $\craythree =\widetilde{c}_3/R$ and $\crayfive=\widetilde{c}_5 R$
with $\widetilde{c}_3, \widetilde{c}_5>0$ independent of $R$. Furthermore,

(iv)$^\prime$ after their first reflection from $\GammaIR$, all of the rays hit $B(0,1)$.
\end{lem}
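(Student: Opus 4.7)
The plan rests on an elementary geometric observation. For $\GammaIR=\partial B(0,R)$, a ray whose supporting line lies at distance $d>0$ from the origin hits $\GammaIR$ at angle $\theta$ to the outward normal satisfying $\sin\theta=d/R$, and the reflected ray has supporting line again at distance $d$ from the origin (so the angle is preserved on reflection). Since $\overline{\Omegaminus}\subset B(0,1)$, any ray emanating from $\Gamma_D$ has $d\leq \sup_{x'\in\Gamma_D}|x'|<1$, so the upper bound $\theta=O(1/R)$ is automatic; the entire task is to choose $V_D$ so that $d$ is also bounded below by a constant independent of $R$ and of $a$.

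The first step is to construct, uniformly in $a\in\mathcal S^{d-1}$, a subset $V_D\subset\Gamma_D^{+,a}$ satisfying
\begin{equation*}
\vol(V_D)\geq \crayone,\qquad |n(x')\cdot a|\geq \craytwo,\qquad c_0\leq d(x')\leq c_1\tfa x'\in V_D,
\end{equation*}
where $d(x'):=|x'-(x'\cdot\mathfrak R_a(x'))\mathfrak R_a(x')|$ is the distance from the origin to the line of the emanating ray, and the constants $\crayone,\craytwo,c_0,c_1$ depend only on $\Omegaminus$ with $0<c_0\leq c_1<1$. Fixing $a$, Lemma~\ref{lem:all_directions} supplies a direct, strictly illuminated point $x_0\in\Gamma_D^{+,a}$ with $\mathfrak R_a(x_0)$ prescribable in any nonempty open subset of $\mathcal S^{d-1}$; choosing this direction not collinear with $x_0$ ensures $d(x_0)>0$. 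A small neighbourhood $V_D$ of $x_0$ in $\Gamma_D$ then inherits the three properties above (directness is an open condition on bounded time intervals), and a compactness argument on $\mathcal S^{d-1}\ni a$, using joint continuity of $\mathfrak R_a$ in $(a,x')$, promotes the constants to be independent of $a$.

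Given this $V_D$, the verification of (i)--(iv) of Condition~\ref{cond:ray} and of (iv)$'$ is then by direct computation. Conditions (i)--(ii) are built into the construction. For (iii), solving $|x'+t\mathfrak R_a(x')|=R$ for the positive root and evaluating at the hit point $y$ yields $\sin\theta=d(x')/R$, so the interval $c_0\leq d\leq c_1$ translates into $\craythree=\widetilde c_3/R$ with $\widetilde c_3=c_0/2$ for $R$ sufficiently large; since $\theta\leq 2/R\leq \tfrac12\min_j\psi_j$ once $R\geq R_0(\MPade,\NPade)$, one also obtains $|\theta-\psi_j|\geq \tfrac12\min_j\psi_j=:\crayfour$ independent of $R$. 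For (iv) and (iv)$'$, the reflected direction $\zeta'=\mathfrak R_a(x')-(2\cos\theta)\,y/R$ satisfies $y\cdot\zeta'=-R\cos\theta$, so the reflected line again has distance $d$ from the origin. Since $|y|=R$ and $d\leq c_1<1$, the reflected ray travels at least $R-\sqrt{1-d^2}\geq R-1$ from $y$ before entering $\overline{B(0,1)}$; it cannot meet $\Gamma_D\subset B(0,1)$ nor re-meet $\partial B(0,R)$ before doing so, proving $\crayfive\geq R-1\geq \tfrac12 R$, i.e.\ $\widetilde c_5=1/2$, as well as (iv)$'$.

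The main technical obstacle is the uniform-in-$a$ construction in the first step: although for each fixed $a$ a suitable $x_0(a)$ exists by Lemma~\ref{lem:all_directions}, extracting constants $\crayone,\craytwo,c_0,c_1$ independent of $a$ requires both a uniform selection of such $x_0(a)$ and a uniform lower bound on the size of the neighbourhood $V_D$, which I expect to follow from joint continuity of $(a,x')\mapsto \mathfrak R_a(x')$ and compactness of the direction sphere. Everything else reduces to the identity $\sin\theta=d/R$ and its reflection symmetry.
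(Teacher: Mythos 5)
Your quantitative core is correct and genuinely different from the paper's. The identity $\sin\theta=d/R$ for the impact parameter $d$, its invariance under reflection in $\partial B(0,R)$, and the resulting deductions — $\craythree=\widetilde c_3/R$ from a lower bound on $d$, $\crayfour$ independent of $R$ because $\theta\leq 2/R$ falls below $\psi_{\min}/2$ for large $R$, and $\crayfive\geq R-1$ together with (iv)$'$ from $d\leq c_1<1$ — are all sound and cleaner than the paper's route. The paper instead reuses the $V_D$ built in the proof of Lemma \ref{lem:ray1} (a neighbourhood of the extremal point of $\Gamma_D$ in the direction $a$, on which $\mathfrak R_a\equiv a$, minus an excluded set $W$), writes $\GammaIR$ as the graph $\gamma_\tr(x')=\sqrt{R^2-|x'|^2}+c$, and tracks the $R$-dependence of the constants $Q\sim R^{-1}$, $C_2\sim R^{-1}$, $C_3\sim R^{-2}$, $D_3\sim R^{-2}$ through \eqref{eq:ray1:eps1}, \eqref{eq:c3}, \eqref{eq:c4}; the upgrade of $\crayfour$ to an $R$-independent constant is then exactly your observation, via Lemma \ref{lem:ballangle}.

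The genuine gap is in your first step, which is where the real content of the lemma lies: producing $V_D$ with $\vol(V_D)\geq\crayone$, $|n\cdot a|\geq\craytwo$, directness, and $c_0\leq d\leq c_1$, with constants independent of both $a$ and $R$. Your selection "choose the direction $\zeta$ not collinear with $x_0$" is circular: Lemma \ref{lem:all_directions} produces $x_0$ as an extremal point determined by $\zeta$, so you cannot prescribe $\zeta$ and the non-collinearity of $x_0(\zeta)$ with $\zeta$ independently, and for some $\zeta$ the condition fails (for $\Omegaminus=B(0,1)$ the backscattered direction $\zeta=-a$ gives a ray through the origin). You would need to show that the bad set of directions is small in a quantitative sense, uniformly over nontrapping obstacles and over $a$. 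The subsequent "compactness in $a$" step is also not available as stated, because the extremal-point selection $a\mapsto x_0(a)$ need not be continuous; a correct version argues by contradiction along a sequence $a_n\to a_\infty$ and shows that a fixed good $V_D$ for $a_\infty$ remains good for nearby $a_n$, which in turn requires quantitative transversality of the emanating rays to $\Gamma_D$ (so that directness is an open condition). The most economical repair is the paper's: work in the neighbourhood $\Gsc\cap B(x_0',\epsilon_0)$ of the extremal point in direction $a$, where every emanating ray is parallel to $a$ and $d(x')$ is simply the distance from $x'$ to the axis $\mathbb{R}a$; excising the tube $\{d<c_0\}$, whose intersection with the neighbourhood has volume $O(c_0^{d-1})$, yields the uniform lower bound on $d$ with $\crayone$ and $\craytwo$ depending only on $\Gamma_D$. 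With that substitution your argument closes.
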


\begin{lem}[The rays for generic $\GammaIR$] \label{lem:ray3}
If  $\GammaIR$ satisfies the assumptions of Theorem \ref{th:quant2}, then Condition \ref{cond:ray} holds for $R$ sufficiently large with 
$c_{{\rm ray},j}, j=1,\ldots,4$, independent of $R$  and $\crayfive=\widetilde{c}_5 R$
with $\widetilde{c}_5>0$ independent of $R$.

\end{lem}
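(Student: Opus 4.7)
The plan is to exploit the assumption that $\GammaIinf$ is not a sphere centred at the origin to produce an open set of incoming directions $\mathcal{Z}\subset \mathcal{S}^{d-1}$, of positive measure independent of $R$, such that any ray in one of these directions, starting near the origin, hits $\GammaIR$ directly at an angle bounded away from both normal incidence and the special angles $\{\psi_j\}_{j=1}^{\mvanish}$ defined in~\eqref{eq:psi_j}. I would then pull $\mathcal{Z}$ back to $V_D \subset \Gamma_D$ via Lemma~\ref{lem:all_directions}. To locate directions with the requisite non-normal incidence, first I observe that since the assumption $B(0,M^{-1}R)\subset \tildedomain \subset B(0,MR)$ places $0$ in the interior of the convex body bounded by $\GammaIinf$, the outward unit normal $\nu_\infty(y)$ at each smooth point $y$ satisfies $\nu_\infty(y)\cdot y>0$, and the angle $\theta_\infty(y)\in [0,\pi/2)$ with $\cos\theta_\infty(y)=\nu_\infty(y)\cdot y/|y|$ is well defined and continuous on the smooth locus. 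If $\theta_\infty\equiv 0$ there, then $\nu_\infty(y)\parallel y$, which forces $|y|^2/2$ to be constant on $\GammaIinf$ and contradicts the non-sphericity assumption. Hence there exist $y_0\in \GammaIinf$, a neighbourhood $W\subset \GammaIinf$ of $y_0$ (away from any corners of $\GammaIinf$), and constants $0<\eta_1<\eta_2<\pi/2$ with $\theta_\infty(y)\in [\eta_1,\eta_2]$ on $\overline W$.

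Next I would transfer these properties to $\GammaIR$ for large $R$. Working in rescaled coordinates $y=x/R$, the set $\Gamma_D/R\subset B(0,1/R)$ collapses to $\{0\}$, so for each $x'\in \Gamma_D$ the emanating ray in direction $\zeta=\mathfrak{R}_a(x')$ agrees with the ray from $0$ in direction $\zeta$ up to $O(1/R)$ error in $y$-coordinates. Let $\mathcal{Z}_\infty\subset \mathcal{S}^{d-1}$ be the open set of directions whose ray from $0$ meets $W$. The $C^{1,\e}$ convergence $\GammaIR/R\to\GammaIinf$ away from corners implies that both the actual hit point on $\GammaIR/R$ and the outward normal there converge uniformly to their $\GammaIinf$ limits on $\overline W$ as $R\to\infty$. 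Therefore, for $R\geq R_0$ large, the actual incidence angle on $\GammaIR$ lies in $[\eta_1/2,(\eta_2+\pi/2)/2]$; further shrinking $\mathcal{Z}_\infty$ to an open $\mathcal{Z}$ of still-positive measure, which is possible since $\{\psi_j\}_{j=1}^{\mvanish}$ is finite, I can additionally arrange that the incidence angles stay at distance $\geq \crayfour$ from every $\psi_j$, uniformly in $R\geq R_0$. By discarding a neighbourhood of $\{\pm a\}$ from $\mathcal{Z}$ (still with positive measure), I secure $|n(x')\cdot a|\geq \craytwo>0$ downstream.

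Then I would construct $V_D$ and verify the four conditions. By Lemma~\ref{lem:all_directions}, $\mathfrak{R}_a:\Gamma_D^{+,a}\to \mathcal{S}^{d-1}$ is surjective, and since $\mathfrak{R}_a$ is smooth between equidimensional manifolds, Sard's theorem gives that its critical values form a measure-zero subset of $\mathcal{S}^{d-1}$; hence $V_D:=\mathfrak{R}_a^{-1}(\mathcal{Z})\cap \Gamma_D^{+,a}$ contains an open set of positive $(d-1)$-volume $\crayone>0$, with $\crayone$, $\craytwo$, $\craythree:=\eta_1/2$, and $\crayfour$ all depending only on $\Omegaminus$, $\GammaIinf$, and $\{\psi_j\}$---in particular, independent of $R$. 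For (iv), after reflection at $y\in \GammaIR$ the ray re-enters $\tildedomain\supset B(0,M^{-1}R)$; the convexity of $\tildedomain$, the bound $\craythree>0$ on the angle from normal, and a standard chord-length estimate for convex bodies containing a ball of radius $M^{-1}R$ show that the ray travels distance $\gtrsim R$ before re-meeting $\GammaIR$, while reflected rays that happen to head toward $\Omegaminus\subset B(0,1)$ travel distance $\geq M^{-1}R-1\gtrsim R$ before reaching $\Gamma_D$. This yields $\crayfive=\widetilde c_5 R$.

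The main obstacle is maintaining the angular and volume bounds uniformly in $R$. Since the convergence $\GammaIR/R\to\GammaIinf$ is only $C^{0,1}$ globally, the construction must be performed on an $\overline W$ strictly separated from the corners of $\GammaIinf$, where the $C^{1,\e}$ hypothesis applies and delivers the uniform convergence of outward normals required to pass incidence-angle bounds from $\GammaIinf$ to $\GammaIR$. A subtler point is that the hit-point map $\zeta\mapsto$~(hit point on $\GammaIR/R$) itself depends on $R$; I would circumvent this by choosing $W$ and $\mathcal{Z}$ purely in terms of $\GammaIinf$ first, and only then invoking the $C^{1,\e}$ rate of convergence to absorb the $R$-dependent perturbation and fix $R_0$ once and for all.
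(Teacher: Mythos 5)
Your overall strategy is the same as the paper's: use the non-sphericity of $\GammaIinf$ to locate a region of $\GammaIinf$ hit non-normally by rays from the origin, transfer this to $\GammaIR$ for large $R$ via the $C^{1,\varepsilon}$ convergence away from corners, and pull back through Lemma \ref{lem:all_directions} to a set $V_D\subset\Gamma_D$ of $R$-independent volume. However, there is a genuine gap at the step where you dodge the special angles $\{\psi_j\}_{j=1}^{\mvanish}$. Your use of non-sphericity only rules out $\theta_\infty\equiv 0$, which gives a neighbourhood $W$ on which $\theta_\infty\in[\eta_1,\eta_2]$ with $\eta_1>0$; it does \emph{not} rule out, say, $\theta_\infty\equiv\psi_1$ on $W$. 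In that scenario every direction in $\mathcal{Z}_\infty$ hits $\GammaIinf$ at exactly the angle $\psi_1$ where the reflection coefficient vanishes, and your claim that one can ``further shrink $\mathcal{Z}_\infty$'' to arrange $\min_j|\theta-\psi_j|\geq\crayfour$ ``since $\{\psi_j\}$ is finite'' fails: finiteness of $\Psi$ only helps if the angle map is non-degenerate on $W$, which you have not shown. Condition \ref{cond:ray}(iii) requires \emph{both} $\theta\geq\craythree$ and $\min_j|\theta-\psi_j|\geq\crayfour$, so this is not a cosmetic omission.

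The fix is the stronger use of non-sphericity made in the paper: the map $y\mapsto \reallywidehat{(n(y),y/|y|)}$ is continuous on the smooth locus of $\GammaIinf$, and if it were \emph{constant} (not merely constantly zero) then $\GammaIinf$ would be a sphere centred at the origin; hence its image contains a nontrivial interval, and one can choose a single point $x_0^\infty$ whose angle is at distance $\delta_0>0$ from the entire finite set $\{0,\psi_1,\dots,\psi_{\mvanish}\}$, then take $W$ to be a small neighbourhood of that point. Two smaller remarks: (a) Sard's theorem is unnecessary for producing $V_D$ of positive volume --- the preimage of an open set under $\mathfrak{R}_a$ restricted to $\Gamma_D^{+,a}$ is already open and non-empty by Lemma \ref{lem:all_directions}; and (b) for Point (iv) the relevant angle bound is the upper bound $\theta\leq\eta_2<\pi/2$ (away from grazing), not the lower bound $\craythree>0$ that you cite; you do have the upper bound available, but the constant you invoke is the wrong one.
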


\begin{lem}[The rays for when $\GammaIR$ is a smoothed hypercube] \label{lem:ray4}
Let $\GammaIR$ coincide with the boundary of the hypercube $[-R/2, R/2]^d$ at distance more than $\epsilon$ from the corners (as described in the statement of Theorem \ref{th:local_square}).

There exists $\epsilon_0>0$ and $M\in \mathbb{Z}^+$ (both dependent on $\Gsc$ but not on $R$)  such that, 
if $0<\epsilon\leq\epsilon_0$ and $R\geq 4$, then Condition \ref{cond:ray} holds with 
$\craytwo$, $\craythree$, and $\crayfour$ independent of $R$, 
$\crayone =  \widetilde{c}_{\rm ray,1}/R^{d-1}$ and $\crayfive=\widetilde{c}_5 R$ with $\widetilde{c}_1, \widetilde{c}_5>0$ independent of $R$, and 

(iv)$^\prime$ the emanating rays from $V_D$ hit $\GammaIR$ $N(R)\leq M$ times, each time with an angle $\theta$ to the normal satisfying 
\eqref{eq:anglebounds} without hitting 
$\Gamma_D$ in between, and then, after their $N(R)$th reflection, the rays intersect $B(0,  3/2)  \setminus  B(0, 5/4)$ 
before hitting either $\Gamma_D$ or $\GammaIR$ again.
\end{lem}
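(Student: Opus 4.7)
The proof uses the unfolding of billiard dynamics in the hypercube $[-R/2,R/2]^d$: trajectories inside the cube correspond to straight lines in $\mathbb R^d$, with the reflection group $(\mathbb Z/2)^d\ltimes\mathbb Z^d$ identifying the origin of the cube with the lattice $R\mathbb Z^d$ in the universal cover. The strategy is to choose a primitive rational direction $\zeta^\star$ tuned to the $\{\psi_j\}$, pull it back to $\Gamma_D$ via Lemma~\ref{lem:all_directions}, and exploit the resulting near-periodicity of the orbit.

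\emph{Step 1 (central ray).} Pick a primitive $k^\star\in\mathbb Z^d$ with $\gcd(k^\star_j)=1$ such that every $|k^\star_j|/|k^\star|$ lies in $(c_0,1-c_0)$ for some fixed $c_0>0$ and avoids the finitely many values $\{\cos\psi_j\}_{j=1}^m$; set $\zeta^\star:=k^\star/|k^\star|$ and $M:=\sum_j|k^\star_j|-1$. The unfolded ray from any $x_0'\in B(0,1)$ in direction $\zeta^\star$ reaches the lattice point $x_0'+k^\star R$ at time $|k^\star|R$, crossing exactly $M$ hyperplanes of the form $\{x_j=(m+\tfrac12)R\}$ along the way; folded back, these are $M$ reflections off the flat faces of $\GammaIR$, each at angle $\theta_j$ with $\cos\theta_j=|k^\star_j|/|k^\star|$. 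The transverse coordinates of each crossing are rational with denominator at most $|k^\star|$, so every reflection point sits at distance at least $c(k^\star)R$ from every corner of the hypercube; choosing $\epsilon_0<c(k^\star)$ ensures that, for all $R\ge 4$, these $M$ reflections take place on the smooth, flat part of $\GammaIR$, and the $\theta_j$ are uniformly bounded away from $0$, $\pi/2$ and the $\psi_j$.

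\emph{Step 2 (preimage on $\Gamma_D$, and uniformity in $a$).} By Lemma~\ref{lem:all_directions}, for each direction $a$ there is $x_0'(a)\in\Gamma_D^{+,a}$ with $\mathfrak R_a(x_0'(a))=\zeta^\star$. The main obstacle is uniformity in $a$: for a single $\zeta^\star$, $\mathfrak R_a^{-1}(\zeta^\star)$ may, for certain $a$, meet $\Gamma_D$ only at points with vanishing Gaussian curvature or with $n(x_0'(a))\cdot a\to 0$. To circumvent this, cover $\mathcal S^{d-1}$ by finitely many open sets, in each of which $\zeta^\star$ is replaced by a suitable member of a finite family of primitive rational directions, so that $x_0'(a)$ can be chosen strictly convex with $|n(x_0'(a))\cdot a|\ge\delta>0$ uniformly in $a$ (using that any smooth compact hypersurface in $\mathbb R^d$ has elliptic points). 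The differential of $\mathfrak R_a$ is then uniformly invertible at $x_0'(a)$ as in the proof of Lemma~\ref{lem:raydil}, and taking $V_D$ to be the preimage under $\mathfrak R_a$ of the spherical cap of angular radius $\eta/R$ centred at $\zeta^\star$ yields $\vol(V_D)\ge\widetilde c_{\rm ray,1}/R^{d-1}$ for any sufficiently small fixed $\eta>0$, giving (i) and (ii).

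\emph{Step 3 (perturbation and return).} For $x'\in V_D$, the starting position and emanating direction differ from $(x_0'(a),\zeta^\star)$ by at most $C\eta/R$, and transporting along the unfolded straight line of length $|k^\star|R$ gives endpoint shifts of at most $C\eta$. For $\eta$ small enough each ray from $V_D$ therefore undergoes exactly $M$ reflections on flat faces at angles within $O(\eta/R)$ of the corresponding $\theta_j$, without hitting $\Gamma_D$ in between, and all reflection points remain at distance $\ge c(k^\star)R/2$ from any corner (verifying (iii)). After the $M$-th reflection the ray travels from a point at distance $\sim R$ from the origin toward a point in $B(0,1+C\eta)$, so its straight-line segment necessarily crosses $B(0,3/2)\setminus B(0,5/4)$ before hitting $\Gamma_D\subset B(0,1)$ or $\GammaIR$ (a further reflection would require re-entering the near-boundary region). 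The distance between consecutive reflection points is $\sim R$, giving $\crayfive\asymp R$ and completing (iv)$^\prime$, with all constants depending only on $\Omega_-$ and the $\{\psi_j\}$.
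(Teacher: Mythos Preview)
Your overall strategy—unfold the cube billiard, choose a rational direction $\zeta^\star=k^\star/|k^\star|$, pull it back to $\Gamma_D$ via Lemma~\ref{lem:all_directions}, and take $V_D$ of radius $\sim 1/R$—matches the paper's. The essential difference, and the genuine gap in your argument, is in the corner-avoidance step.

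You assert in Step~1 that ``every reflection point sits at distance at least $c(k^\star)R$ from every corner'' because ``the transverse coordinates of each crossing are rational with denominator at most $|k^\star|$''. This fails on two counts. First, the transverse coordinate at a crossing is $x_{0,i}' + (\text{rational})\cdot R$, and since $x_0'\in B(0,1)$ is arbitrary it is not a rational multiple of $R$. Second, and more seriously, even for the ray from the \emph{origin} your conditions on $k^\star$ do not prevent hitting a corner: take $d=2$, $k^\star=(1,1)$, which is primitive, has $|k^\star_j|/|k^\star|=1/\sqrt 2\in(c_0,1-c_0)$, and generically avoids the $\cos\psi_j$, yet the ray from $0$ in direction $(1,1)/\sqrt 2$ meets the vertex $(R/2,R/2)$ exactly. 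A parity constraint (one $k^\star_j$ even, the rest odd) repairs this in $d=2$, but in higher dimensions one must avoid the full codimension-$\ge 2$ skeleton and it is not clear a single choice of $k^\star$ achieves this; and even then the $O(1)$ shift from $x_0'$ degrades the clearance to $c(k^\star)R-O(1)$, which need not be positive at $R=4$.

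The paper does not try to choose $\xi_0$ so cleverly. It accepts that the initial ray from $x_0'$ may pass within distance $\tfrac12$ of finitely many corners $q_1,\ldots,q_m$ (with $m\le N(R)\le\lceil 4|p|\rceil$), and then \emph{iteratively perturbs} the starting point $x_0'\to x_1'\to\cdots\to x_m'$ inside a strictly convex patch $\mathcal D\subset\Gamma_D^{+,a}$, each step of size $\lambda_i\sim 1/R$, using Lemma~\ref{lem:raydil} to push the trajectory a fixed amount $\eta_{i+1}$ away from $q_{i+1}$ while preserving (up to a controlled loss) the clearance from $q_1,\ldots,q_i$. This is the content of the paper's Step~4, and the real reason Lemma~\ref{lem:raydil} is needed: it gives a \emph{lower} bound $|\mathfrak R_a(x')-\mathfrak R_a(y')|\ge C_{\mathfrak R}|x'-y'|$, so that an $O(1/R)$ perturbation of the base point produces an $O(1)$ displacement of the reflection points after travelling distance $\sim R$. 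Your argument skips this mechanism entirely, and without it the lemma does not follow. Your remark in Step~2 about uniformity in $a$ is well taken, but it is orthogonal to this gap.
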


\subsection{The ideas used in the proofs of the lemmas constructing the rays}\label{sec:idea_rays}

In this subsection, we outline the ideas used in the proofs of Lemmas \ref{lem:ray1}-\ref{lem:ray3}, in the simplest possible case when $\MPade=\NPade=0$ (i.e., the boundary condition on $\GammaIR$ is the impedance boundary condition \eqref{eq:impedance1}). In this case $\mvanish=0$ and there are no non-zero angles $\psi_j$; when such angles exist, mass needs to be excluded in a careful way from the neighbourhoods described below so that the rays avoid these angles. The proof of Lemma \ref{lem:ray4} has a different character to the proofs of Lemmas \ref{lem:ray1}-\ref{lem:ray3}, and so we postpone discussion of the ideas of that proof until the start of the proof itself.

The idea behind the ray construction for general strictly-convex $\GammaI$ in Lemma \ref{lem:ray1} is as follows.
We consider a point $x'_0$ in $\Gamma_D$ that
is the extremum point on $\Gamma_D$ in the direction of $a$. The rays emanating from a neighbourhood of this point are rays in the direction $a$, and thus hit  $\GammaIR$ directly.  Since  $\GammaIR$ is 
strictly-convex, these rays cannot be normal to $\GammaIR$ at more than one point, see Figure \ref{fig:ray_casgen}, and thus the required neighbourhood exists.

For the proof of Lemma \ref{lem:ray2}, we need in addition to quantify how far from the normal the ray described in the last paragraph hits $\GammaIR$. When $\GammaIR = \partial B(0,R)$, we show that a set of points of volume $c>0$ can reach $\GammaIR$  with an angle $|\theta| \gtrsim R^{-1}$; see Figure \ref{fig:ray_circle}.

For the proof of Lemma \ref{lem:ray3}, i.e.~when $\GammaIinf := \lim_{R\tendi} (\GammaIR/R)$ is \emph{not} a sphere centred at zero, we  recall from Lemma \ref{lem:all_directions} that,
given any direction, there exists a direct ray emanating from $\Gamma_D$ in that direction. We need to show that at least one of these rays hits $\GammaIinf$ non-normally.
Since $\GammaIinf$ is not a sphere centred at the origin, there exists $ x ^\infty_0 \in \GammaIinf$ with $n_{\GammaIinf}( x ^\infty_0) \neq x ^\infty_0/| x ^\infty_0|$. We use Lemma \ref{lem:all_directions} to identify a point $x'_0 \in \Gamma_D$ such that the ray emanating from $x'_0$ is 
in the direction 
$x ^\infty_0/| x ^\infty_0|$ and
does not hit $\Gamma_D$ again. In the limit $R\rightarrow \infty$, the rescaled obstacle $\Omegaminus/R$ shrinks to the origin; therefore the rays emanating from a neighbourhood of $x'_0$ hit $\GammaIR$ with an angle close to the angle between $ x ^\infty_0/| x ^\infty_0|$ and $n_{\GammaIinf}(x_0^\infty)$; this angle is $\geq c >0$, with $c$ independent of $R$; see Figure \ref{fig:ray_noncircle}.

\begin{figure}
\includegraphics[scale=0.4]{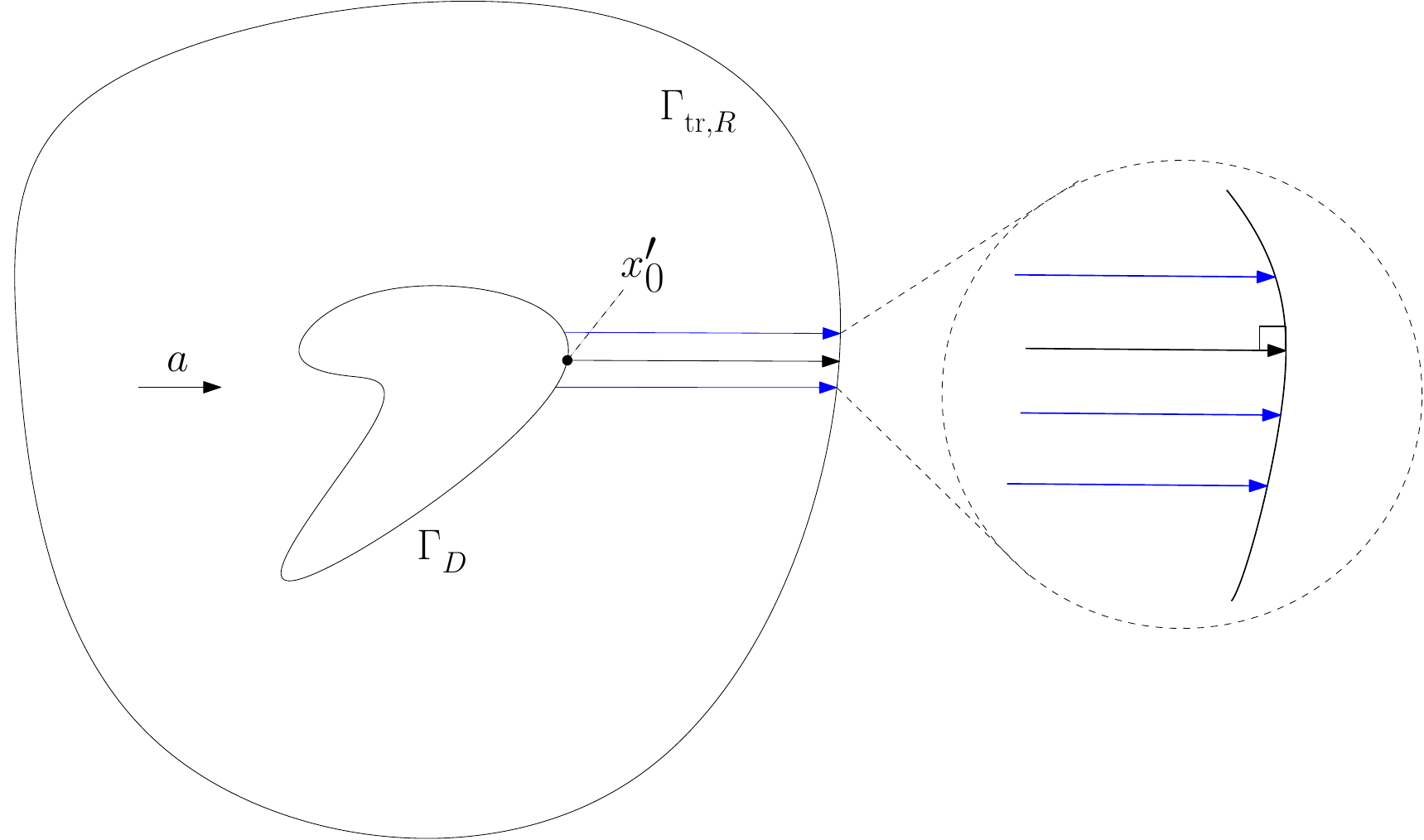} 
\caption{The rays in Lemma \ref{lem:ray1} (i.e., for general 
strictly convex $\GammaI$). 
Neighbourhoods on $\Gamma_D$ from which any of the blue rays emanate satisfy Condition \ref{cond:ray}.}  \label{fig:ray_casgen}
\end{figure}

\begin{figure} 
\includegraphics[scale=0.4]{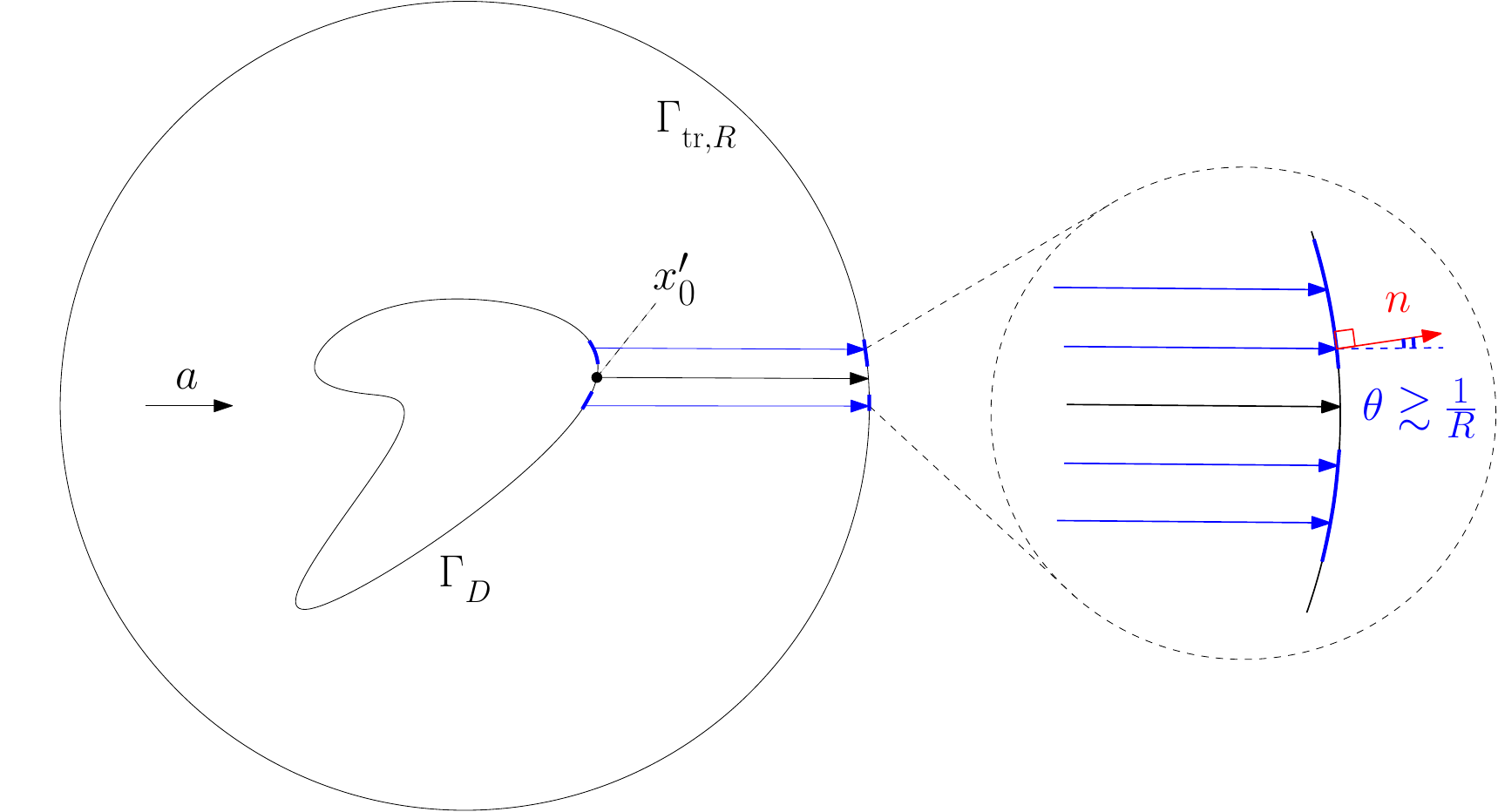}
\caption{The rays in Lemma \ref{lem:ray2} (i.e., for $\GammaIR = \partial B(0,R)$).
Neighbourhoods on $\Gamma_D$ from which any of the blue rays emanate satisfy Condition \ref{cond:ray}.}   \label{fig:ray_circle}
\end{figure}

\begin{figure}  
\includegraphics[scale=0.5]{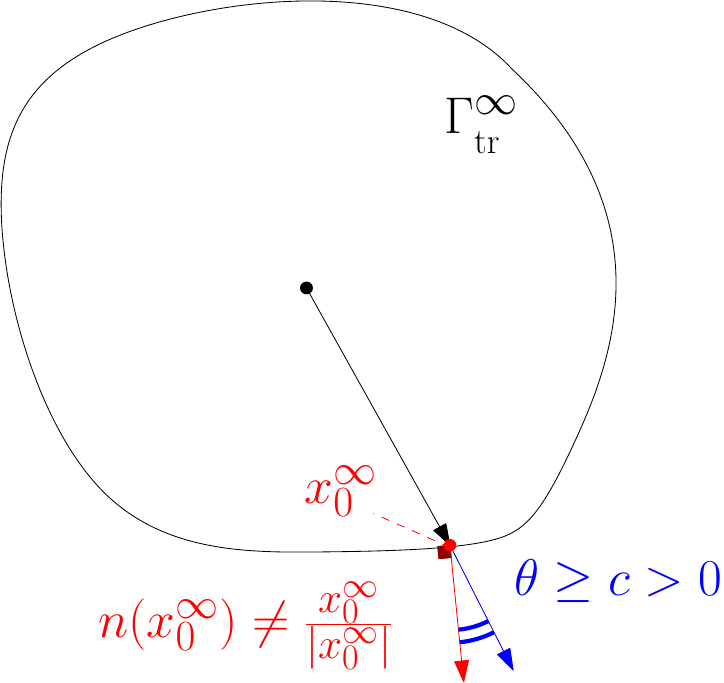}
\caption{The rays in Lemma \ref{lem:ray3}, i.e., when $\GammaIinf$ is not a ball centred at the origin. The figure shows the rescaled domain in the limit $R\rightarrow \infty$ (recall that in this limit the obstacle shrinks to the origin).} \label{fig:ray_noncircle}
\end{figure}

\subsection{Proofs of Lemmas \ref{lem:ray1}-\ref{lem:ray4}}\label{sec:rayconstruction}

In the proofs of these lemmas we use the notation that $\reallywidehat{(b_1,b_2)}$ is the angle between vectors $b_1$ and $b_2$; i.e.
\beqs
\reallywidehat{(b_1,b_2)}:= \cos^{-1}\left( \frac{b_1 \cdot b_2}{|b_1||b_2|}\right),
\eeqs
where the range of $\cos^{-1}$ is $[0,\pi]$.

\begin{proof}[Proof of Lemma \ref{lem:ray1}]

\

\emph{Step 1. Construction of direct emanating rays in the direction of $a$.}

\noindent Without loss of generality, we assume that $a=(1,0,\dots,0)$. Let $x_0'\in \Gsc$ be the point on $\Gamma_D$ with maximal $x_1$ coordinate. By translating the obstacle $\Omegaminus$, we can assume that $x_0'=0$. Then, locally near $0$, for any $0<\epsilon\leq\epsilon_0(\Gamma_D)$, where $\epsilon_0$ is small enough
\begin{equation}
\label{eq:graph}
\Gsc\cap B(0,\epsilon)\subset \big\{(\gamma_D(x'),x') \,:\, x'\in B(0,\epsilon)\subset \mathbb{R}^{d-1}\big\}
\end{equation}
where $\gamma_D \in C^\infty(\mathbb{R}^{d-1})$ and $\partial \gamma_D(0)=0$, and $\gamma_D(x')\leq 0$. Moreover, 
for $\epsilon_0>0$ small enough and $0<\epsilon\leq\epsilon_0$
\begin{equation}
\label{eq:above}
\Gsc\cap \big\{(x_1,x')\,:\, x_1> \gamma_D(x') \,\tand\, x'\in B(0,\epsilon)\big\}=\emptyset.
\end{equation}
Indeed, if not then there exist $x_n'\to 0$, $(y_n,x_n')\in \Gsc$ such that $y_n>\gamma_D(x_n')$. But then, extracting subsequences if necessary, $(y_n,x_n')\to (y,0)\in \Gsc$ and $y\geq \gamma_D(0)$. In particular, by maximality of the $x_1$ coordinate at $x_0'$, $y=0$. But, near $x_0'$~(\ref{eq:graph}) holds and in particular, for $n$ large enough, $y_n=\gamma_D(x_n')$, which is a contradiction.

Observe that, shrinking $\epsilon_0>0$ if necessary, $a$ is outward-pointing along $\Gsc\cap B(0,\epsilon_0)$, and
\beq \label{eq:ray1:tang}
|n(x') \cdot a|\geq \craytwo(\epsilon_0), \; \tfa x' \in B(0,\epsilon_0),
\eeq
where $\craytwo(\epsilon_0)>0$ depends only on $\epsilon_0$ and hence $\Gamma_D$.
By \eqref{eq:mathfrakR}, $\mathfrak R_a(x')=a$ for all $x' \in \Gsc\cap B(0,\epsilon_0)$, and thus the rays emanating from $\Gsc\cap B(0,\epsilon_0)$ are
 the rays in the $x_1$ direction; see Figure \ref{fig:ray_casgen}. By~\eqref{eq:above}, these rays 
 hit $\GammaI$ before hitting $\Gsc$ again.
 The neighbourhood $V_D$ will be a subset of $B(0,\epsilon_0)$, and thus Point (ii) in Condition \ref{cond:ray} follows.

\emph{Step 2. Parametrisation of $\GammaI$.}

\noindent Let $\gamma_\tr:B(0,\epsilon_0) \subset \mathbb R ^{d-1}\to \mathbb{R}_+$ be such that 
$$
\GammaI\cap \{x_1>0,\,|x'|<\epsilon_0\}:=\big\{(\gamma_{\tr}(x'),x')\,:\, |x'|<\epsilon_0\big\};
$$
since $\GammaI$ is strictly convex, this property holds without needing to reduce $\epsilon_0$ and thus $\epsilon_0$ still only depends on $\Gamma_D$.
The outward-pointing normal to $\GammaI$ is given by 
$$
n_{\rm tr}(x'):=\frac{(1,-\nabla \gamma_{\tr}(x'))}{\sqrt{1+|\nabla \gamma_{\tr}(x')|^2}}.
$$
For $x' \in B(0,\epsilon_0) \subset \mathbb R^{d-1}$, let $\theta(x')\in [0,\pi/2)$ be the angle between the ray emanating from $(\gamma_D(x'),x')$ and the normal to $\GammaI$; since $\cos \theta(x')=\big(1,0,\dots 0\big)\cdot n_{\rm tr}(x')$,
\beq \label{eq:ray1:theta}
\theta(x') = \cos^{-1}\left( \frac{1}{\sqrt{1+|\nabla \gamma_{\tr}(x')|^2}}\right)\in [0,\pi/2).
\eeq
We use later the facts, obtained from 
from \eqref{eq:ray1:theta} by direct calculation, that,
\beq\label{eq:tantheta}
\tan \theta(x') = |\nabla \gamma_\tr(x')|,
\eeq
and, in $\{ \nabla \gamma_{\tr}(x') \neq 0\}$,
\beq \label{eq:ray1:dtheta}
\nabla \theta(x') = \frac{1}{1+|\nabla \gamma_{\tr}(x')|^2} \partial^2 \gamma_{\tr} (x') \frac{\nabla \gamma_{\tr}(x')}{| \nabla \gamma_{\tr}(x') |}.
\eeq
We also use the following quantities,
\beq \label{eq:ray1:quant}
Q:=\inf_{x',|v|=1} | \partial^2\gamma_{\tr}(x')v| \qquad \tand\qquad
C_k:=\sup_{x'} \max_{|\mathbf{k}|=k} |\partial^{\mathbf{k}}\gamma_{\tr}(x')|, \; k=1,2,3.
\eeq

\emph{Step 3. Avoiding the angle $\psi_i = 0$.}

\noi Recall that our goal is to construct  $V_D \subset \Gsc\cap B(0,\epsilon)$ so that
$$
\min_{i=1,\ldots,m} | \theta(x') - \psi_i | \geq C > 0 \quad\tfa x'\in V_D,
$$
where $\vol(V_D)$ and $C$ depend only on $\GammaIR$. 
Our plan is to exclude mass from $B(0,\epsilon)$ for each $i$, taking care that the volume is still bounded below to give Point (i) of Condition \ref{cond:ray}.

Avoiding the angle zero corresponds to obtaining a lower bound on $|\theta(x')|$. By Taylor's theorem,
$$
|\nabla\gamma_{\tr}(x')|\geq  |\nabla \gamma_{\tr}(0)+\partial^2\gamma_{\tr}(0)x'|- \widetilde{C}_d C_3|x'|^2,
$$
where $C_3$ is defined by \eqref{eq:ray1:quant}, and $\widetilde{C}_d$ depends only on $d$.
By the definition of $Q$ in \eqref{eq:ray1:quant},
$$
\big|\nabla \gamma_{\tr}(0)+\partial^2\gamma_{\tr}(0)x'\big|
=\Big|\partial^2\gamma_{\tr}(0)\Big(\big(\partial^2\gamma_{\tr}(0)\big)^{-1}\nabla\gamma_{\tr}(0)+x'\Big)\Big|
\geq Q\Big|\big(\partial^2\gamma_{\tr}(0)\big)^{-1}\nabla\gamma_{\tr}(0)+x'\Big|.
$$
Suppose  that $|(\partial^2\gamma_{\tr}(0))^{-1}\nabla\gamma_{\tr}(0)|\leq \e/3$. Then
$$
|\nabla \gamma_{\tr}(x')|\geq \frac{Q\e}{6}-\widetilde{C}_dC_3\e^2 \quad\tfor x' \in B(0,\epsilon) \setminus B(0,\epsilon/2).
$$
On the other hand, if $|\big(\partial^2\gamma_{\tr}(0)\big)^{-1}\nabla\gamma_{\tr}(0)|\geq \e/3$, then
$$
|\nabla \gamma_\tr(x')|\geq \frac{Q\e}{6}-\frac{\widetilde{C}_dC_3\e^2}{36} \quad\tfor x' \in B(0,\epsilon/6).
$$
Therefore, in both cases, if $\e\leq Q/(12\widetilde{C}_dC_3)$, then there exists a set $W$ with 
\beq \label{eq:ray1:contrW}
\vol (W) \leq \max(2^{-d}, 1-6^{-d})\vol (B(0,\epsilon)) =  (1 - 6^{-d})\vol (B(0,\epsilon)).
\eeq
such that
$$
|\nabla \gamma_\tr(x')|\geq \frac{Q\e}{12}\quad \tfa  x' \in B(0,\epsilon)\setminus W.
$$
Therefore, for $x' \in B(0,\epsilon)\setminus W$, by (\ref{eq:ray1:theta})
$$
1-\frac{\theta(x')^2}{2}\leq \cos \theta(x') \leq 1 - \frac{|\nabla\gamma_{\tr}(x')|^2}{2}\leq 1- \frac{Q^2\epsilon^2}{288},
$$
and we conclude that
\beq \label{eq:ray1:step4}
\text{ if } 0<\epsilon \leq \min\left(\frac{Q}{12\widetilde{C}_dC_3}, \epsilon_0\right), \quad\text{ then }
\theta(x') \geq \frac{Q\epsilon}{12} \hspace{0.3cm} \tfa  x' \in B(0,\epsilon)  \setminus  W.
\eeq

\emph{Step 4. Avoiding the angles $\psi_i$.}

Given $\psi_i$, let $x'_i \in \overline{B(0,\epsilon)}\subset \mathbb R^{d-1}$ be such that
\beq \label{eq:ray1:defxi}
|\theta(x_i') - \psi_i| = \min_{x'\in \overline{B(0,\epsilon)}} |\theta(x') - \psi_i|,
\eeq
i.e.,~$x'_i$ is the point in $\overline{B(0,\epsilon)}$ where $\theta(x')$ is closest to $\psi_i$.
Let
\beq \label{eq:ray1:beta}
\psi_{\rm min} := \min_{j=1,\ldots,m} \psi_j > 0,
\eeq
In the following we use the notation $[a,b]$ for the line segment between $a$ and $b$, i.e.
$$
[a,b] := \big\{ ta+(1-t)b, \; t\in[0,1]\big\},
$$
and $\langle\cdot,\cdot\rangle$ denotes the Euclidean inner product on $\Rea^d$.

The main idea of the rest of this step is the following: $|\theta(x')- \psi_i|$ is, by definition, smallest at $x'_i$, and will be smallest when the minimum in \eqref{eq:ray1:beta} is attained, i.e. $\theta(x_i')=\psi_i$; in this case, the idea is for the size of the neighbourhood of $x_i'$ that we exclude to be dictated by using Taylor's theorem
\begin{align} \nonumber 
| \theta(x') - \theta(x_i') |
&\geq | \nabla \theta (x'_i) \cdot (x'-x'_i) | - \sup_{y' \in [x',x'_i]}\max_{|\mathbf{k}| = 2} \big| \partial^{\mathbf{k}} \theta(y')\big| |x' - x'_i|^2  \nonumber \\
&= \frac{1}{1+|\nabla \gamma_{\tr}(x_i')|^2} \left| \left\langle \partial^2 \gamma_{\tr} (x_i') \frac{\nabla \gamma_{\tr}(x_i')}{| \nabla \gamma_{\tr}(x_i') |}, x'-x'_i \right\rangle \right| -\sup_{y' \in [x',x'_i]} \max_{|\mathbf{k}| = 2} \big| \partial^{\mathbf{k}} \theta(y')\big|  | x' - x'_i|^2, \label{eq:ray1:alpha_explain}
\end{align}
where the requirement that the right-hand side is bounded below determines the size of the excluded neighbourhood.
The issues we then have to deal with are (a) $\theta(x_i')$ is not necessarily equal to $\psi_i$, and (b) $|\gamma_\tr(x')| = \tan \theta(x')$ is zero when $\theta(x')=0$, and then the second-order term in \eqref{eq:ray1:alpha_explain} blows up.

To deal with Point (b), we first consider points in $B(0,\epsilon)$ where the second-order term in \eqref{eq:ray1:alpha_explain} does not blow up. Let
\beq \label{eq:ray1:defZ}
Z_i := \Big\{ x'\in B(0,\epsilon)\,:\, \theta(y') \geq \theta_0 \,\tfa y' \in [x',x'_i] \Big\}
\eeq
where $\theta_0$ will be chosen later in the proof (when dealing with the points not in $Z_i$).
By \eqref{eq:tantheta}, for any $x' \in B(0,\epsilon) \cap Z_i$, $ |\nabla \gamma_{\tr}(y')| \geq \tan ( \theta_0) >0$ for $y' \in [x',x'_i]$. Recalling the definitions \eqref{eq:ray1:quant}, and using \eqref{eq:ray1:alpha_explain} and \eqref{eq:ray1:dtheta}, we have
\beq\label{eq:ray1:alpha}
| \theta(x') - \theta(x_i') |
\geq D_1  Q \left| \left\langle v_i,\frac{x'-x'_i}{ | x' - x'_i|} \right\rangle\right|  | x' - x'_i| - D_3 | x' - x'_i|^2,
\eeq
where
\beq \label{eq:ray1:D}
D_1 := (1+C_1^2)^{-1}, \qquad D_3 := C_3 + C_1C_2^2 + \frac{C_2^2}{|\tan (\theta_0)|},
\eeq
and the unit vector $v_i$ is defined by 
\beq\label{eq:vi}
v_i := \left(\partial^2 \gamma_{\tr} (x_i') \frac{\nabla \gamma_{\tr}(x_i')}{|\nabla \gamma_{\tr}(x_i')|}\right)\,\, \left| \partial^2 \gamma_{\tr} (x_i') \frac{\nabla \gamma_{\tr}(x_i')}{|\nabla \gamma_{\tr}(x_i')|} \right| ^ {-1}.
\eeq
Let 
\beqs
 W_i(\eta, \delta) := B(x'_i, \eta \epsilon) \cup \left\{ \left| \left\langle \frac{x'-x'_i}{ | x' - x'_i|},  v_i \right\rangle \right| \leq \delta \right\},
 \eeqs
where $\eta<1$; then \eqref{eq:ray1:alpha} implies that
\beqs
| \theta(x') - \theta(x'_i) | \geq \big(D_1    Q \delta \eta - 4 D_3 \epsilon\big) \epsilon  \quad \tfa
 x' \in \big( B(0,\epsilon) \cap Z_i \big)  \setminus  W_i.
\eeqs
We now deal with Point (a) above (i.e.~that $\theta(x_i')$ is not necessarily equal to $\psi_i$). If $|\theta(x_i') - \psi_i| > \alpha$, for $\alpha$ to be fixed later, then, by \eqref{eq:ray1:defxi},
\beq \label{eq:ray1:galpha}
 |\theta(x') - \psi_i| \geq |\theta(x_i') - \psi_i| > \alpha \quad \tfa x' \in B(0,\epsilon).
\eeq
If $|\theta(x_i') - \psi_i| \leq \alpha$, then 
\beqs
| \theta(x') - \psi_i|\geq | \theta(x') - \theta(x_i') |-\alpha
\eeqs
and then 
\beq \label{eq:ray1:galphafin}
| \theta(x') - \psi_i | \geq \big(D_1    Q \delta \eta - 4 D_3 \epsilon\big) \epsilon   - \alpha \; \tfa
 x' \in \big( B(0,\epsilon) \cap Z_i \big)  \setminus  W_i.
\eeq
Combining \eqref{eq:ray1:galpha} and \eqref{eq:ray1:galphafin}, we have
\beq \label{eq:ray1:noalpha_1}
\min_{i=1,\ldots,m} | \theta(x') - \psi_i | \geq  \min \Big( \big(D_1    Q \delta \eta - 4 D_3 \epsilon\big) \epsilon   - \alpha, \alpha \Big) 
 \tfa x'\in \big( B(0,\epsilon) \cap Z_i \big)\setminus \bigcup_{i=1}^m W_i(\eta, \delta);
\eeq
recall that we still have the freedom to choose $\theta_0, \eta, \delta,$ and $\alpha$.

We now deal with the case $x' \in B(0,\epsilon) \setminus Z_i$; the idea here is the following: $Z_i$ consists of points $x'$ such that every point on $[x',x_i']$ has $\theta\geq \theta_0$, i.e.~$\theta$ bounded below.
If $\theta(x')< \theta_0$, and we chose $\theta_0$ appropriately, then $|\theta(x')|$ can be small compared to $|\psi_i|$, and thus 
$|\theta(x') - \psi_i|$. can be bounded below. Indeed, let $\theta_0:=\psi_{\rm min}/2$; if $\theta(x') < \psi_{\rm min}/2$, then 
\beq \label{eq:ray1:Z1}
|\theta(x') - \psi_i| \geq |\psi_i| - |\theta(x')| \geq \frac 12 \psi_{\rm min}.
\eeq
We now need to consider $x' \in B(0,\epsilon) \setminus Z_i$ with $\theta(x') \geq \psi_{\rm min}/2$.
The sequence of  ideas here is that (i) by the definition of $Z_i$, there is a point, $x'_t$, in $[x',x'_i]$ with $\theta(x'_t) <\psi_{\min}/2$, 
(ii) the argument in 
\eqref{eq:ray1:Z1} applies at $x'_t$, (iii) $|x'-x'_t|\leq \epsilon$, which is small, (iv) $x'_t$ can be chosen so that $|\nabla\gamma_\tr|\neq 0$ on $[x',x'_t]$ and then $|\theta(x') - \theta(x_t')|$ can also be made small.
The detail is as follows: let
$$
t_i(x') := \inf \Big\{t\in [0,1]\,:\,  \big|\nabla \gamma_{\tr}((1-t)x' + tx'_i)\big| < \big|\tan (\psi_{\rm min}/2)\big| \Big\};
$$
the set on the right-hand side is not empty by \eqref{eq:tantheta} and the definition of $Z_i$ \eqref{eq:ray1:defZ}. Let $x'_t := (1-t_i(x'))x' + t_i(x')x'_i$. This definition implies that $\nabla\gamma_{\tr}(y')\neq 0$ for $ y' \in [x',x'_t]$. Therefore, 
using the mean-value theorem and (\ref{eq:ray1:dtheta}), we have
$$
|\theta(x') - \theta(x_t')| \leq \sup_{y' \in  [x',x'_t]} |\nabla \theta(y')| \,|x' - x_t'| \leq 2C_2\epsilon,
$$
Using this together with \eqref{eq:ray1:Z1}, 
we obtain
\beq \label{eq:ray1:Z2}
|\theta(x') - \psi_i| \geq |\theta(x_t') - \psi_i| - |\theta(x') - \theta(x_t')|  \geq \frac 12 \psi_{\rm min} - 2C_2\epsilon.
\eeq
Collecting both cases (\ref{eq:ray1:Z1}) and  (\ref{eq:ray1:Z2}), we obtain that
\beq \label{eq:ray1:Zconcl}
\tif 0<\epsilon\leq \min\left(\frac{\psi_{\rm min}}{4C_2}, \epsilon_0\right), \quad\text{ then }
|\theta(x') - \psi_i| \geq \frac 14 \psi_{\rm min} \quad \tfa x' \in B(0,\epsilon)  \setminus  Z_i.
\eeq

Putting \eqref{eq:ray1:noalpha_1} and \eqref{eq:ray1:Zconcl} together, we find that
if
\beq \label{eq:ray1:V0}
\widetilde{V}_D := B(0,\epsilon) \setminus \bigcup_{i=1}^m W_i(\eta, \delta),
\eeq
and 
\beqs
 0< \epsilon \leq \min\left(\frac{\psi_{\rm min}}{4C_2}, \epsilon_0\right),
\eeqs
then
\begin{align}
\min_{i=1,\ldots,m} | \theta(x') - \psi_i | \geq  \min \Big( \big(D_1    Q \delta \eta - 4 D_3 \epsilon\big) \epsilon   - \alpha, \alpha, \frac 14 \psi_{\rm min} \Big) \tfa x'\in \widetilde{V}_D.
 \label{eq:ray1:noalpha}
\end{align}

We now tune $\eta>0$ and $\delta>0$ to make the volume of $\widetilde{V}_D$ big enough, and conclude the step by selecting suitable $\epsilon>0$ and $\alpha>0$. 
From the definition \eqref{eq:ray1:V0},
\begin{align}\nonumber
\vol \big(\widetilde{V}_D\big) &\geq \vol \big(B(0,\epsilon)\big) - \sum_{i=1}^m \Big( \vol \big(B(x'_i,\eta\epsilon)\big) + \vol\big(\mathcal{C}_i \cap B(0,\epsilon)\big)\Big),\\
&\geq \vol \big(B(0,\epsilon)\big) - \sum_{i=1}^m \Big( \vol \big(B(x'_i,\eta\epsilon)\big) + \vol\big(\mathcal{C}_i \cap B(x'_i,2\epsilon)\big)\Big),
\label{eq:xmas1}
\end{align}
where
\beqs
\mathcal{C}_i:= \left\{ x' \,:\, \left|  \left\langle \frac{x'-x'_i}{ | x' - x'_i|},  v_i \right\rangle \right|\leq \delta \right\}
=  \left\{ x' \,:\,\cos^{-1} \delta \leq \reallywidehat{\left(\frac{x'-x'_i}{ | x' - x'_i|}, v_i\right)} \leq \pi - \cos^{-1} \delta  
  \right\}.
   \eeqs
Observe that $\mathcal{C}_i$ is the complement of a double cone, rotationally symmetric around the axis $v_i$ (recall that $v_i$ defined by \eqref{eq:vi} depends on $x_i'$ and not $x'$); therefore, $\vol(\mathcal{C}_i)$ decreases as $\delta\tendo$.
By integrating in hyperspherical coordinates centered at $x_i$ with axis $v_i$, and comparing $\vol(\mathcal{C}_i \cap B(x'_i,2\epsilon))$ to $\vol( B(x'_i,2\epsilon))$, we have 
\begin{align*}
\vol \Big( \mathcal{C}_i \cap B(x'_i, 2\epsilon) \Big)  
\leq \left( \frac{ \pi - 2 \cos^{-1} \delta }{2\pi}\right) \vol \big(B(x'_i, 2\epsilon)\big)= \frac{2^{d}}{\pi}\left(\frac \pi 2 - \cos^{-1} \delta\right) \vol \big(B(0,\epsilon)\big).
\end{align*}
Using this in \eqref{eq:xmas1}, we have
\begin{align}\nonumber 
\vol \big(\widetilde{V}_D\big) &\geq \vol \big(B(0,\epsilon)\big) - \sum_{i=1}^m \left( \vol \left(B(x'_i,\eta\epsilon)\right) + \frac{2^{d}}{\pi}\left(\frac \pi 2 - \cos^{-1} \delta\right) \vol \big(B(0,\epsilon\big) \right) \\
&\geq \left( 1 -  m  \eta^d -  m  \frac{2^{d}}{\pi}\left(\frac \pi 2 - \cos^{-1} \delta\right)\right) \vol \big(B(0,\epsilon)\big).
\label{eq:ray1:contrV0pre}
\end{align}
We now fix both $\delta > 0$ and $\eta>0$ to be sufficiently small such that
$$
0<\frac \pi 2 - \cos^{-1} \delta \leq \frac{\pi}{2^{d}m } \frac{10^{-d}}{2}, \qquad 0<\eta^d \leq  \frac{1}{m } \frac{10^{-d}}{2};
$$
then (\ref{eq:ray1:contrV0pre}) implies that
\beq \label{eq:ray1:contrV0}
\vol \big(\widetilde{V}_D\big) \geq (1-10^{-d}) \vol \big(B(0,\epsilon)\big) > 0.
\eeq
To conclude this step, we now restrict $\epsilon$ so that $0<\epsilon\leq(D_1  Q \delta\eta)/(8D_3)$ and then set $\alpha := D_1  Q \delta\eta \epsilon/4$; then (\ref{eq:ray1:noalpha}) implies that if
\beq\label{eq:ray1:step3a}
0 < \epsilon \leq \min \left(\frac{D_1 Q \delta\eta}{8D_3},\frac{\psi_{\rm min}}{4C_2},\epsilon_0\right)
\eeq
then
\beq\label{eq:ray1:step3}
\min_{i=1,\ldots,m} | \theta(x') - \psi_i | \geq  \frac{1}{4}  \min\big( D_1  Q \delta\eta \epsilon, \,\psi_{\rm min}\big)\quad
\tfa x' \in \widetilde{V}_D.
\eeq 

\emph{Step 5. Conclusion.}

\noindent 
Combining the result of Step 3 \eqref{eq:ray1:step4} and the result of Step 4 \eqref{eq:ray1:step3a}-\eqref{eq:ray1:step3}, we see that  
if
\beqs
0\leq \epsilon \leq \min\left(\frac{Q}{12\widetilde{C}_dC_3}, \epsilon_0 , \frac{D_1 Q\delta\eta}{8D_3},  \frac{\psi_{\rm min}}{4C_2}\right),
\eeqs
then
\beqs
\theta(x') \geq \frac{Q\epsilon}{12}\quad\tand\quad
\min_{i=1,\ldots,m} | \theta(x') - \psi_i | \geq \min \left( 
\frac{D_1  Q \delta\eta \epsilon}{4}, \,\frac{\psi_{\rm min}}{4}\right)\quad\tfa  x' \in \widetilde{V}_D \setminus  W.
\eeqs
We then let
\beq \label{eq:ray1:eps1}
\epsilon =\epsilon_1:=\min\left(\frac{D_1 Q\delta\eta}{8D_3}, \frac{Q}{12\widetilde{C}_dC_3}, \frac{\psi_{\rm min}}{4C_2}, \epsilon_0 \right),
\eeq
so that
\begin{align*} 
&\min_{i=1,\ldots,m} | \theta(x') - \psi_i | \geq  Q^2 \times  \min \left(
\frac{D_1 \delta\eta}{4}, \frac{ \psi_{\rm min}}{4Q \epsilon_0}\right)\times 
\min\left(\frac{D_1\delta\eta}{8D_3},\frac{1}{12\widetilde{C}_dC_3},  \frac{\psi_{\rm min}}{4QC_2} ,\frac{\epsilon_0}{Q} \right), \\
&\hspace{5cm}\tfa  x' \in \widetilde{V}_D \setminus  W, \nonumber
\end{align*}
where, by (\ref{eq:ray1:contrV0}) and (\ref{eq:ray1:contrW})
\beqs 
\vol \big( \widetilde{V}_D  \setminus  W \big) \geq (6^{-d} - 10^{-d}) \vol \big( B(0,\epsilon_1) \big).
\eeqs
Points (i) and (iii) in Condition \ref{cond:ray} then hold with 
\beqs
V_D :=  \widetilde{V}_D  \setminus  W, \qquad \crayone:= (6^{-d} - 10^{-d}) \vol \big( B(0,\epsilon_1) \big),
\eeqs
\beq\label{eq:c3}
\craythree:= \frac{Q}{12}\min\left(\frac{D_1 Q\delta\eta}{8D_3}, \frac{Q}{12\widetilde{C}_dC_3}, \frac{\psi_{\rm min}}{4C_2}, \epsilon_0 \right),
\eeq
and 
\beq\label{eq:c4}
\crayfour:= 
Q^2 \times  \min \left(
\frac{D_1 \delta\eta}{4}, \frac{ \psi_{\rm min}}{4Q \epsilon_0}\right)\times 
\min\left(\frac{D_1\delta\eta}{8D_3},\frac{1}{12\widetilde{C}_dC_3},   \frac{\psi_{\rm min}}{4QC_2},\frac{\epsilon_0}{Q},  \right).
\eeq
Since $Q, C_2, C_3, D_1$ and $D_3$ (defined by \eqref{eq:ray1:quant} and \eqref{eq:ray1:D}) all depend continuously on $\gamma_\tr$, and $\gamma_\tr$ depends continuously on $R$, 
$\crayone, \craythree,$ and $\crayfour$ depend continuously on $R$. 
The constant $\crayfive$ depends on $\craythree, \crayfour$, $\GammaIR$, and $\Gamma_D$, and thus also depends continuously on $R$.
\epf

Before proving Lemma \ref{lem:ray2}, we prove the following simple lemma.

\ble\label{lem:ballangle}
If $\GammaIR= \partial B(0,R)$, then the emanating rays from $\Gamma_D$ hit $\GammaIR$ directly with an angle to the normal $\theta$ satisfying $\theta< R^{-1}$.
\ele

\bpf
Since $\Omegaminus \subset B(0,1)$,
any ray starting from $\Omegaminus$ hits $\GammaIR = \partial B(0,R)$ with an angle to the normal $\theta$ satisfying $\tan \theta \leq 1/R$.
Since $\theta<\tan\theta$, the result follows.
\epf

\bpf[Proof of Lemma \ref{lem:ray2}]
We first observe that Point (iv)$^{\prime}$ follows from the same argument used to prove Lemma \ref{lem:ballangle}; this implies that $\crayfive =\widetilde{c}_5 R$ with $\widetilde{c}_5$ independent of $R$.

The fact that $\craytwo$ is independent of $R$ follows from the proof of Lemma \ref{lem:ray1}; see
\eqref{eq:ray1:tang}. 
By direct calculation from the definitions (\ref{eq:ray1:quant}), (\ref{eq:ray1:D})), using the fact that $\gamma_{\tr}(x') = \sqrt{R^2 - |x'|^2} + c$ where $c$ is a constant, we obtain that
\beqs
Q\sim R^{-1}, \; C_1\sim 1, \; C_2\sim R^{-1}, \; C_3 \sim R^{-2} ,\; \text{and thus } D_1\sim 1, \; D_3\sim R^{-2}.
\eeqs
Using these asymptotics in \eqref{eq:ray1:eps1}, \eqref{eq:c3}, and \eqref{eq:c4}, we find that 
$\crayone$ is independent of $R$ and $\craythree \sim R^{-1}$ (observe that the first minimum in \eqref{eq:c4} $\sim 1$ and the second minimum $\sim R$).

These arguments from the proof of Lemma \ref{lem:ray1} also show that $\crayfour\sim R^{-1}$, but we now show that in fact $\crayfour \sim 1$ for $R$ sufficiently large.
By Lemma \ref{lem:ballangle}, all the rays from $V_D$ hit $\GammaIR$ with angles $<1/R$. Therefore, if $R\geq 2/\psi_{\min}$, then $|\theta- \psi_j|\geq \psi_{\min}/2$ for all $j$. 
\epf

\bre[Lemma \ref{lem:ray2} when $\MPade=\NPade=0$]\label{rem:MN0}
Recall that when $\MPade=\NPade=0$, then $\mvanish=0$, inspecting the proof of Lemma \ref{lem:ray2}, we see that the result then holds with $\crayfour=0$ and $R_0=1$.
\ere

\bpf[Proof  of Lemma \ref{lem:ray3}]
For $0<\delta<1$, let $\Psi=\{0,\psi_1,\ldots,\psi_m\}$ and 
$$
V^\infty_{\rm tr}(\delta) := \Big\{  x^\infty \in \GammaIinf,\,\,:\,\, n(x^\infty) \text{ exists and } \min_{\psi\in\Psi} \Big|\reallywidehat{\left( n(x^\infty), \frac{ x^\infty}{| x^\infty|}\right)} - \psi \Big| > \delta \Big\}.
$$
We now claim that there exists $\delta_0<1$ such that $V^\infty_{\rm tr}(\delta_0)$ is non-empty.
Indeed, first observe that the map
\beqs
\big\{ x \in \GammaIinf \, :\, n(x) \text{ exists} \big\} \rightarrow \Rea 
\quad\text{ given by }\quad
x \mapsto \reallywidehat{\left( n(x), \frac{ x}{| x|}\right)}=\left\langle n(x) , \frac{x}{|x|} \right\rangle
\eeqs
is continuous.
The only way for this map to be constant is for $\Gamma^\infty_{\rm tr}$ to be a sphere centred at the origin, and this is ruled out by assumption. Since $\GammaIR/R \rightarrow \GammaIinf$ in $C^{0,1}$, $\GammaIinf$ is Lipschitz, and the set $\{x \in \GammaIinf : n(x) \text{ exists} \}$ has full $(d-1)$ dimensional (i.e.~surface) measure. 
Therefore, the image of the map contains an interval, and the claim follows. We note for later that $V^\infty_{\rm tr}(\delta_0)$ is open in $\GammaIinf$.

\begin{figure}  
\includegraphics[scale=0.5]{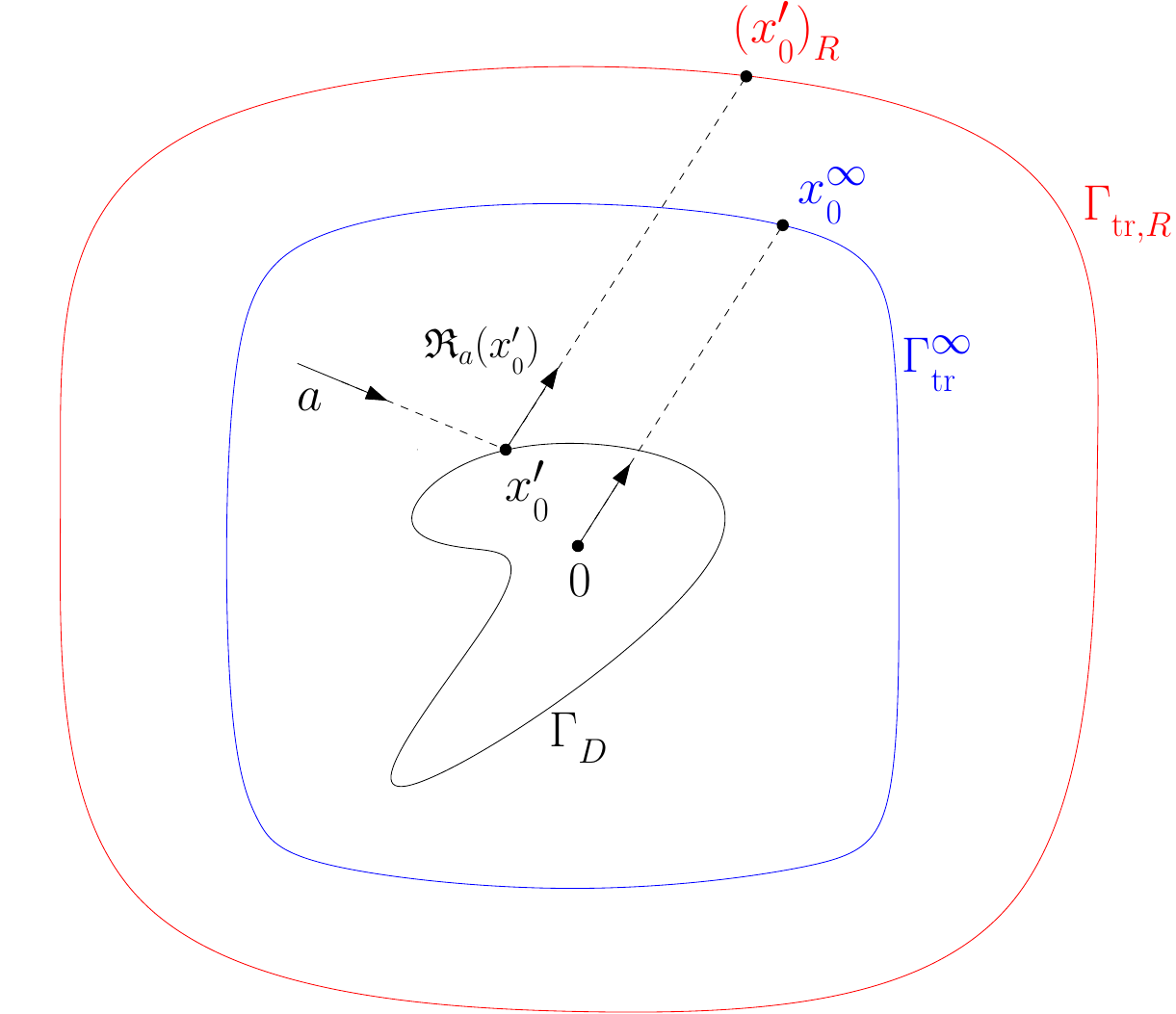}
\caption{The points and rays used in the proof of Lemma \ref{lem:ray3}.} \label{fig:ray_512}
\end{figure}

Let $x_0^\infty \in V^\infty_{\rm tr}(\delta_0)$. By Lemma \ref{lem:all_directions}, there exists $x'_0 \in \Gamma_D^{+,a}$ such that 
\beqs
\mathfrak{R}_a(x_0') = \frac{x_0^\infty}{|x_0^\infty|};
\eeqs
see Figure \ref{fig:ray_512}. 
For $x' \in \Gamma_D$, let $x'_R \in \GammaIR$ denote the point where the ray emanating from $x'$ first hits $\GammaIR$; we use later the fact that this definition implies that 
\beq\label{eq:comb2}
\frac{(x_0')_R-x_0'}{|(x_0')_R-x_0'|} = \frac{x_0^\infty}{|x_0^\infty|}.
\eeq

The neighbourhood $V_D$ in Condition \ref{cond:ray} will be $\Gsc\cap B(0,\epsilon)$ for $\epsilon$ sufficiently small, independent of $R$, and this ensures that Point (i) holds with $\crayone$ independent of $R$.
Let $\epsilon > 0$ be small enough so that $\Gamma_D \cap B(x'_0,\epsilon) \subset \Gamma_D^{+,a}$; this ensures that Point (ii) holds with $\craytwo$ independent of $R$.

We now show that Point (iii) of Condition \ref{cond:ray} holds with $\craythree$ and $\crayfour$ independent of $R$.
Let $W_{{\rm tr}, \epsilon}^\infty\subset \GammaIinf$ be defined by 
\beq\label{eq:Wtr}
W_{{\rm tr}, \epsilon}^\infty:= \lim_{R\tendi} \left\{
\frac{(x')_R}{R} \,: \, x' \in \Gamma_D \cap B(x_0',\epsilon)
\right\};
\eeq
this limit exists $W_{{\rm tr}, \epsilon}^\infty$ is the limit of subsets of  $\GammaIR/R$
and $\GammaIR/R\rightarrow \GammaIinf$ as $R\tendi$.
We claim that it is sufficient to prove that $W_{{\rm tr}, \epsilon}^\infty\subset V^\infty_{\rm tr}(\delta_0)$ for $\epsilon$ sufficiently small (independent of $R$). 
This shows the analogue of Point (iii) in Condition \ref{cond:ray} with $\GammaIR$ replaced by $\GammaIinf$; i.e., that the emanating rays from points in $V_D$ hit $\GammaIinf$ directly with an angle $\theta$ to the normal satisfying 
\eqref{eq:anglebounds} with $\craythree$ and $\crayfour$ independent of $R$.  Point (iii) for $\GammaIR$ with $R$ sufficiently large then follows since $W_{{\rm tr}, \epsilon}^\infty$ is the limit of subsets of $\GammaIR/R$, and $\GammaIR/R \rightarrow \GammaIinf$ as $R\tendi$.

We now claim that to prove that $W_{{\rm tr}, \epsilon}^\infty\subset V^\infty_{\rm tr}(\delta_0)$ for $\epsilon$ sufficiently small (independent of $R$) it is sufficient to show that 
$x_0^\infty \in W_{{\rm tr}, \epsilon}^\infty$ for all $\epsilon >0$. Indeed, if this is the case then $\cap_{\epsilon>0} W_{{\rm tr}, \epsilon}^\infty =  \{ x_0^\infty \}$. Then, since (i) $V_{\rm tr}^\infty(\delta_0)$ is open in $\GammaIinf$ and contains $x_0^\infty$, and (ii) $W_{{\rm tr}, \epsilon_1}^\infty \subseteq W_{{\rm tr}, \epsilon_2}^\infty$ for $\epsilon_1\leq \epsilon_2$, there exists $\epsilon_0>0$ such that $W_{{\rm tr}, \epsilon}^\infty \subset V_{\rm tr}^\infty(\delta_0)$ for all $\epsilon\leq \epsilon_0$. 

We now show that $x_0^\infty \in W_{{\rm tr}, \epsilon}^\infty$ for all $\epsilon >0$. 
We do this by showing that $(x'_0)_{R_k}/R_k \rightarrow x_0^\infty$ for a sequence $R_k\tendi$, 
and then the result follows from \eqref{eq:Wtr}.
Observe that the inclusions \eqref{eq:OmegaR} 
imply that $|x'_R|\leq M R$, for any $x' \in \Gamma_D$,
and thus $(x'_0)_R/R$ is bounded as $R\rightarrow \infty$. 
Therefore, there exists a sequence $R_k \rightarrow \infty$ and a $y\in \GammaIinf$ such that $(x'_0)_{R_k}/R_k \rightarrow y$, and thus also 
\beq\label{eq:xmas3}
\frac{(x'_0)_{R_k}}{|(x'_0)_{R_k}|}\rightarrow \frac{y}{|y|} \quad\tas R_k \tendi.
\eeq
By simple geometry, as $R\tendi$,
\beqs
\frac{ (x_0')_R}{|(x_0')_R|} = \frac{(x_0')_R-x_0'}{|(x_0')_R-x_0'|} + O(R^{-1})= \frac{x_0^\infty}{|x_0^\infty|}+ O(R^{-1}),
\eeqs
by \eqref{eq:comb2}. 
Comparing this to \eqref{eq:xmas3}, and using the uniqueness of the limit, we see that $y/|y| = x_0^\infty/|x_0^\infty|$. Since $\GammaIinf$ is convex, and thus star-shaped,
 $ y = x_0^\infty$, and the proof that $x_0^\infty \in W_{{\rm tr}, \epsilon}^\infty$ for all $\epsilon >0$ is complete; this completes the proof that Point (iii) of Condition \ref{cond:ray} holds with $\craythree$ and $\crayfour$ independent of $R$.

Finally, we show that Point (iv) of Condition \ref{cond:ray} holds for $R$ sufficiently large with $\crayfive= \widetilde{c}_5 R$ with $\widetilde{c}_5>0$ independent of $R$. 
Since $\Omegaminus\subset B(0,1)$ and $\domain$ satisfies the inclusions \eqref{eq:OmegaR}, after hitting $\GammaIR$, a ray must travel a distance $\sim R$ before hitting $\Gamma_D$. 
Therefore, we only need to show that, after hitting $\GammaIR$, a ray must travel a distance $\sim R$ before hitting $\GammaIR$ again.
Since $\GammaIR/R$ tends to a limit as $R\tendi$, this result follows if the rays first hit $\GammaIR$ with angle to the normal $\theta$ satisfying $|\theta - \pi/2|\geq c>0$, with $c$ independent of $R$, which is the case because $\Omegaminus\subset B(0,1)$ and $\domain$ satisfies the inclusions \eqref{eq:OmegaR}.
\epf

\bpf[Proof  of Lemma \ref{lem:ray4}]

The overall plan is to select a ray emanating from $\Gsc$ that returns to $B(0,1)$ after multiple reflections from the sides of the hypercube $[- \frac R2, \frac R2]^d$. 
We do this by identifying $\Rea^d$ with $[- \frac R2, \frac R2]^d$ by reflection through the lines 
\beqs
(x)_j = \frac{R}{2} + nR \quad \tfor n\in \mathbb{Z} \tand j=1,\ldots, d
\eeqs
(where $(x)_j$ denotes the $j$th component of the vector $x\in \Rea^d$);
under this identification the corners of the hypercube correspond to the points $(R/2 + R \mathbb{Z})^d$.
Since $\GammaIR$ coincides with the boundary of the hypercube $[-R/2, R/2]^d$ only at distance more than $\epsilon$ from the corners, we need to make sure that the selected ray avoids these neighbourhoods of the corners; hence the requirement that $\epsilon\leq \epsilon_0(\Omega_-)$ in the statement of the result. We highlight that the constant $C$ in the bound then depends only on the dynamics of the rays, and hence is independent of $\epsilon$.

\emph{Step 0: Preliminary notation and results.}
This argument involves three domains, and three associated flows. 
The first domain is $\domain$, with associated generalised bicharacteristic flow  $\varphi_t$ (as defined in \S\ref{subsec:geo}). The second domain is $\widehat{\Omega}_R := [- \frac R2, \frac R2]^d\setminus\overline{\domain}_-$, and we denote the 
generalised bicharacteristic flow on $\widehat{\Omega}_R$ by $\widehat{\varphi}_t$. The third domain is the hypercube $[- \frac R2, \frac R2]^d$, and we denote the generalised bicharacteristic flow on $[- \frac R2, \frac R2]^d$ by $\varphi^{[- \frac R2, \frac R2]^d}_t$.

By the definition \eqref{eq:mathfrakR} of $ \mathfrak R_a$, if both $x'$ and $y'$ are in the illuminated part of $\Gsc$ (i.e., $a\cdot n(x')<0$), then there exists $C_0>0$ (depending on the Lipschitz constant of $n$) such that
 \beq \label{eq:ref_lip}
| \mathfrak R_a(x') - \mathfrak R_a(y') | \leq C_0 |x'-y'|,
 \eeq
i.e. $\mathfrak R_a$ is Lipschitz.

We record for later use that, since $\Omegaminus \subset B(0,1)$ and $R\geq 4$,
\begin{equation} \label{eq:def_gamma}
\dist\left(\Gamma_D, \partial\left( \left[- \frac R2, \frac R2\right]^d\right)\right) \geq \frac{R}{2}-1\geq \frac{R}{4}.
\end{equation} 

Finally, let $\mathcal D$ be a non-empty, uniformly-convex open subset of $\Gamma^{+,a}_D$ in which $n(x')\cdot a < 0$
(such a $\mathcal D$ exists, since Lemma \ref{lem:all_directions} implies that $\Gamma^{+,a}_D \cap \{ n(x')\cdot a <0 \}$ is not
everywhere flat). 
Shrinking $\mathcal D$ if necessary, we can
assume that
\beq\label{eq:Dnu}
\text{ there exists} \quad 0<\nu<1 \tst  \quad\nu \leq |n(x')\cdot a| \leq 1 - \nu \quad\tfa x'\in \mathcal{D};
\eeq
this implies that the first assumption of Lemma \ref{lem:raydil} holds with $\mathcal{C}=\mathcal D$.
The neighbourhood $V_D$ we construct will be a subset of $\mathcal{D}$.

\emph{Step 1: Bounding the distance between projections of the flow on $[- \frac R2, \frac R2]^d$.}

For $(x_{j}, \xi_{j}) \in S^* B(0,1)$, $j=1,2$, since $\varphi^{\mathbb R^d}_t (x_j,\xi_j)= x_j+ 2t\xi_j$, 
\begin{equation}\label{eq:hyp0:1}
 \Big| \projx \varphi^{\mathbb R^d}_t (x_1, \xi_1) - \projx \varphi^{\mathbb R^d}_t (x_2, \xi_2)  \Big| \leq |x_1 - x_2| +  2t|\xi_1 - \xi_2|.
\end{equation}
We now show that the same inequality holds for the flow on $[- \frac R2, \frac R2]^d$; i.e., that for $(x_{j}, \xi_{j}) \in S^* B(0,1)$, $j=1,2$, 
\begin{equation}\label{eq:div_square}
\Big| \projx \varphi^{[- \frac R2, \frac R2]^d}_t (x_1, \xi_1) - \projx \varphi^{[- \frac R2, \frac R2]^d}_t (x_2, \xi_2)  \Big|  \leq |x_1 - x_2| +  2t|\xi_1 - \xi_2|.
 \end{equation}
To prove \eqref{eq:div_square}, we compare  $ \big| \projx \varphi^{[- \frac R2, \frac R2]^d}_t (x_1, \xi_1) - \projx \varphi^{[- \frac R2, \frac R2]^d}_t (x_2, \xi_2)  \big| $ with
$ \big| \projx \varphi^{\mathbb R^d}_t (x_1, \xi_1) - \projx \varphi^{\mathbb R^d}_t (x_2, \xi_2)  \big| $ by using the 
relationship between the two flows $\varphi^{[- \frac R2, \frac R2]^d}_t$ and $\varphi_t^{\mathbb R^d}$. 

First, observe that, since
$$
 \Big| \projx \varphi^{[- \frac R2, \frac R2]^d}_t (x_1, \xi_1) - \projx \varphi^{[- \frac R2, \frac R2]^d}_t (x_2, \xi_2)  \Big| \leq \operatorname{diam} \left[- \frac R2, \frac R2\right]^d = \sqrt{d} R,
$$
we can assume that 
$$
|x_1 - x_2| +  2t|\xi_1 - \xi_2|\leq \sqrt{d} R.
$$
Therefore, there exists $\ell = (\ell_1, \cdots \ell_d) \in \mathbb Z^d$ and
$\iota = (\iota_1, \cdots, \iota_d) \in \{-1, 0, 1\}^d$ such that
\begin{equation}\label{eq:hyp0:2}
\begin{cases}
\projx\varphi^{\mathbb R^d}_t (x_1, \xi_1) \in \Big( [- \frac R2, \frac R2] ^ d + \ell R \Big), \\
\projx\varphi^{\mathbb R^d}_t (x_2, \xi_2) \in \Big( [- \frac R2, \frac R2] ^ d + (\ell + \iota) R \Big);
\end{cases}
\end{equation}
i.e., after time $t$, the free-space rays from $(x_1, \xi_1) $ and $(x_2, \xi_2) $ are either in the same hypercube or in adjacent hypercubes. We use the following notation for the components of $\varphi^{\mathbb R^d}_t (x_j, \xi_j)$, $j=1,2$:
\begin{equation}\label{eq:hyp0:2b}
\projx \varphi^{\mathbb R^d}_t (x_j, \xi_j) := (z^{1}_j, \cdots, z^{d}_j) \in \mathbb R^d.
\end{equation}
Now, observe that by (\ref{eq:hyp0:2}) and the relationship between $\varphi^{[- \frac R2, \frac R2]^d}_t$ and $\varphi_t^{\mathbb R^d}$,
\begin{equation}\label{eq:hyp0:3}
\begin{cases}
\projx \varphi^{[- \frac R2, \frac R2]^d}_t (x_1, \xi_1) &=\Big(\operatorname{par}(\ell_1)\big(z^{1}_1 - \ell_1 R\big), \cdots, \operatorname{par}(\ell_d)\big(z^{d}_1- \ell_d  R\big)\Big), \\
\projx \varphi^{[- \frac R2, \frac R2]^d}_t (x_2, \xi_2) &=\Big(\operatorname{par}(\ell_1+\iota_1)\big(z^{1}_2 - (\ell_1 + \iota_1) R\big), \cdots, \operatorname{par}(\ell_d+\iota_d)\big(z^{d}_2- (\ell_d + \iota_d) R\big)\Big),
\end{cases}
\end{equation}
where
$$
\operatorname{par}(\ell) := 
\begin{cases}
1 &\text{ if } \ell \text{ is even}, \\
-1 &\text{ if } \ell \text{ is odd}.
\end{cases}
$$
Let $i \in \{1, \cdots, d\}$. We first assume that $\iota_i = 1$; then
\begin{equation}\label{eq:hyp0:4}
\big| \operatorname{par}(\ell_j)(z^{i}_1 - \ell_i R) - \operatorname{par}(\ell_i + \iota_i)(z^{i}_2  - (\ell_i + \iota_i) R) \big| = 
\big|(z^{i}_1-\ell_i R) + (z^{i}_2 - \ell_i R) - R \big|.
\end{equation}
Since $\iota=1$, $z^{i}_1-\ell_i R \in [- \frac R2, \frac R2]$,  $z^{i}_2-\ell_i R \in [\frac R2, \frac{3R}{2}]$, and hence $z^{i}_2 \geq z^{i}_1$. 
 Now, because $z^{i}_1-\ell_i R \leq R/2$,
\begin{equation}\label{eq:hyp0:5}
 (z^{i}_1-\ell_i R) + (z^{i}_2 - \ell_i R) - R \leq (z^{i}_2 - \ell_i R) - (z^{i}_1-\ell_i R) =z^{i}_2 - z^{i}_1 =|z^{i}_1 - z^{i}_2|.
\end{equation}
Similarly, since $z^{i}_2-\ell_i R \geq  R/2$, 
\begin{equation}\label{eq:hyp0:6}
 - (z^{i}_1-\ell_i R) - (z^{i}_2 - \ell_i R) + R \leq (z^{i}_2 - \ell_i R) - (z^{i}_1-\ell_i R) =z^{i}_2 - z^{i}_1 =|z^{i}_1 - z^{i}_2|.
\end{equation}
Then, combining (\ref{eq:hyp0:4}),  (\ref{eq:hyp0:5}),  and  (\ref{eq:hyp0:6}), we have that, for $i \in \{1, \cdots, d\}$ with $\iota_i = 1$,
\begin{equation}\label{eq:hyp0:7}
 \big| \operatorname{par}(\ell_j)\big(z^{i}_1 - \ell_i R\big) - \operatorname{par}(\ell_i + \iota_i)\big(z^{i}_2  - (\ell_i + \iota_i) R\big) \big|  \leq |z^{i}_1 - z^{i}_2|.
\end{equation}
 If $\iota_i = -1$, the prove of  (\ref{eq:hyp0:7}) follows in a very similar way; if $\iota_i = 0$, it is straightforward to check that  (\ref{eq:hyp0:7}) holds with equality. Hence
  (\ref{eq:hyp0:7}) holds for any $i \in \{1, \cdots, d\}$.
Recalling the notation (\ref{eq:hyp0:2b}), we therefore obtain from (\ref{eq:hyp0:3}) and   (\ref{eq:hyp0:7}) that
$$
\Big| \projx \varphi^{[- \frac R2, \frac R2]^d}_t (x_1, \xi_1)  - \projx \varphi^{[- \frac R2, \frac R2]^d}_t (x_2, \xi_2)  \Big| \leq \Big| \projx \varphi^{\mathbb R^d}_t (x_1, \xi_1)  - \projx \varphi^{\mathbb R^d}_t (x_2, \xi_2)  \Big|,
$$
and (\ref{eq:div_square}) follows from (\ref{eq:hyp0:1}).

 \emph{Step 2: Selecting a periodic ray.}
Let $\mathfrak F$ be the finite set of unit vectors forming an angle  belonging to $\Psi$ to one of the elements $(\pm e_i)_{1\leq i \leq d}$, where $(e_i)_{1\leq i \leq d}$ denote the unit vectors in cartesian coordinates.
With $\mathcal D$ as in Step 0, $\mathfrak R_a(\mathcal D)$ contains a non-empty open subset of $\mathcal S^{d-1}$ by Lemma \ref{lem:all_directions}, and therefore contains a vector of the form
$$
\xi_0 = \frac{(p_1, \cdots, p_d)}{|p|}, \quad p_i \in \mathbb Z, \quad \tand\quad \xi_0 \notin \mathfrak F
$$
(since vectors of this form are dense in $\mathcal S^{d-1}$).
Let $x'_0 \in \mathcal D$ be such that $\mathfrak R_a(x'_0) = \xi_0$.

We identify $\Rea^d$ with $[- \frac R2, \frac R2]^d$ as described above.
Then, given any $q_1,\ldots, q_d \in \mathbb{Z}$, 
\beq\label{eq:periodic}
(x_1,\ldots, x_d) + 2 R (q_1,\ldots, q_d) \equiv (x_1,\ldots, x_d);
\eeq
the factor of two is because one reflection changes the parity.

The trajectory starting from $(x_0',\xi_0)$ and evolving according to the flow $\varphi^{[- \frac R2, \frac R2]^d}_t$ can be identified with the trajectory in $\Rea^d$ 
\beqs
x_0' + 2t \xi_0= x_0' + 2t \frac{(p_1, \ldots, p_d)}{|p|};
\eeqs
therefore, by \eqref{eq:periodic}, the former trajectory is 
periodic, with period at most $R|p|$. 
Thus there exists $t>0$ such that $\varphi^{[- \frac R2, \frac R2]^d}_t \in B(0, 11/8)$; let $T(R)$ be the infimum of such $t$s.
Therefore \beq \label{eq:square_TisR}
T(R) \leq R|p|,
\eeq 
and
\beq\label{eq:square_firstray1}
\projx \big(\widehat\varphi_{T(R)}(x'_0, \xi_0)\big) \in  \partial B\left(0, \frac{11}{8}\right).
\eeq
Since $\Omegaminus \subset B(0,1)$, the flows $\widehat \varphi_t$ and $ \varphi^{[- \frac R2, \frac R2]^d}_t$ acting on $(x_0',\xi_0)$ agree up to (at least) time $T(R)$; i.e.
\begin{equation} \label{eq:square_firstray2}
\widehat \varphi_t(x'_0, \xi_0) = \varphi^{[- \frac R2, \frac R2]^d}_t (x'_0, \xi_0) \quad\tfa 0\leq t \leq T(R).
\end{equation} 
Furthermore, since $\xi_0\notin\mathfrak F$, 
the flows $\widehat \varphi_t$ and $ \varphi^{[- \frac R2, \frac R2]^d}_t$ acting on $(x_0',\xi_0)$ never hit $\partial \big([- \frac R2, \frac R2]^d\big)$ at an angle belonging to $\Psi$.

Finally, observe that a length $R$ of a ray can be reflected at most twice.
Therefore, since the length of $ \varphi^{[- \frac R2, \frac R2]^d}_t(x_0',\xi_0)$ for $t\in[0,T(R)]$ is at most $2R |p|$, 
if $M:= \lceil  4 |p| \rceil$, then the number of 
reflections of this ray for $t\in[0,T(R)]$, $N(R)$, is bounded by $M$, i.e.,
\beq\label{eq:Nbound}
N(R)\leq  \lceil  4 |p| \rceil.
\eeq

\emph{Step 3: The neighbourhood $V_D$ on $\Gsc$.}
The neighbourhood $V_D=V_D(R)$ is chosen later in the proof as a subset of 
\beq\label{eq:V1}
V_1(R) := \Gamma_D \cap B\left(x'_0, \frac{\delta_1}{R}\right)
\eeq
where $\delta_1>0$ (independent of $R$) is small enough so that, for all $R\geq 1$,
\beq\label{eq:sq_basicreq}
\begin{cases}
&V_1(R) \subset \mathcal D, \; \\
&\text{for all } x' \in V_1(R), \quad |n(x')\cdot a| \geq \frac 12 |n(x'_0)\cdot a|,\\
&\text{for all } x' \in V_1(R), \quad \min_{\mathfrak f \in \mathfrak F} |\mathfrak R_a(x') - \mathfrak f| \geq \frac 12 \min_{\mathfrak f \in \mathfrak F} | \xi_0 - \mathfrak f|.
\end{cases}
\eeq
Since the neighbourhood $V_D$ will be a subset of $V_1(R)$,
the second condition in \eqref{eq:sq_basicreq} implies that Part (ii) of Condition \ref{cond:ray} holds with $\craytwo:=  |n(x'_0)\cdot a|/2$, which is positive since $x'_0\in \mathcal{D}$, and 
the third condition in \eqref{eq:sq_basicreq} implies that Part (iii) of Condition \ref{cond:ray} holds with $\craythree>0$.

By \eqref{eq:div_square}, the fact that $\xi_0= \mathfrak{R}_a(x'_0)$, \eqref{eq:ref_lip}, and \eqref{eq:square_TisR}, we have, for any $x' \in V_1(R)$ and any $0 \leq t \leq T(R)$
\begin{align}\nonumber
 \big| \projx \varphi^{[- \frac R2, \frac R2]^d}_t (x'_0, \xi_0) - \projx \varphi^{[- \frac R2, \frac R2]^d}_t (x', \mathfrak R_a(x'))  \big| 
 &\leq |x_0' - x'| + 2 T(R) |\mathfrak{R}_a(x'_0) - \mathfrak{R}_a(x')|,\\ \nonumber
 &\leq  \big(1+ 2 R |p| C_0 \big)|x_0' - x'|,\\
 & \leq \big(1+ 2  |p| C_0 \big)R |x_0' - x'|.\label{eq:27_1}
 \end{align}
Therefore, if $\delta_1 \leq (16 (1+2C_0|p|))^{-1}$,
then 
\beq\label{eq:consequence1}
 \big| \projx \varphi^{[- \frac R2, \frac R2]^d}_t (x'_0, \xi_0) - \projx \varphi^{[- \frac R2, \frac R2]^d}_t (x', \mathfrak R_a(x'))  \big| \leq \frac{1}{16}
\eeq
for all $x' \in V_1(R)$ and for all $0\leq t\leq T(R)$. Combining \eqref{eq:consequence1}, \eqref{eq:square_firstray1}, and \eqref{eq:square_firstray2}, we have
\beq\label{eq:Step2Conc1}
\projx \big(\widehat\varphi_{T(R)}(x', \xi_0)\big) \in  B\left(0, \frac{23}{16}\right)\Big \backslash  B\left(0, \frac{21}{16}\right)
\quad\tfa x' \in V_1(R);
\eeq
and
\beq\label{eq:Step2Conc2}
\widehat \varphi_t(x', \mathfrak R_a(x')) = \varphi^{[- \frac R2, \frac R2]^d}_t (x', \mathfrak R_a(x')) \quad\tfa x' \in V_1(R) \text{ and for all } 0\leq t \leq T(R).
\eeq

\emph{Step 4: Avoiding the corners.}
Under the identification of $[- \frac R2, \frac R2]^d$ with $\Rea^d$, the corners of the hypersquare correspond to $(R/2 + R \mathbb Z)^d$.
Given $x'\in V_1(R)$, each point on the ray $x' + 2t \mathfrak R_a(x')$ for $0\leq t\leq T(R)$ has a corner that is closest; we let $Q_\alpha(x')$ denote the subset of these corners that are a distance $\leq \alpha$ away. More precisely,
\begin{align*}
Q_ \alpha(x') &:= \bigg\{ q \in (R/2+ R \mathbb Z)^d \,:\,\text{ there exists } 0\leq t \leq T \tst \\
&\hspace{3cm}\dist\left( x' +2t\mathfrak R_a(x'), \big(R/2 + R \mathbb Z\big)^d \right) = \dist\big( x' +2t\mathfrak R_a(x'), q \big) \leq \alpha \bigg\}.
\end{align*}
We then order the elements of $Q_\alpha(x')$ with the closest first; i.e., $Q_\alpha(x')= \{q_1(x'), \ldots, q_{m(x')}(x') \}$ with $\dist(x', q_i)$ non-decreasing with $i$.

We now prove that if $\delta_1\leq (4(1+2|p|C_0))^{-1}$, 
then
\beq\label{eq:Qinclusion}
 Q_{1/4}(x') \subset Q_{1/2} (x'_0)\quad\tfa x' \in V_1(R).
\eeq
To prove this, observe that, for $0\leq t\leq T(R)$, by \eqref{eq:square_TisR} and \eqref{eq:ref_lip} (in a similar way to as in \eqref{eq:27_1}),
\begin{align*}
\dist\big( x' +2t\mathfrak R_a(x'), x'_0 + 2t\mathfrak R_a(x_0')) &\leq  |x'-x'_0| + 2t|\mathfrak R_a(x') - \mathfrak R_a(x'_0)|,\\
&\leq \big(1 + 2|p|C_0\big)R|x'-x_0'|\leq  \delta_1  \big(1 + 2|p|C_0\big)
\end{align*}
if $x'\in V_1(R)$. 
Therefore, if $\delta_1 \leq (4(1+2|p|C_0))^{-1}$, 
 the distance between the rays is $<1/4$. If $q_i \in Q_{1/4}(x')$ then, since $R\geq 1$, $q_i$ is at most distance $1/2$ away from a point on the ray  $x'_0 + 2t\mathfrak R_a(x_0')$, and thus $q_i \in Q_{1/2}(x_0')$.

It turns out that we will not need to restrict $\delta_1$ further in the proof; we therefore set 
\beq\label{eq:delta1}
\delta_1 := \frac{1}{16 (1+2C_0|p|))},
\eeq
and observe that this satisfies the requirements imposed on $\delta_1$ earlier in the proof (to ensure that 
\eqref{eq:consequence1} and  \eqref{eq:Qinclusion} hold).

We now select one set of corners to work with for all $x' \in V_1(R)$.
Let $Q:= Q_{1/2} (x'_0)=(q_1, \ldots q_m)$. By \eqref{eq:Qinclusion}, 
\beqs
\Big((R/2 + R \mathbb{Z})^d \setminus Q\Big) \subset \Big((R/2 + R \mathbb{Z})^d \setminus Q_{1/4}(x')\Big) \quad\tfa x' \in V_1(R),
\eeqs
so that 
\beq\label{eq:othercorners}
\dist \left(x' +2t\mathfrak R_a(x'), (R/2 + R \mathbb{Z})^d \setminus Q\right) \geq 1/4 \quad\tfa x' \in V_1(R).
\eeq
Furthermore, since $R\geq 4$, the number of corners within distance $1/2$ of the ray is less than or equal to the number of reflections, i.e., 
\beq\label{eq:mbound}
m\leq N(R).
\eeq

We now iteratively construct $x_i' \in V_1(R)$, $i=1,\ldots, m$, such that the ray $x_i'  + 2t\mathfrak R_a(x_i' )$ for $0\leq t \leq T(R)$ is at least a distance $\eta_i$ from $(q_1,\ldots, q_i)$ where $\eta_i> 0$, $i=0,\ldots, m$, are defined below (see \eqref{eq:def_eta}) and, in particular, have the property that $\eta_i> \eta_{i+1}, i=0,\ldots m-1$.
Given $x'_i$, if $\dist(x_i'  + 2t\mathfrak R_a(x_i' ), q_{i+1})\geq \eta_{i+1}$, we set $x_{i+1}' := x_i'$. Otherwise, first observe that, 
for $0\leq t\leq R/16$,
\beq\label{eq:sq_it0}
\dist\big(x'  + 2t\mathfrak R_a(x' ), q_{i+1}\big)\geq R/8\geq 1/2,
\eeq
by \eqref{eq:def_gamma} and the fact that $R\geq 4$;
we can therefore restrict attention to $t\geq R/16$.
Let $\lambda_i>0$, to be fixed later. We first assume that there exists $x'_{i+1} \in V_1(R)$ so that,
with $\Cfrak$ the constant associated to $\mathcal D$ by  Lemma \ref{lem:raydil},
\beq \label{eq:raysq:alphai}
|x'_{i+1} - x'_i| = \lambda_i\quad \text{ and } \quad \big| \mathfrak R_a(x_{i+1}' ) - \mathfrak R_a(x_i' )\big| \geq \Cfrak \lambda_i;
\eeq
we later use Lemma \ref{lem:raydil} to show that such an $x'_{i+1}$ exists once the value of $\lambda_i$ has been fixed. 
By, respectively, the triangle inequality, the convexity of $V_1(R)\subset \mathcal D$, \eqref{eq:raysq:alphai}, 
and the fact that we're dealing with the case that $\dist(x_i'  + 2t\mathfrak R_a(x_i' ), q_{i+1})< \eta_{i+1}$,
we have that, for $R/16\leq t\leq T(R)$,
\begin{align} 
\dist\big(x_{i+1}'  + 2t\mathfrak R_a(x' ), q_{i+1}\big) &\geq  \dist\big(x_{i+1}'  + 2t\mathfrak R_a(x_{i+1}' ), x_i'  + 2t\mathfrak R_a(x_i' )\big) - \dist\big(x_i'  + 2t\mathfrak R_a(x'_i ), q_{i+1}\big), \nonumber  \\ 
& \geq  \dist\big(x_{i+1}'  + 2t\mathfrak R_a(x_{i+1}' ), x_{i+1}' + 2t\mathfrak R_a(x_i' )\big) - \dist\big(x_i'  + 2t\mathfrak R_a(x'_i ), q_{i+1}\big), \nonumber  \\
& =  2t  |\mathfrak R_a(x_{i+1}') - \mathfrak R_a(x_i')| - \dist\big(x_i'  + 2t\mathfrak R_a(x'_i ), q_{i+1}\big), \nonumber \\ \nonumber
& \geq 2t\Cfrak \lambda_i - \eta_{i+1},\\
& \geq \frac{\Cfrak R}{8}\lambda_i - \eta_{i+1}. \label{eq:sq_it1}
\end{align}
Having bounded the distance from the ray to $q_{i+1}$, we now bound the distance to $q_j$ for $j=0,\ldots,i$. 
By, respectively, the triangle inequality, \eqref{eq:ref_lip}, and \eqref{eq:square_TisR}, for $j=0,\ldots,i$ and $0\leq t \leq T(R)$, 
\begin{align} \nonumber
\dist\big(x_{i+1}'  + 2t\mathfrak R_a(x' ), q_{j}\big)&\geq \dist\big(x'_i  + 2t\mathfrak R_a(x'_i ), q_{j}\big) - \dist\big(x_{i+1}'  + 2t\mathfrak R_a(x_{i+1}' ), x_i'  + 2t\mathfrak R_a(x_i' )\big) \\ \nonumber
& \geq \eta_i - \big(1+2tC_0\big)|x'-x'_i|,\\
&\geq \eta_i - R\big(1+2C_0|p|\big)\lambda_i. \label{eq:sq_it2}
\end{align} 
The two inequalities \eqref{eq:sq_it1} and \eqref{eq:sq_it2} imply that if $\eta_i$ and $\eta_{i+1}$ satisfy 
\beq\label{eq:rain1}
\frac{16  \eta_{i+1}}{ \Cfrak} =   \frac{\eta_i - \eta_{i+1}}{( 1+ 2C_0|p|)},
\eeq
and  $\lambda_i$ is defined by 
\beq\label{eq:rain2}
 \lambda_i:= \frac{16  \eta_{i+1}}{R \Cfrak} =   \frac{\eta_i - \eta_{i+1}}{R( 1+ 2C_0|p|)},
\eeq 
then
\beqs
\dist\big(x'_{i+1}  + 2t\mathfrak R_a(x'_{i+1} ), q_{i+1}\big)\geq \eta_{i+1} \quad\tfa R/16\leq t\leq T(R)
\eeqs
and 
\beqs
\dist\big(x'_{i+1} + 2t\mathfrak R_a(x'_{i+1} ), q_{j}\big) \geq \eta_{i+1} \quad\tfor j=0,\ldots,i, \text{ and for all }0\leq t\leq T(R).
\eeqs
This last two inequalities, combined with \eqref{eq:sq_it0}, imply that 
\beqs
\dist\big(x'_{i+1}  + 2t\mathfrak R_a(x_{i+1} ), q_{j}\big) \geq \eta_{i+1} \quad\tfor j=0,\ldots,i+1, \text{ and for all }0\leq t\leq T(R)
\eeqs
as required. We observe for use later that \eqref{eq:rain1} implies that
\beq\label{eq:def_eta}
\eta_{i+1}= \frac{\eta_i }{1+ \frac{16}{\Cfrak}\big(1+ 2 C_0|p|\big)} \quad\text{ so that } \quad 
\eta_j := \eta_0 \left( \frac{1}{1+ \frac{16}{\Cfrak}\big(1+ 2 C_0|p|\big)}\right)^j, \quad j=0,\ldots,m.
\eeq

Since the value of $\lambda_i>0$ has been fixed by \eqref{eq:rain2},  it remains to show that there exists $x'_{i+1} \in V_1(R)$ satisfying \eqref{eq:raysq:alphai}. 
We now use the freedom we have in choosing $\eta_0$ to ensure that the can use Lemma \ref{lem:raydil}  to construct such an $x'_{i+1}$. Recall that we chose $\mathcal D$ so that the assumptions of Lemma \ref{lem:raydil} hold; let $\alpha_0$ be the associated constant.
We impose the condition that 
\beq\label{eq:boundeta0}
\sum_{j=0}^{m-1} \lambda_j \leq \min \left( \frac{\delta_1}{2R}, \alpha_0 \right), 
\qquad \text{i.e.}, \quad \eta_0\frac{16}{\Cfrak} \sum_{j=1}^{m-2} 
\left(\frac{1}{1+ \frac{16}{\Cfrak}\big(1+ 2 C_0|p|\big)}\right)^j\leq \min \left( \frac{\delta_1}{2}, 4\alpha_0 \right),
\eeq
where we have used the definitions of $\lambda_j$ \eqref{eq:rain2} and $\eta_j$ \eqref{eq:def_eta} and the fact that $R\geq 4$.
Observe that \eqref{eq:boundeta0} is a condition that $\eta_0$ is sufficiently small (recall that $\delta_1$ has been fixed by \eqref{eq:delta1}).

The rationale behind imposing \eqref{eq:boundeta0} is as follows;
recalling the definition of $V_1(R)$ \eqref{eq:V1}, we see that $\sum_{j=0}^{m-1} \lambda_j \leq \delta_1/2$ implies that $x_i' \in V_1(R)$ for $i=1,\ldots,m$.
The first inequality in \eqref{eq:boundeta0} implies that $\lambda_i \leq \alpha_0$, for all $i$, and, since $V_1(R)\subset \mathcal D$ (by \eqref{eq:sq_basicreq}),
$$
\partial B(x'_i,\lambda_i) \cap \mathcal D \neq \emptyset \quad\tand\quad\partial B(x'_i,\lambda_i) \cap \partial \mathcal D = \emptyset.
$$
These relations combined with \eqref{eq:Dnu} imply that the assumptions of Lemma \ref{lem:raydil} are satisfied with $\mathcal{D}= \mathcal{C}$. This lemma therefore implies that there exists $x'_{i+1} \in \mathcal D$ satisfying \eqref{eq:raysq:alphai}, for all $i=1,\ldots,m$.

In summary, we have proved that the ray $x'_m + t \mathfrak{R}_a(x'_m)$, $0\leq t\leq T(R)$, is a distance at least $\eta_m$ from any of the corners $q_1,\ldots, q_m$, and a distance at least $1/4$ from any of the other corners by \eqref{eq:othercorners}. 

Let $\eta_{\lceil 4|p|\rceil}$ be defined by the second equation in \eqref{eq:def_eta} with $j= \lceil 4|p|\rceil$
and with $\eta_0$ fixed to satisfy \eqref{eq:boundeta0}. By \eqref{eq:mbound} and \eqref{eq:Nbound}, $m \leq N(R)\leq \lceil 4|p|\rceil$ so that $\eta_m \geq \eta_{\lceil 4|p|\rceil}$.
Therefore, with
\beqs
\epsilon_0 := \half \min\left(\eta_{\lceil 4|p|\rceil}, \frac14\right),
\eeqs
the ray $x'_m + t \mathfrak{R}_a(x'_m)$, $0\leq t\leq T(R)$ is a distance at least $2\epsilon_0>0$ from any corner.
By \eqref{eq:def_eta} and \eqref{eq:boundeta0}, $\eta_{\lceil 4|p|\rceil}$ (and hence 
$\epsilon_0$) depends on  $C_0$, $\Cfrak$, $\alpha_0$, and $|p|$, and hence only on $\Gsc$.

\emph{Step 5: Putting everything together.}
By combining the results of Step 4 with the results \eqref{eq:Step2Conc1} and \eqref{eq:Step2Conc2} of Step 3, we have
\begin{equation}\label{eq:sq_xpm}
\begin{cases}
\widehat \varphi_t(x'_m, \mathfrak R_a(x'_m)) = \varphi^{[- \frac R2, \frac R2]^d}_t (x'_m, \mathfrak R_a(x'_m)) \quad\tfa 0\leq t \leq T(R),\\
\dist\Big(\widehat \varphi_t(x_m', \mathfrak R_a(x_m')), (\frac R2 + R \mathbb Z)^d \Big) \geq 2\epsilon_0 \quad\tfa 0\leq t 
\leq T(R), \quad\tand\\
\projx \big(\widehat\varphi_{T(R)}(x'_m, \xi_0)\big) \in  B\left(0, \frac{23}{16}\right)\Big \backslash    B\left(0, \frac{21}{16}\right).
\end{cases}
\end{equation}
We now define the neighbourhood $V_D$ (the neighbourhood of rays in the statement of the lemma) as a neighbourhood of $x'_m$.
Indeed, we let
\beqs
V_D := \Gamma_D \cap B\left(x'_m, \frac{\delta}{R}\right)
\eeqs
with $\delta>0$ chosen sufficiently small; if $\delta>0$ is independent of $R$, then this implies that $\vol(V_D)\geq  \widetilde{c}_{\rm ray,1}/R^{d-1}$ for some $\widetilde{c}_{\rm ray,1}>0$ independent of $R$; i.e., that Point (i) of Condition \ref{cond:ray} holds.

We first choose $\delta>0$ sufficiently small so that $V_D \subset V_1(R)$; since $\delta_1$ \eqref{eq:delta1} is independent of $R$, $\delta$ can be chosen to be independent of $R$.
As discussed below \eqref{eq:sq_basicreq}, the inclusion $V_D \subset V_1(R)$ ensures that Points (ii) and (iii) of Condition \ref{cond:ray} hold.

Point (iv) in the statement of the result will follow if we can show that, for all $x'\in V_D$,
\begin{equation}\label{eq:Step41}
\begin{cases}
\widehat \varphi_t(x_m, \mathfrak R_a(x_m)) = \varphi^{[- \frac R2, \frac R2]^d}_t (x', \mathfrak R_a(x')) \quad\tfa 0\leq t \leq T(R),\\
\dist\Big(\widehat \varphi_t(x', \mathfrak R_a(x')), (\frac R2 + R \mathbb Z)^d \Big) \geq \epsilon_0 \quad\tfa 0\leq t 
\leq T(R), \quad\tand\\
\projx \big(\widehat\varphi_{T(R)}(x', \xi_0)\big) \in  B\left(0, \frac{47}{32}\right)\Big \backslash    B\left(0, \frac{41}{32}\right).
\end{cases}
\eeq
Indeed, the second property in \eqref{eq:Step41} (missing the corners) implies that all three flows are the same when applied to $(x_m, \mathfrak R_a(x_m))$ for $0\leq t\leq T(R)$, i.e. 
\beqs
\varphi_t(x_m, \mathfrak R_a(x_m))=\widehat \varphi_t(x_m, \mathfrak R_a(x_m)) = \varphi^{[- \frac R2, \frac R2]^d}_t (x', \mathfrak R_a(x')) \quad\tfa 0\leq t \leq T(R).
\eeqs

We now obtain \eqref{eq:Step41} from \eqref{eq:sq_xpm}. 
By \eqref{eq:div_square}, \eqref{eq:ref_lip}, and \eqref{eq:square_TisR} (in a similar way to as in \eqref{eq:27_1}), for any $x' \in V(R)$ and any $0 \leq t \leq T(R)$,
\beqs
 \Big| \projx \varphi^{[- \frac R2, \frac R2]^d}_t (x'_m, \mathfrak{R}_a(x_m)) - \projx \varphi^{[- \frac R2, \frac R2]^d}_t (x', \mathfrak R_a(x'))  \Big|  \leq  \big(1+2C_0|p|\big)R |x'_m - x'|.
\eeqs
so that \eqref{eq:Step41} follows as long as 
\beqs
\delta \leq \min\left(\frac{1}{32  \big(1+C_0|p|\big)}, \frac{\epsilon_0}{  \big(1+2C_0|p|\big)}\right).
\eeqs
Since $\delta>0$ is independent of $R$, Point (i) of Condition \ref{cond:ray} holds with $\crayone= \widetilde{c}_{\rm ray,1}/R$, with 
$\widetilde{c}_{\rm ray,1}$ independent of $R$, and the proof is complete.
\end{proof}

\subsection{Bounding below the reflection coefficient \eqref{eq:reflectioncoefficient} for rays satisfying Condition \ref{cond:ray}}\label{sec:reflectionbound} 

In the follow result, we use the subscripts $D$ and $\tr$ on $\mathcal{H}$ to denote the hyperbolic set on $\Gamma_D$ and $\GammaIR$, respectively.

\ble[Lower bound on the reflection coefficient for general $\GammaIR$]\label{lem:reflectionbound1}
Let $\cV_\tr \subset \cH_{\tr}$. Given $(x',\xi') \in \mathcal{V}_{\tr}$, let 
\beq\label{eq:theta}
\theta(x',\xi'):= \sin^{-1} \big(|\xi'|_g\big)\in [0,\pi/2);
\eeq
observe that $\theta$ is well-defined since $r(x',\xi'):= 1- |\xi'|_g^2 >0$ on $\cH_\tr$.

Let $\{\psi_j\}_{j=1}^\mvanish$ be defined be \eqref{eq:psi_j}.
Suppose that 
\beq\label{eq:angleboundedbelow}
\theta\geq c_3 \quad\tand\quad 
\min_{j=1,\ldots,m}|\theta-\psi_j|\geq c_4, 
\eeq
and $\bcN$ and $\bcD$ satisfy Assumption \ref{ass:Pade} with \emph{either} $\MPade=\NPade$ \emph{or} $\MPade=\NPade+1$.
Then there exists $\Crefone= \Crefone(\MPade,\NPade)>0$ such that
\beq\label{eq:reflectionbound1}
\left|
\frac{
\sqrt{r} \sigma(\bcN) - \sigma(\bcD)
}{
\sqrt{r} \sigma(\bcN) + \sigma(\bcD)
}
\right|
\geq \Crefone 
\min\Big( |c_3|^{2\morderzero},\, |c_4|^{\mmult}\Big)
\quad\ton \cV_{\tr}.
\eeq
\ele

We make three remarks.
\bit
\item
The rationale behind the definition of $\theta$ \eqref{eq:theta} is that later we apply it to sets $\cV_\tr$ whose elements are of the form $\pi_{\GammaIR}(x,\xi)$ where $(x,\xi) \in S^*_{\overline{\domain}}\Rea^d$ (so that $|\xi|=1$). In this case, $\theta$ is the angle the vector $\xi$ makes with the normal to $\GammaIR$. 
\item We have denoted the constants in \eqref{eq:angleboundedbelow} by $c_3$ and $c_4$ since we later apply this lemma with $c_3 = \craythree$ and $c_4= \crayfour$.
\item We highlight that $\Crefone$ only depends on $\MPade$ and $\NPade$, and not on $\GammaIR$.
\eit

\bpf[Proof of Lemma \ref{lem:reflectionbound1}]
By Assumption \ref{ass:Pade},
\beq\label{eq:Pade1}
 \sigma(\bcN)(x',\xi')\sqrt{r(x',\xi')} - \sigma(\bcD)(x',\xi') = q\big(|\xi'|_g^2\big) \sqrt{1-|\xi'|^2_g}-p\big(|\xi'|_g^2\big).
\eeq
Since $\bcN$ and $\bcD$ satisfy Assumption \ref{ass:Pade} with \emph{either} $\MPade=\NPade$ \emph{or} $\MPade=\NPade+1$, Part (a) of Lemma \ref{lem:TH} implies that there exists $C_1=C_1(\MPade,\NPade)>0$ such that $|\sqrt{r} \sigma(\bcN) + \sigma(\bcD)|\geq C_1$ on $\mathcal{V}_\tr$.

By the definitions in \S\ref{sec:setup} of $p(t), q(t)$, $\morderzero$, $\{t_j\}_{j=1}^\mvanish$, and $\mmult$, there exists $C_2= C_2(\MPade,\NPade)>0$ such that 
\beq\label{eq:pqbounded}
\big| q(t) \sqrt{1-t} - p(t)\big| \geq C_1 \min \Big( |t|^{\morderzero}, 
\Big(\min_{j=1,\ldots,\mvanish} |t-t_j|\Big)^{\mmult}\Big) 
\quad\tfa t\in [0,1].
\eeq
Since $\sin x \geq 2x/\pi$ for $x\in[0,\pi/2]$ and there exists $C_2=C_2(\psi_{\min})>0$ such that $|\sin^2 \theta- \sin^2 \psi_j|\geq C_2 |\theta-\psi_j|$ for $j=1,\ldots,\mvanish$, the inequalities in \eqref{eq:angleboundedbelow} imply that 
\beqs
|\xi'|_g^2 \geq \big(c_3\big)^2\left(\frac{2}{\pi}\right)^2 \quad\tand \quad\min_{j=1,\ldots,\mvanish}\big| |\xi'|_g^2 - t_j\big| \geq C_2 c_4.
\eeqs
The bound \eqref{eq:reflectionbound1} then follows from combining these bounds with \eqref{eq:Pade1} and \eqref{eq:pqbounded}.
\epf

\subsection{Proof of Theorem \ref{th:lower} (the qualitative lower bound)}\label{sec:lowerboundproof1}

Similar to above, 
we use the subscripts $D$ and $\tr$ on $\mathcal{H}$ (and subsets of it) to denote the hyperbolic set on $\Gamma_D$ and $\GammaIR$, respectively; we use analogous notation for boundary measures.

\begin{proof}[Proof of Theorem \ref{th:lower}]
By Part (i) of Corollary \ref{cor:reduction}, we only need to show that $\mu(\mathcal I) > 0$. 
We now follow the steps outlined in \S\ref{sec:outlinerays};
seeking a contradiction, we assume that $\mu(\mathcal I)=0$. The inequality  (\ref{eq:key2b}) from Point (ii) of Corollary \ref{cor:explain} implies that $\muin_D=0$. Therefore \eqref{eq:muout}
implies that 
$$
\muout_{D}=2\sqrt{r(x',\xi')}\,\nu_{d,D} 
$$
and Lemma \ref{lem:nuD} therefore gives that
\beq \label{eq:muoutqual}
\muout_{D}=2\sqrt{r(x',\xi')} \ \dvol({x'}) \otimes \delta_{\xi' =(a_{T(x')})^{\flat}}.
\eeq
Given $M$ and $N$, let $\{\psi_j\}_{j=1}^\mvanish$ be defined by \eqref{eq:psi_j}; i.e., $\{\psi_j\}_{j=1}^\mvanish$ is the set of non-zero angles at which the reflection coefficient \eqref{eq:reflectioncoefficient} vanishes.
 Let the set $V_D \subset \Gamma_D$ be given by Lemma \ref{lem:ray1}; i.e., the rays emanating from $V_D$ are non-tangent to $\Gamma_D$ and hit $\GammaI$ directly and at angles bounded away from $\{0,\psi_1,\ldots,\psi_\mvanish\}$.
Let
$$
\mathcal V_D := \Big\{ \big( x', (a_{T(x')})^{\flat}\big), \; x' \in V_D \Big\} \subset \mathcal H_{D}.
$$
By (\ref{eq:muoutqual}), $\muout_D(\mathcal V_D) > 0$. Therefore, using the equality \eqref{eq:key1} from Point (i) of Corollary \ref{cor:explain} and the fact that $r>0$ on $\mathcal{H}$,
\beq\label{eq:29_1}
(2\sqrt{r}\muin_\tr)(\mathcal V_\tr) = (2\sqrt{r}\muout_D)(\mathcal V_D)>0,
\eeq
where 
\beqs
\mathcal{V}_\tr := 
 \bigcup_{q \in \mathcal{V}_D}\pi_{\GammaIR}\Big( \varphi_{\tout (q)}\big(\pout(q)\big)\Big)
 \subset \mathcal H_\tr,
\eeqs
where $\tout$ and $\pout$ are defined in \eqref{eq:tout} and \eqref{eq:pinout} respectively, and $\pi_{\GammaIR}$ equals $\pi_\boundary$ restricted to 
$T^*_{\GammaIR} \Rea^d$; observe that 
$\sup_{q \in \mathcal{V}_D} t^{\rm out}(q)<\infty$ since $\GammaI$ is (strictly) convex.

Corollary \ref{cor:reflection} then implies that
 $$ 
(2\sqrt{r}\muout)(\mathcal V_\tr) =
\left|
\frac{\sqrt{r}\sigma(\bcN) -\sigma(\bcD)}
{\sqrt{r}\sigma(\bcN) +\sigma(\bcD)}
\right|^{2}
(2\sqrt{r}\muin)(\mathcal V_\tr),
$$
where we have used the fact that $|\sigma(\bcN)|>0$ on $\mathcal V_\tr$ by Corollary \ref{cor:TH}. 

Since the rays emanating from $V_D$ hit $\GammaI$ directly and at angles bounded away from $\{0,\psi_1,\ldots,\psi_k\}$, Lemma \ref{lem:reflectionbound1} implies that $(2\sqrt{r}\muout)(\mathcal V_\tr) \geq C (2\sqrt{r} \muin)(\mathcal V_\tr)$ for $C>0$. 
Combining this inequality with \eqref{eq:29_1}, we have $(2\sqrt{r}\muout)(\mathcal V_\tr)>0$.
By the inequality  (\ref{eq:key2a}) in Point (ii) of Corollary \ref{cor:explain}, $\mu(\mathcal I)>0$, which is the desired contradiction.

Finally, the fact that $C$ in Theorem \ref{th:quant2} depends continuously on $\GammaIR$ follows from the fact that $c_{{\rm ray},j}, j=1,\ldots,4$, depend continuously on $\GammaIR$, and $\Crefone$
in Lemma \ref{lem:reflectionbound1} is independent of $\GammaIR$.
\end{proof}

\subsection{Proof of the lower bounds in Theorem \ref{th:quant}, Theorem \ref{th:quant2}, Theorem \ref{th:quant3}, Theorem \ref{th:local_ball}, and Theorem \ref{th:local_square}}\label{sec:lowerboundproof2}

Recall from Corollary \ref{cor:reduction} that to prove the lower bounds  in Theorems \ref{th:quant}, \ref{th:quant2}, \ref{th:local_ball}, and \ref{th:local_square}, we only need to bound $\mu(\mathcal I)$ and $\mu\big(\mathcal I \cap S^*_{B(0, 3/2)}\big)$ below; the following lemma provides the necessary lower bounds.

\begin{lem} \label{lem:quant_mes} 
Suppose Condition \ref{cond:ray} holds for $R \geq R_0$ with $\craytwo$ independent of $R$
and $\crayfive \geq \widetilde{c}_5 R$ with $\widetilde{c}_5>0$ independent of $R$.
 Then, 
 there exists $C>0$ such that, for all $R\geq R_0$,

(i)  
\beq\label{eq:lowerboundmugeneral}
\mu(\mathcal I) \geq C R \Big( \min \big( |\craythree|^{2\morderzero},\, |\crayfour|^{\mmult}\big)\Big)^2 \crayone.
\eeq
(ii) If, in addition, there exists $N_{\rm ref}\geq 1$ such that, for the interior billiard flow in $\domain$, these rays are reflected on $\GammaIR$ $N_{\rm ref}$ times, without being reflected on $\Gamma_D$ in between, and after their $N_{\rm ref}$th reflection all of these rays intersect $B(0, 3/2)  \setminus  B(0, 5/4)$ without being reflected before, then  
\beq\label{eq:lowerboundmulocal}
\mu\big(\mathcal I \cap S^*_{B(0, 3/2)} \mathbb R ^d\big) \geq C 
\Big( \min \big( |\craythree|^{2\morderzero},\, |\crayfour|^{\mmult}\big)\Big)^2
\crayone.
\eeq
\end{lem}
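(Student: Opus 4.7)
My plan is to quantify the chain of implications that gives the contradictory proof of Theorem~\ref{th:lower}. Set $\mathcal V_D := \{(x', (a_T(x'))^\flat) : x' \in V_D\} \subset \mathcal H_D$ and
\[
\mathcal V_\tr := \bigcup_{q \in \mathcal V_D} \pi_{\GammaIR}\!\big(\varphi_{\tout(q)}(\pout(q))\big) \subset \mathcal H_\tr,
\]
which is well defined by Point~(iii) of Condition~\ref{cond:ray}. Lemma~\ref{lem:nuD}, Points~(i)--(ii) of Condition~\ref{cond:ray}, and the identity $r(x', (a_T(x'))^\flat) = |a\cdot n(x')|^2$ give $\nu_d^D(\mathcal V_D) \geq \crayone$ and $r \geq \craytwo^{\,2}$ on $\mathcal V_D$, so $(2r\nu_d^D)(\mathcal V_D) \geq 2\craytwo^{\,2}\crayone$.

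The chain for Part~(i) runs in five steps. In (a), I apply \eqref{eq:key2b} with $A = \mathcal I$, $\mathcal V = \mathcal V_D$: for $q\in\mathcal V_D$, the backward free extension of $\varphi_t(\pin(q))$, $t\in(\tin(q),0)$, is the Euclidean straight line from $x'\in V_D$ in direction $-a$, which escapes $\Omegaminus$ by the illumination condition; since the chord from $V_D$ to $\GammaIR$ has length $\gtrsim R$ (by the scale of $\tildedomain$), this yields $(2\sqrt{r}\mu^{\rm in}_D)(\mathcal V_D) \leq C\mu(\mathcal I)/R$. In (b), Part~(i) of Lemma~\ref{lem:key_Miller} gives $\mu^{\rm out}_D + \mu^{\rm in}_D = \sqrt{r}\nu_d^D + \nu_n^D/\sqrt{r} \geq \sqrt{r}\nu_d^D$, so
\[
(2\sqrt{r}\mu^{\rm out}_D)(\mathcal V_D) \geq 2\craytwo^{\,2}\crayone - C\mu(\mathcal I)/R.
\]
In (c), \eqref{eq:key1} propagates this mass losslessly to $\GammaIR$: $(2\sqrt{r}\mu^{\rm in}_\tr)(\mathcal V_\tr) = (2\sqrt{r}\mu^{\rm out}_D)(\mathcal V_D)$. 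In (d), Corollary~\ref{cor:reflection} together with Lemma~\ref{lem:reflectionbound1} (applicable by Point~(iii) of Condition~\ref{cond:ray}) yields
\[
(2\sqrt{r}\mu^{\rm out}_\tr)(\mathcal V_\tr) \geq \Crefone^{\,2}\min\!\big(\craythree^{\,2\morderzero},\crayfour^{\,\mmult}\big)^{2}(2\sqrt{r}\mu^{\rm in}_\tr)(\mathcal V_\tr).
\]
In (e), I apply \eqref{eq:key2a} with $A = \mathcal I$, $\mathcal V = \mathcal V_\tr$: the forward trajectory from $\pout(q')$ lies in $\mathcal I$ (its backward free extension exits $\tildedomain$ at $\GammaIR$ and so cannot enter $\Omegaminus\subset\tildedomain$), and $\tout(q') \geq \crayfive/2 \geq \widetilde c_5 R/2$ by Point~(iv) of Condition~\ref{cond:ray}, so $\mu(\mathcal I) \gtrsim R(2\sqrt{r}\mu^{\rm out}_\tr)(\mathcal V_\tr)$. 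Chaining (a)--(e) and absorbing the additive $\mu(\mathcal I)/R$ term from (a) into the left-hand side produces \eqref{eq:lowerboundmugeneral}.

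For Part~(ii), the same chain runs with $A = \mathcal I\cap S^*_{B(0,3/2)}\mathbb R^d$ throughout. In (a), the backward line from $\pin(q)$ starts in $V_D\subset B(0,1)$ and moves in direction $-a$, so it lies in $B(0,3/2)$ for a uniform time of order unity, giving $(2\sqrt{r}\mu^{\rm in}_D)(\mathcal V_D) \leq C\mu(\mathcal I\cap S^*_{B(0,3/2)}\mathbb R^d)$. Steps~(c) and~(d) are iterated through the $N_{\rm ref}$ reflections on $\GammaIR$ afforded by the hypothesis (with the angle condition \eqref{eq:angleboundedbelow} at each reflection); the resulting cumulative factor $(\Crefone^{\,2}\min(\cdots)^2)^{N_{\rm ref}}$ reduces to a uniform constant times $\min(\cdots)^2$ in the intended applications, since either $N_{\rm ref}=1$ (as in Lemma~\ref{lem:ray2}) or the $\min$ factor is bounded below independently of $R$ (as in Lemma~\ref{lem:ray4}). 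In (e), the hypothesis that the ray traverses $B(0,3/2)\setminus B(0,5/4)$ after its $N_{\rm ref}$-th reflection provides a chord of length $\geq 1/4$ in $\mathcal I\cap S^*_{B(0,3/2)}\mathbb R^d$, hence a time-integral of the order of unity and \eqref{eq:lowerboundmulocal}.

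The main technical point is verifying that the forward and backward free segments entering the coefficients of \eqref{eq:key2a} and \eqref{eq:key2b} truly lie in $\mathcal I$. For the outgoing trajectories from $\pout(q')$ at $\GammaIR$, the backward free extension immediately exits $\tildedomain$ into $\mathbb R^d\setminus\tildedomain$, which is disjoint from $\Omegaminus$. For the incoming trajectories from $\pin(q)$ at $V_D$, the backward free extension in direction $-a$ escapes $\Omegaminus$ because of the illumination condition $a\cdot n(x')\leq -\craytwo$; this is immediate for convex $\Omegaminus$, and for the $V_D$ constructed in Lemmas~\ref{lem:ray1}--\ref{lem:ray4} it follows from the specific geometric placement of $V_D$ relative to $\Omegaminus$.
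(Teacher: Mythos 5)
Your argument is correct and follows the paper's proof essentially step for step: it uses the same chain (Lemma~\ref{lem:nuD} together with \eqref{eq:key_Miller0} to produce outgoing mass on $\Gamma_D$, \eqref{eq:key1} to transport it to $\GammaIR$, Corollary~\ref{cor:reflection} with Lemma~\ref{lem:reflectionbound1} to reflect it, and \eqref{eq:key2a}/\eqref{eq:key2b} to convert boundary mass into mass on $\mathcal I$), your direct ``absorb the $\mu(\mathcal I)/R$ term'' step being just the contrapositive of the paper's contradiction argument with $\epsilon$ small, and your treatment of the $N_{\rm ref}$-fold reflection factor in Part~(ii) matching how the paper uses the lemma. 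The only cosmetic slip is the aside that $V_D$ satisfies $a\cdot n(x')\leq-\craytwo$: Condition~\ref{cond:ray}(ii) gives only $|a\cdot n(x')|\geq\craytwo$, and the $V_D$ of Lemma~\ref{lem:ray1} in fact lies in the shadow region; this is harmless, since in either case the backward incoming segment from $\pin(q)$ escapes $\Omegaminus$ and Corollary~\ref{cor:explain}(ii) is invoked exactly as in the paper.
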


%

\noi\emph{Proof of (i).} 
As in the proof of Theorem \ref{th:lower}, we argue by contradiction and follow the steps in \S\ref{sec:outlinerays}.
Suppose that Condition \ref{cond:ray} holds for $R\geq R_0$, but,
for any $\epsilon>0$, there exists $R\geq R_0$ such that 
\beq \label{eq:quant_contr1}
\mu(\mathcal I)\leq \epsilon R
\Big( \min \big( |\craythree|^{2\morderzero},\, |\crayfour|^{\mmult}\big)\Big)^2
 \crayone.
\eeq
Let
\beq\label{eq:V_D}
\mathcal V_D := \Big\{ \big(x', (a_{T(x')})^\flat\big) \in T^{*}\Gamma_D, \; x' \in V_D\Big\} \subset \mathcal H_{D}.
\eeq
We now claim that
\beq\label{eq:c_delta}
\mu(\mathcal I) \geq  \left(\frac{\delta}{M}\right) R\, (2\sqrt{r}\muin)(\mathcal{V}_D)\quad\tfa R\geq 1. 
\eeq
Indeed, Part (ii) of Corollary \ref{cor:explain} implies that
\beqs
\mu(\mathcal I) \geq \dist(\GammaIR, \Omegaminus)  (2\sqrt{r}\muin)(\mathcal V_D),
\eeqs
and then to prove \eqref{eq:c_delta} we only need to show that
\beq\label{eq:dist}
\dist(\GammaIR, \Omegaminus)
\geq \left(\frac{\delta}{M}\right) R.
\eeq
Let $\delta = \dist(\Omegaminus, \partial B(0,1))$. Then, since $\tildedomain \supset B(0,M^{-1}R) \cup B(0,1)$ and $\Omegaminus \subset B(0,1)$,
if $R\geq M$,
\beqs
\dist(\GammaIR, \Omegaminus)
\geq 
 \big(M^{-1}R-1+\delta\big) 
= \left(M^{-1} - \frac{ (1-\delta)}{R}\right) R 
   \geq \left(\frac{\delta}{M}\right) R
 \eeqs
 and then \eqref{eq:dist} follows for $R\geq M$. On the other hand, if $R \leq M$, then 
 \beqs
\dist(\GammaIR, \Omegaminus)\geq \delta \geq  \left(\frac{\delta}{M}\right)  R,
\eeqs
and then \eqref{eq:dist} follows for $R\leq M$.

Combining \eqref{eq:quant_contr1} and \eqref{eq:c_delta}, we have
\begin{equation} \label{eq:low_muinV}
 (2\sqrt{r}\muin)(\mathcal{V}_D) \leq  \epsilon\frac{M}{\delta}
\Big( \min \big( |\craythree|^{2\morderzero},\, |\crayfour|^{\mmult}\big)\Big)^2
 \crayone.
\end{equation}

We now use Lemmas \ref{lem:key_Miller} and \ref{lem:nuD} to obtain a lower bound on $\muout(\mathcal{V}_D)$.
The two equations in \eqref{eq:key_Miller0} imply that
\begin{equation} \label{eq:muout_eq}
\muout = \sqrt{r} \nu_d + \frac{1}{\sqrt{r}}\nu_n-  \muin
\end{equation}
(see \eqref{eq:system2}). 
By Lemma~\ref{lem:nuD} and Part (i) of Condition \ref{cond:ray}, 
\begin{equation} \label{eq:nuD_vol}
\nu_d(\mathcal V_D) =  \vol (V_D)\geq \crayone.
\end{equation}
By the assumption that Condition \ref{cond:ray} holds (with $\craytwo$ independent of $R$), $|n(x') \cdot a|\geq \craytwo>0$ on $V_D$.
By the definitions of $V_D$ \eqref{eq:V_D} and $r(x',\xi')$ \eqref{eq:r}, $r(x',(a_{T(x')})^{\flat}=|n(x')\cdot a|$ for $x'\in V_D$ and thus $r \geq \craytwo>0$ on $\mathcal V_D$.
 Combining (\ref{eq:muout_eq}) with (\ref{eq:nuD_vol}) and (\ref{eq:low_muinV}), and using the facts that $\nu_n$ is nonnegative and 
 $\craythree, \crayfour \leq \pi/2$, we have 
\begin{align*}
(2\sqrt{r}\muout) (\mathcal V_D)&\geq 
2r \nu_d(\mathcal V_D) - (2\sqrt{r}\muin) (\mathcal V_D) \\
&\geq 2\sqrt{\craytwo}\left( \sqrt{\craytwo} -  \epsilon\frac{M}{\delta} 
\left(\frac{\pi}{2}\right)^{\max(2\morderzero,\mmult)}
\right) \crayone.
\end{align*}
If
\beq\label{eq:epsilon1}
\epsilon \leq \frac{\sqrt{\craytwo}}{2}
\frac{\delta}{M} 
\left(\frac{2}{\pi}\right)^{\max(2\morderzero,\mmult)}
\eeq
(observe that, since $\craytwo$ is assumed independent of $R$, this upper bound on $\epsilon$ is independent of $R$),
then
\beqs
(2\sqrt{r}\muout) (\mathcal V_D)\geq \craytwo\, \crayone.
\eeqs
We now use Corollary \ref{cor:explain} to propagate this lower bound on $\Gamma_D$ to a lower bound on $\GammaIR$. Indeed, Part (i) of Corollary \ref{cor:explain} then implies that
\beq \label{eq:muVI_uper}
(2\sqrt{r}\muin) (\mathcal V_{\tr}) =  (2\sqrt{r}\muout)(\mathcal V_D)  \geq  \craytwo\, \crayone.
\eeq
where 
\beqs
\mathcal{V}_\tr := 
 \bigcup_{q \in \mathcal{V}_D}\pi_{\GammaIR}\Big( \varphi_{\tout (q)}\big(\pout(q)\big)\Big)
 \subset \mathcal H_\tr,
\eeqs
where $\tout$ and $\pout$ are defined in \eqref{eq:tout} and \eqref{eq:pinout} respectively, and $\pi_{\GammaIR}$ equals $\pi_\boundary$ restricted to 
$T^*_{\GammaIR} \Rea^d$.

Combining Corollary \ref{cor:reflection}, Lemma \ref{lem:reflectionbound1}, and Point (iii) of Condition \ref{cond:ray}, we have
\beq\label{eq:xmas2}
\muout (\mathcal V_{\tr}) =\left| \frac{ \sqrt{r}\sigma(\bcN)- \sigma(\bcD)}{\sqrt{r}\sigma(\bcN)+ \sigma(\bcD)}\right|^2
\muin (\mathcal V_{\tr})
\geq \Big(\Crefone \min \big( |\craythree|^{2\morderzero},\, |\crayfour|^{\mmult}\big)\Big)^2\muin (\mathcal V_{\tr}).
\eeq

Finally, using Part (ii) of Corollary \ref{cor:explain} with Point (iv) of Condition \ref{cond:ray}, and then using
\eqref{eq:xmas2} and \eqref{eq:muVI_uper}, we have, 
\begin{align}\nonumber
\mu(\mathcal{I})\geq  
\widetilde{c}_5 \, R\,
(2\sqrt{r}\muout)(\mathcal V_{\tr}) 
&\geq 
\widetilde{c}_5 \, R\,
(2\sqrt{r}\muout)  (\mathcal V_{\tr})\\
&
\geq 
\widetilde{c}_5 \, R\,\Big(\Crefone \min \big( |\craythree|^{2\morderzero},\, |\crayfour|^{\mmult}\big)\Big)^2 \craytwo\, \crayone.
 \label{eq:contradict2}
\end{align}
We now restrict $\epsilon$ so that, in addition to satisfying \eqref{eq:epsilon1}, $\e$ satisfies
\beqs
\e < \widetilde{c}_5 \big(\Crefone\big)^2 \,\craytwo
\eeqs
(observe that, since $\widetilde{c}_5$ and $\craytwo$ are assumed independent of $R$, this upper bound is independent of $R$). Thus 
$\epsilon$ can be chosen sufficiently small (independent of $R$) such that \eqref{eq:contradict2} contradicts \eqref{eq:quant_contr1}, which is the desired contradiction.

\bpf[Proof of (ii)]
If the assumption of (ii) holds, then our contradiction argument also assumes that for all $\e>0$ there exists $R\geq R_0$ such that
\beq \label{eq:quant_contr2}
\mu(\mathcal I\cap S^*_{B(0, 3/2)} \mathbb R ^d)\leq 
\epsilon \Big( \min \big( |\craythree|^{2\morderzero},\, |\crayfour|^{\mmult}\big)\Big)^2\crayone.
\eeq
Applying Part (i) of Corollary \ref{cor:explain} $N_{\rm ref}-1$ more times and using \eqref{eq:xmas2}, we construct
$\mathcal{V}^1_{\tr}, \ldots, \mathcal{V}^{N_{\rm ref}}_{\tr} \subset T^{*}\GammaIR$, satisfying 
\begin{align}\nonumber
&\mathcal{V}^1_{\tr} := \mathcal{V}_{\tr}, \qquad (2\sqrt{r}\muin)({\mathcal{V}^{j+1}_{\tr}}) = (2\sqrt{r}\muout)({\mathcal{V}^{j}_{\tr}}), 
\\
&(2\sqrt{r}\muout)({\mathcal{V}^{j}_{\tr}}) \geq \Big(\Crefone \min \big( |\craythree|^{2\morderzero},\, |\crayfour|^{\mmult}\big)\Big)^2 (2\sqrt{r}\muin)(\mathcal{V}^{j}_\tr), \label{eq:square_reflsucc}
\end{align}
and so that for any $q \in \mathcal V_{\tr}^{N_{\rm ref}}$, $\big\{ \varphi^{\mathbb R^d}_t (\pout(q))\big\}_{t\geq 0}$ intersects $B(0, \frac 32)  \setminus  B(0, \frac 54)$ before hitting $\Gamma_D$ or $\GammaIR$.
Therefore, by (\ref{eq:square_reflsucc}) and (\ref{eq:muVI_uper})
\begin{equation} \label{eq:VNlower}
(2\sqrt{r}\muout)({\mathcal{V}^{N}_{\tr}}) \geq  
\Big(\Crefone \min \big( |\craythree|^{2\morderzero},\, |\crayfour|^{\mmult}\big)\Big)^{2 N_{\rm ref}}
\craytwo \, 
\crayone.
\end{equation}
Finally, since any ray entering $ B(0, \frac 32)  \setminus  B(0, \frac 54)$ spends a time at least $\frac 12 (\frac 32 - \frac 54) = \frac 18$ in this annulus, Part (ii) of Corollary \ref{cor:explain} implies that
\begin{align}\nonumber
\mu\big(\mathcal I \cap S^*_{B(0, 3/2)  \setminus  B(0, 5/4)} \mathbb R ^d\big) 
&\geq \frac 18 (2\sqrt{r}\muout)({\mathcal{V}^{N_{\rm ref}}_{\tr}}) \\
&\geq \frac18
\Big(\Crefone \min \big( |\craythree|^{2\morderzero},\, |\crayfour|^{\mmult}\big)\Big)^{2 N_{\rm ref}}
\craytwo \, \crayone,
\label{eq:xmas4}
\end{align}
where we have used (\ref{eq:VNlower}). Therefore, if 
\beqs
\epsilon < (\Crefone)^{2N_{\rm ref}} \craytwo ,
\eeqs
then \eqref{eq:xmas4} contradicts (\ref{eq:quant_contr2}), which is the desired contradiction.
(Observe that, similar to in Part (i), 
the upper bound on $\epsilon$ is independent of $R$ since $\craytwo$ and $\Crefone$ are independent of $R$.)
\end{proof}

\begin{proof}[Proof of the lower bounds in Theorems \ref{th:quant}, \ref{th:quant2}, \ref{th:local_ball}, and \ref{th:local_square}]
The lower bounds will follow from combining Corollary \ref{cor:reduction}, Lemma \ref{lem:quant_mes}, and the ray constructions in Lemmas \ref{lem:ray1}-\ref{lem:ray4}.

For Theorem \ref{th:quant2} (for generic $\GammaIR$), 
Lemma \ref{lem:ray3} implies that the assumptions of Part (i) of Lemma \ref{lem:quant_mes} are satisfied with $\crayone, \craythree, \crayfour$ independent of $R$, and $R_0$ sufficiently large; the required lower bound \eqref{eq:lowerboundmu2} on $\mu(\cI)$ then follows by inserting this (lack of) $R$-dependence into \eqref{eq:lowerboundmugeneral}. 

For the lower bound in Theorem \ref{th:quant} (for $\GammaIR =\partial B(0,R)$), 
Lemma \ref{lem:ray2} implies that the assumptions of Part (i) of Lemma \ref{lem:quant_mes} are satisfied with $\crayone, \crayfour$ independent of $R$, $\craythree = \widetilde{c}_3/R$ with $\widetilde{c}_3>0$ independent of $R$, and $R_0$ sufficiently large.
The required lower bound \eqref{eq:lowerboundmu} $\mu(\cI)$ then follows by inserting these $R$-dependences into \eqref{eq:lowerboundmugeneral}, and 
observing that, for $R$ sufficiently large,
\beq\label{eq:asymp}
\min \big( |\craythree|^{2\morderzero},\, |\crayfour|^{\mmult}\big) = \left|\frac{\widetilde{c}_3}{R}\right|^{2\morderzero}.
\eeq

For Theorem \ref{th:local_ball} (i.e.~the local error for $\GammaIR =\partial B(0,R)$), 
Point (iv)$^\prime$ in Lemma \ref{lem:ray2} implies that the assumptions of Part (ii) of Lemma \ref{lem:quant_mes} are satisfied $N_{\rm ref}=1$
and $R_0$ sufficiently large. The required lower bound on $\mu(\mathcal I \cap S^*_{B(0, 3/2)} \mathbb R ^d)$ \eqref{eq:lowerboundmu3} then follows from 
\eqref{eq:lowerboundmulocal} using \eqref{eq:asymp} and the fact that $\crayone$ is independent of $R$. The fact that the result holds with $R_0=2$ when 
$\MPade=\NPade=0$ follows from Remark \ref{rem:MN0}.

Finally, for Theorem \ref{th:local_square} (i.e.~the local error for the hypercube), 
Lemma \ref{lem:ray4} implies that the assumptions of Part (ii) of Lemma \ref{lem:quant_mes} are satisfied
with $\craythree, \crayfour$ independent of $R$, $\crayone =\widetilde{c}_1/R^{d-1}$ with $\widetilde{c}_1$ independent of $R$, 
and $R_0=4$.
The required lower bound on 
$\mu(\mathcal I \cap S^*_{B(0, 3/2)} \mathbb R ^d)$ \eqref{eq:lowerboundmu4} then follows from 
\eqref{eq:lowerboundmulocal} by inserting these $R$-dependences.
\end{proof}

\section{Proof of the trace bounds (Theorem \ref{th:higherOrderBound})}
\label{sec:traceHO}

\subsection{Strategy of the proof}
To illustrate some of the main ideas, consider the BVP \eqref{eq:genBC} with $\bcN=\bcD=I$, $\overline{M}$ compact, and the boundary condition imposed on the whole of $\boundary $, i.e.,
\begin{equation}
\label{eq:genBC2}
\begin{cases}
(-h^2\Delta-1)u=hf&\text{ in }M\\
hD_n u-u =g&\text{ on }\Gamma:=\boundary .
\end{cases}
\end{equation}
In the notation of Theorem \ref{th:higherOrderBound}, we have $m_{0,i}=m_{1,i}=0$, and the bounds \eqref{eq:trace1} and \eqref{eq:trace2} in the case $\ell_i=0$ are that
\beq\label{eq:example1}
\N{u}_{L^2(\Gamma)} + \N{h D_n u}_{L^2(\Gamma)} \leq C \Big( 
\N{u}_{L^2(M)} + \N{f}_{L^2(M)} + \N{g}_{L^2(\Gamma)}
\Big)
\eeq
and 
\beq\label{eq:example2}
\N{u}_{H^1_h(M)}\leq C \Big( 
\N{u}_{L^2(M)} + h\N{f}_{L^2(M)} + \N{g}_{L^2(\Gamma)}
\Big).
\eeq
We now show how to obtain these bounds;
pairing the PDE in \eqref{eq:genBC2} with $u$ and integrating by parts, we have
\beq\label{eq:example3}
h^2 \N{\nabla u}^2_{L^2(M)} - \N{u}^2_{L^2(M)} - h\big\langle f , u \big\rangle_{L^2(M)}= h i \N{u}^2_{L^2(\Gamma)}  + h \big\langle g, u \rangle_{L^2(\Gamma)}.
\eeq
Taking the imaginary part of \eqref{eq:example3}, we find that 
\beq\label{eq:example4}
 \N{u}^2_{L^2(\Gamma)} \leq\N{g}^2_{L^2(\Gamma)} + 
 \N{f}^2_{L^2(M)} + 
 \N{u}^2_{L^2(M)}.
\eeq
Taking the real part of \eqref{eq:example3} and adding $2\|u\|^2_{L^2(M)}$ to both sides of the resulting equation, we find that
\beq\label{eq:example5}
\N{u}^2_{H^1_h(M)} \leq \frac{5}{2} \N{u}^2_{L^2(M)} + \frac{h^2}{2} \N{f}^2_{L^2(M)} + \frac{h^2}{2} \N{u}^2_{L^2(\Gamma)} + \frac{1}{2} \N{g}^2_{L^2(\Gamma)}.
\eeq
Combining the inequality \eqref{eq:example4} with the boundary condition in \eqref{eq:genBC2}, we obtain the first result \eqref{eq:example1}. Then, using \eqref{eq:example4} in \eqref{eq:example5}, we obtain the second result \eqref{eq:example2}.

The proof of Theorem \ref{th:higherOrderBound} follows similar steps; indeed, the two main ingredients are (i) 
bounds on the traces in terms of the data and $H^{1}_h$ norms of $u$, and (ii) a bound the $H^{1}_h$ norm of $u$ in term of the traces and the data. The bound in (ii) is obtained by considering $\Re\langle ( -h^2 \Delta_g-1)u,u\rangle_{L^2(M)}$ and integrating by parts, similar to above, with the inequality \eqref{eq:analogy1} the generalisation of the inequality \eqref{eq:example5}.
The bounds in (i) are obtained by considering $\Im \langle(-h^2 \Delta_g -1)u,u\rangle_{L^2(M)}$, similar to above, but also 
$\Im \langle(-h^2 \Delta_g -1)u,hD_{\nu} u\rangle_{L^2(M)}$ (with Lemma \ref{lem:intByParts} above
considering a general commutator, and Lemma \ref{lem:basicbound} specialising to the case of a normal derivative). 

The additional complications for the bounds in (i) are because we need to consider the cases where $\bcD$ and $\bcN$ are both elliptic  (Lemma \ref{lem:ellipticEst}), where $\bcD$ is small and $\bcN$ elliptic (Lemma \ref{lem:elephant1}), and where $\bcD$ is elliptic and $\bcN$ small  (Lemma \ref{lem:elephant2}), 
These three cases are considered in \S\ref{sub:a_priori}, and then in \S\ref{sub:proofhOB} we show that, under the assumptions \eqref{eq:conditions2a}-\eqref{eq:conditions2d}, 
the bounds in these three cases cover all of $T^* \Gamma$.

\subsection{A priori estimates} \label{sub:a_priori}
We begin by proving some a-priori estimates for~\eqref{eq:genBC}. 
As usual, we work near $\Gamma$ where $M$ is locally given by $x_1>0$, as in \S\ref{subsec:geo}.
We repeatedly use the integration by parts result in Lemma \ref{lem:intByParts}.

\begin{lem}
\label{lem:basicbound}
If $u$ solves \eqref{eq:genBC}, then, for all $\e>0$ and for all $\ell$,
$$
\|hD_{x_1}u\|_{H_h^\ell(\Gamma_i)}\leq C\Big(\|u\|_{H_h^{\ell+1}(\Gamma_i)}+\|u\|_{H_h^{\ell+1}(M)}+\e^{-1}\|f\|_{H_h^{\ell}(M)}+\e\|u\|_{H_h^1(M)}  \Big).
$$
\end{lem}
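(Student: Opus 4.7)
The plan is to apply the integration by parts identity from Lemma \ref{lem:intByParts} with a test operator $B=B_0+B_1hD_{x_1}$ engineered so that the boundary term $-\langle B_1hD_{x_1}u,hD_{x_1}u\rangle_{L^2(\boundary)}$ equals exactly $-\|hD_{x_1}u\|^2_{H_h^\ell(\Gamma_i)}$. Concretely, pick a cutoff $\chi\in C_c^\infty(\mathbb{R})$ supported in a small collar of $\Gamma_i$ with $\chi(0)=1$ and $\chi'(0)=0$, let $\Lambda$ be a self-adjoint tangential pseudodifferential operator of order $1$ with principal symbol $\langle\xi'\rangle$, and set
\[
B_1:=\chi(x_1)\Lambda^{2\ell},\qquad B_0:=\tfrac{h}{2i}\chi'(x_1)\Lambda^{2\ell}.
\]
Since $\Lambda^{2\ell}$ and $\chi$ commute, $B_1=B_1^*$, and the remaining consistency requirement in \eqref{eq:B0B1} reduces to $B_0-B_0^*=[hD_{x_1},B_1]=\tfrac{h}{i}\chi'\Lambda^{2\ell}$, which is satisfied by our choice of $B_0$.

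For the boundary terms in \eqref{eq:ibps} evaluated at $x_1=0$: since $\chi(0)=1$ and $\chi'(0)=0$, the term $-\langle B_1hD_{x_1}u,hD_{x_1}u\rangle_{L^2(\Gamma_i)}$ equals $-\|hD_{x_1}u\|^2_{H_h^\ell(\Gamma_i)}$, while $B_0|_{x_1=0}=0$ kills two further terms outright. The surviving contribution $-\langle B_1 Ru,u\rangle_{L^2(\Gamma_i)}$ has principal symbol $\langle\xi'\rangle^{2\ell}(1-|\xi'|_g^2)$, which is a tangential operator of order $2\ell+2$, so it is bounded by $C\|u\|^2_{H_h^{\ell+1}(\Gamma_i)}$. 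The remaining pieces are genuinely $O(h)$ and, after Cauchy--Schwarz and Young, produce $\tfrac{1}{2}\|hD_{x_1}u\|_{H_h^\ell(\Gamma_i)}^2+C\|u\|_{H_h^{\ell+1}(\Gamma_i)}^2$, with the first summand absorbed into the left-hand side for $h$ small enough.

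For the two interior terms in \eqref{eq:ibps}: since $[P,B]\in h\Psi^{2\ell+1}$, the operator $\tfrac{i}{h}[P,B]$ is semiclassically of order $2\ell+1$ and supported in $\supp\chi$, so
\[
\Big|\tfrac{i}{h}\big\langle[P,B]u,u\big\rangle_{L^2(M)}\Big|\leq C\|u\|^2_{H_h^{\ell+1/2}(M)}\leq C\|u\|^2_{H_h^{\ell+1}(M)}.
\]
For the source pairing, $Pu=hf$ together with self-adjointness of $\Lambda$ (and the fact that $\Lambda$ commutes with both $\chi(x_1)$ and $hD_{x_1}$) gives, modulo the $O(h)$ contribution from $B_0$,
\[
\big|\langle Bu,f\rangle_{L^2(M)}\big|
=\big|\langle\chi\Lambda^\ell hD_{x_1}u,\Lambda^\ell f\rangle_{L^2(M)}\big|
\leq \|\chi\, hD_{x_1}u\|_{H_h^\ell(M)}\,\|f\|_{H_h^\ell(M)}.
\]
Bounding $\|\chi\, hD_{x_1}u\|_{H_h^\ell(M)}$ by $C\|u\|_{H_h^{\ell+1}(M)}$ and applying weighted Young's inequality $ab\leq\e a^2+\e^{-1}b^2$ produces $\e\|u\|^2_{H_h^{\ell+1}(M)}+\e^{-1}\|f\|^2_{H_h^\ell(M)}$; the first summand is then split as $\e\|u\|^2_{H_h^1(M)}+\|u\|^2_{H_h^{\ell+1}(M)}$ using the trivial bound $\e\|u\|_{H_h^{\ell+1}}^2\leq \|u\|_{H_h^{\ell+1}}^2+\e^2\|u\|_{H_h^{\ell+1}}^2$ when $\ell\geq 0$ and $\|u\|_{H_h^{\ell+1}}\leq\|u\|_{H_h^1}$ when $\ell\leq 0$.

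Assembling, rearranging, and taking square roots yields the stated inequality. The main technical point is handling the commutators that appear when redistributing $\Lambda^\ell$ across the pairing $\langle Bu,f\rangle$ and when verifying that the lower-order boundary terms are indeed of size $h$ or bounded by $\|u\|^2_{H_h^{\ell+1}(\Gamma_i)}$; this is essentially bookkeeping in the semiclassical pseudodifferential calculus rather than a genuine obstacle. A secondary subtle point is the flexibility in the $\e$-dependence: the form $\e\|u\|_{H_h^1(M)}+\e^{-1}\|f\|_{H_h^\ell(M)}$ is the useful one for absorption in applications, and while Cauchy--Schwarz delivers this directly when $\ell\leq 0$, for $\ell\geq 0$ one has to recover it from the stronger bound $\|u\|_{H_h^{\ell+1}(M)}$ that is already present on the right-hand side.
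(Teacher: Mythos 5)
Your proposal is correct and follows essentially the same route as the paper: the same choice $B_1=\chi(x_1)\langle hD_{x'}\rangle^{2\ell}$, $B_0=\tfrac12 hD_{x_1}B_1$, the same application of Lemma \ref{lem:intByParts} with $B_0|_{x_1=0}=0$, and the same treatment of the boundary, commutator, and source terms. The one point worth being careful about is the estimate for $\tfrac{i}{h}\langle[P,B]u,u\rangle_{L^2(M)}$: the commutator lies in $h\Psi^{2\ell+2}$ (not $h\Psi^{2\ell+1}$) and contains $(hD_{x_1})^2$ and $hD_{x_1}$ terms, which can only be traded for tangential derivatives (or for $Pu$) because they carry a factor of $\chi'$ and are therefore supported strictly in the interior --- this is exactly where the paper invokes the elliptic parametrix in the interior of $M$, and with that observation your bound by $C\|u\|^2_{H_h^{\ell+1}(M)}$ is justified.
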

\begin{proof}
Let $\chi\in C_c^\infty((-2\delta,2\delta);[0,1])$ with $\chi\equiv 1$ on $[-\delta,\delta]$. Let 
$$
B_1(x,hD_{x'}):=\chi(x_1)\langle hD_{x'}\rangle^{2\ell}\quad\tand\quad B_0(x,hD_{x'}):=\frac{1}{2}hD_{x_1}B_1 = \frac{h}{2i}\chi'(x_1) \langle hD_{x'}\rangle^{2\ell}.
$$
Then \eqref{eq:B0B1} holds, and $B$ satisfies the assumption of Lemma~\ref{lem:intByParts}; since $B_0|_{x_1=0}=0$, \eqref{eq:ibps} implies that 
\begin{align}\nonumber
\frac{i}{h}\langle [P,B]u,u\rangle_{L^2(M)}+\frac{2}{h}\Im \langle Pu,Bu\rangle_{L^2(M)}&=\langle h(B_1 a_1 -\overline{a_1}B_1)hD_{x_1}u,u\rangle_{L^2(\Gamma_i)}\\
&\quad
+\langle B_1(R-ha_0)u,u\rangle_{L^2(\Gamma_i)}+\langle B_1 hD_{x_1}u,hD_{x_1}u\rangle_{L^2(\Gamma_i)}.\label{eq:ibps_end1}
\end{align}
Now, observe that 
$$
[P,B]=h(\widetilde{B}_2 (hD_{x_1})^2+\widetilde{B}_1hD_{x_1}+\widetilde{B}_0)
$$
where 
$$
\widetilde{B}_2\in C_c^\infty((\delta,2\delta);\Psi^{2\ell}(\Gamma_i)),\quad \widetilde{B}_1\in C_c^\infty((\delta,2\delta);\Psi^{2\ell+1}(\Gamma_i)), \quad\widetilde{B}_0\in C_c^\infty((-2\delta,2\delta);\Psi^{2\ell+2}(\Gamma_i)).
$$
In particular, using the elliptic parametrix construction in the interior of $M$, we have
$$
\big\|[P,B]u\big\|_{H_h^s(M)}\leq C_sh\Big(\|Pu\|_{H_h^{s+2\ell}(M)}+\|u\|_{H_h^{s+2\ell}(M)}\Big).
$$
Therefore, 
\begin{equation*}
\begin{aligned}
&\Big|\big\langle B_1(1-R)u,u\big\rangle_{L^2(\Gamma_i)}+\big\langle B_1 hD_{x_1}u,hD_{x_1}u\big\rangle_{L^2(\Gamma_i)} \Big|\\
&\qquad\leq C h\|u\|_{H_h^\ell(\Gamma_i)}^2 +C\|u\|_{H_h^\ell(M)}^2+C\e^{-1}\|f\|_{H_h^\ell}^2+\e\|u\|^2_{H_h^{1}(M)}.\end{aligned}
\end{equation*}
and hence
\begin{equation*}
\begin{aligned}
\|hD_{x_1}u\|_{H_h^\ell}^2 
&\leq C \|u\|_{H_h^{\ell+1}(\Gamma_i)}^2 +C\|u\|_{H_h^\ell(M)}^2+C\e^{-1}\|f\|_{H_h^\ell}^2+\e\|u\|^2_{H_h^{1}(M)}.\end{aligned}
\end{equation*}
\end{proof}

\begin{rem*}
When $\ell=0$ the bound in Lemma \ref{lem:basicbound} is valid for Lipschitz domains and goes back to Ne\v{c}as; see \cite[\S5.1.2]{Ne:67}, \cite[Theorem 4.24 (i)]{Mc:00}.
\end{rem*}

We now show a bound where $\bcD$ and $\bcN$ are both elliptic. 
\begin{lem}
\label{lem:ellipticEst}
Suppose that $\WF(E)\subset \Ell(\bcD)\cap \Ell(\bcN)$. Then for any $B'\in \Psi^0$ with 
$$
\WF(E)\cap \WF(\Id-B')=\emptyset,\qquad \WF(B')\subset \Ell(\bcN)\cap \Ell(\bcD) 
$$
there exist $C>0$ and $h_0>0$ such that for any $\e>0$, $0<h<h_0$,
\begin{align*}
&\|Eu\|_{H_h^{\ell+m_0}(\Gamma_i)}+\|EhD_{x_1}u\|_{H_h^{\ell+m_1}(\Gamma_i)}\\
&\qquad\leq C\Big(\|u\|_{H_h^{\frac{2\ell+m_1+m_0+1}{2}}(M)}+\|u\|_{L^2(M)}+\|f\|_{H_h^{\frac{2\ell+m_1+m_0-1}{2}}(M)}+\|f\|_{L^2(M)}\Big) \\
&\qquad\qquad+\e \Big(\|B'u\|_{H_h^{\ell+m_0}(\Gamma_i)}+ \|B'hD_{x_1}u\|_{H_h^{\ell+m_1}(\Gamma_i)}\Big)+C\e^{-1}\|B'g_i\|_{H_h^{\ell}(\Gamma_i)}\\
&\qquad\qquad+O\Big(h^\infty\big(\|u\|_{H_h^{-N}(\Gamma_i)}+\|hD_{x_1}u\|_{H_h^{-N}(\Gamma_i)}+\|g\|_{H_h^{-N}(\Gamma_i)}\big)\Big).\end{align*}
\end{lem}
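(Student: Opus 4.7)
The strategy is to exploit the fact that the boundary-value problem is microlocally elliptic on $\WF(E)$, in the strong sense that both $\bcD$ and $\bcN$ can be inverted microlocally there. First I would use the elliptic parametrix construction of Lemma \ref{l:ellip} on $\Gamma_i$ to obtain $F_N \in \Psi^{-m_1}(\Gamma_i)$ and $F_D \in \Psi^{-m_0}(\Gamma_i)$ such that $F_N \bcN = \widetilde{E} + R_N$ and $F_D \bcD = \widetilde{E} + R_D$ with $R_N, R_D = O(h^\infty)_{\Psi^{-\infty}}$ microlocally on $\WF(E)$, where $\widetilde{E} \in \Psi^0$ is chosen so that $\WF'(I-\widetilde{E}) \cap \WF(E) = \emptyset$ and $\WF'(\widetilde{E}), \WF'(F_N), \WF'(F_D) \subset \WF'(B')$.

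Second, I would apply $F_N$ and $F_D$ to the boundary condition $\bcN hD_{x_1}u - \bcD u = g_i$ to produce the two microlocal identities
\begin{equation*}
\widetilde{E}\, hD_{x_1}u = F_N g_i + F_N \bcD u \ \text{ mod } O(h^\infty),\qquad \widetilde{E}u = F_D \bcN hD_{x_1}u - F_D g_i \ \text{ mod } O(h^\infty),
\end{equation*}
each of which expresses one trace of $u$ in terms of the other trace and the boundary datum $g_i$. Composing with $E$, taking $H_h^{\ell+m_1}$ and $H_h^{\ell+m_0}$ norms, and using that $EF_N \in \Psi^{-m_1}$ and $EF_D \in \Psi^{-m_0}$ are microlocalized in $\WF'(B')$, the contribution of $g_i$ is immediately bounded by $\|B' g_i\|_{H_h^\ell}$. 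The remaining interior contributions $EF_N \bcD u$ and $EF_D \bcN hD_{x_1}u$ involve operators of order $m_0-m_1$ and $m_1-m_0$ respectively, so that each is bounded (via the standard semiclassical trace theorem and the interior elliptic estimate for $Pu = hf$) by an interior Sobolev norm of $u$ at the \emph{symmetric} order $\ell + (m_0+m_1+1)/2$, matching the right-hand side of the claim.

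Third, I would close the estimate by combining the two identities and absorbing the residual terms. Because each microlocal identity expresses one trace in terms of the other, iterating once produces a self-composition $F_N \bcD F_D \bcN \in \Psi^0$ which to leading order equals the identity on $\WF(E)$; the subprincipal contributions are then either absorbed into the $\e\|B' \cdot\|$ terms on the right-hand side (via Young's inequality, which also generates the $\e^{-1}\|B' g_i\|_{H_h^\ell}$ factor), or bounded by the interior norm of $u$ at order $\ell + (m_0+m_1+1)/2$, using Lemma \ref{lem:basicbound} to control any residual normal derivative traces appearing in the error terms. The $O(h^\infty)$ terms on the very last line of the statement arise from the parametrix residuals $R_N, R_D$ and from the localization errors $I - \widetilde{E}$.

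\textbf{Main obstacle.} The subtle point is ensuring that the final interior norm appears at the symmetric order $\ell + \tfrac{m_0+m_1+1}{2}$ rather than at the asymmetric orders $\ell+m_0+\tfrac{1}{2}$ or $\ell+m_1+\tfrac{1}{2}$ that one would naively expect from the trace theorem applied directly to each trace. This symmetric gain is only available because both $\bcN$ and $\bcD$ are simultaneously elliptic on $\WF(E)$, so that the boundary operator $\bcN hD_{x_1} - \bcD$ behaves, microlocally, like an elliptic operator of "balanced" order $(m_0+m_1)/2$ acting jointly on $(u, hD_{x_1}u)$; I expect the bookkeeping of Sobolev orders through the parametrix composition, and the verification that the $\e$-absorption genuinely closes (rather than producing a circular bound), to be the most delicate parts of the argument.
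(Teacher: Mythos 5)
There is a genuine gap: your argument never actually connects the boundary traces to the interior. The two microlocal identities you derive by applying $F_N$ and $F_D$ to the boundary condition are both rearrangements of the \emph{single} equation $\bcN hD_{x_1}u-\bcD u=g_i$, so each expresses one trace in terms of the other, and composing them ("iterating once produces $F_N\bcD F_D\bcN=\Id$ to leading order") returns a tautology $\widetilde{E}u=\widetilde{E}u+O(g_i)$ rather than an estimate. No amount of $\e$-absorption can close this, because at no point has either trace been bounded by anything other than the other trace. The "standard semiclassical trace theorem and interior elliptic estimate" cannot fill the hole: the trace theorem loses a factor $h^{-1/2}$, and $P$ is characteristic on the part of $\WF(E)$ lying over $S^*M$, so $u$ is not microlocally determined near $\Gamma_i$ by $\|u\|_{L^2(M)}$ and $f$ via interior ellipticity. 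The whole point of the lemma is to avoid exactly these losses.

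The missing ingredient is a Rellich-type commutator identity. The paper's proof takes a self-adjoint tangential operator $B_0\in\Psi^{m_1-m_0+2\ell}(\Gamma_i)$ microsupported in $\WF(E)$ (and $B_1=0$) and applies Lemma \ref{lem:intByParts}: pairing $[P,B_0]u$ with $u$ and integrating by parts produces the boundary cross-term $\langle B_0 hD_{x_1}u,u\rangle+\langle B_0u,hD_{x_1}u\rangle$ controlled by interior Sobolev norms of $u$ and $f$ at the symmetric order $\frac{2\ell+m_0+m_1+1}{2}$ (this is where that order comes from — it is the splitting of the order of $[P,B_0]$ between two copies of $u$, not a "balanced-order" heuristic about the boundary operator). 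Only \emph{then} is the boundary condition used, to substitute $hD_{x_1}u=\bcN^{-1}(\bcD u+g_i)$ and convert the cross-term into the quadratic form $\langle[(\bcN^{-1}\bcD)^*B_0+B_0\bcN^{-1}\bcD]u,u\rangle$; since $\sigma(\bcN)$ and $\sigma(\bcD)$ are \emph{real} and both elliptic on $\WF(E)$, this form has a real elliptic principal symbol of a definite sign there, and the sharp G\aa rding inequality yields the lower bound $\gtrsim\|Eu\|^2$. Your proposal never invokes the reality of the principal symbols, which is a further sign that the essential mechanism is absent: if $\sigma(\bcN^{-1}\bcD)$ were imaginary the quadratic form would degenerate and the conclusion would fail. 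The recovery of $\|EhD_{x_1}u\|$ from $\|E'u\|$ via $\bcN^{-1}$ at the end is the one step your proposal has in common with the paper.
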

\begin{proof}
Let $B_0\in \Psi^{\ell_0}(\Gamma_i)$ self-adjoint with $\WF(b_0(x',hD_{x'}))\subset \WF(E)$. Let $B'\in \Psi^0(\Gamma_i)$ with 
$$ 
\WF(E)\subset \Ell(B')\subset \WF(B')\subset \Ell(\bcN)\cap \Ell(\bcD).
$$
We can assume without loss of generality that $B'$ is microlocally the identity in a neighbourhood of $\WF(E)$. 
Next, let $B_1=0$ and $\bcN^{-1}$ and $\bcD^{-1}$ denote microlocal inverses for $\bcN$ and $\bcD$ on $\WF(B')$. Then, by Lemma \ref{lem:intByParts}, 
\begin{align*}
\Big|\langle B_0hD_{x_1}u,u\rangle_{L^2(\Gamma_i)}+\langle h\overline{a_1}B_0u,u\rangle_{L^2(\Gamma_i)}&+\langle B_0u,hD_{x_1}u\rangle_{L^2(\Gamma_i)}\Big|\\
&\leq |2\langle f,Bu\rangle_{L^2(M)}|+h^{-1}|\langle [P,B]u,u\rangle_{L^2(M)}|
\end{align*}
First, note that 
$$
[P,B]= h(\widetilde{B}_1hD_{x_1}+\widetilde{B}_2)
$$
where 
$$
\widetilde{B}_1\in C_c^\infty((\delta,2\delta); \Psi^{\ell_0}(\Gamma_i)),\qquad \widetilde{B}_2\in C_c^\infty((-2\delta,2\delta); \Psi^{\ell_0+1}(\Gamma_i)),
$$
In particular, by the standard elliptic parametrix construction, for all $s\in \mathbb{R}$,
$$
\|[P,B]\|_{H_h^s(M)}\leq C_sh\Big(\|Pu\|_{H_h^{s+\ell_0-1}(M)}+\|u\|_{H_h^{s+\ell_0+1}(M)}+\|u\|_{L^2(M)}\Big).
$$
Therefore, 
\begin{align*}
&\Big|\langle B_0hD_{x_1}u,u\rangle_{L^2(\Gamma_i)}+\langle h\overline{a_1}B_0u,u\rangle_{L^2(\Gamma_i)}+\langle B_0u,hD_{x_1}u\rangle_{L^2(\Gamma_i)}\Big|\\
&\leq C\Big(\|f\|_{H_h^{\frac{\ell_0-1}{2}}(M)}+\|u\|_{H_h^{\frac{\ell_0+1}{2}}(M)}+\|u\|_{L^2(M)}+\|f\|_{L^2(M)}\Big)\Big(\|u\|_{H_h^{\frac{\ell_0+1}{2}}(M)}+\|u\|_{L^2(M)}\Big).
\end{align*}
Now, using~(\ref{eq:genBC}),
$$
\langle B_0hD_{x_1}u,u\rangle_{L^2(\Gamma_i)}=\langle B_0 \bcN^{-1}(\bcD u+g_i),u\rangle_{L^2(\Gamma_i)}+O\Big(h^\infty\big(\|u\|^2_{H_h^{-N}(\Gamma_i)}+\|hD_{x_1}u\|_{H_h^{-N}(\Gamma_i)}^2\big)\Big)
$$
and
$$
\langle B_0u,hD_{x_1}u\rangle_{L^2(\Gamma_i)}=\langle B_0u, \bcN^{-1}(\bcD u+g_i)\rangle_{L^2(\Gamma_i)}+O\Big(h^\infty\big(\|u\|^2_{H_h^{-N}(\Gamma_i)}+\|hD_{x_1}u\|_{H_h^{-N}(\Gamma_i)}^2\big)\Big).
$$
In particular, letting $B'\in\Psi^0$ with $\WF(B_0)\subset \Ell(B')$,
\begin{align*}
&\Big|\Big\langle[ (\bcN^{-1}\bcD )^*B_0+B_0(\bcN^{-1}\bcD )] u,  u\Big\rangle_{L^2(\Gamma_i)}\Big|\\
& \qquad\leq C\Big(\|f\|_{H_h^{\frac{\ell_0-1}{2}}(M)}+\|f\|_{L^2(M)}+\|u\|_{H_h^{\frac{\ell_0+1}{2}}(M)}+\|u\|_{L^2(M)}\Big)\Big(\|u\|_{H_h^{\frac{\ell_0+1}{2}}(M)}+\|u\|_{L^2(M)}\Big)\\
&\qquad\qquad +O(h)\|B'u\|_{H_h^{\frac{\ell_0}{2}}(\Gamma_i)}^2 +\e \|B'u\|_{H_h^{\frac{m_0-m_1+\ell_0}{2}}(\Gamma_i)}^2+C\e^{-1}\|B'g_i\|_{H_h^{\frac{\ell_0-m_1-m_0}{2}}(\Gamma_i)}^2\\
&\qquad\qquad+O\Big(h^\infty\big(\|u\|^2_{H_h^{-N}(\Gamma_i)}+\|hD_{x_1}u\|_{H_h^{-N}(\Gamma_i)}^2+\|g\|_{H_h^{-N}(\Gamma_i)}^2\big)\Big).
\end{align*}
Now, choose $b_0(x',hD_{x'})\in \Psi^{m_1-m_0+2\ell}$ self adjoint (i.e. $\ell_0=m_1-m_0+2\ell$) such that $B_0$ is elliptic on $\WF(E)$.
Then, since $\bcD $ and $\bcN$ have real-valued symbols and $-\bcN^{-1}\bcD $ is elliptic on $\WF(E)$, 
\begin{align*}
- \Re \langle B_0 \bcN^{-1}\bcD u,u\rangle \geq C\|E u\|^2_{H_h^\ell(\Gamma_i)}- Ch\|B'u\|^2_{H_h^{\frac{\ell-1}{2}}(\Gamma_i)}-O(h^\infty)\|u\|_{H_h^{-N}(M)}^2,
\end{align*}
and
\begin{equation}
\label{eq:squids}
\begin{aligned}
\|Eu\|_{H_h^{\ell}}^2& \leq C\Big(\|f\|_{H_h^{\frac{m_1-m_0+2\ell-1}{2}}(M)}+\|f\|_{L^2(M)}+\|u\|_{H_h^{\frac{m_1-m_0+2\ell+1}{2}}(M)}+\|u\|_{L^2(M)}\Big)\\
&\hspace{5cm}
\times\Big(\|u\|_{H_h^{\frac{m_1-m_0+2\ell+1}{2}}(M)}+\|u\|_{L^2(M)}\Big)\\ 
&\quad+O(h)\|B'u\|_{H_h^{\frac{m_1-m_0+2\ell}{2}}(\Gamma_i)}^2 +\e \|B'u\|_{H_h^{\ell}(\Gamma_i)}^2+C\e^{-1}\|B'g_i\|_{H_h^{k-m_0}(\Gamma_i)}^2\\
&\qquad\qquad+O\Big(h^\infty\big(\|u\|^2_{H_h^{-N}(\Gamma_i)}+\|hD_{x_1}u\|_{H_h^{-N}(\Gamma_i)}^2+\|g\|_{H_h^{-N}(\Gamma_i)}^2\big)\Big).
\end{aligned}
\end{equation}

Let $E'\in \Psi^0$ with 
$$
\WF(E)\cap \WF(\Id-E')=\emptyset,\qquad  \WF(E')\subset \Ell(B')\cap \Ell(\bcN)\cap \Ell(\bcD).
$$ 
By~(\ref{eq:squids}),
\begin{align*}
\|E'u\|_{H_h^{\ell}(\Gamma_i)}^2& \leq C\Big(\|u\|_{H_h^{\frac{m_1-m_0+2\ell+1}{2}}(M)}^2+\|u\|_{L^2(M)}^2+C(\|f\|_{H_h^{\frac{m_1-m_0+2\ell-1}{2}}(M)}^2 +\|f\|_{L^2(M)}^2\Big) \\
&\qquad\qquad+O(h)\|B'u\|_{H_h^{\frac{2\ell-m_0+m_1}{2}}(\Gamma_i)}^2+\e \|B'u\|_{H_h^{\ell}(\Gamma_i)}^2+C\e^{-1}\|B'g_i\|_{H_h^{k-m_0}(\Gamma_i)}^2 \\
&\qquad\qquad+O\Big(h^\infty\big(\|u\|^2_{H_h^{-N}(\Gamma_i)}+\|hD_{x_1}u\|_{H_h^{-N}(\Gamma_i)}^2+\|g\|_{H_h^{-N}(\Gamma_i)}^2\big)\Big).
\end{align*}
Let $\bcN^{-1}$ denote a microlocal inverse for $\bcN$ on $\WF(B')$. Then,  
$$
E hD_{x_1}u= E \big(\bcN^{-1}(-\bcD E' u+B'g_i)\big)+O\big(h^\infty\|u\|_{H_{h}^{-N}(\Gamma_i)}\big)_{H_h^N},
$$
so
$$
\|EhD_{x_1}u\|_{H_h^\ell(\Gamma_i)}\leq C \|E'u\|_{H_h^{\ell+m_0-m_1}}+\|B'g_i\|_{H_h^{k-m_1}}+O\big(h^\infty\|u\|_{H_{h}^{-N}(\Gamma_i)}\big).
$$
In particular, 
\begin{align*}
\|EhD_{x_1}u\|_{H_h^{\ell}}^2& \leq C\Big(\|u\|_{H_h^{\frac{m_0-m_1+2\ell+1}{2}}(M)}^2+\|u\|_{L^2(M)}^2+\|f\|_{H_h^{\frac{m_0-m_1+2\ell-1}{2}}(M)}^2 +\|f\|_{L^2(M)}^2\Big) \\
&\qquad\qquad+O(h)\|B'u\|_{H_h^{\frac{2\ell+m_0-m_1}{2}}(\Gamma_i)}^2+\e \|B'hD_{x_1}u\|_{H_h^{\ell}(\Gamma_i)}^2+C\e^{-1}\|B'g_i\|_{H_h^{k-m_1}(\Gamma_i)}^2 \\
&\qquad\qquad+O\Big(h^\infty\big(\|u\|^2_{H_h^{-N}(\Gamma_i)}+\|hD_{x_1}u\|_{H_h^{-N}(\Gamma_i)}^2+\|g\|_{H_h^{-N}(\Gamma_i)}^2\big)\Big).
\end{align*}
\end{proof}

We now consider $\bcD$ small and $\bcN$ elliptic:
\begin{lem}
\label{lem:elephant1}
Let $K\Subset T^{*}\Gamma_i$. Then for all $\eta>0$ there is $\delta_0>0$ and $C>0$ such that for all $0<\delta <\delta_0$, $E\in \Psi^{0}$ with
\begin{equation}
\label{eq:saphire}\WF(E)\subset K\cap \Ell(\bcN)\cap \{|\sigma (\bcD)|< \delta \langle \xi\rangle^{m_0}\}\cap\{ |R(x',\xi')|>\eta\},
\end{equation}
and $B'\in \Psi^{0}$ with 
$$
\WF(E)\cap \WF(\Id-B')=\emptyset,\qquad  \WF(B')\subset \Ell(\bcN)\cap \{|\sigma (\bcD)|< \delta \langle \xi\rangle^{m_0}\},
$$
there is $h_0>0$ small enough such that for all $0<h<h_0$ and $0<\e<1$
\begin{equation}
\label{e:gook}
\begin{aligned}
&\|Eu\|_{H_h^{\ell+m_0}(\Gamma_i)}+\|EhD_{x_1}u\|_{H_h^{\ell+m_1}(\Gamma_i)}\\  &\leq C (\e+h)\|B'u\|_{H_h^{\ell+m_0}(\Gamma_i)}+ C(\e^{-1}+1)\|B'g\|_{H_h^\ell(\Gamma_i)}  \\
&\qquad+C\|u\|_{H_h^{\ell+\frac{m_1+m_0+1}{2}}(M)}+C\e^{-1}\Big(\|f\|_{H_h^{\ell+\frac{m_1+m_0-1}{2}}(M)}+\|f\|_{L^2(M)}\Big)+C\e \|u\|_{H_h^1(M)}\\
&\qquad+O\Big(h^\infty\big(\|u\|_{H_h^{-N}(\Gamma_i)}+\|hD_{x_1}u\|_{H_h^{-N}(\Gamma_i)}+\|g\|_{H_h^{-N}(\Gamma_i)}\big)\Big).
\end{aligned}
\end{equation}
Moreover, if $ m_0\leq m_1+1$~\eqref{e:gook} holds with $K=T^{*}\Gamma_i$
\end{lem}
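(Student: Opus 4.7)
The strategy is to combine two ingredients: (i) the microlocal invertibility of $\bcN$, which converts the boundary condition into an expression for $hD_{x_1}u$ in terms of $u$ (with a coefficient of size $\delta$) and $g_i$; and (ii) a Rellich-type identity from Lemma~\ref{lem:intByParts} that, away from the glancing set $\{R=0\}$, produces a coercive boundary quadratic form in $u$.

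First, because $\WF(B')\subset\Ell(\bcN)$, I would let $\bcN^{-1}$ denote a microlocal parametrix for $\bcN$ on $\WF(B')$. The boundary condition in \eqref{eq:genBC} gives
$$hD_{x_1}u = \bcN^{-1}\bcD u + \bcN^{-1}g_i + O(h^\infty\|u\|_{H_h^{-N}(\Gamma_i)})_{H_h^N(\Gamma_i)}$$
microlocally. Since $|\sigma(\bcD)|<\delta\langle\xi\rangle^{m_0}$ on $\WF(B')$, the symbol of $\bcN^{-1}\bcD$ is bounded by $C\delta\langle\xi\rangle^{m_0-m_1}$, so
$$\|EhD_{x_1}u\|_{H_h^{\ell+m_1}(\Gamma_i)}\leq C\delta\|B'u\|_{H_h^{\ell+m_0}(\Gamma_i)}+C\|B'g_i\|_{H_h^\ell(\Gamma_i)}+O(h^\infty)(\cdots),$$
handling the Neumann-trace half of the left-hand side of \eqref{e:gook}.

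For the bound on $\|Eu\|_{H_h^{\ell+m_0}(\Gamma_i)}$, I apply Lemma~\ref{lem:intByParts} with $B=B_1hD_{x_1}$ and $B_0=0$, where $B_1$ is self-adjoint, independent of $x_1$, cut off to a neighbourhood of the boundary, and has principal symbol
$$\sigma(B_1)(x',\xi')=\operatorname{sgn}\bigl(R(x',\xi')\bigr)\,\frac{|\sigma(E_0)|^2(x',\xi')}{R(x',\xi')}$$
on $\WF(E)$, where $E_0\in\Psi^{\ell+m_0}$ is elliptic on $\WF(E)$ with $\WF(E_0)\subset\WF(B')\cap\{|R|>\eta\}$; this symbol is smooth because $|R|\geq\eta>0$ on $\WF(E_0)$. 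Then $\sigma(B_1R)=|\sigma(E_0)|^2$ is non-negative and elliptic on $\WF(E)$, so sharp G\aa rding gives $|\langle B_1Ru,u\rangle_{\Gamma_i}|\geq c\|Eu\|_{H_h^{\ell+m_0}(\Gamma_i)}^2-O(h)\|B'u\|^2$. The identity \eqref{eq:ibps} rearranged then yields
$$\|Eu\|^2_{H_h^{\ell+m_0}(\Gamma_i)}\,\leq\,C\bigl|\langle B_1hD_{x_1}u,hD_{x_1}u\rangle_{\Gamma_i}\bigr|+C(\text{interior terms})+Ch\|B'u\|^2,$$
the sign of $B_1$ (positive or negative depending on that of $R$) conspiring with the sign of $B_1R$ in exactly the right way for both the elliptic and hyperbolic regions. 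Substituting the bound on $\|EhD_{x_1}u\|$ from the first step, together with a routine estimate of the interior terms $h^{-1}\langle[P,B]u,u\rangle_{L^2(M)}$ and $h^{-1}\langle Bu,Pu\rangle_{L^2(M)}$ via the elliptic parametrix in the interior (as in the proof of Lemma~\ref{lem:ellipticEst}) and Young's inequality applied to $Pu=hf$ (which produces the $\epsilon^{-1}\|f\|_{H_h^{\ell+(m_0+m_1-1)/2}(M)}$ and absorbable $\epsilon\|u\|_{H_h^1(M)}$ terms), gives the stated bound \eqref{e:gook}.

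The main obstacle will be controlling the lower-order boundary contributions from Lemma~\ref{lem:intByParts} (those involving $h\overline{a_1}B_1hD_{x_1}$, $h(D_{x_1}B_1)hD_{x_1}$, and $B_1(R-ha_0)u$): at first glance these have the same tangential order as the principal terms $\|Eu\|^2$ and $\|EhD_{x_1}u\|^2$, though with an explicit factor of $h$, so absorbing them requires re-expressing the Neumann traces via the boundary condition once more and applying sharp G\aa rding again; the overall gain of $O(h)$ matches the $(h+\epsilon)\|B'u\|$ term in \eqref{e:gook}. Finally, for the \textit{Moreover} clause ($K=T^*\Gamma_i$ under $m_0\leq m_1+1$), the order restriction makes $\bcN^{-1}\bcD$ an operator of order at most $1$, so at semiclassical infinity in $\xi'$ one can bound $hD_{x_1}u$ by one tangential derivative of $u$ via Lemma~\ref{lem:basicbound}, removing the requirement that $\WF(E)$ remain in a compact subset of $T^*\Gamma_i$.
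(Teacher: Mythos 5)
Your overall route is the paper's: invert $\bcN$ microlocally on $\WF(B')$ to rewrite the boundary condition, apply the commutator identity of Lemma~\ref{lem:intByParts} with $B=B_0+B_1hD_{x_1}$, and use sharp G\aa rding away from glancing to extract $\|Eu\|^2_{H_h^{\ell+m_0}(\Gamma_i)}$ from the boundary quadratic form, with the interior pairings handled by the elliptic parametrix and Young's inequality. Two steps, however, do not work as written. First, your displayed symbol $\sigma(B_1)=\operatorname{sgn}(R)\,|\sigma(E_0)|^2/R=|\sigma(E_0)|^2/|R|$ gives $\sigma(B_1R)=\operatorname{sgn}(R)\,|\sigma(E_0)|^2$, which is sign-\emph{indefinite} on $\WF(E)$ (positive on the hyperbolic part, negative on the elliptic part), so sharp G\aa rding yields nothing globally; you must either drop the $\operatorname{sgn}(R)$ factor (so that $B_1$ itself changes sign with $R$, as your parenthetical suggests) or split $E$ into pieces microsupported in $\{R>\eta\}$ and $\{R<-\eta\}$. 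Relatedly, $B_0=0$ violates \eqref{eq:B0B1} once $B_1$ carries the $x_1$-cutoff; the correct choice is $B_0=\tfrac12 hD_{x_1}B_1$, which is harmless because it vanishes on $\Gamma_i$.

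Second, and more substantively, estimating $\langle B_1hD_{x_1}u,hD_{x_1}u\rangle_{\Gamma_i}$ by first bounding $\|EhD_{x_1}u\|$ through the $\delta$-smallness of $\bcN^{-1}\bcD$ leaves a residual term $C\delta\,\|B'u\|_{H_h^{\ell+m_0}(\Gamma_i)}$ on the right-hand side, which is \emph{not} dominated by the $C(\e+h)\|B'u\|_{H_h^{\ell+m_0}(\Gamma_i)}$ appearing in \eqref{e:gook} (the constant $C$ and $\delta$ are fixed before $\e$ and $h$ are sent to zero). The paper avoids this by never separating the two boundary terms: after substituting $hD_{x_1}u=\bcN^{-1}(\bcD B'u+B'g)$ one applies G\aa rding to the \emph{combined} operator $B_1R+(\bcN^{-1}\bcD)^*B_1\bcN^{-1}\bcD$, whose symbol $\sigma(B_1)\big(R+|\sigma(\bcN^{-1}\bcD)|^2\big)$ is made elliptic and non-negative of order $2(\ell+m_0)$ by choosing $\sigma(B_1)$ proportional to $\big(R+|\sigma(\bcN^{-1}\bcD)|^2\big)^{-1}$; the ellipticity of $\bcD$ on $S^*\Gamma_i$ and the smallness of $\delta_0$ guarantee this denominator is bounded away from zero on $\{|\sigma(\bcD)|<\delta_0\langle\xi\rangle^{m_0}\}$. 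The Neumann half of the left-hand side is then recovered at the end from the Dirichlet bound via $EhD_{x_1}u=E\bcN^{-1}(\bcD E'u+B'g)$, rather than directly with a $\delta$ prefactor. Finally, your justification of the \emph{Moreover} clause via Lemma~\ref{lem:basicbound} introduces $\|u\|_{H_h^{\ell+m_1+1}(\Gamma_i)}$, which exceeds $\|u\|_{H_h^{\ell+m_0}(\Gamma_i)}$ unless $m_0=m_1+1$; the correct mechanism is simply that when $m_0\leq m_1+1$ one has $|\sigma(\bcN^{-1}\bcD)|^2\leq C\delta_0^2\langle\xi\rangle^2$, so $\langle\xi\rangle^{-2}\big(R+|\sigma(\bcN^{-1}\bcD)|^2\big)$ stays bounded away from zero at fiber infinity and $b_1$ may be taken in $\Psi^{2(\ell+m_0-1)}$ globally rather than compactly supported.
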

\begin{proof}
Throughout the proof, we take $b_1(x',hD_{x'})$ self-adjoint with $b_1\in \Psi^{2(k+m_0-1)}$  if $m_0\leq m_1+1$ and $b_1\in \Psi^{\comp}$ otherwise. We assume that
$$
\WF(E)\subset \Ell(b_1(x',hD_{x'}))\subset \WF(b_1(x',hD_{x'}))\subset \Ell(\bcN)\cap \{|\sigma (\bcD)|< \delta \langle \xi\rangle^{m_0}\},
$$
As in Lemma \ref{lem:basicbound}, let $\chi\in C_c^\infty((-2\delta,2\delta);[0,1])$ with $\chi\equiv 1$ on $[-\delta,\delta]$.
Let
\beq\label{eq:choiceofb}
B_1(x,hD_{x'}):=\chi(x_1)b_1(x',hD_{x'}) \quad\tand\quad B_0(x',hD_{x'}):=\frac{1}{2}hD_{x_1}B_1.
\eeq
Then \eqref{eq:B0B1} holds, and $B$ satisfies the assumption of Lemma~\ref{lem:intByParts}; since $B_0|_{x_1=0}=0$, \eqref{eq:ibps} implies that \eqref{eq:ibps_end1} holds.

Since $\bcN$ is elliptic on $\WF B'$, there  exists $\bcN^{-1}\in \Psi^{-m_1}$ a microlocal inverse for $\bcN$ on $\WF(B')$; that is, for any $\widetilde{B}$ with $\WF(\widetilde{B})\subset\{B'\equiv \Id\}$,
\begin{equation}
\label{eq:isolate}
\widetilde{B}hD_{x_1}u= \widetilde{B}\bcN^{-1}(\bcD B'u+B'g) +O(h^\infty)_{\Psi^{-\infty}}g+O(h^\infty)_{\Psi^{-\infty}}u+O(h^\infty)_{\Psi^{-\infty}}hD_{x_1}u
\end{equation}
and hence, using the fact that we are working with compactly microlocalized operators on $\Gamma$ to see that all $H_h^s(\Gamma_i)$ norms are equivalent up to $h^\infty$ remainders, we have
\begin{equation}
\label{eq:aardvark}
\begin{aligned}
&\Big|\langle B_1Ru,u\rangle_{L^2(\Gamma_i)}+\langle B_1 hD_{x_1}u,hD_{x_1}u\rangle_{L^2(\Gamma_i)} \Big|\\
&\qquad\leq C h\|B'u\|_{H_h^{\ell+m_0}(\Gamma_i)}^2+ Ch\|B'g\|_{H_h^{\ell}(\Gamma_i)}^2 + \Big|ih^{-1}\langle [P,B]u,u\rangle_{L^2(M)}+2\Im \langle f,Bu\rangle_{L^2(M)}\Big|\\
&\qquad+O\Big(h^\infty\big(\|u\|^2_{H_h^{-N}(\Gamma_i)}+\|hD_{x_1}u\|^2_{H_h^{-N}(\Gamma_i)}+\|g\|^2_{H_h^{-N}(\Gamma_i)}\big)\Big).\end{aligned}
\end{equation}
Now, observe that 
$$
[P,B]=h(\widetilde{B}_2 (hD_{x_1})^2+\widetilde{B}_1hD_{x_1}+\widetilde{B}_0)
$$
where 
\begin{gather*}
\widetilde{B}_2\in C_c^\infty((\delta,2\delta);\Psi^{2(k+m_0-1)}(\Gamma_i)),\quad \widetilde{B}_1\in C_c^\infty((\delta,2\delta);\Psi^{2(k+m_0)-1}(\Gamma_i)), \\
\widetilde{B}_0\in C_c^\infty((-2\delta,2\delta);\Psi^{2(k+m_0)}(\Gamma_i)).
\end{gather*}
In particular, using the elliptic parametrix construction as before, we have
$$
\|[P,B]u\|_{H_h^s(M)}\leq C_sh(\|Pu\|_{H_h^{s+2(k+m_0)}(M)}+\|u\|_{H_h^{s+2(k+m_0)}(M)}+\|u\|_{L^2(M)}),
$$
so by~\eqref{eq:isolate} and ~\eqref{eq:aardvark},
\begin{equation}
\label{eq:aardvark2}
\begin{aligned}
&\Big|\langle B_1Ru,u\rangle_{L^2(\Gamma_i)}+\langle B_1 \bcN^{-1}\bcD u,\bcN^{-1}\bcD u\rangle_{L^2(\Gamma_i)} \Big|\\
&\leq C (\e+h)\|B'u\|_{H_h^{\ell+m_0}(\Gamma_i)}^2+ C(\e^{-1}+1)\|B'g\|_{H_h^\ell(\Gamma_i)}^2  \\
&\qquad+C\|u\|_{H_h^{\ell+\frac{m_1+m_0+1}{2}}(M)}^2+C\e^{-1}(\|f\|_{H_h^{\ell+\frac{m_1+m_0-1}{2}}(M)}^2+\|f\|_{L^2(M)}^2)+C\e \|u\|^2_{H_h^1(M)}\\
&\qquad+O\Big(h^\infty\big(\|u\|^2_{H_h^{-N}(\Gamma_i)}+\|hD_{x_1}u\|^2_{H_h^{-N}(\Gamma_i)}+\|g\|^2_{H_h^{-N}(\Gamma_i)}\big)\Big).\end{aligned}
\end{equation}

If $m_0>m_1+1$, we assume that $b_1\in S^{\comp}$. Therefore, for all $(m_0,m_1)$
$$
B_1R+(\bcN^{-1}\bcD)^*B_1\bcN^{-1}\bcD\in \Psi^{2(\ell+m_0)}.
$$
for our choice of $B_1$. Next, since $\bcD$ is elliptic on $\{R=0\}$, for any $K\subset T^{*}\Gamma_i$ compact, there exists $\delta_0>0$ small enough such that 
$$
\inf\Bigg\{\langle \xi'\rangle^{-2}\Big||\sigma(\bcN^{-1}\bcD)(x',\xi')|^2+R(x',\xi')\Big|\, \text{ where } \, |\sigma(\bcD)(x',\xi')|\leq \delta_0\langle \xi'\rangle^{m_0},\, (x',\xi')\in K\Bigg\}\geq c_K>0
$$ 
Moreover, if $m_0\leq m_1+1$, then there is $\delta_0>0$ small enough such that
$$
\inf\Bigg\{\langle \xi'\rangle^{-2}\Big||\sigma(\bcN^{-1}\bcD)(x',\xi')|^2+R(x,\xi)\Big|\,\text{ where } \, |\sigma(\bcD)(x',\xi')|\leq \delta_0\langle \xi'\rangle^{m_0},\, (x',\xi')\in T^{*}\Gamma_i\Bigg\}\geq c>0.
$$ 
In particular, since $R$ is real-valued, there is $B_1\in \Psi^{2(k+m_0-1)}$ self adjoint, elliptic on $\WF(E)$, such that 
\begin{gather*}
\sigma(B_1R+(\bcN^{-1}\bcD)^*B_1\bcN^{-1}\bcD)(x',\xi')\geq c\langle \xi'\rangle^{2(k+m_0)},\qquad (x'
,\xi')\in \WF(E).
\end{gather*} 
 In particular, then the sharp G\aa rding inequality \cite[Theorem 9.11]{Zworski_semi} gives
$$
\|Eu\|_{H_h^{\ell+m_0}(\Gamma_i)}^2\leq C\big\langle\big( B_1R+(\bcN^{-1}\bcD\big)^*B_1\bcN^{-1}\bcD)u,u\big\rangle_{L^2(\Gamma_i)}+Ch\|B'u\|_{H_h^{\ell+m_0-\frac{1}{2}}}^2+O(h^\infty)\|u\|^2_{H_h^{-N}(\Gamma_i)},
$$
and we obtain from~\eqref{eq:aardvark2},
\begin{align*}
\|Eu\|_{H_h^{\ell+m_0}(\Gamma_i)}^2&\leq C (\e+h)\|B'u\|_{H_h^{\ell+m_0}(\Gamma_i)}^2+ C(\e^{-1}+1)\|B'g\|_{H_h^\ell(\Gamma_i)}^2  \\
&\quad+C\|u\|_{H_h^{\ell+\frac{m_1+m_0+1}{2}}(M)}^2+C\e^{-1}\Big(\|f\|_{H_h^{\ell+\frac{m_1+m_0-1}{2}}(M)}^2+\|f\|_{L^2(M)}^2\Big)+C\e \|u\|^2_{H_h^1(M)}\\
&\quad+O\Big(h^\infty\big(\|u\|^2_{H_h^{-N}(\Gamma_i)}+\|hD_{x_1}u\|^2_{H_h^{-N}(\Gamma_i)}+\|g\|^2_{H_h^{-N}(\Gamma_i)}\big)\Big).\end{align*}
Next, we write, as above,
$$
EhD_{x_1}u= E\bcN^{-1}(\bcD E'u+E'g)+O(h^\infty)_{\Psi^{-\infty}}g+O(h^\infty)_{\Psi^{-\infty}}u+O(h^\infty)_{\Psi^{-\infty}}hD_{x_1}u
$$
to obtain
\begin{align*}
\|EhD_{x_1}u\|_{H_h^{\ell+m_1}(\Gamma_i)}^2 &\leq  C\|E'u\|_{H_h^{\ell+m_0}(\Gamma_i)}+C\|E'g\|_{H_h^\ell(\Gamma_i)}\\
&\qquad+O(h^\infty(\|u\|_{H_h^{-N}(\Gamma_i)}+\|hD_{x_1}u\|_{H_h^{-N}(\Gamma_i)}+\|g\|_{H_h^{-N}(\Gamma_i)}),
\end{align*}
and this finishes the proof.
\end{proof}

Finally, we consider the case $\bcD$ elliptic and $\bcN$ small.
\begin{lem}
\label{lem:elephant2}
For all $K\Subset T^{*}\Gamma_i$, there is $\delta_0>0$ and $C>0$ such that for all $0<\delta <\delta_0$, $E\in \Psi^0$ with
\begin{equation}
\label{eq:saphire2}\WF(E)\subset  K\cap \Ell(\bcD)\cap \{|\sigma (\bcN)|< \delta \langle \xi\rangle^{m_1}\},
\end{equation}
and $B'\in \Psi^{0}$ with 
$$
\WF(E)\cap\WF(I-B')=\emptyset,\qquad \WF(B')\subset \Ell(\bcD)\cap \{|\sigma (\bcN)|< \delta \langle \xi\rangle^{m_1}\},
$$
there is $h_0>0$ small enough such that have for $0<h<h_0$ and $0<\e<1$,
\begin{equation}
\label{e:gobble}
\begin{aligned}
&\|E hD_{x_1}u\|_{H_h^{\ell+m_1}(\Gamma_i)}+\|Eu\|_{H_h^{\ell+m_0}(\Gamma_i)}\\
&\leq  C \e\|B'hD_{x_1}\|_{H_h^{\ell+m_1}(\Gamma_i)}+ C\e^{-1}\|B'g\|_{H_h^\ell(\Gamma_i)}  \\
&\qquad +C\|u\|_{H_h^{\ell+\frac{m_1+m_0+1}{2}}(M)}+C\e^{-1}\Big(\|f\|_{H_h^{\ell+\frac{m_1+m_0-1}{2}}(M)}+\|f\|_{L^2(M)}\Big)+C\e \|u\|_{H_h^1(M)}\\&\qquad+O\Big(h^\infty\big(\|u\|_{H_h^{-N}(\Gamma_i)}+\|hD_{x_1}u\|_{H_h^{-N}(\Gamma_i)}+\|g\|_{H_h^{-N}(\Gamma_i)}\big)\Big).
\end{aligned}
\end{equation}
Moreover, if $m_1+1\leq m_0$, then~\eqref{e:gobble} holds with $K=T^{*}\Gamma_i$.
\end{lem}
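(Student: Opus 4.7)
\emph{Plan of proof.} The argument is the mirror image of the one for Lemma~\ref{lem:elephant1}, with the roles of $\bcD$ and $\bcN$ swapped. The idea is as follows: Lemma~\ref{lem:intByParts} produces an identity whose leading boundary terms are $\langle B_1Ru,u\rangle_{\Gamma_i}+\langle B_1hD_{x_1}u,hD_{x_1}u\rangle_{\Gamma_i}$; previously $\bcN$ was elliptic on $\WF(B')$, so we inverted $\bcN$ and studied the resulting quadratic form in $u$. Here $\bcD$ is elliptic on $\WF(B')$, so instead we will microlocally invert $\bcD$ and study the resulting quadratic form in $hD_{x_1}u$.

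More precisely, I would take $b_1\in \Psi^{2(\ell+m_1)}(\Gamma_i)$ self-adjoint (or of order $\comp$ when $m_1+1>m_0$), elliptic on $\WF(E)$, with $\WF(b_1)\subset \WF(B')\subset \Ell(\bcD)\cap\{|\sigma(\bcN)|<\delta\langle\xi'\rangle^{m_1}\}$, and set, as in the proofs of Lemmas~\ref{lem:basicbound} and~\ref{lem:elephant1},
$$
B_1(x,hD_{x'})=\chi(x_1)b_1(x',hD_{x'}),\qquad B_0=\tfrac{1}{2}hD_{x_1}B_1,\qquad B=B_0+B_1hD_{x_1}.
$$
Then \eqref{eq:B0B1} is satisfied and $B_0|_{\Gamma_i}=0$, so \eqref{eq:ibps} gives the analogue of \eqref{eq:ibps_end1}. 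The interior terms are controlled exactly as in Lemma~\ref{lem:elephant1} using the elliptic parametrix on $\supp \chi'\subset (\delta,2\delta)$ together with $Pu=hf$.

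The key step is to use the boundary condition in the form $u=\bcD^{-1}\bcN hD_{x_1}u-\bcD^{-1}g+O(h^\infty)_{\Psi^{-\infty}}$ on $\WF(B')$ (note that $\bcD^{-1}\in\Psi^{-m_0}$ exists microlocally on $\WF(B')$ because $\bcD$ is elliptic there) and substitute it into $\langle B_1Ru,u\rangle$. Writing $A:=\bcD^{-1}\bcN$, Cauchy--Schwarz with weight $\e$ on the cross terms in $g$ yields
\begin{align*}
&\Big|\big\langle \big(B_1+A^*B_1RA\big)hD_{x_1}u,hD_{x_1}u\big\rangle_{\Gamma_i}\Big|\\
&\qquad \leq C\e\|B'hD_{x_1}u\|^2_{H_h^{\ell+m_1}(\Gamma_i)}+C\e^{-1}\|B'g\|^2_{H_h^\ell(\Gamma_i)}+(\text{interior and }O(h^\infty)\text{ terms}),
\end{align*}
where the interior contribution matches the one appearing in \eqref{e:gobble}.

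The heart of the proof is then to show that the combination $B_1+A^*B_1RA$ is elliptic on $\WF(E)$, so that sharp G\aa{}rding can be applied to give $\|EhD_{x_1}u\|^2_{H_h^{\ell+m_1}(\Gamma_i)}$ on the left. Its principal symbol is $b_1\big(1+r(x',\xi')|\sigma(A)|^2\big)$ with $|\sigma(A)|\leq C\delta \langle\xi'\rangle^{m_1-m_0}$ on $\WF(B')$. In the global case $m_1+1\leq m_0$, one has $r|\sigma(A)|^2=O(\delta^2)$ uniformly on $T^*\Gamma_i$ because $|r|\lesssim \langle\xi'\rangle^2$, so for $\delta<\delta_0$ small this symbol is bounded below by $\tfrac{1}{2}b_1$; in the compact case $\WF(E)\subset K$, the factor $\langle\xi'\rangle^{2(m_1-m_0+1)}$ is bounded on $K$, and one again selects $\delta_0$ small enough. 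Sharp G\aa{}rding \cite[Theorem 9.11]{Zworski_semi}, together with the commutator and microlocal remainder estimates, delivers the bound on $\|EhD_{x_1}u\|_{H_h^{\ell+m_1}(\Gamma_i)}$ claimed in \eqref{e:gobble}.

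Finally, the bound on $\|Eu\|_{H_h^{\ell+m_0}(\Gamma_i)}$ follows immediately by reinserting the boundary condition: $Eu=E\bcD^{-1}\bcN B'hD_{x_1}u-E\bcD^{-1}B'g+O(h^\infty)(\cdots)$, and since the symbol of $E\bcD^{-1}\bcN$ has size $O(\delta)\langle\xi'\rangle^{m_1-m_0}$ on $\WF(E)$, this contributes $C\delta\|B'hD_{x_1}u\|_{H_h^{\ell+m_1}(\Gamma_i)}+C\|B'g\|_{H_h^\ell(\Gamma_i)}$, which is absorbed into the right-hand side of \eqref{e:gobble} upon choosing $\delta_0$ small relative to a fixed reference $\e$ and using the freedom in the constants. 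The main obstacle I expect to be the positivity/ellipticity argument for $B_1+A^*B_1RA$ in the non-compact setting $m_1+1\leq m_0$, and in particular tracking carefully how the powers of $\langle\xi'\rangle$ balance when $r$ changes sign across the glancing set; the rest is a routine adaptation of the proof of Lemma~\ref{lem:elephant1}.
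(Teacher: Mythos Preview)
Your proposal is correct and follows essentially the same route as the paper: the same commutant $B=B_0+B_1hD_{x_1}$ with $b_1\in\Psi^{2(\ell+m_1)}$ (or $\Psi^{\comp}$ when $m_1+1>m_0$), the same substitution $u=\bcD^{-1}(\bcN hD_{x_1}u-g)$ on $\WF(B')$, the same quadratic form $B_1+A^*B_1RA$ in $hD_{x_1}u$, the same ellipticity argument via $|1+r|\sigma(\bcD^{-1}\bcN)|^2|\geq c$ (global when $m_1+1\leq m_0$, on $K$ otherwise), and sharp G\aa rding. The only minor difference is in the last step: the paper recovers $\|Eu\|_{H_h^{\ell+m_0}}$ by applying the rewrite with an intermediate cutoff $E'$ and feeding in the already-established bound on $\|E'hD_{x_1}u\|_{H_h^{\ell+m_1}}$, rather than invoking the $O(\delta)$ smallness of $\sigma(\bcD^{-1}\bcN)$; this avoids the slightly awkward coupling of $\delta_0$ to $\e$ you mention, but both routes are valid.
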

\begin{proof}
Throughout the proof, we take $b_1(x',hD_{x'})$ self-adjoint with $b_1\in \Psi^{2(k+m_0)}$  if $m_1+1\leq m_0$ and $b_1\in \Psi^{\comp}$ otherwise. We assume that
$$
\WF(E)\subset \Ell(b_1(x',hD_{x'}))\subset \WF(b_1(x',hD_{x'}))\subset  \Ell(\bcD)\cap \{|\sigma (\bcN)|< \delta \langle \xi\rangle^{m_1}\}.
$$
Let $B_1$ and $B_0$ be defined by \eqref{eq:choiceofb}.

Since $\bcD$ is elliptic on $\WF (B')$, there exists $\bcD^{-1}\in \Psi^{-m_0}$ a microlocal inverse for $\bcD$ on $\WF(B')$; that is, for any $B$ with $\WF(B)\subset \{B'\equiv \Id\}$,
\begin{equation}
\label{eq:rewrite}
Bu= -B\bcD^{-1}(\bcN hD_{x_1}B'u-B'g)+O(h^\infty)_{\Psi^{-\infty}}g+O(h^\infty)_{\Psi^{-\infty}}u+O(h^\infty)_{\Psi^{-\infty}}hD_{x_1}u
\end{equation}
Arguing as in the proof of Lemma \ref{lem:elephant1}, we obtain the analogue of (\ref{eq:aardvark2}) with $B_1\in \Psi^{2(k+m_1)}(\Gamma_i)$, namely
\begin{equation*}
\begin{aligned}
&\Big|\big\langle B_1R\bcD^{-1}\bcN hD_{x_1}u,\bcD^{-1}\bcN hD_{x_1}u\big\rangle_{L^2(\Gamma_i)}+\big\langle B_1 hD_{x_1}u,hD_{x_1}u\big\rangle_{L^2(\Gamma_i)} \Big|\\
&\leq C (\e+h)\|B'hD_{x_1}\|_{H_h^{\ell+m_1}(\Gamma_i)}^2+ C(\e^{-1}+1)\|B'g\|_{H_h^\ell(\Gamma_i)}^2  \\
&\qquad +C\|u\|_{H_h^{\ell+\frac{m_1+m_0+1}{2}}(M)}^2+C\e^{-1}\Big(\|f\|_{H_h^{\ell+\frac{m_1+m_0-1}{2}}(M)}^2+\|f\|_{L^2(M)}^2\Big)+C\e \|u\|^2_{H_h^1(M)}\\&\qquad+O\Big(h^\infty\big(\|u\|^2_{H_h^{-N}(\Gamma_i)}+\|hD_{x_1}u\|^2_{H_h^{-N}(\Gamma_i)}+\|g\|^2_{H_h^{-N}(\Gamma_i)}\big)\Big).\end{aligned}
\end{equation*}
If $m_0<m_1+1$, we assume that $b_1\in S^{\comp}$. Therefore, for all $(m_0,m_1)$
$$
\big(\bcD^{-1}\bcN\big)^*B_1(1-R)\bcD^{-1}\bcN+B_1\in \Psi^{2(\ell+m_1)}.
$$
for our choice of $B_1$. 
Now, any $K\subset T^{*}\Gamma_i$ compact, there is $\delta_0>0$ small enough such that 
$$
\inf\Bigg\{\Big|1+\big|\sigma(\bcD^{-1}\bcN)(x',\xi')\big|^2R(x',\xi')\Big|\,\text{ where } \, |\sigma(\bcN)(x',\xi')|\leq \delta_0\langle \xi'\rangle^{m_1},\, (x',\xi')\in K\Bigg\}\geq c_K>0
$$ 
Moreover, if $m_0\leq m_1+1$, then there is $\delta_0>0$ small enough such that
$$
\inf\Bigg\{\Big|1+\big|\sigma(\bcD^{-1}\bcN)(x',\xi')\big|^2R(x',\xi')\Big|\,\text{ where } \, |\sigma(\bcN)(x',\xi')|\leq \delta_0\langle \xi'\rangle^{m_1},\, (x',\xi')\in T^{*}\Gamma_i\Bigg\}\geq c>0.
$$ 

Therefore, choosing $B_1$ with non-negative symbol such that $B_1$ is elliptic on $\WF(E)$, we have
$$
\Re \sigma\Big(\big(\bcD^{-1}\bcN\big)^*B_1R\bcD^{-1}\bcN+B_1\Big)(x',\xi')\geq c,\qquad (x',\xi')\in \WF(E).
$$
In particular, 
\begin{align*}
&\|EhD_{x_1}u\|_{H_h^{\ell+m_1}(\Gamma_i)}^2\leq C\Big\langle \big((\bcD^{-1}\bcN)^*B_1R\bcD^{-1}\bcN+B_1\big)hD_{x_1}u,hD_{x_1}u\Big\rangle_{L^2(\Gamma_i)}\\
&\hspace{4cm}
+Ch\|B'u \|_{H_h^{\ell+m_1-\frac{1}{2}}}^2+O(h^\infty)\|hD_{x_1}u\|_{H_h^{-N}(\Gamma_i)}^2
\end{align*}
Therefore, 
\begin{equation*}
\begin{aligned}
\|E hD_{x_1}u\|^2_{H_h^{\ell+m_1}(\Gamma_i)}&\leq C (\e+h)\|B'hD_{x_1}\|_{H_h^{\ell+m_1}(\Gamma_i)}^2+ C(\e^{-1}+1)\|B'g\|_{H_h^\ell(\Gamma_i)}^2  \\
&\qquad +C\|u\|_{H_h^{\ell+\frac{m_1+m_0+1}{2}}(M)}^2+C\e^{-1}\Big(\|f\|_{H_h^{\ell+\frac{m_1+m_0-1}{2}}(M)}^2+\|f\|_{L^2(M)}^2\big)
\\&
\qquad+C\e \|u\|^2_{H_h^1(M)}+O\Big(h^\infty\big(\|u\|^2_{H_h^{-N}(\Gamma_i)}+\|hD_{x_1}u\|^2_{H_h^{-N}(\Gamma_i)}+\|g\|^2_{H_h^{-N}(\Gamma_i)}\big)\Big).\end{aligned}
\end{equation*}

Then, using~(\ref{eq:rewrite}) again the second claim follows.
\end{proof}

\subsection{Proof of Theorem \ref{th:higherOrderBound}} \label{sub:proofhOB}

Throughout this section we assume that~\eqref{eq:conditions} holds. In particular, the union of the elliptic sets for $A_{0,i}$ and $A_{1,i}$ covers $T^{*}\Gamma_i$ and $A_{0,i}$ is elliptic on $S^*\Gamma_i$. 

\begin{proof} 

We start by briefly considering the conditions~\eqref{eq:conditions2a}--~\eqref{eq:conditions2d} separately. Suppose first that~\eqref{eq:conditions2a} holds. Then, fixing $\delta_0>0$, such that~Lemmas \ref{lem:elephant1} and~\ref{lem:elephant2} with $K=T^{*}\Gamma_i$ hold, there exist  $E_0\in \Psi^{0}$ satisfying~\eqref{eq:saphire} and $E_1\in \Psi^{0}$ satisfying~\eqref{eq:saphire2} (both with $K=T^{*}\Gamma_i$) such that 
$$
\{\sigma(\bcD)=0\}\subset \Ell(E_0),\qquad \{\sigma(\bcN)=0\}\subset \Ell(E_1), \qquad T^{*}\Gamma_i\subset \Ell(E_0)\cup \Ell(E_1).
$$

Next, if~\eqref{eq:conditions2c} holds, there exists $K_2\Subset T^{*}\Gamma_i$ such that 
$$
K_2\cup \Ell(\bcN)\supset T^{*}\Gamma_i.
$$
Fixing $\delta_0>0$, such that~Lemma \ref{lem:elephant1} holds with $K=T^{*}\Gamma_i$ and~\ref{lem:elephant2} holds with $K=K_2$, there exist  $E_0\in \Psi^{0}$ satisfying~(\ref{eq:saphire}) with $K=T^{*}\Gamma_i$ and $E_1\in \Psi^{\comp}$ satisfying~(\ref{eq:saphire2}) with $K=K_2$ such that 
$$
\{\sigma(\bcD)=0\}\subset \Ell(E_0),\qquad \{\sigma(\bcN)=0\}\subset \Ell(E_1),\qquad T^{*}\Gamma_i\subset \Ell(E_0)\cup \Ell(E_1).
$$

Finally, if~\eqref{eq:conditions2d} holds, there exists $K_3\subset T^{*}\Gamma_i$ such that 
$$
K_3\cup \Ell(\bcD)\supset T^{*}\Gamma_i.
$$
Fixing $\delta_0>0$, such that~Lemma \ref{lem:elephant1} holds with $K=K_3$ and~\ref{lem:elephant2} holds with $K=T^{*}\Gamma_i$, there exist  $E_0\in \Psi^{\comp}$ satisfying~(\ref{eq:saphire}) with $K=K_3$ and $E_1\in \Psi^{0}$ satisfying~(\ref{eq:saphire2}) with $K=T^{*}\Gamma_i$ such that 
$$
\{\sigma(\bcD)=0\}\subset \Ell(E_0),\qquad \{\sigma(\bcN)=0\}\subset \Ell(E_1),\qquad T^{*}\Gamma_i\subset \Ell(E_0)\cup \Ell(E_1).
$$

In particular, in all cases, there exist $h_0>0$, $E_0,E_1,E_2\in \Psi^0$ such that for $0<h<h_0$, the estimates of Lemma~\ref{lem:elephant1} hold for $E_0^*E_0$, those for~\eqref{lem:elephant2} hold for $E_1^*E_1$, and those of Lemma~\ref{lem:ellipticEst} hold for $E_2^*E_2$ such that 
$$
T^{*}\Gamma_i\subset \Ell(E_0)\cup \Ell(E_1)\cup \Ell(E_2).
$$

Therefore, by Lemma~\ref{lem:elephant1}
\begin{align*}
\|E_0^*E_0u\|_{H_h^{\ell+m_0}}&+\|E_0^*E_0hD_{x_1}u\|_{H_h^{\ell+m_1}}\\
& \leq C (\e+h)\|u\|_{H_h^{\ell+m_0}(\Gamma_i)}+ C(\e^{-1}+1)\|g\|_{H_h^\ell(\Gamma_i)}  \\
&\qquad+C\|u\|_{H_h^{\ell+\frac{m_1+m_0+1}{2}}(M)}+C\e^{-1}\Big(\|f\|_{H_h^{\ell+\frac{m_1+m_0-1}{2}}(M)}+\|f\|_{L^2(M)}\Big)+C\e \|u\|_{H_h^1(M)}\\
&\qquad+O\Big(h^\infty\big(\|u\|_{H_h^{-N}(\Gamma_i)}+\|hD_{x_1}u\|_{H_h^{-N}(\Gamma_i)}+\|g\|_{H_h^{-N}(\Gamma_i)}\big)\Big).
\end{align*}
Similarly, by Lemma~\ref{lem:elephant2}
\begin{align*}
\|E_1^*E_1u\|_{H_h^{\ell+m_0}}&+\|E_1^*E_1hD_{x_1}u\|_{H_h^{\ell+m_1}}\\
&\leq   C \e\|hD_{x_1}\|_{H_h^{\ell+m_1}(\Gamma_i)}+ C\e^{-1}\|g\|_{H_h^\ell(\Gamma_i)}  \\
&\qquad +C\|u\|_{H_h^{\ell+\frac{m_1+m_0+1}{2}}(M)}+C\e^{-1}\Big(\|f\|_{H_h^{\ell+\frac{m_1+m_0-1}{2}}(M)}+\|f\|_{L^2(M)}\Big)+C\e \|u\|_{H_h^1(M)}\\&\qquad+O\Big(h^\infty\big(\|u\|_{H_h^{-N}(\Gamma_i)}+\|hD_{x_1}u\|_{H_h^{-N}(\Gamma_i)}+\|g\|_{H_h^{-N}(\Gamma_i)}\big)\Big).
\end{align*}

Finally, using Lemma~\ref{lem:ellipticEst},
\begin{align*}
&\|E_2^*E_2u\|_{H_h^{\ell+m_0}(\Gamma_i)}+\|E_2^*E_2hD_{x_1}u\|_{H_h^{\ell+m_1}(\Gamma_i)}\\
&\qquad\leq C\Big(\|u\|_{H_h^{\frac{2\ell+m_1+m_0+1}{2}}(M)}+\|u\|_{L^2(M)}+\|f\|_{H_h^{\frac{2\ell+m_1+m_0-1}{2}}(M)}+\|f\|_{L^2(M)}\Big) \\
&\qquad\qquad+\e \big(\|u\|_{H_h^{\ell+m_0}(\Gamma_i)}+ \|hD_{x_1}u\|_{H_h^{\ell+m_1}(\Gamma_i)}\big)+C\e^{-1}\|g_i\|_{H_h^{\ell}(\Gamma_i)}\\
&\qquad\qquad+O\Big(h^\infty\big(\|u\|_{H_h^{-N}(\Gamma_i)}+\|hD_{x_1}u\|_{H_h^{-N}(\Gamma_i)}+\|g\|_{H_h^{-N}(\Gamma_i)}\big)\Big).
\end{align*}

Since 
$$
T^{*}\Gamma_i\subset\Ell(E_0^*E_0+E_1^*E_1+E_2^*E_2),
$$
we have all together 
\begin{equation}
\label{eq:nearlyFinal}
\begin{aligned}
&\|u\|_{H_h^{\ell+m_0}(\Gamma_i)}+\|hD_{x_1}u\|_{H_h^{\ell+m_1}(\Gamma_i)}\\
&\qquad\leq C\Big(\|u\|_{H_h^{\ell+\frac{m_1+m_0+1}{2}}(M)}+\|u\|_{L^2(M)}+\e\|u\|_{H_h^1(M)}+ \|f\|_{H_h^{\frac{k+m_1+m_0-1}{2}}(M)}+\|f\|_{L^2(M)}\Big) \\
&\qquad\qquad+\e \Big(\|u\|_{H_h^{\ell+m_0}(\Gamma_i)}+ \|hD_{x_1}u\|_{H_h^{\ell+m_1}(\Gamma_i)}\Big)+C\e^{-1}\|g_i\|_{H_h^{\ell}(\Gamma_i)}.
\end{aligned}
\end{equation}

Finally, observe that 
\beqs
\Re \langle -h^2\Delta u,u\rangle_{L^2(M)}=\|h\nabla u\|_{L^2(M)}^2+h\sum_i\Re \langle h\partial_\nu u,u\rangle_{L^2(\Gamma_i)}.
\eeqs
Letting $\psi\in \Psi^{\comp}$ with $\bcD$ elliptic on $\WF(\psi)$ and $\bcN$ elliptic on $\supp \WF(\Id-\psi)$, we have 
\begin{align*}
|\Re \langle h\partial_\nu u,u\rangle_{L^2(\Gamma_i)}|&=\Big|\Re i\Big( \big\langle  hD_{\nu} u,\psi u\big\rangle_{L^2(\Gamma_i)}+ \big\langle (\Id- \psi) hD_{\nu} u,u\big\rangle_{L^2(\Gamma_i)}\Big)\Big|\\
&=\Big|\Re i\Big( \big\langle  hD_{\nu} u,-\psi \bcD^{-1}(\bcN hD_n u-g)\big\rangle_{L^2(\Gamma_i)}+ \big\langle (\Id- \psi) \bcN^{-1}(g+\bcD u),u\big\rangle_{L^2(\Gamma_i)}\Big)\Big|\\
&\qquad\qquad+O(h^{\infty})\Big(\|u\|_{H_h^{-N}(\Gamma_i)}^2+\|hD_{x_1}u\|_{H_h^{-N}(\Gamma_i)}^2\Big)\\
&\leq Ch\|hD_{\nu}u\|_{H_h^{-N}(M)}^2+h^{-1}\|g\|_{H_h^{-m_0-s}(\Gamma_i)}^2+h\|u\|_{H_h^{\frac{m_0-m_1-1}{2}}(\Gamma_i)}^2+h\|u\|_{H_h^s(\Gamma_i)}^2.
\end{align*}
Therefore, for any $s$,
\begin{align}\nonumber
\|u\|^2_{H_h^1(M)}\leq& \frac{1}{2}h^2\|f\|^2_{L^2(M)}+\frac{5}{2}\|u\|^2_{L^2(M)}
\\&+C\bigg(\sum_ih^2\|hD_n u\|^2_{H_h^{-N}(\Gamma_i)}+h^2\|u\|^2_{H_h^{\max\left(s,\frac{m_0-m_1-1}{2}\right)}(\Gamma_i)}+\|g_i\|_{H_h^{-m_{1,i}-s}(\Gamma_i)}^2\bigg).\label{eq:analogy1}
\end{align}
Using this in~\eqref{eq:nearlyFinal} and taking 
\beq\label{eq:elli}
-\frac{m_{0,i}+m_{1,i}}{2}\leq \ell_i\leq  \frac{1}{2}-\frac{m_{0,i}+m_{1,i}}{2}, \quad s_i=-\ell_{i}-m_{1,i},
\eeq
 we obtain
\begin{equation*}
\begin{aligned}
&\sum_i\|u\|_{H_h^{\ell_i+m_{0,i}}(\Gamma_i)}+\|hD_{x_1}u\|_{H_h^{\ell_i+m_{1,i}}(\Gamma_i)}\\
&\leq C\|u\|_{L^2(M)}+C(\e^{-1}+\e h)\|f\|_{L^2(M)} \\
&\quad+\sum_i \e^{-1}\|g\|_{H_h^{\ell_i}(\Gamma_i)}+C\e\Big(\|u\|_{H_h^{\ell+m_{0,i}}(\Gamma_i)}+\|hD_{x_1}u\|_{H_h^{\ell+m_{1,i}}(\Gamma_i)}\Big)\\
&\qquad +\sum_i\Big(h\|hD_n u\|_{H_h^{-N}(\Gamma_i)}+h\|u\|_{H_h^{\max\left(-m_{1,i}-\ell_i,\frac{m_{0,i}-m_{1,i}-1}{2}\right)}(\Gamma_i)}\Big).
\end{aligned}
\end{equation*}
Shrinking $\e$ such that $C\e<1/2$ and taking $h_0$ small enough such that $Ch_0\leq \frac{1}{2}$, 
the proof is complete since the inequality \eqref{eq:ellineq} (i.e., the first inequality in \eqref{eq:elli})
 implies that the terms on the right can be absorbed into the left.

The final inequality in Theorem~\ref{th:higherOrderBound} follows from combining the result of Lemma~\ref{lem:basicbound} (with $\ell=-s$) with \eqref{eq:trace2}.
\end{proof}


\appendix
\section{Semiclassical pseudodifferential operators and notation}\label{sec:appendix}

We review the notation and definitions for semiclassical pseudodifferential operators on $\mathbb{R}^d$ and refer the reader to~\cite[Appendix E]{DyZw:19},~\cite[Chapter 14]{Zworski_semi} for details of how to adapt these definitions to manifolds.

Before we introduce these objects, we recall the notion of \emph{semiclassical Sobolev spaces} $H_h^s$; these are the standard Sobolev spaces $H^s$ with a norm weighted with $h$. We say that $u\in H_h^s(\mathbb{R}^d)$ if 
$$
 \|\langle \xi\rangle ^s \mathcal{F}_h(u)(\xi)\|_{L^2}<\infty, \quad\text{ where } \quad\langle \xi\rangle:=(1+|\xi|^2)^{\frac{1}{2}}
\quad\tand\quad
\mathcal{F}_h(u)(\xi):=\int_{\mathbb{R}^d} e^{-\frac{i}{h}\langle y,\xi\rangle}u(y)\,dy
$$
is the \emph{semiclassical Fourier transform}.

We next introduce the notion of symbols.  We say that $a\in C^\infty(T^*\mathbb{R}^d)$ is a symbol of order $m$ if 
$$
|\partial_x^\alpha \partial_\xi^\beta a(x,\xi)|\leq C_{\alpha\beta}\langle \xi\rangle^m,
$$
and write $a\in S^m(T^*\mathbb{R}^d)$. 
Throughout this section we fix $\chi_0\in C_c^\infty(\mathbb{R}))$ to be identically 1 near 0.  We then say that an operator $A:C_c^\infty(\mathbb{R}^d)\to \mathcal{D}'(\mathbb{R}^d)$ is a semiclassical pseudodifferential operator of order $m$, and write $A\in \Psi^m(\mathbb{R}^d)$, if $A$ can be written as
\begin{equation}
\label{e:basicPseudo}
Au(x)=\frac{1}{(2\pi h)^d}\int_{\Rea^d}\int_{\Rea^d} e^{\frac{i}{h}\langle x-y,\xi\rangle}a(x,\xi)\chi_0(|x-y|)u(y)dyd\xi +E
\end{equation}
where $a\in S^m(T^*\mathbb{R}^d)$ and $E=O(h^\infty)_{\Psi^{-\infty}}$, where an operator $E=O(h^\infty)_{\Psi^{-\infty}}$ if for all $N>0$ there exists $C_N>0$ such that
$$
\|E\|_{H_h^{-N}(\mathbb{R}^d)\to H_h^N(\mathbb{R}^d)}\leq C_Nh^N. 
$$
We also define 
$$
\Psi^{-\infty}:=\bigcap_m \Psi^m,\qquad S^{-\infty}:=\bigcap_m S^m,\qquad  \Psi^\infty:=\bigcup_m \Psi^m, \qquad S^\infty:=\bigcup_m S^m.
$$
 We say that $a\in S^{\comp}$ if $a\in S^{-\infty}$ and $a$ is compactly supported, and we say that $A\in \Psi^{\comp}$ if $A\in \Psi^{-\infty}$ and can be written in the form~\eqref{e:basicPseudo} with $a\in S^{\comp}$. We use the notation $a(x,hD_x)$ for the operator $A$ in~\eqref{e:basicPseudo} with $E=0$. 

We recall that there exists a map 
$$
\sigma_m:\Psi^m \to S^m/hS^{m-1}
$$
called the \emph{principal symbol map} and such that the sequence 
$$
0\to hS^{m-1}\overset{\operatorname{Op}_h}{\rightarrow} \Psi^{m}\overset{\sigma}{\rightarrow} S^m/hS^{m-1}\to 0
$$
is exact where $\operatorname{Op}_h(a)=a(x,hD)$. Moreover, 
\beq\label{eq:symbol}
\sigma(AB)=\sigma(A)\sigma(B),\qquad \sigma(A^*)=\overline{\sigma}(A),\qquad \sigma(-ih^{-1}[A,B])=\{\sigma(A),\sigma(B)\}
\eeq
where $\{\cdot,\cdot\}$ denotes the Poisson bracket; see \cite[Proposition E.17]{DyZw:19}.

\subsection{Wavefront sets and elliptic sets}

To introduce a notion of wavefront set that respects both decay in $h$ as well as smoothing properties of pseudodifferential operators, we introduce the set 
$$
\overline{T^*\mathbb{R}^d}:=T^*\mathbb{R}^d\sqcup ( \mathbb{R}^d\times S^{d-1})
$$
where $\sqcup$ denotes disjoint union and we view $\mathbb{R}^d\times S^{d-1}$ as the `sphere at infinity' in each cotangent fiber (see also~\cite[\S E.1.3]{DyZw:19} for a more systematic approach where $\overline{T^*\mathbb{R}^d}$ is introduced as the fiber-radial compactification of $T^*\mathbb{R}^d$). We endow $\overline{T^*\mathbb{R}^d}$ with the usual topology near points $(x_0,\xi_0)\in T^*\mathbb{R}^d$ and define a system of neighbourhoods of a point $(x_0,\xi_0)\in \mathbb{R}^d\times S^{d-1}$ to be
\begin{align*}
U_\e:=&\Big\{ (x,\xi)\in T^*\mathbb{R}^d\,\big|\, |x-x_0|<\e, |\xi|>\e^{-1}, \big|\tfrac{\xi}{\langle \xi\rangle }-\xi_0\big|<\e\Big\}\\
&\qquad\quad \sqcup \big\{ (x,\xi)\in \mathbb{R}^d\times S^{d-1}\,:\, |x-x_0|<\e.,\, |\xi-\xi_0|<\e\big\}.
\end{align*}

We now say that a point $(x_0,\xi_0)\in \overline{T^*\mathbb{R}^d}$ is not in the wavefront set of an operator $A\in \Psi^m$, and write $(x_0,\xi_0)\notin \WF(A)$, if there exists a neighbourhood $U$ of $(x_0,\xi_0)$ such that $A$ can be written as in~\eqref{e:basicPseudo} with 
$$
\sup_{(x,\xi)\in U} | \partial^\alpha_x \partial_\xi^\beta a(x,\xi)\langle \xi\rangle^N|\leq C_{\alpha \beta N} h^N.
$$

We define the elliptic set of a pseudodifferential operator $A\in \Psi^m$ as follows. We say that $(x_0,\xi_0)\in \overline{T^*\mathbb{R}^d}$ is in the elliptic set of $A$, and write $(x_0,\xi_0)\in \Ell(A)$, if there exists a neighbourhood $U$ of $(x_0,\xi_0)$ such that $A$ can be written as in~\eqref{e:basicPseudo} with 
$$
\inf_{(x,\xi)\in U} |a(x,\xi)\langle \xi\rangle^{-m}|\geq c >0.
$$

Next, we define the wavefront of a family of distributions $u_h$ depending on $h$. We say that $u_h$ is \emph{tempered} if for all $\chi\in C_c^\infty(\mathbb{R}^d)$ there exists $N>0$ such that
$$
\|\chi u\|_{H_h^{-N}}<\infty. 
$$
For a tempered family of functions, $u_h$  we say that $(x_0,\xi_0)\in \overline{T^*\mathbb{R}^d}$ is not in the wavefront set of $u_h$ and write $(x_0,\xi_0)\notin \WF(u_h)$ if there exists $A\in \Psi^0$ with $(x_0,\xi_0)\in \Ell(A)$ such that for all $N$ there is $C_N>0$ such that 
$$
\|Au_h\|_{H_h^N}\leq C_Nh^N.
$$

\subsection{Bounds for pseudodifferential operators}
We next review some bounds for pseudodifferential operators acting on Sobolev spaces.
\begin{lem}(\cite[Propositions E.19 and E.24]{DyZw:19}~\cite[Theorem 8.10]{Zworski_semi})
Suppose that $A\in \Psi^m$. Then
$$
\|Au\|_{H_h^{s}}\leq C\|u\|_{H_h^{s+m}}.
$$
Moreover, if $A=a(x,hD)\in \Psi^0$, then there exists $C>0$ such that 
$$
\|A\|_{L^2\to L^2}\leq \sup |a|+ Ch^{\frac{1}{2}}.
$$
\end{lem}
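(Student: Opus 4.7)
The two assertions are logically independent but share a common engine, namely the semiclassical symbol calculus recalled above: composition gives $\Psi^m \cdot \Psi^k \subset \Psi^{m+k}$ with the expected symbol, and adjoints preserve the order class with $\sigma(A^*)=\overline{\sigma(A)}$ modulo $hS^{m-1}$. My plan is to first reduce (a) the Sobolev bound to an $L^2 \to L^2$ bound for operators of order zero, and then (b) obtain the sharp operator-norm bound by combining that $L^2$ bound with Sharp G\aa{}rding.

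For the first assertion, let $\Lambda^t := \operatorname{Op}_h(\langle \xi\rangle^t)\in \Psi^t$. A direct check via the symbol calculus produces $\Lambda^{-t}\in \Psi^{-t}$ with $\Lambda^t \Lambda^{-t}=I+O(h^\infty)_{\Psi^{-\infty}}$, so $\Lambda^t$ is invertible on $L^2$ for $h$ small and $\|u\|_{H_h^s}\simeq \|\Lambda^s u\|_{L^2}$. Consequently,
\begin{equation*}
\|A u\|_{H_h^s} = \|\Lambda^s A \Lambda^{-(s+m)}\Lambda^{s+m} u\|_{L^2} \leq \|\Lambda^s A \Lambda^{-(s+m)}\|_{L^2\to L^2}\,\|u\|_{H_h^{s+m}},
\end{equation*}
and by symbolic composition $\Lambda^s A\Lambda^{-(s+m)}\in \Psi^0$. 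It therefore suffices to prove the $L^2\to L^2$ bound for an arbitrary element of $\Psi^0$, which is the Calder\'on--Vaillancourt theorem; I would prove it using the standard Cotlar--Stein decomposition of the symbol with respect to a partition of unity on $T^*\mathbb{R}^d$ at unit scale, using integration by parts in the oscillatory integral representation~(\ref{e:basicPseudo}) to gain rapid off-diagonal decay of the pieces.

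For the sharp norm bound, set $M:=\sup|a|$ and consider $B:=M^2 I - A^*A \in \Psi^0$. By the calculus, $\sigma(B)=M^2-|a|^2 \geq 0$. The Sharp G\aa{}rding inequality (which I would prove via Friedrichs symmetrization: replace $a$ by its convolution with an $h^{1/2}$-Gaussian in phase space, getting a manifestly non-negative operator that differs from $\operatorname{Op}_h(a)$ by an element of $h\Psi^{-1}$) then yields $\langle B u, u\rangle \geq -Ch\|u\|_{L^2}^2$, i.e.
\begin{equation*}
\|Au\|_{L^2}^2 \leq (M^2 + Ch)\|u\|_{L^2}^2.
\end{equation*}
Using the elementary estimate $\sqrt{M^2+Ch}\leq M+\sqrt{Ch}$ (verified by squaring) gives $\|A\|_{L^2\to L^2}\leq M + C'h^{1/2}$.

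The main obstacle is Sharp G\aa{}rding itself; the $h^{1/2}$ loss in the final bound is precisely the square root of the $h$-loss in that inequality, so no improvement is possible at the level of this argument (Weyl quantization would give $O(h)$ there and thus $O(h^{1/2})$ here, consistent with the statement). The technical work in Friedrichs symmetrization is checking that the symmetrized symbol lies in the intended class and that the remainder is genuinely of order $h \Psi^{-1}$, which requires careful stationary-phase estimates in the phase-space convolution; the Calder\'on--Vaillancourt step, while standard, is the other place where one must work, but can be treated as a black box by invoking the cited references.
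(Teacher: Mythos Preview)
Your argument is correct and is the standard proof of both assertions. However, the paper does not actually prove this lemma: it is stated with citations to \cite[Propositions E.19 and E.24]{DyZw:19} and \cite[Theorem 8.10]{Zworski_semi} and no proof is given. Your reduction of the Sobolev bound to Calder\'on--Vaillancourt via conjugation by $\Lambda^t=\operatorname{Op}_h(\langle\xi\rangle^t)$, and your derivation of the sharp $L^2$ norm bound from Sharp G\aa{}rding applied to $M^2 I - A^*A$, are exactly the arguments one finds in those references (see in particular the proof of \cite[Theorem~5.1]{Zworski_semi}, to which \cite[Theorem~8.10]{Zworski_semi} reduces), so there is nothing substantive to compare.

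One small remark: your parenthetical about Weyl quantization is slightly muddled. Sharp G\aa{}rding already gives an $O(h)$ lower bound for either the standard or the Weyl quantization, and it is this $O(h)$ that becomes $O(h^{1/2})$ after the square root; Weyl quantization does not change the exponent here. (What Weyl quantization \emph{would} buy, via Fefferman--Phong, is an $O(h^2)$ lower bound and hence $O(h)$ in the operator norm, but that is a different and harder inequality.)
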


Finally, we recall the elliptic parametrix construction (see e.g.~\cite[Proposition E.32]{DyZw:19}).
\begin{lem}
\label{l:ellip}
Suppose that $A\in \Psi^{m_1}$ and $B\in \Psi^{m_2}$ with $\WF(A)\subset \Ell(B)$. Then there exist $E_1,E_2\in \Psi^{m_1-m_2}$ such that 
$$
A=E_1B+O(h^\infty)_{\Psi^{-\infty}},\qquad A=BE_2+O(h^\infty)_{\Psi^{-\infty}}.
$$
\end{lem}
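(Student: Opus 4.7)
The plan is to build the left parametrix $E_1$ by the standard inductive symbol construction, then obtain $E_2$ by an analogous right-sided construction (or by taking adjoints). The right parametrix follows the same pattern, so I will only sketch $E_1$ in detail.

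First I would reduce to a microlocal statement. Since $\WF(A)\subset \Ell(B)$ and both are closed in $\overline{T^*\mathbb{R}^d}$, I can choose $\chi\in S^{0}$ with $\chi\equiv 1$ on a neighbourhood of $\WF(A)$ and $\supp\chi\subset \Ell(B)$. Writing $A = \operatorname{Op}_h(\chi)A + (I - \operatorname{Op}_h(\chi))A$, the second term is $O(h^\infty)_{\Psi^{-\infty}}$ by the wavefront characterization, so I may assume $\sigma(A)$ is supported inside $\Ell(B)$ up to such a remainder. On this set, $|\sigma(B)|\geq c\langle\xi\rangle^{m_2}$, and I can define $b^{-1}:=\chi/\sigma(B)\in S^{-m_2}$.

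The iterative step is as follows. Set $e_1^{(0)}:=a\,b^{-1}\in S^{m_1-m_2}$ and $E_1^{(0)}:=\operatorname{Op}_h(e_1^{(0)})$. By the composition formula from \eqref{eq:symbol},
$$\sigma\big(E_1^{(0)}B\big) = e_1^{(0)}\sigma(B) = a \pmod{h\,S^{m_1-1}}.$$
Hence $A - E_1^{(0)} B = hR_1$ with $R_1\in\Psi^{m_1-1}$, and microlocally $\WF(R_1)\subset \WF(A)\subset \Ell(B)$ (after adding an $O(h^\infty)$ error). Now repeat: define $e_1^{(j)} := \sigma(R_j)\,b^{-1}\in S^{m_1-j-m_2}$ and $R_{j+1}\in\Psi^{m_1-j-1}$ with $R_j - \operatorname{Op}_h(e_1^{(j)})B = hR_{j+1}$. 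Inductively this produces symbols $e_1^{(j)}\in S^{m_1-j-m_2}$ such that, for every $N$,
$$A - \operatorname{Op}_h\!\Bigl(\sum_{j=0}^{N} h^j e_1^{(j)}\Bigr) B \in h^{N+1}\Psi^{m_1-N-1}.$$

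The main technical step is Borel summation: I would use the standard result (see, e.g., \cite[Theorem 4.15]{Zworski_semi}) to produce a single symbol $e_1\in S^{m_1-m_2}$ with the asymptotic expansion $e_1\sim\sum_{j\geq 0} h^j e_1^{(j)}$, meaning $e_1 - \sum_{j=0}^N h^j e_1^{(j)}\in h^{N+1} S^{m_1-m_2-N-1}$ for every $N$. Setting $E_1:=\operatorname{Op}_h(e_1)\in \Psi^{m_1-m_2}$, the difference $A - E_1 B$ lies in $h^{N+1}\Psi^{m_1-m_2-N-1}\cdot \Psi^{m_2} \subset h^{N+1}\Psi^{m_1-N-1}$ for every $N$; combined with the wavefront reduction at the start, this forces $A-E_1 B = O(h^\infty)_{\Psi^{-\infty}}$.

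The only real subtlety is keeping the symbols in the correct global Kohn--Nirenberg classes while doing the construction (so that Borel summation applies) and checking that the iteration preserves the microlocalization into $\Ell(B)$ at each step. For the right parametrix $E_2$ I would run exactly the same argument, now solving $\sigma(B)\,e_2^{(0)} = \sigma(A)$ microlocally and iterating using $\sigma(BE_2) = \sigma(B)\sigma(E_2)\pmod{hS^{m_1-1}}$; alternatively, one can obtain $E_2$ from the left parametrix for $A^*$ by $B^*$ via $E_2 = (E_1')^*$ where $E_1'$ is the left parametrix of $A^*$ with respect to $B^*$ (whose existence follows from the first part since $\WF(A^*)=\WF(A)$ and $\Ell(B^*)=\Ell(B)$).
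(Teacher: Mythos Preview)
Your proof is correct and follows the standard iterative symbol construction for the elliptic parametrix. The paper does not actually prove this lemma: it simply recalls it with a reference to \cite[Proposition E.32]{DyZw:19}, whose proof is precisely the argument you outline.
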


\subsection{Tangential pseudodifferential operators}\label{sec:tangential}

It will sometimes be convenient to have families of pseudodifferential operators depending on one of the position variables. In this case, as in  \S\ref{subsec:geo}, we write $x=(x_1,x')\in \mathbb{R}^d$ and $\xi=(\xi_1,\xi')$ for the corresponding dual variables.
We then consider families $A\in C_c^\infty( I_{x_1};\Psi^m(\mathbb{R}^{d-1}))$, that is, smooth functions in $x_1$ valued in pseudodifferential operators of order $m$ and write $A=a(x,hD_{x'})$ for some $a\in C_c^\infty(I_{x_1};S^m(\mathbb{R}^{d-1})).$

\bibliographystyle{amsalpha}
\bibliography{biblio_GLS}

\end{document}